\crefname{thm}{Theorem}{Theorems}
\Crefname{thm}{Theorem}{Theorems}
\crefname{conj}{Conjecture}{Conjectures}
\Crefname{conj}{Conjecture}{Conjectures}
\crefname{prop}{Proposition}{Propositions}
\Crefname{prop}{Proposition}{Propositions}
\crefname{cor}{Corollary}{Corollaries}
\Crefname{cor}{Corollary}{Corollaries}
\crefname{defn}{Definition}{Definitions}
\Crefname{defn}{Definition}{Definitions}
\crefname{rmk}{Remark}{Remarks}
\Crefname{rmk}{Remark}{Remarks}
\crefname{prob}{Problem}{Problems}
\Crefname{prob}{Problem}{Problems}
\crefname{enumi}{}{}
\Crefname{enumi}{}{}
\crefname{figure}{Figure}{Figures}
\Crefname{figure}{Figure}{Figures}
\begin{document}

\NewDocumentCommand{\C}{}{{\mathbb{C}}}
\NewDocumentCommand{\R}{}{{\mathbb{R}}}
\NewDocumentCommand{\Q}{}{{\mathbb{Q}}}
\NewDocumentCommand{\Z}{}{{\mathbb{Z}}}
\NewDocumentCommand{\N}{}{{\mathbb{N}}}
\NewDocumentCommand{\M}{}{{\mathbb{M}}}
\NewDocumentCommand{\grad}{}{\nabla}
\NewDocumentCommand{\sA}{}{\mathcal{A}}
\NewDocumentCommand{\sF}{}{\mathcal{F}}
\NewDocumentCommand{\sH}{}{\mathcal{H}}
\NewDocumentCommand{\sD}{}{\mathcal{D}}
\NewDocumentCommand{\sB}{}{\mathcal{B}}
\NewDocumentCommand{\sC}{}{\mathcal{C}}
\NewDocumentCommand{\sL}{}{\mathcal{L}}
\NewDocumentCommand{\sT}{}{\mathcal{T}}
\NewDocumentCommand{\sO}{}{\mathcal{O}}
\NewDocumentCommand{\sP}{}{\mathcal{P}}
\NewDocumentCommand{\sQ}{}{\mathcal{Q}}
\NewDocumentCommand{\sR}{}{\mathcal{R}}
\NewDocumentCommand{\sM}{}{\mathcal{M}}
\NewDocumentCommand{\sI}{}{\mathcal{I}}
\NewDocumentCommand{\sK}{}{\mathcal{K}}
\NewDocumentCommand{\Span}{}{\mathrm{span}}
\NewDocumentCommand{\fM}{}{\mathfrak{M}}
\NewDocumentCommand{\fN}{}{\mathfrak{N}}
\NewDocumentCommand{\fX}{}{\mathfrak{X}}
\NewDocumentCommand{\fY}{}{\mathfrak{Y}}
\NewDocumentCommand{\gammat}{}{\tilde{\gamma}}
\NewDocumentCommand{\gammah}{}{\hat{\gamma}}
\NewDocumentCommand{\ct}{}{\tilde{c}}
\NewDocumentCommand{\bt}{}{\tilde{b}}
\NewDocumentCommand{\ch}{}{\hat{c}}
\NewDocumentCommand{\Ut}{}{\tilde{U}}
\NewDocumentCommand{\Vt}{}{\tilde{V}}
\NewDocumentCommand{\ah}{}{\hat{a}}
\NewDocumentCommand{\at}{}{\tilde{a}}
\NewDocumentCommand{\Yh}{}{\widehat{Y}}
\NewDocumentCommand{\Yt}{}{\widetilde{Y}}
\NewDocumentCommand{\Ah}{}{\widehat{A}}
\NewDocumentCommand{\At}{}{\widetilde{A}}
\NewDocumentCommand{\Vol}{m}{\mathrm{Vol}(#1)}
\NewDocumentCommand{\BVol}{m}{\mathrm{Vol}\left(#1\right)}
\NewDocumentCommand{\fg}{}{\mathfrak{g}}
\NewDocumentCommand{\Div}{}{\mathrm{div}}

\NewDocumentCommand{\Xa}{}{X^{(\alpha)}}
\NewDocumentCommand{\Va}{}{V^{(\alpha)}}

\NewDocumentCommand{\transpose}{}{\top}

\NewDocumentCommand{\ICond}{}{\sC}

\NewDocumentCommand{\LebDensity}{}{\sigma_{\mathrm{Leb}}}


\NewDocumentCommand{\Lie}{m}{\sL_{#1}}

\NewDocumentCommand{\ZygSymb}{}{\mathscr{C}}

\NewDocumentCommand{\Zyg}{m o}{\IfNoValueTF{#2}{\ZygSymb^{#1}}{\ZygSymb^{#1}(#2) }}
\NewDocumentCommand{\ZygX}{m m o}{\IfNoValueTF{#3}{\ZygSymb^{#2}_{#1}}{\ZygSymb^{#2}_{#1}(#3) }}

\NewDocumentCommand{\CSpace}{m o}{\IfNoValueTF{#2}{C(#1)}{C(#1;#2)}}

\NewDocumentCommand{\CjSpace}{m o o}{\IfNoValueTF{#2}{C^{#1}}{ \IfNoValueTF{#3}{ C^{#1}(#2)}{C^{#1}(#2;#3) } }  }

\NewDocumentCommand{\CXjSpace}{m m o}{\IfNoValueTF{#3}{C^{#2}_{#1}}{ C^{#2}_{#1}(#3) } }

\NewDocumentCommand{\HSpace}{m m o o}{\IfNoValueTF{#3}{C^{#1,#2}}{ \IfNoValueTF{#4} {C^{#1,#2}(#3)} {C^{#1,#2}(#3;#4)} }}

\NewDocumentCommand{\HXSpace}{m m m o}{\IfNoValueTF{#4}{C_{#1}^{#2,#3}}{  {C_{#1}^{#2,#3}(#4)}  }}

\NewDocumentCommand{\ZygSpace}{m o o}{\IfNoValueTF{#2}{\ZygSymb^{#1}}{ \IfNoValueTF{#3} { \ZygSymb^{#1}(#2) }{\ZygSymb^{#1}(#2;#3) } } }

\NewDocumentCommand{\ZygXSpace}{m m o}{\IfNoValueTF{#3}{\ZygSymb^{#2}_{#1}}{\ZygSymb^{#2}_{#1}(#3) }}

\NewDocumentCommand{\Norm}{m o}{\IfNoValueTF{#2}{\| #1\|}{\|#1\|_{#2} }}
\NewDocumentCommand{\BNorm}{m o}{\IfNoValueTF{#2}{\left\| #1\right\|}{\left\|#1\right\|_{#2} }}

\NewDocumentCommand{\CjNorm}{m m o o}{ \IfNoValueTF{#3}{ \Norm{#1}[\CjSpace{#2}]} { \IfNoValueTF{#4}{\Norm{#1}[\CjSpace{#2}[#3]]} {\Norm{#1}[\CjSpace{#2}[#3][#4]]}  }  }

\NewDocumentCommand{\CNorm}{m m}{\Norm{#1}[\CSpace{#2}]}

\NewDocumentCommand{\BCNorm}{m m}{\BNorm{#1}[\CSpace{#2}]}

\NewDocumentCommand{\CXjNorm}{m m m o}{\Norm{#1}[
\IfNoValueTF{#4}
{\CXjSpace{#2}{#3}}
{\CXjSpace{#2}{#3}[#4]}
]}

\NewDocumentCommand{\BCXjNorm}{m m m o}{\BNorm{#1}[
\IfNoValueTF{#4}
{\CXjSpace{#2}{#3}}
{\CXjSpace{#2}{#3}[#4]}
]}


\NewDocumentCommand{\BCjNorm}{m m o}{ \IfNoValueTF{#3}{ \BNorm{#1}[C^{#2}]} { \BNorm{#1}[C^{#2}(#3)]  }  }

\NewDocumentCommand{\HNorm}{m m m o o}{ \IfNoValueTF{#4}{ \Norm{#1}[\HSpace{#2}{#3}]} {
\IfNoValueTF{#5}
{\Norm{#1}[\HSpace{#2}{#3}[#4]]}
{\Norm{#1}[\HSpace{#2}{#3}[#4][#5]] }
}  }

\NewDocumentCommand{\HXNorm}{m m m m o}{ \IfNoValueTF{#5}{ \Norm{#1}[\HXSpace{#2}{#3}{#4}]} {
{\Norm{#1}[\HXSpace{#2}{#3}{#4}[#5]]}
}  }

\NewDocumentCommand{\BHXNorm}{m m m m o}{ \IfNoValueTF{#5}{ \BNorm{#1}[\HXSpace{#2}{#3}{#4}]} {
{\BNorm{#1}[\HXSpace{#2}{#3}{#4}[#5]]}
}  }

\NewDocumentCommand{\ZygNorm}{m m o o}{ \IfNoValueTF{#3}{ \Norm{#1}[\ZygSpace{#2}]} {
\IfNoValueTF{#4}
{\Norm{#1}[\ZygSpace{#2}[#3]]}
{\Norm{#1}[\ZygSpace{#2}[#3][#4]]}
}  }

\NewDocumentCommand{\ZygXNorm}{m m m o}{\Norm{#1}[
\IfNoValueTF{#4}
{\ZygXSpace{#2}{#3}}
{\ZygXSpace{#2}{#3}[#4]}
]}

\NewDocumentCommand{\BZygXNorm}{m m m o}{\BNorm{#1}[
\IfNoValueTF{#4}
{\ZygXSpace{#2}{#3}}
{\ZygXSpace{#2}{#3}[#4]}
]}

\NewDocumentCommand{\ComegaSpace}{m o o}{\IfNoValueTF{#2}{\CjSpace{\omega,#1}}{
\IfNoValueTF{#3}
{\CjSpace{\omega,#1}[#2]}
{\CjSpace{\omega,#1}[#2][#3]}
}
}

\NewDocumentCommand{\CXomegaSpace}{m m o o}{\IfNoValueTF{#3}{\CXjSpace{#1}{\omega,#2}}{
\IfNoValueTF{#4}
{\CXjSpace{#1}{\omega,#2}[#3]}
{\CXjSpace{#1}{\omega,#2}[#3][#4]}
}
}

\NewDocumentCommand{\ComegaNorm}{m m o o}{\IfNoValueTF{#3}{ \Norm{#1}[\ComegaSpace{#2}] }
{
\IfNoValueTF{#4}
{\Norm{#1}[\ComegaSpace{#2}[#3]] }
{\Norm{#1}[\ComegaSpace{#2}[#3][#4]] }
}
}

\NewDocumentCommand{\CXomegaNorm}{m m m o o}{\IfNoValueTF{#4}{ \Norm{#1}[\CXomegaSpace{#2}{#3}] }
{
\IfNoValueTF{#5}
{\Norm{#1}[\CXomegaSpace{#2}{#3}[#4]] }
{\Norm{#1}[\CXomegaSpace{#2}{#3}[#4][#5]] }
}
}


\NewDocumentCommand{\diff}{o m}{\IfNoValueTF{#1}{\frac{\partial}{\partial #2}}{\frac{\partial^{#1}}{\partial #2^{#1}} }}

\NewDocumentCommand{\dt}{o}{\IfNoValueTF{#1}{\diff{t}}{\diff[#1]{t} }}

\NewDocumentCommand{\Zygad}{m}{\{ #1\}}

\NewDocumentCommand{\Zygmad}{m}{\Zygad{#1,-1}}
\NewDocumentCommand{\ZygEad}{m}{\Zygad{#1{:}\: {\mathrm{E}}}}

\NewDocumentCommand{\ZeroE}{}{0{:}\mathrm{E}}

\NewDocumentCommand{\Had}{m}{\langle #1\rangle}
\NewDocumentCommand{\Hmad}{m m}{\Had{ #1, -1, #2}}
\NewDocumentCommand{\HEad}{m}{\Had{#1{:}\: \mathrm{E}}}

\NewDocumentCommand{\DiffOp}{m}{\Delta_{#1}}

\NewDocumentCommand{\InnerProduct}{m m}{\left\langle #1, #2\right\rangle}

\NewDocumentCommand{\Real}{}{\mathrm{Re}}
\NewDocumentCommand{\Imag}{}{\mathrm{Im}}

\newtheorem{thm}{Theorem}[section]
\newtheorem{cor}[thm]{Corollary}
\newtheorem{prop}[thm]{Proposition}
\newtheorem{lemma}[thm]{Lemma}
\newtheorem{conj}[thm]{Conjecture}
\newtheorem{prob}[thm]{Problem}

\theoremstyle{remark}
\newtheorem{rmk}[thm]{Remark}

\theoremstyle{definition}
\newtheorem{defn}[thm]{Definition}

\theoremstyle{definition}
\newtheorem{assumption}[thm]{Assumption}

\theoremstyle{remark}
\newtheorem{example}[thm]{Example}

\numberwithin{equation}{section}

\title{Coordinates Adapted to Vector Fields: Canonical Coordinates}
\author{Betsy Stovall and Brian Street}
\date{}

\maketitle

\begin{abstract}

Given a finite collection of $C^1$ vector fields on a $C^2$ manifold which span the tangent space at every point,
we consider the question of when there is locally a coordinate system in which these
vector fields have a higher level of smoothness.
For example, when is there a coordinate system in which the vector fields are smooth, or real analytic,
or have Zygmund regularity of some finite order?
We address this question in a quantitative way, which strengthens and generalizes
previous works on the quantitative theory of sub-Riemannian (aka Carnot-Carath\'eodory)
geometry due to Nagel, Stein, and Wainger, Tao and Wright, the second author, and others.
Furthermore, we provide a diffeomorphism invariant version of these theories.
This is the first part in a three part series of papers.  
In this paper, we study a particular coordinate system 
adapted to a collection of vector fields (sometimes called canonical coordinates)
and present results related to the above questions which are not quite sharp; these
results form the backbone of the series.
The methods of this paper are based on techniques from ODEs.
In the second paper, we use additional methods from PDEs to obtain the sharp results.
In the third paper, we prove  results concerning real analyticity and
use methods from ODEs.
\end{abstract}

\tableofcontents

\section{Introduction}

Let $X_1,\ldots, X_q$ be $C^1$ vector fields on a $C^2$ manifold $M$, which span the tangent space at every point of $M$.
For $s>0$ let $\ZygSpace{s}$ denote the Zygmund space of order $s$ (see \cref{Section::FuncSpace::Euclid}),
let $\ZygSpace{\infty}$ denote the space of smooth functions, and let $\ZygSpace{\omega}$ denote the space
of real analytic functions.
In this three part series of papers, we investigate the following closely related questions for $s\in (1,\infty]\cup\{\omega\}$:\footnote{We define $\infty+1=\infty+2=\infty$ and $\omega+1=\omega+2=\omega$.}
\begin{enumerate}[(i)]
    \item\label{Item::Intro::LocalQual} When is there a coordinate system near a fixed point $x_0\in M$ such that the vector fields $X_1,\ldots, X_q$
    are $\Zyg{s+1}$ in this coordinate system?
    \item\label{Item::Intro::GlobalQual} When is there a $\Zyg{s+2}$ manifold structure on $M$, compatible with its $C^2$ structure, such that $X_1,\ldots, X_q$
    are $\Zyg{s+1}$ with respect to this structure?  When such a structure exists, we will see it is unique.
    \item\label{Item::Intro::Charts} When there is a coordinate system as in \cref{Item::Intro::LocalQual}, how can we pick
    it so that $X_1,\ldots, X_q$ are ``normalized'' in this coordinate system in a quantitative way which
    is useful for applying techniques from analysis?
\end{enumerate}
We present necessary and sufficient, coordinate free, conditions for \cref{Item::Intro::LocalQual,Item::Intro::GlobalQual}
and, under these conditions, give a quantitative answer to \cref{Item::Intro::Charts}.
See \cref{Section::Overview} for an overview of the results of this series.
The outline of this series is as follows:
\begin{enumerate}[(I)]
\item In this paper, we study a particular explicit coordinate system adapted to a collection of vector fields.
This coordinate system is sometimes known (at least in the setting of Lie groups) as canonical coordinates of the first kind.
This builds on previous work of Nagel, Stein, and Wainger \cite{NagelSteinWaingerBallsAndMetrics}, Tao and Wright \cite[Section 4]{TaoWrightLpImproving}, and the second author \cite{S}.  To study these canonical coordinates, we use methods from ODEs.
Unfortunately, the results given by these methods are one derivative short of being optimal (see \cref{Rmk::Results::LackOfOptimal}).

\item In the second paper \cite{StreetII}, we obtain the optimal results (in terms of Zygmund spaces) by introducing a new (implicitly defined) coordinate system.
The second paper takes as a starting point the main result of this paper, and then uses methods from PDEs
to obtain the sharp results.  These PDE methods were inspired by, and are closely related to, Malgrange's celebrated
proof of the Newlander-Nirenberg theorem \cite{MalgrangeSurLIntegbrabilite}.

\item While the second paper obtains optimal regularity in terms of Zgymund spaces, the methods there
are not applicable to the real analytic setting.  In the third paper \cite{StreetIII}, we  return to canonical coordinates and methods from ODEs to obtain
results regarding real analyticity.   The third paper takes the main results of this paper as its starting point.
\end{enumerate}

To help explain the sorts of questions we investigate, we consider a trivial example.
\begin{example}\label{Example::Intro::OneV}
Let $X$ be a $C^1$ vector field on a $C^2$ manifold $\fM$ with $X(x_0)\ne 0$ for some $x_0\in \fM$.
Let $M$ be the integral curve of $X$ passing through $x_0$.  It is well known that there is a unique $C^2$ manifold
structure on $M$ which sees $M\hookrightarrow \fM$ as a $C^2$ injective immersion (see \cref{Prop::ResQual::Mmanif});
$X$ spans the tangent space to $M$ at every point.  Set $\Phi(t):=e^{tX}x_0$ and let $I\subseteq \R$ be a maximal
open interval containing $0$ such that $\Phi$ is defined on $I$ and $\Phi:I\rightarrow M$ is injective.
It is easy to see that $\Phi\big|_I$ is a $C^2$ diffeomorphism onto its image, and therefore $\Phi$
defines a coordinate chart on $M$ near $x_0$.  In this coordinate system $X$ equals $\diff{t}$; more precisely,
$\Phi^{*}X=\diff{t}$.  Thus, we have not only picked a coordinate chart on $M$ in which $X$ is smooth,
but we have also chosen it so that $X$ is ``normalized'' to be $\diff{t}$.
\end{example}

It is straightforward to generalize \cref{Example::Intro::OneV} to a finite collection of vector fields, so long
as the vector fields are assumed to commute.  The purpose of this series of papers
is to consider similar results when the vector fields are not assumed to commute; in which case it is not always
possible to pick a coordinate system in which the vector fields are smooth.  Indeed, we present necessary
and sufficient conditions for when one can pick a coordinate system giving the vector fields a desired level
of regularity.

The coordinate charts developed in this series can be viewed as scaling maps in a wide variety of problems; this is described in more detail
in \cref{Section::Scale}.
Seen in this perspective, these results are the latest, most general, and sharpest in a series
of papers on the quantitative theory of sub-Riemannian (or Carnot-Carath\'eodory) geometry.
This started with the foundational work of Nagel, Stein, and Wainger \cite{NagelSteinWaingerBallsAndMetrics}
and the closely related work of C.\ Fefferman and S\'anchez-Calle \cite{FeffermanSanchezCalleFundamentalSoltuions}.
Tao and Wright \cite[Section 4]{TaoWrightLpImproving} furthered the results of Nagel, Stein, and Wainger
and offered a new proof based on methods from ODEs (see \cref{Section::Proofs::ODE} for a detailed discussion of
the primary ODE they studied).  The second author unified these two approaches to prove more general results
in \cite{S}.
This series of papers can be seen as strengthening and generalizing these theories and casting them in a way which is
completely ``coordinate free'' in the sense that all of our assumptions and estimates
are invariant under arbitrary $C^2$ diffeomorpisms.
The most basic version of this scaling perspective can be seen in \cref{Example::Intro::OneV},
as the next example shows.

\begin{example}
We take the setting of \cref{Example::Intro::OneV} with $\fM=\R$, $x_0=0$, $X=\delta \diff{x}$,
for some fixed $\delta>0$.  In this case $\Phi(t)=\delta t$; thus the pullback via $\Phi$
is the usual Euclidean dilation of vector fields.
We can therefore think of \cref{Example::Intro::OneV} as a generalization of the usual dilation
maps on $\R$.
\end{example}

As described above, the main results of this series have two facets:
\begin{itemize}
\item They provide a coordinate system in which given $C^1$ vector fields have an optimal degree of smoothness.
\item They provide a coordinate system in which given vector fields are normalized in a way which is useful for applying techniques from analysis.
\end{itemize}
These two facets, along with some applications, are described in more detail in \cref{Section::Scale}.

Despite the fact that the results in the second paper of this series are sharp in terms of regularity, and the
results in this paper are one-derivative off from being optimal, we believe the methods and results of this
paper have several advantages over those in the second paper.  Some of these advantages are:
\begin{enumerate}[(a)]
\item The coordinate system defined in this paper is explicit, while it is only defined implicitly in the second paper.
\item The proofs in this paper are simpler.  Indeed, the second paper requires all of the results of this paper, plus
additional methods from PDEs.
\item Despite having a simpler proof, the main results of this paper are still useful in many applications.
Indeed, they are stronger, sharper, and more general than the previous works on this subject
\cite{NagelSteinWaingerBallsAndMetrics,TaoWrightLpImproving,S} which have had many applications; see \cref{Section::Scale} for further details.  However, they are not strong enough
to obtain some of the most interesting consequences of the results in the second paper; for example, the results stated in \cref{Section::Series::Quant}.
The PDE methods will also be necessary for future work of the second author in the complex setting; see \cref{Section::Scale::SCV}.
\item Because the methods of this paper are based on ODEs, as opposed to the PDEs in the second paper, they
are in some ways more robust, and will likely be easier to adapt to other settings.  For example, in the third paper of the series we see that these ODE methods can be used
to study the real analytic setting.
\end{enumerate}

\noindent\textbf{Acknowledgements:}
We thank the referee whose detailed comments improved the exposition.
Stovall was partially supported by National Science Foundation Grant No.\ 1600458.
Street was partially supported by National Science Foundation Grant Nos.\ 1401671 and 1764265.
This material is partially based upon work supported by the National Science Foundation
under Grant No.\ 1440140, while the authors were in residence at the Mathematical Sciences Research Institute in Berkeley, California, during the spring semester of 2017.



\section{Function Spaces}\label{Section::FuncSpace}

Before we can state any results, we need to introduce the function spaces we use.
We make a distinction between function spaces on subsets of $\R^n$ and function
spaces on a $C^2$ manifold $M$.  On $\R^n$, we have access to the standard coordinate system (and its induced smooth structure)
and we can define all of the usual function spaces and their norms in terms of this coordinate system.
On $M$, we do not have access to any such natural coordinates, and it does not make
sense to talk about, for example, $C^\infty$ functions on $M$; as this would depend
on a choice of coordinate system or smooth structure.  However, if we are given a finite collection
of vector fields on $M$, it does make sense to talk about functions which 
are $C^\infty$ with respect to these vector fields, and this is how we shall proceed.

    \subsection{Function Spaces on Euclidean Space}\label{Section::FuncSpace::Euclid}

Let $\Omega\subset \R^n$ be a bounded, connected, open set (we will almost always
be considering the case when $\Omega$ is a ball in $\R^n$).  We have the following
classical Banach spaces of functions on $\Omega$:
\begin{equation*}
    \CSpace{\Omega}=\CjSpace{0}[\Omega]:=\{f:\Omega\rightarrow \C \:\big|\: f\text{ is continuous and bounded}\}, \quad \CNorm{f}{\Omega}=\CjNorm{f}{0}[\Omega]:=\sup_{x\in \Omega} |f(x)|.
\end{equation*}
For $m\in \N$ (throughout the paper we take the convention $0\in \N$),
\begin{equation*}
    \CjSpace{m}[\Omega]:=\{ f\in \CjSpace{0}[\Omega] \:\big|\: \partial_x^{\alpha} f\in \CjSpace{0}[\Omega],\forall |\alpha|\leq m\}, \quad \CjNorm{f}{m}[\Omega]:=\sum_{|\alpha|\leq m} \CjNorm{\partial_x^{\alpha} f}{0}[\Omega].
\end{equation*}
Next we define the classical Lipschitz-H\"older spaces.  For $s \in [0,1]$,
\begin{equation}\label{Eqn::FuncSpaceRn::DefnHolder}
    \HNorm{f}{0}{s}[\Omega]:=\CjNorm{f}{0}[\Omega]+\sup_{\substack{x,y\in \Omega\\ x\ne y }} |x-y|^{-s} |f(x)-f(y)|, \quad \HSpace{0}{s}[\Omega]:= \{ f\in \CjSpace{0}[\Omega] : \HNorm{f}{0}{s}[\Omega]<\infty\}.
\end{equation}
For $m\in \N$, $s\in [0,1]$,
\begin{equation*}
    \HNorm{f}{m}{s}[\Omega]:=\sum_{|\alpha|\leq m} \HNorm{\partial_x^{\alpha} f}{0}{s}[\Omega], \quad \HSpace{m}{s}[\Omega]:=\{f\in \CjSpace{m}[\Omega] : \HNorm{f}{m}{s}[\Omega]<\infty\}.
\end{equation*}
Next, we turn to the Zygmund-H\"older spaces.  Given $h\in \R^n$ define
$\Omega_h:=\{x\in \R^n : x,x+h,x+2h\in \Omega\}$.
For $s\in (0,1]$ set
\begin{gather*}
    \ZygNorm{f}{s}[\Omega]:=\HNorm{f}{0}{s/2}[\Omega]+\sup_{\substack{0\ne h\in \R^n \\ x\in \Omega_h}} |h|^{-s} \left|f(x+2h)-2f(x+h)+f(x)\right|,
    \\\ZygSpace{s}[\Omega]:=\{f\in \CjSpace{0}[\Omega] : \ZygNorm{f}{s}[\Omega]<\infty\}.
\end{gather*}
For $m\in \N$, $s\in (0,1]$, set
\begin{equation*}
    \ZygNorm{f}{m+s}[\Omega]:=\sum_{|\alpha|\leq m} \ZygNorm{\partial_x^{\alpha} f}{s}[\Omega], \quad \ZygSpace{s+m}[\Omega]:=\{ f\in \CjSpace{m}[\Omega] : \ZygNorm{f}{s+m}[\Omega]<\infty\}.
\end{equation*}
We set,
\begin{equation*}
    \ZygSpace{\infty}[\Omega]:=\bigcap_{s>0} \ZygSpace{s}[\Omega], \quad \CjSpace{\infty}[\Omega]:= \bigcap_{m\in \N} \CjSpace{m}[\Omega].
\end{equation*}
When $\Omega$ is a ball, $\ZygSpace{\infty}[\Omega]=\CjSpace{\infty}[\Omega]$.

\begin{rmk}\label{Rmk::FuncSpace::NonstandardNorm}
The term $\HNorm{f}{0}{s/2}[\Omega]$ in the definition of $\ZygNorm{f}{s}[\Omega]$ is somewhat unusual, and
in the literature is usually replaced by $\CjNorm{f}{0}[\Omega]$.  As is well-known, if $\Omega$ is a bounded Lipschitz domain, these two options yield
equivalent\footnote{This equivalence  follows easily from \cite[Theorem 1.118 (i)]{TriebelTheoryOfFunctionSpacesIII}.  We will usually use these norms in the case when $\Omega$ is a ball in Euclidean space, and is therefore
a bounded Lipschitz domain.} 
(but not equal) norms--and therefore the space $\ZygSpace{s}[\Omega]$ is the usual Zygmund-H\"older
space of order $s$.  However, the constants involved in this equivalence of norms depend on the size of $\Omega$,
and the above choice is more convenient for our purposes.  For an example of the convenience offered
by this choice of norm, see \cref{Rmk::FuncSpaceRev::NonstandardNorm}.
\end{rmk}

Finally, we turn to spaces of real analytic functions.  Given $r>0$ we define:
\begin{equation*}
    \ComegaNorm{f}{r}[\Omega]:=\sum_{\alpha\in \N^n} \frac{\CNorm{\partial_x^{\alpha} f}{\Omega}}{\alpha!} r^{|\alpha|},\quad \ComegaSpace{r}[\Omega]:=\{f\in \CjSpace{\infty}[\Omega] : \ComegaNorm{f}{r}[\Omega]<\infty\}.
\end{equation*}
We set $\CjSpace{\omega}[\Omega]:=\bigcup_{r>0} \ComegaSpace{r}[\Omega]$.
For notational convenience, we set $\ZygSpace{\omega}[\Omega]:=\CjSpace{\omega}[\Omega]$.

Throughout the paper, if we say $\CjNorm{f}{m}[\Omega]<\infty$, it means that $f\in \CjSpace{m}[\Omega]$,
and similarly for any other function space.

For a Banach space $V$ we define the same spaces taking values in $V$ by the obvious modifications and
write $\CjSpace{m}[\Omega][V]$, $\HSpace{m}{s}[\Omega][V]$,  $\ZygSpace{m+s}[\Omega][V]$,  $\ComegaSpace{r}[\Omega][V]$, and $\CjSpace{\omega}[\Omega][V]$ to denote
these spaces.
When we have a vector field $X$ on $\Omega$, we identify $X$ with a function $X:\Omega\rightarrow \R^n$
by writing $X=\sum_{j=1}^n a_j(x) \diff{x_j}$ and treating $X$ as the function $X(x)=(a_1(x),\ldots, a_n(x))$.  Thus, it makes sense to consider norms like $\ZygNorm{X}{s}[\Omega][\R^n]$ and $\HNorm{X}{m}{s}[\Omega][\R^n]$.

    \subsection{Function Spaces on Manifolds}\label{Section::FuncSpace::Manif}

Let $X_1,\ldots, X_q$ be $C^1$ vector fields on a connected $C^2$ manifold $M$.
Define the Carnot-Carath\'eodory ball associated to $X_1,\ldots, X_q$, centered
at $x\in M$, of radius $\delta>0$, by
\begin{equation}\label{Eqn::FundSpaceM::DefnCCBall}
    \begin{split}
        B_X(x,\delta):=\Bigg\{
        y\in M\: \bigg|\: &\exists \gamma:[0,1]\rightarrow M, \gamma(0)=x, \gamma(1)=y,
        \gamma'(t)=\sum_{j=1}^q a_j(t) \delta X_j(\gamma(t)),
        \\&a_j\in L^\infty([0,1]), \BNorm{\sum_{j=1}^q|a_j|^2 }[L^\infty]<1
        \Bigg\},
    \end{split}
\end{equation}
and for $y\in M$, set
\begin{equation}\label{Eqn::FucsSpaceM::rho}
    \rho(x,y):=\inf\{\delta>0 : y\in B_X(x,\delta)\}.
\end{equation}

When $X_1,\ldots, X_q$ are smooth vector fields on a smooth connected manifold $M$, if the Lie algebra generated by $X_1,\ldots, X_q$ spans the tangent space at every point of $M$, $\rho$ is a metric on $M$--sometimes
known as a \textit{sub-Riemannian metric} or a \textit{Carnot-Carath\'eodory metric}.
In this case, the metric toplogy induced by $\rho$ is the same as the topology on $M$.
If the Lie algebra generated by $X_1,\ldots, X_q$ does not span
the tangent space at some point, then $\rho$ may or may not be a metric:  it is possible that $\rho(x,y)=\infty$
for some $x,y$.
If $\rho(x,y)=\infty$, we make the convention that $\rho(x,y)^{-s}=0$ for $s>0$ and $\rho(x,y)^{0}=1$.
In the nonsmooth setting, we will usually be considering the special case when $X_1,\ldots, X_q$ span the tangent space at every point of $M$, and in this case
$\rho$ is a metric, and the metric topology induced by $\rho$ is the same as the topology on $M$.

We use ordered multi-index notation: $X^{\alpha}$.  Here, $\alpha$ denotes a list of elements of $\{1,\ldots, q\}$
and $|\alpha|$ denotes the length of the list.  For example, $X^{(2,1,3,1)}=X_2X_1X_3X_1$ and $|(2,1,3,1)|=4$.

Associated to the vector fields $X_1,\ldots, X_q$, we have the following
Banach spaces of functions on $M$.
\begin{equation*}
    \CSpace{M}=\CXjSpace{X}{0}[M]:=\{f:M\rightarrow \C\:\big|\: f\text{ is continuous and bounded}\}, \quad
    \CNorm{f}{M}=\CXjNorm{f}{X}{0}[M]:=\sup_{x\in M} |f(x)|.
\end{equation*}
For $m\in \N$, we define
\begin{equation*}
    \CXjSpace{X}{m}[M]:=\{f\in \CSpace{M}\: \big|\: X^{\alpha} f\text{ exists and } X^\alpha f\in \CSpace{M},\forall |\alpha|\leq m\}, \quad \CXjNorm{f}{X}{m}[M]:=\sum_{|\alpha|\leq m} \CNorm{X^{\alpha} f}{M}.
\end{equation*}
For $s\in [0,1]$, we define the Lipschitz-H\"older space associated to $X$
by
\begin{equation*}
    \HXNorm{f}{X}{0}{s}[M]:=\CNorm{f}{M}+\sup_{\substack{x,y\in M \\ x\ne y}} \rho(x,y)^{-s} |f(x)-f(y)|, \quad \HXSpace{X}{0}{s}[M]:=\{f\in \CSpace{M} : \HXNorm{f}{X}{0}{s}[M]<\infty\}.
\end{equation*}
For $m\in \N$ and $s\in [0,1]$, set
\begin{equation*}
    \HXNorm{f}{X}{m}{s}[M]:=\sum_{|\alpha|\leq m} \HXNorm{X^{\alpha}f}{X}{0}{s}[M], \quad \HXSpace{X}{m}{s}[M]:=\{f\in \CXjSpace{X}{m}[M] : \HXNorm{f}{X}{m}{s}[M]<\infty\}.
\end{equation*}

We turn to the Zygmund-H\"older spaces.  For this, we use the H\"older spaces
$\HSpace{0}{s}[[a,b]]$ for a closed interval $[a,b]\subset \R$; $\HNorm{\cdot}{0}{s}[[a,b]]$
is defined via the formula \cref{Eqn::FuncSpaceRn::DefnHolder}.
Given $h>0$, $s\in (0,1)$ define
\begin{equation*}
    \sP_{X,s}^M(h):=\left\{ \gamma:[0,2h]\rightarrow M\:\bigg|\: \gamma'(t)=\sum_{j=1}^q d_j(t) X_j(\gamma(t)), d_j\in \HSpace{0}{s}[[0,2h]], \sum_{j=1}^q \HNorm{d_j}{0}{s}[[0,2h]]^2<1 \right\}.
\end{equation*}
For $s\in (0,1]$ set
\begin{equation*}
    \ZygXNorm{f}{X}{s}[M]:=\HXNorm{f}{X}{0}{s/2}[M]+\sup_{\substack{h>0 \\ \gamma\in \sP^M_{X,s/2 }(h) }}h^{-s}\left|f(\gamma(2h))-2f(\gamma(h))+f(\gamma(0))\right|,
\end{equation*}
and for $m\in \N$,
\begin{equation*}
    \ZygXNorm{f}{X}{m+s}[M]:=\sum_{|\alpha|\leq m} \ZygXNorm{X^{\alpha}f}{X}{s}[M],
\end{equation*}
and we set
\begin{equation*}
    \ZygXSpace{X}{s+m}[M]:=\{f\in \CXjSpace{X}{m}[M] : \ZygXNorm{f}{X}{m+s}[M]<\infty\}.
\end{equation*}
Set
\begin{equation*}
    \ZygXSpace{X}{\infty}[M]:=\bigcap_{s>0} \ZygXSpace{X}{s}[M]\text{ and } \CXjSpace{X}{\infty}[M]:=\bigcap_{m\in \N} \CXjSpace{X}{m}[M].
\end{equation*}
It is a consequence of \cref{Lemma::FuncSpaces::Inclu} that $\ZygXSpace{X}{\infty}[M]=\CXjSpace{X}{\infty}[M]$;
indeed, $\ZygXSpace{X}{\infty}[M]\subseteq \CXjSpace{X}{\infty}[M]$ is clear while the reverse containment
follows from \cref{Lemma::FuncSpaces::Inclu}.

We introduce the following counter-intuitive, but convenient, definitions.
\begin{defn}\label{Defn::FuncSpacesM::NegativeSpaces}
For $m<0$, $s\in [0,1]$, we define $\HXSpace{X}{m}{s}[M]:=\CSpace{M}$ with equality of norms.
For $s\in (-1,0]$, we define $\ZygXSpace{X}{s}[M]:=\HXSpace{X}{0}{(s+1)/2}[M]$, with equality of norms.
For $s\in (-\infty,-1]$, we define $\ZygXSpace{X}{s}[M]:=\CSpace{M}$ with equality of norms.
\end{defn}

Finally, for $r>0$ we introduce a space of functions which are ``real analytic with respect to $X$''.
\begin{equation*}
    \CXomegaNorm{f}{X}{r}[M]:=\sum_{m=0}^\infty \frac{r^m}{m!} \sum_{|\alpha|=m} \CNorm{X^{\alpha} f}{M}, \quad \CXomegaSpace{X}{r}[M]:=\{f\in \CXjSpace{X}{\infty}[M] : \CXomegaNorm{f}{X}{r}[M]<\infty\}.
\end{equation*}
This definition was introduced in greater generality by Nelson \cite{NelsonAnalyticVectors}.
We set $\CXjSpace{X}{\omega}[M]:=\bigcup_{r>0} \CXomegaSpace{X}{r}[M]$, and for notational convenience set
$\ZygXSpace{X}{\omega}[M]:=\CXjSpace{X}{\omega}[M]$.  We refer the reader to the third paper in the series for a more detailed
discussion of the spaces $\ComegaSpace{r}[\Omega]$ and $\CXomegaSpace{X}{r}[M]$.

Importantly, all of the above spaces are invariant under diffeomorphisms.  In fact, we have the following
result.

\begin{prop}\label{Prop::FuncSpacesM::DiffInv}
Let $N$ be another $C^2$ manifold, let $\Phi:M\rightarrow N$ be a $C^2$ diffeomorphism,
and let $\Phi_{*}X$ denote the list of vector fields $\Phi_* X_1,\ldots, \Phi_* X_q$.
Then the map $f\mapsto f\circ \Phi$ is an isometric isomorphism between the following Banach spaces:
$\CXjSpace{\Phi_* X}{m}[N]\rightarrow \CXjSpace{X}{m}[M]$, $\HXSpace{\Phi_* X}{m}{s}[N]\rightarrow \HXSpace{X}{m}{s}[M]$,  $\ZygXSpace{\Phi_* X}{s}[N]\rightarrow \ZygXSpace{X}{s}[M]$, and $\CXomegaSpace{\Phi_* X}{r}[N]\rightarrow \CXomegaSpace{X}{r}[M]$.
\end{prop}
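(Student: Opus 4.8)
The plan is to reduce the statement to the corresponding diffeomorphism-invariance of the Carnot--Carath\'eodory metric $\rho$ and of the differential operators $X^\alpha$, and then to observe that each norm in the list is built entirely out of these two ingredients. First I would record the basic naturality facts. If $\Phi:M\to N$ is a $C^2$ diffeomorphism and $Y$ is a $C^1$ vector field on $M$, then for $g\in C^1$ near a point one has $(\Phi_* Y)g = \big(Y(g\circ\Phi)\big)\circ\Phi^{-1}$, equivalently $Y(g\circ\Phi) = \big((\Phi_* Y)g\big)\circ\Phi$. Iterating this along an ordered multi-index $\alpha$ (a list of indices in $\{1,\dots,q\}$) gives the key identity
\begin{equation*}
X^\alpha(f\circ\Phi) = \big((\Phi_* X)^\alpha f\big)\circ\Phi,
\end{equation*}
valid whenever either side makes sense (so in particular $X^\alpha(f\circ\Phi)$ exists iff $(\Phi_* X)^\alpha f$ exists). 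Taking suprema and using that $\Phi$ is a bijection, this immediately yields $\CNorm{X^\alpha(f\circ\Phi)}{M} = \CNorm{(\Phi_* X)^\alpha f}{N}$, hence the isometric isomorphism $\CXjSpace{\Phi_* X}{m}[N]\to\CXjSpace{X}{m}[M]$ for every $m\in\N$, and consequently also for $m=\infty$ by intersecting over $m$.

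Next I would handle the metric. The admissible curves in the definition \cref{Eqn::FundSpaceM::DefnCCBall} of $B_X(x,\delta)$ are sub-unit curves for $\delta X_1,\dots,\delta X_q$; since $\Phi$ is a $C^2$ diffeomorphism, $\gamma$ is such a curve for the fields on $M$ if and only if $\Phi\circ\gamma$ is the corresponding curve for $\delta\,\Phi_* X_1,\dots,\delta\,\Phi_* X_q$ on $N$, with the same coefficient functions $a_j$ (this is just the chain rule $(\Phi\circ\gamma)'(t)=d\Phi_{\gamma(t)}\gamma'(t)$ together with $d\Phi_{\gamma(t)}X_j(\gamma(t))=(\Phi_* X_j)(\Phi(\gamma(t)))$). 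Hence $\Phi\big(B_X(x,\delta)\big)=B_{\Phi_* X}(\Phi(x),\delta)$, and taking the infimum over admissible $\delta$ gives
\begin{equation*}
\rho_{\Phi_* X}(\Phi(x),\Phi(y)) = \rho_X(x,y)\qquad\text{for all }x,y\in M,
\end{equation*}
including the convention $\rho=\infty$. The identical argument, now using the H\"older-regular coefficient curves in $\sP^M_{X,s/2}(h)$ and $\sP^N_{\Phi_* X,s/2}(h)$, shows $\gamma\mapsto\Phi\circ\gamma$ is a bijection between these two families preserving $h$ (the H\"older norms $\HNorm{d_j}{0}{s}[[0,2h]]$ are of the \emph{coefficient} functions $d_j$, which are untouched by $\Phi$).

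With these two naturality statements in hand, the remaining cases are formal. For the H\"older spaces: in
\begin{equation*}
\HXNorm{f\circ\Phi}{X}{0}{s}[M] = \CNorm{f\circ\Phi}{M} + \sup_{x\ne y}\rho_X(x,y)^{-s}\,|f(\Phi(x))-f(\Phi(y))|,
\end{equation*}
substitute $u=\Phi(x), v=\Phi(y)$ and use $\rho_X(x,y)=\rho_{\Phi_* X}(u,v)$ to get exactly $\HXNorm{f}{\Phi_* X}{0}{s}[N]$; combined with the operator identity for $X^\alpha$ this upgrades to all $m$ and all $s$ by summing over $|\alpha|\le m$. For the Zygmund spaces, replace $f(x+2h)-2f(x+h)+f(x)$ by $f(\gamma(2h))-2f(\gamma(h))+f(\gamma(0))$ and use the bijection of curve families above together with $h\mapsto h$; again sum over $|\alpha|\le m$. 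For the analytic-vector spaces $\CXomegaSpace{X}{r}$, the operator identity gives $\CNorm{X^\alpha(f\circ\Phi)}{M}=\CNorm{(\Phi_* X)^\alpha f}{N}$ termwise, so the series defining $\CXomegaNorm{f\circ\Phi}{X}{r}[M]$ and $\CXomegaNorm{f}{\Phi_* X}{r}[N]$ agree term by term. In each case the map $f\mapsto f\circ\Phi$ is linear, the displayed equalities show it is norm-preserving, and $g\mapsto g\circ\Phi^{-1}$ is a two-sided inverse, so it is an isometric isomorphism. I do not expect a genuine obstacle here; the only point requiring a little care is the bookkeeping that $X^\alpha(f\circ\Phi)$ \emph{exists} precisely when $(\Phi_*X)^\alpha f$ does (so that the domains of definition of the norms match), which follows by induction on $|\alpha|$ from the order-one case and the fact that $\Phi,\Phi^{-1}$ are $C^2$ (hence preserve $C^1$ regularity of the intermediate functions $X^\beta f$).
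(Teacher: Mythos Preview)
Your proposal is correct and is essentially the same approach as the paper's, which simply records that the result ``is immediate from the definitions''; you have just written out those definitions carefully and checked that each ingredient (the operators $X^\alpha$, the metric $\rho$, and the curve families $\sP^M_{X,s/2}(h)$) is natural under pushforward by a $C^2$ diffeomorphism. One small simplification: since the paper defines $Vf(x)=\frac{d}{dt}\big|_{t=0}f(e^{tV}x)$ via flows, and $\Phi$ conjugates flows ($e^{t\Phi_*X_j}=\Phi\circ e^{tX_j}\circ\Phi^{-1}$), the identity $X^\alpha(f\circ\Phi)=((\Phi_*X)^\alpha f)\circ\Phi$ and the matching of domains follow directly without needing any ambient $C^1$ regularity of the intermediate functions.
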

\begin{proof}This is immediate from the definitions.\end{proof}

\begin{rmk}
Some of the above definitions deserve some additional remarks.
\begin{itemize}
\item In \cref{Eqn::FundSpaceM::DefnCCBall}, $\gamma'(t)$ is defined as follows.
In the case that $M$ is an open subset $\Omega\subseteq \R^n$ and $\gamma:[a,b]\rightarrow \Omega$,
$\gamma'(t)=\sum_{j=1}^q a_j(t) X_j(\gamma(t))$ is defined to mean $\gamma(t)=\gamma(a)+\int_a^t \sum_{j}a_j(s) X_j(\gamma(s))\: ds$; note that this definition is local in $t$.
For an abstract $C^2$ manifold $M$, this is interpreted locally.  I.e., if $\gamma:[a,b]\rightarrow M$, we say $\gamma'(t)=\sum_{j=1}^q a_j(t) X_j(\gamma(t))$ if $\forall t_0\in [a,b]$,
there is an open neighborhood $N$ of $\gamma(t_0)$ and a $C^2$ diffeomorphism $\Psi:N\rightarrow \Omega$, where $\Omega\subseteq \R^n$ is open,
such that $(\Psi\circ \gamma)'(t) = \sum_{j=1}^{q} a_j(t) (\Psi_{*} X_j)(\Psi\circ \gamma(t))$ for $t$ near $t_0$ ($t\in [a,b]$).


\item When we write $V f$ for a $C^1$ vector field $V$ and $f:M\rightarrow \R$, we define this as $Vf(x):=\frac{d}{dt}\big|_{t=0} f(e^{t V} x)$.  When we say $Vf$ exists, it means that this derivative exists in the classical sense, $\forall x$.  If we have several $C^1$ vector fields $V_1, V_2, \ldots, V_L$, we define $V_1 V_2 \cdots V_L f:=V_1(V_2(\cdots V_L(f)))$ and to say that this exists means that at each stage the derivatives exist.


\end{itemize}
\end{rmk}

        \subsubsection{Beyond Manifolds}\label{Section::FuncSpace::BeyondManif}

For certain subsets of $M$ which are not themselves manifolds, we can still define the
above norms.  Indeed, let $X_1,\ldots, X_q$ be $C^1$ vector fields on a $C^2$ manifold $M$ and fix $\xi>0$.
In this setting, $B_X(x_0,\xi)$ might not be a manifold (though it sometimes is--see \cref{Prop::ResQual::Mmanif}).
$B_X(x_0,\xi)$ is a metric space, with the metric $\rho$.
For a function $f:B_X(x_0,\xi)\rightarrow \C$ and $x\in B_X(x_0,\xi)$, it makes sense to consider
$X_j f(x):=\frac{d}{dt}\big|_{t=0} f(e^{tX_j }x)$.
Using this, we can define the spaces
$\HXSpace{X}{m}{s}[B_X(x_0,\xi)]$, $\ZygXSpace{X}{s}[B_X(x_0,\xi)]$,
and $\CXomegaSpace{X}{r}[B_X(x_0,\xi)]$
and their corresponding norms, with the same formulas as above.

\section{Overview of the Series}\label{Section::Overview}

In this section, we present the main results of this three part series of papers;
though we will offer a more detailed presentation of these results in the later papers.
We separate the results into two parts:  the qualitative results (i.e.,
\cref{Item::Intro::LocalQual,Item::Intro::GlobalQual} from the introduction)
and the quantitative results (i.e., \cref{Item::Intro::Charts}).
The quantitative results are the most useful for applications, and the qualitative results
are simple consequences of the quantitative ones.  
The proofs will not be completed until the later papers--though in this paper
we prove a slightly weaker version of the quantitative results (see \cref{Section::Results}).
We begin by stating
the qualitative results, as they are easier to understand.

    \subsection{Qualitative Results}\label{Section::Series::Qual}

Let $X_1,\ldots, X_q$ be $C^1$ vector fields on a $C^2$ manifold $\fM$.
For $x,y\in \fM$, define $\rho(x,y)$ as in \cref{Eqn::FucsSpaceM::rho}.
Fix $x_0\in \fM$ and let $Z:=\{y\in \fM : \rho(x_0,y)<\infty\}$.
$\rho$ is a metric on $Z$, and we give $Z$ the topology induced by $\rho$ (this
is finer\footnote{See \cref{Lemma::ProofImmerse::FinerTop} for a proof
 that this topology is finer than the subspace topology.} than the topology as a subspace of $\fM$, and may be strictly finer).
Let $M\subseteq Z$ be a connected open subset of $Z$ containing $x_0$.  We
give $M$ the topology of a subspace of $Z$.
We begin with a classical result to set the stage.

\begin{prop}\label{Prop::ResQual::Mmanif}
Suppose $[X_i, X_j]=\sum_{k=1}^q c_{i,j}^k X_k$, where $c_{i,j}^k:M\rightarrow \R$ are locally bounded.
Then, there is a $C^2$ manifold structure on $M$ (compatible with its topology) such that:
\begin{itemize}
\item The inclusion $M\hookrightarrow \fM$ is a $C^2$ injective
immersion.
\item $X_1,\ldots, X_q$ are $C^1$ vector fields tangent to $M$.
\item $X_1,\ldots, X_q$ span the tangent space at every point of $M$.
\end{itemize}
Furthermore, this $C^2$ structure is unique in the sense that if $M$ is given another $C^2$ structure (compatible with its topology)
such that the inclusion map $M\hookrightarrow \fM$ is a $C^2$ injective immersion, then the identity map $M\rightarrow M$ is a $C^2$
diffeomorphmism between these two structures.
\end{prop}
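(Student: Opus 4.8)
The plan is to realize $M$ as a $\rho$-open subset of a single orbit of the family $X_1,\dots,X_q$ --- i.e.\ to run a $C^1$ version of Sussmann's orbit theorem --- and then to bootstrap the resulting $C^1$ structure to a $C^2$ one using that the $X_j$ span.

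\textbf{Step 1 (orbit structure).} Let $\sO$ be the orbit of $x_0$, the set of points reachable from $x_0$ by concatenating finitely many integral curves of $\pm X_1,\dots,\pm X_q$ inside $Z$. Since flows of $C^1$ vector fields are $C^1$ diffeomorphisms, the orbit theorem endows $\sO$ with a $C^1$ manifold structure whose topology is the $\rho$-topology and for which $\sO\hookrightarrow\fM$ is an injective immersion; concrete charts are inverses of composition-of-flows maps $\Phi_p(t_1,\dots,t_r):=e^{t_1X_{i_1}}\cdots e^{t_rX_{i_r}}(p)$, where $D_y:=\Span\{X_1(y),\dots,X_q(y)\}$, $r:=\dim D_{x_0}$, and $X_{i_1}(p),\dots,X_{i_r}(p)$ is a basis of $D_p$. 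The bracket hypothesis is used precisely to pin down tangent spaces: from $[X_j,X_k]=\sum_l c_{j,k}^l X_l$ and $\frac{d}{dt}(e^{-tX_j})_* X_k=(e^{-tX_j})_*[X_j,X_k]$, one finds that $(e^{-tX_j})_* X_1,\dots,(e^{-tX_j})_* X_q$, evaluated at any fixed point, evolve by right multiplication by an invertible fundamental matrix, so their span is constant; that is, the flows $e^{tX_j}$ preserve $D$, and hence so does every diffeomorphism generated by them. Consequently $T_p\sO=D_p$ for all $p\in\sO$, and in particular $\dim D_p\equiv r$ on $\sO$. A standard ball-box comparison of topologies shows $Z$ is the disjoint union of its orbits, each of which is $\rho$-open; being $\rho$-open and $\rho$-connected, $M$ lies in, and is $\rho$-open in, the orbit $\sO$, and we give $M$ the restricted $C^1$ structure. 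Then $X_1,\dots,X_q$ are tangent to $M$ and $\Span\{X_j(x)\}=D_x=T_xM$ at every $x\in M$.

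\textbf{Step 2 (bootstrap to $C^2$).} Fix $x\in M$ and a $C^2$ chart $(y,z)\in\R^r\times\R^{\dim\fM-r}$ of $\fM$ near $x$ in which $M$ is locally the graph $z=g(y)$, $g\in C^1$ (possible since $M$ is locally a $C^1$-embedded submanifold of $\fM$). Choose $X_{i_1},\dots,X_{i_r}$ forming a basis of $D$ near $x$ and write $X_{i_k}=(P_k,Q_k)$ with $P_k,Q_k\in C^1$ --- here we use that $\fM$ is $C^2$, so these components really are $C^1$. Tangency of $X_{i_k}$ to $M$ gives $Q_k(y,g(y))=Dg(y)\,P_k(y,g(y))$, and $[\,P_1\mid\cdots\mid P_r\,](y,g(y))$ is invertible (its columns express a basis of the $r$-dimensional $T_{(y,g(y))}M$), so $Dg(y)=[\,Q_1\mid\cdots\mid Q_r\,]\,[\,P_1\mid\cdots\mid P_r\,]^{-1}$ along the graph. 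Since $g\in C^1$ and $P_k,Q_k\in C^1$, the right-hand side is $C^1$ in $y$; hence $Dg\in C^1$, so $g\in C^2$ and $M$ is locally a $C^2$-embedded submanifold of $\fM$. Such graph charts are pairwise $C^2$-compatible (a subset of a $C^2$ manifold carries at most one compatible embedded-submanifold structure), so they form a $C^2$ atlas on $M$ inducing the $\rho$-topology and making $M\hookrightarrow\fM$ a $C^2$ injective immersion. In these charts the pullback of each $X_j$ is its first $r$ coordinates restricted to the graph, which is $C^1$; thus $X_1,\dots,X_q$ are $C^1$ vector fields on $M$, tangent to $M$, and they span $T_xM=D_x$ everywhere.

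\textbf{Step 3 (uniqueness), and the main obstacle.} Suppose $M$ is given a second $C^2$ structure, compatible with its ($\rho$-)topology, for which $M\hookrightarrow\fM$ is a $C^2$ injective immersion. Near each point an injective immersion is a $C^2$ embedding onto its image with the subspace topology --- which by compatibility coincides locally with the $\rho$-topology --- so both structures present $M$ locally as a $C^2$-embedded submanifold of $\fM$; these presentations agree, by uniqueness of embedded-submanifold structures on a fixed subset, so $\mathrm{id}\colon M\to M$ is a local, hence global, $C^2$ diffeomorphism. The step I expect to be the main obstacle is Step 1: carrying the orbit theorem and the flow computation through at this low regularity --- justifying $\frac{d}{dt}(e^{-tX_j})_* X_k=(e^{-tX_j})_*[X_j,X_k]$ for merely $C^1$ fields, handling the ensuing linear ODE when the $c_{j,k}^l$ are only locally bounded (one may take them Borel, e.g.\ by restricting attention to $Z$, on which some subcollection of the $X_j$ is a frame), and comparing the orbit topology with the Carnot-Carath\'eodory topology via ball-box estimates. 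Once the $C^1$ picture is in place, Steps 2 and 3 are routine.
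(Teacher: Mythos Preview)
Your approach is correct and genuinely different from the paper's. Both arguments share the same two-stage architecture --- first obtain a $C^1$ submanifold structure with $T_xM=\Span\{X_j(x)\}$, then bootstrap to $C^2$ --- but implement Stage~1 differently.

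The paper avoids the orbit theorem entirely. It first shows $\dim\Span\{X_j(x)\}$ is constant on $M$ by a Gr\"onwall argument applied not to the pushforwards $(e^{-tX_j})_*X_k$ but to the vector of all $l\times l$ subdeterminants of the matrix $(X_1|\cdots|X_q)$ in an ambient chart; these satisfy a linear differential inequality along any admissible path with coefficients built from the $c_{j,k}^l$ and the $C^1$ data of the $X_j$, sidestepping the measurability worry you flag. It then produces charts directly: near each $x$ it takes $X_{k_1},\dots,X_{k_n}$ independent, inverts the top $n\times n$ block in ambient coordinates to get $V_j=\sum_k h^{j,k}X_k$ with $V_j\equiv\partial/\partial u_j\bmod\{\partial/\partial u_{n+1},\dots,\partial/\partial u_D\}$, observes that $[V_j,V_k]$ lies simultaneously in $\Span\{V_l\}$ and in $\Span\{\partial/\partial u_{n+1},\dots\}$, hence vanishes on $B_X(x,\delta)$, and uses $\Phi_x(t)=e^{t_1V_1}\cdots e^{t_nV_n}x$ as the chart. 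The $C^2$ bootstrap is then the identity $d\Phi_x\cdot\partial/\partial u_j=V_j\circ\Phi_x$ with $V_j\in C^1$, which is the same mechanism as your graph argument in Step~2.

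What each buys: your route is more conceptual and reuses standard machinery, at the cost of having to justify Sussmann's theorem and the formula $\frac{d}{dt}(e^{-tX_j})_*X_k=(e^{-tX_j})_*[X_j,X_k]$ at $C^1$ regularity with only locally bounded $c_{j,k}^l$ (the paper in fact cites Rampazzo--Sussmann for closely related flow identities, so this is tractable). The paper's route is self-contained and slightly cleaner on the low-regularity bookkeeping --- its determinant ODE needs only that the $c_{j,k}^l$ be locally bounded along paths, with no measurability choice required --- but requires the commuting-frame construction. Your uniqueness argument in Step~3 matches the paper's exactly. One small correction: $Z$ is already $\rho$-connected, so it equals the single orbit $\sO$ rather than a disjoint union of orbits.
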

For a proof of \cref{Prop::ResQual::Mmanif} see \cref{Appendix::ProofOfImmerse}.
Henceforth, we assume the conditions of  \cref{Prop::ResQual::Mmanif} so that $M$
is a $C^2$ manifold and $X_1,\ldots, X_q$ are $C^1$ vector fields on $M$ which span the tangent space
at every point.
We write $n=\dim\Span \{X_1(x_0),\ldots, X_q(x_0)\}$, so that $\dim M=n$.

\begin{rmk}
If $X_1(x_0),\ldots, X_q(x_0)$ span $T_{x_0} \fM$, then $M$ is an open submanifold of $\fM$.  If $X_1,\ldots, X_q$ span the tangent space at every point of $\fM$ and $\fM$ is connected,
one may take $M=\fM$.
\end{rmk}

\begin{thm}[The Local Theorem]\label{Thm::Series::LocalQual}
For $s\in (1,\infty] \cup \{\omega\}$, the following three conditions are equivalent:
\begin{enumerate}[(i)]
\item\label{Item::Series::LocalQual::Chart} There is an open neighborhood $V\subseteq M$ of $x_0$ and a $C^2$ diffeomorphism $\Phi:U\rightarrow V$ where
$U\subseteq \R^n$ is open, such that $\Phi^{*}X_1,\ldots, \Phi^{*}X_q\in \ZygSpace{s+1}[U][\R^n]$.

\item\label{Item::Series::LocalQual::Basis}
Re-order the vector fields so that $X_1(x_0),\ldots, X_n(x_0)$ are linearly
independent.
There is an open neighborhood $V\subseteq M$ of $x_0$ such that:
    \begin{itemize}
    \item $[X_i,X_j]=\sum_{k=1}^n \ch_{i,j}^k X_k$, $1\leq i,j\leq n$, where $\ch_{i,j}^k\in \ZygXSpace{X}{s}[V]$.
    \item For $n+1\leq j\leq q$, $X_j=\sum_{k=1}^n b_j^k X_k$, where $b_j^k\in \ZygXSpace{X}{s+1}[V]$.
    \end{itemize}

\item\label{Item::Series::LocalQual::Spanning} There exists an open neighborhood $V\subseteq M$ of $x_0$ such that
$[X_i,X_j]=\sum_{k=1}^q c_{i,j}^k X_k$, $1\leq i,j\leq q$, where $c_{i,j}^k\in \ZygXSpace{X}{s}[V]$.
\end{enumerate}
\end{thm}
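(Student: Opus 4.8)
The plan is to prove the cyclic chain \cref{Item::Series::LocalQual::Chart}$\Rightarrow$\cref{Item::Series::LocalQual::Spanning}$\Rightarrow$\cref{Item::Series::LocalQual::Basis}$\Rightarrow$\cref{Item::Series::LocalQual::Chart}. The first two arrows are soft — they amount to bookkeeping with the function spaces of \cref{Section::FuncSpace} — while the last arrow is the substantive one and is precisely the content of the quantitative coordinate construction that the whole series is built around. For \cref{Item::Series::LocalQual::Chart}$\Rightarrow$\cref{Item::Series::LocalQual::Spanning}, write $Y_j:=\Phi^{*}X_j=\sum_a A_j^a\partial_a$ with $A_j^a\in\ZygSpace{s+1}[U]$ and compute $[Y_i,Y_j]=\sum_a\bigl(Y_iA_j^a-Y_jA_i^a\bigr)\partial_a$; each $Y_iA_j^a=\sum_bA_i^b\,\partial_bA_j^a$ lies in $\ZygSpace{s}[U]$ because $\partial_bA_j^a\in\ZygSpace{s}[U]$, $A_i^b\in\ZygSpace{s+1}[U]\subseteq\ZygSpace{s}[U]$ by \cref{Lemma::FuncSpaces::Inclu}, and $\ZygSpace{s}$ is a Banach algebra on the bounded set $U$. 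Reordering so that $Y_1(u_0),\dots,Y_n(u_0)$ are linearly independent ($u_0:=\Phi^{-1}(x_0)$), the $n\times n$ coefficient matrix of $Y_1,\dots,Y_n$ is invertible near $u_0$ with $\ZygSpace{s+1}$ entries, hence so is its inverse (stability of $\ZygSpace{s+1}$ under composition with smooth maps); expanding each $\partial_a$ in the local frame $Y_1,\dots,Y_n$ then gives $[Y_i,Y_j]=\sum_{k=1}^q c_{i,j}^kY_k$ on a smaller ball $U'\ni u_0$ with $c_{i,j}^k\in\ZygSpace{s}[U']$ (put $c_{i,j}^k\equiv 0$ for $k>n$). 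Since the $Y_j$ span near $u_0$, where $\rho$ is bi-Lipschitz to the Euclidean metric, the comparison between Euclidean and $X$-adapted Zygmund spaces identifies each $c_{i,j}^k$ as an element of $\ZygXSpace{Y}{s}[U']$, and the diffeomorphism invariance \cref{Prop::FuncSpacesM::DiffInv} transports everything to $V:=\Phi(U')$, yielding $[X_i,X_j]=\sum_k(c_{i,j}^k\circ\Phi^{-1})X_k$ with $c_{i,j}^k\circ\Phi^{-1}\in\ZygXSpace{X}{s}[V]$.

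For \cref{Item::Series::LocalQual::Spanning}$\Rightarrow$\cref{Item::Series::LocalQual::Basis}, reorder so that $X_1(x_0),\dots,X_n(x_0)$ are independent; shrinking $V$ makes $X_1,\dots,X_n$ a $C^1$ frame, so for $n<j\le q$ one may write $X_j=\sum_{k=1}^n b_j^kX_k$ with, a priori, $b_j^k\in\CXjSpace{X}{1}[V]$ — in particular $b_j^k\in\ZygXSpace{X}{\sigma}[V]$ for every $\sigma\in(0,1)$ — obtained by inverting the $C^1$ frame matrix. Equating coefficients in $[X_i,X_j]=\bigl[X_i,\sum_k b_j^kX_k\bigr]$ for $1\le i\le n<j\le q$, using uniqueness of frame expansions together with \cref{Item::Series::LocalQual::Spanning}, yields equations $X_ib_j^l=F_{i,j}^l(b)$ where each $F_{i,j}^l$ is a polynomial in the entries of $b$ with coefficients in $\ZygXSpace{X}{s}[V]$. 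Now bootstrap: if $b\in\ZygXSpace{X}{t}[V]$ with $0<t\le s$, then $F_{i,j}^l(b)\in\ZygXSpace{X}{t}[V]$ by the algebra and module properties of these spaces, hence $X_ib_j^l\in\ZygXSpace{X}{t}[V]$ for all $i$, which forces $b_j^l\in\ZygXSpace{X}{t+1}[V]$ via the characterization $\ZygXSpace{X}{t+1}=\{f\in\CXjSpace{X}{1}:X_if\in\ZygXSpace{X}{t}\ \forall i\}$; iterating $t\mapsto\min(s,t)+1$ from a small positive $t$ reaches $t=s+1$ in finitely many steps, so $b_j^k\in\ZygXSpace{X}{s+1}[V]$. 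Substituting back, $[X_i,X_j]=\sum_{k=1}^n\hat c_{i,j}^kX_k$ for $1\le i,j\le n$, where $\hat c_{i,j}^k$ is a polynomial in the $c$'s (in $\ZygXSpace{X}{s}$) and the $b$'s (in $\ZygXSpace{X}{s+1}\subseteq\ZygXSpace{X}{s}$), hence $\hat c_{i,j}^k\in\ZygXSpace{X}{s}[V]$. For $s=\infty$ the bootstrap never terminates and gives $b\in\ZygXSpace{X}{\infty}$; for $s=\omega$ one runs the same scheme while tracking the analytic norms of \cref{Section::FuncSpace}, as carried out in detail in \cite{StreetIII}.

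Finally, \cref{Item::Series::LocalQual::Basis}$\Rightarrow$\cref{Item::Series::LocalQual::Chart} is the hard direction, and proving it is the real work of the series: one constructs the canonical coordinate chart — roughly $\Phi(u)=e^{u_1X_1+\cdots+u_nX_n}x_0$, the exponential of the linear combination of the frame vector fields — and must show $\Phi^{*}X_1,\dots,\Phi^{*}X_q\in\ZygSpace{s+1}[U][\R^n]$. This is the main obstacle. It is also the one place where this paper falls short of optimal: the ODE-based estimates developed here only produce a chart with $\Phi^{*}X_j\in\ZygSpace{s}[U][\R^n]$, and upgrading to the sharp exponent $s+1$ requires the implicitly defined coordinate system and PDE methods of the second paper \cite{StreetII} (the real-analytic case being recovered instead by the ODE arguments of \cite{StreetIII}). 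Granting the relevant quantitative statement, hypothesis \cref{Item::Series::LocalQual::Basis} supplies exactly the coordinate-free input that the construction requires, which closes the cycle.
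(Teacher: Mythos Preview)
Your proposal is correct and matches the strategy of the series: the implications \cref{Item::Series::LocalQual::Chart}$\Rightarrow$\cref{Item::Series::LocalQual::Spanning}$\Rightarrow$\cref{Item::Series::LocalQual::Basis} are soft function-space bookkeeping (the paper carries out the quantitative analogue of the second arrow via wedge-product ratios and Cramer's rule in \cref{Lemma::PfLD::NormsOfWedges,Lemma::PfLD::Boundbt}, which is the same bootstrap $X_ib=F(b)$ that you describe, just written in a particular parametrization), while \cref{Item::Series::LocalQual::Basis}$\Rightarrow$\cref{Item::Series::LocalQual::Chart} is exactly the Quantitative Theorem. Note that the present paper does not prove \cref{Thm::Series::LocalQual} in full---the sharp $\ZygSpace{s+1}$ estimate needed for the hard direction is established only in \cite{StreetII} (and \cite{StreetIII} for $s=\omega$), precisely as you indicate.
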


\begin{rmk}
\cref{Item::Series::LocalQual::Basis} and \cref{Item::Series::LocalQual::Spanning} of \cref{Thm::Series::LocalQual}
are similar but have slightly different advantages.
In \cref{Item::Series::LocalQual::Basis}, because $X_1,\ldots, X_n$ form a basis for the tangent space of $M$ near $x_0$, the functions
$\ch_{i,j}^k$ and $b_j^k$
are uniquely determined (so long as $V$ is
chosen sufficiently small), and
one can directly check to see if \cref{Item::Series::LocalQual::Basis} holds by computing
these functions.\footnote{The computation can be done in any coordinate system, as the conditions
are invariant under a change of coordinate system--see \cref{Prop::FuncSpacesM::DiffInv}.}
If $q>n$, $X_1,\ldots, X_q$ are linearly
dependent, so the $c_{i,j}^k$ in \cref{Item::Series::LocalQual::Spanning} are not unique--and
\cref{Item::Series::LocalQual::Spanning} only asks that there exists a choice of $c_{i,j}^k$
satisfying the conditions in \cref{Item::Series::LocalQual::Spanning}.
Despite this lack of uniqueness, \cref{Item::Series::LocalQual::Spanning} is the setting which
usually arises in applications.  
\end{rmk}

\begin{rmk}
\Cref{Thm::Series::LocalQual} is stated for $s\in (1,\infty]$.  It is reasonable to expect
the same result for $s\in (0,\infty]$, however our proof runs into some technical issues
when $s\in (0,1]$.  We refer the reader to the second paper for a further discussion of this.
A similar remark holds for \cref{Thm::Series::GlobalQual}, below.
\end{rmk}

\begin{thm}[The Global Theorem]\label{Thm::Series::GlobalQual}
For $s\in (1,\infty]$, the following three conditions are equivalent:
\begin{enumerate}[(i)]
\item\label{Item::Series::Global::Atlas} There exists a $\ZygSpace{s+2}$ atlas on $M$, compatible with its $C^2$ structure,
such that $X_1,\ldots, X_q$ are $\ZygSpace{s+1}$ with respect to this atlas.
\item For each $x_0\in M$, any of the three equivalent conditions \cref{Item::Series::LocalQual::Chart}, \cref{Item::Series::LocalQual::Basis}, or \cref{Item::Series::LocalQual::Spanning} from \cref{Thm::Series::LocalQual} holds for this choice of $x_0$.
\item $[X_i,X_j]=\sum_{k=1}^q c_{i,j}^k X_k$, $1\leq i,j\leq q$, where $\forall x_0\in M$, $\exists V\subseteq M$
open with $x_0\in V$ such that $c_{i,j}^k\big|_V\in \ZygXSpace{X}{s}[V]$, $1\leq i,j,k\leq q$.
\end{enumerate}
Furthermore, under these conditions, the $\ZygSpace{s+2}$ manifold structure on $M$ induced by the atlas in \cref{Item::Series::Global::Atlas}
is unique, in the sense that if there is another $\ZygSpace{s+2}$ atlas on $M$, compatible with its $C^2$
structure, and such that
$X_1,\ldots, X_q$ are $\ZygSpace{s+1}$ with respect to this second atlas, then the identity
map $M\rightarrow M$ is a $\ZygSpace{s+2}$ diffeomorphism between these two $\ZygSpace{s+2}$ manifold structures
on $M$.

Also, the following two conditions are equivalent:
\begin{enumerate}[(a)]
\item\label{Item::Series::Global::Atlas::RA} There is a real analytic atlas on $M$, compatible with its $C^2$ structure, such that $X_1,\ldots, X_q$
are real analytic with respect to this atlas.
\item For each $x_0\in M$, any of the three equivalent conditions \cref{Item::Series::LocalQual::Chart}, \cref{Item::Series::LocalQual::Basis}, or \cref{Item::Series::LocalQual::Spanning}
from \cref{Thm::Series::LocalQual}
hold for this choice of $x_0$ (with $s=\omega$).
\end{enumerate}
Furthermore, under these conditions, the real analytic manifold structure on $M$ induced by the atlas in \cref{Item::Series::Global::Atlas::RA} is unique,
in the sense that if there is another real analytic atlas on $M$, compatible with its $C^2$ structure and such that $X_1,\ldots, X_q$
are real analytic with respect to this second atlas, then the identity map $M\rightarrow M$ is a real analytic
diffeomorphism between these two real analytic structures on $M$.
\end{thm}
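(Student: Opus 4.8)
\emph{Overall approach.} The plan is to obtain the Global Theorem as a globalization of the Local Theorem (\cref{Thm::Series::LocalQual}), so that no hard analysis is needed beyond the chart construction already in hand; the one substantive new ingredient is a compatibility statement for canonical charts, which I would isolate as a lemma. \emph{Lemma:} if $\Phi,\Psi$ are $C^2$ diffeomorphisms from open subsets of $\R^n$ into $M$ with overlapping images and all $\Phi^{*}X_j$ and $\Psi^{*}X_j$ are $\ZygSpace{s+1}$ (resp.\ real analytic) on their domains, then $\theta:=\Psi^{-1}\circ\Phi$ and $\theta^{-1}$ are $\ZygSpace{s+2}$ (resp.\ real analytic). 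To prove this I would use that $\theta_{*}(\Phi^{*}X_j)=\Psi^{*}X_j$, i.e.\ $(d\theta)(u)\,(\Phi^{*}X_j)(u)=(\Psi^{*}X_j)(\theta(u))$ for all $j$; since $X_1,\dots,X_q$ span $TM$, near any point $n$ of the vectors $\Phi^{*}X_j$ form a basis, and solving that linear system yields $d\theta=(A\circ\theta)\,B^{-1}$, with $A,B$ matrices whose entries are $\ZygSpace{s+1}$ (resp.\ real analytic) and $\det B$ locally bounded below. Starting from $\theta\in C^2\subseteq\ZygSpace{2}$, I would bootstrap: each pass uses the composition estimate $\ZygSpace{s+1}\circ\ZygSpace{r}\subseteq\ZygSpace{\min(s+1,r)}$ for $r\ge1$, the multiplication estimate, and stability of $\ZygSpace{s+1}$ under matrix inversion away from $\det=0$, to promote $\theta\in\ZygSpace{r}$ to $\theta\in\ZygSpace{\min(s+2,r+1)}$; after finitely many steps this gives $\theta\in\ZygSpace{s+2}$ (and $\theta\in\ZygSpace{\infty}$ when $s=\infty$). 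In the analytic case the same scheme (with composition and inversion preserving real analyticity) first gives $\theta\in C^\infty$, and then $d\theta=(A\circ\theta)B^{-1}$ exhibits $\theta$ as a smooth solution of a real-analytic first-order system, which is therefore real analytic; the same applies to $\theta^{-1}=\Phi^{-1}\circ\Psi$.

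\emph{Equivalence of \textnormal{(i)}, \textnormal{(ii)}, \textnormal{(iii)} for $s\in(1,\infty]$.} I would prove the cycle $\text{(ii)}\Rightarrow\text{(i)}\Rightarrow\text{(iii)}\Rightarrow\text{(ii)}$. The implication $\text{(iii)}\Rightarrow\text{(ii)}$ is a restriction (a global family $c^k_{i,j}$ restricts near each $x_0$ to one verifying \cref{Item::Series::LocalQual::Spanning}), and $\text{(i)}\Rightarrow\text{(ii)}$ likewise. For $\text{(ii)}\Rightarrow\text{(i)}$: at each $x_0$ choose a canonical chart $\Phi_{x_0}$ via \cref{Item::Series::LocalQual::Chart}; since each $\Phi_{x_0}$ is a $C^2$ diffeomorphism onto an open subset of $M$ and, by the Lemma, any two of them have $\ZygSpace{s+2}$ transition maps, $\{\Phi_{x_0}\}$ is a $\ZygSpace{s+2}$ atlas compatible with the $C^2$ structure in which the $X_j$ are $\ZygSpace{s+1}$. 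For $\text{(i)}\Rightarrow\text{(iii)}$: in a chart of such an atlas $[X_i,X_j]$ is a bracket of $\ZygSpace{s+1}$ vector fields, hence $\ZygSpace{s}$ in coordinates, so choosing locally a basis among $X_1,\dots,X_q$ and inverting writes $[X_i,X_j]=\sum_k c^k_{i,j}X_k$ near each point with $c^k_{i,j}\in\ZygXSpace{X}{s}$ (using the identification of $\ZygXSpace{X}{s}$ with $\ZygSpace{s}$ in canonical coordinates); a partition of unity subordinate to that atlas, whose members lie in $\ZygXSpace{X}{\infty}$, then glues these local families into one global $c^k_{i,j}$, using $\sum_\alpha\chi_\alpha\equiv1$ together with closure of $\ZygXSpace{X}{s}$ under products and the fact that it is a local class.

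\emph{Uniqueness and the real-analytic case.} For uniqueness of the $\ZygSpace{s+2}$ structure: if $\mathcal A_1,\mathcal A_2$ are two $\ZygSpace{s+2}$ atlases, compatible with the $C^2$ structure, in each of which the $X_j$ are $\ZygSpace{s+1}$, then the identity map $M\to M$, read in a chart of $\mathcal A_1$ and a chart of $\mathcal A_2$, is precisely a map $\Psi^{-1}\circ\Phi$ to which the Lemma applies, so it and its inverse are $\ZygSpace{s+2}$ and the identity is a $\ZygSpace{s+2}$ diffeomorphism. The real-analytic statements are proved by the same three steps with $\omega$ in place of $s+2$ and $s+1$: $\text{(a)}\Rightarrow\text{(b)}$ is a restriction; for $\text{(b)}\Rightarrow\text{(a)}$ one takes canonical charts with $s=\omega$ (so the $\Phi^{*}_{x_0}X_j$ are real analytic) and invokes the analytic case of the Lemma; and uniqueness follows verbatim. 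The quantitative form of the analytic compatibility lemma, on the scale of the $\ComegaSpace{r}$ norms, is developed in \cite{StreetIII}, but only its qualitative form is needed here.

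\emph{Main obstacle.} I expect the crux to be the Lemma, i.e.\ showing that the canonical charts gain the full $\ZygSpace{s+2}$ (resp.\ real-analytic) regularity against one another. The delicate point is that $d\theta=(A\circ\theta)B^{-1}$ contains the composition $A\circ\theta$ with $A$ only $\ZygSpace{s+1}$ and $\theta$ of low a priori regularity, so the conclusion cannot be reached in a single step; the iterative gain-of-regularity scheme, run with the sharp composition and multiplication estimates for the (nonstandard) norms of \cref{Section::FuncSpace}, is essential, and the real-analytic case is the most delicate since composition interacts less cleanly with analytic norms. A secondary technical point is the partition-of-unity patching in $\text{(i)}\Rightarrow\text{(iii)}$, which relies on $\ZygXSpace{X}{s}$ being multiplicatively closed, local, and identifiable with $\ZygSpace{s}$ in canonical coordinates.
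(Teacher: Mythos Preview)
The paper does not actually prove \cref{Thm::Series::GlobalQual}. It appears in \cref{Section::Overview} (``Overview of the Series''), where the authors explicitly state that ``the proofs will not be completed until the later papers''; the Zygmund case is deferred to the second paper \cite{StreetII} and the real analytic case to the third \cite{StreetIII}. So there is no proof here to compare against.

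That said, your outline is the natural globalization of \cref{Thm::Series::LocalQual} and is essentially correct. The key compatibility Lemma you isolate---that two charts in which the $X_j$ are $\ZygSpace{s+1}$ (resp.\ real analytic) are $\ZygSpace{s+2}$- (resp.\ real-analytically) related---is exactly the right ingredient, and the bootstrap via $d\theta=(A\circ\theta)B^{-1}$ is the standard mechanism. Two small corrections: first, in the step (i)$\Rightarrow$(iii) you cannot take partition-of-unity members in $\ZygXSpace{X}{\infty}$ when $s$ is finite (the atlas is only $\ZygSpace{s+2}$); partitions of class $\ZygSpace{s+2}\subseteq\ZygSpace{s}$ suffice and exist on any $\ZygSpace{s+2}$ manifold. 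Second, in the real analytic case your appeal to ``smooth solution of a real-analytic first-order system is real analytic'' is correct but needs a word of justification, since the system is nonlinear in $\theta$; the cleanest route is to observe that $\theta$ intertwines two real analytic local frames (the $\Phi^{*}X_j$ and the $\Psi^{*}X_j$), so real analyticity of $\theta$ follows from the real analytic inverse/implicit function theorem applied to the map $(u,v)\mapsto$ (frame of $\Phi^{*}X$ at $u$, frame of $\Psi^{*}X$ at $v$), or equivalently from iterating the Fa\`a di Bruno estimates on $d\theta=(A\circ\theta)B^{-1}$ once $\theta$ is known to be smooth.
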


    \subsection{Quantitative Results}\label{Section::Series::Quant}

\Cref{Thm::Series::LocalQual} gives necessary and sufficient conditions for a certain type of
coordinate chart to exist.  For applications in analysis, it is essential to have
quantitative control of this coordinate chart.
In the second part to this series, these quantitative charts are studied in the setting of Zygmund spaces,
while in the third part they are studied in the real analytic setting.  In this section,
we present the results on Zygmund spaces, and refer the reader to the third paper for the corresponding
real analytic results.

Because we need to keep track of what each constant depends on for applications in analysis (see \cref{Section::Scale}),
the statements of the results in this section, later in the paper, and in the subsequent papers in this series,
are quite technical.  To help simplify matters, we define various notions of ``admissible constants''.  These will
be constants that can only depend on certain parameters.  While these definitions are somewhat unwieldy,
they greatly simplify the statements of the results in the rest of this series.
In each instance, it will be clear what notion of admissible constants we are using.

First we need some new notation.
$B^n(\eta)$ denotes the Euclidean ball of radius $\eta>0$ centered at $0\in \R^n$.
Let $X_1,\ldots, X_q$ be $C^1$ vector fields on a
$C^2$ manifold $\fM$.

\begin{defn}
For $x_0\in \fM$, $\eta>0$,
and $U\subseteq \fM$, we say the list $X=X_1,\ldots, X_q$ satisfies $\ICond(x_0,\eta,U)$
if for every $a\in B^q(\eta)$ the expression
$$e^{a_1 X_1+\cdots+a_q X_q}x_0$$
exists in $U$.  More precisely, consider the differential equation
\begin{equation*}
    \diff{r} E(r)=a_1 X_1(E(r))+\cdots+ a_q X_q(E(r)), \quad E(0)=x_0.
\end{equation*}
We assume that a solution to this differential equation exists up to $r=1$, $E:[0,1]\rightarrow U$.
We have $E(r) = e^{ra_1 X_1+\cdots+ra_q X_q}x_0$.
\end{defn}

For $1\leq n\leq q$, we let
\begin{equation*}
    \sI(n,q):=\{ (i_1,i_2,\ldots, i_n) : i_j\in \{1,\ldots, q\}\}, \quad \sI_0(n,q):=\{i\in \sI(n,q) : 1\leq i_1< i_2< \cdots < i_n\leq q\}.
\end{equation*}
For $J=(j_1,\ldots, j_n)\in \sI(n,q)$ we write $X_J$ for the list of vector fields $X_{j_1},\ldots, X_{j_n}$.
We write $\bigwedge X_J = X_{j_1}\wedge X_{j_2}\wedge \cdots \wedge X_{j_n}$.

Fix $x_0\in \fM$, let $n=\dim \Span\{X_1(x_0),\ldots, X_q(x_0)\}$.  Fix $\xi,\zeta\in (0,1]$.
We assume that on $B_X(x_0,\xi)$, the $X_j$'s satisfy
\begin{equation*}
    [X_j,X_k]=\sum_{l=1}^q c_{j,k}^l X_l, \quad c_{j,k}^l\in C(B_X(x_0,\xi)),
\end{equation*}
where $B_X(x_0,\xi)$ is given the metric topology induced by $\rho$ from \cref{Eqn::FucsSpaceM::rho}.
\Cref{Prop::ResQual::Mmanif} applies to show that $B_X(x_0,\xi)$ is an $n$-dimensional, $C^2$,
injectively immersed submanifold of $\fM$. $X_1,\ldots, X_q$ are $C^1$ vector fields on $B_X(x_0,\xi)$
and span the tangent space at every point.  Henceforth, we treat $X_1,\ldots, X_q$
as vector fields on $B_X(x_0,\xi)$.

Let $J_0\in \sI(n,q)$ be such that $\bigwedge X_{J_0}(x_0)\ne 0$ and moreover
\begin{equation}\label{Eqn::Series::Quant::J0}
    \max_{J\in \sI(n,q)}\left|\frac{\bigwedge X_J(x_0) }{\bigwedge X_{J_0}(x_0)}\right|\leq \zeta^{-1},
\end{equation}
see \cref{Section::Wedge} for the definition of this quotient.  Note that such a $J_0\in \sI(n,q)$ always exists--indeed, we may choose
$J_0$ so that the left hand side of \cref{Eqn::Series::Quant::J0} equals $1$.
Without loss of generality, reorder the vector fields so that $J_0=(1,\ldots, n)$.

\begin{itemize}
    \item Let $\eta>0$ be such that $X_{J_0}$ satisfies $\sC(x_0,\eta,\fM)$.
    \item Let $\delta_0>0$ be such that for $\delta\in(0,\delta_0]$ the following holds:
    if $z\in B_{X_{J_0}}(x_0,\xi)$ is such that $X_{J_0}$ satisfies
    $\sC(z,\delta, B_{X_{J_0}}(x_0,\xi))$ and if $t\in B^{n}(\delta)$ is such that
    $e^{t_1 X_1+\cdots + t_n X_n} z=z$ and if $X_1(z),\ldots, X_n(z)$ are linearly
    independent, then $t=0$.
\end{itemize}

\begin{rmk}
Using that the vector fields $X_1,\ldots, X_n$ are $C^1$, it follows that there exist $\eta$
and $\delta_0$ as above (which are small depending on, among other things, the $C^1$ norms of $X_1,\ldots, X_n$
in a fixed coordinate system); see \cref{Prop::MoreAssumpt}.  However, it is possible that the $C^1$ norms of $X_1,\ldots, X_q$
can be very large while $\eta$ and $\delta_0$ are not small.  Furthermore, the quantities
$\eta$ and $\delta_0$ are invariant under $C^2$ diffeomorphisms, while the $C^1$ norms
of $X_1,\ldots, X_n$ depend on the choice of coordinate system.  Thus, we present our results
in terms of $\eta$ and $\delta_0$.  
\end{rmk}

\begin{rmk}
For a more detailed discussion of $\eta$ and $\delta_0$ see \cref{Section::Results::MoreAssump}.
\end{rmk}

Fix $s_0>1$.

\begin{defn}
For $s\geq s_0$ if we say $C$ is an $\Zygad{s}$-admissible constant, it means that
we assume $c_{j,k}^l\in \ZygXSpace{X_{J_0}}{s}[B_{X_{J_0}}(x_0,\xi)]$ for $1\leq j,k,l\leq q$.
$C$ is then allowed to depend on $s$, $s_0$, lower bounds $>0$ for $\zeta$, $\xi$, $\eta$, and $\delta_0$,
and upper bounds for $q$ and $\ZygXNorm{c_{j,k}^l}{X_{J_0}}{s}[B_{X_{J_0}}(x_0,\xi)]$, $1\leq j,k,l\leq q$.
We write $A\lesssim_{\Zygad{s}} B$ for $A\leq CB$ where $C$ is a positive $\Zygad{s}$-admissible constant.
We write $A\approx_{\Zygad{s}} B$ for $A\lesssim_{\Zygad{s}} B$ and $B\lesssim_{\Zygad{s}} A$.
\end{defn}

\begin{thm}[The Quantitative Theorem]\label{Thm::Series::MainThm}
Suppose $c_{i,j}^k\in \ZygXSpace{X}{s_0}[B_{X_{J_0}}(x_0,\xi)]$, $1\leq i,j,k\leq q$.  Then,
there exists a map $\Phi:B^n(1)\rightarrow B_{X_{J_0}}(x_0,\xi)$ and $\Zygad{s_0}$-admissible
constants $\xi_1,\xi_2>0$ such that the following hold:
\begin{enumerate}[label=(\roman*),series=overviewtheoremenumeration]
\item $\Phi(B^n(1))\subseteq B_X(x_0,\xi)$ is an open subset of the $C^2$ manifold $B_X(x_0,\xi)$.
\item $\Phi:B^n(1)\rightarrow \Phi(B^n(1))$ is a $C^2$ diffeomorphism.
\item $B_X(x_0,\xi_2)\subseteq B_{X_{J_0}}(x_0,\xi_1)\subseteq \Phi(B^n(1))\subseteq B_X(x_0,\xi)$.
\end{enumerate}
Let $Y_j=\Phi^{*} X_j$.
There exists an $\Zygad{s_0}$-admissible constant $K\approx_{\Zygad{s_0}} 1$ and a matrix
 $A\in \ZygSpace{s_0}[B^n(1)][\M^{n\times n}]$ such that:\footnote{Here, and in the rest of the paper,
 $\M^{n\times n}$ denotes the space of $n\times n$ real matrices endowed with the usual operator norm of a matrix.}
\begin{enumerate}[resume*=overviewtheoremenumeration]
\item $Y_{J_0}=K(I+A)\grad$, where $\grad$ denotes the gradient in $\R^n$ (thought of as a column vector) and we are identifying $Y_{J_0}$ with the column vector of vector fields
$\begin{bmatrix} Y_1,Y_2,\ldots, Y_n\end{bmatrix}^{\transpose}$.
\item $A(0)=0$, $\sup_{t\in B^n(1)} \Norm{A(t)}[\M^{n\times n}]\leq \frac{1}{2}$.
\item\label{Item::Series::YReg} For all $s\geq s_0$, $1\leq j\leq q$, $\ZygNorm{Y_j}{s+1}[B^n(1)][\R^n]\lesssim_{\Zygad{s}} 1$.
\end{enumerate}
\end{thm}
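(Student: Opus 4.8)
The plan is to take $\Phi$ to be a rescaled exponential map of the first kind. Fix a small $\Zygad{s_0}$-admissible constant $\nu\in(0,1]$, to be determined, set $K:=\nu^{-1}$, and put $\Phi(t):=e^{\nu(t_1X_1+\cdots+t_nX_n)}x_0$ for $t\in B^n(1)$. If $\nu<\eta$ then $\sC(x_0,\cdot,\fM)$ makes $\Phi$ defined on $B^n(1)$, and since $r\mapsto e^{r\nu(t\cdot X_{J_0})}x_0$ is an admissible $X_{J_0}$-path of length $<\nu\le\xi$ we get $\Phi(B^n(1))\subseteq B_{X_{J_0}}(x_0,\xi)\subseteq B_X(x_0,\xi)$. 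Smooth dependence of ODE solutions on data and parameters gives $\Phi\in\CjSpace{1}$, and $d\Phi_0=\nu[X_1(x_0)\mid\cdots\mid X_n(x_0)]$ is invertible by the choice of $J_0$. To propagate invertibility of $d\Phi_t$ across $B^n(1)$ I would run a Gr\"onwall/continuity argument on the wedge quotients $\bigwedge X_J/\bigwedge X_{J_0}$, $J\in\sI(n,q)$, along the rays $r\mapsto e^{r\nu(\omega\cdot X_{J_0})}x_0$: expanding $\Lie{\omega\cdot X_{J_0}}\bigwedge X_{J_0}$ via $[X_k,X_i]=\sum_l c_{ki}^lX_l$ shows that each quotient, restricted to such a ray, solves a linear ODE whose coefficients are controlled by $\nu$, $q$, $\zeta^{-1}$ and $\sup|c_{j,k}^l|$; hence, shrinking $\nu$ admissibly, all these quotients stay $\le 2\zeta^{-1}$ and $|\bigwedge X_{J_0}|\approx_{\Zygad{s_0}}|\bigwedge X_{J_0}(x_0)|$ on $\Phi(B^n(1))$, so $d\Phi_t$ is invertible there. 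Combined with the hypothesis on $\delta_0$ (which forces $\Phi$ to be injective once $\nu$ is small), this makes $\Phi$ a $\CjSpace{1}$ diffeomorphism onto an open subset of $B_X(x_0,\xi)$, which gives (i)--(ii). For (iii), since $\Norm{d\Phi}$ and $\Norm{(d\Phi)^{-1}}$ are admissibly bounded, any $X$-admissible path issuing from $x_0$ has admissibly bounded Euclidean speed when read through $\Phi^{-1}$, hence cannot leave $\Phi(B^n(1))$ before Euclidean time comparable to $1$; this yields $B_X(x_0,\xi_2)\subseteq B_{X_{J_0}}(x_0,\xi_1)\subseteq\Phi(B^n(1))$ with $\Zygad{s_0}$-admissible $\xi_1,\xi_2$.

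\textbf{Step 2: the algebra of the radial field.}
Writing $T=\sum_k t_k\partial_{t_k}$ and differentiating $r\mapsto\Phi(rt)$ in $r$ gives $d\Phi_t(T)=\nu\sum_j t_jX_j(\Phi(t))$, i.e. $\sum_{j=1}^n t_jY_j=KT$, where $Y_j:=\Phi^{*}X_j$. Since $Y_j(0)=Ke_j$, writing $Y_{J_0}=B\grad$ with $B=K(I+A)$ we read off $A(0)=0$ and, from the same identity, $A(t)^{\transpose}t=0$; shrinking $\nu$ admissibly and using continuity gives $\sup_{B^n(1)}\Norm{A}\le\frac12$, hence $B$ is invertible with $\Norm{B^{-1}}\approx_{\Zygad{s_0}}1$. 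This is (iv)--(v).

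\textbf{Step 3: the ODE along rays.}
The regularity of $A$ is governed by the pulled-back structure equations $[Y_i,Y_j]=\sum_l(c_{i,j}^l\circ\Phi)Y_l$ ($i,j\le n$). Freezing a unit vector $\omega$ and passing to the Jacobian $\mathcal M(r):=d_t|_{t=\omega}\Psi(r,t)$ of $\Psi(r,t):=e^{r\nu(t\cdot X)}x_0$, one has $\dot{\mathcal M}=\nu[X_1(\Psi)\mid\cdots\mid X_n(\Psi)]+\nu\,D(\omega\cdot X)(\Psi)\mathcal M$; the crucial algebraic point is that, using $DX_j\cdot X_k=[X_k,X_j]+DX_k\cdot X_j=\sum_l c_{kj}^lX_l+DX_k\cdot X_j$ to rewrite $\frac{d}{dr}[X_j\circ\Psi]$, the $D(\omega\cdot X)$-contributions -- the only place where derivatives of the $X_j$ occur -- cancel. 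One is then led to a matrix Riccati equation $\dot N=\nu\,N(\mathcal C-N)$ for $N(r):=\mathcal M(r)^{-1}[X_1(\Psi)\mid\cdots\mid X_n(\Psi)]$ (the coefficient matrix of $Y_1,\dots,Y_n$ along the ray, up to the factor $r$), where $\mathcal C(r)$ has entries $\sum_k\omega_k(c_{k,j}^l\circ\Psi)(r)$. Because $\Psi(\cdot,\omega)$ is exactly the type of $X_{J_0}$-path used to define $\ZygXSpace{X_{J_0}}{s}[B_{X_{J_0}}(x_0,\xi)]$, the hypothesis $c_{j,k}^l\in\ZygXSpace{X_{J_0}}{s}$ gives $c_{j,k}^l\circ\Psi\in\ZygSpace{s}[{[0,1]}]$ uniformly in $\omega$ with $\Zygad{s}$-admissible norm -- and this needs no regularity information about $\Phi$. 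Translating the Riccati equation back to $A$ produces the radial integral equation $rA(r\omega)=\int_0^r[\nu\sigma(I+A)\mathcal C^{\transpose}-A^2](\sigma\omega)\,d\sigma$, supplemented by the full (uncontracted) structure equations $Y_iA_j^q-Y_jA_i^q=(c_{i,j}^q\circ\Phi)+\sum_l(c_{i,j}^l\circ\Phi)A_l^q$ and the gauge condition $A^{\transpose}t=0$.

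\textbf{Step 4: the bootstrap (the main obstacle).}
The estimate $\ZygNorm{Y_j}{s+1}[B^n(1)][\R^n]\lesssim_{\Zygad s}1$ is a regularity bootstrap on the system of Step 3, starting from the a priori bound $A\in\CjSpace{1}$ with admissible norm (the base case, itself run by a low-regularity version of the same scheme, or taken from \cite{S}). Suppose $A\in\ZygSpace{\sigma}[B^n(1)]$ for some $\sigma\in[1,s+1)$. Trading $Y$-regularity for Euclidean regularity via $\grad=K^{-1}(I+A)^{-1}Y_{J_0}$ gives $c_{i,j}^l\circ\Phi\in\ZygSpace{\min(\sigma,s)}[B^n(1)]$; the radial integral equation then controls $|t|A(t)$ in $\ZygSpace{\min(\sigma,s)+1}$ (radial integration in one variable gains a derivative), while the transverse structure equations control the angular derivatives of $A$ at the same level, and a Poincar\'e-type step carried out by explicit integration along rays -- with the gauge $A^{\transpose}t=0$ fixing the exact part -- removes the spurious factor $|t|$ and upgrades $A$ to $\ZygSpace{\min(\sigma,s)+1}$. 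Iterating, $A\in\ZygSpace{s+1}[B^n(1)]$, so $Y_{J_0}=K(I+A)\grad\in\ZygSpace{s+1}[B^n(1)][\R^n]$, which is the claimed estimate for $1\le j\le n$; bootstrapping $\Phi$ once more from $Y_{J_0}$ gives its promised regularity. For $n<j\le q$ one writes $X_j=\sum_{k\le n}b_j^kX_k$ with $b_j^k$ expressed through the wedge quotients $\bigwedge X_J/\bigwedge X_{J_0}$ (admissibly bounded, and inheriting regularity from the $Y$'s once $A$ is controlled), so $Y_j=\sum_{k\le n}(b_j^k\circ\Phi)Y_k\in\ZygSpace{s+1}$. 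The hard part is exactly this bootstrap: realizing the sharp-for-this-method gain of one derivative forces one to exploit the interplay between the radial ODE (which supplies the extra derivative) and the structure equations (which organize the angular behaviour and dispose of the factor $|t|$), all while tracking how every constant depends on $\xi,\zeta,\eta,\delta_0,q$ and the Zygmund norms of the $c_{j,k}^l$, so that the constants produced are genuinely $\Zygad{s}$-admissible.
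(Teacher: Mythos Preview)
This theorem is not proved in the present paper at all: it is the sharp quantitative result of the series, and the paper explicitly defers its proof to Paper~II, where it is obtained by passing to a \emph{new, implicitly defined} coordinate system and invoking PDE methods related to Malgrange's proof of Newlander--Nirenberg. What the present paper does prove, via exactly the canonical-coordinate/ODE machinery you set up in Steps~1--3, is the weaker \cref{Thm::Results::MainThm}: one gets $\ZygNorm{Y_j}{s}\lesssim_{\Zygad{s,s-1}}1$, not $\ZygNorm{Y_j}{s+1}\lesssim_{\Zygad{s}}1$ (see \cref{Rmk::Results::LackOfOptimal}). Your Steps~1--3 are essentially the skeleton of the paper's argument (the paper works on $B^n(\eta_1)$ rather than rescaling to $B^n(1)$, and organizes the Gr\"onwall/injectivity arguments somewhat differently, but the content is the same).

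The genuine gap is Step~4. The radial ODE $\partial_r(rA)=-A^2-CA-C$ only shows that $A$ inherits the regularity of $C$ with \emph{no} gain: this is precisely \cref{Prop::ODE::ExistMainProp}, which gives $C\in C^{m,l,\omega}\Rightarrow A\in C^{m,l,\omega}$. Your claim that radial integration puts $|t|A(t)$ into $\ZygSpace{\min(\sigma,s)+1}$ on $B^n(1)$ is false---integrating in $r$ gains a derivative in $r$, not in the full $n$-variable Zygmund norm (and the quadratic term $A^2$ on the right is only in $\ZygSpace{\sigma}$ anyway). The structure equations give curl-type information $Y_iA_j-Y_jA_i\in\ZygSpace{\min(\sigma,s)}$, but turning curl control plus one radial relation into a full extra derivative on $A$ is an elliptic problem; a ``Poincar\'e-type step by explicit integration along rays'' does not gain regularity. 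The paper is explicit that canonical coordinates do deliver a bonus---namely $\ct_{i,j}^k=c_{i,j}^k\circ\Phi$ lands in the \emph{same} Zygmund class as the $Y_j$ rather than one below (\cref{Rmk::Results::LeverageBetterOptimality})---but converting that bonus into an extra derivative on $Y_j$ requires changing coordinates again and solving a PDE; the ODE bootstrap you sketch cannot close.
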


\begin{rmk}
In the second paper, we discuss further details of the map $\Phi$ from \cref{Thm::Series::MainThm}.
For example, we describe how to understand $\Phi^{*}\nu$ where $\nu$ is a density on $B_X(x_0,\xi)$.
\end{rmk}

    	\subsubsection{Diffeomorphism Invariance}\label{Section::Series::DiffInv}

The results in this series are invariant under arbitrary $C^2$ diffeomorphisms.
In light of \cref{Prop::FuncSpacesM::DiffInv} this is obvious for the qualitative results (\cref{Thm::Series::LocalQual,Thm::Series::GlobalQual}).
It is true for the quantitative results as well (e.g., \cref{Thm::Series::MainThm}).

Indeed, let $X_1,\ldots, X_q$ be $C^1$ vector fields on a $C^2$ manifold $\fM$, as in \cref{Thm::Series::MainThm}, and fix $x_0\in \fM$.
Let $\Psi:\fM\rightarrow \fN$ be a $C^2$ diffeomorphism.  Then, $X_1,\ldots, X_q$ satisfy the conditions of \cref{Thm::Series::MainThm}
at the point $x_0$ if and only if $\Psi_{*} X_1,\ldots, \Psi_{*} X_q$ satisfy the conditions at $\Psi(x_0)$.
Moreover, $\Zygad{s}$-admissible constants as defined in terms of $X_1,\ldots, X_q$ are the same as $\Zygad{s}$-admissible
constants when defined in terms of $\Psi_{*} X_1,\ldots, \Psi_{*} X_q$.
Finally, if $\Phi$ is the map guaranteed by \cref{Thm::Series::MainThm} when applied to $X_1,\ldots, X_q$, then
$\Psi\circ \Phi$ is the map guaranteed by \cref{Thm::Series::MainThm} when applied to $\Psi_{*} X_1,\ldots, \Psi_{*} X_q$ (as can be seen by tracing through the proof).
The same remarks hold for \cref{Thm::Results::MainThm}, below.

\section{Main Results of this Paper}\label{Section::Results}

We now turn to the results of this paper, which amount to a slightly weaker
version of \cref{Thm::Series::MainThm}.  We take the same setup as \cref{Thm::Series::MainThm};
so that we have $X_1,\ldots, X_q$, $C^1$ vector fields on a $C^2$ manifold $\fM$.
Fix $x_0\in M$ and set $n=\dim \Span\{X_1(x_0),\ldots, X_q(x_0)\}$.
As before, we assume that on $B_X(x_0,\xi)$, the $X_j$'s satisfy
\begin{equation*}
    [X_j,X_k]=\sum_{l=1}^q c_{j,k}^l X_l, \quad c_{j,k}^l\in C(B_X(x_0,\xi)),
\end{equation*}
where $B_X(x_0,\xi)$ is given the metric topology induced by $\rho$ from \cref{Eqn::FucsSpaceM::rho}.
\Cref{Prop::ResQual::Mmanif} applies to show that $B_X(x_0,\xi)$ is an $n$-dimensional, $C^2$,
injectively immersed submanifold of $\fM$. $X_1,\ldots, X_q$ are $C^2$ vector fields on $B_X(x_0,\xi)$
and span the tangent space at every point.  Henceforth, we treat $X_1,\ldots, X_q$
as vector fields on $B_X(x_0,\xi)$.
Let $J_0\in \sI(n,q)$ be such that $\bigwedge X_{J_0}(x_0)\ne 0$ and moreover
\begin{equation*}
    \max_{J\in \sI(n,q)}\left|\frac{\bigwedge X_J(x_0) }{\bigwedge X_{J_0}(x_0)}\right|\leq \zeta^{-1},
\end{equation*}
see \cref{Section::Wedge} for the definition of this quotient.\footnote{One may always choose $J_0$ so that $\zeta=1$.  However, the flexibility
to take $\zeta<1$ is essential for
some applications.  It will prove to be particularly important when we turn to analogous results in the complex setting in a future
paper.}
Without loss of generality, reorder the vector fields so that $J_0=(1,\ldots, n)$.
Let $\eta,\delta_0>0$ be as in \cref{Section::Series::Quant}.

\begin{defn}
We say $C$ is a $0$-admissible constant if $C$ can be chosen to depend only on
upper bounds for $q$, $\zeta^{-1}$, $\xi^{-1}$, and $\CNorm{c_{j,k}^l}{B_{X_{J_0}}(x_0,\xi)}$, $1\leq j,k,l\leq q$.
\end{defn}

\begin{defn}
If we say $C$ is a $1$-admissible constant, it means that we assume $c_{j,k}^l\in \CXjSpace{X}{1}[B_{X_{J_0}}(x_0,\xi)]$
for $1\leq j,k\leq n$, $1\leq l\leq q$.  $C$ is then allowed to depend on anything a $0$-admissible constant can depend on,
lower bounds $>0$ for $\eta$ and $\delta_0$, and upper bounds for $\CXjNorm{c_{j,k}^l}{X}{1}[B_{X_{J_0}}(x_0,\xi)]$, $1\leq j,k\leq n$, $1\leq l\leq q$.
\end{defn}

\begin{defn}\label{Defn::Results::HXAdmiss}
For $m_1,m_2\in \Z$ and $s\in [0,1]$ if we say $C$ is an $\Had{m_1,m_2,s}$-admissible constant, it means
that we assume:
\begin{itemize}
\item $c_{j,k}^l\in \HXSpace{X_{J_0}}{m_1}{s}[B_{X_{J_0}}(x_0,\xi)]$, $1\leq j,k\leq n$, $1\leq l\leq q$.

\item $c_{j,k}^l\in \HXSpace{X_{J_0}}{m_2}{s}[B_{X_{J_0}}(x_0,\xi)]$, $1\leq j,k,l\leq q$.
\end{itemize}
$C$ can then be chosen to depend only on upper bounds for $m_1$, $m_2$, $q$, $\zeta^{-1}$, $\xi^{-1}$,
$\HXNorm{c_{j,k}^l}{X_{J_0}}{m_1}{s}[B_{X_{J_0}}(x_0,\xi)]$, $1\leq j,k\leq n$, $1\leq l\leq q$,
and $\HXNorm{c_{j,k}^l}{X_{J_0}}{m_2}{s}[B_{X_{J_0}}(x_0,\xi)]$, $1\leq j,k,l\leq q$.
\end{defn}

\begin{defn}
For $s_1,s_2\in \R$ if we say $C$ is an $\Zygad{s_1,s_2}$-admissible constant,
it means that we assume:
\begin{itemize}
\item $c_{j,k}^l\in \ZygXSpace{X_{J_0}}{s_1}[B_{X_{J_0}}(x_0,\xi)]$, $1\leq j,k\leq n$, $1\leq l\leq q$.

\item $c_{j,k}^l\in \ZygXSpace{X_{J_0}}{s_2}[B_{X_{J_0}}(x_0,\xi)]$, $1\leq j,k,l\leq q$.
\end{itemize}
$C$ can then be chosen to depend only on $s_1$, $s_2$ and upper bounds for $q$, $\zeta^{-1}$, $\eta^{-1}$, $\xi^{-1}$,
$\ZygXNorm{c_{j,k}^l}{X_{J_0}}{s_1}[B_{X_{J_0}}(x_0,\xi)]$, $1\leq j,k\leq n$, $1\leq l\leq q$,
and $\ZygXNorm{c_{j,k}^l}{X_{J_0}}{s_2}[B_{X_{J_0}}(x_0,\xi)]$, $1\leq j,k,l\leq q$.
\end{defn}

\begin{rmk}
 $0$ and $1$-admissible constants are the most basic type of admissible constants, and nearly all of our estimates
 depend on those quantities used in $0$-admissible constants, while many depend on the
 stronger $1$-admissible constants.
Admissible constants using the braces $\Had{\cdot}$ are used when working
with estimates relating to H\"older norms, while those using
 $\Zygad{\cdot}$ are used for estimates relating to Zygmund norms. 
In \cref{Section::Densities}, we introduce a density $\nu$ and admissible constants
that take into account this density.  To indicate this, we will decorate the notions of
admissible constants by writing, e.g.,  $\Had{m_1,m_2,s;\nu}$-admissible constants
and $\Zygad{s_1,s_1;\nu}$-admissible constants.
Finally, in \cref{Section::FuncSpaceRev::Compare} we will prove some technical results
for vector fields which are defined on Euclidean space.  To indicate
the corresponding admissible constants, we will use notation like $\HEad{m_1,s}$ and $\ZygEad{s}$,
where $\mathrm{E}$ stands for ``Euclidean''.
\end{rmk}

\begin{rmk}
In the various definitions of admissible constants in this section, we treat $c_{j,k}^l$ differently depending
on whether $1\leq j,k\leq n$ or $1\leq j,k\leq q$.  This is likely an artifact of the proof.
Indeed, this lack of symmetry disappears when we move to the sharp results in the second
paper in the series; see \cref{Thm::Series::MainThm}.
\end{rmk}

We write $A\lesssim_0 B$ for $A\leq CB$ where $C$ is a positive $0$-admissible constant.  We write
$A\approx_0 B$ for $A\lesssim_0 B$ and $B\lesssim_0 A$.
We similarly define $\lesssim_1$, $\approx_1$, $\lesssim_{\Had{m_1,m_2,s}}$, $\approx_{\Had{m_1,m_2,s}}$,
$\lesssim_{\Zygad{s_1,s_2}}$, and $\approx_{\Zygad{s_1,s_2}}$.

Because $X_{J_0}$ satisfies $\sC(x_0,\eta,\fM)$, by hypothesis, we may define the map, for $t\in B^n(\eta)$,
\begin{equation}\label{Eqn::Results::DefinePhi}
    \Phi(t):=e^{t_1 X_1+\cdots + t_n X_n}x_0.
\end{equation}
Let $\eta_0:=\min\{\eta,\xi\}$ so that $\Phi:B^n(\eta_0)\rightarrow B_{X_{J_0}}(x_0,\xi)$.
Note that, a priori, $\Phi$ is $C^1$, since $X_1,\ldots, X_n$ are $C^1$.

\begin{thm}\label{Thm::Results::MainThm}
There exists a $0$-admissible constant $\chi\in (0,\xi]$ such that:
\begin{enumerate}[label=(\alph*),series=maintheoremenumeration]
\item\label{Item::Results::WedgeNonzero} $\forall y\in B_{X_{J_0}}(x_0,\chi)$, $\bigwedge X_{J_0}(y)\ne 0$.
\item\label{Item::Results::BigWedge} $\forall y\in B_{X_{J_0}}(x_0,\chi)$,
$$\max_{J\in \sI(n,q)} \left|\frac{\bigwedge X_J(y)}{\bigwedge X_{J_0}(y)}\right|\approx_0 1.$$
\item\label{Item::Results::Open} $\forall \chi'\in (0,\chi]$, $B_{X_{J_0}}(x_0,\chi')$ is an open subset of
$B_X(x_0,\xi)$ and is therefore a submanifold.
\end{enumerate}
For the rest of the theorem, we assume $c_{j,k}^l\in \CXjSpace{X_{J_0}}{1}[B_{X_{J_0}}(x_0,\xi)]$
for $1\leq j,k\leq n$, $1\leq l\leq q$.
There exist $1$-admissible constants $\eta_1,\xi_1,\xi_2>0$ such that:
\begin{enumerate}[resume*=maintheoremenumeration]
\item $\Phi(B^n(\eta_1))$ is an open subset of $B_{X_{J_0}}(x_0,\chi)$, and is therefore a submanifold of $B_X(x_0,\xi)$.
\item $\Phi:B^n(\eta_1)\rightarrow \Phi(B^n(\eta_1))$ is a $C^2$ diffeomorphism.
\item\label{Item::Results::xi2} $B_X(x_0,\xi_2)\subseteq B_{X_{J_0}}(x_0,\xi_1)\subseteq \Phi(B^n(\eta_1))\subseteq B_{X_{J_0}}(x_0,\chi)\subseteq B_X(x_0,\xi)$.
\end{enumerate}
Let $Y_j=\Phi^{*}X_j$ and write $Y_{J_0}=(I+A)\grad$, where $Y_{J_0}$ denotes the column vector of vector fields
$Y_{J_0}=\begin{bmatrix}Y_1,Y_2,\ldots,Y_n\end{bmatrix}^{\transpose}$, $\grad$ denotes the gradient in $\R^n$
thought of as a column vector, and $A\in \CSpace{B^n(\eta_1)}[\M^{n\times n}]$.
\begin{enumerate}[resume*=maintheoremenumeration]
\item\label{Item::Results::ASize} $A(0)=0$ and $\sup_{t\in B^n(\eta_1)} \Norm{A(t)}[\M^{n\times n}]\leq \frac{1}{2}$.
\item\label{Item::Results::YReg} We have the following regularity on $Y_j$, $1\leq j\leq q$:
\begin{itemize}
\item $\HNorm{Y_j}{m}{s}[B^n(\eta_1)][\R^n]\lesssim_{\Had{m,m-1,s}} 1$, for $m\in \N$, $s\in [0,1]$.
\item $\ZygNorm{Y_j}{s}[B^n(\eta_1)][\R^n]\lesssim_{\Zygad{s,s-1}} 1$, for $s>0$.
\end{itemize}
\item\label{Item::Results::bklReg} 
There exist $b_{k}^l\in \CjSpace{1}[B^n(\eta_1)]$, $n+1\leq k\leq q$, $1\leq l\leq n$, such that
$Y_k=\sum_{l=1}^n b_k^l Y_l$ and
\begin{equation*}
    \HNorm{b_k^l}{m}{s}[B^n(\eta_1)]\lesssim_{\Had{m-1,m-1,s}} 1,\quad m\in \N, s\in [0,1],
\end{equation*}
\begin{equation*}
    \ZygNorm{b_k^l}{s}[B^n(\eta_1)]\lesssim_{\Zygad{s-1,s-1}} 1,\quad s>0.
\end{equation*}
\item\label{Item::Results::cjklReg} For $1\leq j,k\leq n$, $[Y_j,Y_k]=\sum_{l=1}^n \ct_{j,k}^l Y_l$, where
\begin{equation*}
    \HNorm{\ct_{j,k}^l}{m}{s}[B^n(\eta_1)]\lesssim_{\Had{m,m-1,s}} 1, \quad m\in \N, s\in [0,1],
\end{equation*}
\begin{equation*}
    \ZygNorm{\ct_{j,k}^l}{s}[B^n(\eta_1)]\lesssim_{\Zygad{s,s-1}} 1, \quad s>0.
\end{equation*}
\item\label{Item::Results::EquivNorms} We have the following equivalence of norms, for $f\in C(B^n(\eta_1))$,
\begin{itemize}
\item $\HNorm{f}{m}{s}[B^n(\eta_1)]\approx_{\Had{m-1,m-2,s}} \HXNorm{f}{Y_{J_0}}{m}{s}[B^n(\eta_1)]\approx_{\Had{m-1,m-2,s}} \HXNorm{f}{Y}{m}{s}[B^n(\eta_1)]$, for $m\in \N$, $s\in [0,1]$.
\item $\ZygNorm{f}{s}[B^n(\eta_1)]\approx_{\Zygad{s-1,s-2}} \ZygXNorm{f}{Y_{J_0}}{s}[B^n(\eta_1)]\approx_{\Zygad{s-1,s-2}} \ZygXNorm{f}{Y}{s}[B^n(\eta_1)]$, for $s>2$.
\end{itemize}
\item\label{Item::Results::AbstractNorm} We have, for $f\in \CSpace{B_{X_{J_0}}(x_0,\chi)}$,
\begin{itemize}
\item $\HNorm{f\circ \Phi}{m}{s}[B^n(\eta_1)]\lesssim_{\Had{m-1,m-2,s}} \HXNorm{f}{X_{J_0}}{m}{s}[B_{X_{J_0}}(x_0,\chi)]$, $m\in \N$, $s\in [0,1]$.
\item $\ZygNorm{f\circ \Phi}{s}[B^n(\eta_1)]\lesssim_{\Zygad{s-1,s-2}} \ZygXNorm{f}{X_{J_0}}{s}[B_{X_{J_0}}(x_0,\chi)]$, $s\in (0,\infty)$.
\end{itemize}
\end{enumerate}
\end{thm}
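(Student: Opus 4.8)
The plan is to construct $\Phi$ explicitly as the canonical coordinate map $\Phi(t) = e^{t_1 X_1 + \cdots + t_n X_n} x_0$ (already defined in \cref{Eqn::Results::DefinePhi}), and then to study the pulled-back vector fields $Y_j = \Phi^* X_j$ by exploiting the fundamental property of canonical coordinates: along rays through the origin, the combination $\sum_j t_j Y_j$ is exactly the radial (Euler) vector field $\sum_j t_j \partial_{t_j}$. This single algebraic fact, combined with the structure equations $[X_j,X_k] = \sum_l c_{j,k}^l X_l$, will be the engine for everything. First I would establish parts \ref{Item::Results::WedgeNonzero}--\ref{Item::Results::Open}: the non-vanishing of $\bigwedge X_{J_0}$ on a small CC-ball $B_{X_{J_0}}(x_0,\chi)$ and the comparability \ref{Item::Results::BigWedge} follow by a continuity/Gronwall argument, tracking how $\bigwedge X_J(y)/\bigwedge X_{J_0}(y)$ evolves along integral curves of $X_1,\ldots,X_n$ using only the $C^0$ bounds on the $c_{j,k}^l$ (hence $0$-admissibility); openness of the sub-level balls is then a consequence of $X_1,\ldots,X_n$ spanning the tangent space where the wedge is nonzero.

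The heart of the argument is the regularity statement \ref{Item::Results::YReg}, i.e., that $Y_j$ gains one derivative over the $c_{j,k}^l$. The standard ODE approach (following Tao--Wright and \cite{S}) is to write $Y_{J_0} = (I+A)\grad$ and derive an ODE in the radial variable for the matrix-valued function $A$ — or more precisely for auxiliary quantities encoding $\Phi^* X_j$ — whose right-hand side is built from the structure constants $c_{j,k}^l$ pulled back by $\Phi$. Differentiating the defining relation $\sum t_j Y_j = \sum t_j \partial_{t_j}$ and using the structure equations gives, after contracting with the radial field, a linear ODE of the schematic form $\frac{d}{dr}(rM(rt)) = \big(\text{something involving } c(rt)\big)\, M(rt)$ along each ray; solving this ODE and commuting it with spatial derivatives $\partial_t^\alpha$ yields the H\"older bounds $\HNorm{Y_j}{m}{s} \lesssim_{\Had{m,m-1,s}} 1$ by induction on $m$, and the Zygmund bounds by the analogous second-difference estimate along rays. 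The asymmetry in the admissibility indices (the top $c_{j,k}^l$ with $1\le j,k\le n$ need one more derivative than those with $1\le j,k\le q$) arises precisely here: the $n\times n$ structure constants enter the ODE for $A$ directly, while the remaining ones only enter through the algebraic relations $X_k = \sum_l b_k^l X_l$. To establish \ref{Item::Results::ASize}, shrink $\eta_1$ so that $\Norm{A}\le\frac12$ by continuity from $A(0)=0$; this forces $I+A$ invertible, hence $\Phi$ a $C^2$ diffeomorphism onto its image, giving (d), (e). The ball inclusions \ref{Item::Results::xi2} come from comparing $\rho$ to the Euclidean metric via the bounds on $Y_{J_0}$ and its inverse, again a Gronwall argument.

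The remaining parts are then bookkeeping consequences. For \ref{Item::Results::bklReg}: $b_k^l$ are defined by $Y_k = \sum_l b_k^l Y_l$, so $(b_k^l) = (I+A)^{-1}$ applied to the coefficient vector of $Y_k$ in the $\partial_t$ basis; since the $c_{j,k}^l$ with free index $l$ running up to $q$ control these, and division by $I+A$ costs nothing (its entries are as regular as $A$), one gets the indicated bounds with the shifted indices $\Had{m-1,m-1,s}$. For \ref{Item::Results::cjklReg}: $[Y_j,Y_k] = \Phi^*[X_j,X_k] = \sum_l (c_{j,k}^l\circ\Phi)\, Y_l$, and $\ct_{j,k}^l$ is obtained by re-expanding in the basis $Y_{J_0}$ using the $b$'s; regularity follows from \ref{Item::Results::YReg}, \ref{Item::Results::bklReg}, and the bound on $c_{j,k}^l\circ\Phi$ via \ref{Item::Results::AbstractNorm}. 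The norm equivalences \ref{Item::Results::EquivNorms} and the one-sided bound \ref{Item::Results::AbstractNorm} follow from the fact that $Y_{J_0} = (I+A)\grad$ with $I+A$ and its inverse of controlled regularity: commuting $Y^\alpha$ with $\partial_t^\alpha$ introduces only lower-order terms with coefficients built from $A$ and its derivatives, costing one or two derivatives (hence the index shifts $m-1,m-2$), and one must also invoke that Euclidean balls are Lipschitz domains so the CC-metric $\rho$ for $Y_{J_0}$ on $B^n(\eta_1)$ is comparable to the Euclidean metric (using \ref{Item::Results::xi2}); the statement for the full list $Y$ versus the sublist $Y_{J_0}$ uses \ref{Item::Results::bklReg} to trade $Y_k$'s for $Y_l$'s with $l\le n$.

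The main obstacle I anticipate is \ref{Item::Results::YReg}: setting up the radial ODE so that it genuinely closes — i.e., so that each application of a spatial derivative $\partial_t$ to the equation produces a right-hand side controlled by one-higher derivatives of $c$ and already-controlled lower derivatives of $Y$ — and then pushing this through the Zygmund (second-difference) scale rather than just the integer/H\"older scale, where the non-commutativity of $X_1,\ldots,X_n$ generates commutator terms that must be absorbed. This is exactly the place where the method is ``one derivative off'' from optimal (cf.\ \cref{Rmk::Results::LackOfOptimal}): the ODE produces $Y_j\in\Zyg{s}$ from $c\in\Zyg{s-1}$ rather than from $c\in\Zyg{s}$, because integrating the radial equation only recovers one derivative, not the two that the PDE argument of the second paper extracts.
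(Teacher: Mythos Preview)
Your overall strategy matches the paper's: derive a radial ODE for the matrix $A$ from the Euler identity $\sum_j t_j Y_j = \sum_j t_j \partial_{t_j}$, and use a Gronwall argument on the evolution of wedge ratios for \cref{Item::Results::WedgeNonzero}--\cref{Item::Results::Open}. However there are two genuine gaps. First, you write that $\|A\|\le\tfrac12$ ``forces $I+A$ invertible, hence $\Phi$ a $C^2$ diffeomorphism onto its image.'' Invertibility of $I+A$ only gives that $d\Phi$ has full rank, i.e.\ $\Phi$ is a \emph{local} diffeomorphism; global injectivity on $B^n(\eta_1)$ is a separate issue and is exactly where $\delta_0$ enters. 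The paper handles this via a quantitative inverse function theorem (\cref{Prop::AppIFT::MainProp}) combined with the no-small-periodic-orbit hypothesis on $\delta_0$; without it $\eta_1$ would not be $1$-admissible. Second, there is a bootstrap circularity you do not address: one cannot define $Y_j=\Phi^*X_j$ on all of $B^n(\eta')$ until $\Phi$ is known to be a diffeomorphism there, yet the diffeomorphism claim needs control of $Y_j$. The paper breaks the loop by first solving the ODE \cref{Eqn::ODE::TheODE} abstractly via contraction (\cref{Prop::ODE::ExistMainProp}) to produce $A$ on a $0$-admissible ball, defining $Y_j$ from $A$ directly, and then proving a posteriori by a continuation argument along rays (\cref{Prop::PfLI::dPhiYj}) that $d\Phi(t)Y_j(t)=X_j(\Phi(t))$.

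Two smaller corrections. The radial ODE is not linear as your schematic $\frac{d}{dr}(rM)=c\,M$ suggests: it is $\partial_r(rA)=-A^2-CA-C$, and the quadratic term forces the regularity argument in \cref{Section::RegularODE} to proceed via Izzo's modified contraction principle in the spaces $C^{m,l,\omega}$ rather than by direct differentiation. Also, the paper does not treat general $q\ge n$ in one pass: it first proves the theorem completely for $n=q$ (where \cref{Item::Results::WedgeNonzero}--\cref{Item::Results::Open} are trivial with $\chi=\xi$), and only then, in the general case, uses the Gronwall on wedge ratios to produce $\chi$ and, via Cramer's rule, functions $\tilde b_k^l$ and $\hat c_{i,j}^l$ on $B_{X_{J_0}}(x_0,\chi)$ with the right regularity (\cref{Lemma::PfLD::Boundbt,Lemma::PfLD::LIApplies}), thereby reducing to the $n=q$ case already in hand. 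In particular your account of \cref{Item::Results::bklReg} as linear algebra on $(I+A)^{-1}$ applied to the coefficients of $Y_k$ is circular, since regularity of $Y_k$ for $k>n$ is exactly what \cref{Item::Results::bklReg} is used to prove; in the paper $b_k^l=\tilde b_k^l\circ\Phi$ and its regularity comes from the wedge-ratio bounds (\cref{Lemma::PfLD::NormsOfWedges}) pulled back via \cref{Item::Results::AbstractNorm}, independently of $Y_k$.
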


\begin{rmk}\label{Rmk::Results::LackOfOptimal}
The lack of optimality of \cref{Thm::Results::MainThm} can be seen by comparing
\cref{Thm::Results::MainThm} \cref{Item::Results::YReg} and \cref{Thm::Series::MainThm} \cref{Item::Series::YReg};
in the later one can estimate $\ZygNorm{Y_j}{s+1}$ in terms of an $\Zygad{s}$-admissible constant,
while in the former, one can only estimate $\ZygNorm{Y_j}{s}$ in terms of the similar $\Zygad{s,s-1}$-admissible constants.  Because of this, \cref{Thm::Results::MainThm} ``loses one derivative'' and is not powerful
enough to conclude necessary and sufficient results like \cref{Thm::Series::LocalQual,Thm::Series::GlobalQual}.
\end{rmk}

\begin{rmk}\label{Rmk::Results::LeverageBetterOptimality}
By comparing \cref{Item::Results::YReg} and \cref{Item::Results::cjklReg}, we see that
the functions $\ct_{j,k}^l$
have the same regularity as $Y_1,\ldots, Y_n$.  If one only knew the regularity of $Y_1,\ldots, Y_n$,
one could only conclude the regularity of $\ct_{j,k}^l$ for one fewer derivative.
Similarly, \cref{Item::Results::bklReg} gives one more derivative regularity on $b_{k}^l$
than we get from merely considering the regularity of $Y_1,\ldots, Y_q$.
In the second paper of this series, we will leverage this extra regularity to prove
\cref{Thm::Series::MainThm}.
\end{rmk}

\begin{rmk}
Because the methods in this paper are based on ODEs, it is possible to prove
versions of \cref{Thm::Results::MainThm} for some function spaces other than $\HSpace{m}{s}$
or $\ZygSpace{s}$, with the same methods as in this paper.
However, once we turn to the second paper in the series, where PDEs are used,
we are forced to work with more specialized spaces--and that is the main
motivation for using Zygmund spaces in this context.
\end{rmk}

\begin{rmk}
In the context of Lie groups, the coordinates  given by $\Phi$ are sometimes
called canonical coordinates of the first kind.
\end{rmk}

    \subsection{More on the assumptions}\label{Section::Results::MoreAssump}

We further consider the constants $\eta>0$ and $\delta_0>0$ which were
introduced in \cref{Section::Series::Quant}.  First we present two examples
which show why these constants cannot be dispensed with in our results,
and then we state a result which shows such constants always exist.

\begin{example}
This example demonstrates the importance of $\eta$.  Let $\fM=\R$, $q=1$, $x_0>0$, and let $X_1=x^2\diff{x}$.
In this case, $\eta$ can be taken no larger than $1/x_0$--i.e., $X_1$ satisfies $\sC(x_0,x_0^{-1},\R)$
but does not satisfy $\sC(x_0,\eta', \R)$ for any $\eta'>x_0^{-1}$
(because the ODE $\dot{\gamma}(t)=\gamma(t)^2$, $\gamma(0)=x_0$ exists only for $t<\frac{1}{x_0}$).
If \cref{Thm::Results::MainThm} 
held with constants
independent of $\eta$ 
(and
therefore independent of $x_0$),
then we could conclude that $X_1$ satisfied $\sC(x_0,\eta',\R)$ for some $\eta'$ independent of $x_0$.
This is because the condition $\sC$ is invariant under a change of coordinates, and we can therefore
check it in the coordinate system given by $\Phi$ in \cref{Thm::Results::MainThm}.
This is a contradiction, showing
$\eta$ must play a role
in \cref{Thm::Results::MainThm}.\footnote{For a similar example, one could take $\fM=(-\epsilon,\epsilon)$, $q=1$, $x_0=0$, and $X_1 = \diff{x}$.  Then, $X$
satisfies $\sC(0,\epsilon, (-\epsilon, \epsilon))$, but does not satisfy
$\sC(0, \eta', (-\epsilon, \epsilon))$ for any $\eta'> \epsilon$.}
\end{example}

\begin{example}
This example demonstrates the importance of $\delta_0$--and also
shows its topological nature.  The point of $\delta_0$ is to ensure the map
$\Phi$ in \cref{Thm::Results::MainThm} is injective.\footnote{In fact, by inspecting the proof
of \cref{Thm::Results::MainThm}, it is easy to see that one can prove similar results,
independent of $\delta_0$, so long as one allows $\Phi$ to not be injective.}
Let $\fM=S^1$, $q=1$, $x_0\in S^1$, and let $X_1=K\diff{\theta}$ for some large constant $K$.
For this example, we must take $\delta_0\leq 2\pi /K$.  If the constants in \cref{Thm::Results::MainThm}
did not depend on $\delta_0$, they would also not depend on $K$.  We could then conclude
that $\delta_0$ could be taken independent of $K$--this is because $\delta_0$ is invariant
under a change of coordinates and we can check it in the coordinate system given by $\Phi$
in \cref{Thm::Results::MainThm}--see also \cref{Prop::MoreAssumpt}.
This shows that $\delta_0$ must play a role in \cref{Thm::Results::MainThm}.
\end{example}

Now we state a result which shows that such a $\delta_0$ and $\eta$ always exist for $C^1$ vector fields.
Let $X_1,\ldots, X_q$ be $C^1$ vector fields on a $C^2$ manifold $\fM$,
and let $X$ denote the list $X_1,\ldots, X_q$.

\begin{prop}\label{Prop::MoreAssumpt}
\begin{itemize}
    \item $\forall x_0\in \fM$, $\exists \eta>0$, such that $X$ satisfies $\sC(x_0,\eta,\fM)$.
    \item Let $K\Subset \fM$ be a compact set.  Then, $\exists \delta_0>0$ such that
    $\forall \theta\in S^{q-1}$ if $x\in K$ is such that $\theta_1 X_1(x)+\cdots+ \theta_qX_q(x)\ne 0$,
    then $\forall r\in (0,\delta_0]$,
    \begin{equation*}
        e^{r\theta_1 X_1+\cdots + r\theta_q X_q}x\ne x.
    \end{equation*}
\end{itemize}
\end{prop}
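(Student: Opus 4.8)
The plan is to prove the two bullets separately, using only the $C^1$ regularity of the vector fields and standard ODE theory in a fixed coordinate chart.

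For the first bullet, I would fix a coordinate chart $\Psi:N\to\Omega$ with $\Omega\subseteq\R^n$ open, $x_0\in N$, and set $\Yh_j:=\Psi_*X_j$, which are $C^1$ vector fields on $\Omega$, hence locally Lipschitz. The expression $e^{a_1X_1+\cdots+a_qX_q}x_0$ corresponds to the solution $E(r)$ of the ODE $\dt E(r)=\sum_j a_j\Yh_j(E(r))$, $E(0)=\Psi(x_0)$. Since the right-hand side is Lipschitz in $E$ uniformly for $a$ in a bounded set, the Picard--Lindel\"of theorem gives a solution on some interval $[0,r_0]$ with $r_0>0$ depending only on a lower bound for the distance from $\Psi(x_0)$ to $\partial\Omega$ and an upper bound for the Lipschitz constants and sup-norms of the $\Yh_j$ on a fixed compact neighborhood of $\Psi(x_0)$; shrinking further, the solution stays inside $\Psi(N)$. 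Taking $\eta$ small enough that $|a|<\eta$ forces $r_0\ge 1$ (by rescaling: the flow of $\sum a_j\Yh_j$ run to time $1$ equals the flow of $\sum(a_j/\lambda)\Yh_j$ run to time $\lambda$), we get that $X$ satisfies $\sC(x_0,\eta,\fM)$. This is routine.

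For the second bullet, fix finitely many coordinate charts covering $K$; it suffices to work in one chart and take $\delta_0$ to be the minimum of the resulting values. So assume $K\Subset\Omega\subseteq\R^n$ and $\Yh_j$ are $C^1$ on a neighborhood of $K$. Given $\theta\in S^{q-1}$ and $x\in K$ with $V_\theta(x):=\sum_j\theta_j\Yh_j(x)\ne 0$, the curve $r\mapsto e^{rV_\theta}x$ satisfies $\dt(e^{rV_\theta}x)\big|_{r=0}=V_\theta(x)\ne 0$, so the curve has nonzero initial velocity. The quantitative statement I want is: there is $\delta_0>0$, uniform over $x\in K$ and $\theta\in S^{q-1}$ with $V_\theta(x)\ne 0$, such that $e^{rV_\theta}x\ne x$ for $0<r\le\delta_0$. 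The natural approach is a compactness/contradiction argument: if not, there are sequences $r_k\to 0^+$, $x_k\in K$, $\theta_k\in S^{q-1}$ with $V_{\theta_k}(x_k)\ne 0$ but $e^{r_kV_{\theta_k}}x_k=x_k$. Pass to subsequences so $x_k\to x_*\in K$ and $\theta_k\to\theta_*\in S^{q-1}$. One must handle the case $V_{\theta_*}(x_*)=0$ (where continuity of $V$ alone is not enough) versus $V_{\theta_*}(x_*)\ne 0$ (where the inverse function theorem on the $C^1$ flow map gives a contradiction directly). I expect this uniformity — especially near points where $V_{\theta_k}(x_k)$ is nonzero but tiny — to be the main obstacle.

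To make the argument robust I would instead argue directly with the integrated ODE. Write, using $e^{r V_\theta}x = x$,
\begin{equation*}
    0 = \int_0^r V_\theta(e^{\sigma V_\theta}x)\,d\sigma = r\,V_\theta(x) + \int_0^r\left(V_\theta(e^{\sigma V_\theta}x) - V_\theta(x)\right)d\sigma.
\end{equation*}
Let $L$ be an upper bound for the Lipschitz constant of every $\Yh_j$ on a fixed compact neighborhood $K'$ of $K$ (so $V_\theta$ is $L\sqrt{q}$-Lipschitz there, uniformly in $\theta$), and $\Lambda$ an upper bound for $\sup_{K'}|\Yh_j|$. For $r$ small enough (depending only on the distance from $K$ to $\partial K'$ and on $\Lambda$) the trajectory stays in $K'$ and $|e^{\sigma V_\theta}x-x|\le \Lambda q\,\sigma$ for $\sigma\in[0,r]$. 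Hence the error integral is bounded by $L\sqrt q\cdot\Lambda q\cdot r^2/2$, giving $r\,|V_\theta(x)|\le C r^2$ with $C$ depending only on $L,\Lambda,q$. But this inequality says $|V_\theta(x)|\le Cr$; since $e^{rV_\theta}x = x$ implies this for the \emph{given} $r$, we don't immediately get a contradiction for all small $r$ — rather we must use that the set of return times is, near $0$, governed by this. The clean fix: the estimate shows that if $0<r\le\delta_0:=\min\{\,\text{(stay-in-}K'\text{ bound)},\ \tfrac12 C^{-1}\inf\{|V_\theta(x)|:x\in K,\ \theta\in S^{q-1},\ V_\theta(x)\ne0\}\,\}$ — \emph{provided} that infimum is positive — then $r\,|V_\theta(x)|\le Cr^2\le \tfrac12 r\,|V_\theta(x)|$, a contradiction. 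So everything reduces to showing $\inf\{|V_\theta(x)| : x\in K,\ \theta\in S^{q-1},\ V_\theta(x)\ne 0\}>0$, and \emph{this} is the step that genuinely requires an idea beyond soft compactness, since the constraint set is not closed. Here I would stratify $K$ by the rank of the span of $X_1(x),\ldots,X_q(x)$, or equivalently argue chart-by-chart after a linear change making a fixed subset of the $X_j(x_*)$ a basis of the span near $x_*$; in a neighborhood of $x_*$ one then has $|V_\theta(x)|\ge c\,\mathrm{dist}(\theta,\ker\text{-directions at }x)$ with $c$ locally uniform, and one shows the return-time estimate forces $\theta$ to lie near those kernel directions — contradicting $e^{rV_\theta}x=x$ with $x=x$ actually returned — completing the contradiction. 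I expect this local stratification, ensuring uniformity as $V_{\theta_k}(x_k)$ degenerates, to be the crux of the proof.
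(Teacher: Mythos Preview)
Your first bullet is fine and matches the paper: Picard--Lindel\"of in a chart gives $\eta>0$.

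For the second bullet, your integrated-ODE approach is the right starting point, and in fact the paper's argument (packaged as a lemma about $C^1$-small vector fields, applied to $Z=rV_\theta$) amounts to exactly this computation done correctly. But your proposal contains a genuine gap, and your proposed repair goes in the wrong direction.

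The gap is in the displacement bound. You estimate $|e^{\sigma V_\theta}x-x|\le \Lambda q\,\sigma$ using the \emph{sup-norm} of $V_\theta$, which throws away the crucial information that when $|V_\theta(x)|$ is tiny the flow moves slowly. The correct bound uses the Lipschitz constant together with Gr\"onwall: from $\frac{d}{d\sigma}|e^{\sigma V_\theta}x-x|\le |V_\theta(x)|+L\sqrt{q}\,|e^{\sigma V_\theta}x-x|$ one gets $|e^{\sigma V_\theta}x-x|\le C\,|V_\theta(x)|\,\sigma$ for $\sigma\in[0,r]$ and $r$ bounded, with $C$ depending only on $L,q$ and an upper bound for $r$. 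Plugging this into your identity yields
\[
r\,|V_\theta(x)|\le L\sqrt{q}\int_0^r C\,|V_\theta(x)|\,\sigma\,d\sigma \le C' r^2\,|V_\theta(x)|.
\]
Since $V_\theta(x)\ne 0$ by hypothesis, divide by $|V_\theta(x)|$ to obtain $1\le C'r$, which fails for all $r\le\delta_0:=1/(2C')$. This $\delta_0$ depends only on $L$, $q$, and the radius needed to keep the flow in $K'$, so it is uniform over $x\in K$ and $\theta\in S^{q-1}$. No infimum of $|V_\theta(x)|$ is needed.

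By contrast, the quantity $\inf\{|V_\theta(x)|:x\in K,\ \theta\in S^{q-1},\ V_\theta(x)\ne0\}$ is typically zero (take $q=2$, $X_1=\partial_{x_1}$, $X_2=x_1\partial_{x_2}$ near $x_1=0$), so your stratification plan is chasing a statement that is false as written, and the sketch after it does not supply a replacement argument.
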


For the proof, see \cref{Section::Proofs::MoreAssump}.  \Cref{Prop::MoreAssumpt} shows that there always exist $\eta$ and $\delta_0$
as in \cref{Section::Series::Quant}.  However, the $\eta$ and $\delta_0$
guaranteed by \cref{Prop::MoreAssumpt} depend on the $C^1$ norms of $X_1,\ldots, X_q$
in some fixed coordinate system, and this is not invariant under diffeomorphisms.
It is important for some applications that $\eta$ and $\delta_0$
can be taken to be large in some settings even when the $C^1$ norms of $X_1,\ldots, X_q$
are large.  The next example gives a simple setting where this is the case.

\begin{example}
Take $q=1$, $\fM=\R$, $X_1=K\diff{x}$, for any $K\in \R\setminus \{0\}$ (we think of $K$ as large).
Then one can take $\eta=\delta_0=\infty$ in the assumptions in \cref{Section::Series::Quant}.
\end{example}

\section{Wedge Products}\label{Section::Wedge}

Let $Z$ be a one dimensional real vector space.
For $x,y\in Z$, $x\ne 0$ we define $\frac{y}{x}\in \R$ by
$\frac{y}{x}:=\frac{\lambda(y)}{\lambda(x)}$ where $\lambda:Z\rightarrow \R$
is any nonzero linear functional.  It is easy to see that $\frac{y}{x}$ is independent of the
choice of $\lambda$.

This allows us to formulate a ``coordinate free'' version of Cramer's rule.
Let $V$ be an $n$-dimensional vector space, so that $\bigwedge^n V$ is a one dimensional
vector space.  Let $x_1,\ldots, x_n\in V$ be a basis for $V$.  For any $y\in V$, we have
\begin{equation}\label{Eqn::Wedge::Cramer}
    y = \frac{y\wedge x_2\wedge x_3\wedge \cdots \wedge x_n}{x_1\wedge x_2\wedge \cdots \wedge x_n} x_1 + \frac{x_1\wedge y\wedge x_3\wedge \cdots \wedge x_n}{x_1\wedge x_2\wedge \cdots \wedge x_n} x_2+\cdots + \frac{x_1\wedge x_2\wedge \cdots \wedge x_{n-1} \wedge y}{x_1\wedge x_2\wedge \cdots \wedge x_n} x_n.
\end{equation}

Let $M$ be a $C^2$ manifold of dimension $n$.  Let $Y_1,\ldots, Y_n$ be $C^1$ vector fields in on $M$.
For another $C^1$ vector field $Z$, the Lie derivative of $Y_1\wedge Y_2\wedge \cdots \wedge Y_n$
with respect $Z$ is given by
\begin{equation*}
    \Lie{Z}(Y_1\wedge Y_2\wedge \cdots \wedge Y_n) = [Z,Y_1]\wedge Y_2\wedge Y_3\wedge \cdots\wedge Y_n
    + Y_1\wedge [Z,Y_2]\wedge Y_3\wedge \cdots \wedge Y_n +\cdots + Y_1\wedge Y_2\wedge \cdots \wedge Y_{n-1}\wedge [Z,Y_n].
\end{equation*}
Let $X_1,\ldots, X_n$ be $C^1$ vector fields on $M$ which span the tangent space near a point $x_0$.
Thus, near $x_0$, we may define a real valued function by
\begin{equation*}
    \frac{Y_1\wedge Y_2\wedge \cdots \wedge Y_n}{X_1\wedge X_2\wedge \cdots\wedge X_n}.
\end{equation*}
The derivative of this function with respect to $Z$ is exactly what one would expect as the
next lemma shows.
\begin{lemma}\label{Lemma::Wedge::DerivFrac}
$$Z \frac{Y_1\wedge Y_2\wedge \cdots \wedge Y_n}{X_1\wedge X_2\wedge \cdots\wedge X_n}
=\frac{\Lie{Z}(Y_1\wedge Y_2\wedge \cdots \wedge Y_n)}{X_1\wedge X_2\wedge \cdots \wedge X_n}
-\frac{Y_1\wedge Y_2\wedge \cdots \wedge Y_n}{X_1\wedge X_2\wedge \cdots \wedge X_n} \frac{\Lie{Z}(X_1\wedge X_2\wedge \cdots \wedge X_n)}{X_1\wedge X_2\wedge \cdots \wedge X_n}.$$
\end{lemma}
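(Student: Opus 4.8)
The plan is to reduce the statement to a computation in local coordinates together with the Leibniz rule for the Lie derivative acting on $n$-vector fields. Since the assertion is local near $x_0$ and both sides of the claimed identity are coordinate-free (quotients of elements of the one-dimensional space $\bigwedge^n T_yM$, as set up in \cref{Section::Wedge}), I may work in a fixed $C^2$ coordinate chart around $x_0$ in which $X_1,\ldots,X_n$ span the tangent space. In such a chart $\bigwedge^n T_yM$ is canonically identified with $\R$ via the standard volume form $dx_1\wedge\cdots\wedge dx_n$, so I can write $Y_1\wedge\cdots\wedge Y_n = g\,(dx_1\wedge\cdots\wedge dx_n)^{\vee}$ and $X_1\wedge\cdots\wedge X_n = f\,(dx_1\wedge\cdots\wedge dx_n)^{\vee}$ for scalar functions $f,g$ with $f\neq 0$ near $x_0$ (here $f$ and $g$ are, up to sign, the determinants of the coefficient matrices of the $X_i$'s and $Y_i$'s). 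The quotient in the statement is then literally $g/f$.

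First I would record the key fact that for the scalar function $f$ representing $X_1\wedge\cdots\wedge X_n$ one has $Z f = h_X f$ where $h_X$ is the scalar representing $\Lie{Z}(X_1\wedge\cdots\wedge X_n)$ relative to the same frame; i.e., $\dfrac{\Lie{Z}(X_1\wedge\cdots\wedge X_n)}{X_1\wedge\cdots\wedge X_n} = \dfrac{Zf}{f}$. This follows from the Leibniz rule for $\Lie{Z}$ on $n$-vectors (the displayed formula for $\Lie{Z}(Y_1\wedge\cdots\wedge Y_n)$ just above the lemma) combined with the fact that $\Lie{Z}$ on scalars is $Z$ and that $\Lie{Z}$ of the constant section $(dx_1\wedge\cdots\wedge dx_n)^{\vee}$ is accounted for by the same bracket expansion — concretely, expanding $\Lie{Z}(X_1\wedge\cdots\wedge X_n)$ in the basis using Cramer's rule \cref{Eqn::Wedge::Cramer} gives exactly $(Zf)/f$ times $X_1\wedge\cdots\wedge X_n$. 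The identical statement holds with $Y$ in place of $X$: $\dfrac{\Lie{Z}(Y_1\wedge\cdots\wedge Y_n)}{X_1\wedge\cdots\wedge X_n} = \dfrac{Zg}{f}$, since numerator and denominator are measured in the same frame.

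Then I would simply differentiate the quotient: $Z\left(\dfrac{g}{f}\right) = \dfrac{Zg}{f} - \dfrac{g}{f}\cdot\dfrac{Zf}{f}$, which is the ordinary quotient rule, valid because $f,g$ are $C^1$ and $f\neq 0$ near $x_0$. Substituting the two identifications from the previous step turns the right-hand side into exactly $\dfrac{\Lie{Z}(Y_1\wedge\cdots\wedge Y_n)}{X_1\wedge\cdots\wedge X_n} - \dfrac{Y_1\wedge\cdots\wedge Y_n}{X_1\wedge\cdots\wedge X_n}\cdot\dfrac{\Lie{Z}(X_1\wedge\cdots\wedge X_n)}{X_1\wedge\cdots\wedge X_n}$, as claimed. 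Finally I would note that the answer is independent of the chosen chart, either by appealing to the coordinate-free definition of the quotient of elements of a one-dimensional space, or by observing that a change of chart multiplies $f$ and $g$ by the same nonvanishing Jacobian factor, which cancels in every ratio appearing in the statement.

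The only mildly delicate point — and the one I would write out carefully rather than wave at — is the bracket-expansion identity $Zf = h_X f$, i.e. that expanding $[Z,X_1]\wedge X_2\wedge\cdots\wedge X_n + \cdots$ in the frame $X_1,\ldots,X_n$ via Cramer's rule \cref{Eqn::Wedge::Cramer} and collecting the diagonal coefficients reproduces $Zf/f$. This is a short linear-algebra computation (essentially the derivative-of-a-determinant formula, $Z\det = \det\cdot\operatorname{tr}(\text{coefficient matrix}^{-1} Z(\text{coefficient matrix}))$, rephrased invariantly), but it is the computational heart of the lemma; everything else is the quotient rule and invariance bookkeeping.
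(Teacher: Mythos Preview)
Your overall strategy---reduce to the quotient rule for $g/f$ in a local frame and identify the pieces with the Lie-derivative quotients---is the same as the paper's, but your ``key fact'' is false as stated. If $e=\partial_1\wedge\cdots\wedge\partial_n$ and $\fX=X_1\wedge\cdots\wedge X_n=f\,e$, then $\Lie{Z}\fX=(Zf)\,e+f\,\Lie{Z}e$, and $\Lie{Z}e=-(\operatorname{div}Z)\,e$ is \emph{not} zero in general. So
\[
\frac{\Lie{Z}\fX}{\fX}=\frac{Zf}{f}-\operatorname{div}Z,\qquad
\frac{\Lie{Z}\fY}{\fX}=\frac{Zg}{f}-\frac{g}{f}\operatorname{div}Z,
\]
not $Zf/f$ and $Zg/f$ as you claim. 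Your ``derivative-of-determinant'' heuristic gives $Zf/f=\operatorname{tr}(A^{-1}ZA)$, but the bracket $[Z,X_j]$ also contains the term $-a_j^k(\partial_k Z^l)\partial_l$, which contributes the extra $-\operatorname{div}Z$. The lemma is still true because this same $\operatorname{div}Z$ appears in both terms on the right-hand side and cancels, but your write-up asserts the wrong intermediate identity and never sees the cancellation.

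The paper's proof is exactly this computation with the bookkeeping done correctly: it picks any nonvanishing $C^1$ $n$-form $\nu$, writes $\fY/\fX=\nu(\fY)/\nu(\fX)$, applies the Leibniz rule $Z\,\nu(\fY)=(\Lie{Z}\nu)(\fY)+\nu(\Lie{Z}\fY)$, writes $\Lie{Z}\nu=h\nu$ for a scalar $h$ (this $h$ is your missing $\operatorname{div}Z$ term, up to sign and choice of $\nu$), and then the $h$-terms visibly cancel in the quotient rule. To fix your argument, either carry the $\Lie{Z}e$ term through honestly and exhibit the cancellation, or---simpler---replace the $n$-vector frame $e$ by an $n$-form as the paper does, so that the Leibniz rule puts the extra term in plain view.
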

\begin{proof}
Let $\fX=X_1\wedge X_2\wedge \cdots \wedge X_n$ and $\fY=Y_1\wedge Y_2\wedge \cdots \wedge Y_n$.
Let $\nu$ be any $C^1$ $n$-form which is nonzero near $x_0$, so that by definition
\begin{equation*}
    \frac{\fY}{\fX}=\frac{\nu(\fY)}{\nu(\fX)}.
\end{equation*}
Because $\nu$ is nonzero near $x_0$ (and the space of $n$-forms is one dimensional at each point), we may write $\Lie{Z}\nu = f\nu$ for some continuous function $f$ (near $x_0$); where here and in the rest of the paper
$\Lie{Z}$ denotes the Lie derivative with respect to $Z$.
Using \cite[Proposition 18.9]{LeeIntroToSmoothManifolds}, we have
\begin{equation*}
    Z \nu(\fY) = (\Lie{Z}\nu)(\fY) + \nu(\Lie{Z}\fY)
    = f\nu(\fY) + \nu(\Lie{Z}\fY).
\end{equation*}
and similarly with $\fY$ replaced by $\fX$.
We conclude
\begin{equation*}
\begin{split}
    Z\frac{\fY}{\fX}=Z\frac{\nu(\fY)}{\nu(\fX)}
    &=\frac{Z\nu(\fY)}{\nu(\fX)}
    -\frac{\nu(\fY)}{\nu(\fX)} \frac{Z\nu(\fX)}{\nu(\fX)}
    =\frac{f\nu(\fY)+\nu(\Lie{Z}\fY)}{\nu(\fX)} - \frac{\nu(\fY)}{\nu(\fX)}\frac{f\nu(\fX)+\nu(\Lie{Z}\fX) }{\nu(\fX)}
    \\&=\frac{\nu(\Lie{Z}\fY)}{\nu(\fX)} - \frac{\nu(\fY)}{\nu(\fX)} \frac{\nu(\Lie{Z}\fX)}{\nu(\fX)}
    =\frac{\Lie{Z}\fY}{\fX} - \frac{\fY}{\fX} \frac{\Lie{Z}\fX}{\fX},
\end{split}
\end{equation*}
completing the proof.
\end{proof}

\section{Densities}\label{Section::Densities}

Let $\chi\in (0,\xi]$ be as in \cref{Thm::Results::MainThm}.
In many applications, one is given a density on $B_{X_{J_0}}(x_0,\chi)$ and it is of interest
to measure certain sets with respect to this density.  For a quick introduction
to the basics of densities, we refer the reader to Guillemin's lecture notes \cite{GuilleminNotes}.

Let $\nu$ be a $C^1$ density on $B_{X_{J_0}}(x_0,\chi)$.  Suppose
\begin{equation}\label{Eqn::Desnity::Defnfj}
\Lie{X_j} \nu = f_j \nu, \quad 1\leq j\leq n, \quad f_j\in \CSpace{B_{X_{J_0}}(x_0,\chi)}.
\end{equation}
Our goal is to understand $\Phi^{*}\nu$ and $\nu(B_{X}(x_0,\xi_2))$ where $\Phi$ and $\xi_2$
are as in \cref{Thm::Results::MainThm}.

\begin{defn}
We say $C$ is a $0;\nu$-admissible constant if $C$ is a $0$-admissible constant which is also
allowed to depend on upper bounds for $\CNorm{f_j}{B_{X_{J_0}}(x_0,\chi)}$, $1\leq j\leq n$.
\end{defn}

\begin{defn}\label{Defn::Densities::1nuAdmissible}
We say $C$ is a $1;\nu$-admissible constant if $C$ is a $1$-admissible constant
which is also allowed to depend on upper bounds for $\CNorm{f_j}{B_{X_{J_0}}(x_0,\chi)}$, $1\leq j\leq n$.
\end{defn}

\begin{defn}
For $m_1,m_2\in \Z$, $s\in [0,1]$ if we say $C$ is an $\Had{m_1,m_2,s;\nu}$-admissible constant,
it means that we assume $f_j\in \HXSpace{X_{J_0}}{m_1}{s}[B_{X_{J_0}}(x_0,\chi)]$, and $C$
is an $\Had{m_1,m_2,s}$-admissible constant which is also allowed to depend on
upper bounds for $\HXNorm{f_j}{X_{J_0}}{m_1}{s}[B_{X_{J_0}}(x_0,\chi)]$, $1\leq j\leq n$.
\end{defn}

\begin{defn}
For $s_1>0$, $s_2\in \R$, if we say $C$ is an $\Zygad{s_1,s_2;\nu}$-admissible constant, it means
that we assume $f_j\in \ZygXSpace{X_{J_0}}{s_1}[B_{X_{J_0}}(x_0,\chi)]$, and $C$
is an $\Zygad{s_1,s_2}$-admissible constant which is also allowed to depend on
upper bounds for $\ZygXNorm{f_j}{X_{J_0}}{s_1}[B_{X_{J_0}}(x_0,\chi)]$, $1\leq j\leq n$.
For $s_1\leq 0$, $s_2\in \R$, if we say $C$ is an $\Zygad{s_1,s_2;\nu}$-admissible constant, it means
 $C$
is an $\Zygad{s_1,s_2}$-admissible constant which is also allowed to depend on
upper bounds for $\CNorm{f_j}{B_{X_{J_0}}(x_0,\chi)}$, $1\leq j\leq n$.
\end{defn}

We write $A\lesssim_{0;\nu} B$ for $A\leq C B$ where $C$ is a positive $0;\nu$-admissible constant,
and write $A \approx_{0;\nu} B$ for $A\lesssim_{0;\nu} B$ and $B\lesssim_{0;\nu} A$.
We define $\lesssim_{1;\nu}$, $\approx_{1;\nu}$, $\lesssim_{\Had{m_1,m_2,s;\nu}}$, $\approx_{\Had{m_1,m_2,s;\nu}}$, $\lesssim_{\Zygad{s_1,s_2;\nu}}$,
and $\approx_{\Zygad{s_1,s_2;\nu}}$ similarly.

To help understand $\nu$, we use a distinguished density $\nu_0$ on $B_{X_{J_0}}(x_0,\chi)$:
\begin{equation}\label{Eqn::Density::Defnnu0}
    \nu_0(Z_1,\ldots, Z_n) := \left|\frac{Z_1\wedge Z_2\wedge \cdots \wedge Z_n}{X_1\wedge X_2\wedge \cdots \wedge X_n}\right|,
\end{equation}
note that $\nu_0$ is defined since $X_1\wedge X_2\wedge \cdots \wedge X_n$ is never zero on $B_{X_{J_0}}(x_0,\chi)$ by \cref{Thm::Results::MainThm} \cref{Item::Results::WedgeNonzero}; $\nu_0$ is clearly a density.

\begin{thm}\label{Thm::Density::MainThm}
There exists $g\in \CSpace{B_{X_{J_0}}(x_0,\chi)}$ such that
$\nu = g\nu_0$ and
\begin{enumerate}[label=(\roman*),series=densitytheoremenumeration]
\item\label{Item::Desnity::gconst} $g(x)\approx_{0;\nu} g(x_0)=\nu(X_1,\ldots, X_n)(x_0)$, $\forall x\in B_{X_{J_0}}(x_0,\chi)$.  In particular, $g$ always has the same sign,
and is either never zero or always zero.
\item\label{Item::Desnity::greg} We have the following regularity on $g$:
\begin{itemize}
\item For $m\in \N$, $s\in [0,1]$, we have $\HXNorm{g}{X_{J_0}}{m}{s}[B_{X_{J_0}}(x_0,\chi)]\lesssim_{\Had{m-1,m-1,s;\nu}} |\nu(X_1,\ldots, X_n)(x_0)|$.
\item For $s>0$,  we have $\ZygXNorm{g}{X_{J_0}}{s}[B_{X_{J_0}}(x_0,\chi)]\lesssim_{\Zygad{s-1,s-1;\nu}} |\nu(X_1,\ldots, X_n)(x_0)|$.
\end{itemize}
\end{enumerate}
Define $h\in \CjSpace{1}[B^n(\eta_1)]$ by $\Phi^{*}\nu=h \LebDensity$, where $\LebDensity$ denotes
the usual Lebesgue density on $\R^n$.
\begin{enumerate}[resume*=densitytheoremenumeration]
\item\label{Item::Density::hconst} $h(t)\approx_{0;\nu} \nu(X_1,\ldots, X_n)(x_0)$, $\forall t\in B^n(\eta_1)$.  In particular, $h$
always has the same sign and is either never zero or always zero.
\item\label{Item::Density::hReg} We have the following regularity on $h$:
\begin{itemize}
\item For $m\in \N$, $s\in [0,1]$, $\HNorm{h}{m}{s}[B^{n}(\eta_1)]\lesssim_{\Had{m,m-1,s;\nu}} |\nu(X_1,\ldots, X_n)(x_0)|$.
\item For $s>0$, $\ZygNorm{h}{s}[B^n(\eta_1)]\lesssim_{\Zygad{s,s-1;\nu}} |\nu(X_1,\ldots, X_n)(x_0)|$.
\end{itemize}
\end{enumerate}
\end{thm}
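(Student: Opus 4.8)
The plan is to identify $g$ explicitly, extract from \cref{Eqn::Desnity::Defnfj} a first order linear ODE satisfied by $g$ along the flows of $X_1,\dots,X_n$, read off \cref{Item::Desnity::gconst,Item::Desnity::greg} from that ODE, and then obtain \cref{Item::Density::hconst,Item::Density::hReg} by a short computation of $\Phi^{*}\nu_0$. Set $\fX:=X_1\wedge\cdots\wedge X_n$, which is nowhere vanishing on $B_{X_{J_0}}(x_0,\chi)$ by \cref{Thm::Results::MainThm} \cref{Item::Results::WedgeNonzero}; hence $\nu_0$ of \cref{Eqn::Density::Defnnu0} is nowhere vanishing, and setting $g:=\nu(X_1,\dots,X_n)$ we have $\nu=g\nu_0$ (two densities agreeing on the frame $X_1,\dots,X_n$, along which $\nu_0\equiv 1$), with $g(x_0)=\nu(X_1,\dots,X_n)(x_0)$. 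The first real step is to compute $\Lie{X_j}\nu_0$: writing $\nu_0=|\omega|$ with $\omega$ the $n$-form dual to $\fX$, differentiating the identity $\omega(\fX)\equiv 1$ along $X_j$ (exactly as in the proof of \cref{Lemma::Wedge::DerivFrac}) gives $\Lie{X_j}\nu_0=-\tfrac{\Lie{X_j}\fX}{\fX}\,\nu_0$, and \cref{Eqn::Wedge::Cramer} identifies $\tfrac{\Lie{X_j}\fX}{\fX}$ with $\sigma_j:=\sum_{i=1}^{n}\tfrac{X_1\wedge\cdots\wedge[X_j,X_i]\wedge\cdots\wedge X_n}{\fX}$ (with $[X_j,X_i]$ in the $i$th slot). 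Combining the Leibniz rule $\Lie{X_j}(g\nu_0)=(X_jg)\nu_0+g\,\Lie{X_j}\nu_0$ with $\Lie{X_j}\nu=f_j\nu$ yields the key identity $X_jg=(f_j+\sigma_j)\,g$, $1\le j\le n$.

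Next I would pin down the size and regularity of the coefficient $f_j+\sigma_j$. Expanding $[X_j,X_i]$ in the basis $X_1,\dots,X_n$ and substituting $X_l=\sum_m\beta_l^mX_m$ for $n<l\le q$ gives $\sigma_j=\sum_i\bigl(c_{j,i}^i+\sum_{l>n}c_{j,i}^l\beta_l^i\bigr)$; each $\beta_l^i$ is a wedge-product ratio $\bigwedge X_J/\bigwedge X_{J_0}$, hence bounded by a $0$-admissible constant by \cref{Thm::Results::MainThm} \cref{Item::Results::BigWedge}, and by \cref{Lemma::Wedge::DerivFrac} it solves an $X_{J_0}$-ODE whose coefficients are built from the $c_{j,k}^l$; by the regularity theory for such ODEs (the same underlying \cref{Thm::Results::MainThm} \cref{Item::Results::bklReg,Item::Results::cjklReg}) $\beta_l^i$, and therefore $\sigma_j$, lies in $\HXSpace{X_{J_0}}{m-1}{s}$ with norm $\lesssim_{\Had{m-1,m-1,s}}1$ and in $\ZygXSpace{X_{J_0}}{s-1}$ with norm $\lesssim_{\Zygad{s-1,s-1}}1$; with the hypotheses on $f_j$ this gives $\sup|f_j+\sigma_j|\lesssim_{0;\nu}1$ and the analogous $\Had{m-1,m-1,s;\nu}$ and $\Zygad{s-1,s-1;\nu}$ bounds. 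For \cref{Item::Desnity::gconst}: given $y\in B_{X_{J_0}}(x_0,\chi)$, choose a curve $\gamma:[0,1]\to B_{X_{J_0}}(x_0,\chi)$ from $x_0$ to $y$ with $\gamma'=\chi\sum_j a_jX_j(\gamma)$ and $\sum_j|a_j|^2<1$; integrating the ODE along $\gamma$ gives $g(y)=g(x_0)\exp\!\bigl(\chi\int_0^1\sum_j a_j(f_j+\sigma_j)(\gamma)\bigr)$, whose exponent is bounded in absolute value by a $0;\nu$-admissible constant (using $\chi\le\xi\le1$), so $g(y)\approx_{0;\nu}g(x_0)$ and $g$ is of constant sign and either nowhere or everywhere zero. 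For \cref{Item::Desnity::greg} I would run a standard bootstrap on $X_jg=(f_j+\sigma_j)g$: since $X_jg\in L^\infty$, $g$ is $\rho$-Lipschitz, hence $g\in\HXSpace{X_{J_0}}{0}{1}[B_{X_{J_0}}(x_0,\chi)]$; then, using that $\HXSpace{X_{J_0}}{k}{s}$ and $\ZygXSpace{X_{J_0}}{s}$ are multiplication algebras and that $f_j+\sigma_j$ lies one derivative-class below the target, $X_jg\in\HXSpace{X_{J_0}}{k}{s}$ whenever $g\in\HXSpace{X_{J_0}}{k}{s}$ with $k\le m-1$, so $g\in\HXSpace{X_{J_0}}{k+1}{s}$; iterating up to $k=m$ (and likewise in the Zygmund scale) gives $\HXNorm{g}{X_{J_0}}{m}{s}[B_{X_{J_0}}(x_0,\chi)]\lesssim_{\Had{m-1,m-1,s;\nu}}|g(x_0)|$ and $\ZygXNorm{g}{X_{J_0}}{s}[B_{X_{J_0}}(x_0,\chi)]\lesssim_{\Zygad{s-1,s-1;\nu}}|g(x_0)|$, which is \cref{Item::Desnity::greg}.

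For the pullback statements, write $\Phi^{*}\nu=(g\circ\Phi)\,\Phi^{*}\nu_0$. Since $\Phi^{*}X_i=Y_i$ and $Y_{J_0}=(I+A)\grad$, the frame $(X_1\circ\Phi,\dots,X_n\circ\Phi)$ is $(I+A)$ applied to $(d\Phi(\partial_{t_1}),\dots,d\Phi(\partial_{t_n}))$, so $(\Phi^{*}\nu_0)(\partial_{t_1},\dots,\partial_{t_n})=|\det(I+A)|^{-1}$, i.e.\ $\Phi^{*}\nu_0=|\det(I+A)|^{-1}\LebDensity$ and $h=(g\circ\Phi)/\det(I+A)$, the absolute value being superfluous because $\Norm{A}[\M^{n\times n}]\le\tfrac12$ (\cref{Thm::Results::MainThm} \cref{Item::Results::ASize}) forces $\det(I+A)>0$. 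Since $\det(I+A)\approx_0 1$, \cref{Item::Density::hconst} is immediate from \cref{Item::Desnity::gconst}. For \cref{Item::Density::hReg}: $A$ inherits the regularity of the $Y_j$, namely $\HNorm{A}{m}{s}[B^n(\eta_1)][\M^{n\times n}]\lesssim_{\Had{m,m-1,s}}1$ and $\ZygNorm{A}{s}[B^n(\eta_1)][\M^{n\times n}]\lesssim_{\Zygad{s,s-1}}1$ by \cref{Thm::Results::MainThm} \cref{Item::Results::YReg}, hence $\det(I+A)^{-1}$ has the same regularity (reciprocals of functions bounded away from $0$ preserve $\HSpace{m}{s}$ and $\ZygSpace{s}$), while $g\circ\Phi$ inherits the regularity of $g$ via the norm comparison \cref{Thm::Results::MainThm} \cref{Item::Results::AbstractNorm}; multiplying and collecting admissible constants (the extra derivative in the first slot coming from $A$) gives $\HNorm{h}{m}{s}[B^n(\eta_1)]\lesssim_{\Had{m,m-1,s;\nu}}|\nu(X_1,\dots,X_n)(x_0)|$ and $\ZygNorm{h}{s}[B^n(\eta_1)]\lesssim_{\Zygad{s,s-1;\nu}}|\nu(X_1,\dots,X_n)(x_0)|$, which is \cref{Item::Density::hReg}.

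The conceptual heart of the argument is the identity $X_jg=(f_j+\sigma_j)g$, which is short once one has the density analogue of \cref{Lemma::Wedge::DerivFrac}. I expect the main obstacle to be twofold: first, verifying that $\sigma_j$ --- and hence every structure function entering the ODE --- has precisely the claimed regularity with the claimed admissible constants \emph{directly on} $B_{X_{J_0}}(x_0,\chi)$ rather than merely after pulling back through $\Phi$, which forces one to run the wedge-ratio estimates behind \cref{Thm::Results::MainThm} \cref{Item::Results::bklReg,Item::Results::cjklReg} in the abstract setting; and second, the bookkeeping in the bootstrap, namely propagating regularity of $f_j+\sigma_j$ into regularity of $g$ while tracking exactly which quantities the constants may depend on --- pinning down the index pair $(m,m-1)$ versus $(m-1,m-1)$ is the finicky point.
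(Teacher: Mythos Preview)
Your proposal is correct and follows essentially the same route as the paper: the paper derives $X_j g=(f_j-f_j^0)g$ with $f_j^0=-\tfrac{\Lie{X_j}\fX}{\fX}$ (your $-\sigma_j$), integrates this ODE along a curve for \cref{Item::Desnity::gconst}, bootstraps for \cref{Item::Desnity::greg}, and then writes $h=(g\circ\Phi)\det(I+A)^{-1}$ to obtain \cref{Item::Density::hconst,Item::Density::hReg}. The wedge-ratio regularity of $\sigma_j$ that you flag as the main obstacle is exactly what the paper establishes separately (via \cref{Lemma::PfLD::LieWedge,Lemma::PfLD::NormsOfWedges}) before turning to the density argument, so your anticipation of where the work lies is accurate.
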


\begin{cor}\label{Cor::Desnity::MeasureSets}
Let $\xi_2$ be as in \cref{Thm::Results::MainThm}.  Then,
\begin{equation}\label{Cor::Density::ToShowMeasure1}
\nu(B_{X_{J_0}}(x_0,\xi_2))\approx_{1;\nu} \nu(B_X(x_0,\xi_2))\approx_{1;\nu} \nu(X_1,\ldots, X_n)(x_0),
\end{equation}
and therefore,
\begin{equation}\label{Cor::Density::ToShowMeasure2}
|\nu(B_{X_{J_0}}(x_0,\xi_2))|\approx_{1;\nu} |\nu(B_X(x_0,\xi_2))|\approx_{1;\nu} |\nu(X_1,\ldots, X_n)(x_0)|
\approx_{0} \max_{(j_1,\ldots, j_n)\in \sI(n,q)} |\nu(X_{j_1},\ldots, X_{j_n})(x_0)|.
\end{equation}
\end{cor}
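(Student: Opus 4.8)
The plan is to transfer everything to Euclidean space through the map $\Phi$ and the density identity $\Phi^{*}\nu = h\LebDensity$ from \cref{Thm::Density::MainThm}, reducing the claim to a pair of elementary volume bounds for $\Phi^{-1}$ of the relevant Carnot--Carath\'eodory balls. First I would record a change-of-variables fact. Since $X_{J_0}$ is a sublist of $X$, we have $B_{X_{J_0}}(x_0,r) \subseteq B_X(x_0,r)$ for every $r > 0$, so \cref{Thm::Results::MainThm} \cref{Item::Results::xi2} yields
\[
B_{X_{J_0}}(x_0,\xi_2) \subseteq B_X(x_0,\xi_2) \subseteq B_{X_{J_0}}(x_0,\xi_1) \subseteq \Phi(B^n(\eta_1)).
\]
These balls are open, hence Borel, and $\Phi \colon B^n(\eta_1) \to \Phi(B^n(\eta_1))$ is a $C^2$ diffeomorphism, so $\nu(S) = \int_{\Phi^{-1}(S)} h(t)\,dt$ for any such $S$. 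By \cref{Thm::Density::MainThm} \cref{Item::Density::hconst}, $h$ has constant sign on $B^n(\eta_1)$ with $h \approx_{0;\nu} \nu(X_1,\ldots,X_n)(x_0)$ pointwise; integrating,
\[
\nu(S) \approx_{0;\nu} \nu(X_1,\ldots,X_n)(x_0)\, \Vol{\Phi^{-1}(S)}.
\]
Hence it remains to show $\Vol{\Phi^{-1}(S)} \approx_1 1$ for $S \in \{B_{X_{J_0}}(x_0,\xi_2),\, B_X(x_0,\xi_2)\}$; combining this with the previous display (a $0;\nu$-admissible constant times a $1$-admissible constant being $1;\nu$-admissible) produces \cref{Cor::Density::ToShowMeasure1}.

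For the upper bound, $\Phi^{-1}(S) \subseteq B^n(\eta_1)$, so $\Vol{\Phi^{-1}(S)} \le \Vol{B^n(\eta_1)} \lesssim_1 1$ because $\eta_1$ is $1$-admissible. For the lower bound I would show $\Phi(B^n(r)) \subseteq B_{X_{J_0}}(x_0,\xi_2)$, where $r := \min\{\xi_2,\eta_1\}$: for $t \in \R^n$ with $|t| < r$, the curve $\gamma(u) := \Phi(ut) = e^{u(t_1 X_1 + \cdots + t_n X_n)} x_0$, $u \in [0,1]$, runs inside $M = B_X(x_0,\xi)$ (one checks it is defined on $[0,1]$ using $|ut| \le |t| < \eta_1$ and that $X_{J_0}$ satisfies $\sC(x_0,\eta,\fM)$, together with $\Phi(B^n(\eta_1)) \subseteq B_X(x_0,\xi)$), joins $x_0$ to $\Phi(t)$, and satisfies
\[
\gamma'(u) = \sum_{j=1}^{n} \frac{t_j}{\xi_2}\, \xi_2\, X_j(\gamma(u)), \qquad \sum_{j=1}^{n} \Bigl|\frac{t_j}{\xi_2}\Bigr|^{2} = \frac{|t|^{2}}{\xi_2^{2}} < 1,
\]
which by \cref{Eqn::FundSpaceM::DefnCCBall} places $\Phi(t)$ in $B_{X_{J_0}}(x_0,\xi_2)$. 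Therefore $B^n(r) \subseteq \Phi^{-1}(B_{X_{J_0}}(x_0,\xi_2)) \subseteq \Phi^{-1}(B_X(x_0,\xi_2))$, and since $r$ is a positive $1$-admissible constant, $\Vol{\Phi^{-1}(S)} \ge \Vol{B^n(r)}$ is bounded below by a positive $1$-admissible constant for both choices of $S$. This finishes \cref{Cor::Density::ToShowMeasure1}.

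For \cref{Cor::Density::ToShowMeasure2}, taking absolute values in \cref{Cor::Density::ToShowMeasure1} gives the first two equivalences. For the last, write $\nu = g\nu_0$ as in \cref{Thm::Density::MainThm}; by the definition \cref{Eqn::Density::Defnnu0} of $\nu_0$ together with $g(x_0) = \nu(X_1,\ldots,X_n)(x_0)$ (see \cref{Item::Desnity::gconst}), for $J = (j_1,\ldots,j_n) \in \sI(n,q)$ we have
\[
|\nu(X_{j_1},\ldots,X_{j_n})(x_0)| = |\nu(X_1,\ldots,X_n)(x_0)| \left| \frac{\bigwedge X_J(x_0)}{\bigwedge X_{J_0}(x_0)} \right|,
\]
and since $1 \le \max_{J \in \sI(n,q)} \bigl| \bigwedge X_J(x_0) / \bigwedge X_{J_0}(x_0) \bigr| \le \zeta^{-1}$ with $\zeta^{-1}$ a $0$-admissible quantity, the maximum over $J$ of the left-hand side is $\approx_0 |\nu(X_1,\ldots,X_n)(x_0)|$.

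The step I expect to need the most care is the lower volume bound: verifying that the constant-coefficient curve $\gamma$ above is genuinely defined on all of $[0,1]$ and remains in the manifold $B_X(x_0,\xi)$, so that the membership $\Phi(t) \in B_{X_{J_0}}(x_0,\xi_2)$ is valid, and keeping the sign conventions and the arithmetic of the admissible-constant classes straight so that the composed estimates land in $\approx_{1;\nu}$ and $\approx_0$ as asserted; everything else is bookkeeping around \cref{Thm::Results::MainThm} and \cref{Thm::Density::MainThm}.
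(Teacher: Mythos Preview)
Your argument is correct and follows the same overall scheme as the paper: pull back through $\Phi$, use $h\approx_{0;\nu}\nu(X_1,\ldots,X_n)(x_0)$, and reduce to showing $\Vol{\Phi^{-1}(S)}\approx_1 1$. The one genuine difference is in the lower volume bound. The paper obtains it by invoking \cref{Cor::MainThm::MoreBalls}, which reapplies \cref{Thm::Results::MainThm} with $\xi$ replaced by $\xi_2$ to produce a $1$-admissible $\eta_2>0$ with $\Phi(B^n(\eta_2))\subseteq B_{X_{J_0}}(x_0,\xi_2)$. You instead observe directly from the exponential-map definition $\Phi(t)=e^{t\cdot X_{J_0}}x_0$ that $\Phi(B^n(r))\subseteq B_{X_{J_0}}(x_0,\xi_2)$ for $r=\min\{\xi_2,\eta_1\}$, via the straight-line curve $u\mapsto\Phi(ut)$. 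Your route is more elementary and self-contained; the paper's route reuses an existing corollary rather than reopening the definition. For the final $\approx_0$ in \cref{Cor::Density::ToShowMeasure2}, your computation via $\nu=g\nu_0$ and the wedge-quotient bound $1\le\max_J|\bigwedge X_J(x_0)/\bigwedge X_{J_0}(x_0)|\le\zeta^{-1}$ is exactly the paper's argument, just with \cref{Lemma::PfDense::Evalnu0} unpacked inline.
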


\section{Scaling and other consequences}\label{Section::Scale}

The main results of this series have two facets:
\begin{itemize}
\item (Smoothness) They provide a coordinate system in which given $C^1$ vector fields have an optimal degree of smoothness.
\item (Scaling) They provide a coordinate system in which given vector fields are normalized in a way which is useful for applying techniques from analysis.
\end{itemize}
In both cases, the results are in many ways optimal:  they provide necessary and sufficient, diffeomorphic invariant conditions under which one can obtain such coordinate charts.
In this section, we describe these two facets.

When viewed as providing a coordinate system in which vector fields have an optimal level of smoothness, these results seem to be of a new type.
When viewed as scaling maps, these results
take their roots in the quantitative study of sub-Riemannian
(aka Carnot-Carath\'eodory) geometry initiated by Nagel, Stein, and Wainger
\cite{NagelSteinWaingerBallsAndMetrics}.  Since Nagel, Stein, and Wainger's original work,
these ideas have had a significant impact on various questions in harmonic analysis
(see the discussion at the end of Chapter 2 of \cite{StreetMultiParamSingInt}
for a detailed history of these ideas).
Following Nagel, Stein, and Wainger's work, Tao and Wright \cite{TaoWrightLpImproving}
generalized Nagel, Stein, and Wainger's ideas and provided a new approach to proving their results.
In \cite{S}, the second author combined these two approaches to prove results in more
general settings; these more general results have already had several applications, for example
\cite{SteinStreetA,SteinStreetI,SteinStreetII,SteinStreetIII,SteinStreetS,StreetMultiParamSingInt,GressmanScalarOscillatoryIntegrals,StovallUniformLpImprovingForWeightedAveragesOnCurves}.

    \subsection{Classical sub-Riemmanian geometries and the work of Nagel, Stein, and Wainger}\label{Section::Scale::NSW}

In this section, we describe the foundational work of Nagel, Stein, and Wainger
\cite{NagelSteinWaingerBallsAndMetrics}, and see how it is a special case
of \cref{Thm::Results::MainThm}.  This provides the simplest non-trivial setting where the results
in this paper can be seen as providing scaling maps adapted to a sub-Riemannian geometry.
In \cref{Section::Scale::GenSubR}, we generalize these results to more general geometries.

Let $X_1,\ldots, X_q$ be $C^\infty$ vector fields on an open set $\Omega\subseteq \R^n$;
we assume $X_1,\ldots, X_q$ span the tangent space at every point of $\Omega$.
To each $X_j$ assign a formal degree $d_j\in [1,\infty)$.
We assume
\begin{equation}\label{Eqn::NSW::MainAssump}
    [X_j,X_k]=\sum_{d_l\leq d_j+d_k} c_{j,k}^l X_l, \quad c_{j,k}^l\in C^\infty(\Omega).
\end{equation}

We write $(X,d)$ for the list $(X_1,d_1),\ldots, (X_q,d_q)$ and for $\delta>0$
write $\delta^dX$ for the list of vector fields $\delta^{d_1} X_1,\ldots, \delta^{d_q} X_q$.
The sub-Riemannian ball associated to $(X,d)$ centered at $x_0\in \Omega$
of radius $\delta>0$ is defined by
\begin{equation*}
    B_{(X,d)}(x_0,\delta):=B_{\delta^d X}(x_0,1),
\end{equation*}
where the later ball is defined by \cref{Eqn::FundSpaceM::DefnCCBall}.  $B_{(X,d)}(x_0,\delta)$
is an open subset of $\Omega$.  It is easy to see that the balls $B_{(X,d)}(x,\delta)$ are metric balls.



Define, for $x\in \Omega$, $\delta\in (0,1]$,
\begin{equation*}
	\Lambda(x,\delta):=\max_{j_1,\ldots, j_n\in \{1,\ldots, q\}}  \left|\det \left( \delta^{d_{j_1}} X_{j_1}(x) | \cdots | \delta^{d_{j_n}} X_{j_n}(x)\right)\right|.
\end{equation*}
For each $x\in \Omega$, $\delta\in (0,1]$, pick $j_1=j_1(x,\delta),\ldots, j_n=j_n(x,\delta)$ so that
\begin{equation*}
	\left|\det \left( \delta^{d_{j_1}} X_{j_1}(x) | \cdots | \delta^{d_{j_n}} X_{j_n}(x)\right)\right|=\Lambda(x,\delta).
\end{equation*}
For this choice of $j_1=j_1(x,\delta),\ldots, j_n=j_n(x,\delta)$,
set
\begin{equation*}
    \Phi_{x,\delta}(t_1,\ldots, t_n):=\exp\left(t_1 \delta^{d_{j_1}} X_{j_1}+ \cdots + t_n \delta^{d_{j_n}} X_{j_n}\right)x.
\end{equation*}

\begin{thm}[\cite{NagelSteinWaingerBallsAndMetrics}]\label{Thm::NSW::MainThm}
Fix a compact set $\sK\Subset \Omega$.\footnote{Here, and in the rest of the paper, we write $\sK\Subset \Omega$ to mean that $\sK$ is a relatively compact subset of $\Omega$.}  
In what follows, we write $A\lesssim B$ for $A\leq CB$ where $C$ is a positive constant which may depend on $\sK$,
but does not depend on the particular point $x\in \sK$ or the scale $\delta\in (0,1]$.
There exist $\eta_1,\xi_0\approx 1$, such that $\forall x\in \sK$,
\begin{enumerate}[(i)]
\item\label{Item::NSW::EstVol} $\LebDensity(B_{(X,d)}(x,\delta))\approx \Lambda(x,\delta)$, $\forall \delta\in (0,\xi_0]$.
\item\label{Item::NSW::Doubling} $\LebDensity(B_{(X,d)}(x,2\delta))\lesssim \LebDensity(B_{(X,d)}(x,\delta))$, $\forall \delta\in (0,\xi_0/2]$.
\item $\forall \delta\in (0,1]$, $\Phi_{x,\delta}(B^n(\eta_1))\subseteq \Omega$ is open and $\Phi_{x,\delta}:B^n(\eta_1)\rightarrow \Phi_{x,\delta}(B^n(\eta_1))$ is a $C^\infty$ diffeomorphism.
\item\label{Item::NSW::Jacobian} $|\det d\Phi_{x,\delta}(t)|\approx \Lambda(x,\delta)$, $\forall t\in B^n(\eta_1)$.
\item\label{Item::NSW::Image} $B_{(X,d)}(x,\xi_0\delta)\subseteq \Phi_{x,\delta}(B^n(\eta_1))\subseteq B_{(X,d)}(x,\delta)$, $\forall \delta\in (0,1]$.
\item\label{Item::NSW::Pullback} Let $Y_j^{x,\delta}:=\Phi^{*}_{x,\delta} \delta^{d_j} X_j$, so that $Y_j^{x,\delta}$ is a $C^\infty$ vector field
on $B^n(\eta_1)$.  We have
\begin{equation*}
    \BCjNorm{Y_j^{x,\delta}}{m}[B^n(\eta_1);\R^n]\lesssim 1,\quad \forall m\in \N,
\end{equation*}
where the implicit constant depends on $m$, by not on $x\in \sK$ or $\delta\in (0,1]$.   Finally, $Y_1^{x,\delta}(u),\ldots, Y_q^{x,\delta}(u)$ span
$T_uB^n(\eta_1)$,
uniformly in $x$, $\delta$, and $u$, in the sense that
\begin{equation*}
    \max_{j_1,\ldots, j_n\in \{1,\ldots, q\}} \inf_{u\in B^n(\eta_1)} \left|\det \left(Y_{j_1}^{x,\delta}(u)| \cdots | Y_{j_n}^{x,\delta}(u)\right)\right|\approx 1.
\end{equation*}
\end{enumerate}
\end{thm}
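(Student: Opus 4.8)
The plan is to derive \cref{Thm::NSW::MainThm} from \cref{Thm::Results::MainThm} and \cref{Thm::Density::MainThm} by applying those results, at each point $x\in\sK$ and each scale $\delta\in(0,1]$, to the rescaled list of vector fields $W=\delta^dX:=\delta^{d_1}X_1,\dots,\delta^{d_q}X_q$ on $\Omega$, and then verifying that every parameter and admissible constant that arises is controlled in a way that is independent of $x$ and $\delta$.

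First I would assemble the rescaled data at a given $x$ and $\delta$. Choose a tuple $J_0=(j_1(x,\delta),\dots,j_n(x,\delta))$ maximizing $\lvert\det(\delta^{d_{j_1}}X_{j_1}(x)\mid\cdots\mid\delta^{d_{j_n}}X_{j_n}(x))\rvert$; since in $\R^n$ a quotient of wedge products at a point equals a quotient of determinants, this is precisely the choice for which \cref{Eqn::Series::Quant::J0} holds for $W$ with $\zeta=1$, and after relabeling we may take $J_0=(1,\dots,n)$. From \cref{Eqn::NSW::MainAssump} one computes $[\delta^{d_j}X_j,\delta^{d_k}X_k]=\sum_{d_l\le d_j+d_k}\delta^{d_j+d_k-d_l}c_{j,k}^l\,(\delta^{d_l}X_l)$, so the structure constants of $W$ may be taken to be $\delta^{d_j+d_k-d_l}c_{j,k}^l$ when $d_l\le d_j+d_k$ and $0$ otherwise. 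The key observation is that, because every $d_j\ge1$ and $\delta\le1$, applying any word in $W_1,\dots,W_q$ to such a structure constant multiplies a Euclidean derivative of $c_{j,k}^l$ by a \emph{nonnegative} power of $\delta$; hence every Euclidean $C^m$ norm of it, and therefore (by bounding $\HXSpace{W_{J_0}}{m}{s}$ and $\ZygXSpace{W_{J_0}}{s}$ norms by a $C^{m+1}$-type norm via integration along $\rho_W$-curves) every norm of it entering an admissible constant of \cref{Thm::Results::MainThm}, is $\lesssim\sup_{\sK'}\sum_{\lvert\alpha\rvert\le m+1}\lvert\partial^\alpha c_{j,k}^l\rvert$ for a fixed compact $\sK'\Subset\Omega$. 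Next I would check that the remaining parameters for $W$ can be taken $\approx1$ uniformly: $\zeta=1$ by construction; a Gr\"onwall estimate using only $C^0$ bounds on the $X_j$ near $\sK$ gives $\xi\approx1$ with $B_W(x,\xi)\subseteq B_X(x,\xi)\subseteq\sK'\Subset\Omega$ for every $x\in\sK$ and $\delta\le1$ (which also lets us replace sup norms over $B_W(x,\xi)$ by sup norms over $\sK'$); $\eta\approx1$ follows from standard ODE existence since the $W_j$ have uniformly bounded Euclidean $C^1$ norms; and for $\delta_0$, writing $\sum_j\theta_j\delta^{d_j}X_j=\lvert\theta^\delta\rvert\sum_j\hat\theta_jX_j$ with $\lvert\theta^\delta\rvert=(\sum_j\theta_j^2\delta^{2d_j})^{1/2}\le1$ reduces the injectivity condition for $W$ to the one for $X$, so the $\delta_0$ supplied by \cref{Prop::MoreAssumpt} for $\sK'$ works for every $\delta\le1$. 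Consequently \cref{Thm::Results::MainThm} and \cref{Thm::Density::MainThm} apply to $W$ at $x$ with admissible constants bounded uniformly in $x\in\sK$ and $\delta\in(0,1]$.

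It then remains to translate the output into the assertions (i)--(vi). The map $\Phi$ produced by \cref{Thm::Results::MainThm} for $W$ at $x$ is exactly $\Phi_{x,\delta}$, so its diffeomorphism properties give \cref{Thm::NSW::MainThm}\,(iii) (with smoothness improved from $C^2$ to $C^\infty$ since the $X_j$ are $C^\infty$). For the nesting in \cref{Thm::NSW::MainThm}\,(v), I would combine $B_W(x,\xi_2)\subseteq\Phi_{x,\delta}(B^n(\eta_1))\subseteq B_W(x,\xi)$ (from \cref{Thm::Results::MainThm}, using $B_{(X,d)}(x,\delta)=B_{\delta^dX}(x,1)$) with the elementary one-sided rescaling inclusions $B_{(X,d)}(x,\xi_0\delta)\subseteq B_W(x,\xi_2)$ and $B_W(x,\xi)\subseteq B_{(X,d)}(x,\delta)$, which hold whenever $\xi_0\le\xi_2$ and $\xi\le1$ because $d_j\ge1$ forces $r^{d_j}\le r$ for $r\le1$; one takes $\xi_0:=\xi_2\approx1$. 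For \cref{Thm::NSW::MainThm}\,(iv) I would invoke \cref{Thm::Density::MainThm} with $\nu=\LebDensity$: here $f_j=\delta^{d_j}\Div X_j$ is uniformly bounded, $\Phi_{x,\delta}^*\LebDensity=h\,\LebDensity$ with $h=\lvert\det d\Phi_{x,\delta}\rvert$, and $\nu(\delta^{d_1}X_1,\dots,\delta^{d_n}X_n)(x)=\Lambda(x,\delta)$ by the choice of $J_0$, so the estimate $h(t)\approx\nu(\delta^{d_1}X_1,\dots)(x)$ becomes $\lvert\det d\Phi_{x,\delta}(t)\rvert\approx\Lambda(x,\delta)$ on $B^n(\eta_1)$. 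Integrating this, using the inclusions of (v) applied at the scale $\delta/\xi_0\le1$ when $\delta\le\xi_0$, together with the elementary bounds $\Lambda(x,\lambda\delta)\approx\Lambda(x,\delta)$ for $\lambda\in[\xi_0,2]$ (with constant depending only on $n$, $\max_jd_j$, and $\xi_0$, since the relevant exponents $\sum_id_{j_i}$ lie in $[n,\,n\max_jd_j]$), yields \cref{Thm::NSW::MainThm}\,(i) for $\delta\le\xi_0$ and the doubling property (ii) for $\delta\le\xi_0/2$; alternatively (i) follows directly from \cref{Cor::Desnity::MeasureSets}. Finally, for \cref{Thm::NSW::MainThm}\,(vi), $Y_j^{x,\delta}=\Phi_{x,\delta}^*(\delta^{d_j}X_j)$ is $\Phi^*X_j$ for the list $W$, so the regularity statement of \cref{Thm::Results::MainThm} gives $\BCjNorm{Y_j^{x,\delta}}{m}[B^n(\eta_1);\R^n]\lesssim1$ uniformly; and writing $Y_{J_0}=(I+A)\grad$ with $\sup\lVert A\rVert\le\tfrac12$, the matrix with columns $Y_{j_1(x,\delta)}^{x,\delta}(u),\dots,Y_{j_n(x,\delta)}^{x,\delta}(u)$ is $(I+A(u))^{\transpose}$, whose determinant has modulus $\ge2^{-n}$, so the displayed maximum--infimum is $\approx1$ (the upper bound being immediate from the $C^0$ bounds).

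The step I expect to be the main obstacle is the uniform bookkeeping in the second paragraph: one must confirm not only that the balls $B_{\delta^dX}(x,\xi)$ stay inside a single fixed compact subset of $\Omega$ independently of $x\in\sK$ and $\delta\in(0,1]$, but also that every quantity feeding an admissible constant of \cref{Thm::Results::MainThm} or \cref{Thm::Density::MainThm} --- the norms of the rescaled structure constants, of $f_j=\delta^{d_j}\Div X_j$, and the injectivity scale $\delta_0$ --- is bounded uniformly in $x$ and $\delta$. This is exactly where the hypothesis $d_j\ge1$ is used: rescaling the $j$-th field by $\delta^{d_j}$ with $\delta\le1$ never introduces a negative power of $\delta$ into any $W_{J_0}$-derivative. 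Once these uniformities are in hand, the rest is a routine dictionary between the conclusions of \cref{Thm::Results::MainThm} and \cref{Thm::Density::MainThm} and the statements (i)--(vi) of \cref{Thm::NSW::MainThm}.
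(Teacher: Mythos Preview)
Your approach is correct and matches the paper's in substance: both rescale to $W=\delta^dX$, observe that the structure constants $\delta^{d_j+d_k-d_l}c_{j,k}^l$ carry only nonnegative powers of $\delta$ so every admissible constant is uniform in $x\in\sK$ and $\delta\in(0,1]$, and then invoke \cref{Thm::Results::MainThm} together with \cref{Thm::Density::MainThm}/\cref{Cor::Desnity::MeasureSets}. The only difference is organizational: the paper packages the verification as a check of hypotheses \cref{Item::GenSubr::Assume::Span}--\cref{Item::GenSubr::Assume::Smooth} of the more general \cref{Thm::GenSubr::MainThm} (whose proof then carries out exactly the steps you describe), whereas you apply the main theorems directly and handle the ball comparisons $B_{(X,d)}(x,\xi_0\delta)\subseteq B_{\delta^dX}(x,\xi_2)$ and $\Lambda(x,\lambda\delta)\approx\Lambda(x,\delta)$ by hand using $d_j\ge1$.
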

\begin{proof}
This result is a special case of \cref{Thm::GenSubr::MainThm}, below.  To see this, 
for $\delta\in (0,1]$ we multiply both sides of \cref{Eqn::NSW::MainAssump} by $\delta^{d_j+d_k}$ to
obtain
\begin{equation*}
    [\delta^{d_j} X_j, \delta^{d_k}X_k]=\sum_{d_l\leq d_j+d_k} \delta^{d_j+d_k-d_l} c_{j,k}^l \delta^{d_l} X_l,
\end{equation*}
so that if we set
\begin{equation*}
    X_j^{\delta}:=\delta^{d_j} X_j, \quad c_{j,k}^{l,\delta}:=\begin{cases}
    \delta^{d_j+d_k-d_l} c_{j,k}^l, & d_l\leq d_j+d_k,\\
    0,&\text{otherwise},
    \end{cases}
\end{equation*}
then we have
\begin{equation*}
    [X_j^{\delta}, X_k^{\delta}] = \sum_{l} c_{j,k}^{l,\delta} X_l^{\delta}.
\end{equation*}
Furthermore, $c_{j,k}^{l,\delta}\in C^\infty$ and $X_l^{\delta}\in C^\infty$ \textit{uniformly in }$\delta$.
From here it is straightforward to verify that $X_1^{\delta},\ldots, X_q^{\delta}$ satisfy all the hypotheses of  \cref{Thm::GenSubr::MainThm};
in the application of \cref{Thm::GenSubr::MainThm}, we replace $\Omega$ with $\Omega'$ where $\sK\Subset \Omega'\Subset \Omega$.
\end{proof}

\begin{rmk}
It is easy to see that the balls $B_{(X,d)}(x,\delta)$ are metric balls.\footnote{This uses that
$d_j\geq 1$, $\forall j$.  If $d_j\in (0,\infty)$, they are quasi-metric balls.}
\Cref{Thm::NSW::MainThm} \cref{Item::NSW::Doubling} is the main estimate
needed to show these balls (when paired with $\LebDensity$) form a space
of homogeneous type.  Thus, one can obtain a theory of singular integrals
associated with these balls.  Such singular integrals have a long history
and have proven to be quite useful in a variety of contexts.
The history of these ideas is detailed at the end of
\cite[Chapter 2]{StreetMultiParamSingInt}.
\end{rmk}


        \subsubsection{H\"ormander's condition}

The main way that \cref{Thm::NSW::MainThm} arises is via vector fields which satisfy
H\"ormander's condition.  Suppose $V_1,\ldots, V_r$ are $C^\infty$ vector fields
on an open set $\Omega\subseteq \R^n$.  We assume that $V_1,\ldots, V_r$ satisfy
H\"ormander's condition of order $m$ on $\Omega$.  I.e., we assume that the finite
list of vector fields
\begin{equation*}
    V_1,\ldots, V_r,\ldots, [V_i,V_j],\ldots, [V_i,[V_j,V_k]],\ldots,\ldots, \text{commutators of order }m,
\end{equation*}
span the tangent space at every point of $\Omega$.

To each $V_1,\ldots, V_r$, we assign the formal degree $1$.  If $Z$ has formal degree $e$,
we assign to $[V_j,Z]$ the formal degree $e+1$.  Let $(X_1,d_1),\ldots, (X_q,d_q)$
denote the finite list of vector fields with formal degree $d_j\leq m$.
H\"ormander's condition implies $X_1,\ldots, X_q$ span the tangent space at every point of $\Omega$.

We claim that \cref{Eqn::NSW::MainAssump} holds, and therefore \cref{Thm::NSW::MainThm}
applies to $(X_1,d_1),\ldots, (X_q,d_q)$.  Indeed, if $d_j+d_k\leq m$ we have
\begin{equation*}
    [X_j,X_k]=\sum_{d_l= d_j+d_k} c_{j,k}^l X_l,
\end{equation*}
where $c_{j,k}^l$ are constants by the Jacobi identity.  If $d_j+d_k>m$ then,
since $X_1,\ldots, X_q$ span the tangent space at every point, we have
\begin{equation*}
    [X_j,X_k]=\sum_{l=1}^q c_{j,k}^l X_l=\sum_{d_l\leq d_j+d_k} c_{j,k}^l X_l, \quad c_{j,k}^l\in \CjSpace{\infty}[\Omega].
\end{equation*}
Thus,  \cref{Eqn::NSW::MainAssump} holds and \cref{Thm::NSW::MainThm} applies.

Let $\sK\Subset \Omega$ be a compact set.  Applying \cref{Thm::NSW::MainThm}, for $\delta\in (0,1]$, $x\in \sK$, we obtain
$\eta_1>0$ and $\Phi_{x,\delta}:B^n(\eta_1)\rightarrow B_{(X,d)}(x,\delta)$ as in that theorem.
Set $V_{j}^{x,\delta}:=\Phi_{x,\delta}^{*} \delta V_j$, $1\leq j\leq r$.

If $d_k=l$, then
\begin{equation*}
    X_k=[V_{j_1}, [V_{j_2},\cdots,[V_{j_{l-1}}, V_{j_l}]\cdots]],
\end{equation*}
and so
\begin{equation*}
    \Phi_{x,\delta}^{*} \delta^{d_k}X_k =\Phi_{x,\delta}^{*} [\delta V_{j_1},[\delta V_{j_2},\ldots, [\delta V_{j_{l-1}}, \delta V_{j_l}]\ldots]]=[V_{j_1}^{x,\delta},[V_{j_2}^{x,\delta},\ldots,[V_{j_{l-1}}^{x,\delta},V_{j_l}^{x,\delta}]\ldots]].
\end{equation*}
\Cref{Thm::NSW::MainThm} implies that the vector fields $\Phi_{x,\delta}^{*} \delta^{d_k} X_k$ are smooth and
span the tangent space, uniformly for $x\in \sK$, $\delta\in (0,1]$.
We conclude that the vector fields $V_1^{x,\delta},\ldots, V_r^{x,\delta}$ are smooth
and satisfy H\"ormander's condition, uniformly for $x\in \sK$, $\delta\in (0,1]$.
In short, the map $\Phi_{x,\delta}^{*}$ takes $\delta V_1,\ldots, \delta V_r$
to $V_1^{x,\delta},\ldots, V_r^{x,\delta}$ which satisfy H\"ormander's condition ``uniformly'';
i.e., it takes the case of $\delta$ small and ``rescales'' it to the case $\delta=1$.

\begin{rmk}
In the above, we multiplied $V_1,\ldots, V_r$ all by the same small number $\delta$.
Similar results hold (with the same proofs) for $\delta_1 V_1,\ldots, \delta_r V_r$
where $\delta_1,\ldots, \delta_r$ are small, provided they are ``weakly-comparable.''
I.e., provided $\exists N,\kappa$ such that
$\delta_j^N\leq \kappa \delta_k$, for all $j,k$.
This was first noted and used by Tao and Wright \cite{TaoWrightLpImproving}.
See \cite[Section 5.2.1]{S} for further details.
\end{rmk}

\begin{rmk}
It is possible for \cref{Eqn::NSW::MainAssump} to hold (for a sufficiently large $m$) even if $V_1,\ldots, V_r$ do not
satisfy H\"ormander's condition.  In this case, with the same proof one can obtain similar results; however,
now the ball $B_{(X,d)}(x,\delta)$ lies on an injectively immersed submanifold of $\R^n$ as discussed in
\cref{Prop::ResQual::Mmanif}.
An important setting where this arises is when $V_1,\ldots, V_r$ are real analytic; see \cite[Section 2.15.5]{StreetMultiParamSingInt} for details.
\end{rmk}

    \subsection{Multi-parameter Balls}\label{Section::Scale::MultiParam}

In a generalization of the work of Nagel, Stein, and Wainger, the second author studied multi-parameter
sub-Riemannian balls in \cite{S}.  The main result of \cite{S} is a special case of 
\cref{Thm::Results::MainThm,Thm::Density::MainThm,Cor::Desnity::MeasureSets}.
We refer the reader to \cite{S} for the detailed assumptions used in that paper,
which are very similar to the assumptions of \cref{Thm::Results::MainThm}.
We give a few comments here to help the reader understand how the main
result of \cite{S} (namely \cite[Theorem 4.1]{S}) is a special case
of the results in this paper.

The main differences between \cite[Theorem 4.1]{S} and the setting of this paper are:
\begin{itemize}
    \item $\fM$ is taken to be an open subset of $\R^N$ in \cite{S}.
    \item In \cite{S}, the various kinds of admissible constants are allowed to depend on upper
    bounds for quantities like $\CjNorm{X_j}{m}$.  This quantity is not invariant under diffeomorphisms,
    and the norm is defined in terms of the fixed standard coordinate system on $\R^N$.
    \item Instead of an abstract density as is used in \cref{Thm::Density::MainThm,Cor::Desnity::MeasureSets},
    \cite{S} uses the usual Lebesgue measure on submanifolds of $\R^N$.
    \item In \cite{S}, the existence of $\delta_0$ is not assumed.  Instead, one uses bounds on
    $\CjNorm{X_j}{1}$ to prove that such a $\delta_0$ exists (as in \cref{Prop::MoreAssumpt}).  This
    process is not invariant under diffeomorphisms.
    \item The constants in \cref{Thm::Results::MainThm} have better dependence on various
    quantities than they do in \cite[Theorem 4.1]{S}.  For example, the methods
    in \cite{S} do not imply that $\eta_1$ is a $1$-admissible constant.
    \item In \cite{S}, only the spaces $\CXjSpace{X}{m}$ (and not $\HXSpace{X}{m}{s}$ or $\ZygXSpace{X}{s}$)
    were used.
\end{itemize}

We include a lemma, whose straightforward proof we omit, which will allow the reader
to more easily translate the results of \cite{S} into the language of this paper.
For an $N\times n$ matrix we write $\det_{n\times n} B$ to be the vector consisting
of determinants of $n\times n$ submatricies of $B$.

\begin{lemma}
Let $L$ be an $n$-dimensional injectively immersed submanifold of $\R^N$, and give $L$
the induced Riemannian metric.  Let $\nu$ denote the Riemannian volume density on $L$.
For vector fields $Z_1,\ldots, Z_n$ on $\R^N$ which are tangent to $L$, let $Z$ denote the $N\times n$ matrix
whose columns are $Z_1,\ldots, Z_n$.  Then,
\begin{equation*}
    \left|\det_{n\times n} Z\right| = \nu(Z_1,\ldots, Z_n).
\end{equation*}
Furthermore, if $\Phi:B^n(\eta)\rightarrow L\subseteq \R^N$, and if $\Phi^{*} \nu=h(t)\LebDensity$,
then we have
\begin{equation*}
    h(t)=\left|\det_{n\times n} d\Phi(t)\right|,
\end{equation*}
where $d\Phi(t)$ is computed by thinking of $\Phi$ as a map $B^n(\eta)\rightarrow \R^N$.
\end{lemma}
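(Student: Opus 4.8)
The statement is purely about linear algebra of determinants together with the standard description of the Riemannian volume density on an immersed submanifold, so the plan is elementary. The plan is to first record the pointwise linear-algebra identity: if $v_1,\dots,v_n\in\R^N$ span an $n$-dimensional subspace $W$, and $Z$ is the $N\times n$ matrix with these columns, then the Gram determinant formula $\det(Z^{\transpose}Z)=\sum_{|I|=n}(\det Z_I)^2=\bigl|\det_{n\times n}Z\bigr|^2$ (the Cauchy--Binet identity) holds. On the other hand, the Riemannian volume density $\nu$ on $L$, evaluated on a tuple $Z_1,\dots,Z_n$ of tangent vectors at a point $p\in L$, is by definition $\nu(Z_1,\dots,Z_n)=\sqrt{\det\bigl(g(Z_i,Z_j)\bigr)}$ where $g$ is the induced metric, i.e. $g(Z_i,Z_j)=\langle Z_i,Z_j\rangle_{\R^N}$ since $L$ carries the metric induced from the Euclidean inner product. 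Thus $g(Z_i,Z_j)$ is exactly the $(i,j)$ entry of $Z^{\transpose}Z$, and combining the two displays gives $\nu(Z_1,\dots,Z_n)=\sqrt{\det(Z^{\transpose}Z)}=\bigl|\det_{n\times n}Z\bigr|$, which is the first assertion. Here I use that $L$ is injectively immersed so that near any point it looks like an embedded submanifold and the induced metric makes sense; and I use that $Z_1,\dots,Z_n$ are tangent to $L$, so $Z^{\transpose}Z$ is computed intrinsically.

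For the second assertion, let $\Phi:B^n(\eta)\rightarrow L\subseteq\R^N$ be the given map and write $e_1,\dots,e_n$ for the standard frame on $B^n(\eta)$, so $\LebDensity(e_1,\dots,e_n)=1$. By naturality of the pullback of densities, $(\Phi^{*}\nu)(e_1,\dots,e_n)=\nu\bigl(d\Phi(t)e_1,\dots,d\Phi(t)e_n\bigr)$, and $d\Phi(t)e_j$ is precisely the $j$-th column of the $N\times n$ Jacobian matrix $d\Phi(t)$ (viewing $\Phi$ as a map into $\R^N$). Each such column is tangent to $L$ at $\Phi(t)$, so the first part of the lemma applies with $Z=d\Phi(t)$, giving $(\Phi^{*}\nu)(e_1,\dots,e_n)=\bigl|\det_{n\times n}d\Phi(t)\bigr|$. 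Since by definition $\Phi^{*}\nu=h(t)\LebDensity$ we get $h(t)=(\Phi^{*}\nu)(e_1,\dots,e_n)=\bigl|\det_{n\times n}d\Phi(t)\bigr|$, as claimed.

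There is essentially no obstacle here; the only points requiring a modicum of care are bookkeeping ones. One should make sure that ``tangent to $L$'' is used correctly so that $Z^{\transpose}Z$ genuinely represents the induced metric (as opposed to some ambient quantity), and that the Cauchy--Binet identity is invoked in the right form, namely $\det(Z^{\transpose}Z)=\sum_{I}(\det Z_I)^2$ where the sum is over the $\binom{N}{n}$ index sets $I$ of size $n$. Since the paper states this lemma with its proof omitted, it suffices to indicate these two ingredients; a full write-up would simply chain together Cauchy--Binet, the definition of the Riemannian volume density, and the change-of-variables description of a pulled-back density.
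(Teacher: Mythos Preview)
Your proposal is correct and is the natural argument: Cauchy--Binet to identify $\sqrt{\det(Z^{\transpose}Z)}$ with $|\det_{n\times n}Z|$, the definition of the Riemannian volume density in terms of the Gram determinant, and then the pullback formula for the second assertion. The paper explicitly omits the proof of this lemma (``whose straightforward proof we omit''), so there is no alternative approach to compare against; your write-up is exactly the kind of verification the authors had in mind.
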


Using this lemma and the above remarks, \cite[Theorem 4.1]{S} follows easily from
the results in this paper.  We refer the reader to \cite{S,StreetMultiParamSingInt,SteinStreetA,SteinStreetI,SteinStreetII,SteinStreetIII,SteinStreetS}
for examples of how these ideas can be used as scaling maps.

    \subsection{Generalized sub-Riemannian geometries}\label{Section::Scale::GenSubR}
The results described in \cref{Section::Scale::NSW} concern the classical setting of sub-Riemannian geometry.  When applied to partial differential equations
defined by vector fields, this is the geometry which arises in the important case of \textit{maximally hypoelliptic} operators.
Maximal hypoellipticity is a far reaching generalization of ellipticity, which was first introduced (implicitly) by
Folland and Stein \cite{FollandSteinEstimatesForTheDbarComplex}; see
\cite[Chapter 2]{StreetMultiParamSingInt} for a discussion of these ideas as well as a detailed history.
When one moves beyond the setting of maximal hypoellipticity, other more general sub-Riemannian geometries can arise.
These are defined by choosing different vector fields at each scale.  A particularly transparent setting where this arises
is in the work of Charpentier and Dupain on the Bergman and Szeg\"o projections \cite{CharpentierDupainExtremalBases}.
The theory in this paper allows us to easily understand what properties one requires on these vector fields so that the induced quasi-metrics
give rise to a space of homogeneous type; furthermore, our theory provides generalized scaling maps adapted to these geometries.
See \cref{Section::Scale::SCV} for some further comments on the relationship between
 the results in this paper and several complex variables.

Fix an open set $\Omega\subseteq \R^n$, and for each $\delta\in (0,1]$, let $X^{\delta} = X_1^{\delta},\ldots, X_q^{\delta}$ be a list of $C^1$ vector fields on $\Omega$,
which span the tangent space at every point.  For $x\in \Omega$, $\delta\in (0,1]$ set $B(x,\delta):=B_{X^{\delta}}(x,1)$, where $B_{X^{\delta}}(x,1)$
is defined by \cref{Eqn::FundSpaceM::DefnCCBall}.  Our goal is to give conditions on $X^{\delta}$ so that the balls $B(x,\delta)$, when paired
with Lebesgue measure on $\Omega$ (denoted $\LebDensity$), locally form a space of homogeneous type (see \cite{SteinHarmonicAnalysis} for the definition
we are using
of a space of homogeneous type).  The conditions we give can be thought of as infinitesimal versions of the axioms of a space of homogeneous type.
In what follows, we write $X^{\delta}$ for the column vector of vector fields $[X_1^{\delta},\ldots, X_q^{\delta}]^{\transpose}$.
Because of this, if we are given a matrix $A:\Omega\rightarrow \M^{q\times q}$, it makes sense to consider
$A(x)X^{\delta}(x)$ which again gives a column vector of vector fields on $\Omega$.

We assume:
\begin{enumerate}[(I)]
\item\label{Item::GenSubr::Assume::Span} $\forall \delta\in (0,1]$, $x\in \Omega$, we have $\Span\{X_1^{\delta}(x),\ldots, X_q^{\delta}(x)\} = T_x\Omega$.
\item\label{Item::GenSubr::Assume::C1} $\sup_{\delta\in (0,1]} \CjNorm{X_j^{\delta}}{1}[\Omega][\R^n]<\infty$.
\item\label{Item::GenSubr::Assume::Decreasing} $X_j^{\delta}\rightarrow 0$, as $\delta\rightarrow 0$, uniformly on compact subsets of $\Omega$.
\item\label{Item::GenSubr::Assume::Containment} $\forall 0<\delta_1\leq \delta_2\leq 1$, $X^{\delta_1}=T_{\delta_1,\delta_2} X^{\delta_2}$, where $T_{\delta_1,\delta_2}\in L^\infty(\Omega;\M^{q\times q})$,
and $\Norm{T_{\delta_1,\delta_2}}[L^\infty(\Omega;\M^{q\times q})]\leq 1$.
\item\label{Item::GenSubr::Assume::Engulfing} $\exists B_1, B_2\in (1,\infty)$, $b_1,b_2\in (0,1)$, such that $\forall \delta\in (0,1/B_1]$, $\exists S_\delta\in L^\infty(\Omega;\M^{q\times q})$
and $\forall \delta\in (0,1/B_2]$, $\exists R_\delta\in L^\infty(\Omega;\M^{q\times q})$ with
$S_\delta X^{B_1 \delta} = X^{\delta}$, $R_{\delta} X^{\delta} = X^{B_2\delta}$, and
$$\sup_{0<\delta\leq 1/B_1} \Norm{S_{\delta}}[L^\infty(\Omega;\M^{q\times q})]\leq b_1, \quad \sup_{0<\delta\leq 1/B_2} \Norm{R_{\delta}}[L^\infty(\Omega;\M^{q\times q})]\leq b_2^{-1}.$$
\item\label{Item::GenSubr::Assume::Smooth} $\forall \delta\in (0,1]$, $[X_j^{\delta}, X_k^{\delta}] =\sum_{l=1}^q c_{j,k}^{l,\delta} X_l^{\delta}$, where $c_{j,k}^{l,\delta}\in \CSpace{\Omega}$ and $\forall m\in \N$
\begin{equation*}
\sup_{\delta\in (0,1],x\in \Omega} \CXjNorm{c_{j,k}^{l,\delta}}{X^{\delta}}{m}[B(x,\delta)]<\infty.
\end{equation*}
\end{enumerate}

Define, for $x\in \Omega$, $\delta\in (0,1]$,
\begin{equation*}
\Lambda(x,\delta):= \max_{j_1,\ldots, j_n\in \{1,\ldots, q\}} \left|\det \left(X_{j_1}^{\delta}(x)| \cdots | X_{j_n}^{\delta}(x)\right)\right|.
\end{equation*}
For each $x\in \Omega$, $\delta\in (0,1]$, pick $j_1=j_1(x,\delta),\ldots, j_n=j_n(x,\delta)\in \{1,\ldots, q\}$ so that
\begin{equation*}
\left|\det \left(X_{j_1}^{\delta}(x)| \cdots | X_{j_n}^{\delta}(x)\right)\right| = \Lambda(x,\delta),
\end{equation*}
and set (for this choice of $j_1=j_1(x,\delta),\ldots, j_n=j_n(x,\delta)$),
\begin{equation*}
\Phi_{x,\delta}(t_1,\ldots, t_n) = \exp\left(t_1 X_{j_1}^\delta + \cdots + t_n X_{j_n}^{\delta}\right)x.
\end{equation*}

\begin{thm}\label{Thm::GenSubr::MainThm}
\begin{enumerate}[label=(\roman*),series=gensubrtheoremenumeration]
\item\label{Item::GenSubr::Containment} $B(x,\delta_1)\subseteq B(x,\delta_2)$, $\forall x\in \Omega$, $0<\delta_1\leq \delta_2\leq 1$.
\item\label{Item::GenSubr::Decreasing} $\bigcap_{\delta\in (0,1]} \overline{B(x,\delta)} = \{x\}$, $\forall x\in \Omega$.
\item\label{Item::GenSubr::Engulfing} $B(x,\delta)\cap B(y,\delta)\ne \emptyset\Rightarrow B(y,\delta)\subseteq B(x,C\delta)$, $\forall \delta\in (0,1/C]$,
where $C=B_1^k$ and $k$ is chosen so that $b_1^k\leq \frac{1}{3}$.
\item\label{Item::GenSubr::Cont} For each $U\Subset \Omega$ with $U$ open, $\delta\in (0,1]$, the map $x\mapsto \LebDensity(U\cap B(x,\delta))$ is continuous.
\end{enumerate}
Fix a compact set $\sK\Subset \Omega$.  In what follows we write $A\lesssim B$ for $A\leq CB$ where $C$ is a positive constant which  may depend on $\sK$,
but does not depend on the particular point $x\in \sK$ or the scale $\delta\in (0,1]$.  We write $A\approx B$ for $A\lesssim B$ and $B\lesssim A$.
There exist $\eta_1, \xi_0\approx 1$ such that $\forall x\in \sK$:
\begin{enumerate}[resume*=gensubrtheoremenumeration]
\item\label{Item::GenSubr::VolEst} $\LebDensity(B(x,\delta))\approx \Lambda(x,\delta)$, $\forall \delta\in (0,\xi_0]$.
\item\label{Item::GenSubr::Doubling} $\LebDensity(B(x,2\delta))\lesssim \LebDensity(B(x,\delta))$, $\forall \delta\in (0,\xi_0/2]$.
\item\label{Item::GenSubr::Diffeo} $\forall \delta\in (0,1]$, $\Phi_{x,\delta}(B^n(\eta_1))\subseteq \Omega$ is open and $\Phi_{x,\delta}:B^n(\eta_1)\rightarrow \Phi_{x,\delta}(B^n(\eta_1))$ is a $C^2$ diffeomorphism.
\item\label{Item::GenSubr::Jacobian} $\left|\det d\Phi_{x,\delta}(t)\right|\approx \Lambda(x,\delta)$, $\forall t\in B^n(\eta_1)$, $\delta\in (0,1]$.
\item\label{Item::GenSubr::Image} $B(x,\xi_0\delta)\subseteq \Phi_{x,\delta}(B^n(\eta_1))\subseteq B(x,\delta)$, $\forall \delta\in (0,1]$.
\item\label{Item::GenSubr::Pullback} Let $Y_j^{x,\delta}:=\Phi_{x,\delta}^{*} X_j^{\delta}$, so that $Y_j^{x,\delta}$ is a vector field on $B^n(\eta_1)$.
Then $Y_j^{x,\delta}\in \CjSpace{\infty}[B^n(\eta_1)][\R^n]$ and
\begin{equation*}
\CjNorm{Y_j^{x,\delta}}{m}[B^n(\eta_1)][\R^n]\lesssim 1,\quad \forall m\in \N,
\end{equation*}
where the implicit constant may depend on $m$, but does not depend on $x\in \sK$ or $\delta\in (0,1]$.  Furthermore, $Y_1^{x,\delta}(u),\ldots, Y_q^{x,\delta}(u)$
span $T_uB^n(\eta_1)$, uniformly in $x$, $\delta$, and $u$ in the sense that
\begin{equation*}
\max_{j_1,\ldots, j_n\in \{1,\ldots, q\}} \inf_{u\in B^n(\eta_1)} \left| \det \left( Y_{j_1}^{x,\delta}(u) | \cdots | Y_{j_n}^{x,\delta}(u)\right) \right|\approx 1.
\end{equation*}
\end{enumerate}
\end{thm}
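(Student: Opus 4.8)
\emph{Proof idea.} The plan is to deduce everything from the single-scale results \Cref{Thm::Results::MainThm}, \Cref{Thm::Density::MainThm}, and \Cref{Cor::Desnity::MeasureSets}, applied to the list $X^{\delta}=X_1^{\delta},\dots,X_q^{\delta}$ at each point $x\in\sK$ and each scale $\delta\in(0,1]$, with the density $\nu=\LebDensity$ on $\Omega$; the entire point will be that the admissible constants they produce can be taken uniform in $(x,\delta)$. Since $X^{\delta}$ spans $T\Omega$ (hypothesis (I)), here $n=\dim\Omega$, and \Cref{Prop::ResQual::Mmanif} makes $B_{X^{\delta}}(x,\xi)$ an open subset of $\Omega$ for $\xi$ small; I would first fix a constant $\xi\in(0,1]$ small enough --- using $\sup_{\delta,j}\CjNorm{X_j^{\delta}}{0}[\Omega][\R^n]<\infty$ from hypothesis (II) --- that $B_{X^{\delta}}(x,\xi)\Subset\Omega$ for all $x\in\sK$, $\delta\in(0,1]$. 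For each $(x,\delta)$ one chooses $J_0=J_0(x,\delta)$ realizing $\Lambda(x,\delta)$; after reordering so $J_0=(1,\dots,n)$, the wedge parameter may be taken $\zeta=1$ and the map $\Phi$ of \Cref{Thm::Results::MainThm} is precisely $\Phi_{x,\delta}$. Hypothesis (VI) supplies, for every $m$, a uniform bound on $\CXjNorm{c_{j,k}^{l,\delta}}{X^{\delta}}{m}[B(x,\delta)]$, hence on the weaker $C^m_{X^{\delta}_{J_0}}$-norms over $B_{X^{\delta}_{J_0}}(x,\xi)\subseteq B(x,\delta)$; hypothesis (II) bounds $\CNorm{\Div X_j^{\delta}}{\Omega}$, so the functions $f_j$ of \Cref{Thm::Density::MainThm} are uniformly controlled. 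Finally, \Cref{Prop::MoreAssumpt}, applied on an intermediate open set $\Omega'$ with $\sK\Subset\Omega'\Subset\Omega$, combined with (II) and the decay hypothesis (III), should furnish uniform positive lower bounds for the parameters $\eta$ and $\delta_0$ of the single-scale setup: the decay forces the flow of $X^{\delta}$ to move slowly, so $\sC(x,\eta,\Omega)$ holds with $\eta$ bounded below and the injectivity radius $\delta_0$ does not degenerate as $\delta\to0$. I expect verifying this uniformity (above all of $\eta$ and $\delta_0$ over all $\delta$) to be the main obstacle.

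Granting the uniformity, the ``soft'' conclusions (i)--(iv) come directly from the transition-matrix hypotheses. Part (i) is immediate from (IV): if $\gamma'=\sum_j a_j X_j^{\delta_1}(\gamma)$ with $\BNorm{\sum_j|a_j|^2}[L^\infty]<1$, then $\gamma'=\sum_k (a^{\transpose}T_{\delta_1,\delta_2})_k X_k^{\delta_2}(\gamma)$ and $|a^{\transpose}T_{\delta_1,\delta_2}|\le|a|$ since $\Norm{T_{\delta_1,\delta_2}}[L^\infty(\Omega;\M^{q\times q})]\le1$. Part (ii) follows from (III): curves defining $B(x,\delta)$ have Euclidean length $\lesssim\sup_j\CNorm{X_j^{\delta}}{\Omega}\to0$, so $\overline{B(x,\delta)}$ shrinks to $\{x\}$. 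Part (iii) is the engulfing argument: the balls are symmetric (curves may be reversed) and concatenable, so $B(x,\delta)\cap B(y,\delta)\ne\emptyset$ forces $y\in B_{X^{\delta}}(x,2)$, and iterating the $S_{\delta}$ relations of (V) $k$ times (with $b_1^{k}\le\frac13$) expresses $X^{\delta}$ as a matrix of norm $\le b_1^{k}$ times $X^{B_1^{k}\delta}$, whence $B_{X^{\delta}}(x,2)\subseteq B_{X^{B_1^{k}\delta}}(x,2b_1^{k})\subseteq B(x,C\delta)$ with $C=B_1^{k}$. Part (iv) is the standard continuity of $x\mapsto\LebDensity(U\cap B(x,\delta))$, a consequence of the continuous dependence of $B(x,\delta)$ on its center.

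For the estimates over $\sK$: \Cref{Thm::Results::MainThm} produces the $0$-admissible $\chi$ and the $1$-admissible $\eta_1,\xi_1,\xi_2$, all uniform by the above, and its parts giving the diffeomorphism property of $\Phi$ and the regularity of $Y_j$ yield conclusions (vii) and (x): $\Phi_{x,\delta}:B^n(\eta_1)\to\Phi_{x,\delta}(B^n(\eta_1))$ is a $C^2$ diffeomorphism onto an open subset of $B_{X^{\delta}}(x,\xi)\subseteq\Omega$, and the H\"older bounds of \cref{Item::Results::YReg} of \Cref{Thm::Results::MainThm}, valid for \emph{all} $m$ with uniformly bounded right-hand sides (by (VI)), give $Y_j^{x,\delta}=\Phi_{x,\delta}^{*}X_j^{\delta}\in\CjSpace{\infty}[B^n(\eta_1)][\R^n]$ with $\CjNorm{Y_j^{x,\delta}}{m}[B^n(\eta_1)][\R^n]\lesssim1$ uniformly; the uniform spanning follows since $Y_{J_0}=(I+A)\grad$ with $\sup_t\Norm{A(t)}[\M^{n\times n}]\le\tfrac12$, so $|\det(Y_{1}^{x,\delta}|\cdots|Y_{n}^{x,\delta})|\ge(\tfrac12)^{n}$ pointwise, while the $C^0$ bounds give the matching upper bound. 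Applying \Cref{Thm::Density::MainThm} with $\nu=\LebDensity$ and the lemma of \cref{Section::Scale::MultiParam} (in the case $N=n$, where $\det_{n\times n}d\Phi=\det d\Phi$) gives $|\det d\Phi_{x,\delta}(t)|=h(t)\approx_{0;\nu}\nu(X_1^{\delta},\dots,X_n^{\delta})(x)=\Lambda(x,\delta)$ on $B^n(\eta_1)$, which is conclusion (viii).

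It remains to derive (ix), (v), and (vi) from the engulfing hypothesis (V). The chain $B(x,\xi_0\delta)\subseteq B_{X^{\delta}}(x,\xi_2)\subseteq\Phi_{x,\delta}(B^n(\eta_1))\subseteq B_{X^{\delta}}(x,\chi)\subseteq B(x,\delta)$ --- conclusion (ix) --- follows from \cref{Item::Results::xi2} of \Cref{Thm::Results::MainThm} (for the two middle inclusions) together with an iteration of the $S_{\delta}$ relations that writes $X^{\delta}$ as a small-norm multiple of $X^{\delta/B_1^{m}}$ and forces the $0$-admissible choice of $\xi_0$; the same iteration gives $B(x,\delta)\subseteq B_{X^{B_1^{m}\delta}}(x,\xi_2)\subseteq\Phi_{x,B_1^{m}\delta}(B^n(\eta_1))$ for a suitable $m$. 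Integrating $|\det d\Phi|$ over $B^n(\eta_1)$ and using (viii) gives $\LebDensity(\Phi_{x,\delta}(B^n(\eta_1)))\approx\Lambda(x,\delta)$; combined with \Cref{Cor::Desnity::MeasureSets} (which yields $\LebDensity(B_{X^{\delta}}(x,\xi_2))\approx\Lambda(x,\delta)$, hence $\LebDensity(B(x,\delta))\gtrsim\Lambda(x,\delta)$), the inclusion $B(x,\delta)\subseteq\Phi_{x,B_1^{m}\delta}(B^n(\eta_1))$, and an elementary doubling estimate $\Lambda(x,2\delta)\lesssim\Lambda(x,\delta)$, $\Lambda(x,B_1^{m}\delta)\lesssim\Lambda(x,\delta)$ obtained from the $R_{\delta}$- and $T_{\delta_1,\delta_2}$-relations, one obtains (v) and then (vi). Because the $R_{\delta}$-relations require powers of $B_2$ times $\delta$ to stay $\le1$, this bookkeeping is exactly what dictates the restriction $\delta\le\xi_0$ in (v) and (vi); all constants above are admissible, hence uniform over $x\in\sK$ and $\delta$.
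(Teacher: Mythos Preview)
Your proposal is correct and follows essentially the same route as the paper: derive (i)--(iv) directly from hypotheses (III)--(V), then apply \Cref{Thm::Results::MainThm}, \Cref{Thm::Density::MainThm}, and \Cref{Cor::Desnity::MeasureSets} to $X^{\delta}$ at each $(x,\delta)\in\sK\times(0,1]$ with uniformly bounded admissible data, and bridge $B(x,\xi_0\delta)$ to $B_{X^{\delta}}(x,\xi_2)$ by iterating the $S_{\delta}$-relations. Two small corrections to the sketch: in (iii) you must contain all of $B(y,\delta)$, not just $y$, so the intermediate step is $B(y,\delta)\subseteq B_{X^{\delta}}(x,3)$ (this is why $b_1^{k}\le\tfrac13$ is the right threshold); and the uniform lower bounds on $\eta$ and $\delta_0$ come from hypothesis (II) alone via the uniform $C^1$ bound (as one sees from the proof of \Cref{Prop::MoreAssumpt})---hypothesis (III) is used only for (ii).
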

\begin{proof}
To facilitate the proof, we introduce some new notation.  For $y\in \Omega$, $y\in B(x,\delta)=B_{X^{\delta}}(x,1)$ if and only if
$\exists \gamma:[0,1]\rightarrow \Omega$, $\gamma(0)=x$, $\gamma(1)=y$, $\gamma'(t)=\InnerProduct{a(t)}{X^{\delta}(\gamma(t))}$,
where $a\in L^\infty([0,1];\R^q)$ with $\Norm{a}[L^\infty([0,1];\R^q)]<1$, we have identified $X^{\delta}$ with the vector of vector fields
$X^\delta=(X^{\delta}_1,\ldots, X^{\delta}_q)$, and $\InnerProduct{\cdot}{\cdot}$ denotes the usual inner product on $\R^q$.

\Cref{Item::GenSubr::Containment}:  Let $0<\delta_1\leq \delta_2\leq 1$.  Take $y\in B(x,\delta_1)$ so that
$\exists \gamma:[0,1]\rightarrow \Omega$, $\gamma(0)=x$, $\gamma(1)=y$,
$\gamma'(t)=\InnerProduct{a(t)}{X^{\delta_1}(\gamma(t))}$, $\Norm{a}[L^\infty([0,1];\R^q)]<1$.
We have
\begin{equation*}
\gamma'(t) = \InnerProduct{a(t)}{X^{\delta_1}(\gamma(t))} = \InnerProduct{a(t)}{T_{\delta_1,\delta_2}(\gamma(t))X^{\delta_2}(\gamma(t))}
= \InnerProduct{T_{\delta_1,\delta_2}(\gamma(t))^{\transpose} a(t)}{X^{\delta_2}(\gamma(t))}.
\end{equation*}
Since $\Norm{T_{\delta_1,\delta_2}(\gamma(t))^{\transpose} a}[L^\infty([0,1];\R^q)]\leq \Norm{a}[L^\infty([0,1];\R^q)]<1$, this proves
$y\in B(x,\delta_2)$, completing the proof of \cref{Item::GenSubr::Containment}.

\Cref{Item::GenSubr::Decreasing} follows from the hypothesis \cref{Item::GenSubr::Assume::Decreasing}.

\Cref{Item::GenSubr::Engulfing}:  Suppose $B(x,\delta)\cap B(y,\delta)\ne \emptyset$.
This is equivalent to $B_{X^{\delta}}(x,1)\cap B_{X^{\delta}}(y,1)\ne \emptyset$.  Since the balls $B_{X^{\delta}}(x,\cdot)$ are metric balls,
this implies $B(y,\delta)=B_{X^{\delta}}(y,1)\subseteq B_{X^{\delta}}(x,3)$.  Thus it suffices to show $B_{X^{\delta}}(x,3)\subseteq B(x,C\delta)$.
Suppose $z\in B_{X^{\delta}}(x,3)$, so that $\exists \gamma:[0,1]\rightarrow \Omega$, $\gamma(0)=x$, $\gamma(1)=z$,
$\gamma'(t) = \InnerProduct{a(t)}{3X^{\delta}(\gamma(t))}$, where $\Norm{a}[L^\infty([0,1];\R^q)]<1$.

Take $k$ so large that $b_1^{k}\leq \frac{1}{3}$.  Then, for $\delta\in (0,B_1^{-k}]$,
$\gamma'(t)=\InnerProduct{a(t)}{A(t)X^{B_1^k \delta}(\gamma(t))} = \InnerProduct{A(t)^{\transpose} a(t)}{X^{B_1^k \delta}(\gamma(t)}$,
where
\begin{equation*}
A(t) = 3 S_{\delta}(\gamma(t)) S_{B_1 \delta}(\gamma(t)  ) \cdots S_{B_1^{k-1} \delta}(\gamma(t)).
\end{equation*}
Since $\Norm{A}[L^\infty([0,1];\M^{q\times q})]\leq 3b_1^k\leq 1$, it follows that $\Norm{A^{\transpose} a}[L^\infty([0,1];\R^q)]\leq \Norm{a}[L^\infty([0,1];\R^q)]<1$,
and therefore $z=\gamma(1)\in B(x,B_1^k \delta)=B(x,C\delta)$, completing the proof of \cref{Item::GenSubr::Engulfing}.

\Cref{Item::GenSubr::Cont} follows from standard ODE results.

For the remaining parts, the goal is to apply \cref{Thm::Results::MainThm,Thm::Density::MainThm,Cor::Desnity::MeasureSets}
to the list of vector fields $X^{\delta}$ (with $\nu=\LebDensity$ and $\xi=1$).
Take $\eta\in (0,1]$, depending on $\sK$ and upper bounds for $\CjNorm{X_j^{\delta}}{1}[\Omega]$, so that
$\forall x\in \sK$, $X_1^{\delta},\ldots, X_q^{\delta}$ satisfy $\sC(x,\eta,\Omega)$.  Note that $\eta$ can be chosen
independent of $x\in \sK$ and $\delta\in (0,1]$.
Take $\delta_0>0$ as in \cref{Prop::MoreAssumpt} when applied to $X_1^{\delta},\ldots, X_q^{\delta}$, with $\fM=\Omega$.
It can be seen from the proof of \cref{Prop::MoreAssumpt} that $\delta_0$ can be chosen independent of $\delta\in (0,1]$.
Finally, note that $\sL_{X_j^{\delta}} \nu = \Div(X_j^{\delta}) \nu =:f_j^{\delta} \nu$, where $\sup_{\delta\in (0,1]} \CNorm{f_j^\delta}{\Omega}<\infty$.

Using the above choices, all of the hypotheses of \cref{Thm::Results::MainThm,Thm::Density::MainThm,Cor::Desnity::MeasureSets}
hold for $x_0\in \sK$ with $X_1,\ldots, X_q$ replaced by $X_1^{\delta},\ldots, X_q^{\delta}$, uniformly for $\delta\in (0,1]$, $x_0\in \sK$.
In particular, any constant which is admissible (of any kind) in the sense of those results is $\approx 1$ in the sense
of this theorem (when working with $\nu$, we only use $1;\nu$-admissible constants--see \cref{Defn::Densities::1nuAdmissible} for the definition of $1;\nu$-admissible constants).

\Cref{Item::GenSubr::Diffeo} is contained in \cref{Thm::Results::MainThm}.

\Cref{Item::GenSubr::Image}:  \Cref{Thm::Results::MainThm} gives $\xi_2\approx 1$ ($\xi_2<1$) such that
\begin{equation*}
B_{X^{\delta}}(x,\xi_2)\subseteq \Phi_{x,\delta}(B^n(\eta_1))\subseteq B_{X^{\delta}}(x,1)=B(x,\delta).
\end{equation*}
Thus, to prove \cref{Item::GenSubr::Image}, we wish to show $\exists \xi_0\approx 1$ with
\begin{equation}\label{Eqn::GenSubr::CompareBalls}
B(x,\xi_0\delta)\subseteq B_{X^{d}}(x,\xi_2).
\end{equation}
Take $k\approx 1$ so large that $b_1^k\leq \xi_2$
and set $\xi_0=B_1^{-k}$.  Let $y\in B(x,\xi_0\delta)$, so that there exists $\gamma:[0,1]\rightarrow \Omega$,
$\gamma(0)=x$, $\gamma(1)=y$, $\gamma'(t)=\InnerProduct{a(t)}{X^{\xi_0\delta}(\gamma(t))}$,
with $\Norm{a}[L^\infty]<1$.
Then,
\begin{equation*}
\gamma'(t)=\InnerProduct{a(t)}{\xi_2 A(t) X^{\delta}(\gamma(t))} = \InnerProduct{A(t)^{\transpose}a(t)}{\xi_2 X^{\delta}(\gamma(t))},
\end{equation*}
where $A(t) = \xi_2^{-1} S_{\xi_0\delta}(\gamma(t))S_{\xi_0 B_1\delta}(\gamma(t))\cdots S_{\xi_0 B_1^{k-1}\delta}(\gamma(t))$;
note that $\Norm{A}[L^\infty([0,1];\M^{q\times q})]\leq 1$, and therefore,
$\Norm{A^{\transpose}a}[L^\infty([0,1];\R^q)]\leq \Norm{a}[L^\infty([0,1];\R^q)]<1$.  It follows that
$y=\gamma(1)\in B_{X^{\delta}}(x,\xi_2)$, completing the proof of \cref{Item::GenSubr::Image}.

We claim, for $\delta_1\leq \delta_2$,
\begin{equation}\label{Eqn::GenSubr::LambdaIncrease}
	\Lambda(x,\delta_1)\lesssim \Lambda(x,\delta_2),
\end{equation}
where the implicit constant can be chosen to depend only on $q$.  Indeed,
\begin{equation*}
\begin{split}
	\Lambda(x,\delta_1) &= \max_{j_1,\ldots, j_n\in \{1,\ldots, q\}} \left| \det \left(X_{j_1}^{\delta_1}(x)| \cdots | X_{j_n}^{\delta_1}(x)\right) \right|
\\&= \max_{j_1,\ldots, j_n\in \{1,\ldots, q\}} \left| \det \left((T_{\delta_1,\delta_2}X^{\delta_2})_{j_1}(x)| \cdots | (T_{\delta_1,\delta_2}X^{\delta_2})_{j_n}(x)\right) \right|.
\end{split}
\end{equation*}
Since $\Norm{T_{\delta_1,\delta_2}(x)}\leq 1$, the right hand side is the determinant of a matrix whose columns are linear combinations (with coefficients bounded by $1$)
of the vectors $X_1^{\delta_2}(x),\ldots, X_q^{\delta_2}(x)$.  \Cref{Eqn::GenSubr::LambdaIncrease} follows.

Next we claim, for $c>0$ fixed,
\begin{equation}\label{Eqn::GenSubr::ScaleLambda}	
	\Lambda(x,c\delta)\approx \Lambda(x,\delta),\quad \delta, c\delta\in (0,1],
\end{equation}
where the implicit constant depends on $c$.  It suffices to prove \cref{Eqn::GenSubr::ScaleLambda} for $c<1$.
By \cref{Eqn::GenSubr::LambdaIncrease}, it suffices to prove \cref{Eqn::GenSubr::ScaleLambda}  for
$c=B_2^{-k}$ for some $k$.  We have
\begin{equation}\label{Eqn::GenSubr::WriteLambdaAgain}
\begin{split}
\Lambda(x,\delta) &= \max_{j_1,\ldots, j_n\in \{1,\ldots, q\}} \left|\det \left( X^{\delta}_{j_1}(x) | \cdots | X^{\delta}_{j_n}(x) \right)\right|
\\& = \max_{j_1,\ldots, j_n\in \{1,\ldots, q\}} \left|\det \left( (AX^{c\delta})_{j_1}(x) | \cdots | (AX^{c\delta})_{j_n}(x) \right)\right|,
\end{split}
\end{equation}
where $A(x) = R_{B_2^{-1}\delta}(x) R_{B_2^{-2}\delta}(x)\cdots R_{B_2^{-k}\delta}(x)$.  Since $\sup_{x\in \Omega}\Norm{A(x)}[\M^{q\times q}]\leq b_2^{-k}\lesssim 1$,
it follows that the right hand side of \cref{Eqn::GenSubr::WriteLambdaAgain} is the determinant of a matrix whose
columns are linear combinations (with coefficients whose magnitudes  are $\lesssim 1$)
of the vectors $X_1^{c\delta}(x),\ldots, X_q^{c\delta}(x)$.  It follows that $\Lambda(x, \delta)\lesssim \Lambda(x,c\delta)$.
Combining this with \cref{Eqn::GenSubr::LambdaIncrease}, \cref{Eqn::GenSubr::ScaleLambda} follows.

\Cref{Cor::Desnity::MeasureSets} shows 
\begin{equation}\label{Eqn::GenSubr::CorVolConc}
	\LebDensity(B_{X^{\delta}}(x,\xi_2))\approx \Lambda(x,\delta),
\end{equation}
where we have used that (thinking of $\LebDensity$ as a density) $\LebDensity(V_1(x),\ldots, V_n(x)) = \left|\det(V_1(x) | \cdots| V_n(x))\right|$.
Combining this with \cref{Eqn::GenSubr::ScaleLambda} and \cref{Eqn::GenSubr::CompareBalls}, we have
\begin{equation}\label{Eqn::GenSubr::EstVol::1}
	\LebDensity(B(x,\xi_0\delta))\leq \LebDensity(B_{X^{\delta}}(x,\xi_2))\approx \Lambda(x,\delta) \approx \Lambda(x,\xi_0\delta).
\end{equation}
Conversely, using \cref{Eqn::GenSubr::CorVolConc} again we have,
\begin{equation}\label{Eqn::GenSubrEstVol::2}
\Lambda(x,\delta)\approx \LebDensity(B_{X^{\delta}}(x,\xi_2)) \leq \LebDensity(B_{X^{\delta}}(x,1)) = \LebDensity(B(x,\delta)).
\end{equation}
Combining \cref{Eqn::GenSubr::EstVol::1} and \cref{Eqn::GenSubrEstVol::2} proves \cref{Item::GenSubr::VolEst}.
\Cref{Item::GenSubr::Doubling} follows from  \cref{Item::GenSubr::VolEst} and \cref{Eqn::GenSubr::ScaleLambda}.

Since $\Phi_{x,\delta}^{*}\LebDensity = |\det d\Phi_{x,\delta}| \LebDensity$, \cref{Item::GenSubr::Jacobian} follows from
\cref{Thm::Density::MainThm} \cref{Item::Density::hconst} and \cref{Cor::Desnity::MeasureSets}.
\Cref{Item::GenSubr::Pullback} follows directly from \cref{Thm::Results::MainThm}.
\end{proof}

\begin{rmk}
One can generalize the multi-parameter geometries from \cref{Section::Scale::MultiParam} in a similar way by changing the above variable $\delta\in (0,1]$
to a \textit{vector}, $\delta\in [0,1]^{\nu}$ for some $\nu\in \N$, and proceeding in a a similar way.
\end{rmk}

\begin{rmk}
The most artificial hypothesis in this section is \cref{Item::GenSubr::Assume::C1}.  Indeed, it is not directly related to any of the hypotheses of a space of homogeneous type.
This hypothesis can be replaced with weaker hypotheses and we can still achieve the same result.  In fact, the main purposes of \cref{Item::GenSubr::Assume::C1}
are to ensure the existence of $\eta$ and $\delta_0$ (independent of $x\in \sK$, $\delta\in (0,1]$) in our application of \cref{Thm::Results::MainThm}, and to estimate
$\Lie{X_j^{\delta}} \LebDensity$.  One could just directly assume
the existence of such constants and estimates, or assume any number of other hypotheses which imply their existence, depending on the application at hand.
\end{rmk}

    \subsection{Diffeomorphism Invariance and Nonsmooth Vector Fields}
An important way in which the results in this paper are stronger than previously mentioned works is that the statements of the main thoerems are completely invariant
under $C^2$ diffeomorphisms (see \cref{Section::Series::DiffInv}).  This is true quantitatively:  all of the estimates
depend on quantities which are invariant under arbitrary $C^2$ diffeomorphisms.
In previous works like \cite{NagelSteinWaingerBallsAndMetrics,TaoWrightLpImproving,S,MontanariMorbidelliNonsmoothHormanderVectorFields} the estimates
were in terms of $C^m$ type norms of the vector fields in some fixed coordinate system.\footnote{\cite{MontanariMorbidelliNonsmoothHormanderVectorFields}
works with Lipschitz vector fields to obtain some results with less regularity than the other mentioned works.
  It is possible that the ideas from that paper could be combined with the ideas from this paper to prove
 results like the ones in this paper, but with Lipschitz vector fields instead of $C^1$ vector fields; though we do not pursue this here.}
Thus, the vector fields had to be a priori ``smooth'' and ``not large'' in some fixed coordinate system.
The concepts of ``smooth'' and ``not large'' are not invariant under $C^2$ diffeomorphisms.
Under the assumptions of \cref{Thm::Results::MainThm}, we conclude the existence
of a coordinate system in which the vector fields are smooth and not large, but we need not
assume it.
This allows us to address some settings where the vector fields are given in a coordinate
system in which they are large and/or are merely $C^1$; in particular, unlike previous works, we only use the qualitative assumption that the vector fields are $C^1$, and our estimates
do not depend on the $C^1$ norms of the coefficients in a coordinate system.

When considering only the smoothness (and not the size) aspect of this diffeomorphism invariance,
these results can be rephrased as the qualitative results in \cref{Section::Series::Qual};
the methods from previous works on this subject cannot yield such theorems, since they require the vector fields
to be smooth in the first place.  In fact, the qualitative results in this series seem to be of a new type;
though there may be some connection to Hilbert's fifth problem.

In the series of papers \cite{SteinStreetA,SteinStreetI,SteinStreetII,SteinStreetIII,SteinStreetS}, the second author and Stein
used the scaling techniques from \cite{S} to study singular Radon transforms of the form
\begin{equation*}
Tf(x) = \psi(x)\int f(\gamma(t,x))K(t)\: dt,
\end{equation*}
where $\gamma(t,x)$ is a germ of a smooth function defined near $(0,0)$, $\gamma(t,x):\R^N_0\times \R^n_0\rightarrow \R^n$
with $\gamma(0,x)\equiv x$ (we have used $\R_0^m$ to denote a small neighborhood of $0\in \R^m)$, and $K(t)$ is a ``multi-parameter singular kernel'' supported near $0\in \R^N$.  Conditions were given so that the above operator was bounded on $L^p$.  Because the theory was based on \cite{S}, it was required that $\gamma(t,x)$ be smooth
and supported very near $(0,0)$.  One could replace every application of the results from \cite{S} in these papers
with \cref{Thm::Results::MainThm} to obtain more general results where $\gamma$ is not necessarily required to be smooth or supported very close to $0$.
In fact, the results can be made completely invariant under arbitrary $C^2$ diffeomorphisms, and so the concepts of smooth and small do not have intrinsic meaning.
Similar remarks hold for many other settings where methods from  \cite{NagelSteinWaingerBallsAndMetrics,TaoWrightLpImproving,S} are used.

Large sub-Riemannian balls have been studied in some special cases before.
See, for example,
the discussion of model pseudoconvex boundaries in \cite[Section 4]{NagelSteinDifferentiableControl}
as well as \cite{PetersonCCMetricsInUnboudnedSubDomains,DlugiePetersonOnUniformLargeScale}.
The approach in this paper allows us to unify the ideas behind these large sub-Riemannian balls
with the more robust theory of small sub-Riemannian balls.


    \subsection{Several Complex Variables}\label{Section::Scale::SCV}
As described in \cref{Section::Scale::GenSubR}, the results in this paper can be used to study generalized
versions of sub-Riemannian geometries, and as elucidated by Charpentier and Dupain \cite{CharpentierDupainExtremalBases},
these geometries arise when studying $\overline{\partial}$-problems.
When applying the results from this series to such questions, a difficulty arises.
We turn to describing this issue, and how it will be addressed in a future work of the second author.

Let $M$ be a complex manifold of dimension $n$, and for each $\delta\in (0,1]$, let $L_1^{\delta},\ldots, L_q^{\delta}$
be $C^1$ complex vector fields on $M$ such that $\forall \zeta\in M$, $\Span\{L_1^{\delta}(\zeta),\ldots, L_q^{\delta}(\zeta)\} = T^{0,1}_{\zeta}M$.
Let $X_1^{\delta},\ldots, X_{2q}^{\delta}$ denote the list of real vector fields $\Real(L_1^{\delta}),\ldots, \Real(L_q^{\delta}),\Imag(L_1^{\delta}),\ldots, \Imag(L_q^{\delta})$.
We assume that the list $X_1^{\delta},\ldots, X_{2q}^{\delta}$ locally satisfies the hypotheses of \cref{Section::Scale::GenSubR}.
Then, \cref{Thm::GenSubr::MainThm} applies to show that the balls $B(x,\delta)$ defined in that section locally give $M$ the structure of a space of
homogeneous type\footnote{Since $M$ is an abstract manifold, we do not have a natural choice of density $\LebDensity$ on $M$.  However,
one may instead use any strictly positive $C^1$ density on $M$ and obtain the same results.  All such choices of density are equivalent for our purposes.}, and we obtain scaling maps $\Phi_{x,\delta}:B^{2n}(\eta_1)\rightarrow B(x,\delta)$ as in that theorem.
In particular, by \cref{Thm::GenSubr::MainThm} \cref{Item::GenSubr::Pullback}, the maps $\Phi_{x,\delta}$ ``rescale''
the vector fields $X_1^{\delta},\ldots, X_{2q}^{\delta}$ so that they are smooth and span the tangent space, uniformly for $x$ in compact sets and $\delta\in (0,1]$.

In other words, $\Phi_{x,\delta}^{*}L_1^{\delta},\ldots, \Phi_{x,\delta}^{*}L_q^{\delta},\Phi_{x,\delta}^{*}\overline{L_1^{\delta}},\ldots, \Phi_{x,\delta}^{*}\overline{L_q^{\delta}}$
are smooth and span the complexified tangent space, uniformly for $x$ in compact sets and $\delta\in (0,1]$.
The hope is to apply techniques from several complex variables at the unit scale to these rescaled vector fields, to be able to conclude results at every
scale $\delta\in (0,1]$.
However, there is one key component that is missing in the complex setting.  We identify $\R^{2n}$ with $\C^n$ via the map
$(x_1,\ldots, x_{2n})\mapsto (x_1+ix_{n+1}, \ldots, x_n+ix_{2n})$.  To be able to apply 
results from complex analysis, we would need that
$\Phi_{x,\delta}^{*}L_1^{\delta},\ldots, \Phi_{x,\delta}^{*}L_q^{\delta}$ (thought of as vector fields on the ball of radius $\eta_1$ in $\C^n$) are still $T^{0,1}$ vector fields.
It is easy to see that this is equivalent to the map $\Phi_{x,\delta}$ being holomorphic.
However, the best one can say about the maps constructed in this series is that they are $C^2$.

One therefore wishes to obtain the same results as this paper, but with a different map $\Phi$, where we can also conclude that $\Phi$ is holomorphic.
In the past, this has been achieved in special cases by
using ad hoc methods for the particular problem at hand (e.g., by using non-isotropic dilations determined by the Taylor series of some ingredients in the problem)--see, for example,
\cite[Section 3]{NagelRosaySteinWaingerEstimatesForTheBergmanAndSzegoKernelsInCt}, \cite[Section 3.3.2]{CharpentierDupainExtremalBases}, and 
\cite[Section 2.1]{CharpentierDupainEstimatesForBergmanAndSezgoLocallyDiag}.
However, using such ad hoc methods does not allow one to proceed in the generality of this paper, and can obfuscate the underlying mechanism of the problem.

In a forthcoming paper, the second author will address this issue, and obtain appropriate analogs of results in this series in the complex setting; which
can be seen as a quantitatively diffeomorphic invariant version of the classical Newlander-Nirenberg theorem \cite{NewlanderNirenbergComplexAnalyticCoordiantesInAlmostComplexManifolds}.
The results and methods of this series are the first step in addressing this complex setting.

When we move to the complex setting (and more general settings which will be discussed in a future paper), the ODE methods of this paper are no longer sufficient to obtain even non-sharp results, and one
must move to PDE methods.  In particular, Zygmund spaces are the right scale of spaces to discuss any of the results in the complex setting.


\section{Function Spaces, revisited}

In this section, we state and prove the basic results we need concerning the function spaces
introduced in \cref{Section::FuncSpace}.  We begin with several straightforward inclusions
of these spaces, which we state in the next lemma.  For the rest of this section,
we take the setting of \cref{Section::FuncSpace::Manif}.

\begin{lemma}\label{Lemma::FuncSpaces::Inclu}
\begin{enumerate}[(i)]
\item\label{Item::FuncSpace::IncludHold} For $0\leq s_1\leq s_2\leq 1$, $m\in \N$, $\HXNorm{f}{X}{m}{s_1}[M]\leq 3\HXNorm{f}{X}{m}{s_2}[M]$.
\item\label{Item::FuncSpace::IncludLip} $\HXNorm{f}{X}{m}{1}[M]\leq \CXjNorm{f}{X}{m+1}[M]$.
\item\label{Item::FuncSpace::IncludZygInHold} For $s\in (0,1]$, $m\in \N$, $\ZygXNorm{f}{X}{s+m}[M]\leq 5 \HXNorm{f}{X}{m}{s}[M]$.
\item\label{Item::FuncSpace::IncludZyg} For $0<s_1\leq s_2<\infty$, $\ZygXNorm{f}{X}{s_1}[M]\leq 15 \ZygXNorm{f}{X}{s_2}[M]$.
\item\label{Item::FuncSpace::IncludInclusion} If $U\subseteq M$ is an open set, then $\HXNorm{f}{X}{m}{s}[U]\leq \HXNorm{f}{X}{m}{s}[M]$
and $\ZygXNorm{f}{X}{s}[U]\leq \ZygXNorm{f}{X}{s}[M]$.
\end{enumerate}
\end{lemma}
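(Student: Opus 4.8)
The plan is to verify each of the five inclusions directly from the definitions in \cref{Section::FuncSpace::Manif}, reducing each to an elementary pointwise estimate. Throughout, the key observation is that all the norms are built from the sup-norm $\CNorm{\cdot}{M}$ together with difference-type seminorms along Carnot--Carath\'eodory curves, so the inequalities are combinatorial once one organizes the terms correctly. Since the $C^m$-type parts $\sum_{|\alpha|\le m}\CNorm{X^\alpha f}{M}$ appear identically on both sides of each claimed inequality (after differentiating), in each case it suffices to treat the base case $m=0$ and then sum over multi-indices.

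First I would prove \cref{Item::FuncSpace::IncludHold}. For $m=0$: the difference quotient $\rho(x,y)^{-s_1}|f(x)-f(y)|$ is bounded, for $\rho(x,y)\le 1$, by $\rho(x,y)^{-s_2}|f(x)-f(y)|$, and for $\rho(x,y)\ge 1$ (using our convention $\rho^{-s}=0$ when $\rho=\infty$, and $\rho^{-s_1}\le\rho^{-s_2}\cdot\rho^{s_2-s_1}$... actually here one splits: when $1\le\rho(x,y)<\infty$, $\rho(x,y)^{-s_1}|f(x)-f(y)|\le |f(x)-f(y)|\le 2\CNorm{f}{M}$). Combining the two regimes gives $\HXNorm{f}{X}{0}{s_1}[M]\le \CNorm{f}{M}+\HXNorm{f}{X}{0}{s_2}[M]+2\CNorm{f}{M}\le 3\HXNorm{f}{X}{0}{s_2}[M]$ (absorbing $\CNorm{f}{M}$ into the $s_2$-norm three times — the constant $3$ is exactly this bookkeeping). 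Summing over $|\alpha|\le m$ gives the general case. For \cref{Item::FuncSpace::IncludLip} with $m=0$: if $\rho(x,y)<\infty$, pick a CC-curve $\gamma$ from $x$ to $y$ with $\gamma'=\sum a_j X_j(\gamma)$, $\BNorm{\sum|a_j|^2}[L^\infty]<1$, of ``length'' essentially $\rho(x,y)$; then $|f(x)-f(y)|=|\int_0^1 \frac{d}{dt}f(\gamma(t))\,dt|=|\int_0^1\sum_j a_j(t)(X_jf)(\gamma(t))\,dt|\le \rho(x,y)\cdot\sqrt q\cdot\max_j\CNorm{X_jf}{M}$; a touch of care with the factor $\sqrt q$ versus the $\ell^2$ normalization of the $a_j$ shows this is $\le\rho(x,y)\CXjNorm{f}{X}{1}[M]$, hence $\HXNorm{f}{X}{0}{1}[M]\le\CXjNorm{f}{X}{1}[M]$; sum over $\alpha$.

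Next, \cref{Item::FuncSpace::IncludZygInHold}: for $m=0$, $s\in(0,1]$, the Zygmund norm is $\HXNorm{f}{X}{0}{s/2}[M]+\sup h^{-s}|f(\gamma(2h))-2f(\gamma(h))+f(\gamma(0))|$ over $\gamma\in\sP^M_{X,s/2}(h)$. Write the second difference as $[f(\gamma(2h))-f(\gamma(h))]-[f(\gamma(h))-f(\gamma(0))]$; each bracket is a H\"older increment of $f$ along a sub-curve, so bounded by $\HXNorm{f}{X}{0}{s}[M]\cdot(\text{CC-distance})^s$, and the CC-distance between $\gamma(0),\gamma(h)$ (resp.\ $\gamma(h),\gamma(2h)$) is $\lesssim h$ since the coefficients $d_j$ have $\sum\HNorm{d_j}{0}{s/2}[[0,2h]]^2<1$, in particular are bounded by $1$ in sup-norm after rescaling — this gives each bracket $\le h^s\HXNorm{f}{X}{0}{s}[M]$, so the whole seminorm is $\le 2\HXNorm{f}{X}{0}{s}[M]$; adding $\HXNorm{f}{X}{0}{s/2}[M]\le 3\HXNorm{f}{X}{0}{s}[M]$ from part \cref{Item::FuncSpace::IncludHold} yields the constant $5$. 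For \cref{Item::FuncSpace::IncludZyg}: reduce to $s_2-s_1<1$ and $\lfloor s_1\rfloor=\lfloor s_2\rfloor$ by iterating, then argue as in part \cref{Item::FuncSpace::IncludHold} with the second-difference seminorm, splitting on whether $h\le 1$ or $h\ge 1$; when $h\ge1$ the second difference is bounded by $4\CNorm{f}{M}$ and $h^{-s_1}\le1$, and when $h\le1$, $h^{-s_1}\le h^{-s_2}$; the constant $15$ is the accumulated bookkeeping (roughly $3$ from the H\"older part, plus the $4\CNorm{f}{M}$ absorptions). Finally \cref{Item::FuncSpace::IncludInclusion} is immediate: for $U\subseteq M$ open, every admissible curve $\gamma$ (for the CC-ball definition, or for $\sP^U_{X,s}$) with image in $U$ is also admissible in $M$, and $\rho_U(x,y)\ge\rho_M(x,y)$ for $x,y\in U$ (more curves are available in $M$), so each seminorm over $U$ is bounded by the corresponding one over $M$; the sup-norm is trivially monotone.

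The main obstacle I anticipate is \cref{Item::FuncSpace::IncludZygInHold} (and the closely related \cref{Item::FuncSpace::IncludZyg}): one must be careful that the curve-segments $\gamma|_{[0,h]}$ and $\gamma|_{[h,2h]}$, after reparametrization, are genuinely admissible competitors in the definition of the H\"older (or CC-metric) seminorm, with CC-length comparable to $h$ and not, say, to $\sqrt h$ or $h^{s/2}$ — this is where the precise normalization $\sum\HNorm{d_j}{0}{s/2}[[0,2h]]^2<1$ (which controls the sup-norm of the $d_j$, hence the CC-speed) is used, and getting the rescaling exactly right to land inside the unit-coefficient class is the one genuinely fiddly point. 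Everything else is routine triangle-inequality manipulation and summation over multi-indices.
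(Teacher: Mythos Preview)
Your approach to parts \cref{Item::FuncSpace::IncludHold}, \cref{Item::FuncSpace::IncludLip}, \cref{Item::FuncSpace::IncludZygInHold}, and \cref{Item::FuncSpace::IncludInclusion} matches the paper's: reduce to $m=0$, split on the size of $\rho$ or $h$, and sum over $|\alpha|\le m$. Your anticipated obstacle in \cref{Item::FuncSpace::IncludZygInHold} dissolves immediately: since $\HNorm{d_j}{0}{s/2}[[0,2h]]$ dominates $\sup_{[0,2h]}|d_j|$, the curve $\gamma$ itself already witnesses $\rho(\gamma(0),\gamma(h))<h$ and $\rho(\gamma(h),\gamma(2h))<h$, so each first difference is at most $h^s$ times the H\"older-$s$ seminorm with no reparametrization whatsoever.

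Your plan for \cref{Item::FuncSpace::IncludZyg} has a gap. Iterating to reach $\lfloor s_1\rfloor=\lfloor s_2\rfloor$ would require on the order of $s_2-s_1$ steps, so the accumulated constant would depend on $s_1,s_2$ and could not be the uniform $15$ the lemma asserts. The paper's route is different and avoids iteration entirely: first reduce to $s_1\in(0,1]$, and then for $s_2>1$ chain the already-established parts together,
\[
\ZygXNorm{f}{X}{s_1}[M]\;\le\;5\,\HXNorm{f}{X}{0}{s_1}[M]\;\le\;15\,\HXNorm{f}{X}{0}{1}[M]\;\le\;15\,\CXjNorm{f}{X}{1}[M]\;\le\;15\,\ZygXNorm{f}{X}{s_2}[M],
\]
invoking \cref{Item::FuncSpace::IncludZygInHold}, then \cref{Item::FuncSpace::IncludHold}, then \cref{Item::FuncSpace::IncludLip}, and finally the trivial observation that $\CXjNorm{f}{X}{1}[M]\le\ZygXNorm{f}{X}{s_2}[M]$ for $s_2>1$ (each $\CNorm{X^\alpha f}{M}$ with $|\alpha|\le 1$ is dominated by the corresponding term in the Zygmund sum). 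Routing through $\CXjSpace{X}{1}$ is the idea you are missing.
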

\begin{proof}
For \cref{Item::FuncSpace::IncludHold}, it suffices to prove the case $m=0$.  We have,
\begin{equation*}
\begin{split}
    &\HXNorm{f}{X}{0}{s_1} = \CNorm{f}{M} + \sup_{x\ne y} \rho(x,y)^{-s_1} |f(x)-f(y)|
    \leq \CNorm{f}{M} + \sup_{x\ne y} \min\{\rho(x,y),1\}^{-s_1} |f(x)-f(y)|
    \\&\leq \CNorm{f}{M} + \sup_{x\ne y} \min\{\rho(x,y),1\}^{-s_2} |f(x)-f(y)|
    \leq 3\CNorm{f}{M} + \sup_{x\ne y} \rho(x,y)^{-s_2} |f(x)-f(y)|
    \leq 3\HXNorm{f}{X}{0}{s_2},
\end{split}
\end{equation*}
proving \cref{Item::FuncSpace::IncludHold}.

For \cref{Item::FuncSpace::IncludLip}, it suffices to prove the case $m=0$.
Let $x\ne y\in M$ with $\rho(x,y)<\infty$, fix $\epsilon>0$,
and let $\delta=\rho(x,y)+\epsilon$.  Pick
$\gamma:[0,1]\rightarrow M$ with $\gamma(0)=x$, $\gamma(1)=y$, $\gamma'(t)=\sum_{j=1}^q a_j(t) \delta X_j(\gamma(t))$, $\Norm{\sum|a_j|^2}[L^\infty([0,1])]<1$.  Then we have,
\begin{equation*}
\begin{split}
    &\rho(x,y)^{-1}|f(x)-f(y)|=\rho(x,y)^{-1} \left|\int_0^1 \sum a_j(t) \delta (X_j f)(\gamma(t))\: dt\right|
    \leq \frac{\delta}{\rho(x,y)} \BNorm{\max_{1\leq j\leq q} |a_j(t)|}[L^\infty([0,1])] \sum_{j=1}^q \CNorm{X_j f}{M}
    \\&\leq \frac{\delta}{\rho(x,y)} \sum_{j=1}^q \CNorm{X_j f}{M} = \frac{\rho(x,y)+\epsilon}{\rho(x,y)} \sum_{j=1}^q \CNorm{X_j f}{M} \xrightarrow{\epsilon\rightarrow 0} \sum_{j=1}^q \CNorm{X_j f}{M}.
\end{split}
\end{equation*}
If $\rho(x,y)=\infty$, then $\rho(x,y)^{-1}|f(x)-f(y)|=0\leq \sum_{j} \CNorm{X_j f}{M}$.
It follows that $\HXNorm{f}{X}{0}{1}[M]\leq \CXjNorm{f}{X}{1}[M]$, completing the proof of
\cref{Item::FuncSpace::IncludLip}.

For \cref{Item::FuncSpace::IncludZygInHold}, it suffices to prove the case $m=0$.
Let $\gamma\in \sP^M_{X,s/2}(h)$.  Then $\rho(\gamma(2h),\gamma(h)),\rho(\gamma(h),\gamma(0))<h$,
and so
\begin{equation*}
    h^{-s}|f(\gamma(2h))-2f(\gamma(h))+f(\gamma(0))|\leq 2\sup_{\rho(x,y)<h} h^{-s} |f(x)-f(y)|
    \leq 2\sup_{x,y\in M, x\ne y} \rho(x,y)^{-s} |f(x)-f(y)|.
\end{equation*}
Combining this with $\HXNorm{f}{X}{0}{s/2}[M]\leq 3 \HXNorm{f}{X}{0}{s}[M]$ (by \cref{Item::FuncSpace::IncludHold}), \cref{Item::FuncSpace::IncludZygInHold} follows.

For \cref{Item::FuncSpace::IncludZyg} it suffices to prove the case when $s_1\in (0,1]$.
When $s_2\in (0,1]$, as well, then it follows easily from the definitions
that $\ZygXNorm{f}{X}{s_1}[M]\leq 5 \ZygXNorm{f}{X}{s_2}[M]$.
When $s_2>1$, we use \cref{Item::FuncSpace::IncludZygInHold}, \cref{Item::FuncSpace::IncludHold}, and \cref{Item::FuncSpace::IncludLip} to see
\begin{equation*}
    \ZygXNorm{f}{X}{s_1}[M]\leq 5 \HXNorm{f}{X}{0}{s_1}[M]\leq 15 \HXNorm{f}{X}{0}{1}[M]\leq 15\CXjNorm{f}{X}{1}[M]\leq 15 \ZygXNorm{f}{X}{s_2}[M],
\end{equation*}
completing the proof of \cref{Item::FuncSpace::IncludZyg}.  \Cref{Item::FuncSpace::IncludInclusion} follows
easily from the definitions.
\end{proof}

\begin{rmk}
Given the analogy with Euclidean spaces, one expects the reverse inequality to
\cref{Lemma::FuncSpaces::Inclu} \cref{Item::FuncSpace::IncludZygInHold}, when
$s\in (0,1)$; namely $\HXNorm{f}{X}{m}{s}[M]\lesssim \ZygXNorm{f}{X}{s+m}[M]$.  Under additional hypotheses, this is true locally.
See the second paper in this series  for details.
\end{rmk}

\begin{prop}\label{Prop::FuncSpaceRev::Algebra}
The spaces $\HXSpace{X}{m}{s}[M]$, $\ZygXSpace{X}{s}[M]$, $\HSpace{m}{s}[\Omega]$, and $\ZygSpace{s}[\Omega]$
are algebras.  In fact, we have for $m\in \N$, $s\in [0,1]$,
\begin{equation*}
    \HXNorm{fg}{X}{m}{s}[M]\leq C_{m,q} \HXNorm{f}{X}{m}{s}[M]\HXNorm{g}{X}{m}{s}[M],
\end{equation*}
where $C_{m,q}$ is a constant depending only on $m$ and $q$.  And for $m\in \N$, $s\in (m,m+1]$,
\begin{equation}\label{Eqn::FuncSpacesRev::ZygAlg}
    \ZygXNorm{fg}{X}{s}[M]\leq C_{m,q} \ZygXNorm{f}{X}{s}[M]\ZygXNorm{g}{X}{s}[M].
\end{equation}
Moreover, these algebras have multiplicative inverses for functions which are bounded away from zero.  If $f\in \HXSpace{X}{m}{s}[M]$ with $\inf_{x\in M} |f(x)|\geq c_0>0$
then $f(x)^{-1}=\frac{1}{f(x)}\in \HXSpace{X}{m}{s}[M]$ with
\begin{equation*}
	\HXNorm{f(x)^{-1}}{X}{m}{s}[M]\leq C,
\end{equation*}
where $C$ can be chosen to depend only on $m$, $q$, $c_0$, and an upper bound for $\HXNorm{f}{X}{m}{s}[M]$.
And for $m\in \N$, $s\in (m,m+1]$ if $f\in \ZygXSpace{X}{s}[M]$ with $\inf_{x\in M}|f(x)|\geq c_0>0$ then
$f(x)^{-1}\in \ZygXSpace{X}{s}[M]$ with
\begin{equation}\label{Eqn::FuncSpacesRev::ZygInv}
	\ZygXNorm{f(x)^{-1}}{X}{s}[M]\leq C,
\end{equation}
where $C$ can be chosen to depend only on $m$, $q$, $c_0$, and an upper bound for $\ZygXNorm{f}{X}{s}[M]$.
The same results hold with $\HXSpace{X}{m}{s}[M]$ replaced by $\HSpace{m}{s}[\Omega]$ and $\ZygXSpace{X}{s}[M]$ replaced by $\ZygSpace{s}[\Omega]$ (with $n$ playing
the role of $q$).
\end{prop}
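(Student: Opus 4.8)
The plan is to establish the H\"older algebra property first, then bootstrap to the Zygmund case via a discrete Leibniz identity for second differences, and finally treat inverses by induction on the order of differentiation. The basic computational tool in every case is the Leibniz expansion for ordered multi-indices: for $\alpha=(i_1,\dots,i_k)$ one has $X^{\alpha}(fg)=\sum (X^{\alpha'}f)(X^{\alpha''}g)$, the sum running over the $2^{k}$ ways of distributing $i_1,\dots,i_k$ — in order — between $f$ and $g$, which follows by induction on $k$ from $X_i(fg)=(X_if)g+f(X_ig)$. For the H\"older bound, I would first note the $m=0$ case is just the triangle inequality: $|f(x)g(x)-f(y)g(y)|\le\CNorm{f}{M}|g(x)-g(y)|+\CNorm{g}{M}|f(x)-f(y)|$, which gives $\HXNorm{fg}{X}{0}{s}[M]\le\HXNorm{f}{X}{0}{s}[M]\HXNorm{g}{X}{0}{s}[M]$. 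For general $m$ one applies the Leibniz expansion to each $X^{\alpha}(fg)$, $|\alpha|\le m$, bounds each summand by the $m=0$ case, and uses $\HXNorm{X^{\alpha'}f}{X}{0}{s}[M]\le\HXNorm{f}{X}{\lvert\alpha'\rvert}{s}[M]\le\HXNorm{f}{X}{m}{s}[M]$; the constant $C_{m,q}$ just counts the number of $\alpha$ with $|\alpha|\le m$ together with the $\le 2^{m}$ splittings of each.

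For the Zygmund algebra property the key is the elementary identity, valid for any values $F_0,F_1,F_2$ and $G_0,G_1,G_2$,
\begin{equation*}
F_2G_2-2F_1G_1+F_0G_0=F_1(G_2-2G_1+G_0)+G_1(F_2-2F_1+F_0)+(F_2-F_1)(G_2-G_1)+(F_0-F_1)(G_0-G_1).
\end{equation*}
I would apply this with $F_j,G_j$ equal to the values of $f,g$ at $\gamma(0),\gamma(h),\gamma(2h)$ for $\gamma\in\sP_{X,s/2}^{M}(h)$, $s\in(0,1]$. The first two terms are bounded by $\bigl(\CNorm{f}{M}\ZygXNorm{g}{X}{s}[M]+\CNorm{g}{M}\ZygXNorm{f}{X}{s}[M]\bigr)h^{s}$. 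For the cross terms, observe that $\gamma$ restricted to $[h,2h]$ (resp.\ $[0,h]$), rescaled to $[0,1]$, witnesses $\rho(\gamma(h),\gamma(2h))\le h$ (resp.\ $\rho(\gamma(0),\gamma(h))\le h$), so $|F_2-F_1|\le h^{s/2}\HXNorm{f}{X}{0}{s/2}[M]$ and likewise for the other factors, giving a contribution $\le 2h^{s}\HXNorm{f}{X}{0}{s/2}[M]\HXNorm{g}{X}{0}{s/2}[M]$. Together with the H\"older algebra bound applied to the $\HXNorm{fg}{X}{0}{s/2}[M]$ part of the norm, this proves \cref{Eqn::FuncSpacesRev::ZygAlg} for $s\in(0,1]$. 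For $s\in(m,m+1]$ one expands $X^{\alpha}(fg)$, $|\alpha|\le m$, by Leibniz, applies the case just proved to each $\ZygXNorm{(X^{\alpha'}f)(X^{\alpha''}g)}{X}{s-m}[M]$, and bounds $\ZygXNorm{X^{\alpha'}f}{X}{s-m}[M]\le\ZygXNorm{f}{X}{\lvert\alpha'\rvert+s-m}[M]\le 15\,\ZygXNorm{f}{X}{s}[M]$ using the monotonicity in \cref{Lemma::FuncSpaces::Inclu}.

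For inverses I would induct on $m$. When $m=0$: $|f(x)^{-1}-f(y)^{-1}|\le c_0^{-2}|f(x)-f(y)|$ and $\CNorm{f^{-1}}{M}\le c_0^{-1}$, so $\HXNorm{f^{-1}}{X}{0}{s}[M]\le c_0^{-1}+c_0^{-2}\HXNorm{f}{X}{0}{s}[M]$. For the inductive step, use $X_i(f^{-1})=-(f^{-1})^{2}(X_if)$: by the inductive hypothesis and the algebra property, $(f^{-1})^{2}\in\HXSpace{X}{m-1}{s}[M]$ with controlled norm, and $X_if\in\HXSpace{X}{m-1}{s}[M]$ with norm $\le\HXNorm{f}{X}{m}{s}[M]$, so $X_i(f^{-1})\in\HXSpace{X}{m-1}{s}[M]$ with controlled norm, and $\HXNorm{f^{-1}}{X}{m}{s}[M]=\HXNorm{f^{-1}}{X}{0}{s}[M]+\sum_i\HXNorm{X_i(f^{-1})}{X}{m-1}{s}[M]$ is controlled. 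The Zygmund estimate \cref{Eqn::FuncSpacesRev::ZygInv} runs along the same induction; its base case $s\in(0,1]$ comes from the second-difference identity above applied with $G=f$ and $F=f^{-1}$ (so $FG\equiv 1$ and the left-hand side vanishes), solving for the second difference of $f^{-1}$ and estimating the remaining terms as before, and the step again uses $X_i(f^{-1})=-(f^{-1})^{2}X_if$ with \cref{Eqn::FuncSpacesRev::ZygAlg} and \cref{Lemma::FuncSpaces::Inclu}.

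The Euclidean statements for $\HSpace{m}{s}[\Omega]$ and $\ZygSpace{s}[\Omega]$ follow from the identical arguments with ordinary second differences $f(x+2h)-2f(x+h)+f(x)$ over $x\in\Omega_h$ in place of second differences along curves; this is not literally a special case of the manifold statement, since the Carnot--Carath\'eodory metric of $\partial_{x_1},\dots,\partial_{x_n}$ on a general connected $\Omega$ need not be comparable to Euclidean distance, but no modification of the proof is required. I expect the only delicate point to be the bookkeeping in the Zygmund estimates — specifically, verifying $\rho(\gamma(h),\gamma(2h))\le h$ for $\gamma\in\sP_{X,s/2}^{M}(h)$ and lining the $\HXNorm{\cdot}{X}{0}{s/2}[M]$ term of the norm up with the cross terms of the second-difference Leibniz identity — while the H\"older case and the combinatorics of the multi-index Leibniz rule are routine.
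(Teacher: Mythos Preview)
Your proposal is correct and follows essentially the same approach as the paper: both treat the H\"older case as routine Leibniz bookkeeping, handle the Zygmund base case $s\in(0,1]$ via a second-difference product identity combined with the built-in $\HXNorm{\cdot}{X}{0}{s/2}$ term of the norm (using $\rho(\gamma(h),\gamma(0))\le h$), and then induct on $m$. Your symmetric identity $F_2G_2-2F_1G_1+F_0G_0=F_1\Delta^2G+G_1\Delta^2F+(\text{cross terms})$ and your inverse base case via setting $F=f^{-1},\,G=f$ in that identity are slightly cleaner variants of the paper's asymmetric decomposition and common-denominator computation, but the structure of the argument is the same.
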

\begin{proof}
The proofs for $\HXSpace{X}{m}{s}[M]$ and $\HSpace{m}{s}[\Omega]$ are straightforward and standard,
so we focus on the Zygmund spaces.  We prove
\cref{Eqn::FuncSpacesRev::ZygAlg}
by induction on $m$, where $s\in (m,m+1]$.
We begin with the base case $s\in (0,1]$.
Since we already know $\HXNorm{fg}{X}{0}{s/2}[M]\lesssim \HXNorm{f}{X}{0}{s/2}[M]\HXNorm{g}{X}{0}{s/2}[M]$,
it suffices to show for $\gamma\in \sP^M_{X,s/2}(h)$,
\begin{equation*}
    h^{-s}|f(\gamma(2h))g(\gamma(2h))-2f(\gamma(h))g(\gamma(h))+f(\gamma(0))g(\gamma(0))|\leq 6 \ZygXNorm{f}{X}{s}[M]\ZygXNorm{g}{X}{s}[M].
\end{equation*}
Notice that $\rho(\gamma(h),\gamma(0))\leq h$, and therefore
$|f(\gamma(h))-f(\gamma(0))|\leq h^{s/2} \HXNorm{f}{X}{0}{s/2}[M]$.
Thus, we have
\begin{equation*}
    \begin{split}
        &h^{-s} |f(\gamma(2h))g(\gamma(2h))-2f(\gamma(h))g(\gamma(h))+f(\gamma(0))g(\gamma(0))|
        \\&\leq h^{-s}|f(\gamma(2h))-2f(\gamma(h))+f(\gamma(0))||g(\gamma(2h))|
        \\&\quad +h^{-s}|2f(\gamma(h))-f(\gamma(0))| |g(\gamma(2h))-2g(\gamma(h))+g(\gamma(0))|
        \\&\quad +h^{-s} 2|f(\gamma(h))-f(\gamma(0))||g(\gamma(h))-g(\gamma(0))|
        \\&\leq \ZygXNorm{f}{X}{s}[M] \CNorm{g}{M} + 3 \CNorm{f}{M}\ZygXNorm{g}{X}{s}[M]+2\HXNorm{f}{X}{0}{s/2}[M]\HXNorm{g}{X}{0}{s/2}[M]
        \\&\leq 6 \ZygXNorm{f}{X}{s}[M]\ZygXNorm{g}{X}{s}[M].
    \end{split}
\end{equation*}
Having proved the base case, \cref{Eqn::FuncSpacesRev::ZygAlg} follows by a straightforward induction,
which we leave to the reader.

We now turn to inverses.  We prove \cref{Eqn::FuncSpacesRev::ZygInv} by induction on $m$, where $s\in (m,m+1]$.  We begin with the base case $s\in (0,1]$.
Let $f\in \ZygXSpace{X}{s}[M]$ with $\inf_{x\in M} |f(x)|\geq c_0>0$.  We write $A\lesssim B$ for $A\leq C B$ where $C$ is as in \cref{Eqn::FuncSpacesRev::ZygInv}.
Since we already know the results for H\"older spaces, we have $\HXNorm{f(x)^{-1}}{X}{0}{s/2}[M]\lesssim 1$.
Thus, it suffices to show for $\gamma\in \sP^M_{X,s/2}(h)$,
\begin{equation*}
\left|\frac{1}{f(\gamma(2h))} -\frac{2}{f(\gamma(h))}  + \frac{1}{f(\gamma(0))} \right| = \left| \frac{f(\gamma(h))f(\gamma(0)) - 2f(\gamma(2h))f(\gamma(0)) + f(\gamma(2h))f(\gamma(h))}{f(\gamma(2h))f(\gamma(h))f(\gamma(0))}\right|\lesssim h^s.
\end{equation*}
Since we have $|f(\gamma(2h))f(\gamma(h))f(\gamma(0))|\geq c_0^3\gtrsim 1$, it suffices to show
\begin{equation*}
\left|f(\gamma(h))f(\gamma(0)) - 2f(\gamma(2h))f(\gamma(0)) + f(\gamma(2h))f(\gamma(h))\right|\lesssim h^s.
\end{equation*}
But we have
\begin{equation*}
\begin{split}
&\left|f(\gamma(h))f(\gamma(0)) - 2f(\gamma(2h))f(\gamma(0)) + f(\gamma(2h))f(\gamma(h))\right|
\\&\leq \left|\left(f(\gamma(2h)) - 2f(\gamma(h))+f(\gamma(0))\right)f(\gamma(h))\right| +2 \left|f(\gamma(h))^2 -f(\gamma(2h))f(\gamma(0))  \right|
\\&\leq h^s \ZygXNorm{f}{X}{s}[M]\CNorm{f}{M} + 2 \left|f(\gamma(h))^2 -f(\gamma(2h))f(\gamma(0))  \right| \lesssim h^s + 2 \left|f(\gamma(h))^2 -f(\gamma(2h))f(\gamma(0))  \right|.
\end{split}
\end{equation*}
Thus, it suffices to show
\begin{equation*}
\left|f(\gamma(h))^2 -f(\gamma(2h))f(\gamma(0))  \right|\lesssim h^s.
\end{equation*}
But, using that $\rho(\gamma(h),\gamma(0))\leq h$, and therefore
$|f(\gamma(h))-f(\gamma(0))|\leq h^{s/2} \HXNorm{f}{X}{0}{s/2}[M]\lesssim h^{s/2}$, we have
\begin{equation*}
\begin{split}
&\left|f(\gamma(h))^2 -f(\gamma(2h))f(\gamma(0))  \right|\leq
\left|\left(f(\gamma(2h))-2f(\gamma(h))+f(\gamma(0))\right)f(\gamma(0))\right| + \left| (f(\gamma(h))-f(\gamma(0))^2 \right|
\\& \lesssim h^s+h^s \lesssim h^s,
\end{split}
\end{equation*}
completing the proof of the base case.  Having proved the base case, the inductive step is straightforward, and we leave it to the reader.

The proofs for $\ZygSpace{s}[\Omega]$ are similar, and we leave them to
the reader.
\end{proof}

\begin{rmk}\label{Rmk::FuncSpaceRev::NonstandardNorm}
In the proof of \cref{Prop::FuncSpaceRev::Algebra}, it is used that $\HNorm{f}{0}{s/2}[\Omega]\leq \ZygNorm{f}{s}[\Omega]$, $s\in (0,1]$, which is clearly true because of our nonstandard definition of $\ZygNorm{f}{s}[\Omega]$ (see \cref{Rmk::FuncSpace::NonstandardNorm}).
Even with the more standard definition, for a bounded Lipschitz domain $\Omega$, one has
$\HNorm{f}{0}{s/2}[\Omega]\leq C\ZygNorm{f}{s}[\Omega]$, however $C$ depends on $\Omega$.
Thus, if one takes the more standard definition, the conclusions of \cref{Prop::FuncSpaceRev::Algebra}
take a more complicated form.
\end{rmk}

\begin{rmk}
\Cref{Lemma::FuncSpaces::Inclu,Prop::FuncSpaceRev::Algebra} hold (with exactly the same proofs)
if $M$ is repalced by $B_X(x_0,\xi)$, whether or not $B_X(x_0,\xi)$ is a manifold--see
\cref{Section::FuncSpace::BeyondManif}.
\end{rmk}

\begin{prop}\label{Prop::FuncSpaceRev::PushForwardNorm}
Let $N$ be another $C^2$ manifold, $Y_1,\ldots, Y_q$ be $C^1$ vector fields on $N$,
and $\Phi:N\rightarrow M$ be a $C^1$ map such that
$d\Phi(u) Y_j(u) = X_j(\Phi(u))$, $\forall u\in N$.  Then,
\begin{equation}\label{Eqn::FuncSpaceRev::PushForwardHolder}
    \HXNorm{f\circ \Phi}{Y}{m}{s}[N]\leq \HXNorm{f}{X}{m}{s}[M],\quad m\in \N, s\in [0,1],
\end{equation}
\begin{equation}\label{Eqn::FuncSpaceRev::PushForwardZyg}
    \ZygXNorm{f\circ\Phi}{Y}{s}[N]\leq \ZygXNorm{f}{X}{s}[M], \quad s>0.
\end{equation}
\end{prop}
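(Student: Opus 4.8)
The plan is to reduce everything to the definitions of the norms on $M$ and $N$ and exploit the fact that $\Phi$ intertwines the vector fields and the Carnot–Carathéodory structures. First I would check the underlying metric comparison: if $\gamma:[0,1]\to N$ is a curve with $\gamma'(t)=\sum_j a_j(t)\,\delta Y_j(\gamma(t))$ and $\|\sum_j|a_j|^2\|_{L^\infty}<1$, then $\Phi\circ\gamma$ is a curve in $M$ with the same control $a_j$ relative to $X_1,\ldots,X_q$, because $d\Phi(u)Y_j(u)=X_j(\Phi(u))$ gives $(\Phi\circ\gamma)'(t)=d\Phi(\gamma(t))\gamma'(t)=\sum_j a_j(t)\,\delta X_j(\Phi\circ\gamma(t))$ (interpreting the derivative of a curve locally, exactly as in the remark after \cref{Prop::FuncSpacesM::DiffInv}). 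Hence $\Phi(B_Y(u,\delta))\subseteq B_X(\Phi(u),\delta)$, and taking infima, $\rho_X(\Phi(u),\Phi(v))\le \rho_Y(u,v)$ for all $u,v\in N$. The same computation shows that if $\gamma\in\sP^N_{Y,s/2}(h)$ then $\Phi\circ\gamma\in\sP^M_{X,s/2}(h)$.

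Next I would use the chain rule for vector-field derivatives. Since $d\Phi(u)Y_j(u)=X_j(\Phi(u))$, the integral curves satisfy $\Phi(e^{tY_j}u)=e^{tX_j}\Phi(u)$, so for $f:M\to\C$ with $X_jf$ existing we get $Y_j(f\circ\Phi)(u)=\frac{d}{dt}\big|_{t=0}f(\Phi(e^{tY_j}u))=\frac{d}{dt}\big|_{t=0}f(e^{tX_j}\Phi(u))=(X_jf)(\Phi(u))=(X_jf)\circ\Phi(u)$. Iterating, $Y^\alpha(f\circ\Phi)=(X^\alpha f)\circ\Phi$ for every ordered multi-index $\alpha$, and in particular $\CNorm{Y^\alpha(f\circ\Phi)}{N}=\sup_{u}|(X^\alpha f)(\Phi(u))|\le\CNorm{X^\alpha f}{M}$. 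This already yields $\CXjNorm{f\circ\Phi}{Y}{m}[N]\le\CXjNorm{f}{X}{m}[M]$.

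Now I assemble the two pieces. For the Hölder estimate \cref{Eqn::FuncSpaceRev::PushForwardHolder} with $m=0$: the $\CNorm{\cdot}{}$ term is bounded as above, and for the seminorm, given $u\ne v$ in $N$,
\begin{equation*}
\rho_Y(u,v)^{-s}|f(\Phi(u))-f(\Phi(v))|\le \rho_X(\Phi(u),\Phi(v))^{-s}|f(\Phi(u))-f(\Phi(v))|\le\HXNorm{f}{X}{0}{s}[M],
\end{equation*}
using $\rho_X(\Phi(u),\Phi(v))\le\rho_Y(u,v)$ and the convention $\rho^{-s}=0$ when $\rho=\infty$ (if $\rho_Y(u,v)=\infty$ the left side is $0$). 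For general $m$, sum over $|\alpha|\le m$ using $Y^\alpha(f\circ\Phi)=(X^\alpha f)\circ\Phi$ and the $m=0$ case applied to each $X^\alpha f$. For the Zygmund estimate \cref{Eqn::FuncSpaceRev::PushForwardZyg} with $s\in(0,1]$: the term $\HXNorm{f\circ\Phi}{Y}{0}{s/2}[N]\le\HXNorm{f}{X}{0}{s/2}[M]$ by the Hölder case, and for any $h>0$ and $\gamma\in\sP^N_{Y,s/2}(h)$ we have $\Phi\circ\gamma\in\sP^M_{X,s/2}(h)$, so
\begin{equation*}
h^{-s}\bigl|f(\Phi(\gamma(2h)))-2f(\Phi(\gamma(h)))+f(\Phi(\gamma(0)))\bigr|\le\sup_{h'>0,\ \tilde\gamma\in\sP^M_{X,s/2}(h')}(h')^{-s}\bigl|f(\tilde\gamma(2h'))-2f(\tilde\gamma(h'))+f(\tilde\gamma(0))\bigr|.
\end{equation*}
Taking the supremum over $h$ and $\gamma$ gives $\ZygXNorm{f\circ\Phi}{Y}{s}[N]\le\ZygXNorm{f}{X}{s}[M]$. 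For $s=m+s'$ with $m\in\N$, $s'\in(0,1]$, sum the base case over $|\alpha|\le m$ applied to $X^\alpha f$, again via $Y^\alpha(f\circ\Phi)=(X^\alpha f)\circ\Phi$.

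I do not expect any serious obstacle here; the one point requiring a little care is the local interpretation of $\gamma'(t)=\sum a_j\delta X_j(\gamma)$ on an abstract manifold, i.e. checking that $\Phi\circ\gamma$ genuinely satisfies the defining integral/local relation when $\gamma$ does — this is where the hypothesis $d\Phi(u)Y_j(u)=X_j(\Phi(u))$ (rather than merely $\Phi_*Y_j=X_j$, which would need $\Phi$ to be a diffeomorphism) is used, and it follows by composing with local charts exactly as in the parenthetical remark following \cref{Prop::FuncSpacesM::DiffInv}. Everything else is a routine unwinding of definitions.
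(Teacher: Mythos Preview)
Your proposal is correct and follows essentially the same route as the paper: both reduce to $m=0$ (resp.\ $s\in(0,1]$) via the identity $Y^{\alpha}(f\circ\Phi)=(X^{\alpha}f)\circ\Phi$, then use that $\Phi\circ\gamma$ is an admissible curve in $M$ whenever $\gamma$ is admissible in $N$ to obtain $\rho_X(\Phi(u),\Phi(v))\le\rho_Y(u,v)$ and $\Phi\circ\gamma\in\sP^M_{X,s/2}(h)$ for $\gamma\in\sP^N_{Y,s/2}(h)$. Your treatment is slightly more explicit about the flow identity $\Phi(e^{tY_j}u)=e^{tX_j}\Phi(u)$ underlying the chain rule, but otherwise the arguments match.
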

\begin{proof}
We begin with \cref{Eqn::FuncSpaceRev::PushForwardHolder}.
Since $Y^{\alpha} (f\circ \Phi)=(X^{\alpha} f)\circ \Phi$, it suffices to prove the case
$m=0$.  We have a sub-Riemannian metric $\rho_Y$ on $N$ and another sub-Riemannian metric
$\rho_X$ on $M$, defined by \cref{Eqn::FucsSpaceM::rho}.  We claim
\begin{equation}\label{Eqn::FuncSpaceRev::Comparerho}
\rho_X(\Phi(u_1),\Phi(u_2))\leq \rho_Y(u_1,u_2).
\end{equation}
This is clear if $\rho_Y(u_1,u_2)=\infty$.  If $\rho_Y(u_1,u_2)<\infty$,
let $\delta>\rho_Y(u_1,u_2)$.  Then, there exists $\gamma:[0,1]\rightarrow N$,
$\gamma(0)=u_1$, $\gamma(1)=u_2$, $\gamma'(t)=\sum a_j(t) \delta Y_j(\gamma(t))$,
$\Norm{\sum |a_j|^2}[L^\infty([0,1])]<1$.  Set $\gammat=\Phi\circ \gamma$.  Then,
$\gammat(0)=\Phi(u_1)$, $\gammat(1)=\Phi(u_2)$, and
$\gammat'(t)=\sum a_j(t) \delta X_j(\gammat(t))$.  This proves
$\rho_X(\Phi(u_1),\Phi(u_2))<\delta$.
Taking $\delta\rightarrow \rho_Y(u_1, u_2)$ proves \cref{Eqn::FuncSpaceRev::Comparerho}.
We conclude, for $s\in [0,1]$,
\begin{equation*}
    \rho_Y(u_1,u_2)^{-s}|f\circ \Phi(u_1)-f\circ\Phi(u_2)| \leq \rho_X(\Phi(u_1),\Phi(u_2))^{-s}|f(\Phi(u_1))-f(\Phi(u_2))|.
\end{equation*}
\Cref{Eqn::FuncSpaceRev::PushForwardHolder} follows.

We turn to \cref{Eqn::FuncSpaceRev::PushForwardZyg}.  Again, since $Y^{\alpha} (f\circ \Phi)=(X^{\alpha} f)\circ \Phi$, it suffices to prove \cref{Eqn::FuncSpaceRev::PushForwardZyg} for $s\in (0,1]$.
That $\HXNorm{f\circ \Phi}{Y}{0}{s/2}[N]\leq \HXNorm{f}{X}{0}{s/2}[M]$ follows from
\cref{Eqn::FuncSpaceRev::PushForwardHolder}.  Furthermore, it follows easily from the definitions
that for $\gamma\in \sP^{N}_{Y,s/2}(h)$, we have $\Phi\circ \gamma\in \sP^M_{X,s/2}(h)$.
Using this, \cref{Eqn::FuncSpaceRev::PushForwardZyg} for $s\in (0,1]$ follows immediately.
\end{proof}

    \subsection{Comparison with Euclidean Function Spaces}\label{Section::FuncSpaceRev::Compare}

Fix $\eta\in (0,1]$ and let $Y_1,\ldots, Y_q$ be vector fields on $B^n(\eta)$.
When $Y_1,\ldots, Y_q$ span the tangent space at every point of $B^n(\eta)$
and are sufficiently smooth, we have $\HXSpace{Y}{m}{s}[B^n(\eta)]=\HSpace{m}{s}[B^n(\eta)]$
and $\ZygXSpace{Y}{s}[B^n(\eta)]=\ZygSpace{s}[B^n(\eta)]$.  In what follows, we state and
prove quantitative versions of these equalities.

We write $Y_j =\sum_{k=1}^n a_j^k \diff{t_k}$ and assume $\diff{t_k}=\sum_{j=1}^q b_k^j Y_j$,
where $a_j^k\in \CjSpace{1}[B^n(\eta)]$, $b_k^j\in \CSpace{B^n(\eta)}$.

\begin{defn}
In analogy with \cref{Defn::FuncSpacesM::NegativeSpaces}, for $m<0$ we define
$\HSpace{m}{s}[B^n(\eta)]:=\CSpace{B^n(\eta)}$, with equality of norms.
For $s\in (-1,0]$ we define $\ZygSpace{s}[B^n(\eta)]:=\HSpace{0}{(s+1)/2}[B^n(\eta)]$,
with equality of norms.
\end{defn}


\begin{defn}
We say $C$ is a $\ZeroE$-admissible constant\footnote{Here we are using the $\mathrm{E}$ to stand for Euclidean, and to help differentiate these admissible constants
 from the other admissible constants in this paper.} if $C$ can be chosen to depend only on
upper bounds for $q$ and $\CNorm{a_j^k}{B^n(\eta)}$, $\CNorm{b_k^j}{B^n(\eta)}$, $\forall j,k$.
\end{defn}

\begin{defn}\label{Defn::FuncSpaeRevComp::HEAdmiss}
For $m\in \Z$, $s\in [0,1]$, if we say $C$ is an $\HEad{m,s}$-admissible constant
if $a_j^k,b_k^j\in \HSpace{m}{s}[B^n(\eta)]$, $\forall j,k$, and $C$ can be chosen
to depend only on upper bounds for $q$, $m$, and $\HNorm{a_j^k}{m}{s}[B^n(\eta)]$, $\HNorm{b_k^j}{m}{s}[B^n(\eta)]$, $\forall j,k$.
\end{defn}

\begin{defn}
For $s>-1$ we say $C$ is an $\ZygEad{s}$-admissible constant if $a_j^k,b_k^j\in \ZygSpace{s}[B^n(\eta)]$,
$\forall j,k$ and $C$ can be chosen to depend only on $s$ and upper bounds for $q$, $\eta^{-1}$,
and $\ZygNorm{a_j^k}{s}[B^n(\eta)]$, $\ZygNorm{b_k^j}{s}[B^n(\eta)]$, $\forall j,k$.
\end{defn}

As before, we define $A\lesssim_{\HEad{m,s}} B$ to be $A\leq CB$ where $C$ is an $\HEad{m,s}$-admissible
constant.  We similarly define $\approx_{\HEad{m,s}}$, $\lesssim_{\ZygEad{s}}$, and $\approx_{\ZygEad{s}}$.
Recall, the vector fields $Y_1,\ldots, Y_q$ induce a metric $\rho$ on $B^n(\eta)$
via \cref{Eqn::FucsSpaceM::rho}.

\begin{lemma}\label{Lemma:FuncSpaceRev::Comp::rhoequiv}
$\rho(x,y)\approx_{\ZeroE} |x-y|$.
\end{lemma}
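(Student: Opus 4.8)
The claim has two directions, and both rest on comparing the defining control system for $\rho$ with the Euclidean straight-line path. The hypotheses give us both directions of control: $Y_j = \sum_k a_j^k \partial_{t_k}$ with the $a_j^k$ bounded, and $\partial_{t_k} = \sum_j b_k^j Y_j$ with the $b_k^j$ bounded; these two facts are what make the two inequalities possible.

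First I would prove $|x-y| \lesssim_{\ZeroE} \rho(x,y)$. This is the ``easy direction'': any admissible subunit curve for the $Y_j$'s cannot move faster than a fixed constant in the Euclidean metric. Concretely, suppose $\rho(x,y) < \infty$, fix $\epsilon > 0$, set $\delta = \rho(x,y) + \epsilon$, and take $\gamma:[0,1]\to B^n(\eta)$ with $\gamma(0)=x$, $\gamma(1)=y$, $\gamma'(t) = \sum_j a_j(t)\delta Y_j(\gamma(t))$ and $\Norm{\sum_j |a_j|^2}[L^\infty] < 1$. Since $Y_j(\gamma(t)) = \sum_k a_j^k(\gamma(t))\partial_{t_k}$ and $|a_j^k| \leq \CNorm{a_j^k}{B^n(\eta)}$, we get $|\gamma'(t)| \leq \delta \cdot C$ where $C$ depends only on $q$ and the sup norms of the $a_j^k$ (via Cauchy--Schwarz in the $j$-sum and a trivial bound on the $k$-sum). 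Integrating, $|x - y| \leq \int_0^1 |\gamma'(t)|\,dt \leq C\delta = C(\rho(x,y)+\epsilon)$, and letting $\epsilon \to 0$ gives the bound. (If $\rho(x,y) = \infty$ there is nothing to prove.) This is essentially the same computation as in \cref{Lemma::FuncSpaces::Inclu} \cref{Item::FuncSpace::IncludLip}.

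Next, $\rho(x,y) \lesssim_{\ZeroE} |x-y|$. Here I would use the straight-line segment $\sigma(t) = x + t(y-x)$, $t\in[0,1]$, which stays in $B^n(\eta)$ whenever $x,y\in B^n(\eta)$ by convexity. We have $\sigma'(t) = y - x = \sum_k (y_k - x_k)\partial_{t_k} = \sum_k (y_k-x_k)\sum_j b_k^j(\sigma(t)) Y_j(\sigma(t)) = \sum_j \big(\sum_k (y_k-x_k) b_k^j(\sigma(t))\big) Y_j(\sigma(t))$. So $\sigma$ is an integral curve of the control system with coefficient functions $e_j(t) := \sum_k (y_k - x_k) b_k^j(\sigma(t))$, which satisfy $|e_j(t)| \leq |x-y| \cdot C'$ with $C'$ depending only on $n$ and the sup norms of the $b_k^j$; hence $\sum_j |e_j(t)|^2 \leq (C'')^2 |x-y|^2$ for a constant $C''$ depending on $q, n$ and those sup norms. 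After reparametrizing to $[0,1]$ and rescaling, this exhibits $\sigma$ as an admissible curve showing $y \in B_Y(x,\delta)$ for any $\delta > C'' |x-y|$, whence $\rho(x,y) \leq C'' |x-y|$. One small point to handle cleanly: the definition of $B_X(x,\delta)$ requires the $L^\infty$ bound on $\sum|a_j|^2$ to be strictly less than $1$, so I would take $\delta = C''|x-y| + \epsilon$ and let $\epsilon\to 0$, exactly as in the first part; and if $x = y$ the statement is trivial. I do not expect any serious obstacle here — the only things to be careful about are keeping track of which direction of the hypothesis ($Y$ in terms of $\partial_t$ versus $\partial_t$ in terms of $Y$) is used for which inequality, and the strict-inequality normalization in the ball definition.
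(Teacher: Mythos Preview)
Your proposal is correct and is exactly the natural argument; the paper simply records the proof as ``This follows immediately from the assumptions'' without writing out any details. You have supplied those details, using precisely the two bounds encoded in the definition of $\ZeroE$-admissible constants (the $a_j^k$ for one direction, the $b_k^j$ for the other), so there is no divergence in approach.
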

\begin{proof}
This follows immediately from the assumptions.
\end{proof}

\begin{prop}\label{Prop::FuncSpaceRev::CompNorms}
For $m\in \N$, $s\in [0,1]$,
\begin{equation}\label{Eqn::FuncSpaceRev::Comp::Holder}
\HNorm{f}{m}{s}[B^n(\eta)]\approx_{\HEad{m-1,s}} \HXNorm{f}{Y}{m}{s}[B^n(\eta)],
\end{equation}
and for $s>0$,
\begin{equation}\label{Eqn::FuncSpaceRev::Comp::Zyg}
\ZygNorm{f}{s}[B^n(\eta)]\approx_{\ZygEad{s-1}} \ZygXNorm{f}{Y}{s}[B^n(\eta)].
\end{equation}
\end{prop}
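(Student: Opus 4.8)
The plan is to reduce both equivalences to three ingredients that are already available: the comparison $\rho(x,y)\approx_{\ZeroE}|x-y|$ from \cref{Lemma:FuncSpaceRev::Comp::rhoequiv}, the algebra property of \cref{Prop::FuncSpaceRev::Algebra}, and explicit formulas trading $\partial_{t_k}$-derivatives for $Y_j$-derivatives. One preliminary point must be disposed of first: when $m\geq1$ (resp.\ $s>1$), a function $f$ with finite $\HXNorm{\cdot}{Y}{m}{s}[B^n(\eta)]$-norm (resp.\ finite $\ZygXNorm{\cdot}{Y}{s}[B^n(\eta)]$-norm) is in fact classically $C^m$ on $B^n(\eta)$. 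Since each $a_j^k$ is $C^1$, this follows from a standard mollification argument: for $f_\epsilon:=f*\phi_\epsilon$ one has $\partial_{t_k}f_\epsilon=\sum_j b_k^j\,Y_jf_\epsilon$, while the Friedrichs commutator lemma (applicable because each $Y_j$ has $C^1$ coefficients) gives $Y_jf_\epsilon\to Y_jf$ locally uniformly; hence $\partial_{t_k}f_\epsilon$ converges locally uniformly and $\partial_{t_k}f=\sum_j b_k^j\,Y_jf$, after which one iterates to obtain all of $C^m$.

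Iterating $Y_j=\sum_k a_j^k\partial_{t_k}$, and separately $\partial_{t_k}=\sum_j b_k^j Y_j$, yields for every multi-index the identities
\[
Y^\beta f=\sum_{|\alpha|\leq|\beta|}P_{\beta,\alpha}\,\partial^\alpha f,
\qquad
\partial^\alpha f=\sum_{|\beta|\leq|\alpha|}Q_{\alpha,\beta}\,Y^\beta f,
\]
where $P_{\beta,\alpha}$ is a polynomial in the $a_j^k$ and their $\partial$-derivatives of order $\leq|\beta|-1$, and $Q_{\alpha,\beta}$ is a polynomial in the $a_j^k,b_k^j$ and their $\partial$-derivatives of order $\leq|\alpha|-1$. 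For \cref{Eqn::FuncSpaceRev::Comp::Holder} with $m\geq1$: by \cref{Defn::FuncSpaeRevComp::HEAdmiss} these coefficients (involving at most $m-1$ derivatives of $a,b$) lie in $\HSpace{0}{s}[B^n(\eta)]$ with $\HEad{m-1,s}$-admissible norms, so \cref{Prop::FuncSpaceRev::Algebra} gives $\HNorm{Y^\beta f}{0}{s}[B^n(\eta)]\lesssim_{\HEad{m-1,s}}\HNorm{f}{m}{s}[B^n(\eta)]$ for $|\beta|\leq m$ and, symmetrically, $\HNorm{\partial^\alpha f}{0}{s}[B^n(\eta)]\lesssim_{\HEad{m-1,s}}\HXNorm{f}{Y}{m}{s}[B^n(\eta)]$ for $|\alpha|\leq m$; summing over $|\alpha|,|\beta|\leq m$ gives the equivalence. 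The case $m=0$ is immediate from $\rho\approx_{\ZeroE}|\cdot|$.

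For \cref{Eqn::FuncSpaceRev::Comp::Zyg} I would first handle $s\in(0,1]$ and then bootstrap. The base terms $\HXNorm{\cdot}{Y}{0}{s/2}$ and $\HNorm{\cdot}{0}{s/2}$ are comparable by the $m=0$ Hölder case (each is a summand of the respective Zygmund norm), so only the second-difference terms remain. Given $\gamma\in\sP^{B^n(\eta)}_{Y,s/2}(h)$, write $\gamma'(t)=e(t)$ in Euclidean coordinates, $e_k(t)=\sum_j d_j(t)a_j^k(\gamma(t))$; using $\sum_j\HNorm{d_j}{0}{s/2}[[0,2h]]^2<1$ together with $a_j^k\in\ZygSpace{s-1}[B^n(\eta)]=\HSpace{0}{s/2}[B^n(\eta)]$, one gets $|e(t)|\lesssim_{\ZygEad{s-1}}1$ and $|e(t)-e(t')|\lesssim_{\ZygEad{s-1}}|t-t'|^{s/2}$. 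Let $w:=\tfrac12(\gamma(0)+\gamma(2h))$ (which lies in $B^n(\eta)$ by convexity, so that $\gamma(0),w,\gamma(2h)$ are collinear and equally spaced at distance $\lesssim h$); then
\[
f(\gamma(2h))-2f(\gamma(h))+f(\gamma(0))=\bigl[f(\gamma(2h))-2f(w)+f(\gamma(0))\bigr]+2\bigl[f(w)-f(\gamma(h))\bigr],
\]
the first bracket being $\lesssim h^s\ZygNorm{f}{s}[B^n(\eta)]$, and since $w-\gamma(h)=\tfrac12\int_0^h(e(t+h)-e(t))\,dt$ has norm $\lesssim h^{1+s/2}$, the second bracket is estimated by picking $\theta\in\bigl(\tfrac{2s}{2+s},\min(s,1)\bigr)$ and using the standard embedding $\ZygSpace{s}[B^n(\eta)]\hookrightarrow\HSpace{0}{\theta}[B^n(\eta)]$: it is $\lesssim h^{\theta(1+s/2)}\ZygNorm{f}{s}[B^n(\eta)]\lesssim h^s\ZygNorm{f}{s}[B^n(\eta)]$ because $\theta(1+s/2)>s$ and $h\lesssim1$. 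This gives $\ZygXNorm{f}{Y}{s}[B^n(\eta)]\lesssim_{\ZygEad{s-1}}\ZygNorm{f}{s}[B^n(\eta)]$. Conversely, a Euclidean second difference $f(x+2\tilde h)-2f(x+\tilde h)+f(x)$ with $x,x+2\tilde h\in B^n(\eta)$ is realized along the segment from $x$ to $x+2\tilde h$, parametrized at constant velocity $v=\tilde h/h'$ on $[0,2h']$ with $h':=C|\tilde h|$; writing $v=\sum_j\bigl(\sum_k v_kb_k^j(\cdot)\bigr)Y_j(\cdot)$ and using $b_k^j\in\HSpace{0}{s/2}[B^n(\eta)]$, one checks $\sum_j\HNorm{d_j}{0}{s/2}[[0,2h']]^2<1$ once $C$ is a sufficiently large $\ZygEad{s-1}$-admissible constant, so this curve lies in $\sP^{B^n(\eta)}_{Y,s/2}(h')$ and $|\tilde h|^{-s}|f(x+2\tilde h)-2f(x+\tilde h)+f(x)|=C^s\,h'^{-s}|\cdots|\lesssim_{\ZygEad{s-1}}\ZygXNorm{f}{Y}{s}[B^n(\eta)]$. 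This proves \cref{Eqn::FuncSpaceRev::Comp::Zyg} for $s\in(0,1]$. For $s=m_0+s_0$ with $m_0\geq1$ and $s_0\in(0,1]$, apply the exchange identities with $|\alpha|,|\beta|\leq m_0$: since $a,b\in\ZygSpace{s-1}[B^n(\eta)]=\ZygSpace{m_0-1+s_0}[B^n(\eta)]$, their $\partial$-derivatives of order $\leq m_0-1$ lie in $\ZygSpace{s_0}[B^n(\eta)]$ with $\ZygEad{s-1}$-admissible norms (by \cref{Lemma::FuncSpaces::Inclu} and the definition of the Zygmund norm), and the $\ZygSpace{s_0}$-algebra estimate \cref{Eqn::FuncSpacesRev::ZygAlg} together with the case $s_0\in(0,1]$ already established yields both inequalities after summing.

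The step I expect to be the main obstacle is the second-difference estimate for $s\in(0,1]$: comparing the controlled curve $\gamma$ with a straight chord leaves an error of size $\approx h^{1+s/2}$ in position, and if one estimates $|f(w)-f(\gamma(h))|$ using only the $C^{0,s/2}$ modulus built into the definition of the Zygmund norm, the resulting bound is of order $h^{s/2+s^2/4}$, which exceeds $h^s$ for $s<2$; the resolution is to spend instead the genuine Hölder regularity of order $<\min(s,1)$ enjoyed by functions of finite Zygmund norm, which turns out to be exactly what the curve's deviation from its chord can absorb. A secondary technical point is the $Y$-differentiability $\Rightarrow$ classical differentiability transfer described in the first paragraph.
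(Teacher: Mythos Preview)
Your proposal is correct and follows essentially the same line as the paper's proof: the H\"older case is handled by trading $\partial$-derivatives for $Y$-derivatives via the algebra property (the paper phrases this as a one-step induction on $m$ rather than your explicit expansion $Y^\beta f=\sum P_{\beta,\alpha}\partial^\alpha f$, but the content is identical), and the Zygmund base case $s\in(0,1]$ is handled by the same chord comparison---the paper's $\tilde\gamma(h)$ coincides with your midpoint $w$, and the deviation bound $|\gamma(h)-w|\lesssim h^{1+s/2}$ is then absorbed via the embedding $\ZygSpace{s}\hookrightarrow\HSpace{0}{\theta}$. The only notable differences are that the paper takes the critical exponent $\theta=s/(1+s/2)$ exactly (so that $h^{\theta(1+s/2)}=h^s$ without needing to treat large $h$ separately), and that your explicit justification of ``$Y$-differentiable $\Rightarrow$ classically differentiable'' via Friedrichs' lemma fills a point the paper leaves implicit.
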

\begin{proof}
We use \cref{Prop::FuncSpaceRev::Algebra} freely in this proof.
In this proof, the norms $\HXNorm{f}{Y}{m}{s}$, $\HNorm{f}{m}{s}$, $\ZygXNorm{f}{Y}{s}$, and $\ZygNorm{f}{s}$
are always taken to be over the domain $B^n(\eta)$ unless otherwise mentioned.
We prove \cref{Eqn::FuncSpaceRev::Comp::Holder} by induction on $m$.
The base case, $\HNorm{f}{0}{s}[B^n(\eta)]\approx_{\ZeroE} \HXNorm{f}{Y}{0}{s}[B^n(\eta)]$,
follows immediately from \cref{Lemma:FuncSpaceRev::Comp::rhoequiv}.
We assume \cref{Eqn::FuncSpaceRev::Comp::Holder} for $m-1$ and prove it for $m$.
We have
\begin{equation*}
    \begin{split}
        &\HXNorm{f}{Y}{m}{s} = \HXNorm{f}{Y}{m-1}{s} + \sum_{j=1}^q \HXNorm{Y_j f}{Y}{m-1}{s}
        \approx_{\HEad{m-2,s}} \HNorm{f}{m-1}{s}+\sum_{j=1}^q \HNorm{Y_j f}{m-1}{s}
        \\&\leq \HNorm{f}{m-1}{s}+\sum_{j=1}^q\sum_{k=1}^n \HNorm{a_j^k \partial_{x_k} f}{m-1}{s}
        \lesssim_{\HEad{m-1,s}} \HNorm{f}{m}{s}.
    \end{split}
\end{equation*}
For the reverse inequality,
\begin{equation*}
    \begin{split}
        &\HNorm{f}{m}{s}\leq \HNorm{f}{m-1}{s} + \sum_{k=1}^n \HNorm{\partial_{x_k} f}{m-1}{s}
        \leq \HNorm{f}{m-1}{s}+\sum_{k=1}^n\sum_{j=1}^q \HNorm{b_k^j Y_j f}{m-1}{s}
        \\&\lesssim_{\HEad{m-1,s}} \HNorm{f}{m-1}{s} + \sum_{j=1}^q \HNorm{Y_j f}{m-1}{s}
        \lesssim_{\HEad{m-2,s}} \HXNorm{f}{Y}{m-1}{s}+\sum_{j=1}^q \HXNorm{Y_j f}{Y}{m-1}{s}
        =\HXNorm{f}{Y}{m}{s}.
    \end{split}
\end{equation*}
This completes the proof of \cref{Eqn::FuncSpaceRev::Comp::Holder}.

We prove \cref{Eqn::FuncSpaceRev::Comp::Zyg} by induction on $m$, where $s\in (m,m+1]$.
We begin with the base case, $m=0$, and thus $s\in (0,1]$.
First we show $\lesssim_{\ZygEad{s-1}}$.  Take $0\ne h\in \R^n$, and $x\in \Omega_h$ (where $\Omega=B^n(\eta)$).  Set $\gamma(t)=x+t\theta$, where $\theta=h/|h|$.
Note $\gamma'(t)=\sum_{k=1}^n \theta_k \diff{x_k}= \sum_{j=1}^q\sum_{k=1}^n \theta_k b_k^j(\gamma(t)) Y_j (\gamma(t))$.  Since $\HNorm{b_k^j}{0}{s/2}\lesssim_{\ZygEad{s-1}} 1$,
we have $\HNorm{b_j^k\circ \gamma}{0}{s/2}\lesssim_{\ZygEad{s-1}} 1$, and therefore
$\gamma\in \sP_{Y,s/2}^{B^n(\eta)}(C|h|)$, where $C\lesssim_{\ZygEad{s-1}} 1$.  Hence,
\begin{equation*}
    |h|^{-s} |f(x+2h)-2f(x+h)+f(x)|\lesssim_{\ZygEad{s-1}} (C|h|)^{-s} |f(\gamma(2|h|))-2f(\gamma(|h|))+f(\gamma(0))| \leq \ZygXNorm{f}{Y}{s}[B^n(\eta)].
\end{equation*}
Since we have already shown $\HNorm{f}{0}{s/2}\approx_{\ZygEad{s-1}} \HXNorm{f}{Y}{0}{s/2}$ (by \cref{Eqn::FuncSpaceRev::Comp::Holder}), the $\lesssim_{\ZygEad{s-1}}$ direction of
\cref{Eqn::FuncSpaceRev::Comp::Zyg} follows.

We turn to $\gtrsim_{\ZygEad{s-1}}$.  We already have $\HXNorm{f}{Y}{0}{s/2}\approx_{\ZygEad{s-1}} \HNorm{f}{0}{s/2}\leq \ZygNorm{f}{s}$ (by \cref{Eqn::FuncSpaceRev::Comp::Holder}).
Fix $h>0$ and $\gamma\in \sP_{Y,s/2}^{B^n(\eta)}(h)$.
Note $\gamma'(t)=\sum_{j=1}^q d_j(t) Y_j(\gamma(t))=\sum_{j=1}^q \sum_{k=1}^n d_j(t) a_j^k(\gamma(t))\diff{x_k}$, with $\sum \HNorm{d_j}{0}{s/2}[[0,2h]]^2<1$.  Since we also have  $\HNorm{a_j^k}{0}{s/2}\lesssim_{\ZygEad{s-1}} 1$,
it follows that $\HNorm{\gamma}{1}{s/2}[[0,2h]]\lesssim_{\ZygEad{s-1}} 1$.
Define $\gammat:[0,2h]\rightarrow B^n(\eta)$ by
$\gammat(t)=(t/2h)\gamma(2h)+(1-t/2h)\gamma(0)$.

We claim that
\begin{equation}\label{Eqn::FuncSpaceRevComp::GammaminusGammt}
|\gamma(t)-\gammat(t)|\lesssim_{\ZygEad{s-1}} h^{1+s/2}.
\end{equation}
Indeed,
\begin{equation*}
    |\gammat(t)-\gamma(t)|= t\left| \frac{\gamma(2h)-\gamma(0)}{2h} -\frac{\gamma(t)-\gamma(0)}{t}\right|
    =t|\gamma'(c_1)-\gamma'(c_2)|,
\end{equation*}
by the mean value theorem, where $c_1,c_2\in [0,2h]$.  Since $t\in [0,2h]$,
it follows that $|\gamma(t)-\gammat(t)|\lesssim_{\ZygEad{s-1}} h^{1+s/2}$,
by using the estimate $\HNorm{\gamma}{1}{s/2}[[0,2h]]\lesssim_{\ZygEad{s-1}} 1$.

Next we claim that
\begin{equation}\label{Eqn::FuncSpaeRevComp::ToShow::TrebBound}
\HNorm{f}{0}{s/(1+s/2)}[B^n(\eta)]\lesssim_{\ZygEad{s-1}} \ZygNorm{f}{s}[B^n(\eta)].
\end{equation}
To prove \cref{Eqn::FuncSpaeRevComp::ToShow::TrebBound} we use
\begin{equation}\label{Eqn::FundSpaceRevComp::ToShow::TrebEquiv}
\HNorm{f}{0}{s/(1+s/2)}[B^n(\eta)]\approx \ZygNorm{f}{s/(1+s/2)}[B^n(\eta)],
\end{equation}
where the implicit constants depend on $s$, $n$, and an upper bound for $\eta^{-1}$
(here we use $s/(1+s/2)\in (0,1)$; \cref{Eqn::FundSpaceRevComp::ToShow::TrebEquiv} does not hold when the exponent equals $1$).
Then, since $0<s/(1+s/2)<s\leq 1$, we have $\ZygNorm{f}{s/(1+s/2)}[B^n(\eta)]\leq 5 \ZygNorm{f}{s}[B^n(\eta)]$ (this follows immediately from the definitions) and
\cref{Eqn::FuncSpaeRevComp::ToShow::TrebBound} follows.
\Cref{Eqn::FundSpaceRevComp::ToShow::TrebEquiv} is classical;
indeed, we first consider the case when $\eta=1$.  The $\gtrsim$
part of \cref{Eqn::FundSpaceRevComp::ToShow::TrebEquiv} follows immediately
from the definitions.  For the $\lesssim$ part when $\eta=1$, see \cite[Theorem 1.118 (i)]{TriebelTheoryOfFunctionSpacesIII}--by choosing $M=1,2$ in that theorem,
the $\lesssim$ part of \cref{Eqn::FundSpaceRevComp::ToShow::TrebEquiv} follows, for $\eta=1$,
with implicit constant depending only on $s$ and $n$.
Finally, a simple scaling argument establishes \cref{Eqn::FundSpaceRevComp::ToShow::TrebEquiv} for general
$\eta>0$, which we leave to the reader.
 

Note that $\gammat(t)$ is a line with $|\gammat(2h)-\gammat(0)| \leq 2h \CjNorm{\gamma}{1}[[0,2h]]\lesssim_{\ZygEad{s-1}} h$;
and therefore $|f(\gammat(2h))-2f(\gammat(h))+f(\gammat(0))|\lesssim_{\ZygEad{s-1}} h^s \ZygNorm{f}{s}$.
We combine this with \cref{Eqn::FuncSpaeRevComp::ToShow::TrebBound,Eqn::FuncSpaceRevComp::GammaminusGammt}
 to see:
\begin{equation*}
    \begin{split}
        &|f(\gamma(2h))-2f(\gamma(h))+f(\gamma(0))|
        \\&\leq |f(\gammat(2h))-2f(\gammat(h))+f(\gammat(0))| + 2 |f(\gamma(h))-f(\gammat(h))| 
        \\&\lesssim_{\ZygEad{s-1}} h^s \ZygNorm{f}{s} + |\gamma(h)-\gammat(h)|^{s/(1+s/2)} \HNorm{f}{0}{s/(1+s/2)} 
        \\&\lesssim_{\ZygEad{s-1}} h^s \ZygNorm{f}{s}.
    \end{split}
\end{equation*}
This proves $\ZygXNorm{f}{Y}{s}\lesssim_{\ZygEad{s-1}} \ZygNorm{f}{s}$, and completes
the proof for the base case of \cref{Eqn::FuncSpaceRev::Comp::Zyg}.  From here
the inductive step follows just as in the inductive step for \cref{Eqn::FuncSpaceRev::Comp::Holder},
and we leave it to the reader.
\end{proof}

\section{Proofs}

We turn to the proofs of the main results of this paper.  The heart of this paper
is the study of a certain ODE which arises in canonical coordinates; this is presented in
\cref{Section::Proofs::ODE}.  Then we present a quantitative version of a special case
of the Inverse Function Theorem in \cref{Section::Proofs::IFT}.  We then prove
the main result (\cref{Thm::Results::MainThm}) in \cref{Section::Proofs::Main}.
Next, we prove the results concerning densities from \cref{Section::Densities}
in \cref{Section::Proofs::Densities}.
Finally, we prove \cref{Prop::MoreAssumpt} in \cref{Section::Proofs::MoreAssump}.

    \subsection{An ODE}\label{Section::Proofs::ODE}

The quantitative study of canonical coordinates is closely tied to the study of the following ODE,
defined for an $n\times n$ matrix $A(u)$, depending on $u\in B^n(\eta)$ for some $\eta>0$.
Write $u=r\theta$, $r>0$, $\theta\in S^{n-1}$.  The ODE is:
\begin{equation}\label{Eqn::ODE::TheODE}
    \diff{r} rA(r\theta)= -A(r\theta)^2-C(r\theta)A(r\theta)-C(r\theta),
\end{equation}
where $C(u)\in C(B^n(\eta); \M^{n\times n})$ is a given function.
That this ODE arises in the study of cannonical coordinates
is classical (see, for example, \cite[p. 155]{ChevalleyTheoryOfLieGroups} for the derivation of a similar ODE);
however the detailed study of the ODE to prove regularity properties
in canonical coordinates was pioneered by Tao and Wright \cite{TaoWrightLpImproving}.

In \cref{Section::DerivODE} we show how this ODE arises in cannonical coordinates.
Because our vector fields $X_1,\ldots, X_q$ are merely assumed to be $C^1$, there are some slight
technicalities which we deal with in that section.
In \cref{Section::RegularODE} we prove the regularity properties of solutions to this ODE.

        \subsubsection{Derivation of the ODE}\label{Section::DerivODE}

Let $X_1,\ldots, X_n$ be $C^1$ vector fields on an
$n$-dimensional $C^2$ manifold $M$.
Fix $x\in M$ and $\epsilon>0$ and suppose:
\begin{itemize}
    \item $X_1,\ldots, X_n$ span the tangent space at every point of $M$.
    \item $\Phi(u):=e^{u_1 X_1+u_2X_2+\cdots +u_n X_n}x$ exists for $u\in B^n(\epsilon)$.
\end{itemize}
Write $[X_j,X_k]=\sum_{l=1}^n c_{j,k}^l X_l$.  Since $X_1,\ldots, X_n$ form a basis
for the tangent space of $M$ at every point, $c_{j,k}^l\in C(M)$ are uniquely defined.
Classical theorems show that $\Phi$ is $C^1$ (since $X_1,\ldots, X_n$ are).

Let $U\subseteq M$ and $V\subseteq B^n(\epsilon)$ be open sets such that
$\Phi|_V:V\rightarrow U$ is a $C^1$ diffeomorphism.
Let $Y_j=\Phi|_V^{*}X_j$ so that $Y_j$ is a $C^0$ vector field on $V$.
Write,
\begin{equation}\label{Eqn::ODE::Derive::Defnajk}
    Y_j=\diff{u_j}+\sum_{k=1}^n a_j^k(u)\diff{u_k},
\end{equation}
where $a_j^k\in C(V)$.  Let $A(u)$ denote the $n\times n$ matrix with $j,k$ component $a_j^k(u)$,
and let $C(u)$ denote the $n\times n$ matrix with $j,k$ component $\sum_l u_l c_{j,l}^k\circ \Phi(u)$.
We write $u$ in polar coordinates as $u=r\theta$, $r\geq 0$.

\begin{prop}\label{Prop::ODE::Derive::MainProp}
In the above setting, $A(u)$ satisfies the differential equation
\begin{equation}\label{Eqn::ODE::Derive::MainEqn}
    \diff{r}rA(r\theta)=-A(r\theta)^2-C(r\theta)A(r\theta)-C(r\theta).
\end{equation}
In particular, $\diff{r}rA(r\theta)$ exists in the classical sense.
\end{prop}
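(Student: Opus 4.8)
The plan is to derive \cref{Eqn::ODE::Derive::MainEqn} from the first--variation equation of the flow defining $\Phi$, which simultaneously yields that $\diff{r}rA(r\theta)$ exists in the classical sense --- the delicate point, since $X_1,\dots,X_n$ are only $C^1$, so $\Phi$ is only $C^1$ and $Y_k=\Phi|_V^{*}X_k$ is only continuous. In particular the classical derivation --- computing the Lie derivative $\Lie{E}Y_k$ along the Euler field $E=\sum_j u_j\diff{u_j}$ (equivalently $u^{\transpose}A(u)=0$) in two ways, once via $[Y_j,Y_k]=\Phi^{*}[X_j,X_k]=\sum_l(c_{j,k}^l\circ\Phi)Y_l$ and once from \cref{Eqn::ODE::Derive::Defnajk}, then equating (cf.~\cite[p.~155]{ChevalleyTheoryOfLieGroups}) --- is not literally meaningful and must be recast in terms of genuinely $C^1$ objects.

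First, for $u\in B^n(\epsilon)$ and $s\in[0,1]$ I would set $G(s,u):=e^{s(u_1X_1+\dots+u_nX_n)}x$, so that $\Phi(u)=G(1,u)$ and, by rescaling the generating vector field, $G(s,u)=\Phi(su)$. Working in a $C^2$ chart about $x$, $G$ solves $\diff{s}G(s,u)=\sum_j u_jX_j(G(s,u))$, $G(0,u)=x$, a system whose right--hand side is $C^1$ jointly in $(u,G)$; hence by the standard theorem on $C^1$ dependence of solutions on parameters, $G$ is $C^1$, its $u$--Jacobian $W(s,u):=\partial_uG(s,u)$ is continuous and $C^1$ in $s$, and $W$ satisfies the first--variation equation $\partial_sW=\mathcal{X}(G)+\bigl(\sum_j u_j\,DX_j(G)\bigr)W$, $W(0,u)=0$, where $\mathcal{X}(y):=[\,X_1(y)\,|\cdots|\,X_n(y)\,]$ and $DX_j$ is the (continuous) Jacobian of $X_j$. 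From $G(s,u)=\Phi(su)$ one gets $W(s,u)=s\,d\Phi(su)$, hence $r\,d\Phi(r\theta)=W(r,\theta)$; so, although $\Phi$ is merely $C^1$, the map $r\mapsto d\Phi(r\theta)=W(r,\theta)/r$ is $C^1$ on $(0,\epsilon)$ --- this is the key regularity gain.

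Since $X_1,\dots,X_n$ span and $\Phi|_V$ is a $C^1$ diffeomorphism, $d\Phi(u)$ is invertible, $Y_k(u)=d\Phi(u)^{-1}X_k(\Phi(u))$, and stacking columns and using \cref{Eqn::ODE::Derive::Defnajk} gives $d\Phi(u)\bigl(I+A(u)^{\transpose}\bigr)=\mathcal{X}(\Phi(u))$. Multiplying by $r$ at $u=r\theta$ and rearranging, $r\,A(r\theta)^{\transpose}=d\Phi(r\theta)^{-1}\bigl(r\,\mathcal{X}(\Phi(r\theta))-W(r,\theta)\bigr)$; the right--hand side is a product of $C^1$ functions of $r$ on $(0,\epsilon)$, and at $r=0$ one has $A(0)=0$ (since $d\Phi(0)=\mathcal{X}(x)$), so $\diff{r}rA(r\theta)$ exists in the classical sense. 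To identify this derivative, I would differentiate $d\Phi(r\theta)\bigl(r\,A(r\theta)^{\transpose}\bigr)=r\,\mathcal{X}(\Phi(r\theta))-W(r,\theta)$ in $r$, using the first--variation equation for $\partial_rW$, the chain rule $\partial_r\mathcal{X}(\Phi(r\theta))=D\mathcal{X}(\Phi)[X_\theta(\Phi)]$ with $X_\theta:=\sum_j\theta_jX_j$, the identity $\partial_r d\Phi(r\theta)=\tfrac1r\bigl(\partial_rW-d\Phi(r\theta)\bigr)$, and $\mathcal{X}(\Phi)-d\Phi=d\Phi\,A^{\transpose}$. Solving for $\diff{r}\bigl(rA(r\theta)^{\transpose}\bigr)$ leaves $-\bigl(A^{\transpose}\bigr)^2$ plus the term $r\,d\Phi(r\theta)^{-1}\bigl\{D\mathcal{X}(\Phi)[X_\theta(\Phi)]-DX_\theta(\Phi)\,\mathcal{X}(\Phi)\bigr\}$, whose $k$--th column equals $d\Phi^{-1}\sum_j u_j\,[X_j,X_k](\Phi)$ by the coordinate formula $[V,W]=(DW)V-(DV)W$; expanding $[X_j,X_k]=\sum_l c_{j,k}^l X_l$ and using antisymmetry of $c_{j,k}^l$ together with the definition of $C$, this term equals $-\bigl(I+A(r\theta)^{\transpose}\bigr)C(r\theta)^{\transpose}$. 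Transposing the resulting matrix identity gives exactly \cref{Eqn::ODE::Derive::MainEqn}.

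The main obstacle is precisely this low regularity: the substance of the argument is recognizing that passing to $W$ and the first--variation equation both supplies the missing $C^1$ regularity of $r\mapsto rA(r\theta)$ along rays and legitimizes the term--by--term identification above, which in the smooth case reduces to the standard Lie--derivative computation. An alternative would be to run that smooth computation for mollifications $X_j^{(\delta)}\to X_j$ in $C^1$ and pass to the limit in the integrated form $rA(r\theta)=-\int_0^r\bigl(A^2+CA+C\bigr)\,ds$, using continuous dependence of flows on the vector field in the $C^1$ topology; I expect the variational--equation route to be cleaner and to make the regularity claim transparent.
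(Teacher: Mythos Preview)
Your approach is correct and genuinely different from the paper's. The paper proceeds in two steps: first it proves the proposition under the extra hypothesis that $M$ is $C^\infty$ and $X_1,\dots,X_n$ are $C^\infty$ (there the computation $\Lie{E}Y_k$ you sketch is literally valid, and one reads off \cref{Eqn::ODE::Derive::MainEqn} by equating components), and then it reduces the $C^1$ case to the smooth one by invoking Whitney's theorem to upgrade the manifold structure, approximating $X_j$ by $C^\infty$ vector fields $X_j^{\sigma}\to X_j$ in $C^1$, applying the smooth case to the corresponding $A_\sigma,C_\sigma$, and passing to the limit in $C^0$. In other words, the paper takes exactly the ``alternative'' you mention at the end. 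Your primary route---extracting the extra radial derivative from the first--variation equation for $W(s,u)=\partial_u G(s,u)$ and then identifying terms via $d\Phi(I+A^{\transpose})=\mathcal{X}(\Phi)$---avoids the approximation entirely and makes the existence of $\diff{r}rA(r\theta)$ transparent, at the cost of a slightly more delicate matrix computation carried out directly at $C^1$ regularity. The paper's approach, by contrast, isolates the algebra in the smooth case (where it is routine) and pushes all the low--regularity work into a single limiting step.

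One small point to tighten: you write ``working in a $C^2$ chart about $x$'', but the proposition is stated for an arbitrary open $V\subseteq B^n(\epsilon)$ on which $\Phi$ is a diffeomorphism, and the trajectory $s\mapsto\Phi(s\theta)$, $s\in[0,r]$, need not stay in a single chart. Since the conclusion is local in $r$, it suffices to fix $r_0\theta\in V$, work in a chart about $\Phi(r_0\theta)$, and note that the first--variation identity $\partial_s W=\mathcal{X}(G)+DX_\theta(G)\,W$ is itself local in $s$ once $W=\partial_u G$ is known to exist and be continuous (which follows from $\Phi\in C^1$); your differentiation of $d\Phi(r\theta)(rA^{\transpose})=r\mathcal{X}(\Phi)-W$ then goes through unchanged near $r_0$.
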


\begin{lemma}\label{Lemma::ODE::Deriv::SmoothMfld}
\Cref{Prop::ODE::Derive::MainProp} holds in the special case when
$M$ is a $C^\infty$ manifold and $X_1,\ldots, X_n$ are $C^\infty$ vector fields on $M$.
\end{lemma}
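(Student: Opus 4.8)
The plan is to exploit that in the smooth category the map $\Phi$, the pulled-back vector fields $Y_j=\Phi|_V^{*}X_j$, and hence the matrix $A=(a_j^k)$ from \cref{Eqn::ODE::Derive::Defnajk} are all $C^\infty$, so that every Lie-bracket manipulation below is legitimate and \cref{Eqn::ODE::Derive::MainEqn} falls out of comparing two expressions for a single commutator. Throughout, for a matrix-valued function $B=(b_j^k)$ I write $E[B]$ for the entrywise application of a vector field $E$ to $B$.

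First I would record the one geometric input: letting $E:=\sum_{k}u_k\partial_{u_k}$ be the Euler (radial) vector field on $B^n(\epsilon)$, I claim $d\Phi(u)[E_u]=\sum_{j}u_j X_j(\Phi(u))$ for all $u$ where $\Phi$ is defined. Indeed, for fixed $\theta\in S^{n-1}$ the curve $r\mapsto\Phi(r\theta)$ is by construction the integral curve of $\sum_j\theta_j X_j$ through $x$, so $d\Phi(r\theta)[\theta]=\sum_j\theta_j X_j(\Phi(r\theta))$; multiplying by $r$ gives the claim for $u=r\theta\ne0$, and it is trivial at $u=0$. Since $Y_j=\Phi|_V^{*}X_j$ and $\Phi|_V$ is a diffeomorphism, applying $d\Phi(u)^{-1}$ yields the pointwise identity $\sum_{j}u_j Y_j = E$ on $V$.

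Next I would compute $[E,Y_j]$ on $V$ in two ways. On one hand, writing $Y_j=\partial_{u_j}+\sum_k a_j^k\partial_{u_k}$ and using $[E,\partial_{u_k}]=-\partial_{u_k}$ together with $[E,f\partial_{u_k}]=(Ef-f)\partial_{u_k}$, one gets $[E,Y_j]=-\partial_{u_j}+\sum_k\bigl(Ea_j^k-a_j^k\bigr)\partial_{u_k}$. On the other hand, substituting $E=\sum_i u_i Y_i$ and using $[fV,W]=f[V,W]-(Wf)V$ together with the pulled-back structure equations $[Y_i,Y_j]=\sum_l(c_{i,j}^l\circ\Phi)Y_l$ — valid because pullback by a diffeomorphism commutes with the Lie bracket and $[X_i,X_j]=\sum_l c_{i,j}^l X_l$ on $M$ — one gets, using $Y_j u_i=\delta_{ij}+a_j^i$, that $[E,Y_j]=\sum_l\bigl(\sum_i u_i\,c_{i,j}^l\circ\Phi\bigr)Y_l-Y_j-\sum_i a_j^i Y_i$. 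Re-expanding every $Y_l$ on the right in the $\partial_{u_m}$ basis and matching coefficients of $\partial_{u_m}$ — and using the antisymmetry $c_{i,j}^l=-c_{j,i}^l$ to identify the matrix with $(j,k)$-entry $\sum_i u_i\,c_{i,j}^k\circ\Phi$ as $-C$ (with $C$ as defined before \cref{Prop::ODE::Derive::MainProp}) — gives the matrix identity $E[A]+A=-A^2-CA-C$.

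Finally I would restrict to a ray $u=r\theta$, where $E[A](r\theta)=r\,\partial_r A(r\theta)$, so that $E[A]+A=\tfrac{d}{dr}\bigl(rA(r\theta)\bigr)$; hence $\tfrac{d}{dr}\bigl(rA(r\theta)\bigr)$ exists in the classical sense and equals $-A(r\theta)^2-C(r\theta)A(r\theta)-C(r\theta)$, which is exactly \cref{Eqn::ODE::Derive::MainEqn}. There is no genuine analytic obstacle in the smooth case — the whole computation is bookkeeping — so the point requiring the most care is index discipline: keeping straight the placement of $j,k$ in $A=(a_j^k)$ and in the definition of $C$, and carrying out the coefficient-matching consistently so that the raw sum $\sum_i u_i\,c_{i,j}^k\circ\Phi$ genuinely collapses to $-C_{jk}$ via the antisymmetry of the structure functions. (Removing the smoothness assumption on the $X_j$, which is the substance of the general \cref{Prop::ODE::Derive::MainProp}, is a separate matter handled afterward.)
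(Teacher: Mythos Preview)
Your proof is correct and follows essentially the same route as the paper: both arguments derive the key identity $\sum_j u_j Y_j = E$ (the Euler field), then bracket it with $Y_j$ and match coefficients in the $\partial_{u_k}$ basis to obtain $E[A]+A=-A^2-CA-C$. The only cosmetic difference is that you compute $[E,Y_j]$ while the paper computes $[Y_i,E]$, which is why you need the antisymmetry $c_{i,j}^l=-c_{j,i}^l$ to identify $\sum_i u_i\,c_{i,j}^k\circ\Phi$ with $-C_{jk}$, whereas in the paper the sum is directly $C_{ik}$.
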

\begin{proof}
When $X_1,\ldots, X_n$ are $C^\infty$, then $\Phi$ is $C^\infty$ and
$\Phi|_V:V\rightarrow \Phi(V)$ is a $C^\infty$ diffeomorphism.
We conclude that $Y_1,\ldots, Y_n$ are $C^\infty$ vector fields.
Furthermore, $[Y_j,Y_k]=\sum_{l} \ct_{j,k}^l Y_l$.
where $\ct_{j,k}^l=c_{j,k}^l\circ \Phi$.

Note that $d\Phi(r\theta) r\diff{r} = r d\Phi(r\theta)\diff{r} = r\theta\cdot X(\Phi(r\theta))$,
since $\Phi(r\theta) = e^{r(\theta\cdot X)}x$,
and we are identifying $X$ with the vector of vector fields $(X_1,\ldots, X_n)$.
Writing this in Cartesian coordinates, we have
\begin{equation}\label{Eqn::ODE::Derive::Smooth::Basic}
    \sum_{j=1}^n u_j \diff{u_j} = \sum_{j=1}^n u_j Y_j(u).
\end{equation}
Taking the Lie bracket of \cref{Eqn::ODE::Derive::Smooth::Basic} with $Y_i$, we obtain
\begin{equation}\label{Eqn::ODE::Derive::Smooth::1}
    \begin{split}
        \sum_{j=1}^n \left( (Y_i u_j) \partial_{u_j} + u_j [Y_i, \partial_{u_j} ] \right)
        =\sum_{j=1}^n \left( (Y_i u_j) Y_j + u_j [Y_i,Y_j] \right)
        =\sum_{j=1}^n\left( (Y_i u_j)Y_j + u_j \sum_{l=1}^n \ct_{i,j}^l(u) Y_l \right).
    \end{split}
\end{equation}
We re-write \cref{Eqn::ODE::Derive::Smooth::1} as
\begin{equation}\label{Eqn::ODE::Derive::Smooth::2}
    \begin{split}
        \left(\sum_{j=1}^n u_j [\partial_{u_j}, Y_i-\partial_{u_i} ]\right) + Y_i-\partial_{u_i}
        =-\left( \sum_{j=1}^n \left((Y_i-\partial_{u_i})(u_j)\right) (Y_j-\partial_{u_j})  \right)
        -\sum_{j=1}^n \sum_{l=1}^n u_j \ct_{i,j}^l(u) Y_l.
    \end{split}
\end{equation}
Plugging \cref{Eqn::ODE::Derive::Defnajk} into \cref{Eqn::ODE::Derive::Smooth::2}, we have
\begin{equation*}
    \sum_{j=1}^n \sum_{k=1}^n u_j (\partial_{u_j} a_i^k) \partial_{u_k} + \sum_{k=1}^n a_i^k\partial_{u_k}
    =-\sum_{k=1}^n \sum_{j=1}^n a_i^j a_j^k \partial_{u_k} - \sum_{k=1}^n \sum_{j=1}^n u_j \ct_{i,j}^k \partial_{u_k} - \sum_{l=1}^n\sum_{k=1}^n \sum_{j=1}^n u_j \ct_{i,j}^l a_l^k \partial_{u_k}.
\end{equation*}
Taking the $\partial_{u_k}$ component of the above, and writing $1+\sum_{j=1}^n u_j \partial_{u_j}=\partial_r r$, we have
\begin{equation*}
    \partial_r r a_i^k = -\sum_{j=1}^n a_i^j a_j^k -\sum_{j=1}^n u_j \ct_{i,j}^k -\sum_{l=1}^n \left(\sum_{j=1}^n u_j \ct_{i,j}^l\right)a_l^k.
\end{equation*}
This is exactly \cref{Eqn::ODE::Derive::MainEqn} and completes the proof.
\end{proof}

\begin{proof}[Proof of \cref{Prop::ODE::Derive::MainProp}]
By a classical theorem of Whitney, there is a $C^\infty$ structure on $M$
compatible with its $C^2$ structure, so we may assume $M$ is a $C^\infty$ manifold.
  Pick\footnote{Recall, $\Vt\Subset V$ means that $\Vt$ is a relatively compact susbet of $V$.} 
  $\Vt\Subset V$ and $\Ut\Subset U$ open sets with $\Phi|_{\Vt}:\Vt\rightarrow \Ut$
a $C^1$ diffeomorphism.  Fix $u_0\in \Vt$.  We will prove the result with
$V$ replaced by $B^{n}(u_0,\delta_0)$ for some $\delta_0>0$, and
the result will follow as the conclusion is local.

Fix $\epsilon'\in (0,\epsilon)$ so large that $\Vt\subseteq B^{n}(\epsilon')$.
Let $X_j^{\sigma}$ be smooth vector fields on $M$ such that $X_j^{\sigma}\rightarrow X_j$ in $C^1$
as $\sigma\rightarrow 0$.  Define
\begin{equation*}
    \Phi_\sigma(u) = e^{u_1 X_1^{\sigma}+\cdots+ u_nX_n^{\sigma}}x.
\end{equation*}
Then, for $\sigma$ sufficiently small, $\Phi_\sigma(u)$ is defined for $u\in B^n(\epsilon')$,
and $X_1^{\sigma},\ldots, X_n^{\sigma}$ form a basis for the tangent space
at every point of a neighborhood of the closure of $\Phi_\sigma(B^n(\epsilon'))$.
Thus, we may write $[X_i^{\sigma},X_j^{\sigma}]=\sum_{k} c_{i,j}^{k,\sigma} X_k^{\sigma}$,
with $c_{i,j}^{k,\sigma}\rightarrow c_{i,j}^l$ in $C^0$ as $\sigma\rightarrow 0$.
Also, $\Phi_{\sigma}\rightarrow \Phi$ in $C^1(B^{n}(\epsilon'))$ as $\sigma\rightarrow 0$,
by standard theorems.

For $\sigma$ sufficiently small, $|\det d\Phi_\sigma (u_0)|\geq \frac{1}{2} |\det d\Phi(u_0)|>0$.
The Inverse Function Theorem shows that there is a $\delta_0>0$ (independent of $\sigma$)
so that for $\sigma$ small, $\Phi_{\sigma}|_{B^n(u_0,\delta_0)}$ is a diffeomorphism
onto its image.

Define $A_{\sigma}$ and $C_\sigma$ in the obvious way on $B^{n}(u_0,\delta_0)$,
by using the vector fields $X_1^{\sigma},\ldots, X_n^{\sigma}$.  We have that
$A_{\sigma}\rightarrow A$ and $C_\sigma\rightarrow C$ in $C^0(B^n(u_0,\delta_0))$.
Furthermore, by \cref{Lemma::ODE::Deriv::SmoothMfld},
$\partial_r rA_\sigma=-A_\sigma^2-C_\sigma A_\sigma-C_\sigma$.
Taking the limit as $\sigma\rightarrow 0$, we find that $\partial_r rA$ exists in the
classical sense and $\partial_r rA=-A^2-C A-C$,
completing the proof.
\end{proof}

For another proof of \cref{Prop::ODE::Derive::MainProp} in the special case where $\epsilon$ is assumed to be small, see \cite[Appendix A]{MontanariMorbidelliStepSInvolutiveFamiliesOfVectorFields}.

        \subsubsection{Regularity Properties}\label{Section::RegularODE}

In this section, we discuss the existence, uniqueness, and regularity of solutions to \cref{Eqn::ODE::TheODE}
satisfying $A(0)=0$.
Some of this was done in \cite{S}, however we provide a complete proof here.

To facilitate the proof, we introduce a family of function spaces on $B^n(\eta)$.
Throughout this section, for a matrix $A$, we write $|A|$ to denote the operator norm of $A$.

Fix $\eta>0$, we are interested in solutions $A(x)\in C(B^n(\eta);\M^{n\times n})$
to \cref{Eqn::ODE::TheODE} (in this section, we use the variable $x$ in place of $u$).
For $l\in \N$ set
\begin{equation*}
    \Omega_l:=\left\{(x,h)\in B^n(\eta) \times (\R^n\setminus\{0\}) : x+jh\in B^n(\eta), 0\leq j\leq l\right\}.
\end{equation*}
Note that $\Omega_0:=B^n(\eta)\times (\R^n\setminus\{0\})$.
For $h\in \R^n\setminus\{0\}$ set $\DiffOp{h}A(x)=A(x+h)-A(x)$ and $\DiffOp{h}^lA(x) = (\DiffOp{h})^lA(x)$.
Note that $\DiffOp{h}^lA(x)$ is defined precisely for $(x,h)\in\Omega_l$.
Without explicitly mentioning it, we will repeatedly use the fact that if
$(x,h)\in \Omega_l$ and $s\in (0,1]$, then $(sx,sh)\in \Omega_l$.

Let $\omega:(0,\infty)\rightarrow(0,\infty)$ be a non-decreasing function and
for $l,m\in \N$ set
\begin{equation*}
    \Norm{A}[C^{m,l,\omega}]:=\sum_{|\beta|\leq m} \sum_{j=0}^l \sup_{(x,h)\in \Omega_j} \omega(|h|)^{-j} \left|\DiffOp{h}^{j} \partial_x^\beta A(x)\right|, \quad C^{m,l,\omega}:= \left\{ A\in C^m(B^n(\eta);\M^{n\times n}) : \Norm{A}[C^{m,l,\omega}]<\infty\right\}.
\end{equation*}
Note that $C^{m,l,\omega}$ is a Banach space, and when $l=0$, $\omega$ does not play a role.

\begin{rmk}\label{Rmk::ODE::ImportantSpaces}
We are particularly interested in the following special cases
\begin{equation*}
    \CjSpace{m}[B^n(\eta)][\M^{n\times n}]= C^{m,0,\omega}, \quad
    \HSpace{m}{s}[B^n(\eta)][\M^{n\times n}]=C^{m,1,\omega_s}, \quad
    \ZygSpace{m+s}[B^n(\eta)][\M^{n\times n}]
    =C^{m,2,\omega_{s/2}},
\end{equation*}
with equality of norms, where $\omega_{s}(h)=h^{s}$.
\end{rmk}

\begin{prop}\label{Prop::ODE::ExistMainProp}
Let $C\in C(B^{n}(\eta); \M^{n\times n})$ be given with $C(0)=0$.
Suppose $|C(x)|\leq D|x|$, for $x\in B^n(\eta)$.  Then, if $\eta\leq (10 D)^{-1}$,
there exists a unique $A\in C^{0}(B^n(\eta); \M^{n\times n})$ with $A(0)=0$
satisfying \cref{Eqn::ODE::TheODE}.  This unique solution satisfies:
\begin{equation}\label{Eqn::ODE::ExistMainProp::SupBounds}
    |A(x)|\leq \frac{5}{8}D|x|\text{ and }|A(x)|\leq \frac{1}{16},\quad \forall x\in B^n(\eta).
\end{equation}
Furthermore, for this solution $A$,
\begin{equation*}
    C\in C^{m,l,\omega}\Rightarrow A\in C^{m,l,\omega},\quad \forall m,l,\omega,
\end{equation*}
and
\begin{equation*}
    \Norm{A}[C^{m,l,\omega}]\leq K_{n,m,l,\omega},
\end{equation*}
where $K_{n,m,l,\omega}$ can be chosen to depend only on $n$, $m$, $l$, and an upper bound
for $\Norm{C}[C^{m,l,\omega}]$.
\end{prop}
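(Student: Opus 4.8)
The proof has three parts: (1) existence and uniqueness of the continuous solution $A$ with $A(0)=0$, together with the sup-bounds in \cref{Eqn::ODE::ExistMainProp::SupBounds}; (2) propagation of regularity, i.e. $C\in C^{m,l,\omega}\Rightarrow A\in C^{m,l,\omega}$ with quantitative control of the norm. For both parts the strategy is to recast \cref{Eqn::ODE::TheODE} as a fixed-point equation. Integrating $\diff{r}(rA(r\theta)) = -A^2 - CA - C$ from $0$ to $r$ (using $A(0)=0$, so $rA(r\theta)\to 0$ as $r\to 0$), and changing variables $r = s|x|$, $\theta = x/|x|$, one gets
\begin{equation*}
A(x) = -\int_0^1 \left( A(sx)^2 + C(sx)A(sx) + C(sx)\right)\, ds =: \sT A(x).
\end{equation*}
So I would first verify that a continuous $A$ with $A(0)=0$ solves \cref{Eqn::ODE::TheODE} if and only if it is a fixed point of $\sT$ (the forward direction is the integration just described; the reverse is differentiation under the integral sign, legitimate since the integrand is continuous in $x$ and $\diff{r}r = 1 + \sum x_j\partial_{x_j}$ acts nicely on functions of the form $\int_0^1 G(sx)\,ds$).

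\textbf{Part 1: existence, uniqueness, sup-bounds.} I would run the contraction mapping theorem on the closed ball $\sB := \{A\in \CSpace{B^n(\eta)}[\M^{n\times n}] : A(0)=0,\ |A(x)|\leq \tfrac58 D|x|\ \forall x\}$. Using $|C(x)|\leq D|x|$ and $\eta\leq (10D)^{-1}$, for $A\in\sB$:
\begin{equation*}
|\sT A(x)| \leq \int_0^1\left( \tfrac{25}{64}D^2 s^2|x|^2 + \tfrac58 D^2 s^2|x|^2 + D s|x|\right) ds \leq \tfrac{D|x|}{2}\left( \tfrac{25}{32}D|x| + \tfrac54 D|x|\right) + \tfrac{D|x|}{2},
\end{equation*}
and since $D|x| < D\eta \leq 1/10$ this is $\leq \tfrac58 D|x|$, so $\sT:\sB\to\sB$. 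Note $\tfrac58 D|x| \leq \tfrac58 D\eta \leq \tfrac{1}{16}$, giving both bounds in \cref{Eqn::ODE::ExistMainProp::SupBounds}. For contractivity, $\sT A - \sT B = -\int_0^1\big( (A^2-B^2)(sx) + C(sx)(A-B)(sx)\big)\,ds$, and $|A^2-B^2| \leq (|A|+|B|)|A-B| \leq \tfrac54 D|x||A-B|$, so $|\sT A(x) - \sT B(x)| \leq \int_0^1 (\tfrac54 D s|x| + D s|x|)\|A-B\|_\infty\, ds \leq \tfrac98 D\eta \|A-B\|_\infty \leq \tfrac14\|A-B\|_\infty$. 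Thus $\sT$ is a contraction and has a unique fixed point in $\sB$; uniqueness among \emph{all} continuous $A$ with $A(0)=0$ (not just those in $\sB$) requires a small extra argument: any such solution is a fixed point of $\sT$, and near $0$ the bound $\|A\|_{L^\infty(B^n(r))} \leq \tfrac12 D r^2 \cdot (\ldots) + \ldots$ forces $|A(x)| \lesssim |x|$ for $|x|$ small, after which a Gronwall/bootstrap argument on the integral equation pins it into $\sB$.

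\textbf{Part 2: regularity propagation.} This is the main obstacle and where the function spaces $C^{m,l,\omega}$ earn their keep. The idea is that $\sT$ is built from pointwise products and the dilation-averaging operator $G\mapsto \int_0^1 G(s\,\cdot)\,ds$, both of which are bounded on $C^{m,l,\omega}$: products because these are algebras up to constants (differentiating $G(sx)$ in $x$ brings down factors of $s\leq 1$, and $\DiffOp{h}^j$ applied to $G(s\cdot)$ evaluated at $(x,h)$ equals $(\DiffOp{sh}^j G)(sx)$ with $\omega(|sh|)\leq\omega(|h|)$ by monotonicity — this is exactly why $\omega$ is assumed non-decreasing and why the $(sx,sh)\in\Omega_l$ remark is flagged), and the $s$-integral only helps. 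So I would show: if $C\in C^{m,l,\omega}$, then $\sT$ maps the set $\sB\cap C^{m,l,\omega}$ into itself and is a contraction in a suitably weighted $C^{m,l,\omega}$-type norm (or, more simply, iterate $\sT$ starting from $A_0 = 0$ and show $\|A_k\|_{C^{m,l,\omega}}$ stays bounded by a constant depending only on $n,m,l$ and an upper bound for $\|C\|_{C^{m,l,\omega}}$, using the already-established convergence $A_k\to A$ in $\CSpace{}$ to identify the limit). The bookkeeping is an induction on $m$ then on $l$: the top-order derivative $\partial_x^\beta$ with $|\beta|=m$ hitting $A^2$ produces $\partial^\beta A \cdot A$ plus lower-order-times-lower-order terms, and the $\partial^\beta A$ term is absorbed because its coefficient carries a factor $\lesssim D\eta \leq 1/10$ — exactly the same smallness that made $\sT$ a contraction. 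The differenced quantities $\DiffOp{h}^j\partial_x^\beta(\sT A)$ are handled by the discrete Leibniz rule $\DiffOp{h}^j(FG) = \sum_{i}\binom{j}{i}(\DiffOp{h}^i F)(\DiffOp{h}^{j-i}G)(\cdot + ih)$, again with the dangerous top term absorbed by smallness. In view of \cref{Rmk::ODE::ImportantSpaces}, this single statement then yields the $\CjSpace{m}$, $\HSpace{m}{s}$, and $\ZygSpace{m+s}$ regularity of $A$ as corollaries, which is presumably how it gets used downstream.
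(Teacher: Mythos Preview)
Your Part 1 is essentially the paper's approach: recast the ODE as the fixed-point equation $A=\sT A$ and contract. The paper uses the weighted metric $d(A,B)=\sup_{x\ne0}|x|^{-1}|A(x)-B(x)|$ on the set $\{A: A(0)=0,\ \sup|x|^{-1}|A(x)|<\infty,\ \|A\|_\infty\leq 1/10\}$, whereas you contract in sup-norm on $\{|A(x)|\leq\tfrac58 D|x|\}$; both work. Your uniqueness sketch (bootstrap $|A(x)|=O(|x|)$, then Gr\"onwall on the difference) matches the paper.

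Part 2 has a genuine gap. Your plan is to iterate $A_{k+1}=\sT A_k$, show $\|A_k\|_{C^{m,l,\omega}}$ stays uniformly bounded, and then ``use the already-established convergence $A_k\to A$ in $\CSpace{}$ to identify the limit.'' For $l\geq 1$ this is fine: the constraint $|\DiffOp{h}^j\partial^\beta A_k(x)|\leq K\omega(|h|)^j$ is pointwise and passes to pointwise limits once you know $A_k\to A_\infty$ in $C^m$. But at the step $l=0$, $m\geq 1$ (i.e.\ upgrading $C^{m-1}$ to $C^m$) it fails: uniform boundedness of $\|A_k\|_{C^m}$ together with $A_k\to A_\infty$ in $C^{m-1}$ only gives that the $(m{-}1)$-st derivatives of $A_\infty$ are Lipschitz, not that $A_\infty\in C^m$ (e.g.\ $\sqrt{x^2+1/k}\to|x|$). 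You need actual \emph{convergence} of $\partial^\beta A_k$ in sup-norm for $|\beta|=m$, not just a uniform bound.

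The paper closes this with a tool you did not invoke (\cref{Lemma::ODE::Izzo}, Izzo's contraction principle). Writing $B_k:=\DiffOp{h}^l\partial^\beta A_k$, the recursion reads $B_{k+1}=\sQ_k(B_k)$ for a sequence of affine maps whose coefficients involve $A_k$; your smallness observation (the top-order coefficient is $|A_k|+|C|\lesssim D\eta\leq 1/10$) is exactly what makes every $\sQ_k$ a contraction with a uniform constant. By the inductive hypothesis $A_k\to A_\infty$ in the lower-order spaces, so $\sQ_k(B)\to\sQ_\infty(B)$ for each fixed $B$, and Izzo's lemma then yields $B_k\to B_\infty$, the fixed point of $\sQ_\infty$. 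This gives convergence (hence regularity of the limit) and the norm bound $\|B_\infty\|\leq(1-c)^{-1}\|\sQ_\infty(0)\|$. What is missing from your sketch is precisely this mechanism for turning a sequence of varying contractions into convergence of the iterates; you could equally well show Cauchy-ness of $\partial^\beta A_k$ directly, but some such argument is required.
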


The rest of this section is devoted to the proof of \cref{Prop::ODE::ExistMainProp}.  We begin
with several lemmas.

\begin{lemma}\label{Lemma::ODE::BasicCmlProperties}
For $j\leq m$, $k\leq l$, $C^{m,l,\omega}\hookrightarrow C^{j,k,\omega}$ and
\begin{equation}\label{Eqn::ODE::InclusionIneq}
\Norm{A}[C^{j,k,\omega}]\leq \Norm{A}[C^{m,l,\omega}].
\end{equation}
If $A,B\in C^{m,l,\omega}$, then $AB\in C^{m,l,\omega}$ and
\begin{equation}\label{Eqn::ODE::AlgebraIneq}
    \Norm{AB}[C^{m,l,\omega}]\leq C_{m,l} \Norm{A}[C^{m,l,\omega}]\Norm{B}[C^{m,l,\omega}],
\end{equation}
where $C_{m,l}$ can be chosen to depend only on $m$ and $l$.
\end{lemma}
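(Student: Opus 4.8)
The inclusion $C^{m,l,\omega}\hookrightarrow C^{j,k,\omega}$ and the inequality \cref{Eqn::ODE::InclusionIneq} require essentially no argument: for $j\le m$ and $k\le l$ the finite sum defining $\Norm{A}[C^{j,k,\omega}]$ consists of a subcollection of the (nonnegative) terms appearing in the sum defining $\Norm{A}[C^{m,l,\omega}]$, so one simply discards the remaining terms. I would dispose of this first.

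For the algebra estimate \cref{Eqn::ODE::AlgebraIneq}, the plan is to reduce everything to a single termwise bound by combining two product rules. The ordinary Leibniz rule gives $\partial_x^{\beta}(AB)=\sum_{\gamma\le\beta}\binom{\beta}{\gamma}(\partial_x^{\gamma}A)(\partial_x^{\beta-\gamma}B)$ (which in particular already shows $AB\in C^{m}$ whenever $A,B\in C^{m}$), and for the forward difference operator I will use the discrete Leibniz rule
\[
\DiffOp{h}^{p}(FG)(x)=\sum_{i=0}^{p}\binom{p}{i}\big(\DiffOp{h}^{i}F\big)(x+(p-i)h)\,\big(\DiffOp{h}^{p-i}G\big)(x),
\]
proved by an easy induction on $p$ starting from $\DiffOp{h}(FG)(x)=F(x+h)\DiffOp{h}G(x)+\DiffOp{h}F(x)\,G(x)$. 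Since $\DiffOp{h}$ commutes with $\partial_x$, applying the discrete rule with $F=\partial_x^{\gamma}A$, $G=\partial_x^{\beta-\gamma}B$ after expanding $\partial_x^{\beta}(AB)$ will express $\DiffOp{h}^{p}\partial_x^{\beta}(AB)(x)$, for $(x,h)\in\Omega_{p}$, $|\beta|\le m$, $p\le l$, as a sum over $\gamma\le\beta$ and $0\le i\le p$ of terms $\binom{\beta}{\gamma}\binom{p}{i}\big(\DiffOp{h}^{i}\partial_x^{\gamma}A\big)(x+(p-i)h)\,\big(\DiffOp{h}^{p-i}\partial_x^{\beta-\gamma}B\big)(x)$.

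It then remains to estimate each such term, and here the only fact I need is the elementary observation that if $(x,h)\in\Omega_{p}$ then $(x+(p-i)h,h)\in\Omega_{i}$ and $(x,h)\in\Omega_{p-i}$ for all $0\le i\le p$, which is immediate from the definition of the sets $\Omega_{\bullet}$. Combining this with submultiplicativity of the operator norm and the factorization $\omega(|h|)^{-p}=\omega(|h|)^{-i}\,\omega(|h|)^{-(p-i)}$, each factor $\omega(|h|)^{-i}\big|\big(\DiffOp{h}^{i}\partial_x^{\gamma}A\big)(x+(p-i)h)\big|$ is bounded by $\Norm{A}[C^{m,l,\omega}]$ (since $|\gamma|\le|\beta|\le m$ and $i\le p\le l$), and the companion factor is bounded by $\Norm{B}[C^{m,l,\omega}]$. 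Taking the supremum over $(x,h)\in\Omega_{p}$ and summing over the finitely many indices $\gamma\le\beta$, $0\le i\le p$, $|\beta|\le m$, $0\le p\le l$ produces a constant $C_{m,l}$ depending only on $m$ and $l$, which establishes \cref{Eqn::ODE::AlgebraIneq} and, in particular, shows $AB\in C^{m,l,\omega}$. I expect the only place requiring genuine care to be the bookkeeping of which translate of the base point $x$ lands in which $\Omega_{i}$; the rest is the standard Leibniz/binomial-identity machinery.
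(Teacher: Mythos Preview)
Your proposal is correct and follows essentially the same approach as the paper: both dispose of the inclusion trivially and establish the algebra estimate by expanding $\DiffOp{h}^{p}\partial_x^{\beta}(AB)$ via a discrete Leibniz rule into translated products of lower-order pieces, then bound each term by the appropriate factor of $\omega(|h|)$ times the two norms. The paper states the decomposition slightly more abstractly (allowing translates on both factors) whereas you write out the explicit binomial form, but the argument and the bookkeeping about which translates land in which $\Omega_i$ are the same.
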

\begin{proof}
The inclusion and inequality \cref{Eqn::ODE::InclusionIneq} follow immediately from the definitions,
thus we prove only the algebra property and \cref{Eqn::ODE::AlgebraIneq}.

For $A,B\in C^{m,l,\omega}$ and $0\leq j\leq l, |\beta|\leq m$,
we have
    $\partial_x^{\beta} \DiffOp{h}^j(AB)$
is a constant coefficient linear combination of terms of the form
\begin{equation}\label{Eqn::ODE::ProductDefined}
    \tau_{k_1 h} \left(\DiffOp{h}^{j_1} \partial_x^{\beta_1} A \right) \tau_{k_2 h} \left( \DiffOp{h}^{j_2} \partial_x^{\beta_2} B \right),
\end{equation}
where $\tau_h A(x) = A(x+h)$, $j_1+j_2=j$, $0\leq k_1\leq j_2$, $0\leq k_2\leq j_1$, $\beta_1+\beta_2=\beta$.  Note that, since $0\leq k_1\leq j_2$, $0\leq k_2\leq j_1$, and $j_1+j_2= j\leq l$,
the expression in \cref{Eqn::ODE::ProductDefined} is defined for $(x,h)\in \Omega_l$.
Finally,
\begin{equation*}
\begin{split}
    &\left|\omega(|h|)^{-j} \tau_{k_1 h} \left(\DiffOp{h}^{j_1} \partial_x^{\beta_1} A \right) \tau_{k_2 h} \left( \DiffOp{h}^{j_2} \partial_x^{\beta_2} B \right)\right|
    =\left|\tau_{k_1 h} \left(\omega(|h|)^{-j_1}\DiffOp{h}^{j_1} \partial_x^{\beta_1} A \right) \tau_{k_2 h} \left( \omega(|h|)^{-j_2}\DiffOp{h}^{j_2} \partial_x^{\beta_2} B \right)\right|
    \\&\leq \Norm{A}[C^{|\beta_1|,j_1,\omega}]\Norm{B}[C^{|\beta_2|,j_2,\omega}]
    \leq \Norm{A}[C^{m,l,\omega}]\Norm{B}[C^{m,l,\omega}],
\end{split}
\end{equation*}
where the last inequality follows from \cref{Eqn::ODE::InclusionIneq}.  The result follows.
\end{proof}

Define $\sT:C(B^{n}(\eta);\M^{n\times n})\rightarrow C(B^{n}(\eta); \M^{n\times n})$ by
\begin{equation*}
    \sT(A)(x)=\int_0^1 -A(sx)^2-C(sx)A(sx)-C(sx)\:ds.
\end{equation*}
The relevance of $\sT$ is the following lemma.
\begin{lemma}\label{Lemma::ODE::RelvanceOfsT}
$A\in C(B^n(\eta);\M^{n\times n})$ is a solution to \cref{Eqn::ODE::TheODE}
if and only if $\sT(A)=A$.  Also, writing $x=r\theta$, we have the following formula for $\sT$
when $r>0$:
\begin{equation}\label{Eqn::ODE::PolarT}
    \sT(A)(r\theta)= \frac{1}{r}\int_0^r -A(s\theta)^2-C(s\theta)A(s\theta)-C(s\theta)\: ds.
\end{equation}
\end{lemma}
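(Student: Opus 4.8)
The plan is to show that the ODE \cref{Eqn::ODE::TheODE} and the fixed-point equation $\sT(A)=A$ are both equivalent to the single integral identity
\[
    rA(r\theta)=\int_0^r \bigl(-A(s\theta)^2-C(s\theta)A(s\theta)-C(s\theta)\bigr)\,ds,
    \qquad r>0,\ \theta\in S^{n-1},\ r\theta\in B^n(\eta),
\]
and that passing from this identity to the formula \cref{Eqn::ODE::PolarT} for $\sT$ is nothing more than the change of variables $s=tr$.

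First I would record that, writing $g(y):=-A(y)^2-C(y)A(y)-C(y)$, the function $g$ is continuous on $B^n(\eta)$ because $A$ and $C$ are, and that $g(r\theta)$ is exactly the right-hand side of \cref{Eqn::ODE::TheODE}. Now suppose $A$ solves \cref{Eqn::ODE::TheODE}, so that for each fixed $\theta$ the map $r\mapsto rA(r\theta)$, which is continuous on $[0,\eta)$ and vanishes at $r=0$, is differentiable on $(0,\eta)$ with derivative $g(r\theta)$. Applying the fundamental theorem of calculus on $[a,r]$ and letting $a\downarrow 0$ (using continuity of $g$ near $0$) gives the integral identity. Dividing it by $r$ and substituting $s=tr$ yields $A(r\theta)=\int_0^1 g(tr\theta)\,dt=\sT(A)(r\theta)$ for every $r>0$; dividing by $r$ and instead letting $r\downarrow 0$ gives $A(0)=g(0)$, while $\sT(A)(0)=\int_0^1 g(0)\,dt=g(0)$, so $\sT(A)$ and $A$ agree at the origin as well (here I use that $\sT(A)$ is continuous, which follows from dominated convergence in its defining integral). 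Hence $\sT(A)=A$.

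Conversely, if $\sT(A)=A$, then for $r>0$ the change of variables $s=tr$ in $\sT(A)(r\theta)=\int_0^1 g(tr\theta)\,dt$ produces $rA(r\theta)=\int_0^r g(s\theta)\,ds$; since $g$ is continuous, the right-hand side is a $C^1$ function of $r$ with derivative $g(r\theta)$, which is precisely \cref{Eqn::ODE::TheODE}, and dividing this same identity by $r$ gives the polar formula \cref{Eqn::ODE::PolarT}.

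I do not expect a genuine obstacle here; the only points needing care are the behavior at $r=0$ — handled by continuity of $A$, $C$, and the limiting arguments above — and pinning down the precise meaning of ``solution of \cref{Eqn::ODE::TheODE}'', namely that $\frac{d}{dr}\,rA(r\theta)$ exists in the classical sense for every $r>0$ and equals the (automatically continuous) right-hand side, so that the fundamental theorem of calculus applies as used.
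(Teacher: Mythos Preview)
Your proposal is correct and follows the same approach as the paper, which simply says the polar formula \cref{Eqn::ODE::PolarT} is a change of variables in the definition of $\sT$ and that the equivalence then follows from \cref{Eqn::ODE::PolarT}. Your write-up just fills in the details the paper omits, in particular the care at $r=0$.
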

\begin{proof}
\cref{Eqn::ODE::PolarT} follows from a straightforward change of variables in the
definition of $\sT$.  That $A\in C(B^n(\eta);\M^{n\times n})$ is a solution to \cref{Eqn::ODE::TheODE} if and only
if $\sT(A)=A$ follows from \cref{Eqn::ODE::PolarT}.
\end{proof}

\begin{lemma}\label{Lemma::ODE::sTPreserves}
If $C\in C^{m,l,\omega}$, then $\sT:C^{m,l,\omega}\rightarrow C^{m,l,\omega}$.
\end{lemma}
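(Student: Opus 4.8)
The plan is to factor $\sT$ through the radial averaging operator $\sA(B)(x):=\int_0^1 B(sx)\,ds$ and show that $\sA$ preserves $C^{m,l,\omega}$; the nonlinear part of $\sT$ is then absorbed by the algebra property already proved. Concretely, if $A\in C^{m,l,\omega}$ (and $C\in C^{m,l,\omega}$ by hypothesis), then by \cref{Lemma::ODE::BasicCmlProperties} the matrix-valued function $B:=-A^2-CA-C$ lies in $C^{m,l,\omega}$, with $\Norm{B}[C^{m,l,\omega}]\le C_{m,l}\bigl(\Norm{A}[C^{m,l,\omega}]^2+\Norm{C}[C^{m,l,\omega}]\Norm{A}[C^{m,l,\omega}]+\Norm{C}[C^{m,l,\omega}]\bigr)$. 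Since $\sT(A)=\sA(B)$ directly from the definition of $\sT$, it suffices to prove that $\sA:C^{m,l,\omega}\to C^{m,l,\omega}$ (which, incidentally, also shows $\sT$ is bounded on $C^{m,l,\omega}$, a fact we will use later).

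To prove this, let $B\in C^{m,l,\omega}\subseteq C^m(B^n(\eta);\M^{n\times n})$. Differentiating under the integral sign---legitimate on each compact subset of $B^n(\eta)$, since for $|\beta|\le m$ the map $(s,x)\mapsto\partial_x^\beta\bigl[B(sx)\bigr]=s^{|\beta|}(\partial_x^\beta B)(sx)$ is jointly continuous there---gives $\partial_x^\beta\sA(B)(x)=\int_0^1 s^{|\beta|}(\partial_x^\beta B)(sx)\,ds$, so $\sA(B)\in C^m$. Taking finite differences and using that $(sx,sh)\in\Omega_j$ whenever $(x,h)\in\Omega_j$ and $s\in(0,1]$ (as noted in the text), one gets for $(x,h)\in\Omega_j$ with $j\le l$ and $|\beta|\le m$:
\[
\DiffOp{h}^{j}\partial_x^\beta\sA(B)(x)=\int_0^1 s^{|\beta|}\,\DiffOp{sh}^{j}(\partial_x^\beta B)(sx)\,ds.
\]
Because $\omega$ is non-decreasing and $|sh|\le|h|$ for $s\in(0,1]$, we have $\omega(|h|)^{-j}\le\omega(|sh|)^{-j}$, hence $\omega(|h|)^{-j}\bigl|\DiffOp{sh}^{j}(\partial_x^\beta B)(sx)\bigr|\le\Norm{B}[C^{m,l,\omega}]$ for every such $s$; integrating and using $s^{|\beta|}\le1$ gives $\omega(|h|)^{-j}\bigl|\DiffOp{h}^{j}\partial_x^\beta\sA(B)(x)\bigr|\le\Norm{B}[C^{m,l,\omega}]$. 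Summing over $|\beta|\le m$ and $0\le j\le l$ yields $\Norm{\sA(B)}[C^{m,l,\omega}]\le C_{n,m,l}\Norm{B}[C^{m,l,\omega}]<\infty$, so $\sA(B)\in C^{m,l,\omega}$.

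I do not expect a genuine obstacle. The two points that need a moment of care are the justification of differentiation under the integral---handled by restricting to compact subsets of $B^n(\eta)$, where the integrand and its $x$-derivatives of order $\le m$ are jointly continuous---and the scaling property $(sx,sh)\in\Omega_j$, which, combined with monotonicity of $\omega$, is exactly what lets the weight $\omega(|h|)^{-j}$ be pulled inside the average without cost.
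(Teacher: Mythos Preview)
Your proof is correct and follows essentially the same approach as the paper: set $B=-A^2-CA-C\in C^{m,l,\omega}$ via the algebra property, then show the radial averaging operator $B\mapsto\int_0^1 B(sx)\,ds$ preserves $C^{m,l,\omega}$ using the scaling $(sx,sh)\in\Omega_j$ and monotonicity of $\omega$. The paper's argument is slightly more compact (it absorbs the factor $s^{|\beta|}$ to get $(|\beta|+1)^{-1}$ rather than just bounding $s^{|\beta|}\le 1$), but the substance is identical.
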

\begin{proof}
Let $A\in C^{m,l,\omega}$.  We wish to show $\sT(A)\in C^{m,l,\omega}$.
Set $B:=-A^{2}-CA-C$.  By \cref{Lemma::ODE::BasicCmlProperties}, $B\in C^{m,l,\omega}$.
We wish to show $\int_0^1 B(sx)\:ds\in C^{m,l,\omega}$.

Let $0\leq j\leq l$, $|\beta|\leq m$.  Consider,
\begin{equation*}
    \left|\DiffOp{h}^j \partial_x^{\beta} \int_0^1 B(sx)\: ds\right|
    =\left| \int_0^1 s^{|\beta|} (\DiffOp{sh}^j \partial_x^{\beta} B )(sx) \right|
    \leq \int_0^1 s^{|\beta|} \omega(s|h|)^{j} \Norm{B}[C^{m,l,\omega}]\: ds
    \leq \omega(|h|)^j \Norm{B}[C^{m,l,\omega}] (|\beta|+1)^{-1},
\end{equation*}
where we have used that $\omega$ is non-decreasing.
The result follows.
\end{proof}

\begin{lemma}[Izzo's contraction mapping principle \cite{IzzoCrConvergenceOfPicard}]\label{Lemma::ODE::Izzo}
Suppose $(X,d)$ is a metric space and $\{\sQ_a\}_{a=0}^\infty$ is a sequence
of contractions on $X$ for which there exists $c<1$ with
\begin{equation*}
    d(\sQ_a(x),\sQ_a(y))\leq c d(x,y), \quad \forall x,y\in X, a\in \N.
\end{equation*}
Suppose $\exists x_\infty\in X$ with $\lim_{a\rightarrow \infty} \sQ_a(x_\infty)=x_\infty$.
Let $x_0\in X$ be arbitrary, and define $x_a$ recursively by
$x_{a+1}=\sQ_a(x_a)$.  Then $\lim_{a\rightarrow \infty} x_a=x_\infty$.
\end{lemma}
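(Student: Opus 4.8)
The plan is to reduce the convergence of the inhomogeneous iteration $x_{a+1} = \sQ_a(x_a)$ to the convergence of a single geometric-type series, using only the uniform contraction constant $c$ and the asymptotic fixed-point property of $x_\infty$. First I would estimate, for each $a$, the one-step discrepancy $d(x_{a+1}, x_\infty)$ by inserting $\sQ_a(x_\infty)$:
\begin{equation*}
d(x_{a+1}, x_\infty) \leq d(\sQ_a(x_a), \sQ_a(x_\infty)) + d(\sQ_a(x_\infty), x_\infty) \leq c\, d(x_a, x_\infty) + \varepsilon_a,
\end{equation*}
where $\varepsilon_a := d(\sQ_a(x_\infty), x_\infty) \to 0$ by hypothesis. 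Writing $D_a := d(x_a, x_\infty)$, this is the scalar recursion $D_{a+1} \leq c D_a + \varepsilon_a$.

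Next I would solve this recursion by unwinding it: iterating gives
\begin{equation*}
D_a \leq c^a D_0 + \sum_{k=0}^{a-1} c^{a-1-k}\varepsilon_k.
\end{equation*}
The term $c^a D_0 \to 0$ since $c < 1$. For the sum, the key observation is that $\sum_{k=0}^{a-1} c^{a-1-k}\varepsilon_k$ is a ``convolution of a summable geometric sequence with a null sequence,'' hence tends to $0$; concretely, given $\delta > 0$, pick $N$ with $\varepsilon_k < \delta$ for $k \geq N$, split the sum at $N$, bound the tail by $\delta \sum_{j\geq 0} c^j = \delta/(1-c)$, and bound the finite head (which involves fixed $\varepsilon_0,\ldots,\varepsilon_{N-1}$ times powers $c^{a-1-k}$ with $a-1-k \to \infty$) by something $\to 0$ as $a \to \infty$. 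Combining, $\limsup_a D_a \leq \delta/(1-c)$ for every $\delta > 0$, so $D_a \to 0$, i.e.\ $x_a \to x_\infty$.

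There is essentially no hard part here: the whole argument is the standard ``perturbed contraction'' lemma, and the only thing to be careful about is that the contractions $\sQ_a$ vary with $a$, so one may not appeal to a fixed-point uniqueness statement for a single map — instead one must track the explicit estimate $D_{a+1} \leq cD_a + \varepsilon_a$ and use the uniformity of $c$ in $a$ (which is exactly what the hypothesis supplies). I would also remark that this simultaneously shows $x_\infty$ is the unique such limit point, though the statement only asks for convergence. One small bookkeeping point worth stating explicitly: all quantities $D_a$, $\varepsilon_a$ are finite because $x_a, x_\infty \in X$ and $d$ is a genuine metric, so the manipulations above are legitimate.
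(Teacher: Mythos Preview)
Your proof is correct. Both you and the paper start from the same one-step estimate
\[
d(x_{a+1},x_\infty)\le c\,d(x_a,x_\infty)+d(\sQ_a(x_\infty),x_\infty),
\]
but diverge from there. You unwind the recursion explicitly to get $D_a\le c^a D_0+\sum_{k=0}^{a-1}c^{a-1-k}\varepsilon_k$ and then kill the convolution term by the standard ``geometric times null'' splitting argument. The paper instead argues in two stages: first it shows by contradiction that the sequence $d(x_a,x_\infty)$ is bounded, and then passes to $\limsup$ on both sides of the one-step estimate to obtain $\limsup_a d(x_a,x_\infty)\le c\,\limsup_a d(x_a,x_\infty)$, forcing the $\limsup$ to vanish. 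Your route is slightly more elementary and more quantitative (it would yield an explicit rate if one knew a rate for $\varepsilon_a\to 0$), while the paper's $\limsup$ argument is shorter once boundedness is in hand and avoids the $\delta$--$N$ bookkeeping. Either approach is perfectly adequate here.
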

\begin{proof}
We include a slightly modified version of the proof in \cite{IzzoCrConvergenceOfPicard}.
For each $a\in \N$,
\begin{equation}\label{Eqn::ODE::MainIzzoIneq}
\begin{split}
    &d(x_{a+1},x_{\infty})= d(\sQ_a(x_a),x_{\infty})\leq d(\sQ_a(x_a), \sQ_a(x_\infty))+ d(\sQ_a(x_\infty),x_\infty)
    \\&\leq cd(x_a,x_\infty)+d(\sQ_a(x_\infty),x_{\infty}).
\end{split}
\end{equation}

First we claim that the sequence $d(x_a,x_\infty)$ is bounded.  Since $\sQ_a(x_\infty)\rightarrow x_\infty$, $\exists N$, $a\geq N\Rightarrow d(\sQ_a(x_\infty),x_\infty)<1-c$.
Suppose $d(x_a,x_\infty)$ is not bounded; then $\exists a\geq N$ with $\max\{d(x_a,x_\infty),1\}\leq d(x_{a+1},x_\infty)$.
Applying this to \cref{Eqn::ODE::MainIzzoIneq}, we have $d(x_{a+1},x_\infty)\leq cd(x_a,x_\infty)+d(\sQ_a(x_\infty),x_\infty)<cd(x_{a+1},x_\infty)+1-c$.
And so $d(x_{a+1},x_{\infty})<1\leq d(x_{a+1},x_\infty)$, a contradiction.  Thus
the sequence $d(x_a,x_\infty)$ is bounded.

Since $\sQ_a(x_\infty)\rightarrow x_\infty$, \cref{Eqn::ODE::MainIzzoIneq} implies
$\limsup_{a\rightarrow \infty} d(x_a,x_\infty) \leq c\limsup_{a\rightarrow \infty} d(x_a,x_\infty)$.
Since $\limsup_{a\rightarrow \infty} d(x_a,x_\infty)<\infty$, this gives
$\limsup_{a\rightarrow \infty} d(x_a,x_\infty)=0$, completing the proof.
\end{proof}

We now turn to \cref{Prop::ODE::ExistMainProp}.  We begin with uniqueness.
Suppose $A_1,A_2\in C(B^{n}(\eta);\M^{n\times n})$ are two solutions
to \cref{Eqn::ODE::TheODE} with $A_1(0)=A_2(0)=0$.
By \cref{Lemma::ODE::RelvanceOfsT} we have $\sT(A_1)=A_1$, $\sT(A_2)=A_2$.
We first claim that $|A_j(x)|=O(|x|)$ for $j=1,2$; we prove this for $A_1$
and the same is true for $A_2$ by symmetry.
Set $F(r)=\sup_{|x|\leq r} |A_1(x)|$, note that $F:[0,\eta)\rightarrow \R$
is continuous, increasing, and $F(0)=0$.
Since $\sT(A_1)=A_1$ and $|C(sx)|\leq Ds|x|$ by assumption,
we have
\begin{equation*}
    |A_1(x)|\leq \int_0^1 F(s|x|)^2+Ds|x|F(s|x|)+ Ds|x|\: ds
    \leq F(|x|)^2 + \frac{1}{2} D|x|F(|x|)+\frac{1}{2}D|x|.
\end{equation*}
And so $F(r)\leq F(r)^2+\frac{1}{2}DrF(r)+\frac{1}{2}D r$, and thus
$F(r)(1-F(r))\leq \frac{1}{2}Dr F(r)+ \frac{1}{2} Dr$.
Taking $r$ so small that $F(r)\leq \frac{1}{2}$, we have  for such $r$, $F(r)\leq \frac{3}{2} D r$.
Thus $|A_1(x)|=O(|x|)$.

Writing $x$ in polar coordinates $x=r\theta$ and using \cref{Eqn::ODE::PolarT}
we have for $r>0$,
\begin{equation*}
    | r(A_1(r\theta)-A_2(r\theta))| \leq \int_0^r |s(A_1(s\theta)-A_2(s\theta))| \left( s^{-1}|A_1(s\theta)| +s^{-1} |A_2(s\theta)| + s^{-1}|C(s\theta)| \right)\: ds.
\end{equation*}
Using that $|A_1(s\theta)|,|A_2(s\theta)|,|C(s\theta)|=O(s)$,
the integral form of Gr\"onwall's inequality shows that $A_1(r\theta)=A_2(r\theta)$
for $r>0$ and therefore $A_1=A_2$.  This completes the proof of uniqueness.

We now turn to existence for which we use the contraction mapping principle.
Let
\begin{equation*}
    \sM:=\left\{A\in C^{0}(B^n(\eta);\M^{n\times n}) \:\bigg|\: A(0)=0, \sup_{0\ne x\in B^n(\eta)} \frac{1}{|x|}|A(x)|<\infty, \sup_{x\in B^{n}(\eta)} |A(x)|\leq \frac{1}{10} \right\}.
\end{equation*}
We give $\sM$ the metric
\begin{equation*}
    d(A,B):=\sup_{0\ne x\in B^n(\eta)} \frac{1}{|x|}\left|A(x)-B(x)\right|.
\end{equation*}
With this metric, $\sM$ is a complete metric space.

\begin{lemma}\label{Lemma::ODE::Contraction}
$\sT:\sM\rightarrow \sM$ and $\forall A,B\in \sM$, $d(\sT(A),\sT(B))\leq \frac{1}{5}d(A,B)$.
Also, $d(\sT(0),0)\leq D/2$.
\end{lemma}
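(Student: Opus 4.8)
The plan is to verify the three assertions by direct estimation, using only $C(0)=0$, the bound $|C(x)|\le D|x|$ on $B^n(\eta)$, the smallness hypothesis $\eta\le(10D)^{-1}$, and the defining properties $A(0)=0$, $\sup_{B^n(\eta)}|A|\le\frac1{10}$ of elements $A\in\sM$.

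First I would dispose of the inequality $d(\sT(0),0)\le D/2$. Since $\sT(0)(x)=\int_0^1 -C(sx)\,ds$, the hypothesis gives $|\sT(0)(x)|\le\int_0^1 Ds|x|\,ds=\tfrac12 D|x|$, and dividing by $|x|$ and taking the supremum over $x\ne 0$ yields the claim. Next I would prove $\sT(\sM)\subseteq\sM$. Continuity of $\sT(A)$ follows from continuity and local boundedness of the integrand $(s,x)\mapsto -A(sx)^2-C(sx)A(sx)-C(sx)$ (e.g.\ by uniform continuity on compact subsets of $B^n(\eta)$, continuity being local), and $\sT(A)(0)=0$ because $A(0)=C(0)=0$. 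For the sup bound I would write $|\sT(A)(x)|\le\int_0^1\big(|A(sx)|^2+|C(sx)||A(sx)|+|C(sx)|\big)\,ds$ and insert $|A(sx)|\le\frac1{10}$ together with $|C(sx)|\le Ds|x|\le D\eta\le\frac1{10}$; a short arithmetic check ($\tfrac1{100}+\tfrac1{100}+\tfrac{D|x|}{2}\le\tfrac1{100}+\tfrac1{100}+\tfrac1{20}=\tfrac7{100}$) shows the right side is at most $\frac1{10}$. For finiteness of $\sup_{x\ne 0}|x|^{-1}|\sT(A)(x)|$ I would instead bound $|A(sx)|^2\le\frac1{10}|A(sx)|\le\frac1{10}\big(\sup_{y\ne 0}|y|^{-1}|A(y)|\big)s|x|$ and $|C(sx)|\le Ds|x|$, making the integrand $\le(\text{const})\cdot s|x|$ and hence $|\sT(A)(x)|=O(|x|)$. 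This gives $\sT(A)\in\sM$.

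For the contraction estimate I would write, for $A,B\in\sM$,
\[
\sT(A)(x)-\sT(B)(x)=\int_0^1 \Big(-\big(A(sx)^2-B(sx)^2\big)-C(sx)\big(A(sx)-B(sx)\big)\Big)\,ds,
\]
use the factorization $A(sx)^2-B(sx)^2=A(sx)\big(A(sx)-B(sx)\big)+\big(A(sx)-B(sx)\big)B(sx)$ to get $|A(sx)^2-B(sx)^2|\le\big(|A(sx)|+|B(sx)|\big)|A(sx)-B(sx)|\le\frac15|A(sx)-B(sx)|$, and bound $|C(sx)|\le Ds|x|$. Applying $|A(sx)-B(sx)|\le d(A,B)\,s|x|$ throughout and integrating in $s$ gives $|\sT(A)(x)-\sT(B)(x)|\le\big(\tfrac1{10}+\tfrac{D|x|}{3}\big)d(A,B)\,|x|$; dividing by $|x|$ and using $|x|<\eta\le(10D)^{-1}$ bounds the prefactor by $\tfrac1{10}+\tfrac1{30}=\tfrac2{15}\le\tfrac15$, which is the asserted contraction constant.

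I do not expect any genuine obstacle here: the argument is entirely elementary estimation. The only point requiring care is keeping the numerical constants consistent so that the sup bound lands at $\le\frac1{10}$ and the contraction factor at $\le\frac15$ — which is precisely why the hypothesis is phrased as $\eta\le(10D)^{-1}$ rather than with an unspecified small constant.
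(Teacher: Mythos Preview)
Your proposal is correct and follows essentially the same approach as the paper's proof: direct pointwise estimation using $|C(x)|\le D|x|$, $|A|\le\tfrac1{10}$, and $\eta\le(10D)^{-1}$, with the same factorization $A^2-B^2=A(A-B)+(A-B)B$ for the contraction estimate, arriving at the same constant $\tfrac1{10}+\tfrac1{30}\le\tfrac15$. The only cosmetic difference is that you verify $\sup_{x\ne0}|x|^{-1}|\sT(A)(x)|<\infty$ directly from $A\in\sM$, whereas the paper deduces it a posteriori by combining the contraction inequality (with $B=0$) and the bound $d(\sT(0),0)\le D/2$; both routes are equally short.
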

\begin{proof}
Let $A\in \sM$.  For $x\in B^n(\eta)$,
\begin{equation}\label{Eqn::ODE::Regularity::BasicContractBound}
    |\sT(A)(x)|\leq \int_0^1 \Norm{A}[C^0]^2 + Ds|x|\Norm{A}[C^0]+ Ds|x|\: ds
    \leq \frac{1}{100}+\frac{D}{2}\eta\frac{1}{10}+ \frac{D}{2}\eta \leq \frac{1}{100}+\frac{1}{200}+\frac{1}{20}\leq \frac{1}{10}.
\end{equation}
Also,
\begin{equation}\label{Eqn::ODE::Reguarity::Boundst0}
    \frac{1}{|x|} |\sT(0)(x)|\leq \frac{1}{|x|} \int_0^1 Ds|x|\: ds\leq \frac{1}{2}D,
\end{equation}
and so $\sT(0)\in \sM$ with $d(\sT(0),0)\leq D/2$.

Finally, for $A,B\in \sM$, $0\ne x\in B^n(\eta)$,
\begin{equation}\label{Eqn::ODE::Regularity::Contract}
\begin{split}
    &\frac{1}{|x|} |\sT(A)(x)-\sT(B)(x)|\leq \frac{1}{|x|}\int_0^1 |A(sx)-B(sx)|(|A(sx)|+|B(sx)|+|C(sx)|)\: ds
    \\&\leq \frac{1}{|x|} \int_0^1 s|x|d(A,B)\left(\frac{1}{5}+Ds|x|\right)\: ds
    \leq \int_0^1 s d(A,B)\left(\frac{1}{5}+\frac{s}{10}\right)\: ds
    \\&\leq d(A,B)\left(\frac{1}{10}+\frac{1}{30}\right)\leq \frac{1}{5}d(A,B).
\end{split}
\end{equation}
Putting $0=B$ in \cref{Eqn::ODE::Regularity::Contract} and using
\cref{Eqn::ODE::Reguarity::Boundst0} shows
$\sup_{0\ne x\in B^n(\eta)}\frac{1}{|x|}|\sT(A)(x)|<\infty$.  Combining this with
\cref{Eqn::ODE::Regularity::BasicContractBound} shows $\sT:\sM\rightarrow \sM$.
Further, \cref{Eqn::ODE::Regularity::Contract} with arbitrary $A,B\in \sM$
shows $d(\sT(A),\sT(B))\leq \frac{1}{5}d(A,B)$, and this completes the proof.
\end{proof}

By \cref{Lemma::ODE::Contraction}, $\sT:\sM\rightarrow\sM$ is a strict
contraction, and the contraction mapping principle applies
to show that if $A_0=0$, $A_a=\sT(A_{a-1})$, $a\geq 1$,
then $A_a\rightarrow A_\infty$ in $\sM$, where $\sT(A_\infty)=A_{\infty}$.
$A_{\infty}$ is the desired solution to \cref{Eqn::ODE::TheODE}.

Also, for $a\in \N\cup\{\infty\}$ we have, using \cref{Lemma::ODE::Contraction},
\begin{equation}\label{Eqn::ODE::BoundsForAa}
\frac{1}{|x|} |A_a(x)| \leq d(A_a,0) \leq \sum_{b=0}^{a-1} d(\sT^{b+1}(0), \sT^b(0))
\leq \sum_{b=0}^{a-1} 5^{-b} d(\sT(0),0)\leq \frac{5}{8} D.
\end{equation}
In particular, for $x\in B^n(\eta)$, $|A_\infty(x)|\leq \frac{5}{8} D|x|$.
Also, since $\eta\leq (10D)^{-1}$, it follows that $|A_\infty(x)|\leq \frac{1}{16}$;
this establishes \cref{Eqn::ODE::ExistMainProp::SupBounds}.

It remains to prove the regularity properties of $A_{\infty}$, in terms of the regularity
of $C$.  For the remainder of this section, $K_{n,m,l,\omega}$ is a constant
which can be chosen to depend only on $n$, $m$, $l$, and an upper bound
for $\Norm{C}[C^{m,l,\omega}]$.  This constant may change from line to line.

To complete the proof of \cref{Prop::ODE::ExistMainProp}, we will prove the following when $C\in C^{m,l,\omega}$:
\begin{itemize}
    \item $A_a\rightarrow A_\infty$ in $C^{m,l,\omega}$.
    \item $\Norm{A_\infty}[C^{m,l,\omega}]\leq K_{n,m,l,\omega}$.
\end{itemize}
We prove the above two properties by induction on $m,l$.  The base case, $m=l=0$,
was just proved above (since $C^{0,0,\omega}=C^{0}(B^n(\eta);\M^{n\times n})$).

Fix $(m,l)$.  We assume we have the above for all $(k,j)$ with $0\leq k\leq m$,
$0\leq j\leq l$, and $(k,j)\ne (m,l)$, and we assume $C\in C^{m,l,\omega}$.
Since for $0\leq k\leq m$, $0\leq j\leq l$, $C^{m,l,\omega}\hookrightarrow C^{k,j,\omega}$
(\cref{Lemma::ODE::BasicCmlProperties}), the inductive hypothesis
shows for such $(k,j)$ with $(k,j)\ne (m,l)$,
$A_a\rightarrow A_\infty$ in $C^{k,j,\omega}$ and $\Norm{A_\infty}[C^{k,j,\omega}]\leq K_{n,k,j,\omega}$.

We define a Banach space $X_{\omega,l}$ as follows:
\begin{itemize}
    \item $X_{\omega,0}=C(B^n(\eta); \M^{n\times n})$, with the usual norm.
    \item For $l>0$, $X_{\omega,l}=\{B(x,h)\in C(\Omega_l;\M^{n\times n}) : \Norm{B}[X_{\omega,l}]<\infty\}$, where $\Norm{B}[X_{\omega,l}]:=\sup_{(x,h)\in \Omega_l} \omega(|h|)^{-l} |B(x,h)|$.
\end{itemize}
Fix $|\beta|=m$.  We will show (under our inductive hypothesis)
\begin{enumerate}[(i)]
    \item\label{Item::ODE::Aagood} $A_a\in C^{m,l,\omega}$, $\forall a\in \N$.
    \item\label{Item::ODE::AaDeriv} $\DiffOp{h}^l \partial_x^{\beta} A_a(x)\in X_{\omega,l}$, $\forall a\in \N$.
    \item\label{Item::ODE::AaConverge} $\exists B_\infty\in X_{\omega,l}$ such that $\DiffOp{h}^l \partial_x^{\beta} A_a\xrightarrow{a\rightarrow \infty} B_{\infty}$ in $X_{\omega,l}$.
    \item\label{Item::ODE::BinftyBound} $\Norm{B_\infty}[X_{\omega,l}]\leq K_{n,m,l,\omega}$.
\end{enumerate}

First we see why the above completes the proof.  We already know from our inductive
hypothesis that
\begin{equation}\label{Eqn::ODE::Exist::WahtWeKnow}
    \sup_{(x,h)\in \Omega_j} \omega(|h|)^{-j} \left|\DiffOp{h}^j \partial_x^{\alpha} (A_a-A_\infty) \right|\xrightarrow{a\rightarrow \infty} 0,
\end{equation}
for $0\leq j\leq l$, $|\alpha|\leq m$ with $(j,|\alpha|)\ne (l,m)$,
and that $\Norm{A_\infty}[C^{j,k,\omega}]\leq K_{n,k,j,\omega}$ for $0\leq k\leq m$, $0\leq j\leq l$,
$(j,k)\ne (l,m)$.
Thus, that $A_a\rightarrow A_\infty$ in $C^{m,l,\omega}$ will follow
from \cref{Eqn::ODE::Exist::WahtWeKnow} for $(j,|\alpha|)=(l,m)$ and the fact that $A_a\in C^{m,l,\omega}$.
If $l=0$, \cref{Item::ODE::Aagood} implies $A_a\in C^m$ and \cref{Item::ODE::AaConverge} implies
$\partial_x^{\beta} A_a\rightarrow B_\infty$ in the supremum norm.
Since $A_a\rightarrow A_\infty$ in $C^0$, we have $\partial_x^{\beta} A_\infty=B_\infty$.
\cref{Item::ODE::BinftyBound} implies the desired bound on $\partial_x^{\beta} A_\infty$.
Since $\beta$ is arbitrary with $|\beta|=m$, we conclude $A_\infty\in C^m$, with $\Norm{A_\infty}[C^{m,0,\omega}]\leq K_{n,m,0,\omega}$, and $A_a\rightarrow A_\infty$ in $C^{m,0,\omega}$,
as desired.

If $l\geq 1$, then we already know $A_a\rightarrow A_\infty$ in $C^m(B^n(\eta);\M^{n\times n})$, by the
inductive hypothesis.  Thus
\begin{equation*}
    \DiffOp{h}^l \partial_x^{\beta} A_a(x)\rightarrow \DiffOp{h}^l\partial_x^{\beta} A_\infty, \quad \text{pointwise}.
\end{equation*}
Hence, $\DiffOp{h}^l \partial_x^{\beta} A_\infty (x) = B_{\infty}(x,h)$.
Since $\beta$ was arbirary with $|\beta|=m$, \cref{Item::ODE::AaConverge} shows
$A_a\rightarrow A_\infty$ in $C^{m,l,\omega}$ and \cref{Item::ODE::BinftyBound}
shows $\Norm{A_\infty}[C^{m,l,\omega}]\leq K_{n,m,l,\omega}$.

Having shown them to be sufficient, we turn to
proving \cref{Item::ODE::Aagood}, \cref{Item::ODE::AaDeriv}, \cref{Item::ODE::AaConverge}, and \cref{Item::ODE::BinftyBound}.
Recall, we have fixed $\beta$ with $|\beta|=m$.
Since $A_a=\sT^{a}(0)$, \cref{Item::ODE::Aagood} follows from \cref{Lemma::ODE::sTPreserves}.
\cref{Item::ODE::AaDeriv} is an immediate consequence of \cref{Item::ODE::Aagood}.
Thus, it remains only to prove \cref{Item::ODE::AaConverge,Item::ODE::BinftyBound}.
We will do this by applying \cref{Lemma::ODE::Izzo}.  To begin, we need a few preliminary lemmas.

\begin{lemma}\label{Lemma::ODE::BilinCont}
Fix $m_1,l_1,m_2,l_2,j_1,j_2\in \N$ and set $l=l_1+l_2$ and suppose $j_1+l_1, j_2+l_2\leq l$.
Let $\beta_1$ and $\beta_2$ be multi-indicies with $|\beta_1|=m_1$ and $|\beta_2|=m_2$.
Then, the bilinear map for $A_1\in C^{m_1,l_1,\omega}$, $A_2\in C^{m_2,l_2,\omega}$
given by
\begin{equation}\label{Eqn::ODE::BilinearExpression}
    (A_1,A_2)\mapsto \left(\tau_{j_1 h} \DiffOp{h}^{l_1} \partial_x^{\beta_1} A_1\right)(x) \left(\tau_{j_2 h} \DiffOp{h}^{l_2} \partial_x^{\beta_2} A_2\right)(x).
\end{equation}
is a continuous map $C^{m_1,l_1,\omega}\times C^{m_2,l_2,\omega}\rightarrow X_{\omega,l}$,
and the norm of this map is $\leq 1$.
Here, $\tau_h A(x)= A(x+h)$.
\end{lemma}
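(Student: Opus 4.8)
The plan is to verify the claim directly and pointwise on $\Omega_l$, the estimate being essentially bookkeeping; the only genuine content is checking that the shifted difference quotients have all their arguments inside $B^n(\eta)$, which is exactly what the hypotheses $j_1+l_1,j_2+l_2\leq l$ are designed to guarantee. First I would check that the map
\begin{equation*}
    (x,h)\mapsto \left(\tau_{j_1 h} \DiffOp{h}^{l_1} \partial_x^{\beta_1} A_1\right)(x) \left(\tau_{j_2 h} \DiffOp{h}^{l_2} \partial_x^{\beta_2} A_2\right)(x)
\end{equation*}
is well-defined and continuous on $\Omega_l$. Indeed $\left(\tau_{j_1 h} \DiffOp{h}^{l_1} \partial_x^{\beta_1} A_1\right)(x)=\left(\DiffOp{h}^{l_1} \partial_x^{\beta_1} A_1\right)(x+j_1 h)$ is built from the values of $\partial_x^{\beta_1}A_1$ at the points $x+kh$ with $j_1\leq k\leq j_1+l_1$; since $0\leq j_1$ and $j_1+l_1\leq l$, while $(x,h)\in\Omega_l$ forces $x+kh\in B^n(\eta)$ for all $0\leq k\leq l$, every one of these points lies in $B^n(\eta)$, so the first factor makes sense, and it is continuous in $(x,h)$ because $A_1\in C^{m_1}$. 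The identical argument using $j_2+l_2\leq l$ handles the second factor, so the product lies in $C(\Omega_l;\M^{n\times n})$.

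Next I would carry out the norm estimate. Using $l=l_1+l_2$ to split $\omega(|h|)^{-l}=\omega(|h|)^{-l_1}\omega(|h|)^{-l_2}$, for $(x,h)\in\Omega_l$ we have
\begin{equation*}
    \omega(|h|)^{-l}\left|\left(\tau_{j_1 h} \DiffOp{h}^{l_1} \partial_x^{\beta_1} A_1\right)(x)\right|\left|\left(\tau_{j_2 h} \DiffOp{h}^{l_2} \partial_x^{\beta_2} A_2\right)(x)\right|
    =\prod_{i=1}^{2}\omega(|h|)^{-l_i}\left|\left(\DiffOp{h}^{l_i}\partial_x^{\beta_i}A_i\right)(x+j_ih)\right|.
\end{equation*}
For each $i$ the pair $(x+j_ih,h)$ belongs to $\Omega_{l_i}$ (once more because $j_i+l_i\leq l$ and $(x,h)\in\Omega_l$), and $|\beta_i|=m_i$, so by the definition of $\Norm{A_i}[C^{m_i,l_i,\omega}]$ — the term indexed by $\beta=\beta_i$ and $j=l_i$ in that sum — the $i$-th factor on the right is at most $\Norm{A_i}[C^{m_i,l_i,\omega}]$. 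Multiplying the two bounds and taking the supremum over $(x,h)\in\Omega_l$ yields
\begin{equation*}
    \Norm{\left(\tau_{j_1 h} \DiffOp{h}^{l_1} \partial_x^{\beta_1} A_1\right)\left(\tau_{j_2 h} \DiffOp{h}^{l_2} \partial_x^{\beta_2} A_2\right)}[X_{\omega,l}]\leq \Norm{A_1}[C^{m_1,l_1,\omega}]\Norm{A_2}[C^{m_2,l_2,\omega}],
\end{equation*}
which is exactly the statement that the bilinear map has norm $\leq 1$; boundedness of a bilinear map immediately gives its continuity, so no separate argument is needed there.

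I do not expect any serious obstacle: this lemma is a preparatory estimate of the same flavor as \cref{Lemma::ODE::BasicCmlProperties}, and in fact the Leibniz-type identity used there decomposes $\partial_x^{\beta}\DiffOp{h}^{j}(A_1A_2)$ precisely into a constant-coefficient combination of terms of the form treated here. The one place to be attentive is the domain bookkeeping — confirming that translating an $l_i$-th order difference quotient forward by $j_ih$ keeps all arguments in $B^n(\eta)$ — and that is handled by the hypotheses $j_1+l_1,j_2+l_2\leq l$ as indicated above.
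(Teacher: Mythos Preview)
Your proof is correct and follows exactly the approach the paper takes: verify via the hypotheses $j_1+l_1,j_2+l_2\leq l$ that the expression is defined on $\Omega_l$, then read off the norm bound directly from the definitions. The paper's proof simply asserts that the result ``follows immediately from the definitions,'' and what you have written is precisely the unpacking of that assertion.
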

\begin{proof}
The restriction $j_1+l_1, j_2+l_2\leq l$, ensures
that the expression in \cref{Eqn::ODE::BilinearExpression} is defined for $(x,h)\in \Omega_l$.
With this in mind, the result follows immediately from the definitions.
\end{proof}

For an element $B\in X_{\omega,l}$ we often write $B(x,h)$.  When $l\geq 1$, the meaning
of this is obvious.  For $l=0$ this is to be interpreted as $B(x)$.

\begin{lemma}\label{Lemma::ODE::IntCont}
For $B(x,h)\in X_{\omega,l}$ and $d\geq 1$ the map
\begin{equation*}
    B\mapsto \int_0^1 s^d B(sx,sh)\: ds
\end{equation*}
is continous $X_{\omega,l}\rightarrow X_{\omega,l}$ and has norm $\leq 1$.
\end{lemma}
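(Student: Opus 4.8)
The plan is to check that the indicated map is a well-defined bounded linear operator on $X_{\omega,l}$ with operator norm at most $1$; since linearity is evident, the asserted continuity then follows from the norm bound, and the whole content is the estimate $\BNorm{\int_0^1 s^d B(sx,sh)\,ds}[X_{\omega,l}]\le \Norm{B}[X_{\omega,l}]$ together with the verification that the right-hand side of the definition actually lands in $X_{\omega,l}$.

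First I would record the (already noted) fact that for $(x,h)\in\Omega_l$ and $s\in(0,1]$ one has $(sx,sh)\in\Omega_l$: indeed $sx+j(sh)=s(x+jh)\in B^n(\eta)$ because $B^n(\eta)$ is a ball about the origin and $0\le s\le 1$, while $sh\ne 0$. Hence $s\mapsto s^dB(sx,sh)$ is defined and continuous on $(0,1]$, and from $|B(sx,sh)|\le\omega(s|h|)^l\Norm{B}[X_{\omega,l}]\le\omega(|h|)^l\Norm{B}[X_{\omega,l}]$ (using that $\omega$ is non-decreasing, so $\omega(s|h|)\le\omega(|h|)$) it is bounded on $(0,1]$; thus the integral exists. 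The one place needing a little care is continuity of $\int_0^1 s^dB(sx,sh)\,ds$ as a function of $(x,h)$ on $\Omega_l$ when $l\ge 1$, since $(0,0)\notin\Omega_l$ and the integrand is not a priori continuous at $s=0$. Here the factor $s^d$ with $d\ge 1$ saves the day: the bound above shows $s^dB(sx,sh)\to 0$ as $s\to 0^+$, uniformly for $(x,h)$ in a fixed compact subset of $\Omega_l$, so extending the integrand by $0$ at $s=0$ gives a function continuous on $[0,1]\times\Omega_l$; integrating over the compact interval $[0,1]$ then yields a continuous function of $(x,h)$. (When $l=0$ one reads $B(sx,sh)$ as $B(sx)$, which is already continuous on $[0,1]\times B^n(\eta)$, and the same conclusion is even easier.)

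Finally I would carry out the norm estimate: for $(x,h)\in\Omega_l$,
\[
\left|\int_0^1 s^dB(sx,sh)\,ds\right|\le\int_0^1 s^d\omega(s|h|)^l\Norm{B}[X_{\omega,l}]\,ds\le\omega(|h|)^l\Norm{B}[X_{\omega,l}]\int_0^1 s^d\,ds=\frac{1}{d+1}\,\omega(|h|)^l\Norm{B}[X_{\omega,l}],
\]
again using $\omega(s|h|)\le\omega(|h|)$. Since $d\ge 1$ forces $\tfrac{1}{d+1}\le\tfrac12<1$, multiplying by $\omega(|h|)^{-l}$ and taking the supremum over $\Omega_l$ gives $\BNorm{\int_0^1 s^dB(sx,sh)\,ds}[X_{\omega,l}]\le\Norm{B}[X_{\omega,l}]$; this shows at once that the image lies in $X_{\omega,l}$ and that the operator norm is at most $1$ (the case $l=0$ being the identical computation with $\omega$ absent, since $\omega$ does not play a role there). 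No step is a genuine obstacle; the only subtlety, as indicated, is the $s\to 0^+$ behaviour of the integrand, which is controlled precisely by the hypothesis $d\ge 1$.
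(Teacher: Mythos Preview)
Your proof is correct and is precisely the direct verification the paper has in mind when it writes ``This is clear from the definitions.'' You have simply spelled out the estimate $\left|\int_0^1 s^d B(sx,sh)\,ds\right|\le \omega(|h|)^l\Norm{B}[X_{\omega,l}]\int_0^1 s^d\,ds$ and the continuity check that the authors chose to omit; there is no difference in approach.
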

\begin{proof}
This is clear from the definitions.
\end{proof}

For $A_1,A_2\in C^{m,\omega,l}$, we have
\begin{equation*}
    \DiffOp{h}^l \partial_x^{\beta}(A_1 A_2)(x) = (\DiffOp{h}^l \partial_x^{\beta} A_1)(x) A_2(x+lh)+
    A_1(x) (\DiffOp{h}^l \partial_x^{\beta} A_2)(x) + R_{\beta,l}(A_1,A_2)(x),
\end{equation*}
where $R_{\beta,l}(A_1,A_2)(x,h)$ is a constant coefficient linear combination (depending only on $\beta$ and $l$) of terms of the form
\begin{equation*}
    \left(\tau_{j_1 h} \DiffOp{h}^{l_1} \partial_x^{\beta_1} A_1\right)(x) \left(\tau_{j_2 h} \DiffOp{h}^{l_2} \partial_x^{\beta_2} A_2\right)(x),
\end{equation*}
where $0\leq j_1\leq l_2$, $0\leq j_2\leq l_1$, $l_1+l_2=l$, $\beta_1+\beta_2=\beta$,
and $l_1+|\beta_1|, l_2+|\beta_2|>0$.

\begin{lemma}\label{Lemma::ODE::ConvergeInts}
We have the following limits in $X_{\omega,l}$:
\begin{equation}\label{Eqn::ODE::Limit1}
\int_0^1 s^{|\beta|} R_{\beta,l}(A_a,A_a)(sx,sh)\: ds\xrightarrow{a\rightarrow\infty} \int_0^1 s^{|\beta|} R_{\beta,l}(A_\infty,A_\infty)(sx,sh)\: ds.
\end{equation}
\begin{equation}\label{Eqn::ODE::Limit2}
\int_0^1 s^{|\beta|} R_{\beta,l}(C,A_a)(sx,sh)\: ds\xrightarrow{a\rightarrow\infty} \int_0^1 s^{|\beta|} R_{\beta,l}(C,A_\infty)(sx,sh)\: ds.
\end{equation}
\begin{equation}\label{Eqn::ODE::Limit5}
    \int_0^1 s^{|\beta|} \left(  \DiffOp{sh}^l \partial_x^{\beta} C \right)(sx) A_a(s(x+lh))\:ds \xrightarrow{a\rightarrow \infty} \int_0^1 s^{|\beta|} \left(\DiffOp{sh}^l \partial_x^{\beta} C\right)(sx) A_\infty(s(x+lh))\: ds.
\end{equation}
And for any $B(x,h)\in X_{\omega,l}$,
\begin{equation}\label{Eqn::ODE::Limit3}
    \int_0^1 s^{|\beta|} B(sx, sh) A_a(s(x+lh))\: ds\xrightarrow{a\rightarrow \infty} \int_0^1 s^{|\beta|} B(s,x) A_\infty(s(x+lh))\: ds.
\end{equation}
\begin{equation}\label{Eqn::ODE::Limit4}
    \int_0^1 s^{|\beta|} A_a(sx)B(sx, sh) \: ds\xrightarrow{a\rightarrow \infty} \int_0^1 s^{|\beta|} A_\infty(sx) B(sx,sh) \: ds.
\end{equation}
\end{lemma}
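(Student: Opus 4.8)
The plan is to reduce all five displayed limits to a single mechanism: each integrand is a sum of bilinear expressions in $A_a$ (or linear, when one argument is the fixed matrix $C$), precomposed with the dilation $(x,h)\mapsto(sx,sh)$ and integrated in $s$. Since $B\mapsto\int_0^1 s^{d}B(sx,sh)\,ds$ is, by \cref{Lemma::ODE::IntCont}, a norm-$\le1$ endomorphism of $X_{\omega,l}$ for every $d\ge1$, it is enough to prove convergence in $X_{\omega,l}$ of the un-integrated bilinear/linear expressions and then push the limit through the integral operator. The convergence of the bilinear expressions will in turn follow from \cref{Lemma::ODE::BilinCont} (each such expression defines a norm-$\le1$ bilinear map into $X_{\omega,l}$) together with the inductive hypothesis, which gives $A_a\to A_\infty$ in $C^{k,j,\omega}$ and $\sup_a\Norm{A_a}[C^{k,j,\omega}]<\infty$ for all $(k,j)$ with $k\le m$, $j\le l$, $(k,j)\ne(m,l)$, and which also gives $C\in C^{m,l,\omega}$.

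First I would record the algebraic fact that makes this work. By definition $R_{\beta,l}(A_1,A_2)$ is a constant-coefficient combination of terms $\bigl(\tau_{j_1h}\DiffOp{h}^{l_1}\partial_x^{\beta_1}A_1\bigr)(x)\bigl(\tau_{j_2h}\DiffOp{h}^{l_2}\partial_x^{\beta_2}A_2\bigr)(x)$ with $l_1+l_2=l$, $\beta_1+\beta_2=\beta$, $0\le j_1\le l_2$, $0\le j_2\le l_1$, and $l_1+|\beta_1|>0$, $l_2+|\beta_2|>0$. The last two strict inequalities force $(|\beta_1|,l_1)\ne(m,l)$ and $(|\beta_2|,l_2)\ne(m,l)$ (e.g.\ $(|\beta_1|,l_1)=(m,l)$ would give $l_2=|\beta_2|=0$), while $j_i+l_i\le l$ lets \cref{Lemma::ODE::BilinCont} apply. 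Hence each term is a norm-$\le1$ bilinear map $C^{|\beta_1|,l_1,\omega}\times C^{|\beta_2|,l_2,\omega}\to X_{\omega,l}$ in which both index pairs lie in the range where the inductive hypothesis is available; for \cref{Eqn::ODE::Limit2} the first slot is the fixed $C$, and $C\in C^{m,l,\omega}\hookrightarrow C^{|\beta_1|,l_1,\omega}$ by \cref{Lemma::ODE::BasicCmlProperties}.

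Then I would invoke the elementary bilinear-limit lemma: if $\Lambda$ is bilinear with $\Norm{\Lambda}\le1$, $a_k\to a$, $b_k\to b$, and $\sup_k\Norm{b_k}<\infty$, then $\Lambda(a_k,b_k)-\Lambda(a,b)=\Lambda(a_k-a,b_k)+\Lambda(a,b_k-b)\to0$. Applying this termwise (with $a_k=b_k=A_a$ for \cref{Eqn::ODE::Limit1}, and with the first argument fixed equal to $C$ for \cref{Eqn::ODE::Limit2}) and composing with \cref{Lemma::ODE::IntCont} yields \cref{Eqn::ODE::Limit1,Eqn::ODE::Limit2}. For the remaining three even less is needed: \cref{Eqn::ODE::Limit5} is the case $B(x,h)=\bigl(\DiffOp{h}^{l}\partial_x^{\beta}C\bigr)(x)$ of \cref{Eqn::ODE::Limit3} (this $B$ lies in $X_{\omega,l}$ with $\Norm{B}[X_{\omega,l}]\le\Norm{C}[C^{m,l,\omega}]$), and for \cref{Eqn::ODE::Limit3} one estimates directly, using that $\omega$ is non-decreasing and $s\in(0,1]$,
\begin{equation*}
\omega(|h|)^{-l}\left|\int_0^1 s^{|\beta|}B(sx,sh)\bigl(A_a-A_\infty\bigr)(s(x+lh))\,ds\right|\le\Norm{B}[X_{\omega,l}]\,\CNorm{A_a-A_\infty}{B^n(\eta)}\int_0^1 s^{|\beta|}\,ds\xrightarrow{a\to\infty}0,
\end{equation*}
since $A_a\to A_\infty$ in $C^0(B^n(\eta);\M^{n\times n})=C^{0,0,\omega}$ (available in the inductive step because $(0,0)\ne(m,l)$); \cref{Eqn::ODE::Limit4} is identical with the two factors swapped. (The right-hand side of \cref{Eqn::ODE::Limit3} as displayed should presumably read $B(sx,sh)$ rather than $B(s,x)$.)

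The hard part here is bookkeeping rather than analysis: one must check that every factor produced in $R_{\beta,l}$ — and in the two ``main'' terms $(\DiffOp{h}^l\partial_x^\beta A_1)(x)A_2(x+lh)$ and $A_1(x)(\DiffOp{h}^l\partial_x^\beta A_2)(x)$ appearing in the product rule for $\DiffOp{h}^l\partial_x^\beta(A_1A_2)$ — carries index data in the admissible range $k\le m$, $j\le l$, $(k,j)\ne(m,l)$ (or is the fixed $C$). This is exactly why the constraints $l_1+|\beta_1|>0$ and $l_2+|\beta_2|>0$ were built into the definition of $R_{\beta,l}$. Once that is verified, everything follows from \cref{Lemma::ODE::BilinCont,Lemma::ODE::IntCont,Lemma::ODE::BasicCmlProperties} and the inductive hypothesis, with no further estimates required.
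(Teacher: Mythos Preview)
Your proposal is correct and follows essentially the same approach as the paper: the paper's proof invokes \cref{Lemma::ODE::BilinCont,Lemma::ODE::IntCont} together with the inductive hypothesis for \cref{Eqn::ODE::Limit1,Eqn::ODE::Limit2,Eqn::ODE::Limit5}, and handles \cref{Eqn::ODE::Limit3,Eqn::ODE::Limit4} by the $C^0$ convergence $A_a\to A_\infty$ and a straightforward estimate. Your write-up is more explicit (in particular the verification that the index pairs arising in $R_{\beta,l}$ avoid $(m,l)$, and the bilinear-limit identity), and you correctly note the typo $B(s,x)$ for $B(sx,sh)$ in \cref{Eqn::ODE::Limit3}; the only minor deviation is that you fold \cref{Eqn::ODE::Limit5} into \cref{Eqn::ODE::Limit3} rather than into the bilinear lemma, which is equally valid.
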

\begin{proof}
Recall, we are assuming $C\in C^{m,\omega,l}$ and our inductive hypothesis implies $A_a\rightarrow A_\infty$ in $C^{k,j,\omega}$ with $0\leq k\leq m$, $0\leq j\leq l$, and $(k,j)\ne (m,l)$.
Using this and \cref{Lemma::ODE::BilinCont,Lemma::ODE::IntCont},
\cref{Eqn::ODE::Limit1}, \cref{Eqn::ODE::Limit2}, and \cref{Eqn::ODE::Limit5} follow immediately.
\Cref{Eqn::ODE::Limit3,Eqn::ODE::Limit4} follow from the fact that $A_a\rightarrow A_\infty$ in $C^0(B^{n}(\eta))$ and a straightforward estimate.
\end{proof}

\begin{lemma}\label{Lemma::ODE::BoundInts}
\begin{equation*}
    \left\| \int_0^1 s^{|\beta|} R_{\beta,l}(A_\infty,A_\infty)(sx,sh)\: ds  \right\|_{X_{\omega,l}} \leq K_{n,m,l,\omega}.
\end{equation*}
\begin{equation*}
    \left\| \int_0^1 s^{|\beta|} R_{\beta,l}(C,A_\infty)(sx,sh)\: ds  \right\|_{X_{\omega,l}} \leq K_{n,m,l,\omega}.
\end{equation*}
\begin{equation*}
    \left\| \int_0^1 s^{|\beta|}  \left(\DiffOp{sh}^l \partial_x^{\beta} C\right)(sx)A_\infty(s(x+lh)) \: ds  \right\|_{X_{\omega,l}} \leq K_{n,m,l,\omega}.
\end{equation*}
\end{lemma}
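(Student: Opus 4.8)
The plan is to deduce all three inequalities from \cref{Lemma::ODE::BilinCont,Lemma::ODE::IntCont}, the inclusion/algebra properties in \cref{Lemma::ODE::BasicCmlProperties}, the a priori bound $\Norm{A_\infty}[C^0]\le\tfrac1{16}$ from \cref{Eqn::ODE::ExistMainProp::SupBounds}, and the inductive hypothesis, which supplies $\Norm{A_\infty}[C^{k,j,\omega}]\le K_{n,k,j,\omega}$ whenever $0\le k\le m$, $0\le j\le l$, and $(k,j)\ne(m,l)$. Throughout, $K_{n,m,l,\omega}$ denotes a constant depending only on $n$, $m$, $l$ and an upper bound for $\Norm{C}[C^{m,l,\omega}]$, which may change from line to line.

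For the first bound I would start from the fact that $R_{\beta,l}(A_\infty,A_\infty)(x,h)$ is a finite linear combination, with coefficients depending only on $\beta$ and $l$, of terms $\left(\tau_{j_1 h}\DiffOp{h}^{l_1}\partial_x^{\beta_1}A_\infty\right)(x)\left(\tau_{j_2 h}\DiffOp{h}^{l_2}\partial_x^{\beta_2}A_\infty\right)(x)$ with $\beta_1+\beta_2=\beta$, $l_1+l_2=l$, $0\le j_1\le l_2$, $0\le j_2\le l_1$, and $l_1+|\beta_1|,\,l_2+|\beta_2|>0$. Since $j_1+l_1\le l_1+l_2=l$ and $j_2+l_2\le l$, \cref{Lemma::ODE::BilinCont} bounds the $X_{\omega,l}$-norm of each such term by $\Norm{A_\infty}[C^{|\beta_1|,l_1,\omega}]\,\Norm{A_\infty}[C^{|\beta_2|,l_2,\omega}]$. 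The restriction $l_2+|\beta_2|>0$ forces $(|\beta_1|,l_1)\ne(m,l)$ and, symmetrically, $l_1+|\beta_1|>0$ forces $(|\beta_2|,l_2)\ne(m,l)$; combined with $|\beta_i|\le m$ and $l_i\le l$, the inductive hypothesis then bounds both factors by $K_{n,m,l,\omega}$. Hence $\Norm{R_{\beta,l}(A_\infty,A_\infty)}[X_{\omega,l}]\le K_{n,m,l,\omega}$. Writing $B(x,h):=R_{\beta,l}(A_\infty,A_\infty)(x,h)\in X_{\omega,l}$, the integral $\int_0^1 s^{|\beta|}B(sx,sh)\,ds$ has $X_{\omega,l}$-norm $\le\Norm{B}[X_{\omega,l}]$: this is \cref{Lemma::ODE::IntCont} when $|\beta|\ge1$, and when $|\beta|=0$ it follows directly from $\left|\int_0^1 B(sx,sh)\,ds\right|\le\int_0^1\omega(s|h|)^l\,ds\,\Norm{B}[X_{\omega,l}]\le\omega(|h|)^l\Norm{B}[X_{\omega,l}]$, using that $\omega$ is non-decreasing and $s\le1$. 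This gives the first estimate.

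The second estimate would be handled identically, with one copy of $A_\infty$ replaced by $C$: for a generic term $\left(\tau_{j_1 h}\DiffOp{h}^{l_1}\partial_x^{\beta_1}C\right)\left(\tau_{j_2 h}\DiffOp{h}^{l_2}\partial_x^{\beta_2}A_\infty\right)$ of $R_{\beta,l}(C,A_\infty)$, \cref{Lemma::ODE::BilinCont} bounds its $X_{\omega,l}$-norm by $\Norm{C}[C^{|\beta_1|,l_1,\omega}]\,\Norm{A_\infty}[C^{|\beta_2|,l_2,\omega}]$; the first factor is $\le\Norm{C}[C^{m,l,\omega}]$ by \cref{Lemma::ODE::BasicCmlProperties}, while $l_1+|\beta_1|>0$ again yields $(|\beta_2|,l_2)\ne(m,l)$ so that the inductive hypothesis controls the second factor, and one integrates exactly as before. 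For the third estimate I would set $G(x,h):=\left(\DiffOp{h}^l\partial_x^{\beta}C\right)(x)$, so that $\Norm{G}[X_{\omega,l}]\le\Norm{C}[C^{m,l,\omega}]$ directly from the definition of $\Norm{\cdot}[C^{m,l,\omega}]$, and put $\widetilde B(x,h):=G(x,h)\,A_\infty(x+lh)$ on $\Omega_l$; then $|\widetilde B(x,h)|\le\omega(|h|)^l\Norm{G}[X_{\omega,l}]\Norm{A_\infty}[C^0]$, so $\Norm{\widetilde B}[X_{\omega,l}]\le K_{n,m,l,\omega}$ by the bound $\Norm{A_\infty}[C^0]\le\tfrac1{16}$. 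Since $s^{|\beta|}\left(\DiffOp{sh}^l\partial_x^{\beta}C\right)(sx)\,A_\infty(s(x+lh))=s^{|\beta|}\widetilde B(sx,sh)$, the same integration step finishes the proof.

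None of these steps is deep; the only point that requires real attention is the bookkeeping that in every term of $R_{\beta,l}$ each factor has bidegree $(|\beta_i|,l_i)$ strictly below $(m,l)$ — which is precisely what the constraints $l_1+|\beta_1|,\,l_2+|\beta_2|>0$ guarantee, and what licenses invoking the inductive hypothesis — together with the trivial adjustment needed to handle $|\beta|=0$, since \cref{Lemma::ODE::IntCont} is stated only for exponents $\ge1$.
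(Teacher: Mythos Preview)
Your proof is correct and follows exactly the approach the paper uses: the paper's entire proof is the one-line ``This follows from the inductive hypothesis and \cref{Lemma::ODE::BilinCont,Lemma::ODE::IntCont},'' and you have spelled out precisely that argument. Your observation that \cref{Lemma::ODE::IntCont} is only stated for exponents $d\ge 1$, and your direct handling of the $|\beta|=0$ case, is a nice bit of care that the paper's proof sketch glosses over.
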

\begin{proof}
This follows from the inductive hypothesis and \cref{Lemma::ODE::BilinCont,Lemma::ODE::IntCont}.
\end{proof}

For $a\in \N\cup\{\infty\}$, $B\in X_{\omega,l}$, define
\begin{equation*}
\begin{split}
    \sQ_a(B)(x,h) = \int_0^1 -s^{|\beta|}
    \bigg[&
     B(sx,sh) A_a(s(x+lh)) +A_a(sx) B(sx,sh)+C(sx)B(sx,sh)
     \\&+ \left(\DiffOp{sh}^l \partial_x^{\beta} C\right)(sx) A_a(s(x+lh))
     +\left(\DiffOp{sh}^l \partial_x^\beta C\right)(sx)
     \\&+R_{\beta,l}(A_a, A_a)(sx, sh) + R_{\beta,l}(C,A_a)(sx,sh)
    \bigg]\:ds
\end{split}
\end{equation*}

\begin{lemma}\label{Lemma::ODE::QContractions}
For $a\in \N\cup\{\infty\}$, $\sQ_a:X_{\omega,l}\rightarrow X_{\omega,l}$
and satisfies
\begin{equation}\label{Eqn::ODE::sQaContraction}
    \Norm{\sQ_a(B)-\sQ_a(B')}[X_{\omega,l}]\leq \frac{1}{8} \Norm{B-B'}[X_{\omega,l}].
\end{equation}
Furthermore, $\forall B\in X_{\omega,l}$, $\lim_{a\rightarrow \infty} \sQ_a(B)=\sQ_{\infty}(B)$.
Finally, $\Norm{\sQ_{\infty}(0)}[X_{\omega,l}]\leq K_{n,m,l,\omega}$.
\end{lemma}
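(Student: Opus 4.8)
The plan is to exploit that each $\sQ_a$ is an affine function of $B$. Writing $\sQ_a(B)=\sQ_a(0)+L_a(B)$, the linear part $L_a(B):=\sQ_a(B)-\sQ_a(0)$ collects exactly the three summands in the definition of $\sQ_a$ that are linear in $B$ — the integrals of $-s^{|\beta|}B(sx,sh)A_a(s(x+lh))$, of $-s^{|\beta|}A_a(sx)B(sx,sh)$, and of $-s^{|\beta|}C(sx)B(sx,sh)$ — while $\sQ_a(0)$ collects the remaining, $B$-free summands. I would then establish, in order: the contraction bound, that $\sQ_a$ maps $X_{\omega,l}$ into itself, the convergence $\sQ_a(B)\to\sQ_\infty(B)$, and the bound on $\sQ_\infty(0)$. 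For the contraction bound, since $\sQ_a(B)-\sQ_a(B')=L_a(B-B')$ it suffices to estimate $L_a$. For $(x,h)\in\Omega_l$ and $s\in(0,1]$ one has $(sx,sh)\in\Omega_l$, so the monotonicity of $\omega$ gives $\omega(|h|)^{-l}\,|(B-B')(sx,sh)|\le\omega(s|h|)^{-l}\,|(B-B')(sx,sh)|\le\Norm{B-B'}[X_{\omega,l}]$. The decisive point is that the coefficients are $O(s)$: since $\eta\le(10D)^{-1}$, \cref{Eqn::ODE::BoundsForAa} gives $|A_a(sx)|,|A_a(s(x+lh))|\le \frac58 Ds\eta\le \frac{s}{16}$ for every $a\in\N\cup\{\infty\}$ (using $(x,h)\in\Omega_l\Rightarrow x+lh\in B^n(\eta)$), and $|C(sx)|\le Ds|x|\le \frac{s}{10}$. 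Hence the three terms of $L_a(B-B')$ are bounded in $X_{\omega,l}$ by $\tfrac1{16},\tfrac1{16},\tfrac1{10}$ times $\int_0^1 s^{|\beta|+1}\,ds\,\Norm{B-B'}[X_{\omega,l}]$, and since $|\beta|+2\ge 2$,
\begin{equation*}
\Norm{\sQ_a(B)-\sQ_a(B')}[X_{\omega,l}]\le\Big(\tfrac{1}{32}+\tfrac{1}{32}+\tfrac{1}{20}\Big)\Norm{B-B'}[X_{\omega,l}]=\tfrac{9}{80}\Norm{B-B'}[X_{\omega,l}]\le\tfrac18\Norm{B-B'}[X_{\omega,l}].
\end{equation*}
It is essential to keep the extra factor $s$ from the size bounds on $A_a$ and $C$; the cruder bound $|A_a|\le\tfrac1{10}$ alone would only yield the constant $\tfrac{3}{10}>\tfrac18$.

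That $\sQ_a$ maps $X_{\omega,l}$ into itself follows by combining this linear estimate (with $B'=0$) with the finiteness of $\Norm{\sQ_a(0)}[X_{\omega,l}]$, which is obtained by expanding each $B$-free summand through \cref{Lemma::ODE::BilinCont,Lemma::ODE::IntCont} and recalling that $A_a=\sT^a(0)\in C^{m,l,\omega}$ by \cref{Lemma::ODE::sTPreserves}. For the convergence $\sQ_a(B)\to\sQ_\infty(B)$, observe that in $\sQ_a(B)-\sQ_\infty(B)$ the only $a$-dependence enters through the factor $A_a$ (the summands $C(sx)B(sx,sh)$ and $(\DiffOp{sh}^l\partial_x^\beta C)(sx)$ carry no $a$ and cancel), so each surviving difference is precisely one of the limits recorded in \cref{Lemma::ODE::ConvergeInts}: the two $B$-linear differences are covered by \cref{Eqn::ODE::Limit3,Eqn::ODE::Limit4}, the $(\DiffOp{sh}^l\partial_x^\beta C)(sx)A_a(s(x+lh))$ difference by \cref{Eqn::ODE::Limit5}, and the $R_{\beta,l}(A_a,A_a)$ and $R_{\beta,l}(C,A_a)$ differences by \cref{Eqn::ODE::Limit1,Eqn::ODE::Limit2}; all of these use the inductive hypothesis that $A_a\to A_\infty$ in $C^{k,j,\omega}$ for $0\le k\le m$, $0\le j\le l$, $(k,j)\ne(m,l)$, and in $C^0$. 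Finally, $\sQ_\infty(0)$ is the sum of exactly the three quantities bounded in \cref{Lemma::ODE::BoundInts} together with $\int_0^1 -s^{|\beta|}(\DiffOp{sh}^l\partial_x^\beta C)(sx)\,ds$, whose $X_{\omega,l}$-norm is at most $\Norm{C}[C^{m,l,\omega}]$, so summing gives $\Norm{\sQ_\infty(0)}[X_{\omega,l}]\le K_{n,m,l,\omega}$.

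I expect the main obstacle to be not the contraction estimate — which is robust — but the bookkeeping that makes \cref{Lemma::ODE::BoundInts} applicable: one must check that every $B$-free summand of $\sQ_\infty$, in particular each bilinear piece of $R_{\beta,l}(A_\infty,A_\infty)$ and $R_{\beta,l}(C,A_\infty)$ and the term built from $\DiffOp{sh}^l\partial_x^\beta C$, involves $A_\infty$ only through derivatives and finite differences of order $(k,j)$ strictly below $(m,l)$, so that the already-established inductive bounds $\Norm{A_\infty}[C^{k,j,\omega}]\le K_{n,k,j,\omega}$ are available. This is forced by the constraint $l_1+|\beta_1|,\,l_2+|\beta_2|>0$ built into the definition of $R_{\beta,l}$, but it must be verified combination by combination, and it is also the place where one keeps track of the constants so that the final bound depends only on $n$, $m$, $l$, and $\Norm{C}[C^{m,l,\omega}]$.
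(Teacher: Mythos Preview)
Your proof is correct and follows essentially the same approach as the paper: the contraction estimate is carried out identically (using \cref{Eqn::ODE::BoundsForAa} and $|C(sx)|\le Ds|x|$ to extract an extra factor of $s$, yielding the same constant $\tfrac{9}{80}\le\tfrac18$), and the mapping property, convergence, and bound on $\sQ_\infty(0)$ are all obtained from \cref{Lemma::ODE::BilinCont,Lemma::ODE::IntCont,Lemma::ODE::ConvergeInts,Lemma::ODE::BoundInts} exactly as in the paper. Your exposition is more detailed (the explicit affine decomposition $\sQ_a(B)=\sQ_a(0)+L_a(B)$ and the separate accounting of the $C$-only term in $\sQ_\infty(0)$), but the argument is the same.
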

\begin{proof}
That $\sQ_a:X_{\omega,l}\rightarrow X_{\omega,l}$ follows from \cref{Lemma::ODE::BilinCont,Lemma::ODE::IntCont}, the inductive hypothesis,
and the fact that $A_a\in C^0(B^n(\eta))$, $\forall a\in \N\cup\{\infty\}$.

That $\lim_{a\rightarrow \infty} \sQ_a(B)=\sQ_\infty(B)$ follows from
\cref{Lemma::ODE::ConvergeInts} and $\Norm{\sQ_{\infty}(0)}[X_{\omega,l}]\leq K_{n,m,l,\omega}$
follows from \cref{Lemma::ODE::BoundInts,Lemma::ODE::IntCont}.

Thus we need only show \cref{Eqn::ODE::sQaContraction}.
We have, using \cref{Eqn::ODE::BoundsForAa}, for $(x,h)\in \Omega_l$, $a\in \N\cup\{\infty\}$,
\begin{equation*}
\begin{split}
    &|\sQ_a(B)(x,h)-\sQ_a(B')(x,h)|
    \leq \int_0^1 s^{|\beta|} ( |A_a(s(x+lh)| + |A_a(sx)| + |C(sx)|) |B(sx,sh)-B'(sx,sh)|\: ds
    \\& \leq \int_0^1 \left(\frac{5}{8} Ds |x+lh| + \frac{5}{8} D s|x|+ Ds|x|\right) \omega(s|h|)^{l} \Norm{B-B'}[X_{\omega,l}]\: ds
    \leq \Norm{B-B'}[X_{\omega,l}] \int_0^1 \frac{9}{4} D s \eta \omega(|h|)^l\: ds
    \\&\leq \frac{1}{8} \omega(|h|)^l \Norm{B-B'}[X_{\omega,l}],
\end{split}
\end{equation*}
completing the proof of \cref{Eqn::ODE::sQaContraction}, and therefore the proof of the lemma.
\end{proof}

For $a\in \N$, define $B_a(x,h):=\DiffOp{h}^l \partial_x^{\beta}A_a(x)$; note that
$B_a\in X_{\omega,l}$ since $A_a\in C^{m,\omega,l}$.
Also,
$B_{a+1}(x,h) = \DiffOp{h}^l\partial_x^{\beta} \sT(A_a)(x)= \sQ_a(B_a)(x,h)$.

Since $\sQ_\infty$ is a strict contraction (\cref{Lemma::ODE::QContractions}), there exists a unique fixed point
$B_\infty\in X_{\omega,l}$.  Since $\sQ_a(B_\infty)\rightarrow \sQ_\infty(B_\infty)=B_\infty$,
by \cref{Lemma::ODE::QContractions}, \cref{Lemma::ODE::Izzo} shows
$B_a\rightarrow B_\infty$ in $X_{\omega,l}$.  Since $B_a(x,h)=\DiffOp{h}^l \partial_x^{\beta}A_a(x)$,
this proves \cref{Item::ODE::AaConverge}.

Finally, to prove \cref{Item::ODE::BinftyBound} note that $B_\infty$ is the fixed point
of the strict contraction $\sQ_\infty$.  Thus, $\sQ_\infty^a(0)\rightarrow B_{\infty}$.
Hence,
\begin{equation*}
    \Norm{B_{\infty}}[X_{\omega,l}] \leq \sum_{a=0}^\infty \Norm{\sQ_\infty^{a+1}(0)-\sQ_\infty^{a}(0)}[X_{\omega,l}]
    \leq \sum_{a=0}^\infty 8^{-a} \Norm{\sQ_\infty(0)-0}[X_{\omega,l}]
    =\frac{8}{7} \Norm{\sQ_{\infty}(0)}[X_{\omega,l}]\leq K_{n,m,l,\omega},
\end{equation*}
where the last inequality follows from \cref{Lemma::ODE::QContractions}.
This completes the proof.

    \subsection{An Inverse Function Theorem}\label{Section::Proofs::IFT}

We require a quantitative version of a special case of the Inverse Function Theorem that does
not follow from the standard statement of the theorem, though we will be able to achieve it
by keeping track of some constants in a standard proof.  We present it here.

Fix $\eta>0$ and let $Y_1,\ldots, Y_n\in C^1(B^n(\eta);\R^n)$
be vector fields on $B^{n}(\eta)$ and suppose they satisfy
$$\inf_{u\in B^n(\eta)} \left|\det \left(Y_1(u)|\cdots|Y_n(u)\right)\right|\geq c_0>0.$$
Take $C_0>0$ so that $\Norm{Y_j}[C^1(B^n(\eta);\R^n)]\leq C_0$, $\forall j$.
Define
\begin{equation*}
\Psi_u(v):= e^{v_1 Y_1+\dots + v_n Y_n} u.
\end{equation*}

\begin{prop}\label{Prop::AppIFT::MainProp}
There exist $\kappa=\kappa(C_0,c_0,n)>0$ and $\Delta_0=\Delta_0(C_0,c_0,n,\eta)>0$
such that $\forall \delta\in (0,\Delta_0]$, $u\in B^{n}(\kappa\delta)$,
$v\mapsto \Psi_u(v)$ is defined and injective on $v\in B^n(\delta)$.  Furthermore,
$B^n(\kappa\delta)\subseteq \Psi_u(B^n(\delta))$.
\end{prop}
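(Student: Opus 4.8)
The plan is to run the standard Banach fixed-point proof of the inverse function theorem, but keeping explicit track of how the constants depend on $C_0$, $c_0$, $n$, and $\eta$, so as to extract the two claims: injectivity of $\Psi_u$ on $B^n(\delta)$ for $u$ close to $0$, and the containment $B^n(\kappa\delta)\subseteq \Psi_u(B^n(\delta))$. First I would record the basic facts about $\Psi_u(v)=e^{v_1Y_1+\cdots+v_nY_n}u$: since the $Y_j$ are $C^1$ with $C^1$-norm $\leq C_0$, for $\delta$ small (depending on $C_0,\eta$) the flow exists, $\Psi_u$ is $C^1$ in $(u,v)$, $\Psi_u(0)=u$, and $\partial_v\Psi_u(v)\big|_{v=0}$ is the matrix $\big(Y_1(u)\,|\,\cdots\,|\,Y_n(u)\big)$, whose determinant is $\geq c_0$ in absolute value. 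Moreover, by integrating the ODE defining the flow and using $\|Y_j\|_{C^1}\leq C_0$, one gets Lipschitz-type bounds $|\Psi_u(v)-u - v\cdot Y(u)| \lesssim_{C_0} \delta|v|$ and more generally control on $d_v\Psi_u(v) - d_v\Psi_u(0)$ that is $O_{C_0}(\delta)$, uniformly for $u\in B^n(\eta/2)$, say. The key quantitative estimate is that $d_v\Psi_u(v)$ stays within a small (controlled by $\delta$) perturbation of the fixed invertible matrix $M(u):=\big(Y_1(u)|\cdots|Y_n(u)\big)$, which has $\|M(u)\|\leq \sqrt{n}\,C_0$ and $\|M(u)^{-1}\|\lesssim_{c_0,C_0,n}1$ by Cramer's rule.

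Next I would prove injectivity. For $v,v'\in B^n(\delta)$ write $\Psi_u(v)-\Psi_u(v') = \int_0^1 d_v\Psi_u(v'+t(v-v'))\,(v-v')\,dt = M(u)(v-v') + E$ where $\|E\|\lesssim_{C_0}\delta|v-v'|$; choosing $\Delta_0$ small enough (depending on $C_0,c_0,n$) that this error is $\leq \tfrac12\|M(u)^{-1}\|^{-1}|v-v'|$, we get $|\Psi_u(v)-\Psi_u(v')|\geq \tfrac12\|M(u)^{-1}\|^{-1}|v-v'|$, hence injectivity on $B^n(\delta)$ whenever $\delta\leq\Delta_0$. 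For surjectivity onto $B^n(\kappa\delta)$, I would fix $w$ with $|w|<\kappa\delta$ and solve $\Psi_u(v)=w$ by the contraction $v\mapsto T(v):=v - M(u)^{-1}(\Psi_u(v)-w)$; using the bound on $d_v\Psi_u(v)-M(u)$ one checks $T$ is a contraction with factor $\leq\tfrac12$ on $B^n(\delta)$ provided $\Delta_0$ is small enough (again depending only on $C_0,c_0,n$), and $|T(0)| = |M(u)^{-1}(u-w)| \leq \|M(u)^{-1}\|(|u|+|w|)$. Requiring $u\in B^n(\kappa\delta)$ and $|w|<\kappa\delta$, this is $\leq 2\kappa\delta\|M(u)^{-1}\|$, so choosing $\kappa = \kappa(C_0,c_0,n)$ small enough that $2\kappa\|M(u)^{-1}\|\leq \tfrac12$ makes $T$ map $\overline{B^n(\delta/2)}$ (hence $B^n(\delta)$) into itself; the fixed point $v\in B^n(\delta)$ satisfies $\Psi_u(v)=w$. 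That yields $B^n(\kappa\delta)\subseteq\Psi_u(B^n(\delta))$.

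The one genuine technical point, rather than an obstacle, is that $Y_1,\ldots,Y_n$ are only assumed $C^1$, so $\Psi_u$ is only $C^1$ and I must be careful that all the estimates above use only first derivatives — in particular the bound $\|d_v\Psi_u(v) - M(u)\|\lesssim_{C_0}\delta$ must be derived directly from the ODE (differentiating the flow equation in $v$, which gives a linear ODE for $d_v\Psi_u$ whose coefficients are $dY_j$, bounded by $C_0$, and Gr\"onwall), not from any smoothness of $\Psi$ in $(u,v)$ jointly beyond continuity. Everything else is the textbook argument with constants tracked; the main care is simply bookkeeping to confirm that $\kappa$ depends only on $(C_0,c_0,n)$ while $\Delta_0$ is additionally allowed to depend on $\eta$ (needed only to guarantee the flow stays inside $B^n(\eta)$ for times up to $\delta$ and starting points up to $\kappa\delta$).
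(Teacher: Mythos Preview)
Your proposal is correct and follows essentially the same strategy as the paper: both arguments run the Banach fixed-point proof of the inverse function theorem with constants tracked, the key input being the Lipschitz bound $\|d_v\Psi_u(v)-d_v\Psi_u(0)\|\leq C(C_0,n)|v|$ obtained by differentiating the flow equation and applying Gr\"onwall. The paper organizes the argument by first isolating an abstract quantitative inverse function lemma (for any $F$ with $\|dF(0)^{-1}dF(x)-I\|\leq\tfrac12$) and a separate lemma establishing the Lipschitz bound on $d_v\Psi_u$, then combining them, whereas you carry out the contraction directly; the substance is identical, including your correct observation that $\eta$ enters only through flow existence while $\kappa$ depends only on $(C_0,c_0,n)$.
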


The rest of this section is devoted to the proof of \cref{Prop::AppIFT::MainProp}; for a closely related result
see \cite[Theorem 4.5]{MontanariMorbidelliStepSInvolutiveFamiliesOfVectorFields}.

\begin{lemma}\label{Lemma::AppIFT::FLemma}
Let $\delta_0>0$, $F\in C^1(B^n(\delta_0); \R^n)$, and suppose $dF(0)$ is nonsingular
and $\sup_{x\in B^n(\delta_0)}\Norm{dF(0)^{-1}dF(x)-I}[\M^{n\times n}] \leq \frac{1}{2}$.
Then $F(B^n(\delta_0))\subseteq \R^n$ is open and $F:B^n(\delta_0)\rightarrow F(B^n(\delta_0))$
is a $C^1$ diffeomorphism.
Furthermore,
$F(B^n(\delta_0))\supseteq B^n(F(0), \kappa\delta_0)$ where
\begin{equation}\label{Eqn::AppIFT::Boundkappa}
    \kappa:= \CjNorm{d (F^{-1})}{0}[F(B^n(\delta_0))][\M^{n\times n}]^{-1}\geq c_n |\det dF(0)| \CjNorm{F}{1}[B^n(\delta_0)][\R^n]^{-(n-1)},
\end{equation}
and $c_n>0$ can be chosen to depend only on $n$.
\end{lemma}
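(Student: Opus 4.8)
The plan is to run the classical proof of the inverse function theorem while keeping explicit track of the constants, and in the right order: first promote $F$ to a global $C^1$ diffeomorphism, then bound the \emph{global} quantity $\|d(F^{-1})\|_{C^0}$, and only then prove the sharp ball containment. Write $y_0 = F(0)$, and set $G(x) := dF(0)^{-1}(F(x) - y_0)$, so that $G \in C^1(B^n(\delta_0);\R^n)$, $G(0) = 0$, $dG(0) = I$, and the hypothesis becomes $\sup_{x \in B^n(\delta_0)} \|dG(x) - I\| \le \tfrac12$. From the identity $G(x) - G(y) = (x-y) + \int_0^1\bigl(dG(y+t(x-y)) - I\bigr)(x-y)\,dt$ one gets $|G(x)-G(y)-(x-y)| \le \tfrac12|x-y|$, hence $|G(x)-G(y)| \ge \tfrac12|x-y|$; so $G$, and therefore $F = y_0 + dF(0)\circ G$, is injective. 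Since $\|dG(x)-I\| \le \tfrac12 < 1$, each $dG(x)$ is invertible, so by the qualitative inverse function theorem $G$ is an open map and a local $C^1$ diffeomorphism; being injective, it is a $C^1$ diffeomorphism of $B^n(\delta_0)$ onto the open set $G(B^n(\delta_0))$. As $dF(0)$ is an invertible linear map, $F(B^n(\delta_0))$ is open, $F$ is a $C^1$ diffeomorphism onto it, and $dF^{-1}(y) = \bigl(dF(F^{-1}(y))\bigr)^{-1}$.

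Next I would bound $\|d(F^{-1})\|_{C^0(F(B^n(\delta_0)))}$, which in particular shows it is finite. For $y \in F(B^n(\delta_0))$ set $x = F^{-1}(y)$; then $dF(x) = dF(0)\,dG(x)$, and since $\|dG(x)-I\| \le \tfrac12$ the singular values of $dG(x)$ lie in $[\tfrac12,\tfrac32]$, so $|\det dF(x)| \ge 2^{-n}|\det dF(0)|$, while $\|dF(x)\| \le \|F\|_{C^1(B^n(\delta_0);\R^n)}$. Applying the elementary inequality $\|M^{-1}\| \le c_n\|M\|^{n-1}/|\det M|$ (from $M^{-1} = (\det M)^{-1}\operatorname{adj}M$ together with Hadamard's bound on the cofactors) gives $\|dF^{-1}(y)\| \le c_n 2^n \|F\|_{C^1(B^n(\delta_0);\R^n)}^{n-1}/|\det dF(0)|$, uniformly in $y$. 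Hence $\kappa = \|d(F^{-1})\|_{C^0}^{-1}$ is positive and satisfies the asserted bound \cref{Eqn::AppIFT::Boundkappa} after renaming the dimensional constant.

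It remains to show $F(B^n(\delta_0)) \supseteq B^n(y_0,\kappa\delta_0)$, which I would prove by a continuity (connectedness) argument. Put $L := 1/\kappa$, fix $y$ with $\ell := L|y-y_0| < \delta_0$, and join $y_0$ to $y$ by the segment $y_t = y_0 + t(y-y_0)$, $t \in [0,1]$. Let $\tau$ be the supremum of those $t \in [0,1]$ for which $y_s \in F(B^n(\delta_0))$ for all $s \in [0,t)$; since $F(B^n(\delta_0))$ is open and contains $y_0$, $\tau > 0$. On $[0,\tau)$ the curve $x_s := F^{-1}(y_s)$ is $C^1$ with $|\dot x_s| = |dF^{-1}(y_s)(y-y_0)| \le \ell$, hence $\ell$-Lipschitz with $|x_s| \le s\ell \le \ell < \delta_0$; it therefore extends continuously to $s = \tau$ with $x_\tau \in B^n(\delta_0)$, and $F(x_\tau) = y_\tau$ by continuity of $F$. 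If $\tau < 1$, openness of $F(B^n(\delta_0))$ around $F(x_\tau) = y_\tau$ allows the segment to be continued past $\tau$ inside $F(B^n(\delta_0))$, contradicting maximality; so $\tau = 1$, whence $y = y_1 = F(x_1)$ with $x_1 \in B^n(\delta_0)$. This yields the inclusion.

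I expect the main obstacle to be nothing deep but rather this bookkeeping: the ball-containment step must be run with the precise constant $\kappa = 1/\|d(F^{-1})\|_{C^0}$ coming from the already-established global inverse (not the more readily available local constant near $y_0$), so the logical order — diffeomorphism, then global derivative bound, then the connectedness argument — is what makes the stated sharp form of \cref{Eqn::AppIFT::Boundkappa} come out; the matrix inequalities and the Lipschitz estimates along the way are routine.
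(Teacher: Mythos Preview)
Your proof is correct and matches the paper's approach for injectivity (the paper phrases the same estimate via the map $\phi(x)=x+dF(0)^{-1}(y-F(x))$ having $\|d\phi\|\le\tfrac12$), for the diffeomorphism conclusion, and for the cofactor bound on $\kappa$. The only difference is the final ball-containment step: instead of your path-lifting argument, the paper lets $\epsilon$ be the largest radius with $B^n(F(0),\epsilon)\subseteq F(B^n(\delta_0))$, applies the mean value theorem to $F^{-1}$ to get $F^{-1}(B^n(F(0),\epsilon))\subseteq B^n(0,\epsilon/\kappa)$, and notes that $\epsilon<\kappa\delta_0$ would make this relatively compact in $B^n(\delta_0)$, contradicting maximality of $\epsilon$; both arguments are standard and of comparable length.
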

\begin{proof}
We first show $F$ is injective.  Fix $y\in \R^n$
and set $\phi(x)=x+dF(0)^{-1}(y-F(x))$.  Note that $F(x)=y \Leftrightarrow \phi(x)=x$.
Also, $\forall x\in B^n(\delta_0)$,  $\Norm{d\phi(x)}[\M^{n\times n}]\leq \Norm{ I-dF(0)^{-1} dF(x)}[\M^{n\times n}]\leq \frac{1}{2}$.
Hence $|\phi(x_1)-\phi(x_2)|\leq \frac{1}{2}|x_1-x_2|$.  Hence, there is at most one solution
of $\phi(x)=x$, and therefore at most one solution of $F(x)=y$, proving that $F$ is injective.

Since $\Norm{dF(0)^{-1}dF(x)-I}[\M^{n\times n}] \leq \frac{1}{2}$, $\forall x\in B^n(\delta_0)$, it follows that $dF(x)$ is invertible $\forall x\in B^n(\delta_0)$.
Combining this with the fact that $F$ is injective,
the Inverse Function Theorem shows $F(B^n(\delta_0))$ is open and $F:B^n(\delta_0)\rightarrow F(B^n(\delta_0))$ is a $C^1$ diffeomorphism.

Next we prove the bound for $\kappa$ given in \cref{Eqn::AppIFT::Boundkappa}.
In what follows, we use $A\lesssim B$ to denote $A\leq C_n B$, where $C_n$ can be chosen to depend only on $n$.
Since $\|dF(0)^{-1} dF(x)- I\|\leq \frac{1}{2}$, by assumption,
\begin{equation}\label{Eqn::AppIFT::BoundF1}
    \inf_{x\in B^n(\eta)} |\det dF(x)| \gtrsim |\det dF(0)|.
\end{equation}
Also, $\forall x\in B^n(\delta_0)$,
\begin{equation*}
    \Norm{ (dF(x))^{-1}}[\M^{n\times n}] \lesssim |\det dF(x)|^{-1} \CjNorm{dF}{0}[B^n(\delta_0)][\M^{n\times n}]^{n-1},
\end{equation*}
as can be seen via the cofactor representation $dF(x)^{-1}$.
Hence,
$$\sup_{x\in B^n(\delta_0)}\Norm{ (dF(x))^{-1}}[\M^{n\times n}] \lesssim \left(\inf_{y\in B^n(\delta_0)} |\det dF(y)|\right)^{-1} \CjNorm{dF}{0}[B^n(\delta_0)][\M^{n\times n}]^{n-1},$$
and therefore
\begin{equation}\label{Eqn::AppIFT::BoundF2}
\begin{split}
&\CjNorm{d(F^{-1})}{0}[F(B^n(\delta_0))][\M^{n\times n}] \lesssim \left( \inf_{x\in B^n(\delta_0)} |\det dF(x)|\right)^{-1} \CjNorm{dF}{0}[B^n(\delta_0)][\M^{n\times n}]^{n-1}
\\& \lesssim \left( \inf_{x\in B^n(\delta_0)} |\det dF(x)|\right)^{-1} \CjNorm{F}{1}[B^n(\delta_0)][\R^n]^{n-1}.
\end{split}
\end{equation}
Combining \cref{Eqn::AppIFT::BoundF1,Eqn::AppIFT::BoundF2} yields \cref{Eqn::AppIFT::Boundkappa}.

Finally, we prove $F(B^n(\delta_0))\supseteq B(F(0),\kappa\delta_0)$.
Take $\epsilon>0$ to be the largest $\epsilon$ so that $B^n(F(0),\epsilon)\subseteq F(B^n(\delta_0))$
(note that $\epsilon>0$ by the Inverse Function Theorem).
The proof will be complete once we show $\epsilon\geq \delta_0\kappa$.  Suppose, for contradiction, $\epsilon<\delta_0\kappa$.
We have, by the Mean Value Theorem,
\begin{equation*}
    F^{-1}(B(F(0),\epsilon))\subseteq B(0, \epsilon\CjNorm{dF^{-1}}{0}[F(B^n(\delta_0))][\M^{n\times n}] ).
\end{equation*}
Thus, if $\epsilon<\kappa\delta_0$, $F^{-1}(B(F(0),\epsilon))\Subset B(0,\delta_0)$,
which contradicts the choice of $\epsilon$ and completes the proof.
\end{proof}

\begin{lemma}\label{Lemma::AppIFT::LipDerivs}
Let $Y_j$, $C_0$, $n$, $\eta$, and $\Psi$ be as in \cref{Prop::AppIFT::MainProp}.
There exists $\delta_1=\delta_1(C_0,n,\eta)>0$ such that
$\forall u\in B^n(\eta/2)$, $\Psi_u$ is defined on $B^n(\delta_1)$
and satisfies
\begin{equation}\label{Eqn::AppIFT::C1Bound::Psi}
\Norm{\Psi_u}[C^1(B^n(\delta_1);\R^n)]\leq C(C_0,n)
\end{equation}
and $\forall u\in B^n(\eta/2), v\in B^n(\delta_1)$,
\begin{equation}\label{Eqn::AppIFT::PsiLip}
    \BNorm{ d_v\Psi_u(v) - d_v\Psi_u(0)}[\M^{n\times n}]\leq C(C_0,n)|v|,
\end{equation}
where $C(C_0,n)$ can be chosen to depend only on $C_0$ and $n$.
\end{lemma}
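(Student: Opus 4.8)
The plan is to view $\Psi_u(v)$ as the time‑one flow of the vector field $W(v,\cdot):=\sum_{j=1}^n v_jY_j$, which depends linearly on $v$ and is $C^1$ jointly in $(v,x)$ since each $Y_j\in C^1$, and then to extract every bound from Gr\"onwall's inequality applied to the flow ODE and to its variational equation in $v$. The only structural facts about the $Y_j$ that I will use on $B^n(\eta)$ are the elementary bounds $|W(v,x)|\le\sqrt n\,C_0|v|$, $\BNorm{\partial_x W(v,x)}[\M^{n\times n}]\le\sqrt n\,C_0|v|$, and $\BNorm{\partial_v W(v,x)}[\M^{n\times n}]\le\sqrt n\,C_0$ (the Jacobian $\partial_v W(v,x)$ is the matrix with columns $Y_1(x),\dots,Y_n(x)$, each of length $\le C_0$). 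Since these are the only properties of the $Y_j$ entering once the flow is known to stay inside $B^n(\eta)$, the resulting constant will depend only on $C_0$ and $n$, while $\eta$ will enter only through the choice of $\delta_1$.

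First I would fix $\delta_1:=\min\{1,\ \eta/(4\sqrt n\,C_0)\}$ and show that $\Psi_u$ is defined on $B^n(\delta_1)$ for every $u\in B^n(\eta/2)$: for such $u,v$ the standard existence and uniqueness theorem for ODEs (applicable because the $Y_j$, hence $W(v,\cdot)$, are locally Lipschitz) produces a unique $C^1$ solution $E=E_v(\cdot\,;u)$ of $E'(r)=W(v,E(r))$, $E(0)=u$, on a maximal interval on which $E$ stays in $B^n(\eta)$, and the a priori bound $|E(r)-u|\le\int_0^r|W(v,E(s))|\,ds\le\sqrt n\,C_0|v|r\le\eta/4$ together with the usual continuation argument forces that interval to contain $[0,1]$. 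Thus $\Psi_u(v):=E_v(1;u)$ is well defined and $|\Psi_u(v)-u|\le\sqrt n\,C_0\delta_1\le1$, which handles the $C^0$ part of \cref{Eqn::AppIFT::C1Bound::Psi}. By $C^1$ dependence of ODE solutions on parameters, $v\mapsto\Psi_u(v)$ is $C^1$, and $M(r):=\partial_v E_v(r;u)$ solves
\begin{equation*}
 M'(r)=\partial_x W(v,E(r))\,M(r)+\partial_v W(v,E(r)),\qquad M(0)=0 .
\end{equation*}
Gr\"onwall's inequality, using the bounds $\le\sqrt n\,C_0$ for both coefficient matrices on $[0,1]$, gives $\BNorm{M(r)}[\M^{n\times n}]\le\sqrt n\,C_0\,e^{\sqrt n\,C_0}=:C_1(C_0,n)$, hence $\BNorm{d_v\Psi_u(v)}[\M^{n\times n}]=\BNorm{M(1)}[\M^{n\times n}]\le C_1$; combined with the $C^0$ bound this proves \cref{Eqn::AppIFT::C1Bound::Psi}.

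For \cref{Eqn::AppIFT::PsiLip} the key point is that at $v=0$ one has $W(0,\cdot)\equiv0$, so $E_0(r;u)\equiv u$ and the variational equation integrates explicitly to $d_v\Psi_u(0)=[Y_1(u)|\cdots|Y_n(u)]$. Writing the integrated variational equation for general $v$ and subtracting this identity, I would obtain
\begin{equation*}
 d_v\Psi_u(v)-d_v\Psi_u(0)=\int_0^1\partial_x W(v,E(r))\,M(r)\,dr+\int_0^1\bigl(\,[Y_1(E(r))|\cdots|Y_n(E(r))]-[Y_1(u)|\cdots|Y_n(u)]\,\bigr)\,dr .
\end{equation*}
The first integral is $\le\sqrt n\,C_0|v|\cdot C_1$; in the second, $|E(r)-u|\le\sqrt n\,C_0|v|r$ and $\Norm{Y_j}[C^1(B^n(\eta);\R^n)]\le C_0$ give $\Norm{Y_j(E(r))-Y_j(u)}\le\sqrt n\,C_0^2|v|$ for each column, hence an operator‑norm bound $n\,C_0^2|v|$ for the matrix difference; adding these yields \cref{Eqn::AppIFT::PsiLip} with $C(C_0,n)=\sqrt n\,C_0C_1+n\,C_0^2$. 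The only genuinely delicate points are the continuation argument that keeps the flow confined to $B^n(\eta)$ up to time $r=1$ — the single place where $\delta_1$ must be allowed to depend on $\eta$ — and the bookkeeping ensuring that, once this confinement is in hand, every remaining estimate draws only on $\Norm{Y_j}[C^1(B^n(\eta);\R^n)]\le C_0$ and so produces constants depending on $C_0$ and $n$ alone; neither presents a real obstacle.
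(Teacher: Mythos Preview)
Your proof is correct and follows essentially the same strategy as the paper. Both arguments recognize that $\Psi_u(v)=E_v(1;u)$ where $E_v'(r)=v\cdot Y(E_v(r))$, identify $d_v\Psi_u(0)$ with the matrix $[Y_1(u)|\cdots|Y_n(u)]$, and then bound $d_v\Psi_u(v)-d_v\Psi_u(0)$ using only the $C^1$ control on the $Y_j$; the paper differentiates the integral identity $\Psi_u(v)-(d_v\Psi_u(0))v=\int_0^1 v\cdot(Y(\Psi_u(sv))-Y(u))\,ds$ directly in $v$, while you instead integrate the variational equation $M'=\partial_xW\,M+\partial_vW$ and subtract, which amounts to the same computation organized slightly differently.
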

\begin{proof}
The existence of $\delta_1>0$ so that $\forall u\in B^n(\eta/2)$,  $\Psi_u(v)$ is defined and
\cref{Eqn::AppIFT::C1Bound::Psi} holds are classical theorems from ODEs.
Thus, we prove only \cref{Eqn::AppIFT::PsiLip}.  We write $A\lesssim B$ for $A\leq C B$
where $C$ can be chosen to depend only on $C_0$ and $n$.
We use the equation $\partial_r \Psi_u(rv)= v\cdot Y(\Psi_u(rv))$, and so
\begin{equation*}
    \Psi_u(v) = \int_0^1 v\cdot Y(\Psi_u(sv))\: ds.
\end{equation*}
Since $d_v \Psi_u(0)= (Y_1(u)|\cdots|Y_n(u))$, we have $\forall u\in B^n(\eta/2), v\in B^n(\delta_1)$
\begin{equation*}
    \Psi_u(v)- (d_v\Psi_u(0))v = \int_0^1 v\cdot \left( Y(\Psi_u(sv))- Y(\Psi_u(0)) \right)\: ds.
\end{equation*}
Applying $d_v$ to the above equation and using the chain rule, we have $\forall u\in B^n(\eta/2)$, $v\in B^n(\delta_1)$,
\begin{equation*}
\begin{split}
    &\BNorm{ d_v\Psi_u(v) - d_v\Psi_u(0) }[\M^{n\times n}]
    = 
    \BNorm{
        \int_0^1 (Y(\Psi_u)(sv) -Y(\Psi_u(0))) + sv^{\transpose} dY(\Psi_u(sv)) (d_v\Psi_u)(sv)\: ds
    }[\M^{n\times n}]
    \\&\lesssim |v| \CjNorm{Y\circ \Psi_u}{1}[B^n(\delta_0)][\M^{n\times n}] + |v| \CjNorm{Y}{1}[B^n(\eta)][\M^{n\times n}]\CjNorm{\Psi_u}{1}[B^n(\delta_1)][\R^n]
    \\&\lesssim |v|\CjNorm{Y}{1}[B^n(\eta)][\M^{n\times n}]\CjNorm{\Psi_u}{1}[B^n(\delta_1)][\R^n] \lesssim |v|,
\end{split}
\end{equation*}
where we have written $Y(u)$ for the matrix valued function $(Y_1(u)|\cdots|Y_n(u))$
and used \cref{Eqn::AppIFT::C1Bound::Psi}.
This completes the proof.
\end{proof}

\begin{proof}[Proof of \cref{Prop::AppIFT::MainProp}]
In what follows we write $A\lesssim B$ for $A\leq C B$, where $C$ can be chosen to depend only on
$n$, $C_0$, and $c_0$, and write $A\lesssim_\eta B$ if $C$ can also depend on $\eta$.
By taking $\delta_1\gtrsim_\eta 1$
as in \cref{Lemma::AppIFT::LipDerivs}, for all $u\in B^n(\eta/2)$, $v\in B^n(\delta_1)$,
$\Psi_u(v)$ is defined.  For such $u$, since $|\det d\Psi_u(0)|=|\det (Y_1(u)|\cdots|Y_n(u))|\gtrsim 1$
and using \cref{Eqn::AppIFT::C1Bound::Psi}, we have
$\Norm{ d_v\Psi_u(0)^{-1}}[\M^{n\times n}]\lesssim 1$.
Hence, using \cref{Eqn::AppIFT::PsiLip}, for $u\in B^n(\eta/2)$, $v\in B^n(\delta_1)$,
\begin{equation*}
    \Norm{d_v\Psi_u(0)^{-1} d_v\Psi_u(v) -I}[\M^{n\times n}] \lesssim \Norm{d_v\Psi_u(v) - d_v\Psi_u(0)}[\M^{n\times n}]\lesssim |v|.
\end{equation*}
Thus, if $\delta_2\gtrsim_\eta 1$ is sufficiently small, for all $u\in B^{n}(\eta/2)$,
$v\in B^{n}(\delta_2)$,
\begin{equation*}
    \Norm{d_v\Psi_u(0)^{-1} d_v\Psi_u(v) -I}[\M^{n\times n}]\leq \frac{1}{2}.
\end{equation*}
By \cref{Lemma::AppIFT::FLemma}, for $|u|\leq \eta/2$, $\Psi_u:B^{n}(\delta_2)\rightarrow \Psi_u(B^n(\delta_2))$ is a $C^1$ diffeomorphism,
and if we set $\kappa:=\frac{1}{2} \inf_{|u|<\eta/2}\Norm{d(\Psi_u^{-1})}[\CjSpace{0}[\Psi_u(B^n(\delta_2))][ \M^{n\times n} ]]^{-1}$,
we have $\kappa\gtrsim 1$ (also by \cref{Lemma::AppIFT::FLemma}).  Notice the extra factor of $1/2$ in the defintion
of $\kappa$ as compared to \cref{Lemma::AppIFT::FLemma}.

Take $\Delta_0<\delta_2$, $\Delta_0\gtrsim_\eta 1$ sufficiently small so that $\kappa\Delta_0<\eta/2$.
Then for $\delta\in (0,\Delta_0]$ and $|u|<\kappa\delta$, \cref{Lemma::AppIFT::FLemma}
shows
\begin{equation*}
    \Psi_u(B^n(\delta))\supseteq B^n(\Psi_u(0), 2\kappa\delta) = B^n(u, 2\kappa\delta)\supseteq B^n(0,\kappa\delta),
\end{equation*}
which completes the proof.
\end{proof}

    \subsection{Proof of the main result}\label{Section::Proofs::Main}

We turn to the proof of \cref{Thm::Results::MainThm}.  We separate the proof into two parts:  when $X_1(x_0),\ldots, X_q(x_0)$ are linearly independent (i.e., when $n=q$), and more generally when $X_1(x_0),\ldots, X_q(x_0)$
may be linearly dependent (i.e., when $q\geq n$).

        \subsubsection{Linearly Independent}\label{Section::Proofs::LI}

In this section, we prove \cref{Thm::Results::MainThm} in the special case $n=q$.
We take the same setting as \cref{Thm::Results::MainThm} with the same notions of admissible constants, and with the additional assumption that $n=q$.
Note that, in this case, $X_{J_0}=X$, so we may replace $X_{J_0}$ with $X$
throughout the statement of \cref{Thm::Results::MainThm}.
Also, because $n=q$, in  $\Had{m_1,m_2,s}$-admissible constants, $m_2$ does not play a role (since in all of our results $m_1\geq m_2$ when $\Had{m_1,m_2,s}$ admissible constants are used),
so we instead use $\Hmad{m_1}{s}$-admissible constants throughout this section.
Similarly, we use $\Zygmad{s}$-admissible constants throughout this section.

\Cref{Prop::ResQual::Mmanif} implies that $B_X(x_0,\xi)$ is an $n$-dimensional
manifold and that $X_1,\ldots, X_n$ span the tangent space to every point of $B_X(x_0,\xi)$.
Thus, $X_1(y),\ldots, X_n(y)$ are linearly independent $\forall y\in B_X(x_0,\xi)$,
and \cref{Thm::Results::MainThm} \cref{Item::Results::WedgeNonzero} follows with $\chi=\xi$.
\cref{Item::Results::BigWedge,Item::Results::Open} are both obvious when $n=q$ (and $\chi=\xi$).
With \cref{Item::Results::WedgeNonzero}, \cref{Item::Results::BigWedge}, and \cref{Item::Results::Open} proved, we henceforth assume
$c_{j,k}^l\in \CXjSpace{X_{J_0}}{1}[B_{X_{J_0}}(x_0,\xi)]=\CXjSpace{X}{1}[B_X(x_0,\xi)]$, $1\leq j,k,l\leq n$.

Consider the map $\Phi:B^n(\eta_0)\rightarrow B_X(x_0,\xi)$ defined in \cref{Eqn::Results::DefinePhi}; which we a priori know to be $C^1$.
Clearly $d\Phi(0)\diff{t_j}=X_j(x_0)$.  Since $X_1(x_0),\ldots, X_n(x_0)$ form a basis
of the tangent space $T_{x_0}B_X(x_0,\xi)$, the Inverse Function Theorem shows
that there exists a (non-admissible) $\delta>0$ such that $\Phi:B^n(\delta)\rightarrow \Phi(B^n(\delta))$
is a $C^1$ diffeomorphism.  Let $\Yh_j:=\Phi\big|_{B^n(\delta)}^{*} X_j$,
so that $\Yh_j$ is a $C^0$ vector field on $B^n(\delta)$.
Write $\Yh_j=\diff{t_j}+\sum_k \ah_j^k(t) \diff{t_k}$.  Let $\Ah(t)\in \CSpace{B^n(\delta)}[\M^{n\times n}]$
denote the $n\times n$ matrix with $(j,k)$ component $\ah_j^k(t)$ and
 let $C(t)\in \CSpace{B^n(\eta_0)}[\M^{n\times n}]$ denote the $n\times n$ matrix
 with $(j,k)$ component equal to $\sum_{l=1}^{n} t_l c_{j,l}^k\circ \Phi(t)$.

\begin{prop}\label{Prop:PfLI::MainODEProp}
Write $t$ in polar coordinates, $t=r\theta$, and consider the differential equation
\begin{equation}\label{Eqn::PfLI::MainODE}
    \diff{r} rA(r\theta) = -A(r\theta)^2-C(r\theta)A(r\theta)-C(r\theta),
\end{equation}
defined for $A:B^n(\eta_0)\rightarrow \M^{n\times n}$.  There exists a $0$-admissible constant $\eta'>0$,
which also depends on a lower bound for $\eta>0$, such that there exists a unique continuous solution
$A\in \CSpace{B^n(\eta')}[\M^{n\times n}]$ to \cref{Eqn::PfLI::MainODE}
with $A(0)=0$.  Moreover, 
this solution 
lies in $\CjSpace{1}[B^n(\eta')][\M^{n\times n}]$ 
and satisfies
\begin{equation*}
    \Norm{A(t)}[\M^{n\times n}]\lesssim_0 |t|\text{ and }\Norm{A(t)}[\M^{n\times n}]\leq \frac{1}{2}, \quad \forall t\in B^n(\eta').
\end{equation*}
For $m\in \N$ and $s\in [0,1]$, if $c_{i,j}^k\circ \Phi\in \HSpace{m}{s}[B^n(\eta')]$
with $\HNorm{c_{i,j}^k\circ \Phi}{m}{s}[B^n(\eta')]\leq D_{m,s}$, $\forall i,j,k$,
then $A\in \HSpace{m}{s}[B^n(\eta')][\M^{n\times n}]$ and there exists a constant $C_{m,s}$,
which depends only on $n$, $m$, and $D_{m,s}$, such that
$\HNorm{A}{m}{s}[B^n(\eta')][\M^{n\times n}]\leq C_{m,s}$.  Similarly, for $s\in (0,\infty)$,
if $c_{i,j}^k\circ \Phi\in \ZygSpace{s}[B^n(\eta')]$ with
$\ZygNorm{c_{i,j}^k\circ \Phi}{s}[B^n(\eta')]\leq D_s$, then there exists a constant $C_s$
which depends only on $n$, $s$, and $D_s$ such that
$\ZygNorm{A}{s}[B^n(\eta')][\M^{n\times n}]\leq C_s$.
Finally, 
$\Ah\big|_{B^n(\min\{\eta',\delta\} )} = A\big|_{B^n(\min\{\eta',\delta\})}$.
%
\end{prop}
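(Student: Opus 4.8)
\textbf{Proof plan for \cref{Prop:PfLI::MainODEProp}.}

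The plan is to apply \cref{Prop::ODE::ExistMainProp} to the matrix function $C(t)$, with the variable $x$ there playing the role of $t$ here, and then match the resulting solution to $\Ah$ via \cref{Prop::ODE::Derive::MainProp}. First I would verify the hypotheses of \cref{Prop::ODE::ExistMainProp}: that $C(0)=0$ and $|C(t)|\le D|t|$ for a $0$-admissible constant $D$. The former is immediate from the formula $C(t)_{j,k}=\sum_{l} t_l\, c_{j,l}^k\circ\Phi(t)$, since each term has a factor $t_l$. For the latter, I would bound $|C(t)|\le n\,|t|\,\max_{j,k,l}\sup_{B^n(\eta_0)}|c_{j,l}^k\circ\Phi|$; since $\Phi$ maps into $B_{X_{J_0}}(x_0,\xi)$, the sup of $|c_{j,l}^k\circ\Phi|$ is controlled by $\CNorm{c_{j,l}^k}{B_{X_{J_0}}(x_0,\xi)}$, so $D$ is $0$-admissible. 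Then one takes $\eta'$ to be any number with $\eta'\le\min\{\eta_0,(10D)^{-1}\}$; since $\eta_0=\min\{\eta,\xi\}$ this is a $0$-admissible constant depending additionally on a lower bound for $\eta$, as claimed. Invoking \cref{Prop::ODE::ExistMainProp} on $B^n(\eta')$ produces the unique continuous solution $A$ with $A(0)=0$, together with the bounds $|A(t)|\le\tfrac58 D|t|\le\tfrac12$ (shrinking $\eta'$ further if needed so that $\tfrac58 D\eta'\le\tfrac12$, still $0$-admissible) and $|A(t)|\le\tfrac1{16}$, giving $\Norm{A(t)}[\M^{n\times n}]\lesssim_0|t|$ and $\le\tfrac12$.

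Next I would transfer regularity. By hypothesis $c_{i,j}^k\in\CXjSpace{X}{1}[B_X(x_0,\xi)]$, and since $\Phi$ is $C^1$, the composition $c_{i,j}^k\circ\Phi$ is continuous; more precisely, I would argue $C\in\CjSpace{1}$ on a possibly smaller ball once $\Phi$ is known to be a $C^1$ diffeomorphism there (from the Inverse Function Theorem paragraph preceding the proposition), and hence $A\in\CjSpace{1}$ by the $m=1,l=0,s$-irrelevant case $C^{1,0,\omega}=\CjSpace{1}$ in \cref{Rmk::ODE::ImportantSpaces} applied within \cref{Prop::ODE::ExistMainProp}. For the H\"older and Zygmund statements, I would use the identifications in \cref{Rmk::ODE::ImportantSpaces}: $\HSpace{m}{s}[B^n(\eta')][\M^{n\times n}]=C^{m,1,\omega_s}$ and $\ZygSpace{m+s}=C^{m,2,\omega_{s/2}}$. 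Thus if $\HNorm{c_{i,j}^k\circ\Phi}{m}{s}[B^n(\eta')]\le D_{m,s}$ for all $i,j,k$, then $C\in C^{m,1,\omega_s}$ with norm controlled by $D_{m,s}$ (the extra factor of $t_l$ and $\Phi$ only cost constants depending on $n$, $D_{m,s}$, and on lower-order norms of $\Phi$, which are themselves controlled), and \cref{Prop::ODE::ExistMainProp} gives $\Norm{A}[C^{m,1,\omega_s}]\le K$ with $K$ depending only on $n$, $m$, and an upper bound for $\Norm{C}[C^{m,1,\omega_s}]$, hence only on $n$, $m$, $D_{m,s}$. The Zygmund case is identical with $(m,l,\omega)=(m,2,\omega_{s/2})$. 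One subtlety I would be careful about: $\Phi$ itself must be shown to have enough regularity for $c\circ\Phi$ to inherit the stated norm bounds; I would handle this by noting that the quantities in the hypothesis are assumed directly on $c_{i,j}^k\circ\Phi$, so no regularity of $\Phi$ beyond $C^1$ (for the composition to make sense) is actually needed in this step.

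Finally, for the matching statement $\Ah|_{B^n(\min\{\eta',\delta\})}=A|_{B^n(\min\{\eta',\delta\})}$: on $B^n(\delta)$, $\Phi$ is a $C^1$ diffeomorphism onto its image and $\Yh_j=\Phi^*X_j$, so \cref{Prop::ODE::Derive::MainProp} applies (with $M=B_X(x_0,\xi)$, $n=q$, $V=B^n(\delta)$) to show that $\Ah$ satisfies the ODE $\partial_r r\Ah(r\theta)=-\Ah(r\theta)^2-C(r\theta)\Ah(r\theta)-C(r\theta)$ on $B^n(\delta)$; moreover $\Ah(0)=0$ since $\Yh_j(0)=\diff{t_j}$. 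On $B^n(\min\{\eta',\delta\})$ both $\Ah$ and $A$ are continuous solutions of \cref{Eqn::PfLI::MainODE} vanishing at the origin, so by the uniqueness clause of \cref{Prop::ODE::ExistMainProp} (applied on $B^n(\min\{\eta',\delta\})$, noting $\min\{\eta',\delta\}\le\eta'\le(10D)^{-1}$) they coincide. The main obstacle I anticipate is bookkeeping rather than conceptual: ensuring that each shrinkage of $\eta'$ (to guarantee $\eta'\le(10D)^{-1}$, to guarantee $\tfrac58 D\eta'\le\tfrac12$, and to stay inside $\eta_0$) is $0$-admissible and that the constants $C_{m,s}$, $C_s$ genuinely depend only on $n$, $m$ (resp.\ $s$), and $D_{m,s}$ (resp.\ $D_s$) — this follows because \cref{Prop::ODE::ExistMainProp}'s constant $K_{n,m,l,\omega}$ has exactly that dependence, and the passage from $\HNorm{c\circ\Phi}{m}{s}$ to $\Norm{C}[C^{m,1,\omega_s}]$ only introduces factors of $n$ and polynomial dependence on the data already accounted for.
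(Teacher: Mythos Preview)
Your approach is essentially the paper's: verify $|C(t)|\lesssim_0 |t|$, invoke \cref{Prop::ODE::ExistMainProp} (together with the identifications in \cref{Rmk::ODE::ImportantSpaces}) for existence, uniqueness, the pointwise bounds, and the H\"older/Zygmund estimates, and match $\Ah$ to $A$ via \cref{Prop::ODE::Derive::MainProp} plus uniqueness.  The one step that does not quite work as written is your justification that $A\in \CjSpace{1}[B^n(\eta')][\M^{n\times n}]$: you propose to get $C\in C^1$ by restricting to the ball where $\Phi$ is a $C^1$ diffeomorphism, but that ball is $B^n(\delta)$ with $\delta$ explicitly non-admissible, so this only yields $A\in C^1$ on $B^n(\min\{\eta',\delta\})$, not on $B^n(\eta')$.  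The paper fixes this without any diffeomorphism: since $c_{j,k}^l\in \CXjSpace{X}{1}[B_X(x_0,\xi)]$, the $X_1,\ldots,X_n$ span the tangent space at every point, and the $X_j$ are $C^1$, the functions $c_{j,k}^l$ are genuinely $C^1$ on the manifold $B_X(x_0,\xi)$; composing with the (merely) $C^1$ map $\Phi:B^n(\eta_0)\to B_X(x_0,\xi)$ gives $c_{j,k}^l\circ\Phi\in C^1(B^n(\eta_0))$, hence $C\in \CjSpace{1}[B^n(\eta')][\M^{n\times n}]$, and \cref{Prop::ODE::ExistMainProp} then gives $A\in \CjSpace{1}[B^n(\eta')][\M^{n\times n}]$.
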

\begin{proof}
Note that, by the definition of $C(t)$ we have $\Norm{C(t)}[\M^{n\times n}]\lesssim_0 |t|$.
Also, $\Ah$ satisfies \cref{Eqn::PfLI::MainODE} on $B^n(\delta)$ by \cref{Prop::ODE::Derive::MainProp}.
Since $d\Phi(0)\diff{t_j}=X_j(x_0)$, we have $\Ah(0)=0$.
With these remarks in hand, the proposition (except for the claim 
$A\in \CjSpace{1}[B^n(\eta')][\M^{n\times n}]$)
follows directly from \cref{Prop::ODE::ExistMainProp}
(see also \cref{Rmk::ODE::ImportantSpaces}).

The claim that 
$A\in \CjSpace{1}[B^n(\eta')][\M^{n\times n}]$
can be seen as follows.
First note that we may assume $\eta'<\eta_0$ as if $\eta'=\eta_0$, we may replace $\eta'$ with $\eta_0/2$.
Since $c_{j,k}^l\in \CXjSpace{X_{J_0}}{1}[B_{X_{J_0}}(x_0,\xi)]=\CXjSpace{X}{1}[B_X(x_0,\xi)]$, 
$X_1,\ldots, X_n$ span the tangent space
at every point of $B_X(x_0,\xi)$, and the vector fields $X_1,\ldots, X_n$ are $C^1$, it follows that $c_{j,k}^l$ are $C^1$ on $B_X(x_0,\xi)$.
Since $\Phi:B^n(\eta_0)\rightarrow B_X(x_0,\xi)$ is a priori known to be $C^1$, we have $c_{j,k}^l\circ \Phi$ is $C^1$ on $B^n(\eta_0)$.  Thus, 
$C\in \CjSpace{1}[B^n(\eta')][\M^{n\times n}]$,
and it follows from \cref{Prop::ODE::ExistMainProp} that $A\in \CjSpace{1}[B^n(\eta')][\M^{n\times n}]$.
\end{proof}

We fix $\eta'>0$ and $A$ as in \cref{Prop:PfLI::MainODEProp}.  Write $a_j^k(t)$ for the $(j,k)$
component of $A(t)$ and set $Y_j:=\diff{t_j}+\sum_{k=1}^n a_j^k \diff{t_k}$.  Note that $Y_1,\ldots, Y_n$ are $C^1$ vector fields on $B^n(\eta')$.
By \cref{Prop:PfLI::MainODEProp}, $Y_j\big|_{B^n(\min\{\eta',\delta\} )}=\Yh_j\big|_{B^n(\min\{\eta',\delta\} )}$.  Since $\delta$ is not admissible, we think of $\delta$ as being much smaller than
$\eta'$, and so $Y_j$ should be thought of as extending $\Yh_j$.

\begin{prop}\label{Prop::PfLI::dPhiYj}
$\forall t\in B^n(\eta')$, $d\Phi(t)Y_j(t)=X_j(\Phi(t))$, $1\leq j\leq n$.
\end{prop}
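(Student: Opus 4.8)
The plan is to show that both sides of the asserted identity, viewed as functions of $t$ along rays through the origin, satisfy the same first-order linear ODE with the same initial value, and then invoke uniqueness. Concretely, fix $\theta\in S^{n-1}$ and consider the curve $r\mapsto \Phi(r\theta)$. By definition of $\Phi$ in \cref{Eqn::Results::DefinePhi}, this curve is precisely $r\mapsto e^{r(\theta_1 X_1+\cdots+\theta_n X_n)}x_0$, so it satisfies $\partial_r \Phi(r\theta)=\sum_j \theta_j X_j(\Phi(r\theta))$; equivalently, writing $\theta\cdot Y$ for $\sum_j\theta_j Y_j$, we have $d\Phi(r\theta)(\theta\cdot Y)(r\theta)=\sum_j\theta_j X_j(\Phi(r\theta))$, since $(\theta\cdot Y)(r\theta)=\theta\cdot\partial_r$ in the identification $Y_j=\partial_{t_j}+\sum_k a_j^k\partial_{t_k}$ evaluated on the radial direction (here one uses that $\sum_j \theta_j(\delta_{jk}+a_j^k(r\theta))$ dotted against $\partial_{t_k}$, combined with the structure of $A$, reproduces the radial vector field — more carefully, one checks $\sum_j\theta_j Y_j(r\theta)$ has radial component $1$ because $A(r\theta)^\transpose\theta$ contributes only tangential-to-sphere terms once one recalls $\partial_r(rA(r\theta))$ solves \cref{Eqn::PfLI::MainODE}). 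This already gives the identity $d\Phi(t)(\theta\cdot Y)(t)=\sum_j\theta_j X_j(\Phi(t))$ for $t=r\theta$, i.e. the radial combination of the desired equations.

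To upgrade from the radial combination to each individual $j$, I would differentiate the radial identity in the angular directions, or more cleanly, mimic the derivation in \cref{Section::DerivODE}. Set $W_j(t):=d\Phi(t)Y_j(t)-X_j(\Phi(t))$, a continuous vector field along $\Phi$ (or a section of $\Phi^*TM$), defined for $t\in B^n(\eta')$. The goal is $W_j\equiv 0$. On the smaller ball $B^n(\min\{\eta',\delta\})$ this is immediate from \cref{Prop:PfLI::MainODEProp}, since there $Y_j=\Yh_j=\Phi|^*X_j$, so $d\Phi(t)Y_j(t)=X_j(\Phi(t))$ by the very definition of pullback; thus $W_j$ vanishes near $0$. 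The content is to propagate this vanishing outward. Using that $\Phi(r\theta)=e^{r(\theta\cdot X)}x_0$, one computes $\partial_r(r W_j(r\theta))$ and finds — after using the ODE \cref{Eqn::PfLI::MainODE} satisfied by $A$, the structure equations $[X_i,X_j]=\sum_l c_{i,j}^l X_l$ on $B_X(x_0,\xi)$, and the product/bracket bookkeeping exactly as in the proof of \cref{Lemma::ODE::Deriv::SmoothMfld} — that $r\mapsto rW_j(r\theta)$ satisfies a linear homogeneous ODE of the form $\partial_r(rW_j(r\theta))=\sum_k M_{jk}(r\theta)W_k(r\theta)$ with continuous (indeed $O(1)$) coefficients $M_{jk}$. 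Since $rW_j(r\theta)\to 0$ as $r\to 0$ and it vanishes identically for $r<\min\{\eta',\delta\}$, Grönwall's inequality (or the uniqueness part of the standard linear ODE theory) forces $W_j\equiv 0$ on all of $B^n(\eta')$.

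The main obstacle I anticipate is the careful algebraic verification that $\partial_r(rW_j(r\theta))$ closes up into such a linear system — that is, re-running the computation of \cref{Lemma::ODE::Deriv::SmoothMfld} but now in the nonsmooth setting where $\Phi$ is only $C^1$, $Y_j$ only $C^1$ (by \cref{Prop:PfLI::MainODEProp}), and where one cannot freely take second derivatives of $\Phi$. The clean way around this is the regularization device already used in the proof of \cref{Prop::ODE::Derive::MainProp}: approximate $X_j$ by smooth vector fields $X_j^\sigma$, let $\Phi_\sigma$ be the corresponding exponential map and $Y_j^\sigma$ the associated vector fields built from the (smooth) solution $A_\sigma$ of the regularized ODE, observe that $d\Phi_\sigma(t)Y_j^\sigma(t)=X_j^\sigma(\Phi_\sigma(t))$ holds by the smooth theory (\cref{Lemma::ODE::Deriv::SmoothMfld} together with the fact that $Y_j^\sigma=\Phi_\sigma|^*X_j^\sigma$ on a genuine coordinate patch, extended via the ODE), and then pass to the limit $\sigma\to 0$ using $\Phi_\sigma\to\Phi$ in $C^1$, $A_\sigma\to A$ in $C^0$, and $c_{j,k}^{l,\sigma}\to c_{j,k}^l$ in $C^0$, exactly as in \cref{Section::DerivODE}. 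This sidesteps the need to differentiate $\Phi$ twice and reduces the proposition to a limiting argument plus the already-established smooth case.
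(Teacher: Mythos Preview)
Your overall strategy --- establish $d\Phi\,Y_j=X_j\circ\Phi$ near the origin (where $Y_j=\Yh_j$ by \cref{Prop:PfLI::MainODEProp}) and then propagate it radially outward by an ODE/uniqueness argument --- matches the paper's. But the propagation mechanism you propose is different and has a gap.

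Your direct $W_j$ route requires $\partial_r\big(d\Phi(r\theta)Y_j(r\theta)\big)$, which involves second derivatives of $\Phi$; at this point $\Phi$ is only known to be $C^1$ (the $C^2$ statement \cref{Lemma::PfLI::PhiC2} \emph{uses} the present proposition), so you are right to flag this. Your fix is to regularize and run the argument for smooth $X_j^\sigma$, but your justification of the smooth case --- ``\cref{Lemma::ODE::Deriv::SmoothMfld} together with the fact that $Y_j^\sigma=\Phi_\sigma|^*X_j^\sigma$ on a genuine coordinate patch, extended via the ODE'' --- does not actually deliver $d\Phi_\sigma Y_j^\sigma=X_j^\sigma\circ\Phi_\sigma$ on all of $B^n(\eta')$. \Cref{Lemma::ODE::Deriv::SmoothMfld} goes the \emph{other} direction: on a set where $\Phi$ is already known to be a diffeomorphism, the pullback matrix $\Ah$ satisfies the ODE. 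It does not say that any solution $A$ of the ODE yields $Y_j$ with the intertwining property beyond the patch. So even after regularizing you still owe either the $W_j$-ODE computation (now legitimate, but a genuine algebraic verification you have not carried out) or some other extension argument; ``extended via the ODE'' is precisely the step in question.

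The paper avoids both the $W_j$ computation and a second regularization by a short continuation argument that recycles work already done. Suppose the identity holds on $[0,r_1]$ with $r_1<\eta'$. By continuity it holds at $r_1$; hence $d\Phi(r_1\theta)$ takes the basis $Y_j(r_1\theta)$ to the basis $X_j(\Phi(r_1\theta))$ and is therefore invertible, so the Inverse Function Theorem makes $\Phi|_V$ a $C^1$ diffeomorphism on a neighborhood $V$ of $r_1\theta$. Now apply \cref{Prop::ODE::Derive::MainProp} (already proved for merely $C^1$ vector fields, via the single regularization in \cref{Section::DerivODE}) to $\Phi|_V$: the matrix $\At$ built from $\Phi|_V^{*}X_j$ satisfies the \emph{same} ODE \cref{Eqn::PfLI::MainODE} as $A$. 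On an interval $[r_2,r_3]\subset(0,r_1)$ inside $V$ we have $\At=A$ (since there $Y_j=\Phi|_V^{*}X_j$ by the definition of $r_1$), and for $r>0$ the equation $\partial_r(rA)=-A^2-CA-C$ is a standard nonsingular ODE, so Gr\"onwall gives $\At=A$ past $r_1$; thus $Y_j=\Phi|_V^{*}X_j$ there, contradicting maximality of $r_1$. The point is that the uniqueness step is applied to the \emph{matrix} ODE for $A$ --- whose derivation and uniqueness theory are already in hand --- rather than to a new ODE for $W_j$ that you would have to set up and justify.
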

\begin{proof}
Fix $\theta\in S^{n-1}$ and set
\begin{equation*}
    r_1:=\sup \{r\geq 0 : d\Phi(r'\theta) Y_j(r'\theta) =X_j(\Phi(r'\theta)), 0\leq r'\leq r, 1\leq j\leq n\}.
\end{equation*}
We wish to show $r_1=\eta'$, and this will complete the proof since $\theta\in S^{n-1}$ was
arbitrary.  Suppose, for contradiction, $r_1<\eta'$.
Since $Y_j\big|_{B^n(\min\{\eta',\delta\} )}=\Yh_j\big|_{B^n(\min\{\eta',\delta\} )}$
and $d\Phi(u)\Yh_j(u)=X_j(\Phi(u))$, we know $r_1>0$.
By continuity, we have
\begin{equation*}
    d\Phi(r_1\theta) Y_j(r_1\theta) = X_j(\Phi(r_1\theta)).
\end{equation*}
By \cref{Prop::ResQual::Mmanif}, $X_1(\Phi(r_1\theta)),\ldots, X_n(\Phi(r_1\theta))$
span $T_{\Phi(r_1\theta)}B_X(x_0,\xi)$, and therefore the Inverse Function
Theorem applies to $\Phi$ at the point $r_1\theta$.
Thus, there exists a neighborhood $V$ of $r_1\theta$ such that $\Phi:V\rightarrow \Phi(V)$ is a
$C^1$ diffeomorphism.
Pick $0<r_2<r_3<r_1<r_4<\eta'$ such that $\{r'\theta : r_2\leq r'\leq r_4\}\subset V$.

Let $\Yt_j:=\Phi\big|_V^{*} X_j$.  By the choice of $r_1$, for $r_2\leq r'\leq r_3$
we have $\Yt_j(r'\theta)=Y_j(r'\theta)$.  Write $\Yt_j=\diff{u_j}+\sum_{k=1}^n\at_j^k\diff{u_k}$
and let $\At$ denote the matrix with $(j,k)$ component $\at_j^k$.
We therefore have $\At(r'\theta)=A(r'\theta)$ for $r_2\leq r'\leq r_3$.
$\At$ satisfies \cref{Eqn::PfLI::MainODE} by \cref{Prop::ODE::Derive::MainProp}.
Away from $r=0$, \cref{Eqn::PfLI::MainODE} is a standard ODE
that both $A$ and $\At$ satisfy.  Thus, by standard uniqueness theorems (using, for example,
Gr\"onwall's inequality) we have $\At(r'\theta)=A(r'\theta)$ for $r_2\leq r'\leq r_4$.
Thus, $Y_j(r'\theta)=\Yt_j(r'\theta)$, $r_2\leq r'\leq r_4$.  Since
$d\Phi(r'\theta) \Yt_j(r'\theta)=X_j(\Phi(r'\theta))$ we conclude $r_1\geq r_4$.
This is a contradiction, completing the proof.
\end{proof}

\begin{lemma}\label{Lemma::PfLI::PhiC2}
$\Phi:B^n(\eta')\rightarrow B_X(x_0,\xi)$ is $C^2$.
\end{lemma}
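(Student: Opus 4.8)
The goal is to upgrade the a priori $C^1$ regularity of $\Phi$ to $C^2$. The plan is to exploit that, by \cref{Prop::PfLI::dPhiYj}, $\Phi$ satisfies the first-order system $d\Phi(t)Y_j(t) = X_j(\Phi(t))$ on $B^n(\eta')$, where the vector fields $Y_j = \diff{t_j} + \sum_k a_j^k \diff{t_k}$ have coefficients $a_j^k$ coming from the matrix $A$, which by \cref{Prop:PfLI::MainODEProp} lies in $\CjSpace{1}[B^n(\eta')][\M^{n\times n}]$ (since the structure functions $c_{j,k}^l$ are $C^1$, hence $c_{j,k}^l\circ \Phi$ is $C^1$, and $\CjSpace{1}$ falls under the hypotheses of \cref{Prop::ODE::ExistMainProp}). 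Thus the $Y_j$ are $C^1$ vector fields, and since $A(0)=0$ with $\Norm{A(t)}\leq \tfrac12$, the matrix $I+A(t)$ is invertible at every $t\in B^n(\eta')$, with inverse again $C^1$ (by the $C^1$ version of \cref{Prop::FuncSpaceRev::Algebra}, or simply Cramer's rule). Also $X_1,\ldots,X_n$ are $C^1$ vector fields on $B_X(x_0,\xi)$.

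The first step is to write the defining relation in matrix form: identifying $d\Phi(t)$ with its Jacobian matrix and recalling $Y_{J_0} = (I+A)\grad$, the relation $d\Phi(t)Y_j(t) = X_j(\Phi(t))$ for $1\leq j\leq n$ becomes
\begin{equation*}
    d\Phi(t)\,(I+A(t))^{\transpose} = \bigl( X_1(\Phi(t)) \mid \cdots \mid X_n(\Phi(t)) \bigr),
\end{equation*}
or equivalently, letting $\mathbf{X}(y)$ denote the matrix whose columns are $X_1(y),\ldots,X_n(y)$,
\begin{equation*}
    d\Phi(t) = \mathbf{X}(\Phi(t))\,\bigl( (I+A(t))^{\transpose}\bigr)^{-1}.
\end{equation*}
The right-hand side is a composition and product of $C^1$ functions: $\Phi$ is $C^1$, $\mathbf{X}$ is $C^1$, $A$ is $C^1$, and matrix inversion is smooth on invertible matrices. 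Hence the right-hand side is $C^1$ in $t$, which means $d\Phi$ is $C^1$, i.e. $\Phi$ is $C^2$. This is essentially a bootstrapping argument: an expression for the derivative of $\Phi$ in terms of quantities each known to be $C^1$.

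I would then write this out carefully, noting that on $B_X(x_0,\xi)$ we have a $C^2$ manifold structure (from \cref{Prop::ResQual::Mmanif}) in which $X_1,\ldots,X_n$ are genuinely $C^1$ vector fields spanning the tangent space, so that $\mathbf{X}(\cdot)$ and its local inverse make sense and $\mathbf{X}(\Phi(t))$ is invertible for each $t$ (indeed $\bigwedge X_{J_0}(\Phi(t))\neq 0$ by \cref{Thm::Results::MainThm}\cref{Item::Results::WedgeNonzero}, which holds with $\chi=\xi$ in the $n=q$ case). The argument is genuinely local in $t$ — pick any $t_0\in B^n(\eta')$, work in a $C^2$ chart near $\Phi(t_0)$, and conclude $\Phi$ is $C^2$ near $t_0$ — so no global issues arise. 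The main (and really only) obstacle is bookkeeping: making sure the invertibility of $I+A(t)$ and of $\mathbf{X}(\Phi(t))$ is in hand at every point, and that the composition $\mathbf{X}\circ\Phi$ of a $C^1$ map with a $C^1$ map is $C^1$ — which is standard. I do not expect any substantive difficulty; the content is entirely in the formula for $d\Phi(t)$ derived from \cref{Prop::PfLI::dPhiYj} together with the $C^1$ regularity of $A$ from \cref{Prop:PfLI::MainODEProp}.
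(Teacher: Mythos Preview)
Your proposal is correct and follows essentially the same approach as the paper: both use \cref{Prop::PfLI::dPhiYj} to write $d\Phi(t)Y_j(t)=X_j(\Phi(t))$, observe that the $Y_j$ are $C^1$ (via the $C^1$ regularity of $A$ from \cref{Prop:PfLI::MainODEProp}) and form a basis since $\|A\|\leq\tfrac12$, and conclude $d\Phi$ is $C^1$ because $X_j\circ\Phi$ is $C^1$. The only difference is that you spell out the matrix inversion $d\Phi=\mathbf{X}(\Phi)\bigl((I+A)^{\transpose}\bigr)^{-1}$ explicitly, whereas the paper leaves this step implicit.
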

\begin{proof}
Since we already know that $\Phi:B^n(\eta')\rightarrow B_X(x_0,\xi)$ is $C^1$,
it suffices to show the
map $u\mapsto d\Phi(u)$, $u\in B^n(\eta')$ is $C^1$.
We have already remarked that $Y_1,\ldots, Y_n$ are $C^1$.
Since $Y=(I+A)\grad$, with $\Norm{A(t)}[\M^{n\times n}]\leq \frac{1}{2}$, $\forall t$,
we conclude $Y_1,\ldots, Y_n$ are a basis for the tangent space at every point of $B^n(\eta')$.
Also, $d\Phi(u)Y_j(u)=X_j(\Phi(u))\in C^1$ since $X_j\in C^1$, $\Phi\in C^1$.
Since $Y_1,\ldots, Y_n$ are $C^1$ and a basis for the tangent space at every point, we conclude
$u\mapsto d\Phi(u)$ is $C^1$, and therefore $\Phi$ is $C^2$,
completing the proof.
\end{proof}

\begin{prop}\label{Prop::PfLI::EquivNorms}
For $m\in \N$, $s\in [0,1]$, $\eta''\in (0,\eta']$ we have (for any function $f$),
\begin{equation}\label{EqnPfLI::EquivNorm::HolderNorm}
    \HNorm{f}{m}{s}[B^n(\eta'')] \approx_{\Hmad{m-1}{s}} \HXNorm{f}{Y}{m}{s}[B^n(\eta'')],
\end{equation}
and
\begin{equation}\label{EqnPfLI::EquivNorm::HolderVF}
    \HNorm{Y_j}{m}{s}[B^n(\eta')][\R^n]\lesssim_{\Hmad{m}{s}} 1.
\end{equation}
Similarly, for $s\in (0,\infty)$,
\begin{equation}\label{EqnPfLI::EquivNorm::ZygNorm}
    \ZygNorm{f}{s}[B^n(\eta'')]\approx_{\Zygmad{s-1},\eta''} \ZygXNorm{f}{Y}{s}[B^n(\eta'')]
\end{equation}
and
\begin{equation}\label{EqnPfLI::EquivNorm::ZygVF}
    \ZygNorm{Y_j}{s}[B^n(\eta')][\R^n]\lesssim_{\Zygmad{s}} 1.
\end{equation}
In \cref{EqnPfLI::EquivNorm::ZygNorm} we have written $\approx_{\Zygmad{s-1},\eta''}$ to denote
that the implicit constants are also allowed to depend on the choice of $\eta''$.

Furthermore, for $m\in \N$, $s\in [0,1]$, and $1\leq i,j,k\leq n$, we have
\begin{equation}\label{EqnPfLi::EquivNorm::Pullbackcijk::Holder}
\HNorm{c_{i,j}^k\circ \Phi}{m}{s}[B^n(\eta')]\lesssim_{\Hmad{m}{s}} 1,
\end{equation}
and for $s\in (0,\infty)$,
\begin{equation}\label{EqnPfLi::EquivNorm::Pullbackcijk::Zyg}
\ZygNorm{c_{i,j}^k\circ \Phi}{s}[B^n(\eta')]\lesssim_{\Zygmad{s}} 1.
\end{equation}
\end{prop}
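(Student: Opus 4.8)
The plan is to prove all assertions of the proposition at once by a bootstrap: the norm equivalences, the regularity bounds on $c_{i,j}^k\circ\Phi$, and the regularity of $A$ (hence of $Y_j=\diff{t_j}+\sum_k a_j^k\diff{t_k}$, with $a_j^k$ the entries of $A$) are obtained one derivative at a time, each level feeding the next. Two preliminaries set this up. Since $Y_{J_0}=(I+A)\grad$ and $\sup_t\Norm{A(t)}[\M^{n\times n}]\leq\tfrac12$ by \cref{Prop:PfLI::MainODEProp}, the matrix $I+A$ is invertible on $B^n(\eta')$ with $\Norm{(I+A)^{-1}}[\M^{n\times n}]\leq 2$ and $|\det(I+A)|\geq 2^{-n}$; writing $b_k^j$ for the entries of $(I+A)^{-1}$ we have $\diff{t_k}=\sum_j b_k^j Y_j$, so the hypotheses of \cref{Section::FuncSpaceRev::Compare} hold on every $B^n(\eta'')$, $\eta''\in(0,\eta']$, with $a_j^k,b_k^j$ having $\CjSpace{0}$-norms $\lesssim_0 1$. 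Moreover $d\Phi(t)Y_j(t)=X_j(\Phi(t))$ by \cref{Prop::PfLI::dPhiYj}, so \cref{Prop::FuncSpaceRev::PushForwardNorm} gives, for every $g$, $\HXNorm{g\circ\Phi}{Y}{m}{s}[B^n(\eta')]\leq\HXNorm{g}{X}{m}{s}[B_X(x_0,\xi)]$ and $\ZygXNorm{g\circ\Phi}{Y}{s}[B^n(\eta')]\leq\ZygXNorm{g}{X}{s}[B_X(x_0,\xi)]$; applied to $g=c_{i,j}^k$ (recall $X=X_{J_0}$ here) the right-hand sides are $\lesssim_{\Hmad{m}{s}}1$, resp.\ $\lesssim_{\Zygmad{s}}1$, by definition of those admissible constants.

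For the H\"older statements, fixing $s\in[0,1]$ I would induct on $m\in\N$ on the claim that $\HNorm{A}{m}{s}[B^n(\eta')][\M^{n\times n}]$ and $\HNorm{(I+A)^{-1}}{m}{s}[B^n(\eta')][\M^{n\times n}]$ are $\lesssim_{\Hmad{m}{s}}1$, the case $m=0$ holding because $\HSpace{-1}{s}=\CjSpace{0}$ and the preliminary bounds apply. Granting level $m-1$: then $a_j^k,b_k^j\in\HSpace{m-1}{s}$ with norms $\lesssim_{\Hmad{m-1}{s}}1$, so \cref{Prop::FuncSpaceRev::CompNorms} yields \cref{EqnPfLI::EquivNorm::HolderNorm} at level $m$ (on any $B^n(\eta'')$, the $\HEad{m-1,s}$-admissible constants there being $\lesssim_{\Hmad{m-1}{s}}1$); combining this on $B^n(\eta')$ with the push-forward bound gives $\HNorm{c_{i,j}^k\circ\Phi}{m}{s}[B^n(\eta')]\lesssim_{\Hmad{m-1}{s}}\HXNorm{c_{i,j}^k\circ\Phi}{Y}{m}{s}[B^n(\eta')]\leq\HXNorm{c_{i,j}^k}{X}{m}{s}\lesssim_{\Hmad{m}{s}}1$, which is \cref{EqnPfLi::EquivNorm::Pullbackcijk::Holder}; feeding this into \cref{Prop:PfLI::MainODEProp} gives $\HNorm{A}{m}{s}[B^n(\eta')][\M^{n\times n}]\lesssim_{\Hmad{m}{s}}1$; and writing $(I+A)^{-1}=(\det(I+A))^{-1}\,\mathrm{adj}(I+A)$ and invoking the algebra and scalar-inverse parts of \cref{Prop::FuncSpaceRev::Algebra} (using $|\det(I+A)|\geq 2^{-n}$) gives the same bound for $(I+A)^{-1}$, closing the induction. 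Since $Y_j$ has coefficients $\delta_j^k+a_j^k$, \cref{EqnPfLI::EquivNorm::HolderVF} then follows from the bound on $\HNorm{A}{m}{s}[B^n(\eta')][\M^{n\times n}]$.

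For the Zygmund statements I would run the same loop with $s\in(m,m+1]$, inducting on $m$. The only new point is that \cref{Prop::FuncSpaceRev::CompNorms} now needs $a_j^k,b_k^j\in\ZygSpace{s-1}$: for $s\in(0,1]$ this space equals $\HSpace{0}{s/2}$ and the required bound is the $s/2$-case of the already-proved H\"older result, with admissible constant $\lesssim_{\Zygmad{s}}1$ because $\HXNorm{c_{j,k}^l}{X}{0}{s/2}\leq\ZygXNorm{c_{j,k}^l}{X}{s}$ by the definition of the Zygmund norm, while for $s>1$ it comes from the inductive hypothesis at level $m-1$. Then \cref{Prop::FuncSpaceRev::CompNorms} gives \cref{EqnPfLI::EquivNorm::ZygNorm} on $B^n(\eta'')$ with a $\ZygEad{s-1}$-admissible constant that also depends on $(\eta'')^{-1}$ (hence the decoration $\approx_{\Zygmad{s-1},\eta''}$; for $\eta''=\eta'$ it is $\lesssim_{\Zygmad{s}}1$ since $(\eta')^{-1}$ is controlled by $\Zygmad{s}$-admissible data); push-forward plus the equivalence on $B^n(\eta')$ gives \cref{EqnPfLi::EquivNorm::Pullbackcijk::Zyg}; \cref{Prop:PfLI::MainODEProp} gives $\ZygNorm{A}{s}[B^n(\eta')][\M^{n\times n}]\lesssim_{\Zygmad{s}}1$; the Zygmund algebra and inverse parts of \cref{Prop::FuncSpaceRev::Algebra} give the same for $(I+A)^{-1}$; and \cref{EqnPfLI::EquivNorm::ZygVF} follows from the coefficient identity for $Y_j$.

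The hard part will be arranging the bootstrap so that there is genuinely no circularity: the level-$m$ equivalence is allowed to use only the level-$(m-1)$ regularity of $A$ and $(I+A)^{-1}$, whereas the level-$m$ regularity of $A$ is recovered from that very equivalence together with \cref{Prop::FuncSpaceRev::PushForwardNorm} and \cref{Prop:PfLI::MainODEProp}. Beyond this, one must treat the matrix inverse via the adjugate-over-determinant identity so that the scalar statements of \cref{Prop::FuncSpaceRev::Algebra} suffice, and in the Zygmund induction keep careful track that $\Zygmad{s}$-admissible constants dominate the H\"older-$s/2$ quantities fed back in from the $s\leq1$ case; everything else is routine bookkeeping with the algebra properties of the spaces and the monotonicity of the norms under domain restriction.
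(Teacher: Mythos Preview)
Your proposal is correct and follows essentially the same bootstrap as the paper: induct on $m$, at each step using the level-$(m-1)$ regularity of $A$ (and hence of $(I+A)^{-1}$) to invoke \cref{Prop::FuncSpaceRev::CompNorms}, then \cref{Prop::FuncSpaceRev::PushForwardNorm} with \cref{Prop::PfLI::dPhiYj} to bound $c_{i,j}^k\circ\Phi$, and finally \cref{Prop:PfLI::MainODEProp} to upgrade $A$ to level $m$. One small sharpening: in the Zygmund base case $s\in(0,1]$ you use $\HXNorm{c_{j,k}^l}{X}{0}{s/2}\leq\ZygXNorm{c_{j,k}^l}{X}{s}$, which only yields $\Zygmad{s}$-admissibility, but in fact by \cref{Defn::FuncSpacesM::NegativeSpaces} one has $\ZygXSpace{X}{s-1}=\HXSpace{X}{0}{s/2}$ with equality of norms, so the H\"older bound at exponent $s/2$ is already $\Zygmad{s-1}$-admissible, matching your claimed decoration $\approx_{\Zygmad{s-1},\eta''}$.
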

\begin{proof}
Since $\sup_{t\in B^n(\eta')}\Norm{A(t)}[\M^{n\times n}]\leq \frac{1}{2}$,
and $Y=(I+A) \grad$, we also have $\grad = (I+A)^{-1}Y$.  Thus, once we prove a certain
regularity on $A$, we can compare norms as in \cref{EqnPfLI::EquivNorm::HolderNorm,EqnPfLI::EquivNorm::ZygNorm}
by applying \cref{Prop::FuncSpaceRev::CompNorms}.
For example, once we show $\HNorm{A}{m}{s}[B^n(\eta')][\M^{n\times n}]\lesssim_{\Hmad{m}{s}} 1$, we will also have
$\HNorm{(I+A)^{-1}}{m}{s}[B^n(\eta')][\M^{n\times n}]\lesssim_{\Hmad{m}{s}} 1$.
It will then follow that constants which are $\HEad{m,s}$-admissible in the sense
of 
\cref{Defn::FuncSpaeRevComp::HEAdmiss}
(when applied to the vector fields $Y_1,\ldots, Y_n$) are $\Hmad{m}{s}$-admissible in the sense
of \cref{Defn::Results::HXAdmiss}.
 From here, \cref{Prop::FuncSpaceRev::CompNorms} implies \cref{EqnPfLI::EquivNorm::HolderNorm}.  Similar comments hold for Zygmund spaces;
 however, we are applying  \cref{Prop::FuncSpaceRev::CompNorms} with
 $\eta$ replaced by $\eta''$, and therefore $\ZygEad{s}$-admissible constants
 will also depend on an upper bound for $(\eta'')^{-1}$.  This is where the dependance
 on $\eta''$ enters in \cref{EqnPfLI::EquivNorm::ZygNorm}.

We first prove \cref{EqnPfLI::EquivNorm::HolderNorm,EqnPfLI::EquivNorm::HolderVF}.
We claim (for any function $f$),
\begin{equation}\label{EqnPfLI::EquivNorm::HolderNorm::Again}
    \HNorm{f}{m}{s}[B^n(\eta'')] \approx_{\Hmad{m-1}{s}} \HXNorm{f}{Y}{m}{s}[B^n(\eta'')],
\end{equation}
\begin{equation}\label{EqnPfLI::EquivNorm::HolderVF::Again}
    \HNorm{A}{m}{s}[B^n(\eta')][\M^{n\times n}]\lesssim_{\Hmad{m}{s}} 1,
\end{equation}
which are clearly equivalent to \cref{EqnPfLI::EquivNorm::HolderNorm,EqnPfLI::EquivNorm::HolderVF}.
We proceed by induction on $m$.  Using that $\CjNorm{A}{0}[B^n(\eta')][\M^{n\times n}]\leq \frac{1}{2}\lesssim_{\Hmad{-1}{s}} 1$,
the base case of \cref{EqnPfLI::EquivNorm::HolderNorm::Again} follows from
\cref{Prop::FuncSpaceRev::CompNorms}.  Using this and \cref{Prop::FuncSpaceRev::PushForwardNorm,Prop::PfLI::dPhiYj} we have
\begin{equation}\label{Eqn::PfLI::Basecijk::Holder}
    \HNorm{c_{i,j}^k\circ \Phi}{0}{s}[B^n(\eta')]\approx_{\Hmad{-1}{s}} \HXNorm{c_{i,j}^k\circ \Phi}{Y}{0}{s}[B^n(\eta')]
    \leq \HXNorm{c_{i,j}^k}{X}{0}{s}[B_X(x_0,\xi)]\lesssim_{\Hmad{0}{s}} 1.
\end{equation}
In light of \cref{Eqn::PfLI::Basecijk::Holder}, \cref{Prop:PfLI::MainODEProp} implies
$\HNorm{A}{0}{s}[B^n(\eta')][\M^{n\times n}]\lesssim_{\Hmad{0}{s}} 1$, completing the proof
of the base case $m=0$.

We assume \cref{EqnPfLI::EquivNorm::HolderNorm::Again,EqnPfLI::EquivNorm::HolderVF::Again}
for $m-1$ and prove them for $m$.  Because $\HNorm{A}{m-1}{s}[B^n(\eta')][\M^{n\times n}]\lesssim_{\Hmad{m-1}{s}} 1$, \cref{Prop::FuncSpaceRev::CompNorms} implies \cref{EqnPfLI::EquivNorm::HolderNorm::Again} for $m$.
Thus we need to show \cref{EqnPfLI::EquivNorm::HolderVF::Again}.

Using \cref{EqnPfLI::EquivNorm::HolderNorm::Again} and \cref{Prop::FuncSpaceRev::PushForwardNorm,Prop::PfLI::dPhiYj} we have
\begin{equation}\label{Eqn::PfLI::Inductcijk::Holder}
    \HNorm{c_{i,j}^k\circ \Phi}{m}{s}[B^n(\eta')]\approx_{\Hmad{m-1}{s}} \HXNorm{c_{i,j}^k\circ \Phi}{Y}{m}{s}[B^n(\eta')]\leq \HXNorm{c_{i,j}^k}{X}{m}{s}[B_X(x_0,\xi)]\lesssim_{\Hmad{m}{s}} 1.
\end{equation}
In light of \cref{Eqn::PfLI::Inductcijk::Holder}, \cref{Prop:PfLI::MainODEProp} implies
$\HNorm{A}{m}{s}[B^n(\eta')][\M^{n\times n}]\lesssim_{\Hmad{m}{s}} 1$, completing the proof
of \cref{EqnPfLI::EquivNorm::HolderVF::Again}, and therefore completing the proof of
\cref{EqnPfLI::EquivNorm::HolderNorm,EqnPfLI::EquivNorm::HolderVF}.

We turn to proving \cref{EqnPfLI::EquivNorm::ZygNorm,EqnPfLI::EquivNorm::ZygVF}.
We prove (for any function $f$)
\begin{equation}\label{EqnPfLI::EquivNorm::ZygNorm::Again}
    \ZygNorm{f}{s}[B^n(\eta'')]\approx_{\Zygmad{s-1},\eta''} \ZygXNorm{f}{Y}{s}[B^n(\eta'')],
\end{equation}
\begin{equation}\label{EqnPfLI::EquivNorm::ZygVF::Again}
    \ZygNorm{A}{s}[B^n(\eta')][\M^{n\times n}] \lesssim_{\Zygmad{s}} 1,
\end{equation}
which are clearly equivalent to \cref{EqnPfLI::EquivNorm::ZygNorm,EqnPfLI::EquivNorm::ZygVF}.

We first prove \cref{EqnPfLI::EquivNorm::ZygNorm::Again,EqnPfLI::EquivNorm::ZygVF::Again}
for $s\in (0,1]$.
\cref{EqnPfLI::EquivNorm::HolderVF::Again} shows
\begin{equation*}
    \HNorm{A}{0}{s/2}[B^n(\eta')][\M^{n\times n}]\lesssim_{\Hmad{-1}{s/2}} 1,
\end{equation*}
and therefore
\begin{equation*}
    \HNorm{A}{0}{s/2}[B^n(\eta')][\M^{n\times n}]\lesssim_{\Zygmad{s-1}} 1.
\end{equation*}
Using this, \cref{Prop::FuncSpaceRev::CompNorms} implies \cref{EqnPfLI::EquivNorm::ZygNorm::Again}.
In particular, since $\eta'$ is a $\Zygmad{-1}$-admissible constant (since it is a $0$-admissible constant),
and using \cref{EqnPfLI::EquivNorm::ZygNorm::Again} and \cref{Prop::FuncSpaceRev::PushForwardNorm,Prop::PfLI::dPhiYj},
\begin{equation}\label{Eqn::PfLI::Basecijk::Zyg}
    \ZygNorm{c_{i,j}^k\circ \Phi}{s}[B^n(\eta')]\approx_{\Zygmad{s-1}} \ZygXNorm{c_{i,j}^k\circ \Phi}{Y}{s}[B^n(\eta')] \leq \ZygXNorm{c_{i,j}^k}{X}{s}[B_X(x_0,\xi)]\lesssim_{\Zygmad{s}} 1.
\end{equation}
In light of \cref{Eqn::PfLI::Basecijk::Zyg}, \cref{Prop:PfLI::MainODEProp} implies \cref{EqnPfLI::EquivNorm::ZygVF::Again}.

We now assume \cref{EqnPfLI::EquivNorm::ZygNorm::Again,EqnPfLI::EquivNorm::ZygVF::Again}
for $s\in (0,k]$ and prove them for $s\in (k,k+1]$.  Fix $s\in (k,k+1]$.
By the inductive hypothesis, we know $\ZygNorm{A}{s-1}[B^n(\eta')][\M^{n\times n}]\lesssim_{\Zygmad{s-1}} 1$.
Using this, \cref{Prop::FuncSpaceRev::CompNorms} implies \cref{EqnPfLI::EquivNorm::ZygNorm::Again}
for $s$.  In particular, since $\eta'$ is a $\Zygmad{-1}$-admissible constant (since it is a $0$-admissible constant),
and using \cref{EqnPfLI::EquivNorm::ZygNorm::Again} and  \cref{Prop::FuncSpaceRev::PushForwardNorm,Prop::PfLI::dPhiYj},
\begin{equation}\label{Eqn::PfLI::Inductcijk::Zyg}
    \ZygNorm{c_{i,j}^k\circ \Phi}{s}[B^n(\eta')]\approx_{\Zygmad{s-1}} \ZygXNorm{c_{i,j}^k\circ \Phi}{Y}{s}[B^n(\eta')] \leq \ZygXNorm{c_{i,j}^k}{X}{s}[B_X(x_0,\xi)]\lesssim_{\Zygmad{s}} 1.
\end{equation}
In light of \cref{Eqn::PfLI::Inductcijk::Zyg}, \cref{Prop:PfLI::MainODEProp} implies \cref{EqnPfLI::EquivNorm::ZygVF::Again}.  

Finally, \cref{EqnPfLi::EquivNorm::Pullbackcijk::Holder} was
established in \cref{Eqn::PfLI::Basecijk::Holder,Eqn::PfLI::Inductcijk::Holder}
while \cref{EqnPfLi::EquivNorm::Pullbackcijk::Zyg} was established in
\cref{Eqn::PfLI::Basecijk::Zyg,Eqn::PfLI::Inductcijk::Zyg}.
\end{proof}

\begin{prop}
There exists a $1$-admissible constant $\eta_1\in (0,\eta']$ such that $\Phi\big|_{B^n(\eta_1)}$
is injective.  Furthermore, $\Phi(B^n(\eta_1))\subseteq B_X(x_0,\xi)$ is open and
$\Phi:B^n(\eta_1)\rightarrow \Phi(B^n(\eta_1))$ is a $C^2$-diffeomorphism.
\end{prop}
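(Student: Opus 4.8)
The plan is to derive injectivity of $\Phi$ on a small ball from the hypothesis built into $\delta_0$, with the quantitative inverse function theorem (\cref{Prop::AppIFT::MainProp}) applied to $Y_1,\dots,Y_n$ and the intertwining relation $d\Phi(t)Y_j(t)=X_j(\Phi(t))$ of \cref{Prop::PfLI::dPhiYj} as the two engines. The non-injectivity parts are already essentially in hand: $\Phi$ is $C^2$ by \cref{Lemma::PfLI::PhiC2}; since $Y=(I+A)\grad$ with $\sup_{t}\Norm{A(t)}[\M^{n\times n}]\le\tfrac12$, the vectors $Y_1(t),\dots,Y_n(t)$ form a basis of $\R^n$ for every $t\in B^n(\eta')$, and because $d\Phi(t)Y_j(t)=X_j(\Phi(t))$ with $X_1,\dots,X_n$ spanning the tangent space of $B_X(x_0,\xi)$ everywhere (\cref{Prop::ResQual::Mmanif}), $d\Phi(t)$ is invertible for all $t\in B^n(\eta')$. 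Hence $\Phi$ is an open map and a local $C^2$-diffeomorphism on $B^n(\eta')$, so once injectivity on some $B^n(\eta_1)$ is known, $\Phi(B^n(\eta_1))$ is automatically open in $B_X(x_0,\xi)$ and $\Phi\colon B^n(\eta_1)\to\Phi(B^n(\eta_1))$ is a $C^2$-diffeomorphism.

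For injectivity I would first apply \cref{Prop::AppIFT::MainProp} to $Y_1,\dots,Y_n$ on $B^n(\eta')$: by \cref{EqnPfLI::EquivNorm::HolderVF} (with $m=1$, $s=0$) we have $\Norm{Y_j}[\CjSpace{1}[B^n(\eta')][\R^n]]\lesssim_1 1$, while $|\det(Y_1|\cdots|Y_n)|=|\det(I+A)|\ge 2^{-n}$ since $\Norm{A}[\M^{n\times n}]\le\tfrac12$ bounds the spectral radius of $A$. Thus the resulting constants $\kappa$ and $\Delta_0$ are $1$-admissible. Write $\Psi_u(v):=e^{v_1Y_1+\cdots+v_nY_n}u$. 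Since $s\mapsto\Psi_u(sv)$ is the integral curve of $\sum_jv_jY_j$ through $u$, \cref{Prop::PfLI::dPhiYj} and uniqueness for the flow of $\sum_jv_jX_j$ give
\begin{equation*}
\Phi(\Psi_u(v))=e^{v_1X_1+\cdots+v_nX_n}\Phi(u)
\end{equation*}
whenever the segment $\{\Psi_u(sv):s\in[0,1]\}$ stays inside $B^n(\eta')$. Using $|\Psi_u(v)-u|\lesssim_1|v|$ (from the $C^1$ bound on $Y$), I would then choose a $1$-admissible constant $\delta\in(0,\min\{\Delta_0,\delta_0\}]$ small enough that $\kappa\delta\le\eta'/2$ and $\Psi_u(B^n(\delta))\subseteq B^n(\eta')$ for all $u\in B^n(\kappa\delta)$, and set $\eta_1:=\kappa\delta$; note $\eta_1\le\eta'\le\eta_0$.

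Now suppose $t_1,t_2\in B^n(\eta_1)$ and $\Phi(t_1)=\Phi(t_2)=:z$. Since $|t_1|<\kappa\delta$, \cref{Prop::AppIFT::MainProp} gives $B^n(\kappa\delta)\subseteq\Psi_{t_1}(B^n(\delta))$, so there is a unique $v\in B^n(\delta)$ with $\Psi_{t_1}(v)=t_2$; applying the displayed identity with $u=t_1$ yields $z=\Phi(t_2)=\Phi(\Psi_{t_1}(v))=e^{v_1X_1+\cdots+v_nX_n}z$. To invoke the defining property of $\delta_0$ I would check its three hypotheses: $z=\Phi(t_1)\in B_{X_{J_0}}(x_0,\xi)$ (immediate, as $\Phi$ maps $B^n(\eta_0)$ into $B_{X_{J_0}}(x_0,\xi)$ and $X_{J_0}=X$ here); $X_{J_0}$ satisfies $\sC(z,\delta,B_{X_{J_0}}(x_0,\xi))$, because for $a\in B^n(\delta)$ the point $e^{a_1X_1+\cdots+a_nX_n}z=\Phi(\Psi_{t_1}(a))$ is defined and lies in $\Phi(B^n(\eta'))\subseteq B_{X_{J_0}}(x_0,\xi)$; and $X_1(z),\dots,X_n(z)$ are linearly independent since they span $T_zB_X(x_0,\xi)$. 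As $\delta\le\delta_0$ and $v\in B^n(\delta)$, the definition of $\delta_0$ forces $v=0$, hence $t_2=\Psi_{t_1}(0)=t_1$; this proves $\Phi|_{B^n(\eta_1)}$ is injective and finishes the argument.

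The main obstacle is bookkeeping rather than conceptual: one must pick the single scale $\delta$ so that it is simultaneously $\le\Delta_0$, $\le\delta_0$, small enough that $\kappa\delta\le\eta'/2$, and small enough that $\Psi_u(B^n(\delta))\subseteq B^n(\eta')$ for $|u|\le\kappa\delta$, all while keeping $\delta$ (and therefore $\eta_1$) a $1$-admissible constant; and one must be careful that the intertwining identity $\Phi\circ\Psi_u=e^{\,\cdot\,X}\circ\Phi$ is applied along the whole segment $\{\Psi_{t_1}(sv):s\in[0,1]\}$, which is precisely why one arranges $\Psi_{t_1}(B^n(\delta))\subseteq B^n(\eta')$ rather than only $\Psi_{t_1}(v)\in B^n(\eta')$.
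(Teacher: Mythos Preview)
Your proposal is correct and follows essentially the same route as the paper: apply \cref{Prop::AppIFT::MainProp} to the $C^1$-bounded, uniformly nondegenerate vector fields $Y_1,\dots,Y_n$ to obtain $\kappa,\Delta_0$, set $\eta_1$ to be (essentially) $\kappa\cdot\min\{\Delta_0,\delta_0\}$ capped by $\eta'$, and then use the intertwining $\Phi\circ\Psi_{u}(v)=e^{v\cdot X}\Phi(u)$ together with the defining property of $\delta_0$ to force $v=0$; the remaining diffeomorphism assertions follow from \cref{Lemma::PfLI::PhiC2} and invertibility of $d\Phi$. Your extra bookkeeping condition $\Psi_u(B^n(\delta))\subseteq B^n(\eta')$ is already guaranteed by \cref{Prop::AppIFT::MainProp} (``$\Psi_u$ is defined on $B^n(\delta)$''), so you could streamline slightly, but nothing is wrong.
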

\begin{proof}
Consider the maps, defined for $u,v\in \R^n$ sufficiently small, given by
\begin{equation*}
    \Psi_{u}(v)=e^{v_1 Y_1+\cdots +v_n Y_n}u.
\end{equation*}
Since $Y=(I+A)\grad$ and $\Norm{A(t)}[\M^{n\times n}]\leq \frac{1}{2}$, $\forall t\in B^n(\eta')$,
we have $|\det(Y_1(t)|\cdots|Y_n(t))|\geq c_n>0$, $\forall t\in B^n(\eta')$, where $c_n>0$
can be chosen to depend only on $n$.
Furthermore, by \cref{Prop::PfLI::EquivNorms} (taking $m=1$, $s=0$ in \cref{EqnPfLI::EquivNorm::HolderVF}),
we have
\begin{equation}\label{Eqn::PfLI::Injective::YjC1}
\CjNorm{Y_j}{1}[B^n(\eta')][\R^n]\lesssim_{\Hmad{1}{0}} 1.
\end{equation}
Thus, by the definition of $1$-admissible constants, we have $\CjNorm{Y_j}{1}[B^n(\eta')][\R^n]\lesssim_1 1$.

Take $\Delta_0,\kappa>0$ as in \cref{Prop::AppIFT::MainProp} (with $\eta'$ playing the role of $\eta$
in that proposition).  In light of the above remarks, $\Delta_0$ and $\kappa$
can be taken to be $1$-admissible constants.
Set $\delta_1:=\min\{\Delta_0, \delta_0,1\}$ so that $\delta_1>0$ is a $1$-admissible constant;
see \cref{Section::Series::Quant} for the definition of $\delta_0$.  Let $\eta_1:=\min\{\delta_1\kappa,\eta'\}>0$
so that $\eta_1$ is a $1$-admissible constant.

We claim $\Phi\big|_{B^n(\eta_1)}$ is injective.  Let $u_1,u_2\in B^n(\eta_1)$ be such that
$\Phi(u_1)=\Phi(u_2)$; we wish to show $u_1=u_2$.
By \cref{Prop::AppIFT::MainProp} there exists $v\in B^n(\delta_1)$ such that
$u_2=\Psi_{u_1}(v)$, i.e., $u_2=e^{v\cdot Y} u_1$.  Since $d\Phi(u)Y_j(u)=X_j(\Phi(u))$ (\cref{Prop::PfLI::dPhiYj}), it follows that
\begin{equation*}
    \Phi(u_1)=\Phi(u_2)=\Phi(e^{v\cdot Y} u_1) = e^{v\cdot X} \Phi(u_1).
\end{equation*}
Also, we know $X_1(\Phi(u)),\ldots, X_n(\Phi(u))$ are linearly independent (as a consequence
of \cref{Prop::ResQual::Mmanif}).  Finally, $X$ satisfies $\sC(\Phi(u_1),\delta_1, B_X(x_0,\xi))$
because $Y$ satisfies $\sC(u_1,\delta_1,B^n(\eta'))$ (by \cref{Prop::AppIFT::MainProp}).
Hence, by the definition of $\delta_0$, we have $v=0$.  We conclude
$u_2=e^{v\cdot Y} u_1 = u_1$, and therefore $\Phi$ is injective.

Combining the fact that $d\Phi(u)Y_j(u)=X_j(\Phi(u))$ and $X_1,\ldots, X_n$ span the tangent
space at every point of $B_X(x_0,\xi)$, the Inverse Function Theorem implies
$\Phi:B^n(\eta')\rightarrow B_X(x_0,\xi)$ is an open map and is locally
a $C^1$ diffeomorphism.  In particular,
$\Phi(B^n(\eta_1))$ is open.
Hence, since $\Phi$ is injective, locally a $C^1$ diffeomorphism, and $\Phi$ is $C^2$ (\cref{Lemma::PfLI::PhiC2}), we conclude $\Phi:B^n(\eta_1)\rightarrow \Phi(B^n(\eta_1))$ is a $C^2$-diffeomorphism.
\end{proof}

\begin{lemma}
There exists a $1$-admissible constant $\xi_1>0$ such that $B_X(x_0,\xi_1)\subseteq \Phi(B^n(\eta_1))$.
\end{lemma}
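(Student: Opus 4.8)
The goal is to produce a $1$-admissible radius $\xi_1>0$ with $B_X(x_0,\xi_1)\subseteq \Phi(B^n(\eta_1))$. Since we are in the case $n=q$, every point of $B_X(x_0,\xi)$ is reached by a curve whose velocity is an $L^\infty$ combination of the $X_j$'s, and the map $\Phi$ translates such curves on $B_X(x_0,\xi)$ into curves on $B^n(\eta')$ using the pulled-back vector fields $Y_j$, by virtue of $d\Phi(u)Y_j(u)=X_j(\Phi(u))$ (\cref{Prop::PfLI::dPhiYj}). The first step is therefore to set up the correspondence: given $\xi_1>0$ to be chosen and a point $y\in B_X(x_0,\xi_1)$, pick $\gamma:[0,1]\to B_X(x_0,\xi)$ with $\gamma(0)=x_0$, $\gamma(1)=y$, $\gamma'(t)=\sum_j a_j(t)\xi_1 X_j(\gamma(t))$, $\Norm{\sum|a_j|^2}[L^\infty]<1$, and try to lift $\gamma$ through $\Phi$ to a curve $\tilde\gamma$ in $B^n(\eta_1)$ with $\tilde\gamma(0)=0$ and $\tilde\gamma'(t)=\sum_j a_j(t)\xi_1 Y_j(\tilde\gamma(t))$. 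Because $Y_1,\ldots,Y_n$ are $C^1$ and bounded on $B^n(\eta')$ (with $\CjNorm{Y_j}{1}[B^n(\eta')][\R^n]\lesssim_1 1$, as established via \cref{Prop::PfLI::EquivNorms}), the solution $\tilde\gamma$ to this ODE exists on $[0,1]$ provided it does not exit a fixed ball: explicitly, $|\tilde\gamma'(t)|\lesssim_1 \xi_1$ since $Y_j$ is bounded and $|a_j|\le 1$, so $\tilde\gamma$ stays within distance $\lesssim_1 \xi_1$ of the origin; choosing $\xi_1$ a sufficiently small $1$-admissible constant keeps $\tilde\gamma([0,1])\subseteq B^n(\eta_1)$.

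The second step is to verify that $\Phi(\tilde\gamma(t))=\gamma(t)$ for all $t$, so that $y=\gamma(1)=\Phi(\tilde\gamma(1))\in\Phi(B^n(\eta_1))$, which is exactly what is wanted. This is a uniqueness argument for ODEs: both $t\mapsto \Phi(\tilde\gamma(t))$ and $t\mapsto\gamma(t)$ start at $x_0$, and using $d\Phi(\tilde\gamma(t))Y_j(\tilde\gamma(t))=X_j(\Phi(\tilde\gamma(t)))$ (\cref{Prop::PfLI::dPhiYj}) together with the chain rule, $\frac{d}{dt}\Phi(\tilde\gamma(t))=d\Phi(\tilde\gamma(t))\tilde\gamma'(t)=\sum_j a_j(t)\xi_1 X_j(\Phi(\tilde\gamma(t)))$, so $\Phi\circ\tilde\gamma$ solves the same ODE on $B_X(x_0,\xi)$ that $\gamma$ does. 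Since $X_1,\ldots,X_n$ are $C^1$ and span the tangent space at every point of $B_X(x_0,\xi)$ (by \cref{Prop::ResQual::Mmanif}), and $\Phi$ is $C^2$ (\cref{Lemma::PfLI::PhiC2}), local uniqueness for this ODE gives $\Phi\circ\tilde\gamma=\gamma$ on $[0,1]$. One should be slightly careful that the lifted curve $\tilde\gamma$ and the image curve both remain in regions where the relevant vector fields are $C^1$, but this is handled by the smallness of $\xi_1$ arranged in the previous step.

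The main obstacle, and the only place requiring genuine care, is tracking that the lifted curve $\tilde\gamma$ stays inside $B^n(\eta_1)$ — i.e., choosing $\xi_1$ so that the flow never escapes the domain where $Y_j$ is controlled — while simultaneously ensuring that the forward image $\Phi\circ\tilde\gamma$ stays in $B_X(x_0,\xi)$ so that all the cited results apply along the curve. Since $\Phi(B^n(\eta_1))\subseteq B_{X_{J_0}}(x_0,\chi)\subseteq B_X(x_0,\xi)$ by the already-proved parts \cref{Item::Results::xi2}, keeping $\tilde\gamma$ in $B^n(\eta_1)$ automatically keeps the image in $B_X(x_0,\xi)$, so the issue reduces to a single a-priori estimate: $|\tilde\gamma(t)-0|\le \int_0^t|\tilde\gamma'(s)|\,ds \lesssim_1 \xi_1$, valid as long as $\tilde\gamma$ has not yet left $B^n(\eta_1)$, which by a standard continuity/continuation argument forces $\tilde\gamma$ never to leave once $\xi_1$ is chosen a small enough $1$-admissible constant. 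With that estimate in hand the proof is complete, and the resulting $\xi_1$ is manifestly $1$-admissible since it depends only on $\eta_1$ and the $C^1$ bounds on the $Y_j$, both of which are $1$-admissible.
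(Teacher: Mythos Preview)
Your proposal is correct and takes essentially the same approach as the paper. The only cosmetic difference is that the paper, having already established that $\Phi:B^n(\eta_1)\to\Phi(B^n(\eta_1))$ is a $C^2$ diffeomorphism, works directly with $\Phi^{-1}\circ\gamma$ rather than constructing $\tilde\gamma$ as an ODE solution and invoking uniqueness; the core continuation argument and the a-priori estimate $|\tilde\gamma(t)|\lesssim \xi_1$ via the $C^0$ bound on the $Y_j$ are identical.
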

\begin{proof}
Fix $\xi_1\in (0,\xi]$ to be chosen later, and suppose $y\in B_X(x_0,\xi_1)$.  Thus,
there exists $\gamma:[0,1]\rightarrow B_X(x_0,\xi)$ with $\gamma(0)=x_0$,
$\gamma(1)=y$, $\gamma'(t)=\sum_{j=1}^n b_j(t) \xi_1 X_j(\gamma(t))$,
$\Norm{\sum |b_j(t)|^2}[L^\infty([0,1])]<1$.
Define
$$t_0:=\sup \{t\in [0,1] : \gamma(t')\in \Phi(B^n(\eta_1/2)), \forall 0\leq t'\leq t\}.$$
We want to show that by taking $\xi_1>0$ to be a sufficiently small $1$-admissible
constant, we have $t_0=1$ and $\gamma(1)\in \Phi(B^n(\eta_1/2))$.
Note that $t_0\geq 0$, since $\gamma(0)=x_0=\Phi(0)$.

Suppose not.  Then $|\Phi^{-1}(\gamma(t_0))|=\frac{\eta_1}{2}$.  And, using that
$\CNorm{Y_j}{B^n(\eta_1);\R^n}\lesssim_0 1$ and $\Phi(0)=x_0$,
\begin{equation*}
    \eta_1/2 = |\Phi^{-1}(\gamma(t_0))| = \left|\int_0^{t_0} \frac{d}{dt} \Phi^{-1}\circ \gamma (t)\: dt\right|
    =\left|\int_0^{t_0} \sum_{j=1}^n b_j(t) \xi_1 Y_j(\Phi^{-1}\circ\gamma(t))\: dt\right|\lesssim_0 \xi_1.
\end{equation*}
This is a contradiction if $\xi_1$ is a sufficiently small $1$-admissible constant,
completing the proof.
\end{proof}

\begin{lemma}
$[Y_i,Y_j]=\sum_{k=1}^n \ct_{i,j}^k Y_k$ on $B^n(\eta_1)$, where
$\HNorm{\ct_{i,j}^k}{m}{s}[B^n(\eta_1)]\lesssim_{\Hmad{m}{s}} 1$
and $\ZygNorm{\ct_{i,j}^k}{s}[B^n(\eta_1)]\lesssim_{\Zygmad{s}} 1$.
\end{lemma}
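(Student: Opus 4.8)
The plan is to set $\ct_{i,j}^k := (c_{i,j}^k\circ\Phi)\big|_{B^n(\eta_1)}$ and to establish the two assertions separately: first the structural identity $[Y_i,Y_j]=\sum_{k=1}^n \ct_{i,j}^k Y_k$ on $B^n(\eta_1)$, and then the regularity bounds. For the identity, the key input is \cref{Prop::PfLI::dPhiYj}, which gives $d\Phi(t)Y_j(t)=X_j(\Phi(t))$ for $t\in B^n(\eta')$; restricted to $B^n(\eta_1)$, where $\Phi$ is a $C^2$ diffeomorphism onto its image, this says $Y_j=\Phi^{*}X_j$, i.e.\ $Y_j$ is $\Phi$-related to $X_j$. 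Formally, applying $\Phi^{*}$ to the relation $[X_i,X_j]=\sum_k c_{i,j}^k X_k$ then yields $[Y_i,Y_j]=[\Phi^{*}X_i,\Phi^{*}X_j]=\Phi^{*}[X_i,X_j]=\sum_k(c_{i,j}^k\circ\Phi)\,\Phi^{*}X_k=\sum_k\ct_{i,j}^k Y_k$, which is the claim.

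The one genuine subtlety, and the step I expect to need the most care, is that $X_1,\ldots,X_n$ are merely $C^1$ and $\Phi$ merely $C^2$, so $[Y_i,Y_j]$ exists only as a continuous vector field and the naturality of the Lie bracket under $\Phi^{*}$ is not literally the textbook statement. The cleanest rigorous route is to test against $C^2$ functions. Given $g\in C^2(B^n(\eta_1))$, put $f:=g\circ\Phi^{-1}$, which is $C^2$ on $\Phi(B^n(\eta_1))$ since $\Phi^{-1}$ is $C^2$. From $d\Phi(t)Y_j(t)=X_j(\Phi(t))$ one gets $Y_j(h\circ\Phi)=(X_jh)\circ\Phi$ for every $C^1$ function $h$; applying this with $h=f$ and then, since $X_jf$ is $C^1$, once more with $h=X_jf$, gives $Y_i(Y_jg)=(X_iX_jf)\circ\Phi$ and symmetrically $Y_j(Y_ig)=(X_jX_if)\circ\Phi$. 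Hence $[Y_i,Y_j]g=([X_i,X_j]f)\circ\Phi=\sum_k(c_{i,j}^k\circ\Phi)\big((X_kf)\circ\Phi\big)=\sum_k\ct_{i,j}^k (Y_kg)$. Since $Y_1,\ldots,Y_n$ span the tangent space at every point of $B^n(\eta_1)$ (because $Y=(I+A)\grad$ with $\Norm{A(t)}[\M^{n\times n}]\leq\tfrac12$) and $g$ ranges over all of $C^2(B^n(\eta_1))$, this identifies the continuous vector field $[Y_i,Y_j]$ and proves the structural identity. Alternatively, one can mimic the approximation argument of \cref{Prop::ODE::Derive::MainProp}, replacing $X_j$ by smooth $X_j^{\sigma}\to X_j$ in $C^1$, running the classical naturality at level $\sigma$, and passing to the limit.

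For the regularity bounds there is essentially nothing left to do: in the setting of this section ($n=q$, so all indices lie in $\{1,\ldots,n\}$), \cref{Prop::PfLI::EquivNorms} already records $\HNorm{c_{i,j}^k\circ\Phi}{m}{s}[B^n(\eta')]\lesssim_{\Hmad{m}{s}}1$ for $m\in\N$, $s\in[0,1]$, and $\ZygNorm{c_{i,j}^k\circ\Phi}{s}[B^n(\eta')]\lesssim_{\Zygmad{s}}1$ for $s\in(0,\infty)$. Since $\eta_1\in(0,\eta']$, restricting to $B^n(\eta_1)$ only decreases these norms (immediate from the definitions of the norms), so $\HNorm{\ct_{i,j}^k}{m}{s}[B^n(\eta_1)]\lesssim_{\Hmad{m}{s}}1$ and $\ZygNorm{\ct_{i,j}^k}{s}[B^n(\eta_1)]\lesssim_{\Zygmad{s}}1$, which completes the proof. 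Thus the only real work is the bracket-naturality step under low regularity; the rest is bookkeeping against \cref{Prop::PfLI::dPhiYj} and \cref{Prop::PfLI::EquivNorms}.
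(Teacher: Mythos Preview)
Your proposal is correct and follows essentially the same approach as the paper: set $\ct_{i,j}^k=c_{i,j}^k\circ\Phi$, use that $\Phi$ is a $C^2$ diffeomorphism on $B^n(\eta_1)$ to pull back the bracket relation, and invoke \cref{Prop::PfLI::EquivNorms} (together with $\eta_1\leq\eta'$) for the regularity bounds. The paper simply asserts the naturality step $[\Phi^{*}X_i,\Phi^{*}X_j]=\Phi^{*}[X_i,X_j]$ without comment, whereas you supply a careful justification via testing against $C^2$ functions; this extra care is welcome but not a different argument.
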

\begin{proof}
Because $\Phi:B^n(\eta_1)\rightarrow \Phi(B^n(\eta_1))$ is a diffeomorphism,
we have
\begin{equation*}
    [Y_i,Y_j] = [\Phi^{*} X_i, \Phi^{*} X_j] = \Phi^{*} [X_i,X_j] =\Phi^{*}\sum_{k} c_{i,j}^k X_k =\sum_{k} \ct_{i,j}^k Y_k,
\end{equation*}
with $\ct_{i,j}^k=c_{i,j}^k\circ \Phi$.
From here the result follows from \cref{Prop::PfLI::EquivNorms}, since $\eta_1\leq \eta'$.
\end{proof}

\begin{proof}[Proof of \cref{Thm::Results::MainThm} when $n=q$]
As mentioned above, we take $\chi:=\xi$.  We also take $\xi_2:=\xi_1$.
Note that \cref{Item::Results::bklReg} is vacuuous when $n=q$.
Also, since $n=q$, $X=X_{J_0}$ and $Y=Y_{J_0}$.
With these remarks, all of the parts of \cref{Thm::Results::MainThm} except for \cref{Item::Results::AbstractNorm}
were proved above.
We clarify one point in \cref{Item::Results::EquivNorms}.
In \cref{Prop::PfLI::EquivNorms}, \cref{Item::Results::EquivNorms} was proved
on $B^n(\eta'')$ for any $\eta''\in (0,\eta']$.  Here, we are taking $\eta''=\eta_1$.
However, in the case of Zygmund spaces
the implicit constant in \cref{EqnPfLI::EquivNorm::ZygNorm}
also depended on the choice of $\eta''$.  Since $\eta_1$ is a $1$-admissible constant,
if $s>2$, it is a $\Zygmad{s-1}$-admissible constant.
This is why \cref{Item::Results::EquivNorms} is only stated for $s>2$ in the case of Zygmund spaces--in
the case $s\leq 1$, the implicit constants also depend on $\eta_1$, and are therefore $1$-admissible
constants.\footnote{It is classical that $\HSpace{0}{s}[B^n(\eta_1)]$ and
$\ZygSpace{s}[B^n(\eta_1)]$ have comparable norms for $s\in (0,1)$.  However, the constants
involved in the comparability of these norms depend on $\eta_1$, and are therefore
$1$-admissible.}

We close the proof by proving \cref{Item::Results::AbstractNorm}.  We prove the result
for Zygmund spaces, the same proof works for H\"older spaces.
Let $f\in \CSpace{B_{X_{J_0}}(x_0,\chi)}$.
We use \cref{Prop::PfLI::EquivNorms} in the case $\eta''=\eta'$, and that $\eta'$ is a
$\Zygmad{-1}$-admissible constant.  We also use \cref{Prop::FuncSpaceRev::PushForwardNorm}.
We have, for $s\in (0,\infty)$,
\begin{equation*}
    \ZygNorm{f\circ \Phi}{s}[B^n(\eta_1)]\leq \ZygNorm{f\circ\Phi}{s}[B^n(\eta')]
    \approx_{\Zygmad{s-1}} \ZygXNorm{f\circ\Phi}{Y}{s}[B^n(\eta')]
    \leq \ZygXNorm{f}{X}{s}[B_X(x_0,\chi)],
\end{equation*}
completing the proof.
\end{proof}

In the third paper of this series, it will be be convenient to use a slight modification of \cref{Thm::Results::MainThm} in the case $n=q$, where
we replace $1$-admissible constants with a slightly different definition.  We present this here.

\begin{defn}
In the case $n=q$, if we say $C$ is a $1'$-admissible constant, it means that we assume $c_{j,k}^l\circ \Phi\in \CjSpace{1}[B^n(\eta_0)]$, for $1\leq j,k,l\leq n$.
$C$ is then allowed to depend only on upper bounds for $n$, $\xi^{-1}$, $\eta^{-1}$, $\delta_0^{-1}$, and $\CjNorm{c_{j,k}^l\circ \Phi}{1}[B^n(\eta_0)]$
and $\CNorm{c_{j,k}^l}{B_{X_{J_0}}(x_0,\xi)}$
 ($1\leq j,k,l\leq n$).
\end{defn}

\begin{prop}
In the case $n=q$, \cref{Thm::Results::MainThm} (except for \cref{Item::Results::EquivNorms}) holds with the following modifications.
The assumption $c_{j,k}^l\in \CXjSpace{X_{J_0}}{1}[B_{X_{J_0}}(x_0,\xi)]$ is replaced by $c_{j,k}^l\circ \Phi\in \CjSpace{1}[B^n(\eta_0)]$
and $1$-admissible constants are replaced with $1'$-admissible constants throughout.
\end{prop}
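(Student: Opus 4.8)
The plan is to rerun the proof of \cref{Thm::Results::MainThm} in the case $n=q$ from \cref{Section::Proofs::LI}, keeping track of exactly where the standing hypothesis $c_{j,k}^l\in \CXjSpace{X_{J_0}}{1}[B_{X_{J_0}}(x_0,\xi)]$ is used, and showing that each such use either needs no change or can be replaced by the weaker hypothesis $c_{j,k}^l\circ\Phi\in \CjSpace{1}[B^n(\eta_0)]$ at the cost of replacing $1$-admissible constants by $1'$-admissible constants. The key observation is that the old $C^1$ hypothesis enters \cref{Section::Proofs::LI} essentially only through two channels: (a) the assertion in \cref{Prop:PfLI::MainODEProp} that the solution $A$ of \cref{Eqn::PfLI::MainODE} is $C^1$ (hence that $\Phi$ is $C^2$), and (b) the bound $\CjNorm{Y_j}{1}[B^n(\eta')][\R^n]\lesssim_1 1$ used to invoke \cref{Prop::AppIFT::MainProp} in the injectivity argument and in the inclusion $B_X(x_0,\xi_1)\subseteq\Phi(B^n(\eta_1))$. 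Everything else — $\chi=\xi$ together with \cref{Item::Results::WedgeNonzero}, \cref{Item::Results::BigWedge}, \cref{Item::Results::Open}; the nesting of the balls; \cref{Item::Results::ASize}; and the H\"older/Zygmund regularity assertions \cref{Item::Results::YReg}, \cref{Item::Results::cjklReg}, \cref{Item::Results::AbstractNorm}, which are phrased with the unchanged $\Had{\cdot}$- and $\Zygad{\cdot}$-admissible constants — goes through verbatim, since \cref{Prop::PfLI::dPhiYj,Lemma::PfLI::PhiC2,Prop::PfLI::EquivNorms} only ever use that $Y_1,\dots,Y_n$ are $C^1$ and span and that $\Phi$ is a $C^1$ (indeed $C^2$) map (and \cref{Item::Results::bklReg} is vacuous for $n=q$).

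For channel (a): under the new hypothesis the coefficient matrix $C(t)$ of \cref{Eqn::PfLI::MainODE}, whose $(j,k)$ entry is $\sum_l t_l\,c_{j,l}^k\circ\Phi(t)$, lies in $\CjSpace{1}[B^n(\eta')][\M^{n\times n}]$ with $\CjNorm{C}{1}[B^n(\eta')]\lesssim_{1'}1$ (using that $\eta'$ is $0$-admissible, hence $1'$-admissible). Then \cref{Prop::ODE::ExistMainProp} applied with $m=1$, $l=0$ — recalling from \cref{Rmk::ODE::ImportantSpaces} that $\CjSpace{1}[B^n(\eta)][\M^{n\times n}]=C^{1,0,\omega}$ — gives $A\in \CjSpace{1}[B^n(\eta')][\M^{n\times n}]$ with $\CjNorm{A}{1}[B^n(\eta')]\lesssim_{1'}1$. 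With $A$ now $C^1$, the proofs of \cref{Prop::PfLI::dPhiYj} and \cref{Lemma::PfLI::PhiC2} are unchanged, so $\Phi$ is $C^2$ and $d\Phi(t)Y_j(t)=X_j(\Phi(t))$; and \cref{Prop::PfLI::EquivNorms} still holds word for word, as its proof invokes only \cref{Prop::FuncSpaceRev::CompNorms,Prop::FuncSpaceRev::PushForwardNorm} (valid for the $C^1$ map $\Phi$) together with the relevant $\Had{\cdot}$/$\Zygad{\cdot}$ hypotheses, which are encoded in the unchanged admissible-constant notation.

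For channel (b): rather than extracting $\CjNorm{Y_j}{1}$ from the norm-comparison machinery of \cref{Prop::PfLI::EquivNorms} (which is where the $X_{J_0}$-regularity of the $c$'s would be needed), I would read it off directly from the estimate just obtained: $Y_j=\diff{t_j}+\sum_k a_j^k\diff{t_k}$ with $A=(a_j^k)$, so $\CjNorm{Y_j}{1}[B^n(\eta')][\R^n]\lesssim_{1'}1$. Hence the constants $\Delta_0,\kappa$ produced by \cref{Prop::AppIFT::MainProp} (with $\eta'$ playing the role of $\eta$ there), and then $\delta_1=\min\{\Delta_0,\delta_0,1\}$ and $\eta_1=\min\{\delta_1\kappa,\eta'\}$, are $1'$-admissible (a $1'$-admissible constant is allowed to depend on $\delta_0^{-1}$), and the injectivity proof, the proof that $\Phi:B^n(\eta_1)\to\Phi(B^n(\eta_1))$ is a $C^2$ diffeomorphism, and the lemma giving a $1$-admissible $\xi_1$ with $B_X(x_0,\xi_1)\subseteq\Phi(B^n(\eta_1))$ all go through with $1$ replaced by $1'$ (that last lemma uses only $\CNorm{Y_j}{B^n(\eta_1)}\lesssim_0 1$, immediate from $\CjNorm{A}{0}\leq\frac12$). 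One then takes $\xi_2:=\xi_1$ as before. The exclusion of \cref{Item::Results::EquivNorms} is to be expected, since its implicit constants, already at order one, encode H\"older regularity of the $c$'s with respect to $X_{J_0}$, which the $1'$-hypothesis does not provide. The main — indeed essentially the only — obstacle is the bookkeeping: verifying that every appearance of a $1$-admissible constant in \cref{Section::Proofs::LI} is either already $0$-admissible or can be re-derived as a $1'$-admissible constant from the single new input $\CjNorm{A}{1}[B^n(\eta')]\lesssim_{1'}1$.
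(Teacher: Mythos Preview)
Your proposal is correct and takes essentially the same approach as the paper. The paper's proof is a terse two-sentence sketch making precisely your key point: the only place the $\CXjSpace{X_{J_0}}{1}$ hypothesis on the $c_{j,k}^l$ enters the $n=q$ argument is in establishing $\CjNorm{A}{1}[B^n(\eta')][\M^{n\times n}]\lesssim_1 1$, and this follows directly from \cref{Prop:PfLI::MainODEProp} (hence \cref{Prop::ODE::ExistMainProp}) under the new hypothesis with a $1'$-admissible bound instead; your proposal simply unpacks this more carefully, separating the qualitative $C^1$ claim needed for \cref{Lemma::PfLI::PhiC2} from the quantitative bound feeding into \cref{Prop::AppIFT::MainProp}.
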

\begin{proof}[Comments on the proof]
The only place the estimates on $\CXjNorm{c_{j,k}^l}{X_{J_0}}{1}[B_{X_{J_0}}(x_0,\xi)]$ from $1$-admissible constants arose in the proof
was to conclude 
$\CjNorm{Y_j}{1}[B^n(\eta')][\R^n]\lesssim_{1} 1$; i.e.,
to conclude $\CjNorm{A}{1}[B^n(\eta')][\M^{n\times n}]\lesssim_1 1$.
However, one obtains $\CjNorm{A}{1}[B^n(\eta')][\M^{n\times n}]\lesssim_{1'} 1$ directly from \cref{Prop:PfLI::MainODEProp}.
Using this, the proof goes through unchanged.
\end{proof}

        \subsubsection{Linearly Dependent}

In this section, we prove \cref{Thm::Results::MainThm} in the general case $q\geq n$.
Thus, we take the same setting and notation as in \cref{Thm::Results::MainThm}.

\begin{lemma}\label{Lemma::PfLD::LieWedge}
For $J\in \sI(n,q)$, $1\leq j\leq n$,
\begin{equation*}
    \Lie{X_j} \bigwedge X_J = \sum_{K\in \sI_0(n,q)} g_{j,J}^K \bigwedge X_K, \text{ on }B_{X_{J_0}}(x_0,\xi),
\end{equation*}
where
\begin{equation*}
    \CNorm{g_{j,J}^K}{B_{X_{J_0}}(x_0,\xi)}\lesssim_0 1,
\end{equation*}
for $m\in \N$ and $s\in [0,1]$,
\begin{equation*}
    \HXNorm{g_{j,J}^K}{X}{m}{s}[B_{X_{J_0}}(x_0,\xi)]\lesssim_{\Had{m,m,s}} 1,
\end{equation*}
and for $s\in (0,\infty)$,
\begin{equation*}
    \ZygXNorm{g_{j,J}^K}{X}{s}[B_{X_{J_0}}(x_0,\xi)]\lesssim_{\Zygad{s,s}} 1.
\end{equation*}
\end{lemma}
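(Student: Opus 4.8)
The plan is to express $\Lie{X_j}\bigwedge X_J$ in the ``basis'' $\{\bigwedge X_K : K\in \sI_0(n,q)\}$ and to estimate the resulting coefficient functions using the product rule for Lie derivatives of wedges (stated in \cref{Section::Wedge}) together with the commutator relations $[X_i,X_k]=\sum_{l=1}^q c_{i,k}^l X_l$ and the algebra and quotient properties of the function spaces $\HXSpace{X}{m}{s}$ and $\ZygXSpace{X}{s}$ (\cref{Prop::FuncSpaceRev::Algebra}). First I would write $J=(j_1,\dots,j_n)$ and use the Leibniz rule
$$\Lie{X_j}\bigwedge X_J = \sum_{i=1}^n X_{j_1}\wedge\cdots\wedge [X_j,X_{j_i}]\wedge\cdots\wedge X_{j_n}.$$
Substituting $[X_j,X_{j_i}]=\sum_{l=1}^q c_{j,j_i}^l X_l$ expresses this as a sum of terms $c_{j,j_i}^l\,(X_{j_1}\wedge\cdots\wedge X_l\wedge\cdots\wedge X_{j_n})$, i.e.\ a $C(B_{X_{J_0}}(x_0,\xi))$-linear combination of wedges $\bigwedge X_{J'}$ with $J'\in \sI(n,q)$.

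Next I would reduce each $\bigwedge X_{J'}$ with $J'\in\sI(n,q)$ to a combination of the $\bigwedge X_K$ with $K\in\sI_0(n,q)$ (strictly increasing indices). This is pure multilinear algebra: reordering introduces signs, and repeated indices kill the term, so one obtains $\bigwedge X_{J'}=\pm \bigwedge X_K$ for a unique $K\in\sI_0(n,q)$ (or $0$). Collecting, we get $g_{j,J}^K$ as a finite sum of terms $\pm c_{j,j_i}^l$, hence $g_{j,J}^K\in\ZygXSpace{X}{s}$, and the bounds follow: $\CNorm{g_{j,J}^K}{}\lesssim_0 1$ because each $c_{j,j_i}^l$ with $1\le j_i\le n$ is controlled by a $0$-admissible constant; the $\HXSpace{X}{m}{s}$ bound follows since $1\le j, j_i\le n$ so we only ever use $c$'s of the form $c_{j,j_i}^l$ with both lower indices $\le n$ (controlled by $\HXNorm{c}{X}{m}{s}\lesssim_{\Had{m,m,s}}1$), and similarly for the Zygmund bound with $\Zygad{s,s}$; the algebra property of these spaces (\cref{Prop::FuncSpaceRev::Algebra}) ensures the finite sums stay bounded. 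One subtlety: the coefficients that appear are indexed by $j$ (with $1\le j\le n$ since we only take $\Lie{X_j}$ for $j\le n$) and $j_i$ ranging over the entries of $J$; even though $J\in\sI(n,q)$ can contain indices larger than $n$, the bracket $[X_j,X_{j_i}]$ only involves $c_{j,j_i}^l$ with first index $j\le n$ — but the second index $j_i$ may exceed $n$, so one must check that $c_{j,j_i}^l$ with $j\le n$, $j_i$ arbitrary, $l$ arbitrary is still controlled by the claimed admissible constants. This is exactly the content of the first bullet of the admissible-constant definitions ($c_{j,k}^l$ for $1\le j,k\le n$, $1\le l\le q$) — wait, here $j_i$ may be $>n$, so I would instead invoke the \emph{second} bullet ($c_{j,k}^l$ for $1\le j,k,l\le q$, in the $m_2$/$s_2$ slot); but the lemma claims $\Had{m,m,s}$ not $\Had{m,m-1,s}$, so I must verify that the relevant $c$'s with indices beyond $n$ are in fact controlled at level $m$ — this forces a closer reading of which indices actually occur.

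The main obstacle I anticipate is precisely this bookkeeping of which $c_{j,k}^l$ enter and at what regularity, so that the stated $\Had{m,m,s}$ and $\Zygad{s,s}$ (rather than something weaker) genuinely hold; I expect the resolution is that in $\Lie{X_j}\bigwedge X_J$ with $j\le n$, after the reordering step one only ever needs $c_{j,k}^l$ with $j\le n$, and the structure equations plus the fact that $X_1,\dots,X_n$ span let one rewrite any remaining $c$ with a large index in terms of the $c$'s covered by the $m_1$-level hypothesis (both lower indices $\le n$), possibly at the cost of the $b_j^k$ coefficients — so the real work is tracking that this rewriting does not lose a derivative. Once the coefficients are identified as finite sums and products of such $c$'s (and their quotients, handled by the inverse part of \cref{Prop::FuncSpaceRev::Algebra}), the three norm estimates are immediate from the algebra and inclusion properties of the spaces, and the proof concludes. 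The remaining routine calculations (the explicit Leibniz expansion, the sign bookkeeping in reordering, the induction on $m$ implicit in \cref{Prop::FuncSpaceRev::Algebra}) I would leave to the reader or relegate to a short computation.
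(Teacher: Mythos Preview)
Your approach is exactly the paper's: Leibniz rule for $\Lie{X_j}$ on the wedge, substitute $[X_j,X_{j_i}]=\sum_l c_{j,j_i}^l X_l$, then use anti-commutativity to reduce each resulting $\bigwedge X_{J'}$ to $\pm\bigwedge X_K$ with $K\in\sI_0(n,q)$. This identifies each $g_{j,J}^K$ as a finite signed sum of coefficients $c_{j,j_i}^l$, and the paper's proof stops there with ``the result follows from the anti-commutativity of $\wedge$ and the assumptions on $c_{i,j}^k$.''

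Your worry about the admissibility bookkeeping is misplaced, and the rewriting via $b_j^k$ you contemplate is unnecessary. You have the inequality direction backwards: writing $\lesssim_{\Had{m,m,s}}$ is a \emph{weaker} conclusion than $\lesssim_{\Had{m,m-1,s}}$ would be, because an $\Had{m,m,s}$-admissible constant is \emph{allowed} to depend on $\HXNorm{c_{j,k}^l}{X_{J_0}}{m}{s}$ for \emph{all} $1\le j,k,l\le q$ (that is the content of the second bullet in \cref{Defn::Results::HXAdmiss} with $m_2=m$). So even though $j_i$ may exceed $n$, each $c_{j,j_i}^l$ appearing in $g_{j,J}^K$ is controlled at level $m$ directly by the $\Had{m,m,s}$ hypothesis, and the bound is immediate from the triangle inequality---no algebra or quotient property is even needed, since the $g_{j,J}^K$ are just finite \emph{sums} of $\pm c_{j,j_i}^l$, not products. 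The same remark applies to the $0$-admissible bound (the definition already covers all $1\le j,k,l\le q$) and the $\Zygad{s,s}$ bound.
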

\begin{proof}
Let $J=(j_1,\ldots, j_n)$.  We have,
\begin{equation*}
\begin{split}
    \Lie{X_j} \bigwedge X_J &=\Lie{X_j} \left(X_{j_1} \wedge X_{j_2} \wedge \cdots \wedge X_{j_n}\right)
    =\sum_{l=1}^n X_{j_1}\wedge X_{j_2}\wedge \cdots\wedge X_{j_{l-1}} \wedge [X_j,X_{j_l}] \wedge X_{j_{l+1}}\wedge \cdots \wedge X_{j_n}
    \\&=\sum_{l=1}^n \sum_{k=1}^q c_{j,j_l}^k X_{j_1}\wedge X_{j_2}\wedge \cdots \wedge X_{j_{l-1}}\wedge X_k\wedge X_{j_{l+1}}\wedge \cdots \wedge X_{j_n}.
\end{split}
\end{equation*}
The result follows from the anti-commutativity of $\wedge$ and the assumptions on $c_{i,j}^k$.
\end{proof}

\begin{lemma}\label{Lemma::PfLD::LieDerivOfQuotient}
Let $\chi'\in (0,\xi]$.  Suppose for all $y\in B_{X_{J_0}}(x_0,\chi')$,
$\bigwedge X_{J_0}(y)\ne 0$.  Then, for $J\in \sI(n,q)$, $1\leq j\leq n$,
\begin{equation*}
    X_j \frac{\bigwedge X_{J}}{\bigwedge X_{J_0}} = \sum_{K\in \sI_0(n,q)} g_{j,J}^K\frac{\bigwedge X_K}{\bigwedge X_{J_0}} -\sum_{K\in \sI_0(n,q)} g_{j,J_0}^K \frac{\bigwedge X_J}{\bigwedge X_{J_0}} \frac{\bigwedge X_K}{\bigwedge X_{J_0}} \text{ on }B_{X_{J_0}}(x_0,\chi'),
\end{equation*}
where $g_{j,J}^K$ are the functions from \cref{Lemma::PfLD::LieWedge}.
\end{lemma}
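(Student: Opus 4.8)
The plan is to apply \cref{Lemma::Wedge::DerivFrac} with the specific choices $Z = X_j$, $Y_1\wedge\cdots\wedge Y_n = \bigwedge X_J$, and $X_1\wedge\cdots\wedge X_n = \bigwedge X_{J_0}$. The hypothesis of \cref{Lemma::Wedge::DerivFrac} requires that $X_{j_1^{(0)}},\ldots,X_{j_n^{(0)}}$ (the components of $X_{J_0}$) span the tangent space near the relevant points; this is exactly guaranteed by the assumption that $\bigwedge X_{J_0}(y)\ne 0$ for all $y\in B_{X_{J_0}}(x_0,\chi')$, since $B_{X_{J_0}}(x_0,\chi')$ is an $n$-dimensional manifold on which $\bigwedge X_{J_0}$ nonvanishing is equivalent to $X_{j_1^{(0)}}(y),\ldots,X_{j_n^{(0)}}(y)$ being a basis of $T_y$. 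So the lemma applies and yields
\begin{equation*}
X_j \frac{\bigwedge X_J}{\bigwedge X_{J_0}} = \frac{\Lie{X_j}\bigwedge X_J}{\bigwedge X_{J_0}} - \frac{\bigwedge X_J}{\bigwedge X_{J_0}}\,\frac{\Lie{X_j}\bigwedge X_{J_0}}{\bigwedge X_{J_0}}.
\end{equation*}

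Next I would substitute the expansion from \cref{Lemma::PfLD::LieWedge} into both Lie-derivative terms. For the first term, $\Lie{X_j}\bigwedge X_J = \sum_{K\in \sI_0(n,q)} g_{j,J}^K \bigwedge X_K$, so dividing by $\bigwedge X_{J_0}$ gives $\sum_K g_{j,J}^K \frac{\bigwedge X_K}{\bigwedge X_{J_0}}$. For the second term, $\Lie{X_j}\bigwedge X_{J_0} = \sum_{K\in\sI_0(n,q)} g_{j,J_0}^K \bigwedge X_K$ (this is the $J = J_0$ case of \cref{Lemma::PfLD::LieWedge}), so dividing by $\bigwedge X_{J_0}$ gives $\sum_K g_{j,J_0}^K \frac{\bigwedge X_K}{\bigwedge X_{J_0}}$, and multiplying by the scalar factor $\frac{\bigwedge X_J}{\bigwedge X_{J_0}}$ produces $\sum_K g_{j,J_0}^K \frac{\bigwedge X_J}{\bigwedge X_{J_0}}\frac{\bigwedge X_K}{\bigwedge X_{J_0}}$. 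Assembling the two pieces with the minus sign yields exactly the claimed formula.

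There is essentially no obstacle here: the result is a direct specialization of \cref{Lemma::Wedge::DerivFrac} combined with the bracket expansion of \cref{Lemma::PfLD::LieWedge}. The only point deserving a word of care is verifying that the ratios $\frac{\bigwedge X_J}{\bigwedge X_{J_0}}$ and $\frac{\bigwedge X_K}{\bigwedge X_{J_0}}$ are well-defined on all of $B_{X_{J_0}}(x_0,\chi')$, which follows from the standing hypothesis that $\bigwedge X_{J_0}$ is nonvanishing there (using \cref{Section::Wedge} for the meaning of the quotient of elements of the one-dimensional space $\bigwedge^n T_y$). I would also note in passing that $g_{j,J}^K$ denotes the same functions produced in \cref{Lemma::PfLD::LieWedge}, so no new constants or regularity claims are needed in this particular lemma — the regularity statements are all inherited and will be invoked later. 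The whole proof is two or three lines.

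\begin{proof}
By hypothesis $\bigwedge X_{J_0}$ is nonvanishing on $B_{X_{J_0}}(x_0,\chi')$, so $X_{j_1^{(0)}},\ldots,X_{j_n^{(0)}}$ (the list $X_{J_0}$) form a basis of the tangent space at every point of $B_{X_{J_0}}(x_0,\chi')$, and the quotients appearing in the statement are well-defined. Applying \cref{Lemma::Wedge::DerivFrac} with $Z=X_j$, with $Y_1\wedge\cdots\wedge Y_n=\bigwedge X_J$, and with $X_1\wedge\cdots\wedge X_n=\bigwedge X_{J_0}$, we obtain
\begin{equation*}
X_j \frac{\bigwedge X_J}{\bigwedge X_{J_0}} = \frac{\Lie{X_j}\bigwedge X_J}{\bigwedge X_{J_0}} - \frac{\bigwedge X_J}{\bigwedge X_{J_0}}\,\frac{\Lie{X_j}\bigwedge X_{J_0}}{\bigwedge X_{J_0}}\quad \text{on }B_{X_{J_0}}(x_0,\chi').
\end{equation*}
By \cref{Lemma::PfLD::LieWedge}, $\Lie{X_j}\bigwedge X_J = \sum_{K\in\sI_0(n,q)} g_{j,J}^K \bigwedge X_K$ and $\Lie{X_j}\bigwedge X_{J_0} = \sum_{K\in\sI_0(n,q)} g_{j,J_0}^K \bigwedge X_K$. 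Substituting these two expressions and dividing through by $\bigwedge X_{J_0}$ gives
\begin{equation*}
X_j \frac{\bigwedge X_J}{\bigwedge X_{J_0}} = \sum_{K\in\sI_0(n,q)} g_{j,J}^K \frac{\bigwedge X_K}{\bigwedge X_{J_0}} - \sum_{K\in\sI_0(n,q)} g_{j,J_0}^K \frac{\bigwedge X_J}{\bigwedge X_{J_0}}\frac{\bigwedge X_K}{\bigwedge X_{J_0}},
\end{equation*}
which is the asserted formula.
\end{proof}
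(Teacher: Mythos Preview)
Your proof is correct and follows exactly the same approach as the paper, which simply states that the result follows by combining \cref{Lemma::PfLD::LieWedge} and \cref{Lemma::Wedge::DerivFrac}. Your version merely spells out the substitution in more detail.
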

\begin{proof}
This follows by combining \cref{Lemma::PfLD::LieWedge,Lemma::Wedge::DerivFrac}.
\end{proof}

\begin{lemma}\label{Lemma::PfLD::ODEIneq}
Let $C>0$ and $u_0>0$.  Let $u_{u_0,C}(t)$ be the unique solution to
\begin{equation*}
    \frac{d}{dt} u_{u_0,C}(t) = C(u_{u_0,C}(t)+u_{u_0,C}(t)^2), \quad u_{u_0,C}(0)=u_0,
\end{equation*}
defined on some maximum interval $[0, R_{u_0,C})$.
Let $F(t)$ be a non-negative function defined on $[0,R')$ with $R'\leq R_{u_0,C}$
satisfying
\begin{equation*}
    \frac{d}{dt} F(t)\leq C( F(t)+F(t)^2),\quad F(0)\leq u_0.
\end{equation*}
   Then, for $t\in [0,R')$, $F(t)\leq u_{u_0,C}(t)$.
\end{lemma}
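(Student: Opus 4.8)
The plan is to prove this via the classical scalar comparison principle for ODEs, implemented through a Gr\"onwall-type estimate on the difference $h(t):=u_{u_0,C}(t)-F(t)$. The features of the nonlinearity $g(v):=C(v+v^2)$ that matter are that it is $C^1$ (hence locally Lipschitz) and non-decreasing on $[0,\infty)$, and that the hypothesis $R'\le R_{u_0,C}$ guarantees $u_{u_0,C}$ is defined on all of $[0,R')$, so $h$ is a differentiable function on $[0,R')$ with $h(0)=u_0-F(0)\ge 0$. It suffices to fix an arbitrary $T\in[0,R')$ and prove $h(T)\ge 0$; since $T$ is arbitrary this is the assertion $F(t)\le u_{u_0,C}(t)$.

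First I would restrict attention to the compact interval $[0,T]$. There $F$ and $u_{u_0,C}$ are continuous, hence bounded by some $M>0$; fix a Lipschitz constant $L=L(C,M)$ for $g$ on $[0,M]$. Subtracting the differential inequality for $F$ from the equation for $u_{u_0,C}$ gives, for $t\in(0,T]$,
\[
h'(t)=g(u_{u_0,C}(t))-F'(t)\ \ge\ g(u_{u_0,C}(t))-g(F(t)).
\]

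Next, argue by contradiction: suppose $h(T)<0$. Since $h$ is continuous and $h(0)\ge 0$, set $t_0:=\sup\{t\in[0,T]:h(t)\ge 0\}$; then $h(t_0)=0$ and $h<0$ on $(t_0,T]$. On that interval $u_{u_0,C}(t)<F(t)$, so monotonicity of $g$ gives $g(u_{u_0,C}(t))-g(F(t))\le 0$, while the Lipschitz bound gives $|g(u_{u_0,C}(t))-g(F(t))|\le L\,(F(t)-u_{u_0,C}(t))=-L\,h(t)$; combining with the previous display yields $h'(t)\ge L\,h(t)$ on $(t_0,T]$. Hence $t\mapsto e^{-Lt}h(t)$ is continuous on $[t_0,T]$ with non-negative derivative on $(t_0,T)$, so it is non-decreasing there, and therefore $e^{-LT}h(T)\ge e^{-Lt_0}h(t_0)=0$, i.e.\ $h(T)\ge 0$, contradicting $h(T)<0$. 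This proves $h(T)\ge 0$ for every $T\in[0,R')$.

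I do not anticipate a genuine obstacle: the only care needed is the ordering of quantifiers (fix $T$ first, then choose $M$ and the Lipschitz constant $L$ depending on $T$, so that boundedness of $F$ and $u_{u_0,C}$ on $[0,T]$ is available) and the routine sign-tracking in the inequality $|g(u_{u_0,C})-g(F)|\le -L\,h$ on the set where $h<0$. As a consistency check one can solve the separable equation explicitly, $u_{u_0,C}(t)=\dfrac{u_0 e^{Ct}}{1+u_0-u_0 e^{Ct}}$ on $[0,R_{u_0,C})$ with $R_{u_0,C}=\tfrac1C\log(1+u_0^{-1})$, but the Gr\"onwall argument needs no properties of this formula. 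An equally standard alternative is to compare $F$ with the solutions of $u_\epsilon'=g(u_\epsilon)+\epsilon$, $u_\epsilon(0)=u_0+\epsilon$, obtaining a strict inequality $F<u_\epsilon$ on the common interval of existence and letting $\epsilon\to 0$ (using continuous dependence to keep $u_\epsilon$ defined on $[0,T]$); I would use whichever is cleaner at the writing stage.
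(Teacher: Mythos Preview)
Your argument is correct; it is a clean implementation of the scalar comparison principle, and the sign-tracking in step $h'(t)\ge Lh(t)$ on $\{h<0\}$ is handled properly. The paper does not actually give a proof here: it simply remarks that the result is standard, can be seen directly, and is a special case of the Bihari--LaSalle inequality. Your Gr\"onwall/contradiction argument is exactly the sort of direct verification the paper has in mind, and your alternative $\epsilon$-perturbation route is the other textbook approach; either would be more than sufficient.
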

\begin{proof}
This is standard and is easy to see directly.  It is also a special case of the Bihari-LaSalle inequality.
\end{proof}

\begin{lemma}\label{Lemma::PfLD::BasicBootstrapchi::V2}
There exists a $0$-admissible constant $\chi\in (0,\xi]$ such that the following holds.
Suppose $\gamma:[0,\chi]\rightarrow B_{X_{J_0}}(x_0,\xi)$ satisfies $\gamma(0)=x_0$,
$\gamma'(t)=\sum_{j=1}^n a_j(t) X_j(\gamma(t))$, and $\Norm{\sum|a_j(t)|^2}[L^\infty([0,\chi])]<1$.
Suppose further that for some $\chi'\in (0,\chi]$, $\bigwedge X_{J_0}(\gamma(t))\ne 0$ for $t\in (0,\chi']$.
Then,
\begin{equation}\label{Eqn::PfLD::J0BiggestAgain}
    \sup_{\substack{J\in \sI(n,q)\\ t\in [0,\chi']}}\left|\frac{\bigwedge X_J(\gamma(t))}{\bigwedge X_{J_0}(\gamma(t))}\right|\lesssim_0 1.
\end{equation}
Here, the implicit constant depends on neither $\chi'$ nor $\gamma$.
\end{lemma}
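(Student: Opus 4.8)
The plan is to reduce to a scalar Gr\"onwall/Bihari estimate for the single quantity
\[
M(t):=\max_{J\in\sI(n,q)}\left|\frac{\bigwedge X_J(\gamma(t))}{\bigwedge X_{J_0}(\gamma(t))}\right|.
\]
First I would check that $M$ is well defined and continuous on all of $[0,\chi']$: by hypothesis $\bigwedge X_{J_0}(\gamma(t))\ne 0$ for $t\in(0,\chi']$, and $\bigwedge X_{J_0}(\gamma(0))=\bigwedge X_{J_0}(x_0)\ne 0$ by the standing assumption. I would also record two elementary facts. For any fixed point $y$, $\max_{J\in\sI(n,q)}|\bigwedge X_J(y)|=\max_{K\in\sI_0(n,q)}|\bigwedge X_K(y)|$, since a wedge with a repeated index vanishes and a wedge indexed by a permutation of $K\in\sI_0(n,q)$ is $\pm\bigwedge X_K$; hence $M(t)=\max_{K\in\sI_0(n,q)}|u_K(\gamma(t))|$ where $u_K:=\bigwedge X_K/\bigwedge X_{J_0}$. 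Second, $M(0)\le\zeta^{-1}$ by the defining property of $J_0$.

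\textbf{The differential inequality.} At any point $y$ with $\bigwedge X_{J_0}(y)\ne 0$ the function $u_K$ is $C^1$ near $y$, and the identity in \cref{Lemma::PfLD::LieDerivOfQuotient} holds there — its proof (via \cref{Lemma::Wedge::DerivFrac} and \cref{Lemma::PfLD::LieWedge}) is purely local, so the hypothesis that $\bigwedge X_{J_0}$ be nonvanishing on the whole ball is not needed, only at the point in question, which here is $\gamma(t)$ for $t\in[0,\chi']$. Since $\gamma$ is Lipschitz with $\gamma'(t)=\sum_{j=1}^n a_j(t)X_j(\gamma(t))$ a.e., the chain rule gives, for a.e.\ $t\in[0,\chi']$,
\[
\frac{d}{dt}\bigl(u_K\circ\gamma\bigr)(t)=\sum_{j=1}^n a_j(t)\,(X_j u_K)(\gamma(t)),
\]
and expanding $X_j u_K=\sum_{L\in\sI_0(n,q)}g_{j,K}^{L}u_L-\sum_{L\in\sI_0(n,q)}g_{j,J_0}^{L}u_K u_L$ with $\CNorm{g_{j,K}^{L}}{B_{X_{J_0}}(x_0,\xi)}\lesssim_0 1$ (by \cref{Lemma::PfLD::LieWedge}), together with $\sum_j|a_j(t)|\le\sqrt n$ and $\#\sI_0(n,q)=\binom qn$, yields a $0$-admissible constant $C_0$ with
\[
\left|\frac{d}{dt}\bigl(u_K\circ\gamma\bigr)(t)\right|\le C_0\bigl(M(t)+M(t)^2\bigr)\qquad\text{for a.e. }t\in[0,\chi'],
\]
uniformly in $K\in\sI_0(n,q)$. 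Integrating and taking the maximum over $K$ gives $M(t)\le \zeta^{-1}+C_0\int_0^t\bigl(M(s)+M(s)^2\bigr)\,ds$.

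\textbf{Comparison and choice of $\chi$.} Setting $F(t):=\zeta^{-1}+C_0\int_0^t(M+M^2)$, one has $F\in C^1$, $F(0)=\zeta^{-1}$, $F\ge M$, and $F'=C_0(M+M^2)\le C_0(F+F^2)$, so \cref{Lemma::PfLD::ODEIneq} gives $M(t)\le F(t)\le v(t)$, where $v$ solves $\dot v=C_0(v+v^2)$, $v(0)=\zeta^{-1}$. Solving this ODE explicitly, $v$ exists on $[0,R)$ with $R=C_0^{-1}\log(1+\zeta)$, a $0$-admissible constant (it depends only on $C_0$ and a lower bound for $\zeta$). Choosing $\chi:=\min\{\xi,R/2\}\in(0,\xi]$, which is then a $0$-admissible constant, we obtain, for every $\chi'\in(0,\chi]$ and every admissible $\gamma$, that $M(t)\le v(\chi)\lesssim_0 1$ for all $t\in[0,\chi']$ — exactly \cref{Eqn::PfLD::J0BiggestAgain}. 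The argument being a routine Gr\"onwall--Bihari estimate, the only point that genuinely needs care is the bookkeeping showing $\chi$ (hence the final bound) is independent of $\gamma$ and of $\chi'$, i.e.\ that the blow-up time $R$ of the comparison ODE is $0$-admissible; the fact that \cref{Lemma::PfLD::ODEIneq} is stated on a half-open interval is dispensed with by applying it on $[0,t_1]$ for each $t_1<\chi'$ and letting $t_1\uparrow\chi'$ using continuity of $M$.
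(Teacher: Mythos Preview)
Your proof is correct and follows essentially the same approach as the paper: derive a Bihari-type differential inequality for a scalar quantity controlling all the ratios $\bigwedge X_J/\bigwedge X_{J_0}$ along $\gamma$, then compare with the Riccati ODE $\dot v=C(v+v^2)$ to conclude. The only cosmetic difference is that the paper takes the sum of squares $F(t)=\sum_{K\in\sI_0(n,q)}|u_K(\gamma(t))|^2$, which is directly differentiable and satisfies $F'\lesssim_0 F+F^{3/2}\lesssim_0 F+F^2$, whereas you work with the max $M(t)$ and pass through the integral inequality to avoid differentiability issues; both routes lead to the same comparison argument and the same $0$-admissible $\chi$.
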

\begin{proof}
Let $\chi\in (0,\xi]$ be a $0$-admissible constant to be chosen later.
Let $\gamma$ and $\chi'$ be as in the statement of the lemma.
We wish to show that if $\chi$ is chosen to be a sufficiently small $0$-admissible constant (which
 forces $\chi'$ to be small),
then \cref{Eqn::PfLD::J0BiggestAgain} holds.

Set
\begin{equation*}
    F(t):=\sum_{J\in \sI_0(n,q)}\left|\frac{X_J(\gamma(t))}{X_{J_0}(\gamma(t))}\right|^2.
\end{equation*}
We wish to show that if $\chi$ is a sufficiently small $0$-admissible constant,
then $F(t)\lesssim_0 1$, $\forall t\in [0,\chi']$, and this will complete the proof.\footnote{Here we are using
$\forall K\in \sI(n,q)$, either $\bigwedge X_K\equiv 0$ or $\exists J\in \sI_0(n,q)$ with $\bigwedge X_K = \pm \bigwedge X_J$,
by the basic properties of wedge products.}

Using \cref{Lemma::PfLD::LieDerivOfQuotient}, we have,
\begin{equation*}
\begin{split}
    &\frac{d}{dt} F(t) =\sum_{J\in \sI_0(n,q)} 2 \frac{\bigwedge X_J(\gamma(t))}{\bigwedge X_{J_0}(\gamma(t))} \sum_{j=1}^n a_j(t) \left(X_j \frac{\bigwedge X_J}{\bigwedge X_{J_0}}\right)(\gamma(t))
    \\&=\sum_{J\in \sI_0(n,q)} \sum_{K\in \sI_0(n,q)} \sum_{j=1}^n 2 a_j(t) \frac{\bigwedge X_J(\gamma(t))}{\bigwedge X_{J_0}(\gamma(t))} \left( g_{j,J}^K(\gamma(t)) \frac{\bigwedge X_K(\gamma(t)) }{\bigwedge X_{J_0}(\gamma(t))} - g_{j,J_0}^K(\gamma(t)) \frac{\bigwedge X_J(\gamma(t))}{\bigwedge X_{J_0}(\gamma(t))} \frac{\bigwedge X_K(\gamma(t))}{\bigwedge X_{J_0}(\gamma(t))}  \right)
    \\&\lesssim_0  F(t)+F(t)^{3/2}\lesssim_0 F(t)+F(t)^2.
\end{split}
\end{equation*}
Also, we have
\begin{equation*}
    F(0)= \sum_{J\in \sI_0(n,q)}\left|\frac{X_J(x_0)}{X_{J_0}(x_0)}\right|^2 \lesssim_0 1.
\end{equation*}
Thus, there exist  $0$-admissible constants $C$ and $u_0>0$ such that
\begin{equation*}
    \frac{d}{dt} F(t) \leq C \left(F(t)+F(t)^2\right),\quad F(0)\leq u_0.
\end{equation*}
Standard theorems from ODEs show that if $\chi=\chi(C,u_0)>0$ is chosen sufficiently small, then the unique solution
$u(t)$ to
\begin{equation*}
    \frac{d}{dt} u(t) =  C\left(u(t)+u(t)^2\right), \quad u(0)=u_0,
\end{equation*}
exists for $t\in [0,\chi]$ and satisfies $u(t)\leq 2u_0$, $\forall t\in [0,\chi]$.
For this choice of $\chi$ (which is $0$-admissible, since $C$ and $u_0$ are),
\cref{Lemma::PfLD::ODEIneq} shows $F(t)\leq 2u_0\lesssim_0 1$, $\forall t\in [0,\chi']$, completing the proof.
\end{proof}

\begin{prop}\label{Prop::PfLD::Makechi}
There exists a $0$-admissible constant $\chi\in (0,\xi]$ such that
$\forall y\in B_{X_{J_0}}(x_0,\chi)$, $\bigwedge X_{J_0}(y)\ne 0$
and
\begin{equation}\label{Eqn::PfLd::ToShow::QuotConst}
    \sup_{\substack{J\in \sI(n,q) \\ y\in B_{X_{J_0}}(x_0,\chi)}}\left|\frac{\bigwedge X_J(y)}{\bigwedge X_{J_0}(y)}\right|\lesssim_0 1.
\end{equation}
\end{prop}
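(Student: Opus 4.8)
The plan is to deduce \cref{Prop::PfLD::Makechi} from \cref{Lemma::PfLD::BasicBootstrapchi::V2} by showing that the hypotheses of that lemma hold along a suitable curve joining $x_0$ to an arbitrary point of a small ball, and then promoting the pointwise conclusion to the uniform statement \cref{Eqn::PfLd::ToShow::QuotConst}. Fix the $0$-admissible constant $\chi\in(0,\xi]$ furnished by \cref{Lemma::PfLD::BasicBootstrapchi::V2}; by further shrinking $\chi$ (keeping it $0$-admissible) we may arrange any finitely many additional smallness conditions below. Given $y\in B_{X_{J_0}}(x_0,\chi)$, by the definition of the ball $B_{X_{J_0}}(x_0,\chi)$ (see \cref{Eqn::FundSpaceM::DefnCCBall}) there is a curve $\gamma:[0,1]\to B_{X_{J_0}}(x_0,\xi)$ with $\gamma(0)=x_0$, $\gamma(1)=y$, $\gamma'(t)=\sum_{j=1}^n a_j(t)\chi X_j(\gamma(t))$ and $\Norm{\sum|a_j|^2}[L^\infty]<1$; after the affine reparametrization $t\mapsto t/\chi$ this becomes a curve on $[0,\chi]$ of exactly the form required in \cref{Lemma::PfLD::BasicBootstrapchi::V2}.

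The key remaining point is to verify the hypothesis ``$\bigwedge X_{J_0}(\gamma(t))\neq 0$ for $t\in(0,\chi']$'' for an appropriate $\chi'$, since $y$ (hence $\gamma$) is arbitrary. First I would use a one-sided continuity/bootstrap argument: let
\begin{equation*}
    \chi':=\sup\{\tau\in(0,\chi] : \bigwedge X_{J_0}(\gamma(t))\neq 0\ \forall t\in(0,\tau]\}.
\end{equation*}
Since $\bigwedge X_{J_0}(x_0)\neq 0$ (this is the standing assumption on $J_0$ via \cref{Eqn::Series::Quant::J0}) and $\bigwedge X_{J_0}\circ\gamma$ is continuous, $\chi'>0$. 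On $(0,\chi']$ the hypotheses of \cref{Lemma::PfLD::BasicBootstrapchi::V2} are met, so \cref{Eqn::PfLD::J0BiggestAgain} gives a $0$-admissible bound $\sup_{J,t\in[0,\chi']}|\bigwedge X_J(\gamma(t))/\bigwedge X_{J_0}(\gamma(t))|\lesssim_0 1$. Now I would combine this with the fact that, along $\gamma$, the vector $\bigwedge X_{J_0}(\gamma(t))$ cannot shrink too fast: by \cref{Lemma::Wedge::DerivFrac} (or directly \cref{Lemma::PfLD::LieWedge}), $\frac{d}{dt}\bigwedge X_{J_0}(\gamma(t))$ is controlled, in the one-dimensional line $\bigwedge^n T\fM$ along $\gamma$, by $\lesssim_0 \chi$ times a quantity bounded by the ratios just estimated, so a Grönwall argument shows that $\bigwedge X_{J_0}(\gamma(t))$ stays comparable to $\bigwedge X_{J_0}(x_0)$ on $[0,\chi']$ — in particular it is bounded away from $0$ there — provided $\chi$ was chosen to be a sufficiently small $0$-admissible constant. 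This forces $\chi'=\chi$: otherwise, by continuity $\bigwedge X_{J_0}(\gamma(\chi'))\neq 0$ and the nonvanishing persists slightly beyond $\chi'$, contradicting maximality. Hence $\bigwedge X_{J_0}(\gamma(\chi))=\bigwedge X_{J_0}(y)\neq 0$, and \cref{Eqn::PfLD::J0BiggestAgain} with $\chi'=\chi$ yields, at $t=\chi$, exactly $\left|\bigwedge X_J(y)/\bigwedge X_{J_0}(y)\right|\lesssim_0 1$ for every $J\in\sI(n,q)$.

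Since the implicit constant in \cref{Lemma::PfLD::BasicBootstrapchi::V2} depends on neither $\gamma$ nor $\chi'$, and $y\in B_{X_{J_0}}(x_0,\chi)$ was arbitrary, taking the supremum over $y$ gives both $\bigwedge X_{J_0}(y)\neq 0$ on $B_{X_{J_0}}(x_0,\chi)$ and \cref{Eqn::PfLd::ToShow::QuotConst}, completing the proof. The only delicate part is the interplay in the previous paragraph between the bootstrap hypothesis (nonvanishing of $\bigwedge X_{J_0}$) and its conclusion (the uniform ratio bound): one must close the loop by showing the ratio bound on $(0,\chi']$ feeds back — through the Grönwall estimate on $\bigwedge X_{J_0}\circ\gamma$ — into keeping $\bigwedge X_{J_0}\circ\gamma$ away from zero, so that $\chi'$ cannot be strictly less than $\chi$. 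All the smallness requirements on $\chi$ used along the way are $0$-admissible (they depend only on $q$, $\zeta^{-1}$, $\xi^{-1}$ and $\CNorm{c_{j,k}^l}{B_{X_{J_0}}(x_0,\xi)}$), so the final $\chi$ is a $0$-admissible constant as claimed.
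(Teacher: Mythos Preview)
Your overall structure---take $\chi$ from \cref{Lemma::PfLD::BasicBootstrapchi::V2}, pick a curve $\gamma$ to an arbitrary $y$, define the maximal $\chi'$ on which $\bigwedge X_{J_0}\circ\gamma$ does not vanish, and show $\chi'=\chi$---is exactly the paper's. The difference, and the gap, is in how you force $\chi'=\chi$.

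You try to run a Gr\"onwall argument directly on $\bigwedge X_{J_0}(\gamma(t))$ to show it ``stays comparable to $\bigwedge X_{J_0}(x_0)$''. But $\bigwedge X_{J_0}(\gamma(t))$ is a section of the line bundle $\bigwedge^n T(B_X(x_0,\xi))$ (not $\bigwedge^n T\fM$, which need not be a line bundle), and to speak of its size or logarithmic derivative you must fix a trivializing $n$-form $\nu$. Once you do, the derivative of $\nu(\bigwedge X_{J_0})(\gamma(t))$ picks up a term from $\Lie{X_j}\nu$ in addition to the term from $\Lie{X_j}\bigwedge X_{J_0}$; the former depends on the $C^1$ behaviour of $X_j$ in coordinates and is \emph{not} controlled by $0$-admissible quantities. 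So your assertion that ``all the smallness requirements on $\chi$ used along the way are $0$-admissible'' is not justified by this route. (One can still extract a qualitative nonvanishing conclusion this way by covering the compact image $\gamma([0,\chi'])$ with finitely many charts, but it is messier and no longer purely $0$-admissible.)

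The paper avoids this entirely with a simpler contradiction. If $\chi'<\chi$, let $t_0$ be the first time with $\bigwedge X_{J_0}(\gamma(t_0))=0$. Since $X_1,\ldots,X_q$ span the tangent space of the $n$-dimensional manifold $B_X(x_0,\xi)$ at every point, some $\bigwedge X_J(\gamma(t_0))\neq 0$. Choosing any nonvanishing $C^1$ $n$-form $\nu$ near $\gamma(t_0)$, continuity gives $\nu(\bigwedge X_{J_0})(\gamma(t))\to 0$ while $\max_J|\nu(\bigwedge X_J)(\gamma(t))|$ stays bounded below as $t\uparrow t_0$; hence $\sup_J|\bigwedge X_J/\bigwedge X_{J_0}|(\gamma(t))\to\infty$. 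This directly contradicts the uniform ($0$-admissible, $\chi'$-independent) bound from \cref{Lemma::PfLD::BasicBootstrapchi::V2} applied on $(0,\chi'')$ for $\chi''\uparrow t_0$. No Gr\"onwall on $\bigwedge X_{J_0}$ is needed, and the only non-admissible object ($\nu$) is used purely qualitatively to detect the blowup. Once $\chi'=\chi$ is established this way, your final paragraph correctly derives both conclusions from \cref{Lemma::PfLD::BasicBootstrapchi::V2}.
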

\begin{proof}
Take $\chi$ as in \cref{Lemma::PfLD::BasicBootstrapchi::V2}.
First we claim $\forall y\in B_{X_{J_0}}(x_0,\chi)$, $\bigwedge X_{J_0}(y)\ne 0$.
Fix $y\in B_{X_{J_0}}(x_0,\chi)$, so that there exists $\gamma:[0,\chi]\rightarrow B_{X_{J_0}}(x_0,\xi)$,
$\gamma(0)=x_0$, $\gamma(\chi)=y$, $\gamma'(t)=\sum_{j=1}^n a_j(t) X_j(\gamma(y))$,
$\Norm{\sum |a_j(t)|^2}[L^\infty([0,1])]<1$.
We will show that $\forall t\in [0,\chi]$, $\bigwedge X_{J_0}(\gamma(t))\ne 0$, and then
it will follow that $\bigwedge X_{J_0}(y)=\bigwedge X_{J_0}(\gamma(\chi))\ne 0$.

Suppose not, so that $\bigwedge X_{J_0}(\gamma(t))= 0$ for some $t\in [0,\chi]$.
Let $t_0=\inf \{ t\in [0,\chi] : \bigwedge X_{J_0}(\gamma(t))=0\}$, so that
$\bigwedge X_{J_0}(\gamma(t_0))=0$ but $\bigwedge X_{J_0}(\gamma(t))\ne 0$, $\forall t\in [0,t_0)$.
Note that $t_0>0$ since $\bigwedge X_{J_0}(x_0)\ne 0$.

Let $\nu$ be a $C^1$ $n$-form, defined on a neighborhood of $\gamma(t_0)$ and which is nonzero at $\gamma(t_0)$.
We have
$$\lim_{t\uparrow t_0} \nu\left(\bigwedge X_{J_0}\right)(\gamma(t))=0,\quad
\lim_{t\uparrow t_0} \max_{J\in \sI(n,q)} \left|\nu(X_J)(\gamma(t))\right|>0,
$$
by continuity, the fact that $X_1,\ldots, X_q$ span the tangent space at $\gamma(t_0)$,
and that $\nu$ is nonzero at $\gamma(t_0)$.
We conclude,
\begin{equation}\label{Eqn::PfLD::BlowupContracidction}
    \lim_{t\uparrow t_0} \sup_{J\in \sI(n,q)}\left|\frac{\bigwedge X_J(\gamma(t))}{\bigwedge X_{J_0}(\gamma(t))}\right|=\lim_{t\uparrow t_0} \sup_{J\in \sI(n,q)}\left|\frac{\nu\left(\bigwedge X_J\right)(\gamma(t))}{\nu\left(\bigwedge X_{J_0}\right)(\gamma(t))}\right|=\infty.
\end{equation}

Take any $\chi'\in (0,t_0)$.  We know $\forall t\in [0,\chi']$, $\bigwedge X_{J_0}(\gamma(t))\ne 0$.
\cref{Lemma::PfLD::BasicBootstrapchi::V2} implies
\begin{equation*}
    \sup_{\substack{J\in \sI(n,q)\\ t\in [0,\chi']}}\left|\frac{\bigwedge X_J(\gamma(t))}{\bigwedge X_{J_0}(\gamma(t))}\right|\lesssim_0 1.
\end{equation*}
Since $\chi'\in (0,t_0)$ was arbitrary, we have
\begin{equation*}
    \sup_{\substack{J\in \sI(n,q)\\ t\in [0,t_0)}}\left|\frac{\bigwedge X_J(\gamma(t))}{\bigwedge X_{J_0}(\gamma(t))}\right|\lesssim_0 1.
\end{equation*}
This contradicts \cref{Eqn::PfLD::BlowupContracidction} and completes the proof that
$\bigwedge X_{J_0}(y)\ne 0$, $\forall y\in B_{X_{J_0}}(x_0,\chi)$.

To prove \cref{Eqn::PfLd::ToShow::QuotConst} take $y\in B_{X_{J_0}}(x_0,\chi)$.  Then, there exists $\gamma:[0,\chi]\rightarrow B_{X_{J_0}}(x_0,\xi)$,
$\gamma(0)=x_0$, $\gamma(\chi)=y$, $\gamma'(t)=\sum_{j=1}^n a_j(t) X_j(\gamma(y))$,
$\Norm{\sum |a_j(t)|^2}[L^\infty([0,\chi])]<1$.   We have already shown $\bigwedge X_{J_0}(\gamma(t))\ne 0$,
$\forall t\in (0,\chi]$.  \Cref{Lemma::PfLD::BasicBootstrapchi::V2} implies
$\sup_{\substack{J\in \sI(n,q) }}\left|\frac{\bigwedge X_J(y)}{\bigwedge X_{J_0}(y)}\right|=\sup_{\substack{J\in \sI(n,q) }}\left|\frac{\bigwedge X_J(\gamma(\chi))}{\bigwedge X_{J_0}(\gamma(\chi))}\right| \lesssim_0 1$.
Since $y\in B_{X_{J_0}}(x_0,\chi)$ was arbitrary, \cref{Eqn::PfLd::ToShow::QuotConst} follows.

\end{proof}

For the remainder of the section, fix $\chi\in (0,\xi]$ as in \cref{Prop::PfLD::Makechi}.

\begin{lemma}\label{Lemma::PfLD::NormsOfWedges}
For $m\in \N$, $s\in [0,1]$, $J\in \sI(n,q)$,
\begin{equation}\label{Eqn::PfLD::HolderNormWedge}
    \BHXNorm{\frac{\bigwedge X_J}{\bigwedge X_{J_0}}}{X_{J_0}}{m}{s}[B_{X_{J_0}}(x_0,\chi)]\lesssim_{\Had{m-1,m-1,s}} 1,
\end{equation}
and for $s\in (0,\infty)$,
\begin{equation}\label{Eqn::PfLD::ZygNormWedge}
    \BZygXNorm{\frac{\bigwedge X_J}{\bigwedge X_{J_0}}}{X_{J_0}}{s}[B_{X_{J_0}}(x_0,\chi)]\lesssim_{\Zygad{s-1,s-1}} 1.
\end{equation}
\end{lemma}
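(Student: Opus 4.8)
The plan is to set $w_J := \dfrac{\bigwedge X_J}{\bigwedge X_{J_0}}$ and estimate $\HXNorm{w_J}{X_{J_0}}{m}{s}$ and $\ZygXNorm{w_J}{X_{J_0}}{s}$ on $B_{X_{J_0}}(x_0,\chi)$ by induction, using as the engine the identity of \cref{Lemma::PfLD::LieDerivOfQuotient},
\[ X_j w_J = \sum_{K\in\sI_0(n,q)} g_{j,J}^K\, w_K \;-\; \sum_{K\in\sI_0(n,q)} g_{j,J_0}^K\, w_J\, w_K,\qquad 1\le j\le n. \]
Since $\bigwedge X_J=\pm\bigwedge X_K$ or $0$ for a suitable $K\in\sI_0(n,q)$, it suffices to bound the norms of $w_K$ for all $K\in\sI_0(n,q)$ at once. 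Two facts will be fed into the induction. First, by \cref{Prop::PfLD::Makechi}, $\bigwedge X_{J_0}$ is nonvanishing on $B_{X_{J_0}}(x_0,\chi)$ and $\CNorm{w_K}{B_{X_{J_0}}(x_0,\chi)}\lesssim_0 1$, so each $w_K$ is a well-defined $C^1$ function there. Second, inspecting the proof of \cref{Lemma::PfLD::LieWedge}, each $g_{j,J}^K$ is a finite $\pm1$-combination of the structure functions $c_{i,k}^l$ with $i\le n$ (and $k,l\le q$); hence $\CNorm{g_{j,J}^K}{}\lesssim_0 1$, and $\HXNorm{g_{j,J}^K}{X_{J_0}}{r}{s}[B_{X_{J_0}}(x_0,\chi)]\lesssim_{\Had{r,r,s}}1$, $\ZygXNorm{g_{j,J}^K}{X_{J_0}}{\sigma}[B_{X_{J_0}}(x_0,\chi)]\lesssim_{\Zygad{\sigma,\sigma}}1$ — and, because $g_{j,J}^K$ is literally a $c$-function, these bounds in fact hold in terms of the weaker $\Had{\cdot,r,s}$- and $\Zygad{\cdot,\sigma}$-admissible constants constraining only the order-$r$ (resp. order-$\sigma$) ``$q$''-slot.

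For the base cases I would plug the $C^0$ bounds on $g$ and $w$ into the displayed identity to get $\CXjNorm{w_K}{X_{J_0}}{1}[B_{X_{J_0}}(x_0,\chi)]\lesssim_0 1$, and then invoke \cref{Lemma::FuncSpaces::Inclu} (\cref{Item::FuncSpace::IncludLip,Item::FuncSpace::IncludHold,Item::FuncSpace::IncludZygInHold}) to conclude $\HXNorm{w_K}{X_{J_0}}{0}{s}\lesssim_0 1$ for $s\in[0,1]$ and $\ZygXNorm{w_K}{X_{J_0}}{s}\lesssim_0 1$ for $s\in(0,1]$. Since $0$-admissible constants are, by the negative-order conventions of \cref{Defn::FuncSpacesM::NegativeSpaces}, automatically $\Had{-1,-1,s}$- and $\Zygad{s-1,s-1}$-admissible, this gives \cref{Eqn::PfLD::HolderNormWedge} at $m=0$ and \cref{Eqn::PfLD::ZygNormWedge} for $s\in(0,1]$.

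For the inductive step I would first note the elementary norm-splitting identities obtained by peeling the leading vector field off each ordered multi-index: $\HXNorm{f}{X_{J_0}}{m}{s}=\HXNorm{f}{X_{J_0}}{0}{s}+\sum_{j=1}^n\HXNorm{X_j f}{X_{J_0}}{m-1}{s}$, and for $s=m+s_0$ with $s_0\in(0,1]$, $\ZygXNorm{f}{X_{J_0}}{s}=\ZygXNorm{f}{X_{J_0}}{s_0}+\sum_{j=1}^n\ZygXNorm{X_j f}{X_{J_0}}{s-1}$. Applying the displayed identity for $X_j w_K$, the algebra estimates of \cref{Prop::FuncSpaceRev::Algebra} (including \cref{Eqn::FuncSpacesRev::ZygAlg}; these hold on $B_{X_{J_0}}(x_0,\chi)$ whether or not it is a manifold), and the $g$-bounds above, one gets $\HXNorm{X_j w_K}{X_{J_0}}{m-1}{s}$ controlled by a sum of products $\HXNorm{g}{X_{J_0}}{m-1}{s}\HXNorm{w_\cdot}{X_{J_0}}{m-1}{s}$ and $\HXNorm{g}{X_{J_0}}{m-1}{s}\HXNorm{w_\cdot}{X_{J_0}}{m-1}{s}\HXNorm{w_\cdot}{X_{J_0}}{m-1}{s}$, all of which are $\lesssim_{\Had{m-1,m-1,s}}1$ by the inductive hypothesis (whose bound, in terms of $\Had{m-2,m-2,s}$-admissible constants, is a fortiori an $\Had{m-1,m-1,s}$-admissible bound). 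This yields \cref{Eqn::PfLD::HolderNormWedge} at level $m$; the Zygmund induction on $\lceil s\rceil$ is word-for-word the same, using \cref{Eqn::FuncSpacesRev::ZygAlg} for the products (note $s-1>0$ in the inductive step, so it applies). Along the way the required differentiability of $w_K$ along $X_{J_0}$ is bootstrapped from the same identity.

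The main obstacle is not any analytic difficulty but the bookkeeping of admissible constants: one has to observe that the identity for $X_j w_K$ involves the structure functions only to order $m-1$ (resp. $s-1$), so that the estimate for $\HXNorm{w_K}{X_{J_0}}{m}{s}$ (resp. $\ZygXNorm{w_K}{X_{J_0}}{s}$) genuinely requires only $\Had{m-1,m-1,s}$- (resp. $\Zygad{s-1,s-1}$-) admissible constants — exactly matching the claimed statement — and that the negative-index conventions make the $m=0$ / $s\le1$ anchors of the induction consistent with this. Everything else is the straightforward combination of \cref{Lemma::PfLD::LieDerivOfQuotient,Lemma::PfLD::LieWedge,Prop::PfLD::Makechi,Prop::FuncSpaceRev::Algebra,Lemma::FuncSpaces::Inclu}.
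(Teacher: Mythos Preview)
Your proposal is correct and follows essentially the same route as the paper's proof: establish the $C^0$ bound from \cref{Prop::PfLD::Makechi}, upgrade to a $C^1_{X_{J_0}}$ bound via the derivative identity of \cref{Lemma::PfLD::LieDerivOfQuotient}, use the inclusions of \cref{Lemma::FuncSpaces::Inclu} to get the $m=0$ (resp.\ $s\in(0,1]$) base cases, and then induct on $m$ (resp.\ $\lceil s\rceil$) using the same derivative identity together with the algebra estimates of \cref{Prop::FuncSpaceRev::Algebra}. One cosmetic remark: your ``peeling the leading vector field'' should read ``trailing''---writing $\alpha=(\alpha',j)$ with $X^\alpha=X^{\alpha'}X_j$ is what makes your splitting $\HXNorm{f}{X_{J_0}}{m}{s}=\HXNorm{f}{X_{J_0}}{0}{s}+\sum_j\HXNorm{X_jf}{X_{J_0}}{m-1}{s}$ an exact identity (the paper instead uses the looser upper bound with first term $\HXNorm{f}{X_{J_0}}{m-1}{s}$, which overcounts but suffices).
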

\begin{proof}
In this proof, we freely use the estimates on the functions $g_{j,J}^K$ as described
in \cref{Lemma::PfLD::LieWedge,Lemma::PfLD::LieDerivOfQuotient}.
We begin with \cref{Eqn::PfLD::HolderNormWedge}.
\Cref{Prop::PfLD::Makechi} shows
\begin{equation}\label{Eqn::PfLD::HolderNormWedge::C0}
    \BCNorm{\frac{\bigwedge X_J}{\bigwedge X_{J_0}}}{B_{X_{J_0}}(x_0,\chi)}\lesssim_0 1.
\end{equation}
We claim,
\begin{equation}\label{Eqn::PfLD::HolderNormWedge::C1}
\BCXjNorm{\frac{\bigwedge X_J}{\bigwedge X_{J_0}}}{X_{J_0}}{1}[B_{X_{J_0}}(x_0,\chi)]\lesssim_0 1.
\end{equation}
Indeed, for $1\leq j\leq n$, using \cref{Lemma::PfLD::LieDerivOfQuotient},
\begin{equation*}
\begin{split}
    &\BCNorm{X_j \frac{\bigwedge X_J}{\bigwedge X_{J_0}}}{B_{X_{J_0}}(x_0,\chi)}
    =\BCNorm{\sum_{K\in \sI_0(n,q)} g_{j,J}^K\frac{\bigwedge X_K}{\bigwedge X_{J_0}} -\sum_{K\in \sI_0(n,q)} g_{j,J_0}^K \frac{\bigwedge X_J}{\bigwedge X_{J_0}} \frac{\bigwedge X_K}{\bigwedge X_{J_0}}}{B_{X_{J_0}}(x_0,\chi)}
    \\&\lesssim_0 \sum_{K\in \sI_0(n,q)} \BCNorm{\frac{\bigwedge X_K}{\bigwedge X_{J_0}}}{B_{X_{J_0}}(x_0,\chi)} + \BCNorm{ \frac{\bigwedge X_J}{\bigwedge X_{J_0}}}{B_{X_{J_0}}(x_0,\chi)} \BCNorm{\frac{\bigwedge X_K}{\bigwedge X_{J_0}}}{B_{X_{J_0}}(x_0,\chi)}\lesssim_0 1,
    \end{split}
\end{equation*}
where the last inequality follows from \cref{Eqn::PfLD::HolderNormWedge::C0}.  \Cref{Eqn::PfLD::HolderNormWedge::C1} follows.

Using \cref{Lemma::FuncSpaces::Inclu} \cref{Item::FuncSpace::IncludHold} and \cref{Item::FuncSpace::IncludLip},
we have for $s\in [0,1]$,
\begin{equation*}
    \BHXNorm{\frac{\bigwedge X_J}{\bigwedge X_{J_0}}}{X_{J_0}}{0}{s}[B_{X_{J_0}}(x_0,\chi)]\leq
    3\BHXNorm{\frac{\bigwedge X_J}{\bigwedge X_{J_0}}}{X_{J_0}}{0}{1}[B_{X_{J_0}}(x_0,\chi)]
    \leq 3\BCXjNorm{\frac{\bigwedge X_J}{\bigwedge X_{J_0}}}{X_{J_0}}{1}[B_{X_{J_0}}(x_0,\chi)]\lesssim_0 1,
\end{equation*}
where the last inequality used \cref{Eqn::PfLD::HolderNormWedge::C1}.
This proves \cref{Eqn::PfLD::HolderNormWedge} in the case $m=0$.

We prove \cref{Eqn::PfLD::HolderNormWedge} by induction on $m$, the base case ($m=0$) having just been
proved.  We assume \cref{Eqn::PfLD::HolderNormWedge} for $m-1$ and prove it for $m$.
We use \cref{Prop::FuncSpaceRev::Algebra} freely in what follows.
We have
\begin{equation*}
    \BHXNorm{\frac{\bigwedge X_J}{\bigwedge X_{J_0}}}{X_{J_0}}{m}{s}[B_{X_{J_0}}(x_0,\chi)]
    = \BHXNorm{\frac{\bigwedge X_J}{\bigwedge X_{J_0}}}{X_{J_0}}{m-1}{s}[B_{X_{J_0}}(x_0,\chi)]
    +\sum_{j=1}^n\BHXNorm{X_j \frac{\bigwedge X_J}{\bigwedge X_{J_0}}}{X_{J_0}}{m-1}{s}[B_{X_{J_0}}(x_0,\chi)].
\end{equation*}
The first term is $\lesssim_{\Had{m-2,m-2,s}} 1$, by the inductive hypothesis, so we focus only
on the second term.  We have, using \cref{Lemma::PfLD::LieDerivOfQuotient}, and
letting $C_m$ be a constant which depends only on $m$,
\begin{equation*}
    \begin{split}
        &\BHXNorm{X_j \frac{\bigwedge X_J}{\bigwedge X_{J_0}}}{X_{J_0}}{m-1}{s}[B_{X_{J_0}}(x_0,\chi)]
        \leq C_m \sum_{K\in \sI_0(n,q)} \BHXNorm{g_{j,J}^K}{X_{J_0}}{m-1}{s}[B_{X_{J_0}}(x_0,\xi)]
        \BHXNorm{\frac{\bigwedge X_K}{\bigwedge X_{J_0}}}{X_{J_0}}{m-1}{s}[B_{X_{J_0}}(x_0,\chi)]
        \\&\quad \quad +C_m\sum_{K\in \sI_0(n,q)}
        \BHXNorm{g_{j,J_0}^K}{X_{J_0}}{m-1}{s}[B_{X_{J_0}}(x_0,\xi)]
        \BHXNorm{\frac{\bigwedge X_J}{\bigwedge X_{J_0}}}{X_{J_0}}{m-1}{s}[B_{X_{J_0}}(x_0,\chi)]
        \BHXNorm{\frac{\bigwedge X_K}{\bigwedge X_{J_0}}}{X_{J_0}}{m-1}{s}[B_{X_{J_0}}(x_0,\chi)]
        \\&\lesssim_{\Had{m-1,m-1,s}} 1,
    \end{split}
\end{equation*}
where the last inequality follows from the bounds described in \cref{Lemma::PfLD::LieWedge}
and the inductive hypothesis.
This completes the proof of \cref{Eqn::PfLD::HolderNormWedge}.

We turn to \cref{Eqn::PfLD::ZygNormWedge}, and proceed by induction on $m$, where $s\in (m,m+1]$.
We begin with the base case, $m=0$, so that $s\in (0,1]$.
Using \cref{Lemma::FuncSpaces::Inclu} \cref{Item::FuncSpace::IncludZygInHold}, we have
\begin{equation*}
    \BZygXNorm{\frac{\bigwedge X_J}{\bigwedge X_{J_0}}}{X_{J_0}}{s}[B_{X_{J_0}}(x_0,\chi)] \leq 5 \BHXNorm{\frac{\bigwedge X_J}{\bigwedge X_{J_0}}}{X_{J_0}}{0}{s}[B_{X_{J_0}}(x_0,\chi)]\lesssim_{0} 1,
\end{equation*}
where the last inequality follows from \cref{Eqn::PfLD::HolderNormWedge}.
This implies \cref{Eqn::PfLD::ZygNormWedge} for the base case $s\in (0,1]$.
From here, the inductive step follows just as in \cref{Eqn::PfLD::HolderNormWedge}
and we leave the remaining details to the reader.
\end{proof}

\begin{lemma}\label{Lemma::PfLD::Boundbt}
For $1\leq k\leq q$, $1\leq l\leq n$, there exists $\bt_k^l\in \CSpace{B_{X_{J_0}}(x_0,\chi)}$
such that
\begin{equation}\label{Eqn::PfLD::btFormula}
    X_k =\sum_{l=1}^n \bt_k^l X_l,
\end{equation}
where for $m\in \N$, $s\in [0,1]$,
\begin{equation*}
    \BHXNorm{\bt_k^l}{X_{J_0}}{m}{s}[B_{X_{J_0}}(x_0,\chi)]\lesssim_{\Had{m-1,m-1,s}} 1,
\end{equation*}
and for $s\in (0,\infty)$,
\begin{equation*}
    \BZygXNorm{\bt_k^l}{X_{J_0}}{s}[B_{X_{J_0}}(x_0,\chi)]\lesssim_{\Zygad{s-1,s-1}} 1.
\end{equation*}
\end{lemma}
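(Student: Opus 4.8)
The plan is to obtain the functions $\bt_k^l$ directly from the coordinate-free Cramer's rule of \cref{Section::Wedge}, and then read off the required estimates from \cref{Lemma::PfLD::NormsOfWedges}. First I would note that, by \cref{Prop::PfLD::Makechi}, $\bigwedge X_{J_0}(y)\ne 0$ for every $y\in B_{X_{J_0}}(x_0,\chi)$, so $X_1,\ldots,X_n$ form a basis for the tangent space at each point of $B_{X_{J_0}}(x_0,\chi)$. Fixing $1\le k\le q$ and applying \cref{Eqn::Wedge::Cramer} pointwise with $x_j=X_j$ and $y=X_k$, one is led to define
\begin{equation*}
\bt_k^l:=\frac{X_1\wedge\cdots\wedge X_{l-1}\wedge X_k\wedge X_{l+1}\wedge\cdots\wedge X_n}{\bigwedge X_{J_0}},\qquad 1\le l\le n,
\end{equation*}
so that \cref{Eqn::PfLD::btFormula} holds on $B_{X_{J_0}}(x_0,\chi)$.

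The second step is the purely combinatorial observation that $\bt_k^l$ is exactly a quotient of the form $\bigwedge X_J/\bigwedge X_{J_0}$ appearing in \cref{Lemma::PfLD::NormsOfWedges}: indeed, $J=(1,\ldots,l-1,k,l+1,\ldots,n)\in\sI(n,q)$ is the list obtained from $J_0=(1,\ldots,n)$ by replacing its $l$-th entry with $k$. (When $k\in\{1,\ldots,n\}$ this list is either $J_0$ itself, so that $\bt_k^l=\delta_{k,l}$, or has a repeated entry, so that $\bt_k^l=0$; either way the formula is consistent, and in all cases $\bt_k^l\in C(B_{X_{J_0}}(x_0,\chi))$, since the $X_j$ are continuous and $\bigwedge X_{J_0}$ is continuous and nonvanishing there.)

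Finally, the claimed bounds will be immediate from \cref{Lemma::PfLD::NormsOfWedges} applied to this particular $J$: for $m\in\N$, $s\in[0,1]$,
\begin{equation*}
\BHXNorm{\bt_k^l}{X_{J_0}}{m}{s}[B_{X_{J_0}}(x_0,\chi)]=\BHXNorm{\frac{\bigwedge X_J}{\bigwedge X_{J_0}}}{X_{J_0}}{m}{s}[B_{X_{J_0}}(x_0,\chi)]\lesssim_{\Had{m-1,m-1,s}}1,
\end{equation*}
and similarly $\BZygXNorm{\bt_k^l}{X_{J_0}}{s}[B_{X_{J_0}}(x_0,\chi)]\lesssim_{\Zygad{s-1,s-1}}1$ for $s\in(0,\infty)$. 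I do not expect a genuine obstacle at this stage: all of the analytic work has already been carried out in \cref{Lemma::PfLD::NormsOfWedges} (and, upstream of it, in the wedge-derivative identity \cref{Lemma::Wedge::DerivFrac} and the ODE regularity theory of \cref{Section::Proofs::ODE}). The only point that requires a moment's care is keeping the index lists straight, so that the Cramer's-rule cofactors are correctly recognized as admissible quotients $\bigwedge X_J/\bigwedge X_{J_0}$ with $J\in\sI(n,q)$.
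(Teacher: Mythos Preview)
Your proposal is correct and essentially identical to the paper's own proof: both define $\bt_k^l$ via the coordinate-free Cramer's rule \cref{Eqn::Wedge::Cramer} as $\bigwedge X_{J(l,k)}/\bigwedge X_{J_0}$ with $J(l,k)=(1,\ldots,l-1,k,l+1,\ldots,n)\in\sI(n,q)$, and then read off the norm bounds directly from \cref{Lemma::PfLD::NormsOfWedges}. The paper also singles out the trivial case $1\le k\le n$ (where $\bt_k^l=\delta_{k,l}$) before giving the uniform argument, just as you do.
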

\begin{proof}
For $1\leq k\leq n$ this is trivial (merely take $\bt_k^l=1$ if $k=l$ and $\bt_k^l=0$ if $k\ne l$),
however the proof that follows deals with all $1\leq k\leq q$ simultaneously.

For $1\leq k\leq q$, $1\leq l\leq n$, let $J(l,k)=(1,2,\ldots, l-1,k,l+1,\ldots, n)\in \sI(n,q)$.
We have, by Cramer's rule \cref{Eqn::Wedge::Cramer},
\begin{equation*}
    X_k = \sum_{l=1}^n \frac{\bigwedge X_{J(l,k)} }{\bigwedge X_{J_0}} X_l.
\end{equation*}
From here, the result follows from \cref{Lemma::PfLD::NormsOfWedges}.
\end{proof}

\begin{lemma}\label{Lemma::PfLD::LIApplies}
For $1\leq i,j,l\leq n$, $\exists \ch_{i,j}^l\in \CSpace{B_{X_{J_0}}(x_0,\chi)}$ such that
$[X_i,X_j]=\sum_{l=1}^n \ch_{i,j}^l X_l$, where for $m\in \N$, $s\in [0,1]$,
\begin{equation*}
    \HXNorm{\ch_{i,j}^l}{X_{J_0}}{m}{s}[B_{X_{J_0}}(x_0,\chi)] \lesssim_{\Had{m,m-1,s}} 1,
\end{equation*}
and for $s\in (0,\infty)$,
\begin{equation*}
    \ZygXNorm{\ch_{i,j}^l}{X_{J_0}}{s}[B_{X_{J_0}}(x_0,\chi)] \lesssim_{\Zygad{s,s-1}} 1.
\end{equation*}
\end{lemma}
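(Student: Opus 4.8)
The plan is to express the structure constants $\ch_{i,j}^l$ (for $1\leq i,j,l\leq n$) via Cramer's rule applied to the basis $X_{J_0} = X_1,\ldots, X_n$ of the tangent space on $B_{X_{J_0}}(x_0,\chi)$, and then bootstrap from the already-established regularity of the wedge quotients. Concretely, since $[X_i,X_j]$ is a $C^1$ vector field on $B_{X_{J_0}}(x_0,\chi)$ and $X_1(y),\ldots, X_n(y)$ form a basis at every $y\in B_{X_{J_0}}(x_0,\chi)$ (by \cref{Prop::PfLD::Makechi}, which guarantees $\bigwedge X_{J_0}(y)\ne 0$, together with \cref{Prop::ResQual::Mmanif}), Cramer's rule \cref{Eqn::Wedge::Cramer} gives
\begin{equation*}
    \ch_{i,j}^l = \frac{X_1\wedge\cdots\wedge X_{l-1}\wedge [X_i,X_j]\wedge X_{l+1}\wedge\cdots\wedge X_n}{\bigwedge X_{J_0}}.
\end{equation*}

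First I would expand $[X_i,X_j] = \sum_{k=1}^q c_{i,j}^k X_k$ (valid on $B_{X_{J_0}}(x_0,\xi)\supseteq B_{X_{J_0}}(x_0,\chi)$ by the standing hypothesis) and plug this into the numerator. By multilinearity and antisymmetry of the wedge, each resulting term is $c_{i,j}^k$ times a wedge of $n$ of the vector fields $X_1,\ldots, X_q$, i.e. $\pm c_{i,j}^k \bigwedge X_K$ for some $K\in \sI(n,q)$ (or zero). Dividing by $\bigwedge X_{J_0}$, we obtain $\ch_{i,j}^l$ as a finite sum of terms of the form $\pm c_{i,j}^k \cdot \frac{\bigwedge X_K}{\bigwedge X_{J_0}}$. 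The regularity then follows from \cref{Prop::FuncSpaceRev::Algebra} (the algebra property of $\HXSpace{X_{J_0}}{m}{s}$ and $\ZygXSpace{X_{J_0}}{s}$ on $B_{X_{J_0}}(x_0,\chi)$, which holds there by the remark following that proposition), combined with the bound on $\frac{\bigwedge X_K}{\bigwedge X_{J_0}}$ from \cref{Lemma::PfLD::NormsOfWedges} and the hypothesized bounds on $c_{i,j}^k$.

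The bookkeeping on admissible constants is the one point requiring care, and I expect it to be the main (though modest) obstacle. For $1\leq i,j\leq n$ and $1\leq k\leq q$, the structure constant $c_{i,j}^k$ is controlled in $\HXSpace{X_{J_0}}{m_1}{s}$-norm with exponent $m_1$ (these are the ``$1\leq j,k\leq n$'' entries in \cref{Defn::Results::HXAdmiss}, but note $k$ ranges over $1,\ldots,q$ there) — more precisely, the entries $c_{i,j}^k$ with $i,j$ in the range $\{1,\ldots,n\}$ are exactly the ones allowed the stronger $m_1$-regularity. Meanwhile $\frac{\bigwedge X_K}{\bigwedge X_{J_0}}$ is controlled with exponent $m-1$ by \cref{Lemma::PfLD::NormsOfWedges} (i.e. $\lesssim_{\Had{m-1,m-1,s}} 1$ for the $C^{m}$ scale and $\lesssim_{\Zygad{s-1,s-1}} 1$ for the Zygmund scale). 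Multiplying a factor with $\HXSpace{X_{J_0}}{m}{s}$-control by one with $\HXSpace{X_{J_0}}{m-1}{s}$-control and invoking the algebra property at level $m-1$ yields a product controlled at level $m-1$ in the wedge-quotient factor and level $m$ in the $c_{i,j}^k$ factor; tracking which $c_{j,k}^l$'s receive $m_1$ versus $m_2$ regularity, this produces precisely an $\Had{m,m-1,s}$-admissible bound (respectively $\Zygad{s,s-1}$-admissible), as claimed. I would verify the base case $m=0$ (resp. $s\in(0,1]$) directly from \cref{Prop::PfLD::Makechi} and \cref{Eqn::PfLD::HolderNormWedge}, and then the general case follows from the algebra estimate \cref{Eqn::FuncSpacesRev::ZygAlg} of \cref{Prop::FuncSpaceRev::Algebra} and \cref{Lemma::PfLD::NormsOfWedges} with no further induction needed, since \cref{Lemma::PfLD::NormsOfWedges} already supplies all orders.
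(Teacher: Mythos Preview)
Your approach is correct and essentially identical to the paper's: the paper writes $\ch_{i,j}^l = \sum_{k=1}^q c_{i,j}^k \bt_k^l$ where the $\bt_k^l$ are exactly the Cramer's-rule wedge quotients you describe (packaged in \cref{Lemma::PfLD::Boundbt}), and then invokes \cref{Prop::FuncSpaceRev::Algebra} and the definition of admissible constants. One small clarification on your bookkeeping: \cref{Lemma::PfLD::NormsOfWedges} bounds the full $\HXSpace{X_{J_0}}{m}{s}$-norm of the wedge quotients (by an $\Had{m-1,m-1,s}$-admissible constant), so the algebra estimate is applied directly at level $m$, not $m-1$.
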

\begin{proof}
For $1\leq i,j\leq n$ and using \cref{Lemma::PfLD::Boundbt}, we have
\begin{equation*}
    [X_i,X_j] = \sum_{k=1}^q c_{i,j}^k X_k =\sum_{l=1}^n \left(\sum_{k=1}^q  c_{i,j}^k \bt_k^l\right) X_l.
\end{equation*}
Setting $\ch_{i,j}^l = \sum_{k=1}^q  c_{i,j}^k \bt_k^l$, the result follows from
the definition of admissible constants, \cref{Lemma::PfLD::Boundbt}, and \cref{Prop::FuncSpaceRev::Algebra}.
\end{proof}

\Cref{Lemma::PfLD::LIApplies} shows that the case $n=q$ of \cref{Thm::Results::MainThm}
(which was proved in \cref{Section::Proofs::LI}) applies to $X_1,\ldots, X_n$,
with $\xi$ replaced by $\chi$.\footnote{When we proved  \cref{Thm::Results::MainThm}  for $n=q$, in \cref{Section::Proofs::LI},
we took $\chi=\xi$.}
In light of \cref{Lemma::PfLD::LIApplies} any constants
which are $\Had{m,m-1,s}$, $\Zygad{s,s-1}$, $0$, or $1$-admissible in the sense of this
application of the case $n=q$ of \cref{Thm::Results::MainThm},
are $\Had{m,m-1,s}$, $\Zygad{s,s-1}$, $0$, or $1$-admissible (respectively) in the sense of this section.
%
Thus, from the case $n=q$, we obtain $1$-admissible constants $\xi_1,\eta_1>0$
and a map $\Phi:B^n(\eta_1)\rightarrow B_{X_{J_0}}(x_0,\chi)$ as in \cref{Thm::Results::MainThm}.
Most of the case $q\geq n$ of \cref{Thm::Results::MainThm} immediately follows from this application
of the case $n=q$.
All that remain to show are:
\cref{Item::Results::BigWedge}, \cref{Item::Results::Open}, there exists $\xi_2$ as in \cref{Item::Results::xi2},
\cref{Item::Results::YReg} for $n+1\leq j\leq q$, \cref{Item::Results::bklReg}, and \cref{Item::Results::EquivNorms}.

\begin{proof}[Proof of \cref{Item::Results::BigWedge}]
That $\forall y\in B_{X_{J_0}}(x_0,\chi)$
$$\sup_{J\in \sI(n,q)} \left|\frac{\bigwedge X_J(y)}{\bigwedge X_{J_0}(y)}\right|\geq 1$$
is clear (by taking $J=J_0$).
That
$$\sup_{J\in \sI(n,q)} \left|\frac{\bigwedge X_J(y)}{\bigwedge X_{J_0}(y)}\right|\lesssim_0 1,$$
$\forall y\in B_{X_{J_0}}(x_0,\chi)$,
is \cref{Prop::PfLD::Makechi}.
\end{proof}

\begin{proof}[Proof of \cref{Item::Results::Open}]
Let $\chi'\in (0,\chi]$ and fix $x\in B_{X_{J_0}}(x_0,\chi)$.  \Cref{Item::Results::WedgeNonzero} shows
$X_1(x),\ldots, X_n(x)$ are linearly independent.  Define
$\Psi(t):= e^{t_1 X_1+\cdots +t_nX_n}x$, so that
$d\Psi(0) = (X_1(x)|\cdots|X_n(x))$ and is therefore invertible.
It is clear that for $\delta$ sufficienty small
$\Psi(B^n(\delta))\subseteq B_{X_{J_0}}(x_0,\chi')$ and the Inverse Function
Theorem shows that for $\delta$ sufficiently small $\Psi(B^n(\delta))\subseteq B_X(x_0,\xi)$ is
open.  Hence, $\Psi(B^n(\delta))$ is an open neighborhood of $x$ in $B_{X_{J_0}}(x_0,\chi')$.
Since $x\in B_{X_{J_0}}(x_0,\chi')$ was arbitrary, we conclude
$B_{X_{J_0}}(x_0,\chi')\subseteq B_X(x_0,\xi)$ is open.
\end{proof}

That there exists a $1$-admissible constant $\xi_2>0$ such that
\cref{Item::Results::xi2} holds follows by applying the next lemma with $\zeta_1=\xi_1$.
\begin{lemma}
Fix $\zeta_1\in (0,\chi]$.  Then, there is a $0$-admissible constant $\zeta_2>0$ (which also depends on $\zeta_1$)
such that $B_X(x_0,\zeta_2)\subseteq B_{X_{J_0}}(x_0,\zeta_1)$.
\end{lemma}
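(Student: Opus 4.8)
The plan is to show that the $X_{J_0}$-ball at a small enough $0$-admissible radius is contained in a prescribed $X$-ball. The key observation is that the two Carnot-Carath\'eodory metrics, $\rho_X$ built from all of $X_1,\dots,X_q$ and $\rho_{X_{J_0}}$ built from $X_1,\dots,X_n$, are comparable on $B_{X_{J_0}}(x_0,\chi)$, because on that set every $X_k$ ($n+1\le k\le q$) can be written as $X_k=\sum_{l=1}^n \bt_k^l X_l$ with $\CNorm{\bt_k^l}{B_{X_{J_0}}(x_0,\chi)}\lesssim_0 1$, by \cref{Lemma::PfLD::Boundbt} (or equivalently by \cref{Prop::PfLD::Makechi} via Cramer's rule).

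First I would fix $\zeta_1\in(0,\chi]$ and take $y\in B_X(x_0,\zeta_2)$ for a $0$-admissible $\zeta_2\in(0,\zeta_1]$ to be chosen. By definition there is $\gamma:[0,1]\to B_X(x_0,\xi)$ with $\gamma(0)=x_0$, $\gamma(1)=y$, $\gamma'(t)=\sum_{j=1}^q a_j(t)\zeta_2 X_j(\gamma(t))$, and $\Norm{\sum_j |a_j|^2}[L^\infty]<1$. I would then run a continuity (bootstrap) argument: let $t_0:=\sup\{t\in[0,1] : \gamma|_{[0,t]}\text{ stays in }B_{X_{J_0}}(x_0,\zeta_1)\}$, which is positive since $\gamma(0)=x_0$. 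On $[0,t_0)$ the curve lies in $B_{X_{J_0}}(x_0,\zeta_1)\subseteq B_{X_{J_0}}(x_0,\chi)$, so I may substitute $X_k=\sum_l \bt_k^l X_l$ for $k>n$ into the ODE, obtaining $\gamma'(t)=\sum_{l=1}^n d_l(t)\zeta_2 X_l(\gamma(t))$ where $d_l(t)=a_l(t)+\sum_{k=n+1}^q a_k(t)\bt_k^l(\gamma(t))$ satisfies $\Norm{\sum_l|d_l|^2}[L^\infty([0,t_0))]\le C$ for a $0$-admissible constant $C$ (using $\CNorm{\bt_k^l}{}\lesssim_0 1$ and Cauchy-Schwarz). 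Hence, after rescaling time, $\gamma|_{[0,t_0)}$ is a curve witnessing $\rho_{X_{J_0}}(x_0,\gamma(t))\le C'\zeta_2$ for all $t<t_0$, with $C'$ a $0$-admissible constant. Choosing $\zeta_2$ to be a $0$-admissible constant with $C'\zeta_2<\zeta_1$ forces $\gamma(t)\in B_{X_{J_0}}(x_0,\zeta_1')$ for some $\zeta_1'<\zeta_1$ strictly, so the curve cannot reach the boundary of $B_{X_{J_0}}(x_0,\zeta_1)$, giving $t_0=1$ and $\gamma(1)=y\in B_{X_{J_0}}(x_0,\zeta_1)$.

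The main technical point to be careful about is the bootstrap closure: one must ensure $t_0=1$ rather than merely $t_0>0$, which requires that the bound $\rho_{X_{J_0}}(x_0,\gamma(t))\le C'\zeta_2$ obtained on $[0,t_0)$ is \emph{strictly} less than $\zeta_1$, so that by continuity $\gamma(t_0)$ is still an interior point and the supremum cannot be a proper endpoint; choosing $\zeta_2 := \zeta_1/(2C')$ (a $0$-admissible constant since $C'$ is) does this. A second minor point is that $B_{X_{J_0}}(x_0,\zeta_1)$ need not be open a priori, but \cref{Item::Results::Open} (already proved above in this section) guarantees it is open in $B_X(x_0,\xi)$, so the set $\{t : \gamma|_{[0,t]}\subseteq B_{X_{J_0}}(x_0,\zeta_1)\}$ is relatively open at its interior, legitimizing the continuity argument; alternatively one argues directly with the explicit metric bound and never invokes openness. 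This completes the proof that $B_X(x_0,\zeta_2)\subseteq B_{X_{J_0}}(x_0,\zeta_1)$, and applying it with $\zeta_1=\xi_1$ produces the $1$-admissible constant $\xi_2>0$ required for \cref{Item::Results::xi2}.
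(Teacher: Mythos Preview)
Your proposal is correct and follows essentially the same approach as the paper: a continuity/bootstrap argument using \cref{Lemma::PfLD::Boundbt} to rewrite the tangent of the curve in terms of $X_1,\dots,X_n$ with $0$-admissibly bounded coefficients, combined with the openness from \cref{Item::Results::Open} to close the bootstrap. The only cosmetic difference is that the paper builds the strict-containment margin into the definition of $t_0$ (using $B_{X_{J_0}}(x_0,\zeta_1/2)$ rather than $B_{X_{J_0}}(x_0,\zeta_1)$), whereas you achieve the same margin afterward by choosing $\zeta_2=\zeta_1/(2C')$.
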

\begin{proof}
Let $\zeta_2\in (0,\zeta_1]$, we will pick $\zeta_2$ at the end of the proof.
Suppose $y\in B_{X}(x_0,\zeta_2)$, so that $\exists \gamma:[0,1]\rightarrow B_X(x_0,\zeta_2)$
with $\gamma(0)=x_0$, $\gamma(1)=y$, $\gamma'(t)=\sum_{j=1}^q a_j(t)\zeta_2 X_j(\gamma(t))$,
$\Norm{\sum|a_j(t)|^2}[L^\infty([0,1])]<1$.
Let
\begin{equation*}
    t_0=\sup\{t\in [0,1] : \gamma(t')\in B_{X_{J_0}}(x_0,\zeta_1/2), \forall t'\in [0,t]\}.
\end{equation*}
We wish to show that if $\zeta_2=\zeta_2(\zeta_1)>0$ is taken to be a sufficiently small $0$-admissible
constant, then we have $t_0=1$ and $y=\gamma(1)\in B_{X_{J_0}}(x_0,\zeta_1)$.

In fact, we will prove $\gamma(t_0)\in B_{X_{J_0}}(x_0,\zeta_1/2)$.  The result will then follow
as if $t_0<1$, the fact that $B_{X_{J_0}}(x_0,\zeta_1/2)$ is open (see \cref{Item::Results::Open})
and $\gamma$ is continuous show that $\gamma(t')\in B_{X_{J_0}}(x_0,\zeta_1/2)$ for $t'\in [0,t_0+\epsilon)$
for some $\epsilon>0$, which contradicts the choice of $t_0$.

We turn to proving $\gamma(t_0)\in B_{X_{J_0}}(x_0,\zeta_1/2)$.  We have
\begin{equation*}
    \gamma'(t) = \sum_{k=1}^q a_k(t) \zeta_2 X_k(\gamma(t)) = \sum_{l=1}^n \left(\sum_{k=1}^q a_k(t) \zeta_2 \bt_k^l(\gamma(t))\right) X_l(\gamma(t))=:\sum_{l=1}^n \at_l(t)\frac{\zeta_1}{2} X_l(\gamma(t)),
\end{equation*}
where $\BNorm{\sum |\at_l(t)|^2}[L^\infty([0,t_0])]\lesssim_0 \frac{\zeta_2}{\zeta_1}$ (see \cref{Lemma::PfLD::Boundbt}).
Thus, by taking $\zeta_2=\zeta_2(\zeta_1)>0$ to be a sufficiently small $0$-admissible constant,
we have $\BNorm{\sum |\at_l(t)|^2}[L^\infty([0,t_0])]<1$.  It follows that $\gamma(t_0)\in B_{X_{J_0}}(x_0,\zeta_1/2)$,
which completes the proof.
\end{proof}

\begin{proof}[Proof of \cref{Item::Results::bklReg}]
For $n+1\leq k\leq q$, $1\leq l \leq n$, set $b_k^l:=\bt_k^l\circ \Phi$.
Pulling back \cref{Eqn::PfLD::btFormula} via $\Phi$ shows $Y_k=\sum_{l=1}^n b_k^l Y_l$.
The regularity of $b_k^l$ now follows by combining \cref{Item::Results::AbstractNorm} and
the bounds in \cref{Lemma::PfLD::Boundbt}.
\end{proof}

\begin{proof}[Proof of \cref{Item::Results::YReg} for $n+1\leq j\leq q$]
This follows by combining \cref{Item::Results::YReg} for $1\leq j\leq n$ and \cref{Item::Results::bklReg}.
\end{proof}

\begin{proof}[Proof of \cref{Item::Results::EquivNorms}]
We prove the result for Zygmund spaces; the proof for H\"older spaces is similar, and we leave it to the
reader.
Let $s>2$.
The case $n=q$ of \cref{Thm::Results::MainThm} gives
$\ZygNorm{f}{s}[B^n(\eta_1)]\approx_{\Zygad{s-1,s-2}} \ZygXNorm{f}{Y_{J_0}}{s}[B^n(\eta_1)]$.
Also, $\ZygXNorm{f}{Y}{s}[B^n(\eta_1)]\approx_{\Zygad{s-1,s-2}} \ZygNorm{f}{s}[B^n(\eta_1)]$
follows from \cref{Prop::FuncSpaceRev::CompNorms}, \cref{Item::Results::ASize}, \cref{Item::Results::YReg},
and the fact that $\eta_1$ is a $\Zygad{s-1,s-2}$-admissible constant, for $s>2$.
Here we are using $\grad=(I+A)^{-1} Y_{J_0}$ and $\ZygNorm{(I+A)^{-1}}{s}[B^n(\eta_1)][\M^{n\times n}]\lesssim_{\Zygad{s,s-1}} 1$,
for $s>0$
(which follows from \cref{Item::Results::ASize} and \cref{Item::Results::YReg}).
\end{proof}

    \subsection{Densities}\label{Section::Proofs::Densities}

In this section, we prove the results from \cref{Section::Densities}.
We recall the density $\nu_0$ from \cref{Eqn::Density::Defnnu0}, defined
on $B_{X_{J_0}}(x_0,\chi)$:
\begin{equation*}
    \nu_0(x)(Z_1(x),\ldots, Z_n(x))  := \left| \frac{Z_1(x)\wedge Z_2(x)\wedge \cdots \wedge Z_n(x)}{X_1(x)\wedge X_2(x)\wedge \cdots \wedge X_n(x) }\right|.
\end{equation*}

\begin{lemma}\label{Lemma::PfDense::Evalnu0}
$\nu_0(X_1,\ldots, X_n)\equiv 1$, and for $j_1,\ldots, j_n\in \{1,\ldots, q\}$,
$\nu_0(X_{j_1},\ldots, X_{j_n})\lesssim_0 1$.
\end{lemma}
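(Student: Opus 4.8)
The claim $\nu_0(X_1,\ldots,X_n)\equiv 1$ is immediate from the definition of $\nu_0$ in \cref{Eqn::Density::Defnnu0}: plugging in $Z_j = X_j$ gives $\left|\frac{X_1\wedge\cdots\wedge X_n}{X_1\wedge\cdots\wedge X_n}\right| = 1$, since the ratio of a nonzero element of the one-dimensional space $\bigwedge^n T_xM$ by itself is $1$; here we use \cref{Thm::Results::MainThm} \cref{Item::Results::WedgeNonzero}, which guarantees $\bigwedge X_{J_0} = X_1\wedge\cdots\wedge X_n$ is nonzero on $B_{X_{J_0}}(x_0,\chi)$, so the quotient is well-defined.

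For the second claim, fix $j_1,\ldots,j_n\in\{1,\ldots,q\}$ and set $J=(j_1,\ldots,j_n)\in\sI(n,q)$. By the definition of $\nu_0$ and the definition of the quotient from \cref{Section::Wedge},
\begin{equation*}
\nu_0(X_{j_1},\ldots,X_{j_n}) = \left|\frac{X_{j_1}\wedge\cdots\wedge X_{j_n}}{X_1\wedge\cdots\wedge X_n}\right| = \left|\frac{\bigwedge X_J}{\bigwedge X_{J_0}}\right|.
\end{equation*}
Now I would invoke \cref{Thm::Results::MainThm} \cref{Item::Results::BigWedge}, which states precisely that $\max_{J\in\sI(n,q)}\left|\frac{\bigwedge X_J(y)}{\bigwedge X_{J_0}(y)}\right|\approx_0 1$ for all $y\in B_{X_{J_0}}(x_0,\chi)$. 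In particular, for each fixed $J$, $\left|\frac{\bigwedge X_J(y)}{\bigwedge X_{J_0}(y)}\right|\leq \max_{J'\in\sI(n,q)}\left|\frac{\bigwedge X_{J'}(y)}{\bigwedge X_{J_0}(y)}\right|\lesssim_0 1$, uniformly on $B_{X_{J_0}}(x_0,\chi)$. This gives $\nu_0(X_{j_1},\ldots,X_{j_n})\lesssim_0 1$ as desired.

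There is essentially no obstacle here; the lemma is a direct bookkeeping consequence of the definition of $\nu_0$ together with \cref{Item::Results::WedgeNonzero} and \cref{Item::Results::BigWedge} of \cref{Thm::Results::MainThm}. The only point requiring any care is to note that the indices $(j_1,\ldots,j_n)$ in the statement are allowed to be an arbitrary tuple rather than a strictly increasing one, but since $\bigwedge X_J$ changes only by a sign under permutation of the entries of $J$ (and vanishes if there is a repeated index), the absolute value appearing in $\nu_0$ makes this irrelevant: either $\nu_0(X_{j_1},\ldots,X_{j_n})=0\lesssim_0 1$ trivially, or it equals $\left|\frac{\bigwedge X_{J'}}{\bigwedge X_{J_0}}\right|$ for the associated strictly increasing tuple $J'\in\sI_0(n,q)$, which is controlled by \cref{Item::Results::BigWedge}. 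Thus the proof is just these two or three sentences.
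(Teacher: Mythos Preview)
Your proof is correct and follows essentially the same approach as the paper: the first claim is immediate from the definition of $\nu_0$, and the second follows directly from \cref{Thm::Results::MainThm} \cref{Item::Results::BigWedge}. Your additional remark about arbitrary versus strictly increasing tuples is a harmless elaboration that the paper leaves implicit.
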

\begin{proof}
That $\nu_0(X_1,\ldots, X_n)\equiv 1$ follows directly from the definition.
That $\nu_0(X_{j_1},\ldots, X_{j_n})\lesssim_0 1$ follows from \cref{Thm::Results::MainThm} \cref{Item::Results::BigWedge}.
\end{proof}

\begin{lemma}\label{Lemma::PfDense::DivedeWedgeMatrix}
Let $V$ and $W$ be $n$-dimensional real vector spaces, and let $A:W\rightarrow V$ be an invertible linear
transformation.  Let $v_1,\ldots, v_n$ be a basis for $V$ and let $w_1,\ldots, w_n\in W$.  Then,
\begin{equation*}
    \frac{Aw_1\wedge Aw_2\wedge \cdots \wedge Aw_n}{v_1\wedge v_2\wedge \cdots \wedge v_n} = \frac{w_1\wedge w_2\wedge \cdots \wedge w_n}{A^{-1}v_1\wedge A^{-1}v_2\wedge \cdots \wedge A^{-1}v_n}.
\end{equation*}
\end{lemma}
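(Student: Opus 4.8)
The statement is a purely linear-algebraic fact about top wedge powers and the "coordinate free Cramer's rule" notation introduced in \cref{Section::Wedge}: for an invertible linear map $A:W\rightarrow V$ between $n$-dimensional real spaces, a basis $v_1,\ldots,v_n$ of $V$, and arbitrary $w_1,\ldots,w_n\in W$,
\begin{equation*}
    \frac{Aw_1\wedge \cdots \wedge Aw_n}{v_1\wedge \cdots \wedge v_n} = \frac{w_1\wedge \cdots \wedge w_n}{A^{-1}v_1\wedge \cdots \wedge A^{-1}v_n}.
\end{equation*}
The plan is to reduce everything to the behavior of the functor $\bigwedge^n$ under the linear map $A$. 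Recall that $\bigwedge^n A:\bigwedge^n W\rightarrow \bigwedge^n V$ is a linear map between one-dimensional spaces, and by definition $(\bigwedge^n A)(w_1\wedge\cdots\wedge w_n)=Aw_1\wedge\cdots\wedge Aw_n$. Since it is a linear map of one-dimensional spaces, there is a scalar $c=\det A$ (the determinant, independent of any choice of basis) with $(\bigwedge^n A)\omega = c\,\omega'$ under any identification, but more usefully: for any nonzero $\omega\in\bigwedge^n W$, $(\bigwedge^n A)\omega$ is a nonzero element of $\bigwedge^n V$, and quotients are preserved in the sense that for $\omega_1\in\bigwedge^n W$, $\omega_2\in\bigwedge^n W$ with $\omega_2\ne 0$, one has $\frac{(\bigwedge^n A)\omega_1}{(\bigwedge^n A)\omega_2}=\frac{\omega_1}{\omega_2}$, since applying the linear isomorphism $\bigwedge^n A$ to both numerator and denominator multiplies both by the same factor and the quotient (defined in \cref{Section::Wedge} via any nonzero linear functional) is unchanged.

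First I would establish that last observation precisely: if $\lambda:\bigwedge^n V\rightarrow\R$ is a nonzero linear functional, then $\lambda\circ(\bigwedge^n A):\bigwedge^n W\rightarrow\R$ is a nonzero linear functional (as $\bigwedge^n A$ is an isomorphism, using that $A$ is invertible), so for any $\omega_1\in\bigwedge^n W$ and nonzero $\omega_2\in\bigwedge^n W$,
\begin{equation*}
    \frac{(\bigwedge^n A)\omega_1}{(\bigwedge^n A)\omega_2} = \frac{\lambda((\bigwedge^n A)\omega_1)}{\lambda((\bigwedge^n A)\omega_2)} = \frac{(\lambda\circ\bigwedge^n A)(\omega_1)}{(\lambda\circ\bigwedge^n A)(\omega_2)} = \frac{\omega_1}{\omega_2},
\end{equation*}
where the last equality is the definition of the quotient in $\bigwedge^n W$ using the functional $\lambda\circ\bigwedge^n A$, and is independent of that choice.

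Next I would apply this twice. Take $\omega_1 = w_1\wedge\cdots\wedge w_n$ and $\omega_2 = A^{-1}v_1\wedge\cdots\wedge A^{-1}v_n$; the latter is nonzero because $v_1,\ldots,v_n$ is a basis of $V$ and $A^{-1}$ is an isomorphism, so $A^{-1}v_1,\ldots,A^{-1}v_n$ is a basis of $W$. Then $(\bigwedge^n A)\omega_1 = Aw_1\wedge\cdots\wedge Aw_n$ and $(\bigwedge^n A)\omega_2 = A A^{-1}v_1\wedge\cdots\wedge A A^{-1}v_n = v_1\wedge\cdots\wedge v_n$. Plugging into the displayed identity above gives exactly
\begin{equation*}
    \frac{Aw_1\wedge\cdots\wedge Aw_n}{v_1\wedge\cdots\wedge v_n} = \frac{w_1\wedge\cdots\wedge w_n}{A^{-1}v_1\wedge\cdots\wedge A^{-1}v_n},
\end{equation*}
which is the claim.

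I expect no real obstacle here; the only thing to be careful about is checking that $\omega_2$ is genuinely nonzero (so that the quotient is defined), which I handle above by noting $A^{-1}v_1,\ldots,A^{-1}v_n$ is a basis, and confirming that the quotient in \cref{Section::Wedge} is well-defined independent of the choice of linear functional, which is exactly the remark made when that notation was introduced. Everything else is bookkeeping with the functoriality of $\bigwedge^n$.
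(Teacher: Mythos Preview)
Your proof is correct and follows essentially the same approach as the paper. The paper first isolates the abstract one-dimensional fact $\frac{Bz_1}{z_2}=\frac{z_1}{B^{-1}z_2}$ for any invertible linear map $B:Z_1\to Z_2$ between one-dimensional spaces (proved exactly as you do, by composing a nonzero functional with $B$), and then specializes to $Z_1=\bigwedge^n W$, $Z_2=\bigwedge^n V$, $B=\bigwedge^n A$; you carry out the same computation directly in the wedge-power setting, which amounts to the same argument.
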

\begin{proof}
Let $Z_1,Z_2$ be one dimensional real vector spaces and let $B:Z_1\rightarrow Z_2$ be an invertible linear transformation.
Let $z_1\in Z_1$ and $0\ne z_2\in Z_2$.  We claim
\begin{equation}\label{Eqn::PfDense::ToShowOneDim}
\frac{Bz_1}{z_2}=\frac{z_1}{B^{-1} z_2}.
\end{equation}
Indeed, let $\lambda_2:Z_2\rightarrow \R$ be any nonzero linear functional, and set $\lambda_1:=\lambda_2\circ B:Z_1\rightarrow \R$
so that $\lambda_1$ is also a nonzero linear functional.  We have
\begin{equation*}
    \frac{B z_1}{z_2} = \frac{\lambda_2(Bz_1)}{\lambda_2(z_2)} =\frac{\lambda_1(z_1)}{\lambda_1(B^{-1} z_2)} = \frac{z_1}{B^{-1} z_2}.
\end{equation*}
Applying \cref{Eqn::PfDense::ToShowOneDim} in the case
$Z_1=\bigwedge^n W$, $Z_2=\bigwedge^n V$, and $B:Z_1\rightarrow Z_2$ given by
$B (w_1\wedge w_2\wedge \cdots \wedge w_n) = (Aw_1)\wedge (A w_2)\wedge \cdots \wedge (Aw_n)$
completes the proof.


\end{proof}

\begin{lemma}
For $1\leq j\leq n$, $\Lie{X_j} \nu_0 = f_j^0 \nu_0$, where $f_j^0\in \CSpace{B_{X_{J_0}}(x_0,\chi)}$.
Furthermore, for $m\in \N$, $s\in [0,1]$,
\begin{equation}\label{Eqn::PfDense::Holderfj0}
    \HXNorm{f_j^0}{X_{J_0}}{m}{s}[B_{X_{J_0}}(x_0,\chi)]\lesssim_{\Had{m,m,s}} 1,
\end{equation}
and for $s\in (0,\infty)$,
\begin{equation}\label{Eqn::PfDense::Zygfj0}
    \ZygXNorm{f_j^0}{X_{J_0}}{s}[B_{X_{J_0}}(x_0,\chi)]\lesssim_{\Zygad{s,s}} 1.
\end{equation}
\end{lemma}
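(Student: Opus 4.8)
The plan is to compute $\Lie{X_j}\nu_0$ by relating it to the Lie derivative of the $n$-form $\fX = X_1\wedge\cdots\wedge X_n$, which was already analyzed in \cref{Lemma::PfLD::LieWedge}. Recall that $\nu_0(Z_1,\ldots,Z_n) = \left|\frac{Z_1\wedge\cdots\wedge Z_n}{\fX}\right|$. The key observation is that, pointwise on $B_{X_{J_0}}(x_0,\chi)$, the density $\nu_0$ is (up to absolute value) dual to $\fX$ in the following sense: if $\mu$ is any $C^1$ nowhere-zero $n$-form near a point, then $\nu_0$ and $|\mu|$ differ by the function $\left|\frac{\mu^{\sharp}}{\fX}\right|$ where I am writing things loosely; more precisely, writing $\mu$ for a local nonvanishing $C^1$ $n$-form, one has $\nu_0 = |g_\mu| \, |\mu|$ where $g_\mu := \mu(\fX)^{-1}$ is $C^1$ (by \cref{Prop::PfLD::Makechi} it is nonvanishing). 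Then $\Lie{X_j}$ of a density transforms by the Lie derivative of its defining function. Since $\Lie{X_j}|\mu| = (\Div_\mu X_j)|\mu|$ for the appropriate divergence-type function, and since I can choose $\mu$ to be a fixed smooth reference $n$-form on a coordinate chart, the cleanest route is: work with the function $\frac{\bigwedge X_{J_0}}{\mu}$ for a fixed smooth $\mu$ and use \cref{Lemma::Wedge::DerivFrac}.

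Concretely, first I would fix (locally, and then patch, or simply argue invariantly) a smooth nowhere-zero density $\mu_0$ and write $\nu_0 = w\,\mu_0$ where $w = \left|\frac{\fX}{\mu_0^{\flat}}\right|^{-1}$... rather than juggle reference forms, the better approach is to directly differentiate. A density $\nu$ satisfies $\Lie{Z}\nu = f\nu$ where $f$ is characterized by $Z(\nu(W_1,\ldots,W_n)) = (\Lie{Z}\nu)(W_1,\ldots,W_n) + \sum_i \nu(W_1,\ldots,[Z,W_i],\ldots,W_n)$, exactly as in the proof of \cref{Lemma::Wedge::DerivFrac} (which used \cite[Proposition 18.9]{LeeIntroToSmoothManifolds} for forms; the density version is identical). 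Applying this with $Z = X_j$ and $W_i = X_i$ and using $\nu_0(X_1,\ldots,X_n)\equiv 1$ (\cref{Lemma::PfDense::Evalnu0}), the left side vanishes, so
\begin{equation*}
    f_j^0 = f_j^0\,\nu_0(X_1,\ldots,X_n) = (\Lie{X_j}\nu_0)(X_1,\ldots,X_n) = -\sum_{i=1}^n \nu_0(X_1,\ldots,[X_j,X_i],\ldots,X_n).
\end{equation*}
Now write $[X_j,X_i] = \sum_{l=1}^n \ch_{j,i}^l X_l$ using \cref{Lemma::PfLD::LIApplies}. By multilinearity of $\nu_0$ and the fact that $\nu_0(X_1,\ldots,X_n)=1$, only the $l=i$ term survives in each summand, giving $f_j^0 = -\sum_{i=1}^n \ch_{j,i}^i$. (One should double-check the sign and that the absolute value in the definition of $\nu_0$ causes no trouble — near $x_0$, $\frac{X_1\wedge\cdots\wedge X_n}{\fX}\equiv 1 > 0$, so the absolute value is locally smooth and may be dropped; this is where \cref{Thm::Results::MainThm} \cref{Item::Results::WedgeNonzero} is used to ensure $\nu_0$ is genuinely $C^1$ and the argument is valid on all of $B_{X_{J_0}}(x_0,\chi)$.)

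Having the explicit formula $f_j^0 = -\sum_{i=1}^n \ch_{j,i}^i$, the regularity bounds \cref{Eqn::PfDense::Holderfj0} and \cref{Eqn::PfDense::Zygfj0} follow immediately: \cref{Lemma::PfLD::LIApplies} gives $\HXNorm{\ch_{j,i}^i}{X_{J_0}}{m}{s}[B_{X_{J_0}}(x_0,\chi)]\lesssim_{\Had{m,m-1,s}} 1$ and $\ZygXNorm{\ch_{j,i}^i}{X_{J_0}}{s}[B_{X_{J_0}}(x_0,\chi)]\lesssim_{\Zygad{s,s-1}} 1$, and summing over $i$ (a $q$-dependent number of terms) preserves these bounds; since $\Had{m,m-1,s}$-admissibility is implied by $\Had{m,m,s}$-admissibility and $\Zygad{s,s-1}$ by $\Zygad{s,s}$, we get exactly the claimed estimates (in fact slightly stronger ones). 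The main obstacle — really the only subtle point — is justifying the density-Lie-derivative identity cleanly and checking that the local absolute value in the definition of $\nu_0$ does not obstruct the $C^1$ (and higher) regularity globally on $B_{X_{J_0}}(x_0,\chi)$; this is handled by noting $\frac{\bigwedge X_{J_0}}{\bigwedge X_{J_0}} = 1$ so the sign of the ratio defining $\nu_0$ relative to $\fX$ is constant $+1$, making $\nu_0$ as regular as $\fX$ itself, hence the formalism of \cref{Lemma::Wedge::DerivFrac} applies verbatim.
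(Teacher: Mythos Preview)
Your approach is cleaner than the paper's and arrives at the sharper formula $f_j^0 = -\sum_{i=1}^n \ch_{j,i}^i$, from which the regularity bounds follow immediately via \cref{Lemma::PfLD::LIApplies}. The paper instead computes $\phi_t^*\nu_0$ directly from the definition (carefully tracking the sign $\epsilon$ coming from the absolute value), differentiates at $t=0$, and obtains the equivalent expression $f_j^0 = -\sum_{K\in\sI_0(n,q)} g_{j,J_0}^K \frac{\bigwedge X_K}{\bigwedge X_{J_0}}$, then invokes \cref{Lemma::PfLD::LieWedge,Lemma::PfLD::NormsOfWedges}. Your route via the Leibniz identity is shorter and yields the same function.

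There is, however, a genuine technical error in your argument. A density is \emph{not} multilinear in its arguments: it satisfies $\nu_0(cZ_1,Z_2,\ldots) = |c|\,\nu_0(Z_1,Z_2,\ldots)$, not $c\,\nu_0(Z_1,\ldots)$. Consequently, the Leibniz-type identity $Z(\nu(W_1,\ldots,W_n)) = (\Lie{Z}\nu)(W_1,\ldots,W_n) + \sum_i \nu(W_1,\ldots,[Z,W_i],\ldots,W_n)$ is false for densities in general, and your expansion ``by multilinearity of $\nu_0$'' would give $\nu_0(X_1,\ldots,[X_j,X_i],\ldots,X_n) = |\ch_{j,i}^i|$, not $\ch_{j,i}^i$. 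Your parenthetical remark about the absolute value does not address this: the issue is not whether $\nu_0$ is $C^1$, but that the absolute value destroys linearity in the slots.

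The fix is easy and preserves your argument. Since $X_1,\ldots,X_n$ form a global frame on $B_{X_{J_0}}(x_0,\chi)$ (by \cref{Thm::Results::MainThm} \cref{Item::Results::WedgeNonzero}), one may define the genuine $n$-\emph{form} $\omega_0(Z_1,\ldots,Z_n) := \frac{Z_1\wedge\cdots\wedge Z_n}{X_1\wedge\cdots\wedge X_n}$ (no absolute value), which is $C^1$ and satisfies $\nu_0 = |\omega_0|$. The Leibniz rule from \cite[Proposition 18.9]{LeeIntroToSmoothManifolds} applies to $\omega_0$ verbatim, yielding $\Lie{X_j}\omega_0 = f_j^0\,\omega_0$ with your formula $f_j^0 = -\sum_{i=1}^n \ch_{j,i}^i$. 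Since $\omega_0$ is nowhere zero, $\Lie{X_j}|\omega_0| = f_j^0\,|\omega_0|$ with the same $f_j^0$, i.e.\ $\Lie{X_j}\nu_0 = f_j^0\,\nu_0$. With this correction your proof goes through, and in fact gives slightly stronger admissibility ($\Had{m,m-1,s}$ and $\Zygad{s,s-1}$) than stated.
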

\begin{proof}
Set $\phi_t(x)=e^{tX_j}x$ so that $\Lie{X_j} \nu_0 = \diff{t}\big|_{t=0} \phi_t^{*}\nu_0$.
We write $d\phi_t(x)$ to denote the differential of $\phi_t$ in the $x$ variable.
We have, using \cref{Lemma::PfDense::DivedeWedgeMatrix},
\begin{equation}\label{Eqn::PfDense::PullBacknu0}
\begin{split}
    &(\phi_t^{*} \nu_0)(x)(Z_1,\ldots, Z_n) = \nu_0(\phi_t(x))(d\phi_t(x) Z_1(x),\ldots, d\phi_t(x) Z_n(x))
    \\&=\left|\frac{d\phi_t(x) Z_1(x) \wedge d\phi_t(x) Z_2(x)\wedge \cdots \wedge d\phi_t(x) Z_n(x)}{X_1(\phi_t(x))\wedge X_2(\phi_t(x))\wedge \cdots \wedge X_n(\phi_t(x))}\right|
    \\&=\left|\frac{Z_1(x)\wedge Z_2(x)\wedge \cdots \wedge Z_n(x)}{ d\phi_t(x)^{-1} X_1(\phi_t(x))\wedge d\phi_t(x)^{-1} X_2(\phi_t(x))\wedge \cdots \wedge d\phi_t(x)^{-1} X_n(\phi_t(x)) }\right|
    \\&=\left|\frac{Z_1(x)\wedge Z_2(x)\wedge \cdots \wedge Z_n(x)}{\phi_t^{*} X_1(x)\wedge \phi_t^{*} X_2(x)\wedge \cdots \wedge\phi_t^{*} X_n(x) }\right|
\end{split}
\end{equation}

Fix $x\in B_{X_{J_0}}(x_0,\chi)$.
We claim that the sign of $$\frac{Z_1(x)\wedge Z_2(x)\wedge \cdots \wedge Z_n(x)}{\phi_t^{*} X_1(x)\wedge \phi_t^{*} X_2(x)\wedge \cdots\wedge \phi_t^{*} X_n(x) }$$ does not change for $t$ small.
To this end, let $\theta$ be a $C^1$ $n$-form which is nonzero near $x$.
Since
$X_1(x)\wedge X_2(x)\wedge\cdots \wedge X_n(x)\ne 0$ (\cref{Thm::Results::MainThm} \cref{Item::Results::WedgeNonzero}),
$\theta(x)(X_1(x)\wedge X_2(x)\wedge \cdots \wedge X_n(x))\ne 0$, and so by continuity, for $t$ small,
$\theta(x)(\phi_t^{*} X_1(x)\wedge \phi_t^{*} X_2(x)\wedge \cdots \wedge \phi_t^{*} X_n(x))\ne 0$.
We conclude that for $t$ sufficiently small,
\begin{equation*}
    \frac{Z_1(x)\wedge Z_2(x)\wedge \cdots \wedge Z_n(x)}{\phi_t^{*} X_1(x)\wedge \phi_t^{*} X_2(x)\wedge \cdots \wedge\phi_t^{*} X_n(x) } = \frac{\theta(x)(Z_1(x)\wedge Z_2(x)\wedge \cdots \wedge Z_n(x))}{\theta(x)(\phi_t^{*} X_1(x)\wedge \phi_t^{*} X_2(x)\wedge \cdots \wedge \phi_t^{*} X_n(x)) }
\end{equation*}
does not change sign, and is either never zero or always zero for small $t$.

Set, for $t$ small,
 $$\epsilon:=\mathrm{sgn}\frac{Z_1(x)\wedge Z_2(x)\wedge \cdots \wedge Z_n(x)}{\phi_t^{*}X_1(x)\wedge \phi_t^{*} X_2(x)\wedge \cdots \wedge \phi_t^{*} X_n(x) },$$
and in the case the quantity inside $\mathrm{sgn}$ equals zero, the choice of $\epsilon$ does not matter.
By the above discussion, $\epsilon$ does not depend on $t$ (for $t$ small).
We have, using the functions $g_{j,J}^K$ from \cref{Lemma::PfLD::LieWedge,Lemma::PfLD::LieDerivOfQuotient},
\begin{equation*}
    \begin{split}
        &\diff{t}\bigg|_{t=0} (\phi_t^{*} \nu_0)(x)(Z_1(x),\ldots, Z_n(x))
        =\diff{t}\bigg|_{t=0}\left|\frac{Z_1(x)\wedge Z_2(x)\wedge \cdots \wedge Z_n(x)}{\phi_t^{*} X_1(x)\wedge \phi_t^{*} X_2(x)\wedge \cdots \wedge\phi_t^{*} X_n(x) }\right|
        \\&\diff{t}\bigg|_{t=0}\epsilon\frac{Z_1(x)\wedge Z_2(x)\wedge \cdots \wedge Z_n(x)}{\phi_t^{*} X_1(x)\wedge \phi_t^{*} X_2(x)\wedge \cdots\wedge \phi_t^{*} X_n(x) }
        =\diff{t}\bigg|_{t=0}\epsilon\frac{\theta(x)(Z_1(x)\wedge Z_2(x)\wedge \cdots \wedge Z_n(x))}{\theta(x)(\phi_t^{*} X_1(x)\wedge \phi_t^{*} X_2(x)\wedge \cdots \wedge\phi_t^{*} X_n(x) )}
        \\& = -\epsilon\frac{\theta(x)(Z_1(x)\wedge Z_2(x)\wedge \cdots \wedge Z_n(x))}{\theta(x)(\phi_t^{*} X_1(x)\wedge \phi_t^{*} X_2(x)\wedge \cdots\wedge \phi_t^{*} X_n(x) )^2} \diff{t} \theta(x)(\phi_t^{*} X_1(x)\wedge \phi_t^{*} X_2(x)\wedge \cdots \wedge\phi_t^{*} X_n(x) )\bigg|_{t=0}
        \\&=-\epsilon\frac{\theta(x)(Z_1(x)\wedge Z_2(x)\wedge \cdots \wedge Z_n(x))}{\theta(x)( X_1(x)\wedge  X_2(x)\wedge \cdots \wedge X_n(x) )}\frac{\theta(x)(\Lie{X_j}( X_1\wedge  X_2\wedge \cdots \wedge X_n )(x)) }{\theta(x)( X_1(x)\wedge  X_2(x)\wedge \cdots \wedge X_n(x) )}
        \\&=-\epsilon \frac{Z_1(x)\wedge Z_2(x)\wedge \cdots \wedge Z_n(x)}{X_1(x)\wedge  X_2(x)\wedge \cdots \wedge X_n(x)} \frac{\sL_{X_j}( X_1\wedge  X_2\wedge \cdots \wedge X_n)(x)}{X_1(x)\wedge  X_2(x)\wedge \cdots \wedge X_n(x)}
        \\&=-\frac{\sL_{X_j} (X_1\wedge  X_2\wedge \cdots \wedge X_n)(x)}{X_1(x)\wedge  X_2(x)\wedge \cdots \wedge X_n(x)}\nu_0(x)(Z_1(x),\ldots, Z_n(x))
        \\&=-\sum_{K\in \sI_0(n,q)} g_{j,J_0}^K(x) \frac{\bigwedge X_K(x)}{\bigwedge X_{J_0}(x)} \nu_0(x)(Z_1(x),\ldots, Z_n(x)).
    \end{split}
\end{equation*}
We conclude that
\begin{equation*}
    f_j^0 =  -\sum_{K\in \sI_0(n,q)} g_{j,J_0}^K \frac{\bigwedge X_K}{\bigwedge X_{J_0}}.
\end{equation*}
\Cref{Eqn::PfDense::Holderfj0,Eqn::PfDense::Zygfj0} follow from \cref{Lemma::PfLD::LieWedge,Lemma::PfLD::NormsOfWedges,Prop::FuncSpaceRev::Algebra}.
\end{proof}

Let $\sigma_0:=\Phi^{*}\nu_0$, so that $\sigma_0$ is a density on $B^n(\eta_1)$.
Define $h_0$ by $\sigma_0=h_0\LebDensity$, so that $h_0\in \CSpace{B^n(\eta_1)}$.

\begin{lemma}\label{Lemma::Density::Estimateh0}
$h_0(t)=\det(I+A(t))^{-1}$, where $A$ is the matrix from \cref{Thm::Results::MainThm}.
In particular, $h_0(t)\approx_0 1$, $\forall t\in B^n(\eta_1)$.  For $m\in \N$, $s\in [0,1]$,
\begin{equation}\label{Eqn::PfDense::h0Holder}
    \HNorm{h_0}{m}{s}[B^n(\eta_1)]\lesssim_{\Had{m,m-1,s}} 1,
\end{equation}
and for $s\in (0,\infty)$,
\begin{equation}\label{Eqn::PfDense::h0Zyg}
    \ZygNorm{h_0}{s}[B^n(\eta_1)]\lesssim_{\Zygad{s,s-1}} 1.
\end{equation}
\end{lemma}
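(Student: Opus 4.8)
The statement to prove is \cref{Lemma::Density::Estimateh0}: the density $\Phi^{*}\nu_0 = h_0\LebDensity$ has $h_0 = \det(I+A)^{-1}$, $h_0 \approx_0 1$, and $h_0$ enjoys H\"older and Zygmund estimates controlled by $\Had{m,m-1,s}$- and $\Zygad{s,s-1}$-admissible constants.

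\textbf{Plan of proof.} The identity $h_0(t) = \det(I+A(t))^{-1}$ is the computational heart, and the plan is to obtain it by pulling back the defining formula for $\nu_0$ through $\Phi$ and using that $Y_{J_0} = (I+A)\grad$. Recall from \cref{Eqn::Density::Defnnu0} that $\nu_0(Z_1,\ldots,Z_n) = \left|\frac{Z_1\wedge\cdots\wedge Z_n}{X_1\wedge\cdots\wedge X_n}\right|$. By definition of pullback of a density, for vector fields $W_1,\ldots,W_n$ on $B^n(\eta_1)$ we have $(\Phi^{*}\nu_0)(W_1,\ldots,W_n)(t) = \nu_0(\Phi(t))(d\Phi(t)W_1,\ldots,d\Phi(t)W_n)$. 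Apply this with $W_j = Y_j = \Phi^{*}X_j$, for which $d\Phi(t)Y_j(t) = X_j(\Phi(t))$ (\cref{Prop::PfLI::dPhiYj}); then $(\Phi^{*}\nu_0)(Y_{J_0})(t) = \nu_0(X_1,\ldots,X_n)(\Phi(t)) = 1$ by \cref{Lemma::PfDense::Evalnu0}. On the other hand, $Y_j = \diff{t_j} + \sum_k a_j^k\diff{t_k}$, i.e. the matrix expressing $Y_1,\ldots,Y_n$ in terms of $\diff{t_1},\ldots,\diff{t_n}$ is $I+A$; hence $(\Phi^{*}\nu_0)(Y_{J_0})(t) = \left|\det(I+A(t))\right|\cdot (\Phi^{*}\nu_0)(\diff{t_1},\ldots,\diff{t_n})(t) = \left|\det(I+A(t))\right| h_0(t)$. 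Comparing gives $h_0(t) = \left|\det(I+A(t))\right|^{-1}$; since $\Norm{A(t)}[\M^{n\times n}]\leq \tfrac12$ (\cref{Thm::Results::MainThm} \cref{Item::Results::ASize}), $I+A(t)$ is invertible with determinant of constant sign $\approx_0 1$, so we may drop the absolute value (absorbing the sign) and write $h_0 = \det(I+A)^{-1}$, with $h_0 \approx_0 1$.

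\textbf{The estimates.} Once the formula is in hand, the regularity bounds follow from the regularity of $A$ and the algebra/inverse properties of the function spaces. By \cref{Thm::Results::MainThm} \cref{Item::Results::YReg}, $\HNorm{Y_j}{m}{s}[B^n(\eta_1)][\R^n]\lesssim_{\Had{m,m-1,s}} 1$ and $\ZygNorm{Y_j}{s}[B^n(\eta_1)][\R^n]\lesssim_{\Zygad{s,s-1}} 1$; since the entries of $A$ are (components of) the $Y_j$, we get $\HNorm{A}{m}{s}[B^n(\eta_1)][\M^{n\times n}]\lesssim_{\Had{m,m-1,s}} 1$ and $\ZygNorm{A}{s}[B^n(\eta_1)][\M^{n\times n}]\lesssim_{\Zygad{s,s-1}} 1$. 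Then $\det(I+A)$ is a polynomial (with integer coefficients) in the entries of $I+A$, so by the algebra property of $\HSpace{m}{s}$ and $\ZygSpace{s}$ (\cref{Prop::FuncSpaceRev::Algebra}) it satisfies the same bounds, and it is bounded below in absolute value by $2^{-n}$ (as $\Norm{A}\leq \tfrac12$). The inverse part of \cref{Prop::FuncSpaceRev::Algebra} then yields $\HNorm{h_0}{m}{s}[B^n(\eta_1)]\lesssim_{\Had{m,m-1,s}} 1$ and $\ZygNorm{h_0}{s}[B^n(\eta_1)]\lesssim_{\Zygad{s,s-1}} 1$, which are \cref{Eqn::PfDense::h0Holder,Eqn::PfDense::h0Zyg}. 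One small bookkeeping point: the Zygmund algebra/inverse estimates in \cref{Prop::FuncSpaceRev::Algebra} are stated for $s\in(m,m+1]$, so the argument should be run by first reducing to that range and invoking \cref{Lemma::FuncSpaces::Inclu} for the monotonicity in $s$ to cover all $s>0$.

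\textbf{Expected main obstacle.} The substantive step is the pullback identity for densities and getting the combinatorial/sign bookkeeping right: one must be careful that $\Phi^{*}$ of a density behaves like $|\det|$ on the linear-algebra level while $\det(I+A)$ itself has a well-defined sign (constant, by continuity from $\det I = 1$ and the bound $\Norm{A}\leq\tfrac12$), so the absolute values can be consistently removed. The rest is a routine application of results already established: \cref{Prop::PfLI::dPhiYj}, \cref{Lemma::PfDense::Evalnu0}, \cref{Thm::Results::MainThm} \cref{Item::Results::ASize,Item::Results::YReg}, and \cref{Prop::FuncSpaceRev::Algebra}. Indeed the structure closely parallels the computation in the proof of the preceding lemma on $f_j^0$, which already uses the change-of-variables machinery for wedge products via \cref{Lemma::PfDense::DivedeWedgeMatrix}, so one could alternatively route the determinant identity through that lemma if a cleaner derivation is desired.
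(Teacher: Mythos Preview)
Your proof is correct and follows essentially the same approach as the paper. The paper computes $h_0(t)=\sigma_0(\partial_{t_1},\ldots,\partial_{t_n})$ by writing $\partial_{t_j}$ in terms of the $Y_j$'s (pulling out $|\det(I+A)^{-1}|$), while you go in the reverse direction, evaluating $\sigma_0(Y_1,\ldots,Y_n)=1$ and pulling out $|\det(I+A)|$; these are the same computation. For the regularity, the paper invokes \cref{Prop::FuncSpaceRev::Algebra} via the cofactor representation of $(I+A)^{-1}$ rather than your route through $\det(I+A)$ plus the inverse estimate, but both are valid and rest on the same proposition together with \cref{Thm::Results::MainThm} \cref{Item::Results::ASize,Item::Results::YReg}. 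Your caveat about the Zygmund range is unnecessary: every $s>0$ lies in some $(m,m+1]$, so \cref{Prop::FuncSpaceRev::Algebra} applies directly.
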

\begin{proof}
Because $\Norm{A(t)}[\M^{n\times n}]\leq \frac{1}{2}$, $\forall t\in B^n(\eta_1)$ (\cref{Thm::Results::MainThm} \cref{Item::Results::ASize}), we have $|\det(I+A(t))^{-1}|=\det(I+A(t))^{-1}$, $\forall t\in B^n(\eta_1)$.
We have,
\begin{equation*}
\begin{split}
    &h_0(t) = \sigma_0(t)\left(\diff{t_1},\diff{t_2},\ldots, \diff{t_n}\right) = \sigma_0(t)((I+A(t))^{-1}Y_1(t),\ldots, (I+A(t))^{-1}Y_n(t))
    \\&=|\det (I+A(t))^{-1}| \sigma_0(t)(Y_1(t),\ldots, Y_n(t))
    =\det (I+A(t))^{-1} \nu_0(\Phi(t))\left(X_1(\Phi(t)),\ldots, X_n(\Phi(t))\right)
    \\&= \det (I+A(t))^{-1}.
\end{split}
\end{equation*}
That $h_0(t)\approx_0 1$, $\forall t\in B^n(\eta_1)$ follows from the fact that
$\Norm{A(t)}[\M^{n\times n}]\leq \frac{1}{2}$, $\forall t\in B^n(\eta_1)$ (\cref{Thm::Results::MainThm} \cref{Item::Results::ASize}).
Using \cref{Prop::FuncSpaceRev::Algebra} (applied to the cofactor representation of $(I+A)^{-1}$), \cref{Eqn::PfDense::h0Holder,Eqn::PfDense::h0Zyg}  follow from the corresponding regularity for $A$
as described in \cref{Thm::Results::MainThm} \cref{Item::Results::YReg}--here we are using
that the regularity for $A$ and
the regularity for $Y_1,\ldots, Y_n$ are the same, by the definition of $A$.
\end{proof}

We now turn to studying the density $\nu$ from \cref{Section::Densities}; thus we use the
functions $f_j$ from \cref{Eqn::Desnity::Defnfj}.
Because $\nu_0$ is a nonzero density on $B_{X_{J_0}}(x_0,\chi)$, there is a unique $g\in \CSpace{B_{X_{J_0}}(x_0,\chi)}$ such that $\nu=g\nu_0$.
\begin{lemma}\label{Lemma::PfDesnity::Derivg}
For $1\leq j\leq n$, $X_j g = (f_j-f_j^{0}) g$.
\end{lemma}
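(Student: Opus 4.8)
The plan is to exploit that $\nu_0$ is a $C^1$ \emph{nowhere-vanishing} density on $B_{X_{J_0}}(x_0,\chi)$ and that Lie differentiation along a flow behaves like a derivation. First I would record that $\nu_0$ has no zeros and is genuinely $C^1$: in any $C^2$ coordinate chart, writing $X_i=\sum_k X_i^k\,\partial_{x_k}$, one has, by the definition of a ratio in a one-dimensional space, $\partial_{x_1}\wedge\cdots\wedge\partial_{x_n}/(X_1\wedge\cdots\wedge X_n)=\det(X_i^k)^{-1}$, which is $C^1$ and nonvanishing by \cref{Thm::Results::MainThm} \cref{Item::Results::WedgeNonzero}; hence $\nu_0=\bigl|\det(X_i^k)\bigr|^{-1}|dx_1\wedge\cdots\wedge dx_n|$ is a $C^1$ density with no zeros. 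Consequently the function $g$ determined by $\nu=g\nu_0$ is, in every chart, a quotient of two $C^1$ functions with nonvanishing denominator, so $g\in\CjSpace{1}[B_{X_{J_0}}(x_0,\chi)]$; in particular $X_j g$ exists in the classical sense.

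Then I would run the following flow computation, in the same spirit as the proof of the preceding lemma (the one identifying $f_j^0$). Set $\phi_t:=e^{tX_j}$ and, using that $\nu_0$ is nonvanishing, write $\phi_t^{*}\nu=a(t,x)\,\nu_0(x)$ and $\phi_t^{*}\nu_0=b(t,x)\,\nu_0(x)$ with $a,b$ jointly continuous near $t=0$ and $a(0,\cdot)=g$, $b(0,\cdot)\equiv1$. The group law gives $\partial_t\phi_t^{*}\nu_0=\phi_t^{*}\Lie{X_j}\nu_0=\phi_t^{*}(f_j^0\nu_0)=(f_j^0\circ\phi_t)\,\phi_t^{*}\nu_0$, i.e.\ $\partial_t b(t,x)=f_j^0(\phi_t(x))\,b(t,x)$ with $b(0,x)=1$, whence $b(t,x)=\exp\bigl(\int_0^t f_j^0(\phi_s(x))\,ds\bigr)$; likewise $a(t,x)=g(x)\exp\bigl(\int_0^t f_j(\phi_s(x))\,ds\bigr)$. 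On the other hand $\phi_t^{*}\nu=\phi_t^{*}(g\nu_0)=(g\circ\phi_t)\,\phi_t^{*}\nu_0$, so $a(t,x)=g(\phi_t(x))\,b(t,x)$, and since $b>0$,
\[
g(\phi_t(x))=g(x)\exp\Bigl(\int_0^t\bigl(f_j(\phi_s(x))-f_j^0(\phi_s(x))\bigr)\,ds\Bigr).
\]
The right-hand side is $C^1$ in $t$ (the integrand is continuous in $s$), so differentiating at $t=0$ gives $X_j g(x)=\tfrac{d}{dt}\big|_{t=0}g(\phi_t(x))=\bigl(f_j(x)-f_j^0(x)\bigr)g(x)$, which is exactly the claim. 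A shorter variant avoids integrating factors altogether: since $g\in\CjSpace{1}$ and $\Lie{X_j}$ is a derivation, $\Lie{X_j}\nu=\Lie{X_j}(g\nu_0)=(X_j g)\,\nu_0+g\,\Lie{X_j}\nu_0=(X_j g+g f_j^0)\,\nu_0$, while $\Lie{X_j}\nu=f_j\nu=f_j g\,\nu_0$; cancelling the nowhere-zero $\nu_0$ yields the identity.

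The only point needing any care — and the ``main obstacle,'' such as it is — is that $X_1,\dots,X_n$ are merely $C^1$, so I must make sure the identities $\partial_t\phi_t^{*}\mu=\phi_t^{*}\Lie{X_j}\mu$ (for a $C^1$ density $\mu$) and the Leibniz rule are legitimate at this regularity. This is fine: for each fixed $t$, $\phi_t$ is a $C^1$ diffeomorphism and pullback of continuous densities is bounded, and for a $C^1$ density $\mu$ the curve $t\mapsto\phi_t^{*}\mu$ is $C^1$ into continuous densities with derivative $\phi_t^{*}\Lie{X_j}\mu$ — precisely the kind of manipulation already used implicitly in the proof of the preceding lemma. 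Everything else is routine.
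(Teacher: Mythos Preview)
Your proposal is correct, and in fact your ``shorter variant'' is exactly the paper's proof: the paper simply writes $f_j g\,\nu_0 = f_j\nu = \Lie{X_j}\nu = \Lie{X_j}(g\nu_0) = (X_jg)\nu_0 + g f_j^0\nu_0$ and reads off the result. Your longer flow-based computation is a valid alternative route (and your care about the $C^1$ regularity of $g$ and $\nu_0$ is more explicit than the paper's), but it is unnecessary here since the one-line Leibniz argument already does the job.
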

\begin{proof}
We have,
\begin{equation*}
    f_j g \nu_0 = f_j \nu = \Lie{X_j} \nu = \Lie{X_j} (g\nu_0) = (X_j g) \nu_0 + g\Lie{X_j} \nu_0 = (X_j g)\nu_0 + g f_j^0\nu_0.
\end{equation*}
The result follows.
\end{proof}

\begin{lemma}\label{Lemma::Density::gconst}
\Cref{Thm::Density::MainThm} \cref{Item::Desnity::gconst} holds.  Namely,
$g(x)\approx_{0;\nu} g(x_0)=\nu(x_0)(X_1(x_0),\ldots, X_n(x_0))$, $\forall x\in B_{X_{J_0}}(x_0,\chi)$.
\end{lemma}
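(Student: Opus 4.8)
The plan is to derive a differential inequality for $g$ along Carnot--Carath\'eodory curves and then invoke a Gr\"onwall-type argument, exactly as was done for the quantity $F(t)$ in the proof of \cref{Prop::PfLD::Makechi}. First I would recall from \cref{Lemma::PfDesnity::Derivg} that $X_j g = (f_j - f_j^0) g$ for $1\leq j\leq n$, so that $g$ satisfies a linear first-order equation with respect to the frame $X_1,\ldots,X_n$. Since $\chi$ is a $0$-admissible constant coming from \cref{Prop::PfLD::Makechi} (adjusting $\chi$ downward if necessary to also be $\leq$ the one there), every point $x\in B_{X_{J_0}}(x_0,\chi)$ is joined to $x_0$ by a curve $\gamma:[0,\chi]\to B_{X_{J_0}}(x_0,\xi)$ with $\gamma(0)=x_0$, $\gamma(\chi)=x$, $\gamma'(t)=\sum_{j=1}^n a_j(t)X_j(\gamma(t))$ and $\Norm{\sum|a_j|^2}[L^\infty([0,\chi])]<1$; moreover, by \cref{Prop::PfLD::Makechi}, $\bigwedge X_{J_0}$ is nonvanishing along the whole curve, so $X_1(\gamma(t)),\ldots,X_n(\gamma(t))$ form a basis at each $t$ and $g$ is well-defined and continuous along $\gamma$.

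The key computation is then
\[
\diff{t} g(\gamma(t)) = \sum_{j=1}^n a_j(t)\, (X_j g)(\gamma(t)) = \sum_{j=1}^n a_j(t)\,(f_j-f_j^0)(\gamma(t))\, g(\gamma(t)).
\]
By the Cauchy--Schwarz inequality and $\Norm{\sum|a_j|^2}[L^\infty]<1$, together with the bounds $\CNorm{f_j}{B_{X_{J_0}}(x_0,\chi)}\lesssim_{0;\nu}1$ (this is exactly what a $0;\nu$-admissible constant is allowed to depend on) and $\CNorm{f_j^0}{B_{X_{J_0}}(x_0,\chi)}\lesssim_0 1$ (from \cref{Eqn::PfDense::Holderfj0} with $m=s=0$, since $\HXNorm{f_j^0}{X_{J_0}}{0}{0}{} = \CNorm{f_j^0}{}$), one gets
\[
\left|\diff{t}\log|g(\gamma(t))|\right| \lesssim_{0;\nu} 1
\]
wherever $g(\gamma(t))\neq 0$. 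If $g(x_0)=0$, then uniqueness for this linear ODE (Gr\"onwall) forces $g\equiv 0$ along every such curve, hence $g\equiv 0$ on $B_{X_{J_0}}(x_0,\chi)$, and the claimed estimate holds trivially. If $g(x_0)\neq 0$, then $g(\gamma(t))$ cannot vanish (again by uniqueness, running the ODE backward from a hypothetical zero), so $\log|g(\gamma(t))|$ is defined on all of $[0,\chi]$, and integrating the differential inequality from $0$ to $\chi$ gives $\bigl|\log|g(x)| - \log|g(x_0)|\bigr| \lesssim_{0;\nu} 1$, i.e. $|g(x)| \approx_{0;\nu} |g(x_0)|$. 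Since $g$ does not change sign along $\gamma$ and $x$ was arbitrary, $g$ has a constant sign on $B_{X_{J_0}}(x_0,\chi)$ and $g(x)\approx_{0;\nu} g(x_0)$ as real numbers. Finally, $g(x_0) = \nu(x_0)(X_1(x_0),\ldots,X_n(x_0))$ follows from $\nu = g\nu_0$ evaluated at $x_0$ on the frame $X_1,\ldots,X_n$, using $\nu_0(X_1,\ldots,X_n)\equiv 1$ from \cref{Lemma::PfDense::Evalnu0}.

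The main obstacle is largely bookkeeping rather than conceptual: one must be careful that the constant $\chi$ is simultaneously the one furnished by \cref{Prop::PfLD::Makechi} (so that the wedge product does not vanish and the frame stays a basis), and one must handle the degenerate case $g(x_0)=0$ cleanly via the ODE uniqueness, since the logarithmic reformulation is only valid away from zeros. A minor point is that $g$ is only assumed continuous, not $C^1$, so the identity $\diff{t}g(\gamma(t)) = \sum_j a_j(t)(X_j g)(\gamma(t))$ should be justified in integrated (absolutely continuous) form — but this is exactly the same mild regularity issue already dealt with throughout the paper for Carnot--Carath\'eodory curves, and $X_jg = (f_j-f_j^0)g$ is continuous, so the fundamental theorem of calculus applies to $t\mapsto g(\gamma(t))$.
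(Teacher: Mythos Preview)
Your proposal is correct and follows essentially the same approach as the paper: derive the linear ODE $\frac{d}{dt}g(\gamma(t)) = \sum_j a_j(t)(f_j-f_j^0)(\gamma(t))\,g(\gamma(t))$ along a Carnot--Carath\'eodory curve via \cref{Lemma::PfDesnity::Derivg}, then use the $0;\nu$-admissible bounds on $f_j$ and $f_j^0$. The only minor difference is that the paper writes down the explicit exponential solution $g(x)=e^{\int_0^1 \sum_j a_j(s)\chi(f_j-f_j^0)(\gamma(s))\,ds}g(x_0)$, which automatically handles both the vanishing and nonvanishing cases at once and avoids your separate case analysis and logarithmic reformulation.
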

\begin{proof}
Note $g(x_0)=g(x_0)\nu_0(x_0)(X_1(x_0),\ldots, X_n(x_0)) =  \nu(x_0)(X_1(x_0),\ldots, X_n(x_0))$, by definition.
So it suffices to show $g(x)\approx_{0;\nu} g(x_0)$ for $x\in B_{X_{J_0}}(x_0,\chi)$.

Let $\gamma:[0,1]\rightarrow B_{X_{J_0}}(x_0,\chi)$ be such that
$\gamma(0)=x_0$, $\gamma(1)=x$, $\gamma'(t)=\sum_{j=1}^n a_j(t) \chi X_j(\gamma(t))$, $\Norm{\sum |a_j(t)|^2}[L^\infty([0,1])]<1$.  We have, using \cref{Lemma::PfDesnity::Derivg},
\begin{equation*}
    \frac{d}{dt} g(\gamma(t)) = \sum_{j=1}^n a_j(t) \chi (X_j g)(\gamma(t))
    =\sum_{j=1}^n a_j(t) \chi (f_j(\gamma(t))-f_j^0(\gamma(t)))g(\gamma(t)).
\end{equation*}
Hence, $g(\gamma(t))$ satisfies an ODE.  Solving this ODE we have
\begin{equation*}
    g(x)=g(\gamma(1))=e^{ \int_0^1 \sum_{j=1}^n a_j(s) \chi (f_j(\gamma(s))-f_j^0(\gamma(s)))\: ds  }g(x_0).
\end{equation*}
We know $\CNorm{f_j^0}{B_{X_{J_0}}(x_0,\chi)}\lesssim_0 1$ (by the case $m=0$, $s=0$ of \cref{Eqn::PfDense::Holderfj0}).  Using this and the definition of $0;\nu$-admissible constants,
$g(x)\approx_{0;\nu} g(x_0)$ follows immediately, completing the proof.
\end{proof}

\begin{lemma}
\Cref{Thm::Density::MainThm} \cref{Item::Desnity::greg} holds.  Namely,
for $m\in \N$, $s\in [0,1]$,
\begin{equation}\label{Eqn::Density::ToShow::Holderg}
\HXNorm{g}{X_{J_0}}{m}{s}[B_{X_{J_0}}(x_0,\chi)]\lesssim_{\Had{m-1,m-1,s;\nu}} |\nu(X_1,\ldots, X_n)(x_0)|,
\end{equation}
and for $s\in (0,\infty)$,
\begin{equation}\label{Eqn::Density::ToShow::Zygg}
\ZygXNorm{g}{X_{J_0}}{s}[B_{X_{J_0}}(x_0,\chi)]\lesssim_{\Zygad{s-1,s-1;\nu}} |\nu(X_1,\ldots, X_n)(x_0)|.
\end{equation}
\end{lemma}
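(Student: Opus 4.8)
The plan is a bootstrap from the transport equations $X_j g=(f_j-f_j^0)g$ on $B_{X_{J_0}}(x_0,\chi)$, $1\le j\le n$, established in \cref{Lemma::PfDesnity::Derivg}. Besides these, the only inputs are: the bound $\CNorm{g}{B_{X_{J_0}}(x_0,\chi)}\lesssim_{0;\nu}|\nu(X_1,\ldots,X_n)(x_0)|$ from \cref{Lemma::Density::gconst}; the regularity of the $f_j^0$ from \cref{Eqn::PfDense::Holderfj0,Eqn::PfDense::Zygfj0}; the regularity of the $f_j$ that is built into the definitions of $\Had{m_1,m_2,s;\nu}$- and $\Zygad{s_1,s_2;\nu}$-admissible constants; the algebra property \cref{Prop::FuncSpaceRev::Algebra}; and the inclusions \cref{Lemma::FuncSpaces::Inclu}. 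The point that makes the induction close \emph{without} the prefactor $|\nu(X_1,\ldots,X_n)(x_0)|$ compounding into a power is that the transport equation is linear in $g$. Throughout, all function-space norms are taken over $B_{X_{J_0}}(x_0,\chi)$, and I use the elementary reorganizations $\HXNorm{h}{X_{J_0}}{m}{s}\le \HXNorm{h}{X_{J_0}}{m-1}{s}+\sum_{j=1}^n\HXNorm{X_jh}{X_{J_0}}{m-1}{s}$ and $\ZygXNorm{h}{X_{J_0}}{s}\le \ZygXNorm{h}{X_{J_0}}{s-1}+\sum_{j=1}^n\ZygXNorm{X_jh}{X_{J_0}}{s-1}$, valid because $J_0=(1,\ldots,n)$, so every ordered multi-index of length $\le m$ is one of length $\le m-1$ followed by a single index from $\{1,\ldots,n\}$ (the same manipulation used in \cref{Prop::PfLI::EquivNorms,Lemma::PfLD::NormsOfWedges}).

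For \cref{Eqn::Density::ToShow::Holderg} I would induct on $m$. In the base case $m=0$: from $X_jg=(f_j-f_j^0)g$, the bound $\CNorm{f_j^0}{B_{X_{J_0}}(x_0,\chi)}\lesssim_0 1$ (the $m=0$ case of \cref{Eqn::PfDense::Holderfj0}), and the fact that $\CNorm{f_j}{B_{X_{J_0}}(x_0,\chi)}$ and $\CNorm{g}{B_{X_{J_0}}(x_0,\chi)}$ are controlled by $0;\nu$-admissible constants, one gets $\CXjNorm{g}{X_{J_0}}{1}\lesssim_{0;\nu}|\nu(X_1,\ldots,X_n)(x_0)|$; then \cref{Lemma::FuncSpaces::Inclu} \cref{Item::FuncSpace::IncludHold,Item::FuncSpace::IncludLip} give $\HXNorm{g}{X_{J_0}}{0}{s}\le 3\CXjNorm{g}{X_{J_0}}{1}$, and since a $0;\nu$-admissible constant is an $\Had{-1,-1,s;\nu}$-admissible constant this is \cref{Eqn::Density::ToShow::Holderg} for $m=0$. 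For the inductive step, assuming \cref{Eqn::Density::ToShow::Holderg} for $m-1$, apply the reorganization identity and then \cref{Prop::FuncSpaceRev::Algebra} to $(f_j-f_j^0)g$:
\[
\HXNorm{X_jg}{X_{J_0}}{m-1}{s}\le C_{m-1,q}\big(\HXNorm{f_j}{X_{J_0}}{m-1}{s}+\HXNorm{f_j^0}{X_{J_0}}{m-1}{s}\big)\HXNorm{g}{X_{J_0}}{m-1}{s}.
\]
The three factors are bounded by an $\Had{m-1,m-1,s;\nu}$-admissible constant (definition of admissible constants), by \cref{Eqn::PfDense::Holderfj0} with $m$ replaced by $m-1$, and, via the inductive hypothesis, by $|\nu(X_1,\ldots,X_n)(x_0)|$ times an $\Had{m-2,m-2,s;\nu}$-admissible constant; since $\HXSpace{X_{J_0}}{m-1}{s}\subseteq\HXSpace{X_{J_0}}{m-2}{s}$ the latter is also $\Had{m-1,m-1,s;\nu}$-admissible. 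Adding $\HXNorm{g}{X_{J_0}}{m-1}{s}\lesssim_{\Had{m-1,m-1,s;\nu}}|\nu(X_1,\ldots,X_n)(x_0)|$ gives \cref{Eqn::Density::ToShow::Holderg} for $m$.

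The Zygmund estimate \cref{Eqn::Density::ToShow::Zygg} is proved by the parallel induction on $m$ with $s\in(m,m+1]$. The base case $m=0$, $s\in(0,1]$ follows by reduction to the Hölder bound just proved: \cref{Lemma::FuncSpaces::Inclu} \cref{Item::FuncSpace::IncludZygInHold} gives $\ZygXNorm{g}{X_{J_0}}{s}\le 5\HXNorm{g}{X_{J_0}}{0}{s}$, and for $s\in(0,1]$ (so $s-1\in(-1,0]$) an $\Had{-1,-1,s;\nu}$-admissible constant is a $\Zygad{s-1,s-1;\nu}$-admissible constant, both reducing to dependence on the $C^0$-norms of the $c_{j,k}^l$ and the $f_j$. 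For the inductive step, $s\in(m,m+1]$ with $m\ge 1$ (so $s-1>0$), the reorganization identity together with the Zygmund algebra bound \cref{Eqn::FuncSpacesRev::ZygAlg} (applicable since $s-1\in(m-1,m]$) gives $\ZygXNorm{X_jg}{X_{J_0}}{s-1}\le C_{m-1,q}(\ZygXNorm{f_j}{X_{J_0}}{s-1}+\ZygXNorm{f_j^0}{X_{J_0}}{s-1})\ZygXNorm{g}{X_{J_0}}{s-1}$, whose factors are controlled by a $\Zygad{s-1,s-1;\nu}$-admissible constant, by \cref{Eqn::PfDense::Zygfj0} with $s$ replaced by $s-1$, and (inductive hypothesis) by $|\nu(X_1,\ldots,X_n)(x_0)|$ times a $\Zygad{s-2,s-2;\nu}$-admissible constant; since $\ZygXSpace{X_{J_0}}{s-1}\subseteq\ZygXSpace{X_{J_0}}{s-2}$ (\cref{Lemma::FuncSpaces::Inclu} \cref{Item::FuncSpace::IncludZyg}) this is also $\Zygad{s-1,s-1;\nu}$-admissible. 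Combining with the inductive bound on $\ZygXNorm{g}{X_{J_0}}{s-1}$ yields \cref{Eqn::Density::ToShow::Zygg} for $s$.

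The only real difficulty here is organizational rather than analytic: keeping precise track of which class of admissible constant each intermediate estimate belongs to, and checking the monotonicity of these classes in their indices (e.g.\ $\Had{m-2,m-2,s;\nu}\Rightarrow\Had{m-1,m-1,s;\nu}$, $\Had{-1,-1,s;\nu}\Rightarrow\Zygad{s-1,s-1;\nu}$ for $s\in(0,1]$, and $\Zygad{s-2,s-2;\nu}\Rightarrow\Zygad{s-1,s-1;\nu}$) so that the inductions actually terminate within the classes named in \cref{Thm::Density::MainThm} \cref{Item::Desnity::greg}. One trivial case to dispose of first: if $\nu(X_1,\ldots,X_n)(x_0)=0$ then $g\equiv 0$ by \cref{Lemma::Density::gconst} and every assertion is vacuous; otherwise the above bootstrap applies verbatim, the linearity in $g$ ensuring the prefactor rides through the induction untouched.
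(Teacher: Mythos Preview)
Your proposal is correct and follows essentially the same approach as the paper's proof: both bootstrap from the transport equation $X_j g=(f_j-f_j^0)g$ of \cref{Lemma::PfDesnity::Derivg}, establish the $C^1_{X_{J_0}}$ bound first via \cref{Lemma::Density::gconst}, then induct on $m$ (respectively on $m$ with $s\in(m,m+1]$) using the algebra property and the inclusions in \cref{Lemma::FuncSpaces::Inclu}, with the Zygmund base case reduced to the H\"older one. You are in fact slightly more careful than the paper in a couple of places (writing the reorganization as an inequality rather than an equality, explicitly spelling out the Zygmund inductive step and the monotonicity of the admissible-constant classes, and disposing of the $\nu(X_1,\ldots,X_n)(x_0)=0$ case), but the structure and ideas are identical.
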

\begin{proof}
We begin with \cref{Eqn::Density::ToShow::Holderg}.  First note that
\begin{equation}\label{Eqn::Desnity::gBoundedInC}
    \CNorm{g}{B_{X_{J_0}}(x_0,\chi)}\lesssim_{0;\nu} |\nu(X_1,\ldots, X_n)(x_0)|,
\end{equation}
which follows immediately from \cref{Lemma::Density::gconst}.
We claim that
\begin{equation}\label{Eqn::Desnity::C1g}
    \CXjNorm{g}{X_{J_0}}{1}[B_{X_{J_0}}(x_0,\chi)]\lesssim_{0;\nu} |\nu(X_1,\ldots, X_n)(x_0)|.
\end{equation}
Indeed, using \cref{Lemma::PfDesnity::Derivg}, for each $1\leq j\leq n$,
\begin{equation}\label{Eqn::Desnity::gBoundedXj}
    \CNorm{X_j g}{B_{X_{J_0}}(x_0,\chi)} = \CNorm{(f_j -f_j^0) g}{B_{X_{J_0}}(x_0,\chi)} \lesssim_{0;\nu} \CNorm{g}{B_{X_{J_0}}(x_0,\chi)} \lesssim_{0;\nu}|\nu(X_1,\ldots, X_n)(x_0)|,
\end{equation}
where in the last inequality we have used \cref{Eqn::Desnity::gBoundedInC}
and in the second to last inequality we have used $\CNorm{f_j}{B_{X_{J_0}}(x_0,\chi)}\lesssim_{0;\nu} 1$ (which
follows from the definition of $0;\nu$-admissible constants) and
$\CNorm{f_j^0}{B_{X_{J_0}}(x_0,\chi)}\lesssim_{0} 1$ (which follows from the case $m=0$, $s=0$
of \cref{Eqn::PfDense::Holderfj0}).
Combining \cref{Eqn::Desnity::gBoundedInC,Eqn::Desnity::gBoundedXj} proves \cref{Eqn::Desnity::C1g}.

We prove \cref{Eqn::Density::ToShow::Holderg} by induction on $m$.  For the base case, $m=0$, we have
using \cref{Lemma::FuncSpaces::Inclu} \cref{Item::FuncSpace::IncludHold} and \cref{Item::FuncSpace::IncludLip},
and \cref{Eqn::Desnity::C1g},
\begin{equation*}
    \HXNorm{g}{X_{J_0}}{0}{s}[B_{X_{J_0}}(x_0,\chi)] \leq 3 \HXNorm{g}{X_{J_0}}{0}{1}[B_{X_{J_0}}(x_0,\chi)]
    \leq 3\CXjNorm{g}{X_{J_0}}{1}[B_{X_{J_0}}(x_0,\chi)]\lesssim_{0;\nu} |\nu(X_1,\ldots,X_n)(x_0)|.
\end{equation*}
This proves the case $m=0$ of \cref{Eqn::Density::ToShow::Holderg}.

We now assume \cref{Eqn::Density::ToShow::Holderg} for $m-1$ and prove it for $m$.
We have
\begin{equation*}
    \HXNorm{g}{X_{J_0}}{m}{s}[B_{X_{J_0}}(x_0,\chi)] = \HXNorm{g}{X_{J_0}}{m-1}{s}[B_{X_{J_0}}(x_0,\chi)] + \sum_{j=1}^n\HXNorm{X_j g}{X_{J_0}}{m-1}{s}[B_{X_{J_0}}(x_0,\chi)].
\end{equation*}
The first term is $\lesssim_{\Had{m-2,m-2,s;\nu}} |\nu(X_1,\ldots, X_n)(x_0)|$ by the inductive hypothesis, so we focus only
on the second term.  We have, using \cref{Lemma::PfDesnity::Derivg,Prop::FuncSpaceRev::Algebra},
for a constant $C_m$ depending only on $m$, for $1\leq j\leq n$,
\begin{equation*}
\begin{split}
    &\HXNorm{X_j g}{X_{J_0}}{m-1}{s}[B_{X_{J_0}}(x_0,\chi)] = \HXNorm{(f_j-f_j^{0}) g}{X_{J_0}}{m-1}{s}[B_{X_{J_0}}(x_0,\chi)]
    \\&\leq C_m \HXNorm{f_j-f_j^{0}}{X_{J_0}}{m-1}{s}[B_{X_{J_0}}(x_0,\chi)] \HXNorm{ g}{X_{J_0}}{m-1}{s}[B_{X_{J_0}}(x_0,\chi)]
    \lesssim_{\Had{m-1,m-1,s;\nu}}|\nu(X_1,\ldots, X_n)(x_0)|,
\end{split}
\end{equation*}
where the last inequality follows from the inductive hypothesis, \cref{Eqn::PfDense::Holderfj0},
and the definition of $\Had{m-1,m-1,s;\nu}$-admissible constants.  \Cref{Eqn::Density::ToShow::Holderg}
follows.

We turn to \cref{Eqn::Density::ToShow::Zygg}, which we prove by induction on $m$,
where $s\in (m,m+1]$.  We begin with the base case, $m=0$, so that $s\in (0,1]$.
Using \cref{Lemma::FuncSpaces::Inclu} \cref{Item::FuncSpace::IncludZygInHold} and \cref{Eqn::Density::ToShow::Holderg}
we have
\begin{equation*}
    \ZygXNorm{g}{X_{J_0}}{s}[B_{X_{J_0}}(x_0,\chi)]\leq 5 \HXNorm{g}{X_{J_0}}{0}{s}[B_{X_{J_0}}(x_0,\chi)]\lesssim_{0;\nu} |\nu(X_1,\ldots, X_n)(x_0)|.
\end{equation*}
\Cref{Eqn::Density::ToShow::Zygg} follows for $s\in (0,1]$.
From here the inductive step follows just as in the inductive step for \cref{Eqn::Density::ToShow::Holderg},
and we leave the details to the reader.
\end{proof}

\begin{lemma}\label{Lemma::Density::Computeh}
Let $h(t)$ be as in \cref{Thm::Density::MainThm}.  Then
$h(t)=h_0(t)g\circ\Phi(t)$.
\end{lemma}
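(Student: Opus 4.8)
The plan is to reduce the identity to the elementary multiplicativity of the pullback of a density under a $C^1$ map. Specifically, for a $C^1$ diffeomorphism $\Phi$, a $C^1$ density $\mu$ on the target, and a continuous function $f$ on the target, one has $\Phi^{*}(f\mu) = (f\circ\Phi)\,\Phi^{*}\mu$; this is immediate from the definition of the pullback of a density, since
\begin{equation*}
(\Phi^{*}(f\mu))(x)(Z_1,\ldots,Z_n) = (f\mu)(\Phi(x))(d\Phi(x)Z_1,\ldots,d\Phi(x)Z_n) = f(\Phi(x))\,(\Phi^{*}\mu)(x)(Z_1,\ldots,Z_n).
\end{equation*}
So the first thing I would do is state and verify this one-line fact (or simply invoke it, as it is standard — see, e.g., \cite{GuilleminNotes}).

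Next I would record that $\Phi(B^n(\eta_1))\subseteq B_{X_{J_0}}(x_0,\chi)$ by \cref{Thm::Results::MainThm} \cref{Item::Results::xi2}, so that $\nu$, $\nu_0$, and $g$ all restrict to the image of $\Phi$ and the various pullbacks are defined on $B^n(\eta_1)$. Then, recalling that $g$ is defined by $\nu = g\nu_0$ on $B_{X_{J_0}}(x_0,\chi)$, that $\sigma_0 = \Phi^{*}\nu_0 = h_0\LebDensity$, and that $h$ is defined by $\Phi^{*}\nu = h\LebDensity$ in \cref{Thm::Density::MainThm}, I would simply compute
\begin{equation*}
h\,\LebDensity = \Phi^{*}\nu = \Phi^{*}(g\nu_0) = (g\circ\Phi)\,\Phi^{*}\nu_0 = (g\circ\Phi)\,\sigma_0 = (g\circ\Phi)\,h_0\,\LebDensity,
\end{equation*}
and then cancel the nowhere-vanishing density $\LebDensity$ on $B^n(\eta_1)$ to conclude $h(t) = h_0(t)\,g\circ\Phi(t)$ for all $t\in B^n(\eta_1)$.

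There is essentially no obstacle here: the statement is just a bookkeeping consequence of the definitions of $h$, $h_0$, and $g$ together with the fact that pullback commutes with multiplication by functions. The only non-formal ingredient is the containment of domains, which is already part of \cref{Thm::Results::MainThm}; everything else is the short computation displayed above once that is in place.
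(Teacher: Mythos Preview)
Your proposal is correct and takes essentially the same approach as the paper: the paper's proof is the single displayed line $\Phi^{*}\nu = \Phi^{*}(g\nu_0) = (g\circ\Phi)\,\Phi^{*}\nu_0 = (g\circ\Phi)\,h_0\,\LebDensity$, which is exactly your computation. You have merely added a bit more justification (the domain inclusion and the one-line verification that pullback commutes with multiplication by functions), but the argument is identical.
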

\begin{proof}
We have
\begin{equation*}
    \Phi^{*} \nu = \Phi^{*} g\nu_0 = (g\circ \Phi) \Phi^{*} \nu_0 = (g\circ \Phi) h_0 \LebDensity,
\end{equation*}
completing the proof.
\end{proof}

\begin{proof}[Proof of \cref{Thm::Density::MainThm} \cref{Item::Density::hconst}]
This follows from \cref{Lemma::Density::Computeh,Lemma::Density::gconst,Lemma::Density::Estimateh0}.
\end{proof}

\begin{proof}[Proof of \cref{Thm::Density::MainThm} \cref{Item::Density::hReg}]
We prove the result for Zygmund spaces; the same proof works for H\"older spaces, and we leave
the details to the reader.
Using \cref{Thm::Results::MainThm} \cref{Item::Results::AbstractNorm} we have
\begin{equation}\label{Eqn::Density::Zygg::Pullback}
    \ZygNorm{g\circ \Phi}{s}[B^{n}(\eta_1)]\lesssim_{\Zygad{s-1,s-2}} \ZygXNorm{g}{X_{J_0}}{s}[B_{X_{J_0}}(x_0,\chi)]
    \lesssim_{\Zygad{s-1,s-1;\nu}} |\nu(X_1,\ldots, X_n)(x_0)|,
\end{equation}
where the last inequality uses \cref{Eqn::Density::ToShow::Zygg}.
Since $h(t)=h_0(t)g\circ \Phi(t)$ (\cref{Lemma::Density::Computeh}),
combining \cref{Eqn::Density::Zygg::Pullback} and \cref{Eqn::PfDense::h0Zyg},
and using \cref{Prop::FuncSpaceRev::Algebra} completes the proof.
\end{proof}

Having completed the proof of \cref{Thm::Density::MainThm}, we turn to \cref{Cor::Desnity::MeasureSets}.
To facilitate this, we introduce a corollary of \cref{Thm::Results::MainThm}.

\begin{cor}\label{Cor::MainThm::MoreBalls}
Let $\eta_1,\xi_1,\xi_2$ be as in \cref{Thm::Results::MainThm}.  Then, there exist $1$-admissible constants
$0<\eta_2\leq \eta_1$, $0<\xi_4\leq \xi_3\leq \xi_2$ such that
\begin{equation*}
    \begin{split}
        &B_X(x_0,\xi_4)\subseteq B_{X_{J_0}}(x_0,\xi_3)\subseteq \Phi(B^n(\eta_2))\subseteq B_{X_{J_0}}(x_0,\xi_2)\subseteq B_X(x_0,\xi_2)
        \\&\subseteq B_{X_{J_0}}(x_0,\xi_1)\subseteq \Phi(B^n(\eta_1))\subseteq B_{X_{J_0}}(x_0,\chi)
        \subseteq B_{X}(x_0,\xi).
    \end{split}
\end{equation*}
\end{cor}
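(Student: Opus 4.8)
The plan is to extract almost the entire displayed chain directly from \cref{Thm::Results::MainThm} \cref{Item::Results::xi2}, which already yields
\begin{equation*}
    B_X(x_0,\xi_2)\subseteq B_{X_{J_0}}(x_0,\xi_1)\subseteq \Phi(B^n(\eta_1))\subseteq B_{X_{J_0}}(x_0,\chi)\subseteq B_X(x_0,\xi),
\end{equation*}
together with the trivial inclusion $B_{X_{J_0}}(x_0,\xi_2)\subseteq B_X(x_0,\xi_2)$ (a curve from $x_0$ tangent to the list $\xi_2 X_1,\dots,\xi_2 X_n$ is also tangent to the longer list $\xi_2 X_1,\dots,\xi_2 X_q$, taking the extra coefficients to vanish). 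Thus it remains only to produce $1$-admissible constants $0<\xi_4\le \xi_3\le \xi_2$ and $0<\eta_2\le \eta_1$ realizing the three new inclusions
\begin{equation*}
    B_X(x_0,\xi_4)\subseteq B_{X_{J_0}}(x_0,\xi_3)\subseteq \Phi(B^n(\eta_2))\subseteq B_{X_{J_0}}(x_0,\xi_2).
\end{equation*}

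\textbf{The three new inclusions.} For the rightmost one I would set $\eta_2:=\min\{\eta_1,\xi_2/2\}$ and use rays through the origin: recalling $\Phi(t)=e^{t_1X_1+\cdots+t_nX_n}x_0$ from \cref{Eqn::Results::DefinePhi}, for $t\in B^n(\eta_2)$ the curve $\gamma(s):=\Phi(st)=e^{s(t_1X_1+\cdots+t_nX_n)}x_0$, $s\in[0,1]$, runs from $x_0$ to $\Phi(t)$ inside $\Phi(B^n(\eta_1))\subseteq B_{X_{J_0}}(x_0,\chi)\subseteq B_X(x_0,\xi)$ and satisfies $\gamma'(s)=\sum_{j=1}^n (t_j/\xi_2)\,\xi_2 X_j(\gamma(s))$ with $\sum_{j=1}^n|t_j/\xi_2|^2=|t|^2/\xi_2^2<1$; hence $\Phi(t)\in B_{X_{J_0}}(x_0,\xi_2)$. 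For the middle inclusion I would re-run, with $\eta_2$ in place of $\eta_1$, the tracking argument from \cref{Section::Proofs::LI} that proves $B_X(x_0,\xi_1)\subseteq\Phi(B^n(\eta_1))$: given $y\in B_{X_{J_0}}(x_0,\xi_3)$, choose a curve $\gamma$ from $x_0$ to $y$ with $\gamma'=\sum_{j=1}^n b_j\,\xi_3 X_j(\gamma)$, $\Norm{\sum_j|b_j|^2}[L^\infty]<1$, set $t_0:=\sup\{t:\gamma(t')\in\Phi(B^n(\eta_2/2))\ \forall t'\le t\}$, and observe that on $[0,t_0)$ one has $\frac{d}{dt}(\Phi^{-1}\circ\gamma)=\sum_{j=1}^n b_j\,\xi_3\,Y_j(\Phi^{-1}\circ\gamma)$ with $Y_j=\Phi^{*}X_j$; since $\CNorm{Y_j}{B^n(\eta_1)}\lesssim_0 1$ (from \cref{Item::Results::YReg}) and $n$ is $0$-admissible, this gives $|\Phi^{-1}(\gamma(t_0))|\lesssim_0\xi_3$. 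As $\Phi$ is an open, injective $C^2$ map on $B^n(\eta_1)\supseteq\overline{B^n(\eta_2/2)}$ (using \cref{Thm::Results::MainThm} and \cref{Item::Results::Open}), if $t_0<1$ then necessarily $|\Phi^{-1}(\gamma(t_0))|=\eta_2/2$; choosing $\xi_3\in(0,\min\{\xi_2,\chi\}]$ a sufficiently small $1$-admissible constant (permissible since $\eta_2$ is $1$-admissible) makes this impossible, so $t_0=1$ and $y\in\Phi(B^n(\eta_2/2))\subseteq\Phi(B^n(\eta_2))$. Finally, for the leftmost inclusion I would apply the comparison lemma established in the proof of \cref{Thm::Results::MainThm} (for every $\zeta_1\in(0,\chi]$ there is a $0$-admissible $\zeta_2\in(0,\zeta_1]$, depending also on $\zeta_1$, with $B_X(x_0,\zeta_2)\subseteq B_{X_{J_0}}(x_0,\zeta_1)$) with $\zeta_1=\xi_3$, and set $\xi_4:=\zeta_2$, which is $1$-admissible and satisfies $\xi_4\le\xi_3$.

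\textbf{Main difficulty.} I do not expect a genuine obstacle: this corollary is essentially a bookkeeping statement, and every ingredient (openness and injectivity of $\Phi$ on $B^n(\eta_1)$, the $C^0$-bound on the $Y_j$, and the comparison lemma $B_X(x_0,\zeta_2)\subseteq B_{X_{J_0}}(x_0,\zeta_1)$) is already in hand. The only point requiring a little care is the middle inclusion: one must verify that the tracking argument from \cref{Section::Proofs::LI} goes through verbatim with $\eta_2$ replacing $\eta_1$ and that the constants it produces remain $1$-admissible — which they do, since $\eta_2$ is a $1$-admissible constant and the estimate on $\Phi^{-1}\circ\gamma$ involves only $0$-admissible quantities.
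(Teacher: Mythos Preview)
Your proof is correct, but the paper's argument is considerably shorter: it simply re-applies \cref{Thm::Results::MainThm} with $\xi$ replaced by $\xi_2$. Since the map $\Phi$ in \cref{Eqn::Results::DefinePhi} does not depend on $\xi$, the second application produces (after relabelling the new $\eta_1,\xi_1,\xi_2$ as $\eta_2,\xi_3,\xi_4$) exactly the three missing inclusions $B_X(x_0,\xi_4)\subseteq B_{X_{J_0}}(x_0,\xi_3)\subseteq \Phi(B^n(\eta_2))\subseteq B_{X_{J_0}}(x_0,\chi')\subseteq B_{X_{J_0}}(x_0,\xi_2)$; the resulting constants are $1$-admissible because the hypotheses of the theorem for the smaller ball are dominated by those for the original ball, and $\xi_2$ is itself $1$-admissible. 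What you have done is essentially unpack this black-box re-application by re-running, one at a time, the three lemmas from \cref{Section::Proofs::LI} and \cref{Section::Proofs::Main} that produce $\eta_1,\xi_1,\xi_2$ in the first place. Your route has the advantage of making the dependencies completely explicit (and of giving the concrete choice $\eta_2=\min\{\eta_1,\xi_2/2\}$), while the paper's route is a one-line observation that avoids repeating any arguments.
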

\begin{proof}
After obtaining $\eta_1$, $\xi_1$, $\xi_2$ from \cref{Thm::Results::MainThm}, apply
\cref{Thm::Results::MainThm} again with $\xi$ replaced by $\xi_2$ to obtain $\eta_2$, $\xi_3$,
and $\xi_4$ as in the statement of the corollary.
\end{proof}

\begin{proof}[Proof of \cref{Cor::Desnity::MeasureSets}]
We have
\begin{equation}\label{Eqn::Density::Cor::EstMeasure1}
\begin{split}
    &\nu(B_{X_{J_0}}(x_0,\xi_2)) = \int_{B_{X_{J_0}}(x_0,\xi_2)} \nu =\int_{\Phi^{-1}(B_{X_{J_0}}(x_0,\xi_2))}\Phi^{*}\nu
    \\&=\int_{\Phi^{-1}(B_{X_{J_0}}(x_0,\xi_2))} h(t)\: dt
    \approx_{0;\nu}\Vol{\Phi^{-1}(B_{X_{J_0}}(x_0,\xi_2))} \nu(X_1,\ldots, X_n)(x_0),
\end{split}
\end{equation}
where $\Vol{\cdot}$ denotes Lebesgue measure, and we have used \cref{Thm::Density::MainThm} \cref{Item::Density::hconst}.  By \cref{Cor::MainThm::MoreBalls}, and the fact that $\eta_1,\eta_2>0$
are $1$-admissible constants, we have
\begin{equation}\label{Eqn::Density::Cor::EstMeasure2}
    1\approx_{1} \Vol{B^n(\eta_2)} \leq \Vol{\Phi^{-1}(B_{X_{J_0}}(x_0,\xi_2))}\leq \Vol{B^n(\eta_1)}\approx_1 1.
\end{equation}
Combining \cref{Eqn::Density::Cor::EstMeasure1,Eqn::Density::Cor::EstMeasure2} proves
$\nu(B_{X_{J_0}}(x_0,\xi_2))\approx_{1;\nu} \nu(X_1,\ldots, X_n)(x_0)$.  The same proof works
with $B_{X_{J_0}}(x_0,\xi_2)$ replaced by $B_X(x_0,\xi_2)$, which completes the proof of \cref{Cor::Density::ToShowMeasure1}.

All that remains to prove \cref{Cor::Density::ToShowMeasure2} is to show
\begin{equation*}
    |\nu(X_1,\ldots, X_n)(x_0)|
 \approx_{0} \max_{(j_1,\ldots, j_n)\in \sI(n,q)} |\nu(X_{j_1},\ldots, X_{j_n})(x_0)|.
\end{equation*}
We have, using \cref{Lemma::PfDense::Evalnu0},
\begin{equation*}
\begin{split}
    &|\nu(X_1,\ldots, X_n)(x_0)| = |g(x_0) \nu_0(X_1,\ldots, X_n)(x_0)| = |g(x_0)| \approx_0 |g(x_0)|\max_{(j_1,\ldots, j_n)\in \sI(n,q)} |\nu_0(X_{j_1},\ldots, X_{j_n})(x_0)|
    \\&= \max_{(j_1,\ldots, j_n)\in \sI(n,q)}| g(x_0) \nu_0(X_{j_1},\ldots, X_{j_n})(x_0)| = \max_{(j_1,\ldots, j_n)\in \sI(n,q)} |\nu(X_{j_1},\ldots, X_{j_n})(x_0)|,
\end{split}
\end{equation*}
completing the proof.
\end{proof}

    \subsection{More on the assumptions}\label{Section::Proofs::MoreAssump}
In this section we prove \cref{Prop::MoreAssumpt}.  The existence of $\eta>0$ as in \cref{Prop::MoreAssumpt} follows immediately from the Picard--Lindel\"of Theorem, so we focus
on the existence of $\delta_0>0$.  The key is the next lemma.

\begin{lemma}\label{Lemma::PfMoreAssump::Classic}
Suppose $Z$ is a $C^1$ vector field on an open set $V\subseteq \R^n$.  Then, there exists $\delta>0$, depending only on $n$, such that if
$\CjNorm{Z}{1}[V][\R^n]\leq \delta$, then there does not exist $x\in V$ with:
\begin{itemize}
\item $e^{tZ}x\in V$, $\forall t\in [0,1]$.
\item $e^{Z} x=x$.
\item $Z(x)\ne 0$.
\end{itemize}
\end{lemma}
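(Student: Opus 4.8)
The plan is to argue by contradiction: suppose such an $x\in V$ exists and set $\gamma(t):=e^{tZ}x$ for $t\in[0,1]$. By the first hypothesis $\gamma$ takes values in $V$, so (by standard ODE theory, since $Z$ is $C^1$) $\gamma$ is a $C^1$ curve solving $\gamma'(t)=Z(\gamma(t))$ with $\gamma(0)=\gamma(1)=x$, and $c_0:=|Z(x)|=|\gamma'(0)|>0$. The idea is that when $\|Z\|_{C^1(V;\R^n)}$ is small the velocity $\gamma'(t)$ cannot vary much as $t$ runs over $[0,1]$, so $\gamma'(t)$ stays close to $\gamma'(0)$; but then $\int_0^1\gamma'(t)\,dt$ is close to $\gamma'(0)\ne 0$, which is incompatible with $\gamma(1)-\gamma(0)=0$.

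To make this quantitative I would first record the elementary bound that, for any $y\in V$ and $w\in\R^n$, $|dZ(y)w|=\big|\sum_j w_j\,\partial_{x_j}Z(y)\big|\le |w|\,\big(\sum_j|\partial_{x_j}Z(y)|^2\big)^{1/2}\le |w|\,D$, where $D:=\|Z\|_{C^1(V;\R^n)}$; i.e.\ the operator norm of $dZ(y)$ is at most $D$ on $V$. Next, since $Z$ and $\gamma$ are $C^1$, the map $t\mapsto Z(\gamma(t))$ is $C^1$ with derivative $dZ(\gamma(t))\gamma'(t)=dZ(\gamma(t))Z(\gamma(t))$, so for $t\in[0,1]$,
\[
|\gamma'(t)-\gamma'(0)|=|Z(\gamma(t))-Z(\gamma(0))|\le\int_0^t\big|dZ(\gamma(s))Z(\gamma(s))\big|\,ds\le D\int_0^t|\gamma'(s)|\,ds .
\]
Then I would set $F(t):=\sup_{0\le s\le t}|\gamma'(s)-\gamma'(0)|$, which is continuous, nondecreasing, with $F(0)=0$; since $|\gamma'(s)|\le c_0+F(s)\le c_0+F(t)$ for $s\le t$, the previous display gives $F(t)\le D\,t\,(c_0+F(t))$ for $t\in[0,1]$. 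Choosing $\delta:=1/4$ and assuming $D\le\delta$, we get $Dt\le 1/4<1$, hence $F(t)\le \dfrac{D t\,c_0}{1-Dt}\le \dfrac{4}{3}D c_0\le \dfrac{c_0}{3}$ for all $t\in[0,1]$. Finally $\gamma(1)=\gamma(0)$ gives $0=\int_0^1\gamma'(t)\,dt=\gamma'(0)+\int_0^1(\gamma'(t)-\gamma'(0))\,dt$, so $c_0=|\gamma'(0)|\le\int_0^1|\gamma'(t)-\gamma'(0)|\,dt\le F(1)\le c_0/3$, contradicting $c_0>0$. Thus no such $x$ exists, with $\delta=1/4$ (which in fact need not depend on $n$).

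I do not expect a real obstacle here. The only step that is not completely automatic is the estimate $|Z(\gamma(t))-Z(\gamma(0))|\le D\int_0^t|\gamma'(s)|\,ds$: a naive mean-value estimate would bound this by $D\,|\gamma(t)-\gamma(0)|$ but would require the segment $[\gamma(0),\gamma(t)]$ to lie in $V$, which we do not know. Integrating $dZ$ along the curve $\gamma$ itself — which lies in $V$ by hypothesis — avoids any convexity assumption on $V$, and it is precisely the form $D\int_0^t|\gamma'(s)|\,ds$ that drives the Gr\"onwall loop. Everything else (existence and $C^1$-regularity of the flow on $[0,1]$, the operator-norm bound on $dZ$, and the one-line Gr\"onwall inequality for the nondecreasing function $F$) is routine.
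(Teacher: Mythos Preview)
Your argument is correct. The paper itself does not give a proof of this lemma; it simply cites \cite[Lemma 3.19]{S} and moves on, so there is nothing to compare your route against here. Your approach---integrating $dZ$ along the flow curve $\gamma$ (which stays in $V$ by hypothesis) to get $|\gamma'(t)-\gamma'(0)|\le D\int_0^t|\gamma'(s)|\,ds$, then closing a Gr\"onwall loop via $F(t)=\sup_{s\le t}|\gamma'(s)-\gamma'(0)|$ and feeding the result into $\int_0^1\gamma'(t)\,dt=0$---is a clean, self-contained proof, and you are right both that integrating along $\gamma$ avoids any convexity assumption on $V$ and that the resulting $\delta=1/4$ is in fact independent of $n$.
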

\begin{proof}For a proof of this classical result, see \cite[Lemma 3.19]{S}.\end{proof}

To prove the existence of $\delta_0$ as in \cref{Prop::MoreAssumpt}, since $K$ is compact, it suffices to prove the next lemma.

\begin{lemma}
Let $X_1,\ldots, X_q$ be $C^1$ vector fields on a $C^2$ manifold $\fM$.
For all $x\in \fM$, there exists an open set $N\subseteq \fM$ with $x\in N$, and $\delta_0>0$ such that $\forall \theta\in S^{q-1}$ if $y\in N$ is such that
$\theta_1 X_q(y)+\cdots+\theta_q X_q(y)\ne 0$, then $\forall r\in (0,\delta_0]$,
\begin{equation*}
e^{r\theta_1 X_1+\cdots+r\theta_q X_q}y\ne y.
\end{equation*}
\end{lemma}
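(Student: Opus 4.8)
The plan is to reduce, via a coordinate chart and a rescaling of time, to the classical statement \cref{Lemma::PfMoreAssump::Classic}. First I would fix $x\in\fM$ and choose a $C^2$ chart $\psi:N'\to V'$ with $x\in N'$ and $V'\subseteq\R^n$ open; set $x':=\psi(x)$ and pick $\rho>0$ with $\overline{B^n(x',2\rho)}\subset V'$. On the relatively compact ball $B^n(x',2\rho)$ the pushed-forward fields $\psi_{*}X_1,\dots,\psi_{*}X_q$ are $C^1$, so
\[
C:=1+\sum_{j=1}^q\CjNorm{\psi_{*}X_j}{1}[B^n(x',2\rho)][\R^n]<\infty ,
\]
and for every $\theta\in S^{q-1}$ the vector field $Z_\theta:=\sum_{j=1}^q\theta_j\,\psi_{*}X_j$ satisfies $\CjNorm{Z_\theta}{1}[B^n(x',2\rho)][\R^n]\le C$, uniformly in $\theta$. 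I would then let $\delta=\delta(n)>0$ be the constant from \cref{Lemma::PfMoreAssump::Classic} applied to the open set $B^n(x',2\rho)$, and set $\delta_0:=\min\{\delta,\rho\}/(2C)$ and $N:=\psi^{-1}(B^n(x',\rho))$.

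The key observation is that the time-$r$ flow of $\theta_1X_1+\dots+\theta_qX_q$ is the time-$1$ flow of the rescaled field $r\theta_1X_1+\dots+r\theta_qX_q$, and scaling a vector field by $r$ multiplies its $C^1$ norm by $r$. So, given $y\in N$ and $r\in(0,\delta_0]$, set $W:=rZ_\theta$; then $\CjNorm{W}{1}[B^n(x',2\rho)][\R^n]\le rC\le\delta$ and $\CjNorm{W}{0}[B^n(x',2\rho)][\R^n]\le rC\le\rho/2$. A standard escape/Gr\"onwall argument then shows that the solution of $\dot z=W(z)$, $z(0)=\psi(y)\in B^n(x',\rho)$, exists on $[0,1]$ and stays within distance $\rho/2$ of $\psi(y)$, hence inside $B^n(x',2\rho)$. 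By naturality of flows under the diffeomorphism $\psi$ it follows that $t\mapsto e^{t(r\theta_1X_1+\dots+r\theta_qX_q)}y$ is defined for $t\in[0,1]$ and $\psi\big(e^{t(r\theta_1X_1+\dots+r\theta_qX_q)}y\big)=e^{tW}\psi(y)$; in particular $e^{r\theta_1X_1+\dots+r\theta_qX_q}y$ exists.

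Finally I would argue by contradiction: suppose $e^{r\theta_1X_1+\dots+r\theta_qX_q}y=y$ for some $y\in N$, $r\in(0,\delta_0]$ with $\sum_{j=1}^q\theta_jX_j(y)\ne0$. Applying $\psi$ gives $e^{W}\psi(y)=\psi(y)$; by the previous paragraph $e^{tW}\psi(y)\in B^n(x',2\rho)$ for $t\in[0,1]$; we have $\CjNorm{W}{1}[B^n(x',2\rho)][\R^n]\le\delta$; and $W(\psi(y))=r\,d\psi(y)\big(\sum_{j=1}^q\theta_jX_j(y)\big)\ne0$ since $r>0$ and $d\psi(y)$ is invertible. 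These three facts contradict \cref{Lemma::PfMoreAssump::Classic}, so $e^{r\theta_1X_1+\dots+r\theta_qX_q}y\ne y$, as claimed. The only point requiring care — and the one mild obstacle — is making the estimates uniform in $\theta\in S^{q-1}$ and $y\in N$ so that the relevant time-$1$ flow always lands in the single fixed ball $B^n(x',2\rho)$ on which \cref{Lemma::PfMoreAssump::Classic} is invoked; this is precisely what the choices of $C$, $\rho$, $\delta_0$, and $N$ arrange, using only the finiteness of the $C^1$ norms of $X_1,\dots,X_q$ on a compact neighborhood of $x$ together with the escape lemma.
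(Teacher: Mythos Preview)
Your proof is correct and follows essentially the same approach as the paper: reduce via a chart to the Euclidean setting, rescale so that the $C^1$ norm of $r\sum_j\theta_jX_j$ is at most the constant $\delta$ from \cref{Lemma::PfMoreAssump::Classic}, ensure (via a Gr\"onwall/escape bound) that the time-$1$ flow stays in a fixed ball, and apply that lemma. The paper's proof is terser but uses the same ingredients and the same choice $\delta_0\sim\delta/C$ with $C$ controlling the $C^1$ norms.
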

\begin{proof}
Since this result is local, it suffices to prove the lemma in the case when $\fM=B^n(1)$ and $x=0\in \R^n$.
We set $N:=B^n(1/2)$.  Take $\delta=\delta(n)>0$ as in \cref{Lemma::PfMoreAssump::Classic}.  Take $\delta_1>0$
so small that $\forall y\in B^n(1/2)$, $t\in B^q(\delta_1)$, we have $e^{t_1 X_1+\cdots+t_q X_q}y\in B^n(3/4)$.
Set $C:=\max_{1\leq j\leq q} \CjNorm{X_j}{1}[B^n(3/4)][\R^n]$, and let $\delta_0=\min\{\delta_1, \delta/qC\}$.
From here, the result follows from \cref{Lemma::PfMoreAssump::Classic}.
\end{proof}

\appendix

\section{\texorpdfstring{Proof of \cref{Prop::ResQual::Mmanif}}{Orbits}}\label{Appendix::ProofOfImmerse}

The ideas behind \cref{Prop::ResQual::Mmanif} are well-known to experts;
however, we could not find an exact statement of \cref{Prop::ResQual::Mmanif} in the literature,
so we include the proof here for completeness, with the understanding that the methods used are
known to experts.  It seems closely related to the theory of orbits
of Sussman \cite{SussmanOrbitsOfFamiliesOfVectorFieldsAndIntegrabilityOfDistributions}
and Stefan \cite{StefanAccsibleSetsOrbitsAndFoliations}, though does not follow directly from these theories.
Similar methods have been used to prove the Frobenius theorem for Lipschitz vector fields; see
\cite{MontanariMorbidelliAFrobeniusTheorem} and references therein.

We begin with
the existence of the $C^2$ structure; we take all the same
notation as in the statement of \cref{Prop::ResQual::Mmanif}.
Set $D:=\dim \fM$, and let $(\phi_\alpha,U_\alpha)_{\alpha\in \sA}$ be a $C^2$
atlas for $\fM$ with $\{U_\alpha:\alpha\in \sA\}$ an open cover for $\fM$
and $\phi_\alpha:U_\alpha\rightarrow B^D(1)$ a $C^2$ diffeomorphism.

Let $\Xa_j=(\phi_{\alpha})_{*} X_j$ so that $\Xa_j$ is a $C^1$ vector field on $B^D(1)$.
We may pick the above atlas so that $\Norm{\Xa_j}[C^1(B^D(1);\R^n)]<\infty$.

\begin{lemma}\label{Lemma::ProofImmerse::FinerTop}
Let $Z$ be as in the beginning of \cref{Section::Series::Qual}.
The topology on $Z$ (induced by the metric $\rho$) is finer than the
topology 
as
a subspace of $\fM$.
\end{lemma}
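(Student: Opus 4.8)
The plan is as follows. Recall $Z=\{y\in\fM:\rho(x_0,y)<\infty\}$, and that asserting the $\rho$-topology is finer than the subspace topology on $Z$ means precisely that the identity $(Z,\tau_\rho)\to Z\subseteq\fM$ is continuous; equivalently, for every $y\in Z$ and every $\fM$-open set $W$ with $y\in W$ there is $\delta>0$ with $\{z\in Z:\rho(y,z)<\delta\}\subseteq W$. So I would fix such $y$ and $W$ and work in a single coordinate chart: choose $(\phi_\alpha,U_\alpha)$ from the atlas fixed above with $y\in U_\alpha$, so that $\Xa_1,\dots,\Xa_q$ are $C^1$, hence bounded, on $B^D(1)$, say $|\Xa_j|\le M_0$ there. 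Shrinking, fix $\epsilon\in(0,1)$ with $\overline{B^D(\phi_\alpha(y),\epsilon)}\subseteq B^D(1)$ and $\phi_\alpha^{-1}(B^D(\phi_\alpha(y),\epsilon))\subseteq W$, and write $V:=\phi_\alpha^{-1}(B^D(\phi_\alpha(y),\epsilon))$, an $\fM$-open neighborhood of $y$.

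The crux is the estimate: if $\delta\sqrt q\,M_0<\epsilon$, then $B_X(y,\delta)\subseteq V$. The idea is that a Carnot--Carath\'eodory curve emanating from $y$ has, in the chart coordinates, speed $<\delta\sqrt q\,M_0$ a.e.\ (by Cauchy--Schwarz from the constraint $\|\sum_j|a_j|^2\|_{L^\infty}<1$ together with $|\Xa_j|\le M_0$), so it cannot traverse Euclidean distance $\epsilon$ in unit time, hence stays in $V$. To make this rigorous I would run a maximal-interval/connectedness argument: given $z\in B_X(y,\delta)$ with witnessing curve $\gamma:[0,1]\to\fM$, $\gamma(0)=y$, $\gamma(1)=z$, $\gamma'(t)=\sum_j a_j(t)\delta X_j(\gamma(t))$, set $t^*:=\sup\{t\in[0,1]:\gamma([0,t])\subseteq V\}$; this is $>0$ since $V$ is open and $\gamma$ continuous, and $\gamma([0,t^*))\subseteq V\subseteq U_\alpha$. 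On $[0,t^*)$ the curve $\phi_\alpha\circ\gamma$ is defined and, by the local meaning of $\gamma'$ recalled in Section~\ref{Section::FuncSpace::Manif}, satisfies $(\phi_\alpha\circ\gamma)'(t)=\sum_j a_j(t)\delta\,\Xa_j(\phi_\alpha\circ\gamma(t))$ for a.e.\ $t$, whence $|\phi_\alpha\circ\gamma(t)-\phi_\alpha(y)|\le\delta\sqrt q\,M_0\,t<\epsilon$ for all $t\in[0,t^*)$. If $t^*<1$, then since $\phi_\alpha^{-1}(\overline{B^D(\phi_\alpha(y),\epsilon)})$ is compact, hence closed in the Hausdorff space $\fM$, continuity of $\gamma$ puts $\gamma(t^*)$ in it, and passing to the limit in the estimate gives $|\phi_\alpha\circ\gamma(t^*)-\phi_\alpha(y)|\le\delta\sqrt q\,M_0\,t^*<\epsilon$, i.e.\ $\gamma(t^*)\in V$; openness of $V$ and continuity of $\gamma$ at $t^*$ then give $\gamma([0,t^*+\eta))\subseteq V$ for some $\eta>0$, contradicting maximality. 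So $t^*=1$, and the same limiting estimate at $t=1$ gives $z=\gamma(1)\in V$, proving the claim.

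With the estimate in hand the lemma follows at once: pick $\delta>0$ with $\delta\sqrt q\,M_0<\epsilon$; if $z\in Z$ and $\rho(y,z)<\delta$, then by definition of $\rho$ as an infimum $z\in B_X(y,\delta')$ for some $\delta'<\delta$, and since $\delta'\sqrt q\,M_0<\epsilon$ the claim gives $z\in V\subseteq W$. Hence $\{z\in Z:\rho(y,z)<\delta\}\subseteq W$, which is what we needed.

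I expect the main obstacle to be precisely the maximal-interval step: one must rule out the curve $\gamma$ escaping the chart $U_\alpha$ before the chart-coordinate speed bound can be applied. This is handled by the compactness of $\overline{B^D(\phi_\alpha(y),\epsilon)}$ (so that its $\phi_\alpha$-preimage is closed in $\fM$) together with the continuity of $\gamma$ at the endpoint $t^*$; the remaining ingredients are the routine Gr\"onwall-type length estimate and the scaling remark that $\rho(y,z)<\delta$ forces membership in $B_X(y,\delta')$ for some $\delta'<\delta$.
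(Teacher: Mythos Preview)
Your proof is correct and reaches the same conclusion as the paper by the same reduction: showing that for small $\delta$ every admissible curve from $y$ stays inside a fixed chart neighborhood, hence $B_X(y,\delta)\subseteq W$. The execution differs slightly. The paper invokes Picard--Lindel\"of: for $\delta$ small the ODE $\gammat'(t)=\sum_j a_j(t)\delta\,\Xa_j(\gammat(t))$, $\gammat(0)=\phi_\alpha(y)$ has a unique solution $\gammat:[0,1]\to\phi_\alpha(U)$, and then standard ODE uniqueness forces the original $\gamma$ to equal $\phi_\alpha^{-1}\circ\gammat$, hence $\gamma(1)\in U$. You instead carry out the containment argument by hand via a maximal-time step and an explicit speed bound $|(\phi_\alpha\circ\gamma)'|\le\delta\sqrt q\,M_0$. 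Your route is a bit more elementary (no named theorem invoked) and makes the compactness/continuity step at $t^*$ explicit; the paper's route is shorter but hides exactly that work inside Picard--Lindel\"of. Both are standard and essentially equivalent.
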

\begin{proof}
Let $U\subseteq \fM$ be an open set and let $x\in U\cap Z$.  We wish to show that
there is a $\delta>0$ with $B_X(x,\delta)\subseteq U$.  Since $x\in U_\alpha$ for
some $\alpha\in \sA$, we may replace $U$ with $U\cap U_\alpha$, and therefore
assume $U\subseteq U_\alpha$ for some $\alpha\in \sA$.

By the Picard-Lindel\"of Theorem, there exists $\delta>0$ so small
such that given $a_1,\ldots, a_q\in L^\infty([0,1])$ with
$\Norm{\sum |a_j|^2}[L^\infty([0,1])]<1$, there exists
a unique $\gammat:[0,1]\rightarrow \phi_{\alpha}(U)$ with
\begin{equation}\label{Eqn::ProofImmers::gammat::defn}
\gammat(0)=\phi_\alpha(x)\text{ and }\gammat'(t)=\sum_{j=1}^q a_j(t) \delta \Xa(\gammat(t)).
\end{equation}
We claim $B_X(x,\delta)\subseteq U$.  Indeed, fix $y\in B_X(x,\delta)$.
By the definition of $B_X(x,\delta)$, $\exists \gamma:[0,1]\rightarrow B_X(x,\delta)$, $\gamma(0)=x$, $\gamma(1)=y$,
$\gamma'(t) = \sum_{j=1}^q a_j(t) \delta X_j(\gamma(t))$.
Let $\gammat:[0,1]\rightarrow \phi_{\alpha}(U)$ be the unique solution to \cref{Eqn::ProofImmers::gammat::defn} with this choice of $a_1,\ldots, a_q$, and set $\gammah:=\phi_{\alpha}^{-1}\circ \gammat$.
Then, $\gammah(0)=x=\gamma(0)$, $\gammah'(t) = \sum_{j=1}^q a_j(t)\delta X_j(\gamma(t))=\gamma'(t)$.  Standard uniqueness theorems for ODEs
show $\gamma=\gammah$, and therefore $y=\gamma(1)=\gammah(1)=\phi_\alpha^{-1}(\gammat(1))$.
Since $\gammat(1)\in \phi_\alpha(U)$, it follows $y\in U$, which completes the proof.
%
%
%
\end{proof}

Recall, $M$ is a connected open subset of $Z$ which is given the topology as subspace of $Z$; i.e., $M$ is given the topology
induced by the metric $\rho$.

Set $M_\alpha:=\phi_\alpha(U_\alpha\cap M)$; we give $M_\alpha$ the topology
so that $\phi_\alpha: M\cap U_\alpha\rightarrow M_\alpha$ is a homeomorphism
(with $M\cap U_\alpha\subseteq M$ given the topology as a subspace of $M$).
Let $\Xa(u)$ denote the $D\times q$ matrix $\Xa(u)=(\Xa_1(u)|\cdots | \Xa_q(u))$.
For $K=(k_1,\ldots, k_l)\in \sI(l,q)$ let $\Xa_K$ denote the list of vector fields
$\Xa_{k_1},\ldots, \Xa_{k_l}$ and for $J=(j_1,\ldots, j_l)\in \sI(l,D)$
let $\Xa_{J,K}$ denote the $l\times l$ submatrix of $\Xa(u)$ given by taking
the rows listed in $J$ and the columns listed in $K$.

\begin{lemma}\label{Lemma::Appendix::XaDeriv}
For $u\in M_\alpha$, $1\leq k\leq q$, $1\leq l\leq \min\{q,D\}$, $K\in \sI(l,q)$, $J\in \sI(l,D)$
\begin{equation*}
    \Xa_k \det \Xa_{J,K}(u) = \sum_{\substack{K'\in \sI(l,q) \\ J'\in \sI(l,D) }} f_{k,J,K}^{J',K'} \det \Xa_{J',K'}(u),
\end{equation*}
where $f_{k,J,K}^{J',K'}: M_\alpha\rightarrow \R$ are locally bounded.
\end{lemma}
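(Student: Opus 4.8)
The plan is to expand the determinant $\det \Xa_{J,K}(u)$ along one of its columns, differentiate using the product rule, and then re-express each resulting derivative of a matrix entry back in terms of the $\Xa$'s by using the structure equations $[\Xa_i, \Xa_j] = \sum_l c_{i,j}^{l}\circ\phi_\alpha^{-1}\, \Xa_l$, which hold because the $c_{i,j}^l$ are locally bounded (this is the hypothesis of \cref{Prop::ResQual::Mmanif}, transported to $B^D(1)$ via $\phi_\alpha$). The point is that $\Xa_k$ acting on the scalar function $u \mapsto (\Xa_{j_r})^{m}(u)$ (the $m$-th component of the vector field $\Xa_{j_r}$, i.e.\ a matrix entry of $\Xa(u)$) produces the $m$-th component of $\Xa_k \Xa_{j_r}$, and $\Xa_k\Xa_{j_r} = [\Xa_k,\Xa_{j_r}] + \Xa_{j_r}\Xa_k$; the first term is a local-bounded-coefficient combination of the $\Xa_l$'s, while the second term will cancel in the antisymmetrized sum.

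First I would fix $K = (k_1,\ldots,k_l)$ and $J = (j_1,\ldots,j_l)$ and write
\begin{equation*}
\det \Xa_{J,K}(u) = \sum_{\pi \in S_l} \mathrm{sgn}(\pi) \prod_{r=1}^l \Xa_{k_r}^{j_{\pi(r)}}(u),
\end{equation*}
where $\Xa_{k}^{j}$ denotes the $j$-th coordinate of the vector field $\Xa_k$. Applying $\Xa_k$ and the product rule gives a sum over $r_0$ of terms with $\Xa_k \Xa_{k_{r_0}}^{j_{\pi(r_0)}}$ in place of $\Xa_{k_{r_0}}^{j_{\pi(r_0)}}$. Now $\Xa_k \Xa_{k_{r_0}}^{j_{\pi(r_0)}}$ is the $j_{\pi(r_0)}$-component of the vector field $\Xa_k \Xa_{k_{r_0}}$ (thinking of $\Xa_{k_{r_0}}$ as an $\R^D$-valued function and differentiating it along $\Xa_k$); by the structure equations this equals the $j_{\pi(r_0)}$-component of $\sum_{l'} (c_{k,k_{r_0}}^{l'}\circ\phi_\alpha^{-1})\Xa_{l'} + \Xa_{k_{r_0}}\Xa_k$. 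Substituting the first piece and re-assembling the antisymmetrized sum turns it into a linear combination, with coefficients $c_{k,k_{r_0}}^{l'}\circ\phi_\alpha^{-1}$ (locally bounded), of determinants $\det \Xa_{J,K'}(u)$ where $K'$ differs from $K$ only in one slot. The pieces coming from $\Xa_{k_{r_0}}\Xa_k$ would, by themselves, not be of the desired form, so the key observation is that they must be handled collectively.

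The main obstacle, and the step needing care, is the $\Xa_{k_{r_0}}\Xa_k$ contribution: after summing over $\pi$ these "second-order" terms cancel, because $\Xa_{k_{r_0}}\Xa_k^{j} - \Xa_{k}\Xa_{k_{r_0}}^{j}$ is antisymmetric in a way that is killed by the $\mathrm{sgn}(\pi)$ weights — more precisely, one should organize the computation so that $\Xa_k$ is applied symmetrically, or equivalently invoke the classical fact that for vector fields $Z, W_1, \ldots, W_l$ on a $D$-manifold, $Z(\det(W_{j_1}|\cdots|W_{j_l}))$ in terms of the $l\times l$ minors involves only the brackets $[Z, W_r]$, which is exactly the computation in \cref{Section::Wedge} (see the Lie derivative formula for $\Lie{Z}(Y_1\wedge\cdots\wedge Y_n)$ and \cref{Lemma::Wedge::DerivFrac}). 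Indeed the cleanest route is to note that $\det \Xa_{J,K}$ is, up to the standard volume form on $B^D(1)$, a coefficient of $\Xa_{k_1}\wedge\cdots\wedge\Xa_{k_l}$ after contracting with coordinate $1$-forms, and then apply $\Lie{\Xa_k}$ using the formula already recorded in \cref{Section::Wedge}, which immediately produces exactly a locally-bounded-coefficient combination of the $\det \Xa_{J',K'}$. I would then set $f_{k,J,K}^{J',K'}$ to be the resulting coefficients (built from the $c_{k,k_r}^{l'}\circ\phi_\alpha^{-1}$ together with combinatorial constants), note they are locally bounded since the $c$'s are, and observe that $J' = J$ in every term that arises (only the column index set $K$ changes), though stating the conclusion with a sum over both $J'$ and $K'$ as in the lemma is harmless. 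This completes the argument.
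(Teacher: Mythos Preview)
Your approach is the same as the paper's---write $\det \Xa_{J,K} = \nu_J(\Xa_{k_1},\ldots,\Xa_{k_l})$ with $\nu_J = du_{j_1}\wedge\cdots\wedge du_{j_l}$ and apply the Leibniz rule for $\sL_{\Xa_k}$---but there is a genuine error at exactly the step you flagged as ``the main obstacle.''

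The $\Xa_{k_{r_0}}\Xa_k$ contributions do \emph{not} cancel. There is no antisymmetry to exploit: $\Xa_k$ is a fixed vector field (the one you are differentiating by), not one of the $\Xa_{k_r}$'s appearing in the determinant, so the expression $\Xa_{k_{r_0}}\Xa_k^{j} - \Xa_{k}\Xa_{k_{r_0}}^{j}$ never arises. If you carry out the sum over $r_0$ and $\pi$, these ``second-order'' terms assemble into
\[
(\sL_{\Xa_k}\nu_J)(\Xa_{k_1},\ldots,\Xa_{k_l}),
\]
which is nonzero in general. This is precisely the extra term in the identity $\sL_Z(\omega(W_1,\ldots,W_l)) = (\sL_Z\omega)(W_1,\ldots,W_l) + \sum_r \omega(\ldots,[Z,W_r],\ldots)$ that you omitted; the formula in \cref{Section::Wedge} you cite is only for the wedge product itself, not for its pairing against a form.

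The paper handles this term by observing that since $\Xa_k\in C^1$, the $l$-form $\sL_{\Xa_k}\nu_J$ is $C^0$ on $B^D(1)$, hence $\sL_{\Xa_k}\nu_J = \sum_{J'\in\sI(l,D)} f_{k,J}^{J'}\,\nu_{J'}$ with bounded coefficients $f_{k,J}^{J'}$, giving
\[
(\sL_{\Xa_k}\nu_J)(\Xa_{k_1},\ldots,\Xa_{k_l}) = \sum_{J'} f_{k,J}^{J'}\,\det \Xa_{J',K}.
\]
So these are exactly the terms with $J'\neq J$ (and $K' = K$), contradicting your final claim that ``$J' = J$ in every term that arises.'' The bracket terms give the $J' = J$, $K'\neq K$ contributions, and the $\sL_{\Xa_k}\nu_J$ term gives the $J'\neq J$, $K' = K$ contributions; both are needed.
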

\begin{proof}
Let $J=(j_1,\ldots, j_l)$, $K=(k_1,\ldots, k_l)$.
Then,
$\det \Xa_{J,K} = \nu_J (\Xa_{k_1},\ldots, \Xa_{k_l})$,
where $\nu_J$ is the $l$-form $du_{j_1}\wedge du_{j_2}\wedge \cdots \wedge du_{j_l}$.
Hence, using
\cite[Proposition 18.9]{LeeIntroToSmoothManifolds}
we have
\begin{equation}\label{Eqn::AppendImmerse::LieDeriv}
\begin{split}
    &\Xa_k \det \Xa_{J,K} = \sL_{\Xa_k} \left(\nu_J(\Xa_{k_1},\ldots, \Xa_{k_l})\right)
    \\&=\left(\sL_{\Xa_k} \nu_J\right)(\Xa_{k_1},\ldots, \Xa_{k_l})
    +\nu_J([\Xa_k, \Xa_{k_1}], \Xa_{k_2},\ldots, \Xa_{k_l})
    \\&\quad+\nu_J(\Xa_{k_1}, [\Xa_{k}, \Xa_{k_2}],\Xa_{k_3},\ldots, \Xa_{k_l})
    +\cdots
    +\nu_J(\Xa_{k_1},\ldots, \Xa_{k_{l-1}}, [\Xa_{k}, \Xa_{k_l}])
\end{split}
\end{equation}
We begin with the first term on the right hand side of \cref{Eqn::AppendImmerse::LieDeriv}.
Since $\Xa_k$ is a $C^1$ vector field, $\sL_{\Xa_k}\nu_J$ is a $C^0$ $l$-form on $B^D(1)$
and we have
\begin{equation*}
    \sL_{\Xa_k}\nu_J = \sum_{J'\in \sI(l,D)} f_{k,J}^{J'} \nu_{J'},
\end{equation*}
where $\Norm{f_{k,J}^{J'}}[\CjSpace{0}[B^D(1)]]<\infty$.
Hence
\begin{equation*}
    \left(\sL_{\Xa_k} \nu_J\right)(\Xa_{k_1},\ldots, \Xa_{k_l}) =\sum_{J'\in \sL(l,D)} f_{k,J}^{J'} \det \Xa_{J',K},
\end{equation*}
as desired.

We now turn to the rest of the terms on the right hand side of \cref{Eqn::AppendImmerse::LieDeriv}.
These terms are all similar, so we only discuss the first.  We have
\begin{equation*}
    \nu_J([\Xa_k, \Xa_{k_1}], \Xa_{k_2},\ldots, \Xa_{k_l})
    =\sum_{r} (c_{m,k_1}^r\circ \phi_\alpha) \nu_J(\Xa_r, \Xa_{k_2},\ldots, \Xa_{k_l})
    =\sum_r \left(c_{m,k_1}^r\circ \phi_{\alpha} \right)\det \Xa_{J, K_r},
\end{equation*}
where $K_r=(r,k_2,\ldots, k_l)\in \sI(l,q)$.  The result follows.
\end{proof}

For $1\leq l\leq \min\{D,q\}$ let $\det_{l\times l} \Xa(u)$ denote the vector
whose components are $\det \Xa_{J,K}(u)$, where $J\in \sI(l,D)$, $K\in \sI(l,q)$.

\begin{lemma}\label{Lemma::Appendix::BoundXaDet}
For $u\in M_\alpha$, $1\leq j\leq q$, $1\leq l\leq \min\{D,q\}$, $J\in \sI(l,D)$, $K\in \sI(l,q)$,
\begin{equation*}
    \left| \Xa_j \det \Xa_{J,K}(u) \right|\leq g_{j,J,K}(u) \left|\det_{l\times l} \Xa(u)\right|
\end{equation*}
where $g_{j,J,K}:M_\alpha\rightarrow [0,\infty)$ is locally bounded.
\end{lemma}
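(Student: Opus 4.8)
The plan is to obtain this as an immediate formal consequence of \cref{Lemma::Appendix::XaDeriv}. That lemma gives, for each $u\in M_\alpha$,
\begin{equation*}
    \Xa_j \det \Xa_{J,K}(u) = \sum_{\substack{K'\in \sI(l,q)\\ J'\in \sI(l,D)}} f_{j,J,K}^{J',K'}(u)\, \det \Xa_{J',K'}(u),
\end{equation*}
with each coefficient $f_{j,J,K}^{J',K'}$ locally bounded on $M_\alpha$. First I would apply the triangle inequality and then bound each $\left|\det\Xa_{J',K'}(u)\right|$ by the norm of the vector $\det_{l\times l}\Xa(u)$, of which it is a component, obtaining
\begin{equation*}
    \left|\Xa_j \det \Xa_{J,K}(u)\right| \leq \left(\sum_{J',K'}\left|f_{j,J,K}^{J',K'}(u)\right|\right)\left|\det_{l\times l}\Xa(u)\right|.
\end{equation*}
One then sets $g_{j,J,K}(u):=\sum_{J',K'}\left|f_{j,J,K}^{J',K'}(u)\right|$, which is a nonnegative function on $M_\alpha$, and this is exactly the claimed inequality.

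It remains only to check that $g_{j,J,K}$ is locally bounded, which is clear: it is a finite sum (the index sets $\sI(l,D)$ and $\sI(l,q)$ being finite) of absolute values of the locally bounded functions $f_{j,J,K}^{J',K'}$, hence locally bounded. I would also record, for definiteness, that $\left|\det_{l\times l}\Xa(u)\right|$ denotes the norm of $\det_{l\times l}\Xa(u)$ in some fixed norm on the finite-dimensional space it lives in; the choice is immaterial, since any two norms are comparable and the comparison constant can be absorbed into $g_{j,J,K}$, and the only property used is that each coordinate of a vector is dominated by its norm.

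I do not expect any genuine obstacle: the statement is just a repackaging of \cref{Lemma::Appendix::XaDeriv}, and the substantive work was already carried out there. The point of recording the estimate in this normalized form is that it is precisely what is needed to run a Gr\"onwall-type argument along integral curves of the $\Xa_j$'s: it forces $\det_{l\times l}\Xa$ either to vanish identically or to stay nonzero along such a curve, which is how one sees that the relevant rank is constant along orbits in the construction of the $C^2$ structure in \cref{Prop::ResQual::Mmanif}.
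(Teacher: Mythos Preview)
Your proposal is correct and is exactly the argument the paper has in mind: the paper's proof is the single line ``This follows immediately from \cref{Lemma::Appendix::XaDeriv},'' and you have simply written out that immediate deduction.
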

\begin{proof}This follows immediately from \cref{Lemma::Appendix::XaDeriv}.\end{proof}

\begin{lemma}\label{Lemma::Appendix::Constdimongamma}
Let $\gamma:[0,1]\rightarrow M_{\alpha}$ be such that
$\gamma'(t)=\sum_{j=1}^q a_j(t) \Xa_j(\gamma(t))$, where $a_j\in L^\infty([0,1])$.
Then,
$\dim \Span\{ \Xa_1(\gamma(0)),\ldots, \Xa_q(\gamma(0))\} = \dim \Span\{ \Xa_1(\gamma(1)),\ldots, \Xa_q(\gamma(1))\} $.
\end{lemma}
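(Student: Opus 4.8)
The plan is to prove that the rank of the $D\times q$ matrix $\Xa(\gamma(t))$ does not change as $t$ ranges over $[0,1]$. Since $\dim\Span\{\Xa_1(\gamma(t)),\dots,\Xa_q(\gamma(t))\}$ is precisely this rank, and the rank of a matrix equals the largest $l$ for which some $l\times l$ minor is nonzero (with the convention that this largest $l$ is $0$ when $\Xa(\gamma(t))=0$), it is enough to show: for each $l\in\{1,\dots,\min\{D,q\}\}$ the vector $\det_{l\times l}\Xa(\gamma(t))$ is either identically zero on $[0,1]$ or never zero on $[0,1]$. Evaluating the resulting constant rank at $t=0$ and $t=1$ then gives the lemma.

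First I would record the regularity needed. Since $a_j\in L^\infty([0,1])$ and each $\Xa_j$ is continuous, $\gamma$ is Lipschitz, hence absolutely continuous; and since each $\Xa_j$ is $C^1$, every $l\times l$ minor $\det\Xa_{J,K}$ is a $C^1$ function on $M_\alpha$. Setting $w_l(t):=\det_{l\times l}\Xa(\gamma(t))$, the composition of a $C^1$ map with a Lipschitz curve is absolutely continuous, and the chain rule yields, for a.e.\ $t\in[0,1]$,
\[
w_l'(t)=\sum_{j=1}^q a_j(t)\,\bigl(\Xa_j\det_{l\times l}\Xa\bigr)(\gamma(t)).
\]

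Next I would apply \cref{Lemma::Appendix::BoundXaDet} to each component: for every $1\le j\le q$, $J\in\sI(l,D)$, $K\in\sI(l,q)$,
\[
\bigl|\Xa_j\det\Xa_{J,K}(\gamma(t))\bigr|\le g_{j,J,K}(\gamma(t))\,\bigl|\det_{l\times l}\Xa(\gamma(t))\bigr|=g_{j,J,K}(\gamma(t))\,|w_l(t)|.
\]
Because $\gamma([0,1])$ is a compact subset of $M_\alpha$ and each $g_{j,J,K}$ is locally bounded, $\sup_{t\in[0,1]}g_{j,J,K}(\gamma(t))<\infty$; combined with $a_j\in L^\infty([0,1])$ this produces a constant $C_l$ with $|w_l'(t)|\le C_l\,|w_l(t)|$ for a.e.\ $t$. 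Grönwall's inequality, used on $[t_0,1]$ and, after reversing time, on $[0,t_0]$, then forces $w_l\equiv 0$ on $[0,1]$ as soon as $w_l(t_0)=0$ for some $t_0$. This is exactly the dichotomy stated above, so the proof would be complete.

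The argument is largely routine. The only points meriting care are the validity of the chain rule for $w_l=(\det_{l\times l}\Xa)\circ\gamma$ when $\gamma$ is only Lipschitz --- which is standard since the outer map is $C^1$ --- and the upgrade from ``$g_{j,J,K}$ locally bounded on $M_\alpha$'' to ``$g_{j,J,K}\circ\gamma$ bounded on $[0,1]$'' via compactness of $\gamma([0,1])$. I expect the mildly fiddly part to be the two-sided (forward and time-reversed) application of Grönwall's inequality, but there is no conceptual obstacle here.
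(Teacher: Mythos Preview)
Your proof is correct and follows essentially the same approach as the paper: both bound the derivative of $\det_{l\times l}\Xa(\gamma(t))$ (the paper via $\frac{d}{dt}|\det_{l\times l}\Xa(\gamma(t))|^2$, you via $|w_l'(t)|$) using \cref{Lemma::Appendix::BoundXaDet} and then apply Gr\"onwall's inequality to conclude the zero set is all-or-nothing. The paper states the conclusion as the implication $|\det_{l\times l}\Xa(\gamma(0))|=0\Rightarrow|\det_{l\times l}\Xa(\gamma(1))|=0$ plus time reversal, which is exactly your dichotomy evaluated at the endpoints.
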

\begin{proof}
We will show
\begin{equation}\label{Eqn::Appendix::ToShowDet0}
    \left| \det_{l\times l} \Xa(\gamma(0))\right| = 0 \Rightarrow \left|\det_{l\times l} \Xa(\gamma(1))\right|=0.
\end{equation}
To see why \cref{Eqn::Appendix::ToShowDet0} implies the result note that by reversing $\gamma$,
we have
\begin{equation*}
    \left| \det_{l\times l} \Xa(\gamma(0))\right| = 0 \Leftrightarrow \left|\det_{l\times l} \Xa(\gamma(1))\right|=0,
\end{equation*}
and by noting that $\dim\Span\{\Xa_1(u),\ldots, \Xa_q(u)\}\geq l\Leftrightarrow  \left| \det_{l\times l} \Xa(u)\right| \ne 0$, the result follows.
We turn to proving \cref{Eqn::Appendix::ToShowDet0}.  We have, using \cref{Lemma::Appendix::BoundXaDet},
\begin{equation*}
\begin{split}
    &\frac{d}{dt} \left|\det_{l\times l} \Xa(\gamma(t))\right|^2
    =2\sum_{\substack{J\in \sI(l,D) \\ K\in \sI(l,q)}} \det \Xa_{J,K}(\gamma(t)) \frac{d}{dt} \det \Xa_{J,K}(\gamma(t))
    \\&=2\sum_{\substack{J\in \sI(l,D) \\ K\in \sI(l,q)}} \det \Xa_{J,K}(\gamma(t)) \left( \sum_{j=1}^q a_j(t) \left(\Xa_j \det \Xa_{J,K}\right)(\gamma(t))\right)
    \\&\leq 2 \sum_{\substack{J\in \sI(l,D) \\ K\in \sI(l,q)}} \left(\sup_{t\in [0,1]} g_{j,J,K}(\gamma(t))\right)
    \left( \sum_{j=1}^q \Norm{a_j}[L^\infty([0,1])] \right)
     \left|\det_{l\times l} \Xa(\gamma(t))\right|^2.
\end{split}
\end{equation*}
We conclude,
\begin{equation*}
    \frac{d}{dt} \left|\det_{l\times l} \Xa(\gamma(t))\right|^2\leq C \left|\det_{l\times l} \Xa(\gamma(t))\right|^2,
\end{equation*}
for some constant $C$.  \Cref{Eqn::Appendix::ToShowDet0} follows by Gr\"onwall's inequality.
\end{proof}

\begin{prop}\label{Prop::Appendix::dimconst}
The map $x\mapsto \dim\Span\{X_1(x),\ldots,X_q(x)\}$, $M\rightarrow \N$ is constant.
\end{prop}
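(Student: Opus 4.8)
The statement asserts that the dimension of the span $\Span\{X_1(x),\ldots,X_q(x)\}$ is constant as $x$ ranges over the connected set $M$. The plan is to combine a local-constancy statement with connectedness. First I would observe that the function $d(x):=\dim\Span\{X_1(x),\ldots,X_q(x)\}$ is, in any chart $\phi_\alpha$, the same as $\dim\Span\{\Xa_1(u),\ldots,\Xa_q(u)\}$ at $u=\phi_\alpha(x)$, since $(\phi_\alpha)_*$ is an isomorphism on each tangent space. By the remark before the statement, $d(x)\ge l \Leftrightarrow |\det_{l\times l}\Xa(\phi_\alpha(x))|\ne 0$, so $d$ is determined by which minors vanish.

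Next I would fix $x_0\in M$ and show that $d$ is locally constant near $x_0$. Work in the chart $U_\alpha$ containing $x_0$, with $M_\alpha=\phi_\alpha(U_\alpha\cap M)$. Given any $y\in U_\alpha\cap M$ sufficiently close to $x_0$ in the metric topology (which, by \cref{Lemma::ProofImmerse::FinerTop}, contains a basic neighborhood $B_X(x_0,\delta)$ inside $U_\alpha\cap M$), there is a curve $\gamma:[0,1]\to U_\alpha\cap M$ with $\gamma(0)=x_0$, $\gamma(1)=y$, and $\gamma'(t)=\sum_{j=1}^q a_j(t)\delta X_j(\gamma(t))$ with $a_j\in L^\infty([0,1])$; pushing forward by $\phi_\alpha$ gives a curve $\phit\circ\gamma$ in $M_\alpha$ satisfying $(\phi_\alpha\circ\gamma)'(t)=\sum_j a_j(t)\delta\Xa_j(\phi_\alpha\circ\gamma(t))$ — here I am writing $\phit$ informally; in the actual write-up I would just write $\phi_\alpha\circ\gamma$. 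Then \cref{Lemma::Appendix::Constdimongamma} (applied with the $L^\infty$ coefficients $a_j\delta$) yields $\dim\Span\{\Xa_1(\phi_\alpha(x_0)),\ldots\}=\dim\Span\{\Xa_1(\phi_\alpha(y)),\ldots\}$, i.e. $d(y)=d(x_0)$. Hence $d$ is constant on $B_X(x_0,\delta)$, an open neighborhood of $x_0$ in $M$.

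Finally, a locally constant integer-valued function on a connected topological space is constant: the level sets $\{x\in M: d(x)=k\}$ are open, pairwise disjoint, and cover $M$, so by connectedness exactly one is nonempty. This gives the result. The only genuine content is the local step, and within it the one point requiring a little care is that the balls $B_X(x_0,\delta)$ are open in $M$ and that every point of $B_X(x_0,\delta)$ is joined to $x_0$ by a curve of the required form lying inside a single chart — both of which are furnished by \cref{Lemma::ProofImmerse::FinerTop} together with the definition of $B_X$ in \cref{Eqn::FundSpaceM::DefnCCBall}. I do not anticipate a serious obstacle; the main subtlety is bookkeeping to ensure the curve stays inside $U_\alpha$ so that \cref{Lemma::Appendix::Constdimongamma} applies verbatim, which is exactly why one shrinks $\delta$ using \cref{Lemma::ProofImmerse::FinerTop}.
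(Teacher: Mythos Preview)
Your proposal is correct and follows essentially the same approach as the paper's proof: reduce to local constancy by connectedness, use \cref{Lemma::ProofImmerse::FinerTop} to fit a ball $B_X(x_0,\delta)$ inside a single chart $U_\alpha$, push the defining curve forward by $\phi_\alpha$, and apply \cref{Lemma::Appendix::Constdimongamma}. The only minor point to make explicit in the write-up is that the curve $\gamma$ from the definition of $B_X(x_0,\delta)$ actually stays in $B_X(x_0,\delta)$ for all $t\in[0,1]$ (truncate to $[0,t]$), hence in $U_\alpha$; you allude to this but should state it.
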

\begin{proof}
Since $M$ is connected, it suffices to show the map is locally constant.
Fix $x\in M$ and pick $\alpha\in \sA$ such that $x\in U_\alpha$.
Take $\delta>0$ so small that $B_X(x,\delta)\subset M\cap U_\alpha$ (here, we are
using \cref{Lemma::ProofImmerse::FinerTop}).
We wish to show $x\mapsto \dim\Span\{X_1(x),\ldots,X_q(x)\}$, $B_X(x,\delta)\rightarrow \N$
is constant.

Take $y\in B_X(x,\delta)$, so that $\exists \gamma:[0,1]\rightarrow \fM$,
$\gamma(0)=x$, $\gamma(1)=y$, $\gamma'(t)=\sum a_j(t) \delta X_j(\gamma(t))$,
$\|\sum |a_j(t)|^2\|_{L^\infty([0,1])}<1$.  Note,
$\forall t\in [0,1]$, $\gamma(t)\in B_X(x,\delta)\subseteq U_\alpha$.

Set $\gammat(t):=\phi_\alpha\circ\gamma(t)$.  $\gammat$ satisfies all the
hypotheses of \cref{Lemma::Appendix::Constdimongamma} and this shows
$$\dim \Span\{ \Xa_1(\phi_{\alpha}(x)),\ldots, \Xa_q(\phi_{\alpha}(x))\} = \dim \Span\{ \Xa_1(\phi_{\alpha}(y)),\ldots, \Xa_q(\phi_{\alpha}(y))\}.$$
Hence, $\dim\Span\{X_1(x),\ldots, X_q(x)\} = \dim\Span\{X_1(y),\ldots, X_q(y)\}$,
completing the proof.
\end{proof}

Set $n:=\dim\Span\{X_1(x),\ldots, X_q(x)\}$, $x\in M$ (by \cref{Prop::Appendix::dimconst},
$n$ does not depend on $x$).

\begin{lemma}\label{Lemma::Appendix::PickMaxLI}
Let $x\in M$ and $K=(k_1,\ldots, k_n)\in \sI(n,q)$ such that
$X_{k_1}(x),\ldots, X_{k_n}(x)$ are linearly independent.
Then, there is an open set $U\subseteq \fM$, containing $x$,
$J\in \sI(n,D)$, and $\delta>0$ such that the following hold:
\begin{enumerate}[(i)]
\item\label{Item::Appendix::PickdeltaX} $B_X(x,\delta)\subseteq U$.
\item $\exists \alpha\in \sA$, $U\subseteq U_\alpha$.
\item\label{Item::Appendix::infdetX} $\inf_{u\in \phi_\alpha(U)}\left| \det \Xa_{J,K}(u) \right|>0$.
\item\label{Item::Appendix::SpanXkj} $\forall y\in B_X(x,\delta)$, $\Span\{X_{k_1}(y),\ldots, X_{k_n}(y)\}=\Span\{X_1(y),\ldots, X_q(y)\}$.
\item\label{Item::Appendix::CommuteXkj} $\forall y\in B_X(x,\delta)$, $[X_{k_i}, X_{k_j}](y)\in \Span\{X_{k_1}(y),\ldots, X_{k_n}(y)\}$.
\item\label{Item::Appendix::writeXjInXjk} For $1\leq j\leq q$, $1\leq l\leq n$, $\exists b_{j}^l\in C^1(U)$,
$\Norm{b_j^l\circ \phi_\alpha^{-1}}[C^1(\phi_\alpha(U))]<\infty$, such that
$\forall y\in B_X(x,\delta)$,
\begin{equation}\label{Eqn::Appendix::XjInXkl}
    X_j(y)=\sum_{l=1}^n b_j^l(y) X_{k_l}(y).
\end{equation}
\end{enumerate}
\end{lemma}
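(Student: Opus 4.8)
The plan is to build all the required data ($U$, $\alpha$, $J$, $\delta$, and the functions $b_j^l$) by combining continuity of the relevant determinants with the topological input from \cref{Lemma::ProofImmerse::FinerTop} and the dimension-constancy of \cref{Prop::Appendix::dimconst}. First I would choose $\alpha\in\sA$ with $x\in U_\alpha$; since $X_{k_1}(x),\dots,X_{k_n}(x)$ are linearly independent, the matrix $\Xa_K(\phi_\alpha(x))$ (a $D\times n$ matrix, $D=\dim\fM$) has rank $n$, so there is some $J\in\sI(n,D)$ with $\det\Xa_{J,K}(\phi_\alpha(x))\ne 0$. Since $\Xa_j$ are continuous, $u\mapsto\det\Xa_{J,K}(u)$ is continuous, so there is an open neighborhood $W\subseteq U_\alpha$ of $x$ on which $|\det\Xa_{J,K}|$ is bounded below by a positive constant; shrinking, I may take $W$ to have compact closure in $U_\alpha$ so that all the $C^1$ norms $\Norm{\Xa_j}[C^1]$ are finite on $\phi_\alpha(W)$. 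Then, invoking \cref{Lemma::ProofImmerse::FinerTop}, the set $W\cap Z$ is open in the metric topology, so I can pick $\delta>0$ with $B_X(x,\delta)\subseteq W\cap M$. Setting $U:=W$ gives items~\cref{Item::Appendix::PickdeltaX,Item::Appendix::infdetX} immediately.

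For \cref{Item::Appendix::SpanXkj}: by \cref{Prop::Appendix::dimconst}, $\dim\Span\{X_1(y),\dots,X_q(y)\}=n$ for every $y\in M$; and for $y\in B_X(x,\delta)\subseteq W$ we have $\det\Xa_{J,K}(\phi_\alpha(y))\ne 0$, so $X_{k_1}(y),\dots,X_{k_n}(y)$ are $n$ linearly independent vectors inside the $n$-dimensional space $\Span\{X_1(y),\dots,X_q(y)\}$, forcing equality. Item~\cref{Item::Appendix::CommuteXkj} is then a consequence of the hypothesis of \cref{Prop::ResQual::Mmanif}: $[X_{k_i},X_{k_j}]=\sum_{m=1}^q c_{k_i,k_j}^m X_m$ with $c$ locally bounded, and each $X_m(y)\in\Span\{X_1(y),\dots,X_q(y)\}=\Span\{X_{k_1}(y),\dots,X_{k_n}(y)\}$ by \cref{Item::Appendix::SpanXkj}.

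For \cref{Item::Appendix::writeXjInXjk} I would use Cramer's rule in the form \cref{Eqn::Wedge::Cramer}: working in the chart $\phi_\alpha$ and using the $n$ rows indexed by $J$, express $\Xa_j$ (restricted to those rows) as a linear combination of $\Xa_{k_1},\dots,\Xa_{k_n}$ with coefficients that are ratios of $n\times n$ determinants built from the entries of $\Xa$; the denominator is $\det\Xa_{J,K}$, which is bounded away from $0$ on $\phi_\alpha(U)$, and since the $\Xa_m$ are $C^1$ with bounded $C^1$ norm on $\phi_\alpha(U)$ and \cref{Prop::FuncSpaceRev::Algebra} gives that $C^1$ functions bounded away from zero have $C^1$ inverses, the coefficients $b_j^l\circ\phi_\alpha^{-1}$ lie in $C^1(\phi_\alpha(U))$ with finite $C^1$ norm. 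One checks that these coefficients indeed reproduce $X_j$ as a vector field (not merely on the $J$-rows): the identity holds on all $D$ components because on $B_X(x,\delta)$ both sides lie in the $n$-dimensional span $\Span\{X_{k_1}(y),\dots,X_{k_n}(y)\}$ by \cref{Item::Appendix::SpanXkj}, and two elements of that span agreeing on a set of $n$ coordinates (the rows $J$) that are linearly independent there must agree entirely. The main obstacle I anticipate is precisely this last point—making sure the Cramer's-rule coefficients, a priori only matching the $J$-components, give a genuine vector-field identity—and the clean way to handle it is to reduce everything to the rank-$n$ situation guaranteed by \cref{Item::Appendix::infdetX,Item::Appendix::SpanXkj} rather than manipulating the large matrix directly.
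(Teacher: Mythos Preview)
Your proposal is correct and follows essentially the same route as the paper's proof: choose $\alpha$, pick $J$ by linear independence at $x$, shrink $U$ by continuity of $\det\Xa_{J,K}$, use \cref{Lemma::ProofImmerse::FinerTop} to get $\delta$, deduce \cref{Item::Appendix::SpanXkj} from \cref{Prop::Appendix::dimconst}, and construct $b_j^l$ via Cramer's rule with denominator $\det\Xa_{J,K}$. Your discussion of the ``main obstacle'' in \cref{Item::Appendix::writeXjInXjk}---that the Cramer coefficients a priori only match the $J$-rows, and one needs \cref{Item::Appendix::SpanXkj} to promote this to a full vector-field identity---is exactly the point the paper leaves implicit when it simply writes ``follows from Cramer's rule,'' so you have actually made the argument more transparent.
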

\begin{proof}
Let $U\subseteq \fM$ be a neighborhood of $x$ which may shrink from line to line.
First, we may take $U$ so small that $U\subseteq U_\alpha$ for some $\alpha\in \sA$.
Since $\Xa_{k_1}(\phi_\alpha(x)),\ldots, \Xa_{k_n}(\phi_\alpha(x))$ are linearly independent,
by the hypotheses, $\exists J\in \sI(n,D)$ such that
\begin{equation*}
    \left|\det \Xa_{J,K}(\phi_\alpha(x))\right|>0.
\end{equation*}
By the continuity of the map $u\mapsto \left|\det \Xa_{J,K}(u)\right|$, we may shrink $U$
so that \cref{Item::Appendix::infdetX} holds.  We take $\delta>0$ so small that \cref{Item::Appendix::PickdeltaX} holds; here we
are using \cref{Lemma::ProofImmerse::FinerTop}.

Since $\forall u\in \phi_\alpha(U)$, $\left|\det \Xa_{J,K}(u)\right|>0$
we have $\forall y\in B_X(x,\delta)\subseteq U$, $\dim \Span\{X_{k_1}(y),\ldots, X_{k_n}(y)\}
=n=\dim \Span\{X_1(y),\ldots, X_q(y)\}$, proving \cref{Item::Appendix::SpanXkj}.

Since $[X_{k_i},X_{k_j}](y) \in \Span\{X_1(y),\ldots, X_q(y)\}$, $\forall y\in M$ (by assumption),
\cref{Item::Appendix::CommuteXkj} follows from \cref{Item::Appendix::SpanXkj}.

Finally, for \cref{Item::Appendix::writeXjInXjk}, set
\begin{equation*}
    b_j^l(y):=\frac{\det \Xa_{J,K_{j,l}}(\phi_\alpha(y))}{\det \Xa_{J,K}(\phi_\alpha(y))},
\end{equation*}
where $K_{j,l}$ is the same as $K$ but with $k_l$ repalced by $j$.
That $\Norm{b_j^l\circ\phi_\alpha^{-1}}[C^1(\phi_\alpha(U))]<\infty$ follows from
\cref{Item::Appendix::infdetX} and the fact that $X_1,\ldots, X_q\in C^1$.
\Cref{Eqn::Appendix::XjInXkl} follows from Cramer's rule.
\end{proof}

\begin{prop}\label{Prop::Appendix::MakeVs}
Let $x\in M$.  Then there exist an open set $U\subseteq \fM$, containing $x$,
$\delta>0$, and $C^1$ vector fields $V_1,\ldots, V_n$ on $U$ such that the following hold:
\begin{enumerate}[(i)]
\item\label{Item::Appendix::VBall} $B_X(x,\delta)\subseteq U$.
\item\label{Item::Appendix::VU} $\exists \alpha\in \sA$, $U\subseteq U_\alpha$.
\item\label{Item::Appendix::XjIntermVj} For $1\leq j\leq q$, $1\leq l\leq n$, $\exists f_{j}^l\in C^1(U)$,
$\Norm{f_j^l\circ \phi_\alpha^{-1}}[C^1(\phi_\alpha(U))]<\infty$
such that $\forall y\in B_X(x,\delta)$,
\begin{equation*}
    X_j(y) = \sum_{l=1}^n f_j^l(y) V_l(y).
\end{equation*}
\item\label{Item::Appendix::VLI} $\forall y\in B_X(x,\delta)$, $V_1(y),\ldots, V_n(y)$ are linearly independent.
\item\label{Item::Appendix::VjIntermsXj} For all $1\leq l\leq n$, $1\leq j\leq q$, $\exists g_l^j\in C^1(U)$,
$\Norm{g_l^j\circ \phi_\alpha^{-1}}[C^1(\phi_\alpha(U))]<\infty$, such that
$\forall y\in B_X(x,\delta)$,
\begin{equation*}
    V_l(y) =\sum_{j=1}^q g_l^j(y) X_j(y).
\end{equation*}
\item\label{Item::Appendix::VCommute} $\forall y\in B_X(x,\delta)$, $[V_j, V_k](y)=0$, $1\leq j,k\leq n$.
\end{enumerate}
\end{prop}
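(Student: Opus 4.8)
The plan is to produce the vector fields $V_1,\ldots,V_n$ by "straightening out" a maximal linearly independent sub-collection of $X_1,\ldots,X_q$ near $x$, using the involutivity guaranteed by \cref{Lemma::Appendix::PickMaxLI} \cref{Item::Appendix::CommuteXkj} together with the classical Frobenius theorem for $C^1$ distributions. First I would apply \cref{Lemma::Appendix::PickMaxLI} to obtain $K=(k_1,\ldots,k_n)\in\sI(n,q)$, an open $U\subseteq U_\alpha$ containing $x$, and $\delta>0$ such that $B_X(x,\delta)\subseteq U$, such that $X_{k_1},\ldots,X_{k_n}$ are pointwise linearly independent on $U$ (after shrinking so that the relevant minor $\det\Xa_{J,K}$ is bounded away from zero), such that $[X_{k_i},X_{k_j}]$ lies in their span on $B_X(x,\delta)$, and such that every $X_j$ is a $C^1$-combination $\sum_l b_j^l X_{k_l}$ of them with $\Norm{b_j^l\circ\phi_\alpha^{-1}}[C^1]<\infty$. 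Transporting everything to $\phi_\alpha(U)\subseteq\R^D$ via $\phi_\alpha$, I am reduced to a purely Euclidean statement about $C^1$ vector fields $\Xa_{k_1},\ldots,\Xa_{k_n}$.

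Next, because $\Xa_{k_1},\ldots,\Xa_{k_n}$ span a rank-$n$ distribution on $\phi_\alpha(U)$ and this distribution is involutive on $\phi_\alpha(B_X(x,\delta))$ — one must be slightly careful here since involutivity is only known on the (possibly smaller, finer-topology) set $\phi_\alpha(B_X(x,\delta))$, so I would first shrink $U$ and $\delta$ so that $\phi_\alpha(U)$ itself is contained in $\phi_\alpha(B_X(x',\delta'))$ for a nearby base point, or more simply re-run \cref{Lemma::ProofImmerse::FinerTop} to arrange $\phi_\alpha(U)\subseteq\phi_\alpha(B_X(x,\delta))$ up to shrinking — I would invoke the $C^1$ Frobenius theorem (valid for $C^1$ vector fields; see e.g. the references on the Frobenius theorem for Lipschitz/nonsmooth vector fields cited in this appendix) to obtain, after a further shrinking of $U$, a $C^2$ coordinate chart $\psi$ on $U$ in which the distribution $\Span\{\Xa_{k_1},\ldots,\Xa_{k_n}\}$ becomes the standard $\Span\{\partial_{y_1},\ldots,\partial_{y_n}\}$. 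I then set $V_l:=\psi_*^{-1}\partial_{y_l}$, i.e. $V_l$ is the coordinate vector field $\partial/\partial y_l$ pushed back to $U$; these are $C^1$ vector fields on $U$ (the chart is $C^2$, so pulling back the constant vector fields $\partial_{y_l}$ gives $C^1$ fields). Item \cref{Item::Appendix::VCommute} is then immediate since $[\partial_{y_i},\partial_{y_j}]=0$ and the Lie bracket is natural under diffeomorphisms. Item \cref{Item::Appendix::VLI} holds because $\partial_{y_1},\ldots,\partial_{y_n}$ are everywhere independent and $\psi$ is a diffeomorphism. Items \cref{Item::Appendix::XjIntermVj} and \cref{Item::Appendix::VjIntermsXj} follow from \cref{Lemma::Appendix::PickMaxLI} \cref{Item::Appendix::writeXjInXjk}: both $\{V_l\}$ and $\{X_{k_l}\}$ are $C^1$ frames for the same distribution on $B_X(x,\delta)$, so the change-of-frame matrix and its inverse have $C^1$ entries with norms controlled after composing with $\phi_\alpha^{-1}$ (using that the chart $\psi$ and its inverse are $C^2$, hence $C^1$, and that the minor $\det\Xa_{J,K}$ is bounded below, one gets the quantitative $C^1$ bounds via Cramer's rule exactly as in \cref{Lemma::Appendix::PickMaxLI}). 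Finally items \cref{Item::Appendix::VBall} and \cref{Item::Appendix::VU} are inherited directly from \cref{Lemma::Appendix::PickMaxLI} after all the shrinkings, since shrinking $U$ and $\delta$ only strengthens $B_X(x,\delta)\subseteq U\subseteq U_\alpha$.

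The main obstacle I anticipate is the regularity/quantitative bookkeeping around the Frobenius step. The classical Frobenius theorem applied to $C^1$ vector fields does not automatically produce a $C^1$ flow-box chart in the naive way — one needs the flows of $C^1$ vector fields to be $C^1$ in initial conditions (true, by standard ODE theory), and must check that the resulting straightening chart is $C^2$ (or at least that the pushed-back coordinate fields $V_l$ land in $C^1$ and that the transition functions between the $X_{k_l}$-frame and the $V_l$-frame are $C^1$ with the stated norm control). An alternative that avoids citing a nonsmooth Frobenius theorem: build $V_1,\ldots,V_n$ directly as a commuting frame by the standard inductive construction — set $V_1:=X_{k_1}$, and having defined $V_1,\ldots,V_{l-1}$ commuting and spanning, flow $X_{k_l}$ by the flows of $V_1,\ldots,V_{l-1}$ to correct it into a field $V_l$ commuting with all of them, each correction step preserving $C^1$ regularity and the span (using involutivity, \cref{Item::Appendix::CommuteXkj}, to see the correction stays in the distribution). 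This inductive approach keeps everything within the ODE toolkit already used in the appendix (flows of $C^1$ fields, Grönwall-type uniqueness) and makes the $C^1$ norm bounds, expressed after composing with $\phi_\alpha^{-1}$ on $\phi_\alpha(U)$, transparent to track; I would likely present it this way rather than invoking a black-box Frobenius statement.
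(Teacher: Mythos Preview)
Your approach differs substantially from the paper's, and both routes you propose share the regularity obstacle you flag but do not resolve. Applying Frobenius to a $C^1$ distribution yields only a $C^1$ flat chart $\psi$; the pulled-back coordinate fields $V_l=(d\psi)^{-1}\partial_{y_l}$ are then only $C^0$, since $d\psi$ is merely continuous. Your inductive alternative has the same defect: correcting $X_{k_l}$ by flowing along the previously constructed $V_i$ amounts to pushing a $C^1$ field forward by a $C^1$ diffeomorphism, which again drops a derivative. Neither route, as stated, produces $C^1$ fields $V_l$.

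The paper sidesteps this entirely with a one-line linear-algebra construction. After applying \cref{Lemma::Appendix::PickMaxLI} and reordering so that $J=K=(1,\ldots,n)$, write $\Xa_j=\sum_{k=1}^D h_{j,k}\,\partial_{u_k}$ in the chart $\phi_\alpha$ and let $H=(h_{j,k})_{1\le j,k\le n}$ be the top-left $n\times n$ block; by \cref{Lemma::Appendix::PickMaxLI}~\cref{Item::Appendix::infdetX} this matrix is $C^1$ with $|\det H|$ bounded below, so $H^{-1}$ is $C^1$. Define $V_j:=\sum_{k=1}^n (H^{-1})_{j,k}\,X_k$. These are manifestly $C^1$, being $C^1$ combinations of $C^1$ fields, and items \cref{Item::Appendix::XjIntermVj}--\cref{Item::Appendix::VjIntermsXj} follow at once. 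For \cref{Item::Appendix::VCommute}, the construction forces $\Va_l\equiv\partial_{u_l}\pmod{\partial_{u_{n+1}},\ldots,\partial_{u_D}}$, so on one hand $[\Va_j,\Va_k]\in\Span\{\partial_{u_{n+1}},\ldots,\partial_{u_D}\}$ by direct computation; on the other hand involutivity on $B_X(x,\delta)$ gives $[\Va_j,\Va_k]\in\Span\{\Va_1,\ldots,\Va_n\}$. These two subspaces meet only in $0$, so $[V_j,V_k]=0$ on $B_X(x,\delta)$. No flows, no coordinate changes, no loss of regularity --- this is precisely the frame-normalization step in the classical Frobenius proof \emph{before} one starts composing flows, which is why it costs nothing.
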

\begin{proof}
Take $K=(k_1,\ldots, k_n)\in\sI(n,q)$ such that $X_{k_1}(x),\ldots, X_{k_n}(x)$ are linearly independent
and let $J\in \sI(n,D)$, $U\subseteq \fM$, $\delta>0$ be as in \cref{Lemma::Appendix::PickMaxLI}.
Without loss of generality, we may reorder the vector fields and coordinates so that
$J=(1,\ldots, n)$, $K=(1,\ldots,n)$.

For $1\leq j \leq n$, $u\in \phi_\alpha(U)$, write
\begin{equation*}
    \Xa_j = \sum_{k=1}^D h_{j,k}\circ \phi_\alpha^{-1}(u) \diff{x_k},
\end{equation*}
and let $H(y)$ denote the $n\times n$ matrix $H(y)=(h_{j,k})_{1\leq j\leq n, 1\leq k\leq n}$.
Clearly, $\Norm{h_{j,k}\circ \phi_\alpha^{-1}}[C^1(\phi_\alpha(U))]<\infty$.
By \cref{Lemma::Appendix::PickMaxLI} \cref{Item::Appendix::infdetX},
$\inf_{u\in \phi_\alpha(U)} |\det H(u)|>0$.  Define $h^{j,k}$ by
$H(y)^{-1}=(h^{j,k}(y))_{1\leq j\leq n, 1\leq k\leq n}$, $y\in U$.
By the above comments, $\Norm{h^{j,k}\circ \phi_\alpha^{-1}}[C^{1}(\phi_\alpha(U))]<\infty$.
Set
$$V_j(y)=\sum_{k=1}^n h^{j,k}(y) X_k(y), \quad y\in U,$$
so that \cref{Item::Appendix::VjIntermsXj} holds, by definition.  Furthermore,
for $1\leq j\leq n$,
\begin{equation*}
    X_j(y)=\sum_{k=1}^n h_{j,k}(y) V_k(y),
\end{equation*}
so that \cref{Item::Appendix::XjIntermVj} holds for $1\leq j\leq n$.  For $n+1\leq j\leq q$,
\cref{Item::Appendix::XjIntermVj} follows from this and \cref{Lemma::Appendix::PickMaxLI} \cref{Item::Appendix::writeXjInXjk}.
Since $\forall y\in B_X(x,\delta)$, $\dim \Span\{X_1(y),\ldots, X_q(y)\}=n$, we see from \cref{Item::Appendix::XjIntermVj}
that $\dim\Span \{V_1(y),\ldots, V_n(y)\}=n$ and so \cref{Item::Appendix::VLI} follows.

It remains to prove \cref{Item::Appendix::VCommute}.  Let $\Va_k:=(\phi_\alpha)_{*} V_k$,
so that $\Va_k$ is a $C^1$ vector field on $\phi_\alpha(U)$.  By the construction of $\Va_k$,
$\forall u\in U$,
\begin{equation}\label{Eqn::Appendix::TheyAreUsualDerivs}
    \Va_k(\phi_\alpha(u)) \equiv \diff{u_k} \mod \left\{\diff{u_{n+1}},\ldots, \diff{u_D}\right\}.
\end{equation}
Also, by \cref{Item::Appendix::XjIntermVj,Item::Appendix::VjIntermsXj}, for $y\in B_X(x,\delta)$,
\begin{equation*}
    [\Va_j, \Va_k](\phi_\alpha(y))\in \Span\{\Xa_1(\phi_\alpha(y)),\ldots, \Xa_q(\phi_\alpha(y))\} = \Span\{\Va_1(\phi_\alpha(y)),\ldots, \Va_n(\phi_\alpha(y))\}.
\end{equation*}
Combining this with \cref{Eqn::Appendix::TheyAreUsualDerivs}, we have for $y\in B_X(x,\delta)$,
\begin{equation*}
    [\Va_j, \Va_k](\phi_\alpha(y))\in \Span\{\Va_1(\phi_\alpha(y)),\ldots, \Va_n(\phi_\alpha(y))\}\cap \Span\left\{ \diff{u_{n+1}},\ldots, \diff{u_D}\right\}=\{0\}.
\end{equation*}
\Cref{Item::Appendix::VCommute} follows, completing the proof.
\end{proof}

\begin{lemma}\label{Lemma::Appendix::CommuteLemma}
Let $W$ and $Z$ be $C^1$ vector fields on an open set $U\subseteq \R^d$.
Then, $\forall x\in U$, $t,s\in \R$ such that
$e^{-s Z}e^{-\tau W} e^{sZ}e^{\tau W}x$ makes sense for all $\tau \in [\min\{0,t\},\max\{0,t\}]$,
we have
\begin{equation*}
    e^{-sZ} e^{-tW} e^{sZ} e^{tW} x = x+\int_0^t\int_0^s \left( [W,Z](e^{-sZ}e^{-\tau X}e^{\sigma Z})\right)\left( e^{(s-\sigma) Z} e^{\tau W}(x)\right)\: d\sigma d\tau,
\end{equation*}
where we have written $([W,Z](f))(y)$ to denote the vector field $[W,Z]$ applied to the function $f$, then evaluated at the point $y$.
\end{lemma}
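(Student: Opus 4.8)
The plan is to verify the identity by differentiating both sides with respect to the parameters $s$ and $t$ and reducing everything to the standard ODE satisfied by flows. First I would fix $x\in U$ and regard everything as a function of the two real parameters, writing $\Psi(t,s):=e^{-sZ}e^{-tW}e^{sZ}e^{tW}x$ for those $(t,s)$ for which the composition is defined; by continuous dependence on parameters and the openness of the domain of definition of a $C^1$ flow, $\Psi$ is defined and $C^1$ on a neighborhood of the relevant rectangle, and it suffices to prove the stated formula there. The right-hand side, call it $\Theta(t,s)$, is manifestly $C^1$ as well. The strategy is then to show $\Psi$ and $\Theta$ satisfy the same first-order ODE in $t$ (for each fixed $s$) with the same initial data at $t=0$; uniqueness for ODEs with $C^0$ (indeed $C^1$) right-hand side then forces $\Psi\equiv\Theta$.

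The key computational step is the derivative in $t$. At $t=0$ both sides equal $x$, so the initial conditions match. For the left side, I would differentiate the composition of flows: $\partial_t\Psi(t,s)$ is obtained by the chain rule, pushing the innermost generator $W$ forward through the successive differentials $d(e^{-sZ})\,d(e^{-tW})\,d(e^{sZ})$, which by the standard conjugation formula for flows equals $W$ conjugated by the intervening maps; writing this out and subtracting the ``$s=0$'' contribution produces exactly an integral of a pushed-forward Lie bracket $[W,Z]$. Concretely, one uses that $\frac{d}{d\sigma}\bigl(e^{\sigma Z}\bigr)^*W = e^{\sigma Z*}[Z,W]$ (the infinitesimal version of conjugation, valid for $C^1$ vector fields), integrates in $\sigma$ from $0$ to $s$, and then tracks the base point through the remaining flows; this is precisely the inner integrand $\bigl([W,Z](e^{-sZ}e^{-\tau W}e^{\sigma Z})\bigr)\bigl(e^{(s-\sigma)Z}e^{\tau W}x\bigr)$ with the outer variable $\tau$ playing the role of the $t$-flow parameter. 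For the right side, $\partial_t\Theta(t,s)$ is just the $t$-derivative of the $\int_0^t$, which is the integrand evaluated at $\tau=t$; matching this against $\partial_t\Psi$ is then a bookkeeping check that the two vector-field expressions agree. Since both $\Psi(\cdot,s)$ and $\Theta(\cdot,s)$ solve $\dot y = G(t,s,y)$ with the same continuous $G$ and the same value at $t=0$, Picard–Lindelöf (uniqueness) gives equality, completing the proof.

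The main obstacle I anticipate is purely notational rather than conceptual: carefully keeping track of \emph{which} point each pushed-forward bracket is evaluated at as one commutes the $W$-derivative past three composed flows, since a sign error or a misplaced base point is easy here. A secondary subtlety is the low regularity — $W,Z$ are only $C^1$, so $[W,Z]$ is merely $C^0$ and one cannot freely differentiate it; the argument must be arranged so that $[W,Z]$ is only ever integrated, never differentiated, and so that the uniqueness theorem invoked is the $C^0$-in-$(t,y)$, Lipschitz-in-$y$ version (which applies since the relevant vector fields, being $C^1$, are locally Lipschitz). Once the base-point bookkeeping is done correctly, everything else is routine; this is why I would present it as: (1) set up $\Psi,\Theta$ and note $C^1$ regularity and matching initial data; (2) compute $\partial_t\Psi$ via iterated conjugation and recognize the bracket integral; (3) compute $\partial_t\Theta$ trivially; (4) invoke ODE uniqueness.
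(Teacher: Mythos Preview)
The paper does not prove this lemma at all: its entire proof reads ``This is \cite[Lemma 4.1]{RampazzoSussmanCommutatorsOfFlowMapsOfNonsmoothVectorFields}.'' So there is nothing to compare your argument against except that external reference.

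Your outlined direct argument is essentially the right idea, and is in the spirit of how such commutator-of-flows identities are typically established. One small point of framing: the appeal to ``ODE uniqueness'' in step (4) is somewhat misleading. What you actually describe in steps (2)--(3) is computing $\partial_t\Psi(t,s)$ and $\partial_t\Theta(t,s)$ explicitly as functions of $(t,s,x)$ and checking they agree; once that is done, $\Psi(t,s)=\Theta(t,s)$ follows simply by integrating in $t$ from $0$ (fundamental theorem of calculus), not by any uniqueness theorem for an ODE of the form $\dot y=G(t,y)$. Indeed the right-hand side $\Theta$ does not satisfy an autonomous-looking ODE in any natural way---its $t$-derivative is just the integrand at $\tau=t$, which depends on $(t,s,x)$ and not on $\Theta$ itself. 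A cleaner way to organize the computation, which avoids having to match two separately computed expressions, is to note that $\Psi(0,s)=\Psi(t,0)=x$ and hence
\[
\Psi(t,s)-x=\int_0^t\!\int_0^s \partial_\sigma\partial_\tau\Psi(\tau,\sigma)\,d\sigma\,d\tau,
\]
then compute the mixed partial directly and identify it with the stated integrand; the $C^1$ hypothesis on $W,Z$ is exactly enough to make this mixed partial continuous (it involves $[W,Z]$ and first derivatives of the flows, nothing more). Your remark that the low regularity forces one to arrange matters so that $[W,Z]$ is only integrated, never differentiated, is correct and is precisely why the double-integral form is natural here.
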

\begin{proof}
This is \cite[Lemma 4.1]{RampazzoSussmanCommutatorsOfFlowMapsOfNonsmoothVectorFields}.
\end{proof}

Fix $x\in M$ and let $\alpha\in \sA$, $U\subseteq U_\alpha$, $\delta>0$, and
$V_1,\ldots, V_n$ be as in \cref{Prop::Appendix::MakeVs}.
By \cref{Prop::Appendix::MakeVs} \cref{Item::Appendix::VjIntermsXj},
there exists $\delta_1>0$ such that $B_V(x,\delta_1)\subseteq B_X(x,\delta)$.
For $\epsilon=\epsilon(x)>0$ sufficiently small,\footnote{We allow $\epsilon>0$ to shrink, as needed, throughout the argument.}
 define the map $\Phi_x:B^n(\epsilon)\rightarrow \fM$ by
\begin{equation*}
    \Phi_x(t_1,\ldots, t_n)= e^{t_1 V_1} e^{t_2 V_2}\cdots e^{t_n V_n} x.
\end{equation*}
Note that for $t\in B^{n}(\delta_1/n)$, $\Phi_x(t)\in B_V(x,\delta_1)\subseteq B_X(x,\delta)\subseteq M$.

\begin{lemma}\label{Lemma::Appendix::PhiSymm}
For $\epsilon=\epsilon(x)>0$ sufficiently small and for any permutation $\sigma\in S_n$,
\begin{equation*}
    \Phi_x(t_1,\ldots, t_n) = e^{t_{\sigma(1)} V_{\sigma(1)}} e^{t_{\sigma(2)} V_{\sigma(2)}} \cdots  e^{t_{\sigma(n)} V_{\sigma(n)}} x, \quad \forall t\in B^n(\epsilon).
\end{equation*}
\end{lemma}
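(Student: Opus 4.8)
\textbf{Plan for the proof of \cref{Lemma::Appendix::PhiSymm}.}
The key point is that the vector fields $V_1,\ldots,V_n$ pairwise commute on $B_X(x,\delta)$ by \cref{Prop::Appendix::MakeVs} \cref{Item::Appendix::VCommute}, and that commuting flows commute. Since every permutation $\sigma \in S_n$ is a product of adjacent transpositions, it suffices to show that for each $1 \le k \le n-1$ one may swap $e^{t_k V_k}$ and $e^{t_{k+1} V_{k+1}}$ in the composition, provided $t$ is small enough that all the intermediate flows are defined and stay inside $B_X(x,\delta)$. So the plan is: (1) reduce to a single swap of adjacent factors; (2) prove that two commuting $C^1$ vector fields have commuting flows, using \cref{Lemma::Appendix::CommuteLemma}; (3) carry out the swap by applying this to the point $y = e^{t_{k+2} V_{k+2}} \cdots e^{t_n V_n} x$ and noting that the tails on the left are unaffected; (4) track the smallness of $\epsilon$ so that every expression that appears is defined.

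For step (2), fix two of the vector fields, call them $W = V_j$ and $Z = V_k$, so $[W,Z] \equiv 0$ on $B_X(x,\delta)$ (after passing to the chart $\phi_\alpha$, $[\Va_j,\Va_k] \equiv 0$ on $\phi_\alpha(B_X(x,\delta))$). I would work in the coordinate chart so that \cref{Lemma::Appendix::CommuteLemma} applies verbatim to $W = \Va_j$, $Z = \Va_k$ on the open set $\phi_\alpha(U) \subseteq \R^D$. That lemma gives
\begin{equation*}
  e^{-sZ} e^{-tW} e^{sZ} e^{tW} u = u + \int_0^t \int_0^s \big( [W,Z]( e^{-sZ} e^{-\tau W} e^{\sigma Z}) \big)\big( e^{(s-\sigma)Z} e^{\tau W}(u) \big)\: d\sigma\, d\tau
\end{equation*}
for $u \in \phi_\alpha(U)$ and $s,t$ small enough that all the indicated flows make sense and remain in $\phi_\alpha(U)$; since the integrand vanishes identically (the integration path stays where $[W,Z] = 0$), we get $e^{-sZ} e^{-tW} e^{sZ} e^{tW} u = u$, i.e. $e^{sZ} e^{tW} u = e^{tW} e^{sZ} u$. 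Transporting back via $\phi_\alpha^{-1}$ gives $e^{s V_k} e^{t V_j} y = e^{t V_j} e^{s V_k} y$ for all sufficiently small $s,t$ and $y$ near $x$. One subtlety: the flows $e^{\cdot Z}$ and $e^{\cdot W}$ being $C^1$, existence of solutions is only guaranteed on a uniform time interval over a neighborhood of $x$; I would choose $\epsilon = \epsilon(x) > 0$ so small that all the nested flows $e^{t_1 V_1} \cdots e^{t_n V_n} x$ and all the intermediate re-orderings stay within $B_V(x,\delta_1) \subseteq B_X(x,\delta)$, which is possible by continuity of the flows and compactness of the interval of integration. A finite shrinking of $\epsilon$ then accommodates all of the finitely many adjacent swaps needed to realize an arbitrary $\sigma \in S_n$.

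For step (3), to swap the $k$-th and $(k{+}1)$-st factors in $\Phi_x(t) = e^{t_1 V_1} \cdots e^{t_n V_n} x$, set $y := e^{t_{k+2} V_{k+2}} \cdots e^{t_n V_n} x$, which lies in $B_X(x,\delta)$ for $t$ small, apply step (2) to get $e^{t_k V_k} e^{t_{k+1} V_{k+1}} y = e^{t_{k+1} V_{k+1}} e^{t_k V_k} y$, and then left-apply the (unchanged) flows $e^{t_1 V_1} \cdots e^{t_{k-1} V_{k-1}}$ to both sides. Iterating over a sequence of adjacent transpositions expressing $\sigma$ yields the claim. The main obstacle, such as it is, is purely bookkeeping: ensuring $\epsilon$ is chosen small enough that every one of the intermediate compositions appearing during the sequence of swaps is defined and lands inside $B_X(x,\delta)$ where the bracket vanishes and where \cref{Lemma::Appendix::CommuteLemma} is applicable; there is no real analytic difficulty once the commuting-flows fact is in hand. (An alternative to invoking \cref{Lemma::Appendix::CommuteLemma} would be a direct Picard-type uniqueness argument showing $r \mapsto e^{rZ} e^{tW} u$ and $r \mapsto e^{tW} e^{rZ} u$ solve the same ODE when $[W,Z]=0$; I would use the lemma since it is already stated and handles the $C^1$ regularity cleanly.)
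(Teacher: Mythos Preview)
Your proposal is correct and follows essentially the same approach as the paper: reduce to adjacent swaps, pass to the coordinate chart $\phi_\alpha$, and apply \cref{Lemma::Appendix::CommuteLemma} with vanishing bracket to conclude the flows commute. The paper makes the bookkeeping explicit by taking $\epsilon = \delta_1/4n$, so that for $y = e^{t_{l+2}V_{l+2}}\cdots e^{t_n V_n} x \in B_V(x,\delta_1/4)$ the four additional flows in $e^{-t_l V_l} e^{-t_{l+1} V_{l+1}} e^{t_l V_l} e^{t_{l+1} V_{l+1}} y$ keep everything inside $B_V(x,\delta_1)\subseteq B_X(x,\delta)$, which is exactly the tracking you flagged as the only subtlety.
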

\begin{proof}
The minor difficulty in this lemma is that $V_1,\ldots, V_n$ are only known to commute on $B_X(x,\delta)$,
not on a neighborhood in $\fM$--since we do not yet know that $B_X(x,\delta)$ is a manifold, the lemma does not follow from standard results.  We prove the lemma with $\epsilon=\delta_1/4n$.
It suffices to show $\forall l\in \{1,\ldots, n-1\}$,
\begin{equation*}
    e^{t_1 V_1} e^{t_2 V_2}\cdots e^{t_l V_l} e^{t_{l+1} V_{l+1}} \cdots e^{t_n V_n} x =e^{t_1 V_1} e^{t_2 V_2} \cdots e^{t_{l-1} V_{l-1}} e^{t_{l+1} V_{l+1}} e^{t_l V_l} e^{t_{l+2} V_{l+2}}\cdots e^{t_n V_n} x,
\end{equation*}
as the result will then follow by repeated applications of this and by symmetry in the assumptions
on $V_1,\ldots, V_n$.
Since $e^{t_{l+2}V_{l+2}}\cdots e^{t_n V_n} x\in B_V(x,\delta_1/4)$ it suffices
to show $\forall (t_l, t_{l+1})\in B^2(\epsilon)$, $y\in B_V(x,\delta_1/4)$,
\begin{equation}\label{Eqn::Appendix::ToShowCommute}
    e^{t_lV_l} e^{t_{l+1}V_{l+1}} y =e^{t_{l+1} V_{l+1}}e^{t_l V_l }y.
\end{equation}
Note, $\forall (t_l,t_{l+1})\in B^2(\epsilon)$,
\begin{equation*}
    e^{-t_l V_l } e^{-t_{l+1} V_{l+1}} e^{t_l V_l} e^{t_{l+1} V_{l+1}} y \in B_V(x,\delta_1)\subseteq B_X(x,\delta).
\end{equation*}
Pushing this equation forward via $\phi_\alpha$ gives
\begin{equation*}
    e^{-t_l \Va_l } e^{-t_{l+1} \Va_{l+1}} e^{t_l \Va_l} e^{t_{l+1} \Va_{l+1}} \phi_\alpha(y).
\end{equation*}
Since $[\Va_l, \Va_{l+1}](u)=0$, $\forall u\in \phi_\alpha(B_V(x,\delta_1))\subseteq \phi_\alpha(B_X(x,\delta))$,
it follows from \cref{Lemma::Appendix::CommuteLemma} that
\begin{equation*}
    e^{-t_l \Va_l } e^{-t_{l+1} \Va_{l+1}} e^{t_l \Va_l} e^{t_{l+1} \Va_{l+1}} \phi_\alpha(y)=\phi_{\alpha}(y),
\end{equation*}
and so
\begin{equation*}
    e^{t_l \Va_l} e^{t_{l+1} \Va_{l+1}} \phi_\alpha(y) = e^{t_{l+1} \Va_{l+1}} e^{t_l \Va_l} \phi_\alpha(y).
\end{equation*}
\Cref{Eqn::Appendix::ToShowCommute} follows, completing the proof.
\end{proof}

\begin{lemma}\label{Lemma::Appendix::PropsPhi}
For $\epsilon=\epsilon(x)>0$ sufficiently small,
\begin{enumerate}[(i)]
\item\label{Item::Appendix::PhiOpen} $\Phi_x(B^n(\epsilon))\subseteq B_X(x,\delta)$ is an open set (and we give $\Phi_x(B^n(\epsilon))$ the
subspace topology).
\item\label{Item::Appendix::PhiHomeo} $\Phi_x:B^n(\epsilon)\rightarrow \Phi_x(B^n(\epsilon))$ is a homeomorphism.
\item\label{Item::Appendix::PhiC2} $\Phi_x:B^n(\epsilon)\rightarrow \fM$ is $C^2$ and $d\Phi_x(u)$ has full rank (i.e., rank $n$),
    $\forall u\in B^n(\epsilon)$.
\item\label{Item::Appendix::PushForwardPhixVj} $d\Phi_x(u)\diff{u_j} = V_j(\Phi_x(u))$.
\item\label{Item::Appendix::PhiExistY} There are $C^1$ vector fields $Y_1,\ldots, Y_q$ on $B^n(\epsilon)$ with
$\Norm{Y_j}[C^1(B^n(\epsilon);\R^n)]<\infty$ such that
$d\Phi_x(u) Y_j(u) = X_j(\Phi_x(u))$.
\end{enumerate}
\end{lemma}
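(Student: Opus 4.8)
\textbf{Proof plan for \cref{Lemma::Appendix::PropsPhi}.}
The plan is to deduce everything from the commutativity of $V_1,\ldots,V_n$ on $B_X(x,\delta)$ established in \cref{Prop::Appendix::MakeVs} \cref{Item::Appendix::VCommute}, together with the symmetry of $\Phi_x$ in its arguments proved in \cref{Lemma::Appendix::PhiSymm}. The key point is that, although $V_1,\ldots,V_n$ are only known to be $C^1$ and to commute on $B_X(x,\delta)$ rather than on an ambient open set in $\fM$, the composition $\Phi_x(t_1,\ldots,t_n)=e^{t_1V_1}\cdots e^{t_nV_n}x$ still behaves like a flow-box parametrization because of \cref{Lemma::Appendix::PhiSymm}. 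First I would fix $\alpha\in\sA$ with $U\subseteq U_\alpha$ as in \cref{Prop::Appendix::MakeVs}, push everything forward by $\phi_\alpha$, and work with $\Va_j=(\phi_\alpha)_*V_j$, which are $C^1$ vector fields on $\phi_\alpha(U)\subseteq\R^D$ satisfying $\Norm{\Va_j}[C^1]<\infty$. All statements about $\Phi_x$ are equivalent to the corresponding statements about $\phi_\alpha\circ\Phi_x(t)=e^{t_1\Va_1}\cdots e^{t_n\Va_n}\phi_\alpha(x)$, so it suffices to prove them there.

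The order of steps I would follow is: (1) \cref{Item::Appendix::PhiC2} and \cref{Item::Appendix::PushForwardPhixVj} first. For $\Phi_x\in C^2$: each flow $e^{t_jV_j}$ is $C^1$ in $(t_j,\text{point})$ since $V_j\in C^1$, so $\Phi_x$ is at least $C^1$; for the extra derivative, use \cref{Lemma::Appendix::PhiSymm} to move the variable $t_j$ to the front, so that $\diff{t_j}\Phi_x(t)=V_j(\Phi_x(t))$, which is $C^1$ in $t$ because $V_j\in C^1$ and $\Phi_x\in C^1$; since this holds for every $j$, the differential $u\mapsto d\Phi_x(u)$ is $C^1$, hence $\Phi_x\in C^2$. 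This simultaneously proves \cref{Item::Appendix::PushForwardPhixVj}. Full rank of $d\Phi_x(u)$: by \cref{Item::Appendix::PushForwardPhixVj} the image of $d\Phi_x(u)$ contains $V_1(\Phi_x(u)),\ldots,V_n(\Phi_x(u))$, which are linearly independent by \cref{Prop::Appendix::MakeVs} \cref{Item::Appendix::VLI} (valid since $\Phi_x(u)\in B_X(x,\delta)$ for $\epsilon$ small), so $d\Phi_x(u)$ has rank $\geq n$, hence exactly $n$.

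(2) Then \cref{Item::Appendix::PhiHomeo} and \cref{Item::Appendix::PhiOpen}. For injectivity: $d\Phi_x(0)=(V_1(x)|\cdots|V_n(x))$ (in the coordinates of $\phi_\alpha$, using \cref{Eqn::Appendix::TheyAreUsualDerivs} this is essentially the identity mod the complementary coordinates), so $\phi_\alpha\circ\Phi_x$ is a $C^1$ immersion at $0$; I would shrink $\epsilon$ so that $\phi_\alpha\circ\Phi_x$ restricted to $B^n(\epsilon)$ is injective. The key subtlety is that injectivity of $\phi_\alpha\circ\Phi_x$ as a map into $\R^D$ does not follow from the Inverse Function Theorem alone since $D>n$ in general; instead I would use \cref{Eqn::Appendix::TheyAreUsualDerivs} — each $\Va_k$ equals $\diff{u_k}$ modulo $\{\diff{u_{n+1}},\ldots,\diff{u_D}\}$ — which forces the first $n$ coordinates of $\phi_\alpha\circ\Phi_x(t)$ to be $t$ plus a $C^1$ error whose differential vanishes at $0$, so for $\epsilon$ small the map $t\mapsto(\text{first }n\text{ coords of }\phi_\alpha\circ\Phi_x(t))$ is a $C^1$ diffeomorphism onto its image, giving injectivity of $\Phi_x$ and a continuous inverse. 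Openness of $\Phi_x(B^n(\epsilon))$ in $B_X(x,\delta)$ then follows because $\Phi_x$ is an open map onto $B_V(x,\delta_1)$ in the metric $\rho$: any point near $\Phi_x(t_0)$ in the $\rho$-topology is reachable by short $V$-curves, which by commutativity can be straightened into a change of the parameters $t$. Continuity of $\Phi_x^{-1}$ with respect to the subspace ($\rho$) topology uses \cref{Lemma::ProofImmerse::FinerTop} to compare with the $\phi_\alpha$-topology, where the first-$n$-coordinates argument gives a continuous inverse.

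(3) Finally \cref{Item::Appendix::PhiExistY}: set $Y_j(u):=\sum_{l=1}^n (f_j^l\circ\Phi_x(u))\,\diff{u_l}$ where $f_j^l$ are the $C^1$ functions from \cref{Prop::Appendix::MakeVs} \cref{Item::Appendix::XjIntermVj} with $X_j(y)=\sum_l f_j^l(y)V_l(y)$ on $B_X(x,\delta)$. Then, using \cref{Item::Appendix::PushForwardPhixVj},
\begin{equation*}
d\Phi_x(u)Y_j(u)=\sum_{l=1}^n f_j^l(\Phi_x(u))\,d\Phi_x(u)\diff{u_l}=\sum_{l=1}^n f_j^l(\Phi_x(u))V_l(\Phi_x(u))=X_j(\Phi_x(u)),
\end{equation*}
as desired. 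The bound $\Norm{Y_j}[C^1(B^n(\epsilon);\R^n)]<\infty$ follows because $f_j^l\circ\phi_\alpha^{-1}\in C^1$ with finite $C^1$ norm on $\phi_\alpha(U)$, $\phi_\alpha\circ\Phi_x\in C^1$ by step (1), and compositions and products of such functions on the relatively compact set $B^n(\epsilon)$ have finite $C^1$ norm. The main obstacle in this whole argument is step (2): getting a genuine (global, on $B^n(\epsilon)$) homeomorphism out of vector fields that are merely $C^1$ and merely commute on the non-manifold $B_X(x,\delta)$, rather than on an open subset of $\fM$ — this is exactly where one must avoid invoking the classical flow-box theorem and instead lean on \cref{Eqn::Appendix::TheyAreUsualDerivs}, \cref{Lemma::Appendix::PhiSymm}, and \cref{Lemma::ProofImmerse::FinerTop}.
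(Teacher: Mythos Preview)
Your treatment of \cref{Item::Appendix::PhiC2}, \cref{Item::Appendix::PushForwardPhixVj}, and \cref{Item::Appendix::PhiExistY} is essentially identical to the paper's: the paper also gets \cref{Item::Appendix::PushForwardPhixVj} from \cref{Lemma::Appendix::PhiSymm}, deduces $C^2$ from $d\Phi_x(u)\diff{u_j}=V_j(\Phi_x(u))$ with $V_j\in C^1$, and defines $Y_j=\sum_l (f_j^l\circ\Phi_x)\diff{u_l}$ exactly as you do.

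For \cref{Item::Appendix::PhiOpen} and \cref{Item::Appendix::PhiHomeo} your route diverges from the paper's, and there is a small gap. The paper never uses the first-$n$-coordinates projection or \cref{Lemma::ProofImmerse::FinerTop}; instead it argues both openness and continuity of $\Phi_x$ symmetrically via the identification $\Phi_x(B^n(u,\epsilon_0))=B_V(\Phi_x(u),\epsilon_0)$ (a consequence of \cref{Item::Appendix::PushForwardPhixVj}) together with the two inclusions $B_X\subseteq B_V$ and $B_V\subseteq B_X$ coming from \cref{Prop::Appendix::MakeVs} \cref{Item::Appendix::XjIntermVj} and \cref{Item::Appendix::VjIntermsXj} respectively. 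Your first-$n$-coordinates argument together with \cref{Lemma::ProofImmerse::FinerTop} is a correct alternative for continuity of $\Phi_x^{-1}$, but it is redundant with your openness argument (injective + open already gives $\Phi_x^{-1}$ continuous). What is \emph{missing} from your plan is continuity of $\Phi_x$ itself into the $\rho$-topology: knowing $\Phi_x$ is $C^2$ into $\fM$ only gives continuity in the coarser ambient topology, and \cref{Lemma::ProofImmerse::FinerTop} goes the wrong way for this direction. You need the inclusion $B_V(\Phi_x(u),\epsilon_1)\subseteq B_X(\Phi_x(u),\epsilon_0)$, which comes from \cref{Prop::Appendix::MakeVs} \cref{Item::Appendix::VjIntermsXj} (writing $V_l=\sum_j g_l^j X_j$), so that $t'$ near $t$ implies $\Phi_x(t')$ is $V$-near, hence $\rho$-near, $\Phi_x(t)$. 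Once you add this one line, your argument is complete and essentially equivalent to the paper's.
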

\begin{proof}
We have already seen $\Phi_x(B^n(\epsilon))\subseteq B_V(x,\delta_1)\subseteq B_X(x,\delta)$.
Since $V_1,\ldots, V_n$ are $C^1$, standard proofs show that $\Phi_x$ is $C^1$.
Since $\diff{t_j}\big|_{t=0} \Phi_x(t)= V_j(x)$ and
$V_1(x),\ldots, V_n(x)$ are linearly independent
(\cref{Prop::Appendix::MakeVs} \cref{Item::Appendix::VLI}) the
Inverse Function Theorem shows that if $\epsilon>0$ is sufficiently small,
$\Phi_x:B^{n}(\epsilon)\rightarrow \fM$ is injective and $d \Phi_x(u)$ has full rank (i.e., rank $n$)
$\forall u\in B^n(\epsilon)$.

By the definition of $\Phi_x$, $\diff{t_1}\Phi_x(t) = V_1(\Phi_x(t))$,
and by \cref{Lemma::Appendix::PhiSymm}, $\Phi_x$ is symmetric in $V_1,\ldots, V_n$
and so \cref{Item::Appendix::PushForwardPhixVj} follows for $\epsilon>0$ sufficiently small.

Let $S\subseteq B^n(\epsilon)$ be open.  We claim $\Phi_x(S)\subseteq B_X(x,\delta)$ is open.
Indeed, take $\Phi_x(u)\in \Phi_x(S)$.  Let $\epsilon_0>0$ be so small  that $B^n(u,\epsilon_0)\subseteq S$.
Then $\Phi_x(B^n(u,\epsilon_0))\subseteq \Phi_x(S)$.  And so
$B_V(\Phi_x(u),\epsilon_0)=\Phi_x(B^n(u,\epsilon_0))\subseteq \Phi_x(S)$.\footnote{To conclude $B_V(\Phi_x(u),\epsilon_0)=\Phi_x(B^n(u,\epsilon_0))$, we
have used $d\Phi_x(t) \diff{t_j} = V_j(\Phi_x(t))$ and the definition of $B_V(\Phi_x(y), \epsilon_0)$.}
By \cref{Prop::Appendix::MakeVs} \cref{Item::Appendix::XjIntermVj} $\exists \epsilon_1>0$
with $B_X(\Phi_x(u),\epsilon_1)\subseteq B_V(\Phi_x(u),\epsilon_0)=\Phi_x(B^n(u,\epsilon_0))\subseteq \Phi_x(S)$.
Thus, $\Phi_x(S)\subseteq B_X(x,\delta)$ is open.  In particular $\Phi_x(B^n(\epsilon))\subseteq B_X(x,\delta)$ is open.  This proves
\cref{Item::Appendix::PhiOpen}.

Since $\Phi_x$ is an injective open map, to prove it is a homeomophism it suffices to prove it is continuous.
Let $u\in B^n(\epsilon)$ and let $S\subseteq B_X(x,\delta)$ be an open set such that $\Phi_x(u)\in S$.
We wish to show that there is an open set $O\subseteq B^n(\epsilon)$, $u\in O$, $\Phi_x(O)\subseteq S$.

Take $\epsilon_0>0$ so small that $B_X(\Phi_x(u),\epsilon_0)\subseteq S$.  Then by \cref{Prop::Appendix::MakeVs} \cref{Item::Appendix::XjIntermVj}  $\exists \epsilon_1>0$ with $B_V(\Phi_x(u),\epsilon_1)\subseteq B_X(\Phi_x(u),\epsilon_0)\subseteq S$.  But $\Phi_x(B^n(u,\epsilon_1))= B_V(\Phi_x(u),\epsilon_1)$;
thus $O=B^n(u,\epsilon_1)$ is our desired neighborhood of $u$.  This proves \cref{Item::Appendix::PhiHomeo}.

Taking $f_j^l$ as in \cref{Prop::Appendix::MakeVs} \cref{Item::Appendix::XjIntermVj},
and setting $Y_j(u) = \sum_{l=1}^n f_{j}^l\circ \Phi_x(u) \diff{u_l}$,
\cref{Item::Appendix::PhiExistY} follows.

For \cref{Item::Appendix::PhiC2}, we already know $\Phi_x$ is $C^1$.  That $\Phi_x$ is $C^2$ follows from
\cref{Item::Appendix::PushForwardPhixVj} and the fact that $V_1,\ldots, V_n$ are $C^1$.
We have already shown $d\Phi_x(u)$ has full rank, $\forall u\in B^n(\epsilon)$.
\end{proof}

In the previous discussion, $\epsilon>0$ implicitly depended on $x$.  We now make this dependance
explicit and write $\epsilon_x>0$.
We consider a family of functions and open sets on $M$ given by
\begin{equation*}
    \left\{ (\Phi_x^{-1}, \Phi_x(B^n(\epsilon_x)) \right\}_{x\in M}.
\end{equation*}
The proof of the existence of the $C^2$ structure in  \cref{Prop::ResQual::Mmanif} is completed by the next proposition.

\begin{prop}
The above maps yield a $C^2$ atlas on $M$.  With this manifold structure $X_1,\ldots, X_q$ are $C^1$ vector fields
on $M$, and the inclusion map $M\hookrightarrow \fM$ is a $C^2$ injective immersion.
\end{prop}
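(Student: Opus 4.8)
The plan is to verify three things in turn. First, that the family $\{(\Phi_x^{-1},\Phi_x(B^n(\epsilon_x)))\}_{x\in M}$ is a $C^2$ atlas on $M$: after shrinking $\delta$ (hence $\epsilon_x$) so that $B_X(x,\delta)\subseteq M$ — possible because $M$ is $\rho$-open — \cref{Lemma::Appendix::PropsPhi} \cref{Item::Appendix::PhiOpen,Item::Appendix::PhiHomeo} shows that each $\Phi_x(B^n(\epsilon_x))$ is a $\rho$-open subset of $M$ and that $\Phi_x$ is a homeomorphism onto it, while $\bigcup_{x\in M}\Phi_x(B^n(\epsilon_x))=M$ because $y=\Phi_y(0)$, and $M$ is Hausdorff since it is a metric space; so the only substantial point is that the transition maps are $C^2$ diffeomorphisms, and this is where the real work lies.

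To handle the transitions, fix $x,x'\in M$ with $O:=\Phi_x(B^n(\epsilon_x))\cap\Phi_{x'}(B^n(\epsilon_{x'}))\ne\emptyset$; then $\Phi_{x'}^{-1}\circ\Phi_x$ is a homeomorphism between the open sets $\Phi_x^{-1}(O),\Phi_{x'}^{-1}(O)\subseteq\R^n$, and I want it $C^2$. Fixing $u_0$ in its domain, set $y_0=\Phi_x(u_0)$, $v_0=\Phi_{x'}^{-1}(y_0)$ and pick $\alpha\in\sA$ with $y_0\in U_\alpha$. The key is to produce a $C^2$ local left inverse of the immersion $\Phi_{x'}$: since $g:=\phi_\alpha\circ\Phi_{x'}$ is $C^2$ with $\mathrm{rank}\,dg(v_0)=n$ (\cref{Lemma::Appendix::PropsPhi} \cref{Item::Appendix::PhiC2}), after permuting the coordinates on $B^D(1)$ the composition $\pi\circ g$ of $g$ with the projection $\pi\colon\R^D\to\R^n$ onto the first $n$ coordinates is a local $C^2$ diffeomorphism at $v_0$ by the inverse function theorem, so $\psi:=(\pi\circ g)^{-1}$ is $C^2$ near $\pi(g(v_0))$ and $\psi\circ\pi\circ\phi_\alpha\circ\Phi_{x'}=\mathrm{id}$ near $v_0$. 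By continuity of $u\mapsto\Phi_x(u)$ into $\fM$ and of $u\mapsto\Phi_{x'}^{-1}(\Phi_x(u))$ into $\R^n$, there is a neighbourhood of $u_0$ on which $\Phi_x(u)\in U_\alpha$ and $\Phi_{x'}^{-1}(\Phi_x(u))$ lies where $\psi$ is a left inverse; on it, putting $v=\Phi_{x'}^{-1}(\Phi_x(u))$ and using $\Phi_{x'}(v)=\Phi_x(u)$, one gets
\[
\Phi_{x'}^{-1}\circ\Phi_x(u)=v=\psi\bigl(\pi(\phi_\alpha(\Phi_{x'}(v)))\bigr)=\psi\bigl(\pi(\phi_\alpha(\Phi_x(u)))\bigr),
\]
a composition of $C^2$ maps. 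Since $u_0$ is arbitrary and $x,x'$ interchangeable, the transition maps are $C^2$ diffeomorphisms, so this family is a $C^2$ atlas, giving $M$ a $C^2$ manifold structure compatible with its topology.

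With the atlas in hand the remaining assertions follow in charts. In the chart $\Phi_x^{-1}$ the vector field $X_j$ is represented by $Y_j$, since $d\Phi_x(u)Y_j(u)=X_j(\Phi_x(u))$ (\cref{Lemma::Appendix::PropsPhi} \cref{Item::Appendix::PhiExistY}) and $Y_j\in C^1$, so each $X_j$ is a $C^1$ vector field on $M$; and since $\Phi_x$ sends $\partial_{u_l}$ to $V_l$ (\cref{Lemma::Appendix::PropsPhi} \cref{Item::Appendix::PushForwardPhixVj}) while $V_l=\sum_{j=1}^q g_l^j X_j$ on $B_X(x,\delta)$ (\cref{Prop::Appendix::MakeVs} \cref{Item::Appendix::VjIntermsXj}), we get $\partial_{u_l}=\sum_{j=1}^q (g_l^j\circ\Phi_x)Y_j$ on $B^n(\epsilon_x)$, so $Y_1(u),\dots,Y_q(u)$ span $\R^n$ for every $u$ — that is, $X_1,\dots,X_q$ span the tangent space at every point of $M$. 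Finally, the inclusion $\iota\colon M\hookrightarrow\fM$ is injective ($M\subseteq Z\subseteq\fM$ as sets) and continuous (\cref{Lemma::ProofImmerse::FinerTop}), and near any $y\in M$ it is represented, in the chart $\Phi_x^{-1}$ and an ambient chart $(\phi_\alpha,U_\alpha)$ with $y\in U_\alpha$, by $\phi_\alpha\circ\Phi_x$, which is $C^2$ with differential of rank $n$ (\cref{Lemma::Appendix::PropsPhi} \cref{Item::Appendix::PhiC2}); hence $\iota$ is a $C^2$ injective immersion. The main obstacle is the transition-map step: because the $\rho$-topology on $M$ is a priori strictly finer than the subspace topology from $\fM$, one cannot invoke any openness of $\Phi_x$ as a map into $\fM$, and must instead route the argument through the local left inverse $\psi$ of the immersion $\Phi_{x'}$, carefully tracking the shrinking neighbourhoods on which $\Phi_{x'}^{-1}\circ\Phi_x=\psi\circ\pi\circ\phi_\alpha\circ\Phi_x$ holds.
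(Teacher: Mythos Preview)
Your proof is correct and follows essentially the same route as the paper. The paper handles the transition maps more tersely by invoking the general fact that for a $C^2$ injective immersion $\Phi_{x_1}$, a map $h$ is $C^2$ if and only if $\Phi_{x_1}\circ h$ is $C^2$, and then noting $\Phi_{x_1}\circ(\Phi_{x_1}^{-1}\circ\Phi_{x_2})=\Phi_{x_2}$; your explicit construction of the local left inverse $\psi=(\pi\circ\phi_\alpha\circ\Phi_{x'})^{-1}$ via the inverse function theorem is precisely how one proves that general fact, so the underlying argument is the same.
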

\begin{proof}
The main point is to show that the collection of maps gives a $C^2$ atlas.  Once this is shown,
that $X_1,\ldots, X_q$ are $C^1$ on this manifold follows from \cref{Lemma::Appendix::PropsPhi} \cref{Item::Appendix::PhiExistY}.  That the inclusion map is a $C^2$ injective immersion
follows from \cref{Lemma::Appendix::PropsPhi} \cref{Item::Appendix::PhiC2}.

We turn to showing the collection is a $C^2$ atlas.  Set
$W=\Phi_{x_1}(B^n(\epsilon_{x_1}))\cap \Phi_{x_2}(B^n(\epsilon_{x_2}))$.
We want to show $\Phi_{x_1}^{-1}\circ \Phi_{x_2}:\Phi_{x_2}^{-1}(W)\rightarrow B^n(\epsilon_{x_1})$
is $C^2$.  Since $\Phi_{x_1}:B^n(\epsilon_{x_1})\rightarrow \fM$ is injective, $C^2$,
and has injective differential (\cref{Lemma::Appendix::PropsPhi} \cref{Item::Appendix::PhiHomeo} and \cref{Item::Appendix::PhiC2}) we have
\begin{equation*}
    \Phi_{x_1}^{-1}\circ \Phi_{x_2}\text{ is }C^2\Leftrightarrow \Phi_{x_1}\circ \Phi_{x_1}^{-1}\circ \Phi_{x_2}\text{ is } C^2.
\end{equation*}
But $\Phi_{x_1}\circ \Phi_{x_1}^{-1}\circ \Phi_{x_2}=\Phi_{x_2}$ is $C^2$ by \cref{Lemma::Appendix::PropsPhi} \cref{Item::Appendix::PhiC2}, completing the proof.
\end{proof}

Finally,  the uniqueness of the $C^2$ structure in \cref{Prop::ResQual::Mmanif} follows immediately from the next lemma and \cref{Lemma::ProofImmerse::FinerTop}.
\begin{lemma}
Let $\fM$ be a manifold and let $M\subseteq \fM$ be a subset.  Give $M$
any topology which is finer\footnote{Not necessarily strictly finer.} than the subspace topology induced by $\fM$.  Then, there is at most one $C^2$ manifold structure on $M$, compatible with this topology,
such that the inclusion map $M\hookrightarrow \fM$ is an injective immersion.
\end{lemma}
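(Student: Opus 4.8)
The plan is to prove this by a standard argument identifying the manifold structure through the flows of vector fields, exploiting the fact that the topology on $M$ is finer than the subspace topology. Suppose we are given two $C^2$ manifold structures on $M$, call them $M_1$ and $M_2$, both compatible with the given topology on $M$ and both making the inclusion $\iota: M \hookrightarrow \fM$ an injective immersion. I would show that the identity map $M_1 \to M_2$ is a $C^2$ diffeomorphism. By symmetry it suffices to show it is $C^2$; and since the identity is a homeomorphism (both structures induce the same topology on $M$), it is enough to work locally near an arbitrary point $x \in M$.

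First I would fix $x \in M$ and choose, using the $C^2$ structure of $M_1$, a coordinate chart $\psi_1: W_1 \to B^n(1)$ near $x$ in $M_1$ (where $n = \dim M_1$; note $\dim M_1 = \dim M_2$ since both immerse into $\fM$ and, by the argument below, they carry the same tangent spaces). The key observation is that because $\iota: M_1 \hookrightarrow \fM$ is a $C^2$ immersion, the vector fields $\partial/\partial u_j$ in this chart push forward to $C^1$ vector fields on a neighborhood in $\fM$ — more precisely, there are $C^1$ vector fields $Z_1, \ldots, Z_n$ defined on an open subset $U \subseteq \fM$ with $\iota_* (\psi_1^{-1})_* \partial_{u_j}$ agreeing with $Z_j$ along $\iota(W_1)$, and the $Z_j$ are tangent to $\iota(M_1)$. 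Then the local coordinate map of $M_1$ near $x$ can be recovered, up to composition with a diffeomorphism of $B^n(1)$, as an iterated-flow map $t \mapsto e^{t_1 Z_1} e^{t_2 Z_2} \cdots e^{t_n Z_n} x$, evaluated in $\fM$, in the spirit of \cref{Prop::Appendix::MakeVs} and \cref{Lemma::Appendix::PropsPhi}.

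Next I would observe that this same flow map also serves as a local parametrization for $M_2$: since the $Z_j$ are $C^1$ vector fields on the ambient $\fM$, the flows $e^{t Z_j}$ are intrinsic to $\fM$ and do not know which structure $M_1$ or $M_2$ we are using; and since $M_2 \hookrightarrow \fM$ is an injective immersion compatible with the (finer) topology, these flow curves, which stay inside the topological subspace $M$, must stay inside the immersed submanifold $M_2$ and be $C^1$ curves there. (This is precisely where finer-than-subspace topology is used: it guarantees that a curve in $\fM$ lying set-theoretically in $M$ and continuous into $M_1$ is also continuous into $M$ and hence into $M_2$, so the $Z_j$ are genuinely tangent to $M_2$.) Consequently the transition map between the $M_1$-chart and the $M_2$-chart near $x$ factors through this common flow parametrization and is therefore $C^2$.

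The main obstacle I anticipate is making rigorous the claim that a $C^1$ vector field on $\fM$ that is tangent to the immersed submanifold $M_2$ restricts to a $C^1$ vector field on $M_2$, together with the claim that its flow, viewed in $\fM$, stays in $M_2$ and is $C^1$ there — the subtlety being that $M_2$ is only immersed, not embedded, so "tangent" and "restriction" must be interpreted with care, and one must use the finer topology to rule out the flow curve escaping the immersed image. This is handled exactly as in the existence proof above (\cref{Lemma::Appendix::PropsPhi}, using \cref{Lemma::Appendix::CommuteLemma} for the commutativity needed to get a well-defined symmetric flow box), so the proof amounts to re-running that construction with $M_1$ in place of the $V_j$-generated structure and checking at each step that the hypotheses hold for $M_2$ as well. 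Once the two structures are seen to share these flow-box charts, the diffeomorphism statement is immediate.
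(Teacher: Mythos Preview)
Your outline would work, but it is substantially more involved than necessary, and the machinery you invoke (extending coordinate vector fields into the ambient, running flows, checking tangency for the second structure) already presupposes the one fact that suffices to prove the lemma outright: an immersion is locally an embedding.  The paper's proof uses exactly this and nothing else.  Given two $C^2$ structures $M_1$, $M_2$ on $M$ with the same topology and with both inclusions $i_1,i_2:M\hookrightarrow \fM$ injective immersions, one picks for each $x\in M$ a common open neighbourhood $U\subseteq M$ small enough that $i_1|_U$ and $i_2|_U$ are embeddings.  Their images coincide as sets (both equal $U\subseteq\fM$), so $U$ inherits a single $C^2$ submanifold structure from $\fM$, and both $i_1|_U$ and $i_2|_U$ are $C^2$ diffeomorphisms onto it.  The identity $M_1\cap U\to M_2\cap U$ is then $i_2^{-1}\circ i_1$, which is $C^2$; globally the identity is a homeomorphism that is locally a $C^2$ diffeomorphism, hence a $C^2$ diffeomorphism.

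Compared with your route, the paper's argument never needs to build vector fields on $\fM$, never needs to verify that flows stay in $M$ or are tangent to $M_2$, and never invokes \cref{Lemma::Appendix::PropsPhi} or \cref{Lemma::Appendix::CommuteLemma}.  Your ``main obstacle'' --- showing that the $Z_j$ are tangent to $M_2$ and restrict to $C^1$ vector fields there --- is dissolved by the local-embedding observation: once $U$ is an embedded submanifold of $\fM$, the tangent spaces $T_yM_1$ and $T_yM_2$ both equal $T_yU$ computed in $\fM$, so there is nothing to check.  In short, your plan is correct but circuitous; the direct argument is a few lines.
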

\begin{proof}
Suppose there are two such $C^2$ structures on $M$; denote the corresponding $C^2$ manifolds by $M_1$ and $M_2$.
We wish to show that the identity map $M_1\rightarrow M_2$ is a $C^2$ diffeomorphism.
Let $i_1:M_1\hookrightarrow \fM$, $i_2:M_2\hookrightarrow \fM$ be the inclusion maps (on
the underlying space $M$, $i_1=i_2$).
Since $i_1$ and $i_2$ are assumed to be injective immersions, for all $x\in M$,
there is a neighborhood $U\subseteq M$ of $x$ such that
\begin{equation*}
    i_1|_U :M_1\cap U\rightarrow \fM\cap U, \quad i_2|_U:M_2\cap U\rightarrow \fM\cap U
\end{equation*}
are $C^2$ diffeomorpisms, where $\fM\cap U$ is given the $C^2$ structure as a submanifold of $\fM$.
Hence, the idenitity map $U\cap M_1\rightarrow U\cap M_2$ is a $C^2$ diffeomorphism.
Since the idenitity map $M_1\rightarrow M_2$ is a homeomorphism which is locally a $C^2$
diffeomorphism, we conclude that it is a global $C^2$ diffeomorphism, as desired.
\end{proof}


\bibliographystyle{amsalpha}

\bibliography{coords}

\center{\it{University of Wisconsin-Madison, Department of Mathematics, 480 Lincoln Dr., Madison, WI, 53706}}

\center{\it{stovall@math.wisc.edu, street@math.wisc.edu}}

\center{MSC 2010:  58A30 (Primary), 57R55 and 53C17 (Secondary)}

\end{document}